\documentclass[12pt,          
               phd,           
               onehalfspacing 
               ]{ncsuthesis}


\usepackage{booktabs}  
\usepackage{amsmath}
\usepackage{textcomp}  
\usepackage{xcolor}
\usepackage{lipsum}    
\usepackage{longtable}

\usepackage{fancyvrb}

\usepackage{amsthm}
\usepackage{ amssymb }
\usepackage{diagbox}
\usepackage{url,  mathrsfs}
\usepackage{hyperref}
\usepackage{fancyhdr} 

\usepackage{url,  mathrsfs}
\usepackage{hyperref}
\usepackage{amsmath}
\usepackage{amsfonts}
\usepackage{graphicx,  epstopdf}
\usepackage{subfig}
\usepackage{color}
\usepackage{bbm}
\usepackage{subfloat}
\usepackage[draft]{fixme}
\usepackage{bm} 
\usepackage{bbm} 
\usepackage{epigraph}
\usepackage{endnotes}
\usepackage{ifpdf}
\usepackage{array}

\usepackage{multicol}
\usepackage{multirow}
\usepackage{graphicx}
\usepackage{tikz}

\usepackage{xcolor}

\newtheorem{theorem}{Theorem}[section]
\newtheorem{proposition}[theorem]{Proposition}
\newtheorem{lemma}[theorem]{Lemma}
\newtheorem{conjecture}[theorem]{Conjecture}

\newtheorem{corollary}[theorem]{Corollary}

\theoremstyle{definition}

\newtheorem{definition}[theorem]{Definition}

\newtheorem{example}[theorem]{Example}

\newtheorem{remark}[theorem]{Remark}


\newcommand{\B}{\mathcal{B}}
\newcommand{\p}{\mathcal{P}}
\newcommand{\R}{\mathbb{R}}
\newcommand{\nest}[2]{\mathcal{K}_{#1} #2} 


\newcommand{\conichull}{\operatorname{Cone}}

\newcommand{\danger}[1]{}

 

\dispositionformat{\bfseries}            

\headingformat{\large\MakeUppercase}   

\frenchspacing

\setlength{\headheight}{26.94345pt} 
\fancyhead[L]{\vspace{1mm}} 

\committeesize{5}

\chair{Nathan Reading}
\memberI{Seth Sullivant}
\memberII{Laura Colmenarejo}
\memberIII{Corey Jones}   
\memberIV{Stephen Terry}    

\student{Jordan Grady}{Almeter} 

\program{Mathematics}


\thesistitle{$\p$-graph associahedra and hypercube graph associahedra}






\usepackage[backend=biber, style=numeric]{biblatex}

\addbibresource{StudentName-thesis.bib}
\addbibresource{StudentName-thesis}

\begin{document}
\frontmatter

\begin{abstract}

A graph associahedron is a polytope dual to a simplicial complex whose elements are induced connected subgraphs called tubes. Graph associahedra generalize permutahedra, associahedra, and cyclohedra, and therefore are of great interest to those who study Coxeter combinatorics.

For any graph, any proper vertex subset which induces a connected subgraph is called a tube, and any set of compatible tubes is called a tubing. The set of tubings for any graph is a simplicial complex which is dual to a simple polytope called the graph associahedron of that graph. The graph associahedron for a graph can be realized by repeatedly truncating certain faces of a simplex in accordance with tubes of that graph. The graph associahedron is further generalized by nestohedra, whose nested complexes are further generalized by nested complexes of semilattices.

This thesis characterizes nested complexes of simplicial complexes, which we call $\Delta$-nested complexes. From here, we can define $\p$-nestohedra by truncating simple polyhedra, and in more specificity define $\p$-graph associahedra, which are realized by repeated truncation of faces of simple polyhedra in accordance with tubes of graphs.

We then define hypercube-graph associahedra as a special case. Hypercube-graph associahedra are defined by tubes and tubings on a graph with a matching of dashed edges, with tubes and tubings avoiding those dashed edges. These simple rules make hypercube-graph tubings a simple and intuitive extension of classical graph tubings. We explore properties of $\Delta$-nested complexes and $\p$-nestohedra, and use these results to explore properties of hypercube-graph associahedra, including their facets and faces, as well as their normal fans and Minkowski sum decompositions. We use these properties to develop general methods of enumerating $f$-polynomials of families of hypercube-graph associahedra. Several of these hypercube-graphs correspond to previously-studied polyhedra, such as cubeahedra, the halohedron, the type $A_n$ linear $c$-cluster associahedron, and the type $A_n$ linear $c$-cluster biassociahedron. We provide enumerations for these polyhedra and others.
%
%
%

\end{abstract}

\makecopyrightpage

\maketitlepage


\begin{biography}
The author was born and raised in Chesapeake, Virginia before getting his Bachelor's in Mathematics at the College of William and Mary.
\end{biography}

\begin{acknowledgements}
Thanks to friends Wyn Nelson, Eric Geiger, and Hitesh Tolani for their friendship during my time at NCSU. Thank you to my advisor Nathan Reading for his patience and guidance during this time. Thank you to my family for all of their support. Thank you to my love, Richard Hess.
\end{acknowledgements}

\thesistableofcontents


\thesislistoffigures

\mainmatter

\chapter{INTRODUCTION}
\label{chap-one}


\section{Hypercube graph associahedra}

There exist a number of constructions related to polytopes known as associahedra, permutohedra, and nestohedra in the literature. Figure \ref{fig:generalizations} shows the relationships between existing constructions, and new constructions defined for this paper. In the traditional view of things, graph associahedra are polytopes which generalize associahedra and related polytopes. Then, nestohedra generalize graph associahedra, and generalized permutohedra generalize nestohedra. In addition, semilattice-nested complexes generalize nestohedra, with the caveat that these complexes are not polytopal. In this figure, we have drawn thick-outlined boxes around constructions defined in this paper. As we see, hypercube-graph associahedra are developed as a relative to graph associahedra, using $\p$-graph associahedra and $\p$-graph nestohedra as a common generalization. The dashed edges indicate a generalization involving duality, as the $\p$-nestohedron itself is not generalized by $\Delta$-nested complexes, but the $\p$-nestohedron's nested complex is generalized by $\Delta$-nested complexes, and the same go for $\p$-graph associahedra and $\Delta$-graph nested complexes.

\begin{figure}[htp]
\centering
\includegraphics[width=\textwidth]{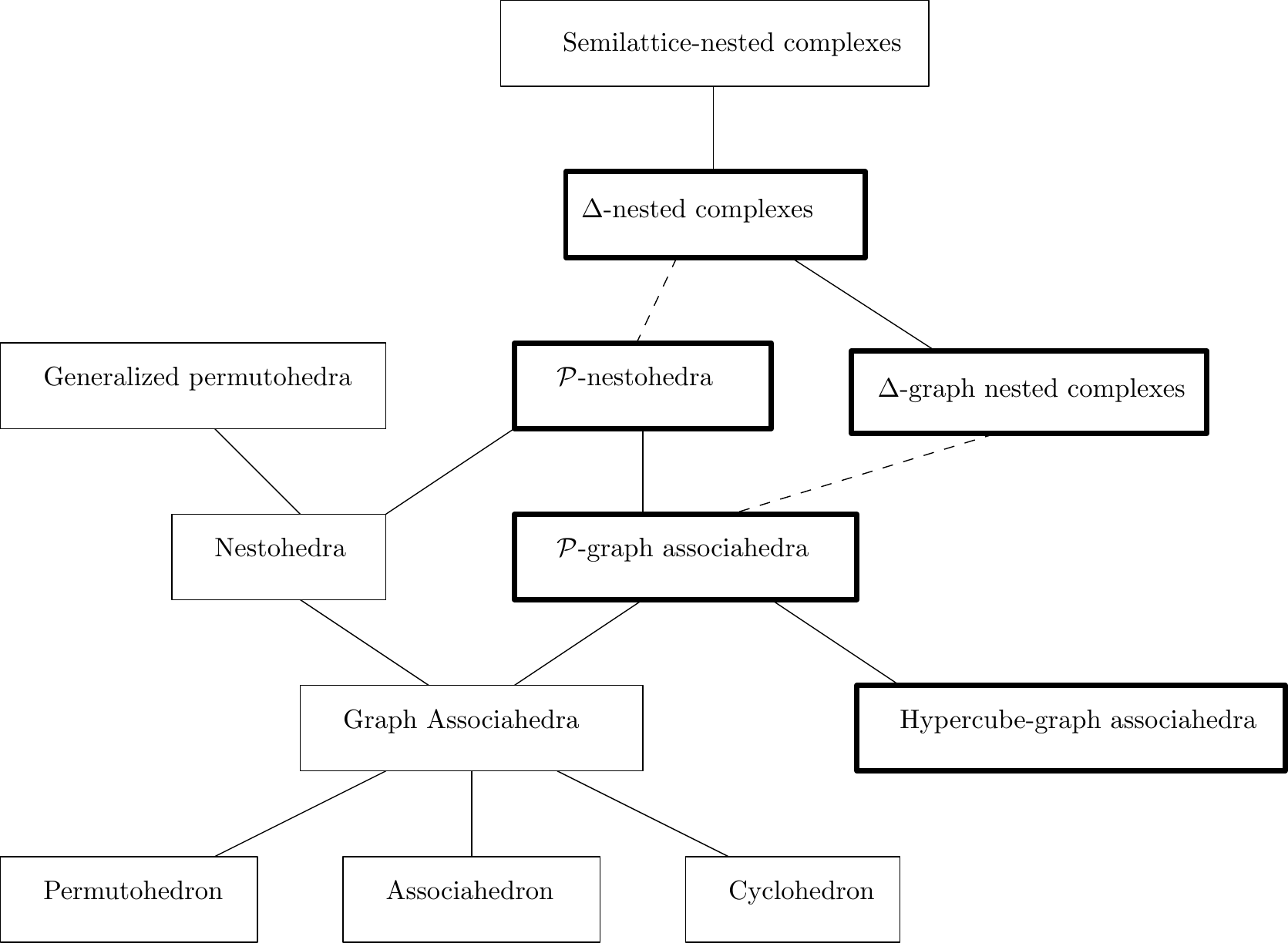}
\caption{A summary of classes of polyhedra and simplicial complexes in the literature and defined in this paper.}
\label{fig:generalizations}
\end{figure}

\subsection{Motivation and Background}

The \emph{associahedron} is a simple polytope first described by Dov Tamari, and later discovered independently by Jim Stasheff in the 1960s. The associahedron has many applications, and Jim Stasheff presents an overview of the history and the applications of the associahedron \cite[How I `met' Dov Tamari]{dovtamari}. Associahedra can be realized as compactifications of configuration spaces. The paper defining graph associahedra is focused on the compactification of moduli spaces \cite{carrdevadoss}, where the graph associahedra of the graphs of Coxeter groups are of particular interest for understanding Coxeter complexes.

Our motivation in defining hypercube graph associahedra was focused on the realization of graph associahedra as generalized permutahedra. Generalized permutahedra are polyhedra whose normal fans coarsen a type $A$ Coxeter fan, so there is a natural connection between the type $A_n$ root system and an $n$-dimensional graph associahedron. The hypercube-graph associahedron is a generalized type $B$ permutahedron, and if the graph associahedron is understood as a type $A$ object, then the hypercube-graph associahedron is understood as a type $B$ analogue. We had initially aimed to provide a generalization for all Coxeter types, but eventually realized that the construction we have defined is better understood as a generalization from the simplex to the broader case of any simple polyhedron, which is why we study $\p$-nestohedra and $\p$-graph associaehdra in this thesis.

The study of graph associahedra has spurred interest in several related polytopes. These polytopes include the \emph{multiplihedron} and the \emph{composihedron}. One other related polytope is the \emph{cubeahedron} defined in \cite{devadoss2011}. In that paper, it is shown that there are three cases where the moduli spaces of stable bordered marked surfaces are polytopal, corresponding to the associahedron, the cyclohedron, and the halohedron. The graph cubeahedron is introduced in order to characterize the halohedron, and it is found that the associahedron and the halohedron can be realized as cubeahedra. In Proposition \ref{prop:cubeahedra-hypercube}, we find that the cubeahedron is a special case of the hypercube-graph associahedron. By studying halohedra as hypercube-graph associahedra, we have been able to provide results enumerating the faces of the halohedron, as in Theorem \ref{thm:halohedron-formula}.

\subsection{Simple Polyhedra}

A \emph{simplicial complex} $\Delta$ is a collection of subsets, called \emph{faces}, of a base set $\mathcal{S}$ such that, if $X \subseteq \mathcal{S}$ is a face of $\Delta$ and $Y \subset X$, then $Y \in \Delta$.

A \emph{polyhedron} is a collection of points in a vector space $\R^n$ defined as the intersection of a finite set of inequalities of the form $\{x \in \R^n|c_i x \le b_i\}$. A \emph{polytope} is a bounded polyhedron.

A \emph{face} $F$ of a polyhedron $P$ is a set of points such that there exists an inequality $cx \le b$ such that $cx=b$ for all $x \in F$, and $cx \le b$ for all $x \in P$. A \emph{facet} of an $n$-dimensional polyhedron $P$ in $\R^n$ is an $(n-1)$-dimensional face of $P$.


A \emph{simple polyhedron} is a polyhedron where every nonempty co-dimension $d$ face is contained in exactly $d$ facets. We will write the family of simple polyhedra with the calligraphic $\p$.


Define the \emph{dual simplicial complex} of a simple polyhedron $P$ as a simplicial complex $\Delta(P)$ on the set of facets $\mathcal{S}$ of $P$. A set $X \subseteq \mathcal{S}$ is in $\Delta(P)$ if and only if the intersection of all facets in $X$ is nonempty. Note that the dual simplicial complex is isomorphic as a poset to the dual of the facial lattice of $P$, minus the empty element. 


\subsection{Graph associahedra}

This subsection recalls the notion of a graph associahedron, which was introduced in \cite{devadoss2011}. After this introduction, we will use the term \emph{classical graph associahedron} to specify this construction, as opposed to other constructions introduced in this thesis.

A \emph{graph} is an object $G=(V,E)$ where $V$ is a set of vertices and $E$ is a set of edges which are subsets of $V$ each containing $2$ elements. In some contexts, this is called a simple graph; in this thesis, we assume all graphs are simple. A \emph{subgraph} is a graph $(V',E')$ with $V' \subseteq V$ and $E' \subseteq E$. The \emph{induced subgraph} of a vertex subset $S \subseteq V$ of a graph $G$ is the maximal subgraph of $G$ on vertex set $S$. It consists of the graph on $S$ containing all edges in $E$ that are subsets of $S$. We notate this graph $G|_S$.

Given a connected graph $G$ on a vertex set $[n+1]=\{1,\ldots,n+1\}$, we define a \emph{tube} as any proper subset $t \subset [n+1]$ such that $G|_t$ is a connected induced subgraph of $G$. An example graph with several tubes is shown in Figure \ref{fig:ygraph1}.

\begin{figure}
\centering
\includegraphics[width=0.7\linewidth]{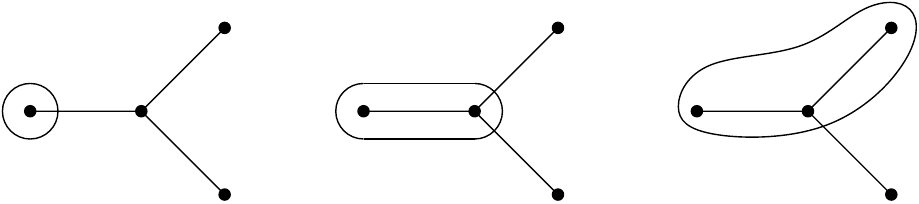}
\caption{Graph with tubes.}
\label{fig:ygraph1}
\end{figure}

We can define a \emph{tubing} as any collection of \emph{pairwise-compatible tubes}, where two tubes $t_1, t_2$ are compatible if and only if $t_1 \subset t_2, t_2 \subset t_1$, or $t_1, t_2$ are disjoint and not \emph{adjacent}; that is, there exist no edges between any vertex in $t_1$ and any vertex in $t_2$. Examples of compatible tubes and tubings are shown in Figure \ref{fig:ygraph2-intro}, and two pairs of incompatible tubes are shown in Figure \ref{fig:ygraph-new}.

\begin{figure}
\centering
\includegraphics[width=0.7\linewidth]{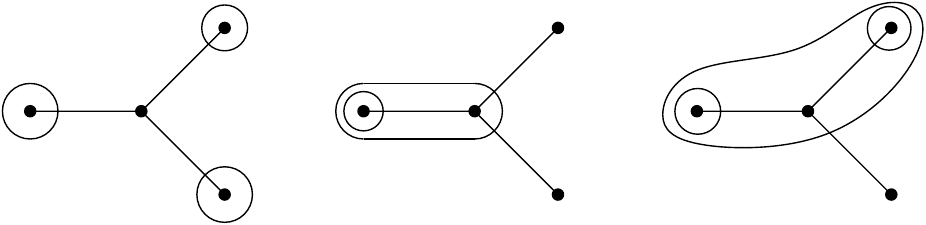}
\caption{Graph with tubings.}
\label{fig:ygraph2-intro}
\end{figure}

\begin{figure}
\centering
\includegraphics[width=0.45\linewidth]{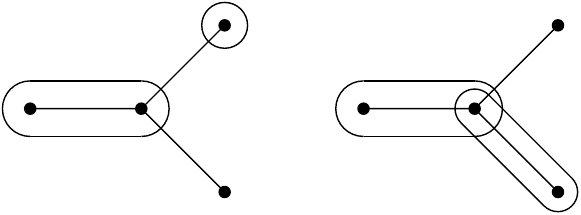}
\caption{Graph with pairs of incompatible tubes.}
\label{fig:ygraph-new}
\end{figure}

The collection of tubings for a graph $G$ is a simplicial complex, which we call the \emph{tubing complex} of $G$. This tubing complex is dual to a simple polytope, called the \emph{graph associahedron} of $G$.
One realization of the graph associahedron is obtained by repeatedly \emph{truncating} (i.e. slicing off) faces of a simplex, as in \cite{carrdevadoss}. 
In this construction, every vertex of $G$ is associated with a facet of an $n$-dimensional simplex. Nonempty ntersections of facets are then naturally associated with proper vertex subsets of $G$. The graph associahedorn is then constructed by truncating faces corresponding to tubes of $G$ in ascending order of dimension.

\subsection{Well-known graph associahedra}

This polyhedron is known as the graph associahedron because it is a generalization of the \emph{associahedron}. The associahedron is a polyhedron with many applications in combinatorics. Its $f$-vector is related to many enumeration problems. For example, it is associated with triangulations of an $(n+3)$-gon. The associahedron and its normal fan are heavily tied to type $A$ Coxeter combinatorics. The $n$-dimensional associahedron can be realized as the graph-associahedron of a path graph on $n+1$ vertices. The \emph{permutahedron}, also known as the type $A_n$ permutahedron, is the convex hull of the orbit of a generic point under action by the symmetry group of a simplex. The permutahedron in $n$ dimensions can be realized as the graph associahedron of a complete graph on $n+1$ vertices.

The \emph{cyclohedron} is another well-known graph associahedron. It can be realized as the graph-associahedron of a cycle graph on $n+1$ vertices. It is associated with symmetric triangulations of a $(2n+2)$-gon. It is also associated with type $B$ Coxeter combinatorics.

\subsection{Hypercube-Graph Associahedra}

In this thesis, we define \emph{$\p$-nestohedra} and \emph{$\p$-graph associahedra} in Chapter \ref{chap:meth}, and characterize their normal fans in Chapter \ref{chap:normal-fans}. We realize $\p$-graph associahedra by repeated truncation of simple polyhedra in a manner analogous to the realization of graph associahedra by repeated truncation of a simplex. However, we are specifically interested in the special case where this simple polyhedron is a hypercube. Hypercube-graph associahedra are described in detail in Chapter \ref{chap:hypercubes} of this thesis, using results from prior chapters to define them as $\p$-graph associahedra. In this introduction, we describe hypercube-graph associahedra without proofs or justifications.

A \emph{hypercube-graph} is a graph on vertex set $\pm[n]=\{-n,\ldots,-1,1,\ldots,n\}$, with dashed edges running between vertex pairs $\{i,-i\}$ for each $i \in [n]$. Figure \ref{fig:y-hypercubegraph1} shows a hypercube graph.

\begin{figure}[h]
\centering
\includegraphics[width=0.35\linewidth]{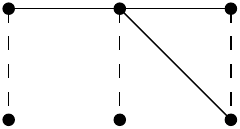}
\caption{Example hypercube-graph on $\pm[3]$.}
\label{fig:y-hypercubegraph1}
\end{figure}

A \emph{hypercube-graph tube} is a vertex subset $t \subset \pm[n]$ such that $t$ induces a connected subgraph which contains no dashed edges, as illustrated in Figure \ref{fig:y-hypercubegraph2}. A \emph{hypercube-graph tubing} is a collection of hypercube-graph tubes which satisfy the usual pairwise tube compatibility rules, but adds the condition that there may not be dashed edges between hypercube-graph tubes. Figure \ref{fig:y-hypercubegraph3} shows a pair of tubings, contrasted with several pairs of tubes which fail for various reasons--adjacency, intersection, or dashed edges between tubes.

\begin{figure}
\centering
\includegraphics[width=0.7\linewidth]{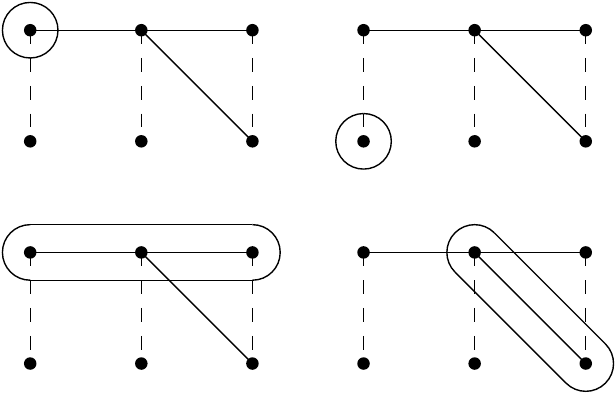}
\caption{Tubes of a hypercube-graph.}
\label{fig:y-hypercubegraph2}
\end{figure}

\begin{figure}
\centering
\includegraphics[width=0.7\linewidth]{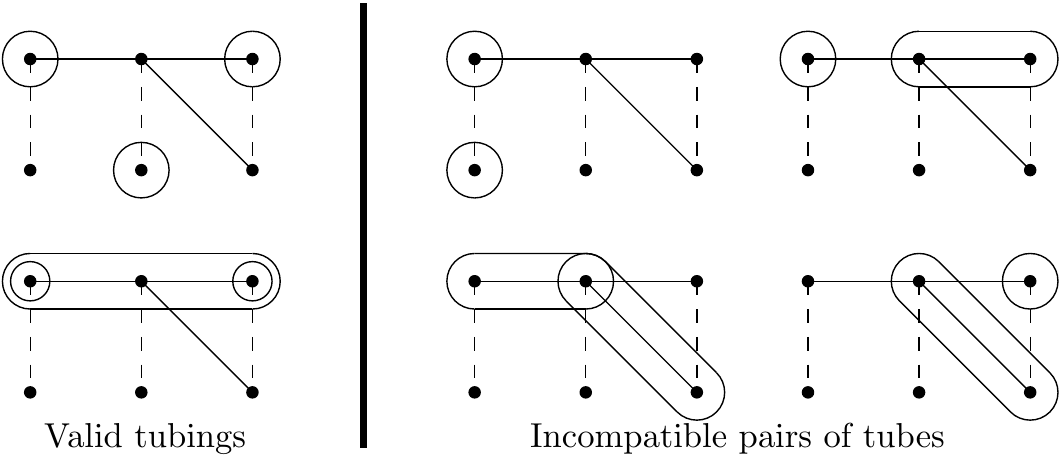}
\caption{Tubings of a hypercube-graph versus incompatible pairs of incompatible tubes}
\label{fig:y-hypercubegraph3}
\end{figure}

Hypercube-graph tubings form a simplicial complex, and this simplicial complex is dual to a simple polytope, called the \emph{hypercube-graph associahedron} of that hypercube-graph. The hypercube-graph associahedron can be constructed for a hypercube-graph $G$ by associating each pair of graph vertices $\{i,-i\}$ with a pair of opposing facets of a hypercube. 
This means that we can associate hypercube-graph tubes of $G$ with faces of the hypercube. A hypercube-graph associahedron of $G$ is constructed by repeatedly truncating faces of a hypercube associated with hypercube-graph tubes of $G$ in ascending order by dimension. Figure \ref{fig:y-hypercubegraph-truncation} shows the hypercube-graph associahedron of the graph in Figure \ref{fig:y-hypercubegraph1}. 

\begin{figure}
\centering
\includegraphics[width=0.4\linewidth]{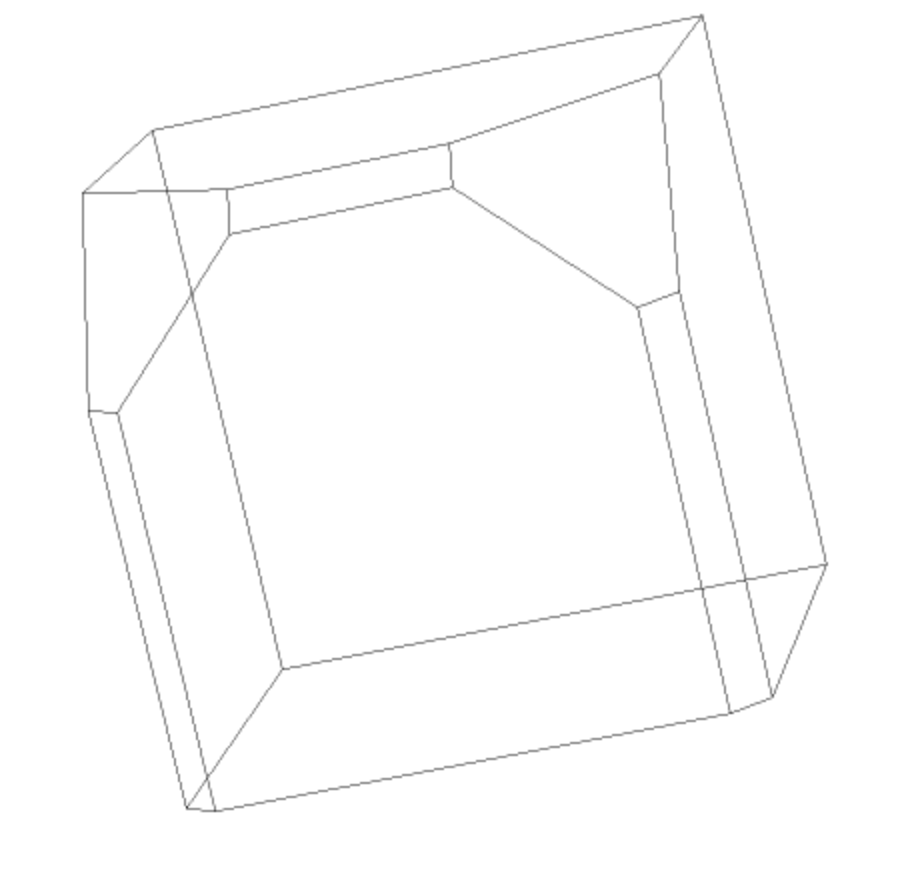}
\caption{Truncation of a hypercube to create a hypercube-graph associahedron}
\label{fig:y-hypercubegraph-truncation}
\end{figure}

\section{Gallery of hypercube-graph associahedra}

Many special cases of hypercube-graphs are studied in Chapter \ref{chap:res2}. Here, we provide an incomplete list of hypercube-graph associahedra studied in this thesis.

The maximal hypercube-graph contains all possible edges. Its hypercube-graph associahedron is shown in Figure \ref{fig:1-b3_permutahedron}, and when defined with symmetric truncations, is a type $B_3$ permutahedron.

\begin{figure}[h!]
\centering
\includegraphics[width=.4\linewidth]{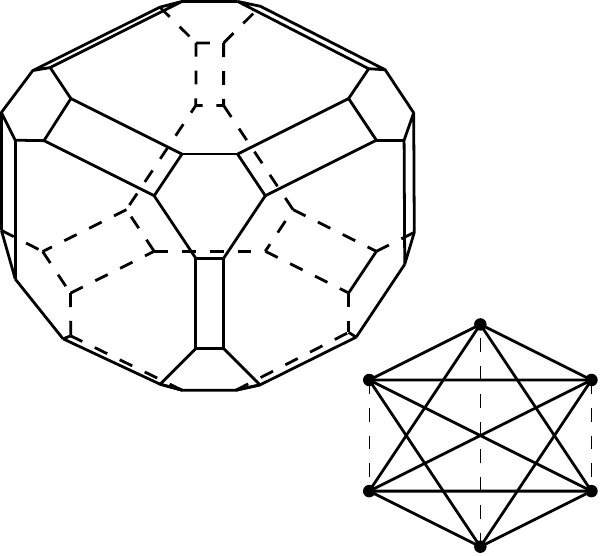}
\caption{Type $B_3$ permutahedron as a hypercube-graph associahedron.} \label{fig:1-b3_permutahedron}
\end{figure}
\begin{figure}[h!]
\centering
\includegraphics[width=.4\linewidth]{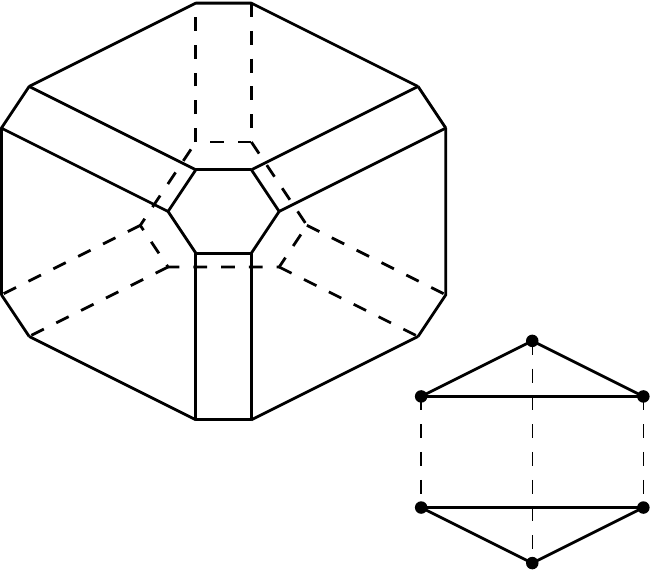}
\caption{Type $A_3$ permutahedron as a hypercube-graph associahedron.} \label{fig:1-a3_permutahedron}
\end{figure}
\begin{figure}[h!]
\centering
\includegraphics[width=.4\linewidth]{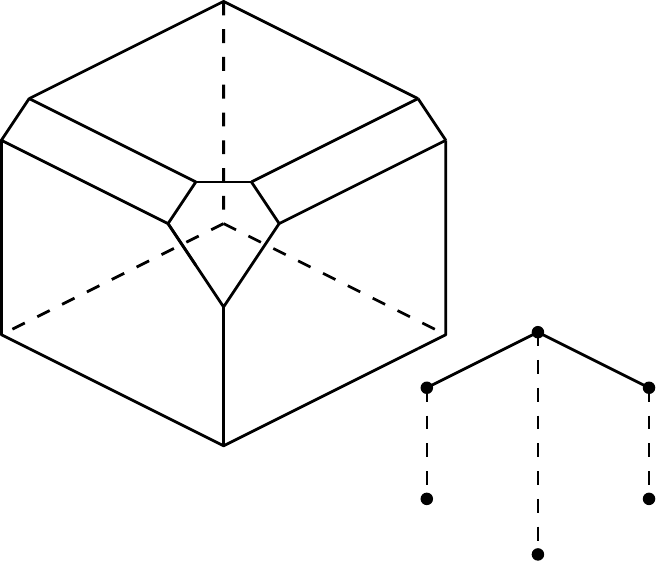}
\caption{Type $A_3$ associahedron as a hypercube-graph associahedron.} \label{fig:1-a3_associahedron}
\end{figure}
\begin{figure}[h!]
\centering
\includegraphics[width=.4\linewidth]{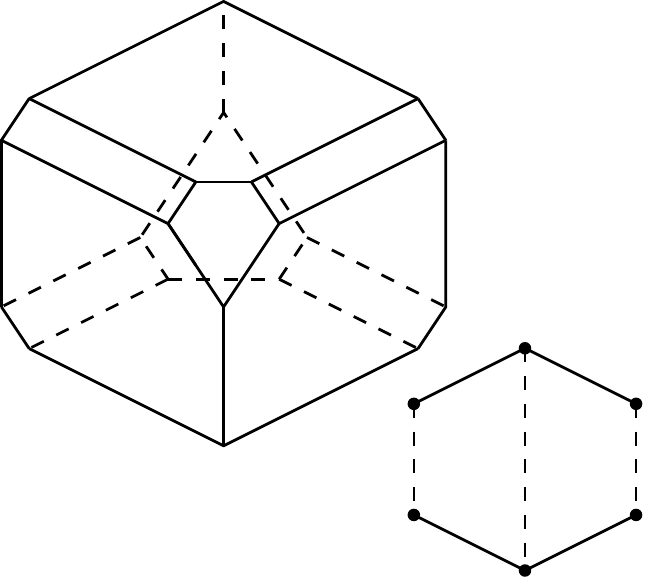}
\caption{Type $A_3$ linear biassociahedron as a hypercube-graph associahedron.} \label{fig:1-a3_biassociahedron}
\end{figure}
\begin{figure}[h!]
\centering
\includegraphics[width=.4\linewidth]{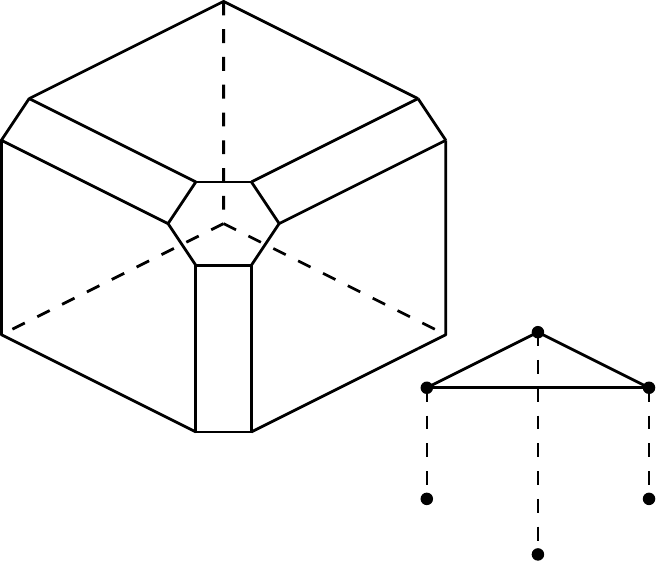}
\caption{$3$-dimensional halohedron.} \label{fig:1-halohedron}
\end{figure}

Figure \ref{fig:1-a3_permutahedron} depicts the hypercube-graph associahedron of a hypercube-graph consisting of a complete graph on positive vertices and a complete graph on negative vertices. As proven in Proposition \ref{prop:type-A-permutahedron}, this hypercube-graph associahedron is isomorphic to a type $A_n$ permutahedron.

Figure \ref{fig:1-a3_associahedron} shows the hypercube-graph associahedron of a hypercube-graph consisting of a path on positive vertices. This polyhedron has been studied before as a path cubeahedron in \cite{devadoss2011}, where it is proven to be isomorphic to an associahedron. We further prove in Proposition \ref{prop:linear-cluster-fan} that it is normal to the linear $c$-cluster fan of type $A_n$.

Figure \ref{fig:1-a3_biassociahedron} shows the hypercube-graph associahedron of a hypercube-graph consisting of a pair of paths on positive and negative vertices. We see that it is similar to the hypercube-graph associahedron shown in Figure \ref{fig:1-a3_associahedron}, but with symmetric truncations made.

Figure \ref{fig:1-halohedron} shows the halohedron as the hypercube-graph associahedron of a hypercube graph consisting of a cycle on positive vertices. Note that the cycle on 3 vertices is equal to the complete graph on 3 vertices, so this polyhedron bears resemblance to the stellahedron in 3 dimensions, but the cyclic pattern becomes apparent in higher dimensions. The halohedron was described as a cubeahedron in \cite{devadoss2011}, and an enumeration of its vertices or $f$-vectors was unknown. Theorem \ref{thm:halohedron-formula} provides an enumeration of the faces of the halohedron.

\section{The relationship between hypercube-graph associahedra and Coxeter combinatorics}

The symmetry group of the $n$-dimensional simplex is called the \emph{type $A_n$ Coxeter group}, and the hyperplane arrangement of reflections in this group creates a fan which we can call the type $A_n$ Coxeter fan. Polyhedra whose normal fans coarsen this fan are called \emph{generalized permutahedra} \cite{postnikov}.

Notably, graph associahedra are generalized permutohedra. In particular, two important type $A_n$ polyhedra can be realized as graph associahedra: the type $A_n$ permutohedron, and the type $A_n$ associahedron. Both are more commonly referred to as simply the permutohedron or the associahedron, respectively, but permutahedra and associahedra exist for other root systems.

The initial goal of this project was to generalize graph associahedra for any Coxeter type, or any root system. We do find that hypercube graph associahedra, realized as in Definition \ref{def:standard-cut}, have normal fans which coarsen the type $B_n$ Coxeter fan, which is generated by the symmetry group of a hypercube. We also find that the type $B_n$ permutohedron can be realized as a hypercube-graph associahedron. We have not been able, however, to find a type $B_n$ associahedron realized as a hypercube graph associahedron, and conjecture in Conjecture \ref{conj:no-cyclohedra} that the type $B_n$ associahedron cannot be realized as a hypercube-graph associahedron for dimensions $\ge 4$.

We note that cones in any fan can be ordered by a linear functional to define a \emph{poset of regions.} When that fan is a Coxeter fan of a finite Coxeter group of type $W$, that poset of regions is isomorphic to the Coxeter weak order of type $W$. When one fan $F$ coarsens a Coxeter fan of type $W$, that fan induces a map from the Coxeter weak order of type $W$ to the poset of regions of $F$. These maps prove interesting, as in the work of \cite{barnard2018lattices} in the classical graph associahedron case, and we believe that this is an interesting field of study for future research in the hypercube-graph associahedron case.

\section{Other results}

In this paper, we explore several other hypercube-graphs. For instance, we conjecture that the Pell Graph, defined in Subsection \ref{sub:pell-graphs}, has a poset of maximal tubings isomorphic to the lattice of sashes defined in \cite{sashes}. Furthermore, in our research we have found families of hypercube-graph associahedra which are apparently not isomorphic to known polyhedra, but which present interesting symmetries or have interesting $f$-polynomials. This includes the twisted cycle and twisted path graphs defined in Subsection \ref{sub:twisted cycle and path}, the families of double cubeahedra defined in Subsection \ref{sub:doublecubeahedra}, and the near-double path hypercube graph associahedron defined in Subsection \ref{sub:near double path definition}. We also define a complex of subtrees in \ref{sub:subtrees} which can be realized as the $\Delta$-graph nested complex of a line graph and its graphic matroid.

\chapter{$\p$-graph associahedra}
\label{chap:meth}

Semilattice-nested complexes, classical nestohedra, and classical graph associahedra were introduced for in \cite{feichtner2003}, \cite{postnikov}, and \cite{carrdevadoss}, and there have been multiple related polyhedra and generalizations defined since. This chapter defines three new concepts. Section \ref{sec:simplicial-complex-nested-complexes} defines \emph{$\Delta$-nested complexes}, the special case for semilattice-nested complexes when the underlying semilattice is a simplicial complex. Section \ref{sec:graph-nested-complexes} defines \emph{graph nested complexes}, generalizing work on graph associahedra for $\Delta$-nested complexes. Finally, Section \ref{sec:nestohedra} defines \emph{$\p$-nestohedra}, simple polyhedra which are dual to certain polyhedral $\Delta$-nested complexes and which are obtained by repeated truncation of polyhedral faces, as well as $\p$-graph associahedra, a special case.

\section{Nested complexes of simplicial complexes}\label{sec:simplicial-complex-nested-complexes}

This section summarizes some basic results on the nested complexes of simplicial complexes. Nested complexes are defined for semilattices in \cite{feichtner2003}. The posets defined by simplicial complexes are semilattices, and so we can use the semilattice definition of nested complexes to characterize building sets and nested complexes for the special case of simplicial complexes.

%

\subsection{Simplicial complexes}

A poset $L$ is called a \emph{meet-semilattice} if, for every pair of elements $x,y \in L$, there exists a unique greatest lower bound called the \emph{meet} and denoted $x \wedge y$. In this thesis, we will simply call these semilattices. A theory of semilattice building sets is developed in \cite{feichtner2003}, and is reiterated here briefly before we focus on the special case where $L$ is a simplicial complex. All semilattices have a minimal element, written in this thesis as $\hat{0}$.


For a subset of elements $B$ of a semilattice $L$ and an element $x \in B$, define $B_{\le x}=\{y\in B| y\le x\}$. Define $\max B_{\le x}$ to be the set of maximal elements in $B_{\le x}$. For two elements $a \le b$ in a poset $P$, define the interval $[a,b]=\{c \in P|a \le c \le b\}$.

The following definition can be intuitively understood as follows: if $B$ is a building set, then for every element $x \in L \backslash \{\hat{0}\}$, the interval $[\hat{0},x]$ can be decomposed into the product of intervals $[\hat{0},y]$ for all $y \in \max B_{\le x}$. Proposition \ref{complex-buildingset} provides a much simpler case, when $L$ is isomorphic to a simplicial complex, and that is the ony case we will need in this thesis.


	\begin{definition}[\cite{feichtner2003}] \label{def:semilatticebuildingset}
		Let $L$ be a semilattice. A subset $B$ of $L \backslash \{\hat{0}\}$ is called a \emph{semilattice building set} of $L$ if for any $x \in L \backslash \{\hat{0}\}$ and $\max B_{\le x} = \{x_1,\ldots, x_k\}$ there is an isomorphism of posets

	\[
		\varphi_x: \prod_{j=1}^k [\hat{0},x_j] \to [\hat{0},x]
\]

	with $\varphi_x(\hat{0}, \ldots, x_j, \ldots, \hat{0}) = x_j$ for $j=1, \ldots, k$.
	\end{definition}

	\begin{definition}[\cite{feichtner2003}]\label{latticebuildingset}
		Let $L$ be a meet-semilattice and $B$ a building set of $L$. A subset $N$ in $B$ is called \emph{semilattice nested} if, for any set of incomparable elements $N'=\{x_1, \ldots, x_t\} \subseteq N$ with $|N'|\ge 2$, the join $x_1 \vee \cdots \vee x_t$ exists and does not belong to $B$.
	\end{definition}

	\begin{definition}
		A \emph{simplicial complex} $\Delta$ on a base set $S$ is a family of subsets of $S$, or \emph{faces}, with the condition that every singleton set $\{s\}$ for $s \in S$ is a face of $\Delta$, and such that if $F$ is a face of $\Delta$ and $F' \subset F$, then $F'$ is a face of $\Delta$.
	\end{definition}

	The \emph{rank} of a simplicial complex is the size of its largest face. A simplicial complex is \emph{pure} if all of its maximal faces are of the same size. \danger{These are new definitions added here!}

	\begin{definition} \label{def:semilatticenestedcomplex}
		The \emph{$B$-nested complex} of a semilattice $L$ is the set of all semilattice nested subsets of elements of a lattice $L$ for a given building set $B$. We will use the notation $\mathcal{N}(B,L)$ to refer to this simplicial complex.
	\end{definition}

	Faces of a simplicial complex admit a partial order based on inclusion, and we know the resulting poset is a meet-semilattice, meaning we can define semilattice building sets on these posets.

	Define $\B_{\subseteq S}$ to be the set of sets $\{T \in \B| T \subseteq S\}$. For an ordered set $\B$ with relation $\leq$, define $\max (\B)$ to be the set of maximal elements in $\B$; that is, $\{S \in \B|S \not< T \forall T \in \B\}$. The following lemma comes very easily from Definition \ref{def:semilatticebuildingset}.

	\begin{lemma} \label{lemma:partition-lemma}
		Given a simplicial complex $\Delta$ on a base set $\mathcal{S}$, a set of faces $\B$ of $\Delta \backslash \hat{0}$ is a building set if and only if:
	\begin{enumerate}
	\item For each element $s \in \mathcal{S}$, the set $\{s\}$ is contained in $\B$.
			\item For every nonempty set $S$ in $\Delta$, $\max (\B_{\subseteq S})$ is a partition of $S$.
	\end{enumerate}
\end{lemma}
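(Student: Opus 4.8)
The plan is to specialize Definition \ref{def:semilatticebuildingset} to the meet-semilattice $L = \Delta \setminus \{\hat 0\}$ ordered by inclusion, where $\hat 0 = \emptyset$ and meets are intersections. The key structural fact I would exploit first is that every interval $[\hat 0, S] = \{T \in \Delta \mid T \subseteq S\}$ in this poset is a Boolean lattice: it is isomorphic, via $T \mapsto T$, to the full power set $2^S$, since a simplicial complex is closed under taking subsets. Consequently, for a collection $\{S_1, \dots, S_k\}$ of subsets of $S$, the product of intervals $\prod_{j=1}^k [\hat 0, S_j] \cong \prod_j 2^{S_j}$, and this product is isomorphic (as a poset) to the Boolean lattice $2^S = [\hat 0, S]$ precisely when the $S_j$ are pairwise disjoint and $\bigcup_j S_j = S$; moreover, under that isomorphism the ``unit vector'' $(\hat 0, \dots, S_j, \dots, \hat 0)$ must map to an atom-closed element, and the normalization condition $\varphi_x(\hat 0, \dots, S_j, \dots, \hat 0) = S_j$ forces exactly the statement that $\{S_1, \dots, S_k\}$ partitions $S$. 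So I would prove a small sublemma: for subsets $S_1, \dots, S_k \subseteq S$ with each $S_j \ne \emptyset$, there is a poset isomorphism $\prod_j 2^{S_j} \to 2^S$ sending the $j$-th unit vector to $S_j$ if and only if $\{S_1, \dots, S_k\}$ is a partition of $S$. The forward direction counts atoms or compares cardinalities ($\prod_j 2^{|S_j|} = 2^{|S|}$ gives $\sum |S_j| = |S|$, and the normalization plus injectivity forces disjointness and covering); the reverse direction writes down the obvious componentwise union map and checks it is an order isomorphism.

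With the sublemma in hand, the equivalence is a direct translation. For the forward direction, suppose $\B$ is a building set in the sense of Definition \ref{def:semilatticebuildingset}. Condition (1): apply the definition to a singleton $x = \{s\} \in \Delta$ (which is a face since $\Delta$ is a simplicial complex); the interval $[\hat 0, \{s\}]$ has only two elements, so the only way to realize it as a product of intervals $\prod [\hat 0, x_j]$ over $\max \B_{\le \{s\}}$ is with a single factor equal to $[\hat 0, \{s\}]$ itself, which forces $\{s\} \in \B$. Condition (2): for any nonempty $S \in \Delta$, let $\max(\B_{\subseteq S}) = \{S_1, \dots, S_k\}$; the isomorphism $\varphi_S : \prod_j [\hat 0, S_j] \to [\hat 0, S]$ together with $\varphi_S(\hat 0, \dots, S_j, \dots, \hat 0) = S_j$ is exactly the hypothesis of the sublemma, so $\{S_1, \dots, S_k\}$ partitions $S$. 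For the converse, assume (1) and (2); given any nonempty $x = S \in \Delta$, condition (2) says $\max(\B_{\subseteq S})$ is a partition of $S$, and condition (1) guarantees $\B_{\subseteq S}$ is nonempty (it contains every singleton $\{s\}$ with $s \in S$), so the partition is genuine; the reverse direction of the sublemma then produces the required isomorphism $\varphi_S$ with the correct normalization, verifying Definition \ref{def:semilatticebuildingset}.

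I expect the main (and really only) obstacle to be pinning down the sublemma cleanly — specifically, making rigorous the claim that the normalization condition $\varphi_x(\text{unit vector}_j) = S_j$, combined with $\varphi_x$ being a poset isomorphism onto a Boolean lattice, forces both disjointness of the $S_j$ and $\bigcup_j S_j = S$. The cardinality identity $\prod_j 2^{|S_j|} = 2^{|S|}$ handles the counting, but one should be slightly careful that each $S_j \ne \emptyset$ (otherwise a trivial factor could be inserted), which is automatic here because $\B \subseteq \Delta \setminus \{\hat 0\}$ and because maximal elements of $\B_{\subseteq S}$ are nonempty. Everything else is routine unwinding of definitions, and the Boolean-interval observation makes the semilattice machinery of \cite{feichtner2003} collapse to the stated combinatorial condition.
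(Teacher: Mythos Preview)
Your proposal is correct and follows essentially the same approach as the paper: both arguments hinge on the observation that every interval $[\hat 0,S]$ in $\Delta$ is the Boolean lattice $2^S$, so the product $\prod_j [\hat 0,S_j]$ is a Boolean lattice on $\sum_j |S_j|$ elements, and the building-set isomorphism with normalization forces $\{S_1,\dots,S_k\}$ to partition $S$. The paper's version is terser---it gets the covering $\bigcup_j S_j = S$ directly from the fact that all singletons lie in $\B_{\subseteq S}$ and then appeals to the cardinality match, whereas you package the equivalence as a standalone sublemma and track the normalization condition more explicitly---but the underlying argument is the same.
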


\begin{proof}
It is simple to show that if $L$ is a semilattice and $B$ is a semilattice-building set, then all atomic elements of $L$ must be in $B$. If $x$ is an atom in $L$ and $x \notin B$, then $ \max (B_{\le x})$ must be empty, making $\prod_{y \in \max B_{\le x}} [\hat{0},y]$ an empty product. This must contradict Definition \ref{def:semilatticebuildingset}, as $[\hat{0},x]$ is a non-singleton poset. The atomic elements of a simplicial complex are the singleton sets, and so simplicial complex building sets must contain all singleton sets.

In a simplicial complex $\Delta$, the interval $[\emptyset,S]$ for any set $S$ is equal to the Boolean lattice of subsets of $S$. We then find that $\B_{\subseteq S}$ must contain all singleton subsets of $S$. From this, we see that the union of all sets in $\max \B_{\subseteq S}$ is equal to $S$. We also find that the product of $[\emptyset,S_1],\ldots,[\emptyset,S_k]$ is equal to a Boolean lattice on $|S_1|+\cdots+|S_k|$ elements. From these, we can deduce that a set $\B$ is a building set in $\Delta$ if and only if, for every $S \in \Delta$, the set $\max (\B_{\subseteq S})$ is a partition of $S$.
\end{proof}

We now wish to prove that this is equivalent to another, very useful definition of building sets, and one which is more familiar to those who have read the classical definition in \cite{postnikov}. This proposition looks very similar to Lemma \ref{lemma:partition-lemma}, but the second condition is different.

	\begin{proposition} \label{complex-buildingset}
		Given a simplicial complex $\Delta$ on a base set $\mathcal{S}$, a set of faces $\B$ of $\Delta \backslash \hat{0}$ is a building set if and only if:
		\begin{enumerate}
			\item For each element $s \in \mathcal{S}$, the set $\{s\}$ is contained in $\B$.
			\item For two sets $S_1, S_2 \in \B$ where $S_1 \cap S_2 \ne \emptyset$, if $S_1 \cup S_2 \in \Delta$, then $S_1 \cup S_2 \in \B$.
\end{enumerate}

\end{proposition}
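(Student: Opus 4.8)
The plan is to reduce this to Lemma \ref{lemma:partition-lemma}, which already characterizes building sets of $\Delta$ as exactly those $\B \subseteq \Delta \setminus \hat 0$ satisfying condition (1) together with the requirement that $\max(\B_{\subseteq S})$ be a partition of $S$ for every nonempty $S \in \Delta$. Since condition (1) appears verbatim in both statements, it suffices to show that, assuming (1), the partition condition is equivalent to condition (2) of the proposition. Both implications are short set-theoretic arguments; the only genuine content is recognizing that the union $S_1 \cup S_2$ must be \emph{required} to lie in $\Delta$, since, unlike in the classical Boolean-lattice setting of \cite{postnikov}, $\Delta$ need not contain the union of two of its faces.

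First I would show the partition condition implies (2). Given $S_1, S_2 \in \B$ with $S_1 \cap S_2 \ne \emptyset$ and $S := S_1 \cup S_2 \in \Delta$, both $S_1$ and $S_2$ lie in $\B_{\subseteq S}$, so each is contained in some maximal element $M_1, M_2 \in \max(\B_{\subseteq S})$. Since $S_1 \cap S_2 \ne \emptyset$, the sets $M_1$ and $M_2$ meet, so the partition hypothesis forces $M_1 = M_2 =: M$. Then $S = S_1 \cup S_2 \subseteq M \subseteq S$, hence $M = S$, and so $S_1 \cup S_2 = M \in \B$, which is exactly (2).

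Conversely I would assume (1) and (2) and fix a nonempty $S \in \Delta$. Every singleton $\{s\}$ with $s \in S$ is a face of $\Delta$ by downward closure and lies in $\B$ by (1), hence in $\B_{\subseteq S}$; as $S$ is finite, each such singleton sits below some maximal element of $\B_{\subseteq S}$, so $\bigcup \max(\B_{\subseteq S}) = S$. For disjointness, suppose $M_1, M_2 \in \max(\B_{\subseteq S})$ with $M_1 \cap M_2 \ne \emptyset$. Then $M_1, M_2 \in \B$ and $M_1 \cup M_2 \subseteq S \in \Delta$, so $M_1 \cup M_2 \in \Delta$ by downward closure, and (2) yields $M_1 \cup M_2 \in \B$, hence $M_1 \cup M_2 \in \B_{\subseteq S}$. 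Maximality of $M_1$ and of $M_2$ then forces $M_1 = M_1 \cup M_2 = M_2$. Thus the elements of $\max(\B_{\subseteq S})$ are pairwise disjoint and cover $S$, i.e. they form a partition of $S$, and Lemma \ref{lemma:partition-lemma} concludes that $\B$ is a building set.

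I do not expect a real obstacle here once Lemma \ref{lemma:partition-lemma} is available — the manipulation of maximal elements is routine. The single point deserving care is the asymmetry with the classical analogue: condition (2) is vacuous for pairs $S_1, S_2$ whose union leaves $\Delta$, and one must confirm this causes no trouble, which it does not, precisely because the partition condition is imposed only on sets $S$ that already belong to $\Delta$.
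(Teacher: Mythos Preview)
Your proposal is correct and follows essentially the same approach as the paper: reduce to Lemma~\ref{lemma:partition-lemma} and show that, under condition (1), the partition condition is equivalent to the intersection-union condition (2). Your argument for the forward direction is in fact slightly more carefully written than the paper's, which is somewhat terse at the step where it concludes $S_1 \cup S_2 \in \max(\B_{\subseteq S})$; your use of maximal elements $M_1, M_2$ and the squeeze $S \subseteq M \subseteq S$ makes this explicit.
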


\begin{proof}

Lemma \ref{lemma:partition-lemma} has already established conditions for a set $\B$ to be a building set. We intend to show that if $\{s\} \in \B$ for every $s \in \mathcal{S}$, then condition (2) of Lemma \ref{lemma:partition-lemma} is equivalent to condition (2) of Proposition \ref{complex-buildingset}. Refer to these conditions as the partition condition and the intersection-union condition.

We first prove that the partition condition implies the intersection condition. If $\max \B_{\subseteq S}$ is a partition of $S$ for every nonempty face $S \in \Delta$, then consider $S=S_1 \cup S_2$ for $S_1, S_2 \in \B$ with nonempty intersection. If $S_1 \cup S_2 \in \Delta$, then $\B_{\subseteq S}$ contains both $S_1$ and $S_2$. This then means that $\max \B_{\subseteq S}$ must also contain the union $S_1 \cup S_2$. As a result, if $\B$ is a building set, then the intersection-union condition holds.

Now consider the case where $\B$ containing $\{s\}$ for each $s\in\mathcal{S}$ is a set for which the intersection-union condition holds, and assume that the partition condition does not hold. This means there exists a nonempty set $S \in \Delta$ such that $\max \B_{\subseteq S}$ is not a partition of $S$. We know that the union of all sets in $\max \B_{\subseteq S}$ is equal to $S$, so this can only fail to be a partition if there exist two sets $S_1, S_2 \in \max \B_{\subseteq S}$ such that $S_1 \cap S_2 \ne \emptyset$. However, by the intersection-union condition, the union $S_1 \cup S_2$ must be in $\B$. This provides a contradiction, as now $S_1 \cup S_2 \in \max \B_{\subseteq S}$, and either $S_1$ or $S_2$ in $\max \B_{\subseteq S}$ is a proper subset of $S_1 \cup S_2$, proving one of the subsets to be non-maximal. As a result, when $\{s\}\in\B$ for all $s \in \mathcal{S}$, the intersection-union condition and partition condition are equivalent. This then proves the proposition.
\end{proof}



Nested sets of semilattices are defined in Definition \ref{latticebuildingset}. Here, we characterize nested sets of simplicial complexes.

\begin{definition}
Two sets $S_1, S_2$ have \emph{non-trivial intersection} if $S_1 \not\subseteq S_2$, $S_2\not\subseteq S_1$, and $S_1 \cap S_2 \ne \emptyset$.
\end{definition}

 As a result, we can characterize nested sets of simplicial complexes as follows.

\begin{proposition} \label{complex-nestedset}
Consider a simplicial complex $\Delta$ and $\Delta$-building set $\B$. Any subset $N \subseteq \B$ is nested if and only if both conditions hold: \begin{enumerate}\item For every subset $N'\subseteq N$ with $|N'|\ge 2$ and every pair of sets in $N'$ is disjoint, $\bigcup N'$ is in $\Delta$ and not in $\B$.\item No two sets $S_1, S_2 \in N$ have nontrivial intersection.\end{enumerate}
	\end{proposition}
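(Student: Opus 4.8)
The plan is to translate the semilattice notion of nestedness (Definition \ref{latticebuildingset}) into the concrete combinatorics of a simplicial complex, using the characterization of building sets from Proposition \ref{complex-buildingset}. Recall that in the face poset of $\Delta$, the meet of two faces is their intersection, and the join $S_1 \vee \cdots \vee S_t$ exists precisely when $S_1 \cup \cdots \cup S_t$ is a face of $\Delta$, in which case it equals that union. So Definition \ref{latticebuildingset} says: $N \subseteq \B$ is nested iff for every set of pairwise incomparable $N' = \{S_1,\ldots,S_t\} \subseteq N$ with $t \ge 2$, the union $\bigcup N'$ either fails to be a face of $\Delta$, or is a face of $\Delta$ but not in $\B$. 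The task is to show this condition is equivalent to the conjunction of conditions (1) and (2) in the proposition.

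First I would prove the easy direction: if $N$ is nested (in the semilattice sense), then (1) and (2) hold. Condition (1) is immediate, since a pairwise-disjoint subfamily of size $\ge 2$ is in particular pairwise incomparable, so the semilattice condition forces $\bigcup N'$ to be a face not in $\B$ (note that if $\bigcup N'$ were not a face, then $\bigcup N'$ would not exist as a join, which is also allowed — so I should be careful: the semilattice definition permits the join not to exist; I must argue that for pairwise-\emph{disjoint} faces lying in a common nested family the union actually \emph{is} a face, which needs a small argument or a reduction using that $N \subseteq \B \subseteq \Delta$ and, hmm, this is exactly the subtle point). For condition (2): suppose $S_1, S_2 \in N$ have nontrivial intersection. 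Then $\{S_1, S_2\}$ is incomparable of size $2$, so by nestedness $S_1 \vee S_2$ must exist and not lie in $\B$; but $S_1 \cup S_2 \supseteq S_1$, and since $S_1 \cap S_2 \ne \emptyset$, Proposition \ref{complex-buildingset}(2) would force $S_1 \cup S_2 \in \B$ as soon as it is a face of $\Delta$ — contradiction unless $S_1 \cup S_2 \notin \Delta$. So I would then need to rule out $S_1 \cup S_2 \notin \Delta$ as well; this should follow because $S_1 \cup S_2$ being a non-face while $S_1 \vee S_2$ is required to exist is contradictory, so in fact this case simply cannot occur, giving (2).

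For the converse, assume (1) and (2) and take any incomparable $N' = \{S_1,\ldots,S_t\} \subseteq N$, $t \ge 2$. By condition (2) the sets in $N'$ are pairwise disjoint (incomparable plus no nontrivial intersection forces disjoint). Then condition (1) gives exactly that $\bigcup N' \in \Delta$ and $\bigcup N' \notin \B$, which is precisely what Definition \ref{latticebuildingset} demands. So $N$ is nested.

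The main obstacle — and the step I would spend the most care on — is the interaction between "the join exists" (a face condition) and "the join is not in $\B$", because the semilattice definition phrases nestedness as "the join exists \emph{and} is not in $B$," which literally fails if the join does not exist. I need to check whether Feichtner–Yuzvinsky intend the clause to read "if the join exists then it is not in $B$" or genuinely "the join exists and is not in $B$." Given condition (1) of the proposition asserts $\bigcup N' \in \Delta$ (i.e. the join does exist) for pairwise-disjoint subfamilies, the paper is clearly using a convention where incomparable nested elements do have a join; I would either cite \cite{feichtner2003} for this, or observe that in a simplicial complex one can always reduce to the disjoint case and use that $\B$ is a building set (Proposition \ref{complex-buildingset}) to show the relevant unions are automatically faces. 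Once that convention is pinned down, both directions are short set-theoretic arguments as sketched above.
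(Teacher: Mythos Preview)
Your approach is essentially the same as the paper's: both directions hinge on the observation that ``incomparable'' together with condition (2) forces ``pairwise disjoint,'' and on using Proposition \ref{complex-buildingset}(2) to derive a contradiction when two sets have nontrivial intersection. The paper's proof is terser and simply asserts that the semilattice definition, specialized to $\Delta$, reads ``for every incomparable $N'$ of size $\ge 2$, $\bigcup N' \in \Delta$ and $\bigcup N' \notin \B$''; you spend more effort on the ``join exists'' clause, but you resolve it correctly (nestedness requires the join to exist, so $S_1 \cup S_2 \in \Delta$, and then the building-set axiom forces it into $\B$, contradiction), and this is exactly the implicit step the paper takes.
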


\begin{proof}

Call the first condition the disjoint union condition, and call the second condition the nontrivial intersection condition. We note that Definition \ref{def:semilatticenestedcomplex} when applied to a simplicial complex is equivalent to saying that a set $N$ is nested if and only if, for every subset of incomparable sets $N'=\{S_1,\ldots,S_k\} \subseteq N$ with $k \ge 2$, the union $\bigcup_{i=1}^k S_i$ is in $\Delta$ and not in $\B$. Note that two sets are incomparable if one is not contained in the other as a subset.

If a set $N$ is $\B$-nested, then the disjoint union condition holds for $N$, as every set $N'$ of disjoint sets is incomparable. In addition, if two sets $S_1, S_2$ have nontrivial intersection, then they are incomparable, and $S_1 \cup S_2$ is in $\B$ according to building set properties, which is a contradiction. As a result, no two sets in a nested set have nontrivial intersection.

Now say that $N$ satisfies the disjoint union condition and the nontrivial intersection condition; we wish to prove that $N$ is a nested set. Consider a subset $N'=\{S_1,\ldots,S_k\} \subseteq N$ such that $|N'|\ge 2$, and the union $\bigcup N'$ either is not in $\Delta$ or is in $\B$. We note that by the nontrivial intersection condition, no two of these sets have nontrivial intersection, and so the sets $S_1, \ldots, S_k$ all are disjoint. Now by the disjoint union condition, $\bigcup N'$ is in $\Delta$ but not in $\B$. As a result, we have proven that these two conditions are equivalent to saying that $N$ is $\B$-nested.
\end{proof}

%
%
%
%
%
%
%
%
%
%
%
%



\subsection{Links of the nested complexes of simplicial complexes}

In \cite{carrdevadoss}, every facet of a graph associahedron is associated with the graph associahedron of a new graph, called the reconnected complement. We are compelled to find a similar result for $\Delta$-nested complexes. We note that each face of a simple polyhedron is dual to the link of a set in the dual simplicial complex of that simple polyhedron. As a result, finding the links of sets in $\Delta$-nested complexes is a natural generalization of the Carr-Devadoss result, as well as the results we prove later in \ref{sub:Pcomplexlinks}.




\begin{definition}
Given a set $S$ in a simplicial complex $\Delta$, the \emph{link} of a face $S$ in $\Delta$, denoted $\Delta/S$, is the subcomplex of $\Delta$ defined by

\[
	\Delta/S = \{X| X \in \Delta, X \cup S \in \Delta,X \cap S = \emptyset\}.
\]
An equivalent but lesser-used definition is

\[
	\Delta/S = \{Y\backslash S| \; Y \in \Delta, S \subset Y\}.
\]

\end{definition}

\begin{definition}
For a simplicial complex $\Delta$ and a set $S$ in a $\Delta$-building set $\B$, the \emph{building set pseudolink} of $S$ in $\B$, denoted $\B/S$, is defined as
\[
	\B/S = \{X|X\in\B,X\cup S \in \Delta, X \cap S = \emptyset\} \cup \{Y\backslash S|\; Y \in \B, S\subset Y\}.
\]
\end{definition}


This definition shows parallels between the two definitions of simplicial complex links. Note that this is not necessarily a disjoint union.

\begin{lemma}
The pseudolink $\B/S$ of an element $S$ in a $\Delta$-building set $\B$ is a building set of the link $\Delta/S$.
\end{lemma}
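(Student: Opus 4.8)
The plan is to verify directly the two conditions of Proposition \ref{complex-buildingset} for the pair $(\Delta/S,\ \B/S)$. First I would dispatch the bookkeeping that makes the argument run. Every element of $\B/S$ is a nonempty face of $\Delta/S$: a set $X$ from the first part of the defining union lies in $\Delta/S$ by the first description of the link, and a set $Y\setminus S$ from the second part lies in $\Delta/S$ by the second description (and is nonempty since $S\subset Y$). Thus $\B/S\subseteq(\Delta/S)\setminus\{\hat{0}\}$. Moreover, the base set of $\Delta/S$ is exactly $\{s\in\mathcal{S}\mid s\notin S,\ S\cup\{s\}\in\Delta\}$, since these are precisely the vertices whose singletons must appear as faces.

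Condition (1) is then immediate: for $s$ in the base set of $\Delta/S$ we have $\{s\}\in\B$ because $\B$ contains every singleton of $\mathcal{S}$, and $\{s\}\cup S\in\Delta$ with $\{s\}\cap S=\emptyset$, so $\{s\}$ lies in the first part of $\B/S$.

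For condition (2), the key device is a lifting step: each $T\in\B/S$ admits some $Y\in\B$ with $Y\setminus S=T$ and with $Y$ either disjoint from $S$ or containing $S$ — take $Y=T$ if $T$ is of the first type (then $T\cap S=\emptyset$), and take $Y$ to be the witnessing superset if $T$ is of the second type (then $S\subset Y$). Now suppose $T_1,T_2\in\B/S$ satisfy $T_1\cap T_2\ne\emptyset$ and $T_1\cup T_2\in\Delta/S$; lift to $Y_1,Y_2\in\B$. From $T_i\subseteq Y_i\subseteq T_i\cup S$ together with the disjoint/contains alternative, one checks that $Y_1\cup Y_2$ equals either $T_1\cup T_2$ or $(T_1\cup T_2)\cup S$. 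In both cases $Y_1\cup Y_2\in\Delta$: the first because $T_1\cup T_2\in\Delta/S\subseteq\Delta$, the second because $T_1\cup T_2\in\Delta/S$ forces $(T_1\cup T_2)\cup S\in\Delta$. Since also $Y_1\cap Y_2\supseteq T_1\cap T_2\ne\emptyset$, the building-set closure of $\B$ (Proposition \ref{complex-buildingset}, condition (2)) yields $Y_1\cup Y_2\in\B$. Finally $(Y_1\cup Y_2)\setminus S=T_1\cup T_2$, using that $T_1\cup T_2$ is disjoint from $S$ because every element of $\Delta/S$ is; this exhibits $T_1\cup T_2$ in $\B/S$, in the first part if $Y_1\cup Y_2$ is disjoint from $S$ and in the second part if $S\subset Y_1\cup Y_2$.

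The only genuine obstacle is making the lifting step airtight: handling the fact that an element of $\B/S$ may have several lifts, and verifying that the union $Y_1\cup Y_2$ cannot "straddle" $S$ (being neither disjoint from $S$ nor containing $S$) — this is exactly what pins $Y_1\cup Y_2$ down to one of the two sets $T_1\cup T_2$ or $(T_1\cup T_2)\cup S$ and lets us transport the closure property from $\B$. Everything else is routine set manipulation with the two equivalent descriptions of the link.
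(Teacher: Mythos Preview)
Your proof is correct and follows essentially the same route as the paper: both verify the two conditions of Proposition~\ref{complex-buildingset}, using the observation that $X\in\Delta/S$ lies in $\B/S$ exactly when $X\in\B$ or $X\cup S\in\B$. Your lifting of each $T_i$ to $Y_i\in\{T_i,\,T_i\cup S\}$ and then applying the building-set closure once to $Y_1\cup Y_2$ is a cleaner packaging of what the paper does via explicit case analysis on whether $I\cup S$ or $J\cup S$ lies in $\B$, but the underlying argument is the same.
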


\begin{proof}
Every atom $\{x\}$ of $\Delta/S$ is an atom in $\Delta$, and so must be in $\B$. From this, we see $\{x\} \cup S$ is in $\Delta$, and so $\{x\}$ is in $\B/S$. As a result, ever atom of $\Delta/S$ is in $\B/S$, and so $\B/S$ satisfies the  first condition of simplicial building sets. Now, we must prove that if $I, J$ are sets in $\B/S$ with $I \cap J \ne \emptyset$, then $I \cup J \in \B/S$ if $I\cup J \in \Delta/S$.

Note that a set $X\in \Delta/S$ is in $\B/S$ if and only if $X \in \B$ or $X \in \B/S$. Assume $I,J \in \B/S$, $I\cap J \ne \emptyset$, and $I \cup J \in \Delta/S$. We know \mbox{$I \cup J \cup S \in \Delta$}. If $I\cup S$ is in $\B$, then $(I\cup S)$ has nontrivial intersection with both sets $J$ and $J \cup S$, which are both in $\B$. As a result, $I \cup J \cup S \in \B$, and $I \cup J \in \B/S$. By the same logic, if $J \cup S \in \B$, then $I \cup J \in \B/S$. As a result, we can consider the case where $I \cup S, J \cup S \notin \B$. This means that $I, J \in \B$, and because of their nonempty intersection, $I \cup J \in \B$. As a result, either $I \cup J \cup S$ or $I \cup J$ is in $\B$, which implies $I \cup J \in \B/S$. Therefore, we have satisfied the two conditions of building sets, and $\B/S$ is a $\Delta/S$-building set.
\end{proof}

Having shown that $\B/S$ is a building set, we now know that its nested complex $\mathcal{N}(\B/S,\Delta/S)$ exists, and state the following proposition about it.

\begin{proposition} \label{prop:buildingsetlink}
The nested complex $\mathcal{N}(\B/S,\Delta/S)$ is isomorphic to the subcomplex of $\mathcal{N}(\B,\Delta)$ consisting of sets $N\in \mathcal{N}(\B,\Delta)$ such that $N$ contains no subset of $S$ as an element, and $N \cup \{S\}$ is $\B$-nested.
\end{proposition}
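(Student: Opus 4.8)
The goal is to show that $\mathcal{N}(\B/S, \Delta/S)$, as a simplicial complex, coincides with the subcomplex $\mathcal{M}$ of $\mathcal{N}(\B,\Delta)$ consisting of nested sets $N$ that (i) contain no subset of $S$ as an element, and (ii) satisfy that $N \cup \{S\}$ is $\B$-nested. We've already shown $\B/S$ is a building set of $\Delta/S$, so $\mathcal{N}(\B/S,\Delta/S)$ is a well-defined simplicial complex, and by the preceding lemma every element of $\B/S$ is of the form $X$ with $X \cup S \in \Delta$, $X \cap S = \emptyset$ and $X \in \B$, or of the form $Y \setminus S$ with $Y \in \B$, $S \subsetneq Y$. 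The subtlety, which I'd flag up front, is that a single set may arise in both ways, so I should fix a canonical correspondence rather than naively identify sets.

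Let me sketch the steps. First, I would establish the vertex-level bijection: I claim that for $X \in \B/S$ with $X \cap S = \emptyset$, exactly one of $X$ or $X \cup S$ lies in $\B$ — or if both do, then $\{X\}$ and $\{X \cup S\}$ are compatible in the relevant sense, and I'd pick, say, the $\subseteq$-maximal representative $Y \in \B$ with $Y \setminus S = X$ and $Y \cup S = Y$ (when $X \cup S \in \B$) or $Y = X$ otherwise. This gives a map $\Phi$ sending a set $N' = \{X_1,\ldots,X_k\} \subseteq \B/S$ to $N = \{Y_1,\ldots,Y_k\} \subseteq \B$, where $Y_i$ is the chosen representative. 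Second, I would show $\Phi$ maps $\mathcal{N}(\B/S,\Delta/S)$ into $\mathcal{M}$: using the characterization of nested sets in Proposition \ref{complex-nestedset}, I need to check (a) no two $Y_i, Y_j$ have nontrivial intersection — this follows because nontrivial intersection of $Y_i \setminus S$ and $Y_j \setminus S$ in $\B/S$ is forbidden, and the $S$-components interact cleanly; (b) for any disjoint subfamily of the $Y_i$, their union lies in $\Delta$ and not in $\B$ — the "not in $\B$" part is where I translate the disjoint-union condition in $\Delta/S$ back to $\Delta$ (being careful that adding $S$ to a disjoint union in $\Delta/S$ lands in $\Delta$); (c) no $Y_i$ is a subset of $S$ (clear from the construction, since a $Y_i$ either equals some $X_i$ disjoint from $S$, hence nonempty off $S$, or contains $S$ strictly); and (d) $N \cup \{S\}$ is $\B$-nested, which amounts to checking that $S$ together with the disjoint ones among the $Y_i$ satisfies the disjoint-union condition — again a direct translation. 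Third, I would construct the inverse: given $N \in \mathcal{M}$, partition $N$ into those elements $Y$ with $S \subseteq Y$ (replace by $Y \setminus S$) and those disjoint from $S$ (keep as is) — no other case occurs precisely because $N$ has no nontrivial intersection with $S$ (since $N \cup \{S\}$ is nested, $S$ cannot nontrivially intersect any element of $N$) and $N$ contains no subset of $S$. Then verify this lands in $\mathcal{N}(\B/S, \Delta/S)$ by the same two conditions in reverse, and that it is inverse to $\Phi$.

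The main obstacle, I expect, is the careful bookkeeping around the "not necessarily disjoint" union in the definition of $\B/S$ — i.e. the possibility that both $X$ and $X \cup S$ are building-set elements, and more generally making the vertex identification well-defined and genuinely bijective on nested sets rather than just on raw sets. Once the canonical-representative choice is pinned down, conditions (a)–(d) should each reduce to a short application of Proposition \ref{complex-nestedset} and the building-set axioms (Proposition \ref{complex-buildingset}), with the recurring technical point being that $X \cup S \in \Delta \iff X \in \Delta/S$ (for $X$ disjoint from $S$) and that unions behave compatibly under adjoining $S$. I would organize the write-up so that this $\Delta$-vs-$\Delta/S$ translation is isolated as a one-line observation and reused throughout, keeping the case analysis manageable.
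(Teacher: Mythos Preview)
Your approach is essentially the paper's: establish a bijection on vertices between the base set of $\mathcal{M}$ and $\B/S$, then check that nestedness transfers in both directions via the conditions of Proposition~\ref{complex-nestedset}. The one difference is direction: the paper defines $\phi(T)=T\setminus S$ from the base set of $\mathcal{M}$ to $\B/S$, which is manifestly well-defined, and then proves injectivity and surjectivity---this sidesteps your need to pick a canonical representative. Your worry about the case ``both $X$ and $X\cup S$ lie in $\B$'' in fact resolves itself: if $X\cup S\in\B$ then $\{X,S\}$ fails the disjoint-union condition of Proposition~\ref{complex-nestedset}, so $X$ is excluded from the base set of $\mathcal{M}$ and $X\cup S$ is forced---precisely your $\subseteq$-maximal choice.
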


\begin{proof}

Define $\mathcal{M}$ to be the simplicial complex of $\B$-nested sets such that, for each $N \in \mathcal{M}$, no subset of $S$ is an element of $N$, and $N \cup \{S\}$ is $\B$-nested. Define a map $\phi(T)=T\backslash S$ for all $T \in \B$. We will first prove that this map is a bijection between the base set of $\mathcal{M}$ and the base set of $\mathcal{N}(\B/S,\Delta/S)$. We will then extend this map to a map between faces of $\mathcal{M}$ and faces of the complex $\mathcal{N}(\B/S,\Delta/S)$, with $\phi(N)=\{\phi(T)|T \in N\}$, and prove that this map is an isomorphism. 


We first prove that $\phi$ is injective on the base set of $\mathcal{M}$. Say $T_1, T_2 \in \B$ and $\{T_1,S\}, \{T_2,S\}$ are both nested, and $T_1\backslash S=T_2 \backslash S$. As a result, $\phi(T_1)=\phi(T_2)$. In order to prove that $\phi$ is injective, we must prove that $T_1=T_2$, which we do by investigating different cases. If $T_1 \cap S=\emptyset$ and $T_2 \cap S=\emptyset$, then $T_1=T_2$. Assume then that one of the sets intersects with $S$; without loss of generality, assume $T_1 \cap S \ne \emptyset$. Because $\{T_1,S\}$ is nested, by Proposition \ref{complex-nestedset}, the two sets can only have trivial intersection, and we find $S \subseteq T_1$.

In the case where $S \cap T_2\ne\emptyset$, we find that $T_1 \cap T_2 \ne \emptyset$, and $S \subseteq T_1 \subseteq T_2$ or $S \subseteq T_2 \subseteq T_1$. In either case, we find that $T_1\backslash S=T_2\backslash S$ implies $T_1=T_2$. Now as the only remaining case, we must assume that $S \subseteq T_1$ and $T_2 \cap S=\emptyset$. In this case, we find $T_1\backslash S=T_2 \backslash S$ implies $T_1$ is the disjoint union of $S$ and $T_2$. This however is a contradiction, as by Proposition \ref{complex-nestedset}, two disjoint sets $\{S,T_2\}$ in a $\B$-nested set cannot have a union in $\B$, but $S\cup T_2=T_1 \in \B$. As a result, we have proven that $\phi$ is injective into $\B/S$.


Next we prove that $\phi$ is surjective onto $\B/S$, by proving that for every set $T\in\B/S$ there is a preimage in $\mathcal{M}$. If $T \in \B/S$, then there exists at least one set $Y \in \B$ such that $Y \backslash S=T$. We are presented with three cases: either $S,Y$ are disjoint, $S \subseteq Y$, or $S, Y$ have nontrivial intersection. If $S, Y$ are disjoint, then $T=Y$, and $\phi(Y)=T$. Now consider the case that $ S \subseteq Y$. We find very easily that $\{S,Y\}$ is nested, and so $Y$ is in the base set of $\mathcal{M}$, and $\phi(Y)=T$. Finally, consider the case that $S,Y$ have nontrivial intersection. This means that $\{S,Y\}$ is not nested. However, we know from Proposition \ref{complex-buildingset} that $S \cup Y $ is in $\B$, and $\{S \cup Y,S\}$ must be $\B$-nested, meaning $S \cup Y$ is in the base set of $\mathcal{M}$ and $\phi(S\cup Y)=T$.

As a result, $\phi$ is an isomorphism between the base set of $\mathcal{M}$ and $\B/S$, the base set of $\mathcal{N}(\B/S,\Delta/S)$. We will now prove that $N$ in $\mathcal{M}$ is $\B$-nested if and only if $\phi(N)$ is $\B/S$-nested, proving $\phi$ is an isomorphism and the two complexes are isomorphic.

We will prove the forward direction first. Consider the case where $N$ is in $\mathcal{M}$, and we will prove $\phi(N)$ is $\B/S$-nested. Consider two sets $T_1, T_2$ in $N$. If $T_1, T_2$ are disjoint, then $\phi(T_1), \phi(T_2)$ are disjoint. If $T_1 \subseteq T_2$, then $(T_1 \backslash S) \subseteq (T_2 \backslash S)$. As a result, the set $\phi(N)$ has no nontrivial intersecting pairs, satisfying the second condition of \ref{complex-nestedset}. Now we will prove that $\phi(N)$ satisfies the first condition of Proposition \ref{complex-nestedset}, that for any subset of $\phi(N)$, with cardinality at least two and containing disjoint subsets, their union cannot be in $\B/S$.

If $N'=\{T_1,\ldots,T_k\} \subseteq N$ and $\phi(N')$ is a set of disjoint sets with $|N'|\ge 2$, we find that $\bigcup \phi(N')=\bigcup N' \backslash S$. Consider a case where $N$ is nested and in $\mathcal{M}$ but $\bigcup \phi(N') \in \B/S$, meaning $\phi(N)$ is not nested. Note that $\bigcup N'\backslash S$ is in $\B/S$ only if $\bigcup N'$ or $\bigcup N'\backslash S $ is in $\B$. We know that $\bigcup N'$ cannot be in $\B$ because $N$ is $\B$-nested, so assume $S \subseteq \bigcup N'$. Note as well that $S$ can only be a subset of one element of $N'$; otherwise, $\phi(N')$ is not disjoint. Without loss of generality, write $S \subset T_1$. We now find that $\bigcup N' \backslash S \in \B$, and $T_1 \in \B$ both have nontrivial intersection, and the union of these two sets must be in $\B$. However, their union is $\bigcup N'$, which cannot be in $\B$ if $N$ is nested, so this is a contradiction. As a result, $\phi(N)$ must be $\B/S$-nested for every set $N \in \mathcal{M}$.

Now consider a case where $N$ is a subset of the base set of $\mathcal{M}$. We now wish to prove that if $\phi(N)$ is $\B/S$-nested, then $N$ is $\B$-nested, and therefore in $\mathcal{M}$.

If $\phi(N)$ is $\B/S$-nested, consider two sets $T_1, T_2 \in N$. If $T_1, T_2$ have nontrivial intersection, then one of the following is true: either $\phi(T_1), \phi(T_2)$ have nontrivial intersection with each other, $T_1$ or $T_2$ have nontrivial intersection with $S$, or $\phi(T_1),\phi(T_2)$ are disjoint. The first case is a contradiction because $\phi(N)$ is $\B/S$-nested, and no two sets in a nested set can have nontrivial intersection. The second case implies that either $\{T_1,S\}$ or $\{T_2,S\}$ is not nested, but when defining the base set of $\mathcal{M}$ we assumed $\{S,T\}$ was nested for each set $T$ in the base set, so this is a contradiction. The third case implies that $T_1 \cap T_2 \ne \emptyset$ and $(T_1 \backslash S) \cap (T_2 \backslash S) = \emptyset$, and because neither set has trivial intersection with $S$, implies that $T_1 \cap T_2 = S$. This then means that $T_1 \cup T_2 \in \B$, and $T_1 \cup T_2 \backslash S \in \B/S$, which is a contradiction because $\phi(N)$ is $\B/S$-nested.

If $\phi(N)$ is $\B/S$-nested, consider a disjoint set $N'=\{T_1,\ldots,T_k\} \subseteq N$ with $|N'|\ge 2$. We find that $\phi(N')$ is a set of disjoint sets, and is a subset of $\phi(N)$. If $\bigcup N' \in \B$, we find that $\bigcup \phi(N') \in \B/S$, which is a contradiction because $\phi(N)$ is nested.

 As a result, we have proven that the map $\phi$ is a bijection between the base sets of $\mathcal{M}$ and $\mathcal{N}(\B/S,\Delta/S)$, and it extends to an isomorphism between the two simplicial complexes.
\end{proof}

This isomorphism is used in Proposition \ref{prop:singletonlink}, but first we must state some definitions for simplicial complexes.

\begin{definition}
The \emph{restriction} of a $\Delta$-building set $\B$ to a simplicial subcomplex $\Delta' \subseteq \Delta$ is the set $\B \cap \Delta' = \{S \in \B|S \in \Delta'\}$.
\end{definition}

It is trivial to apply Proposition \ref{complex-buildingset} to find that the restriction of a $\Delta$-building set $\B$ to a subcomplex $\Delta'$ is a $\Delta'$-building set.'

\begin{proposition} \label{prop:restriction}
If $\Delta' \subseteq \Delta$ and $\B$ is a $\Delta$-building set, then $N$ is $(\B \cap \Delta')$-nested if and only if $N$ is $\B$-nested and $\bigcup N \in \Delta'$.
\end{proposition}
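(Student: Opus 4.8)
The plan is to deduce everything from the combinatorial description of nested sets in Proposition~\ref{complex-nestedset}, applied twice: once to the $\Delta$-building set $\B$, and once to the restriction $\B' := \B \cap \Delta'$, which is a $\Delta'$-building set by the remark following the definition of restriction. Under that description, $N \subseteq \B$ is $\B$-nested exactly when (i) no two members of $N$ have nontrivial intersection, and (ii) every pairwise-disjoint subfamily $N' \subseteq N$ with $|N'| \ge 2$ satisfies $\bigcup N' \in \Delta$ and $\bigcup N' \notin \B$; the analogous statement holds for $\B'$-nestedness with $\Delta', \B'$ in place of $\Delta, \B$. Since condition~(i) makes no reference to the ambient complex, the entire argument reduces to comparing condition~(ii) for the two pairs, together with bookkeeping about the inclusions $\B' = \B \cap \Delta' \subseteq \B$ and $\Delta' \subseteq \Delta$.

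For the backward implication I would assume $N$ is $\B$-nested with $\bigcup N \in \Delta'$. Each $S \in N$ has $S \subseteq \bigcup N \in \Delta'$, so $S \in \Delta'$ (subcomplexes are closed under subsets) and hence $S \in \B'$, giving $N \subseteq \B'$. Condition~(i) transfers verbatim. For condition~(ii), a pairwise-disjoint $N' \subseteq N$ with $|N'| \ge 2$ has $\bigcup N' \subseteq \bigcup N \in \Delta'$, so $\bigcup N' \in \Delta'$, and $\bigcup N' \notin \B$ forces $\bigcup N' \notin \B' = \B \cap \Delta'$. Thus $N$ is $\B'$-nested.

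For the forward implication I would assume $N$ is $\B'$-nested. Then $N \subseteq \B' \subseteq \B$, condition~(i) holds, and since $\Delta' \subseteq \Delta$ any pairwise-disjoint $N' \subseteq N$ with $|N'|\ge 2$ has $\bigcup N' \in \Delta$; moreover $\bigcup N' \in \Delta'$ together with $\bigcup N' \notin \B'$ forces $\bigcup N' \notin \B$, since otherwise $\bigcup N' \in \B \cap \Delta' = \B'$. Hence $N$ is $\B$-nested. The remaining point, $\bigcup N \in \Delta'$, is the one place requiring an idea rather than unwinding definitions: let $M_1,\dots,M_k$ be the inclusion-maximal members of $N$. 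Condition~(i) forces distinct $M_i$ to be disjoint (incomparable and intersecting is precisely nontrivial intersection), and as $N$ is finite every element of $N$ lies below some $M_i$, so $\bigcup N = M_1 \cup \dots \cup M_k$. If $k=1$ then $\bigcup N = M_1 \in \B' \subseteq \Delta'$; if $k \ge 2$ then $\{M_1,\dots,M_k\}$ is a pairwise-disjoint subfamily of size $\ge 2$, so condition~(ii) for $\B'$ gives $\bigcup N \in \Delta'$; the case $N = \emptyset$ is covered by $\emptyset \in \Delta'$.

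The only nonroutine step is this last one — showing $\bigcup N \in \Delta'$ in the forward direction — and I expect the observation that $\bigcup N$ equals the union of the pairwise-disjoint maximal members of $N$ to be the crux; everything else is bookkeeping with Proposition~\ref{complex-nestedset} and the two inclusions.
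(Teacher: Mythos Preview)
Your proof is correct and follows the same approach as the paper, namely a direct comparison of the two instantiations of Proposition~\ref{complex-nestedset}. Your argument is in fact more complete than the paper's terse version: the paper does not spell out the maximal-elements step you use to extract $\bigcup N \in \Delta'$ in the forward direction, so your identification of that as the one nontrivial point is accurate.
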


\begin{proof}
This follows from Proposition \ref{complex-nestedset}. If $N$ is $\B\cap \Delta'$-nested, then there is no subset $N'\subseteq N$ of disjoint sets with $|N'|\ge 2$ such that $\bigcup N' \in (\B \cap \Delta')$, and since $\bigcup N' \in \Delta'$, we know $\bigcup N' \notin \B$. In the case that $N$ is $\B$-nested and $\bigcup N \in \Delta'$, we know that for any subset of disjoint sets $N' \subseteq N$ with $|N'| \ge 2$ that $\bigcup N' \in \Delta'$ and $\bigcup N' \in \B$, so $\bigcup N' \in (\B \cap \Delta')$. In either case, the second condition of \ref{complex-nestedset} holds.
\end{proof}

\begin{definition}
The \emph{Cartesian product} of two simplicial complexes $\Delta_1, \Delta_2$ with disjoint vertex sets is equal to the simplicial complex $\{S_1 \cup S_2|S_1 \in \Delta_1, S_2 \in \Delta_2\}$.
\end{definition}

We state the two following definitions for clarity.

\begin{definition}
The \emph{subset complex} of a set $S$ is the simplicial complex \mbox{$\{T|T \subseteq S\}$}, and the \emph{proper subset complex} of a set $S$ is the simplicial complex \mbox{$\{T|T \subset S\}$.}
\end{definition}

The subset complex of a set $S$ is often identified with a Boolean lattice, whereas the proper subset complex is often identified with the boundary complex of a simplex. With these terms defined, we can now state the following proposition.

\begin{proposition} \label{prop:singletonlink}
For any set $S$ in a $\Delta$-building set $\B$, the link of the singleton face $\{S\}$ in the $\mathcal{N}(\B,\Delta)$ is isomorphic to the Cartesian product of the nested complex of $\B$ restricted to the proper subset complex on $S$, and the nested complex $\mathcal{N}(\B/S,\Delta/S)$.
\end{proposition}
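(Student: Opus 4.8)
The plan is to reduce to Proposition~\ref{prop:buildingsetlink} and then split each nested set of the link into a ``part inside $S$'' and a ``part outside $S$'' that do not interact. Write $\partial S$ for the proper subset complex of $S$, so that $\B\cap\partial S=\{T\in\B: T\subsetneq S\}$, and let $\mathcal{M}$ be the subcomplex of $\mathcal{N}(\B,\Delta)$ appearing in Proposition~\ref{prop:buildingsetlink}, namely the $\B$-nested sets $N$ that contain no subset of $S$ as an element and for which $N\cup\{S\}$ is $\B$-nested; that proposition already identifies $\mathcal{M}$ with $\mathcal{N}(\B/S,\Delta/S)$. It therefore suffices to show the link of $\{S\}$ in $\mathcal{N}(\B,\Delta)$ is isomorphic to $\mathcal{N}(\B\cap\partial S,\partial S)\times\mathcal{M}$. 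The starting observation is that if $N$ is a face of the link then $N\cup\{S\}$ is $\B$-nested, so by Proposition~\ref{complex-nestedset} no $T\in N$ has nontrivial intersection with $S$, and since $T\ne S$ each $T$ satisfies exactly one of $T\subsetneq S$, $S\subsetneq T$, or $T\cap S=\emptyset$. I would send $N$ to the pair $(N_1,N_2)$ with $N_1=\{T\in N: T\subsetneq S\}$ and $N_2=N\setminus N_1$, and take $(N_1,N_2)\mapsto N_1\cup N_2$ as the candidate inverse.

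For the forward direction I must check $N_1\in\mathcal{N}(\B\cap\partial S,\partial S)$ and $N_2\in\mathcal{M}$. The second is immediate: $N_2\cup\{S\}\subseteq N\cup\{S\}$ is $\B$-nested because nested sets form a simplicial complex, and no element of $N_2$ is a subset of $S$ by construction. For the first, $N_1$ is $\B$-nested as a sub-face of $N$, and by Proposition~\ref{prop:restriction} it lies in $\mathcal{N}(\B\cap\partial S,\partial S)$ once we know $\bigcup N_1\subsetneq S$. This holds because the maximal elements of $N_1$ are pairwise incomparable elements of a nested set, hence pairwise disjoint, and their union is $\bigcup N_1$; if $\bigcup N_1$ equalled $S$, then either there are at least two such maximal elements, giving a disjoint subfamily of $N\cup\{S\}$ of size $\ge 2$ whose union $S$ lies in $\B$ and so contradicts the disjoint-union condition for $N\cup\{S\}$, or there is a single maximal element equal to $S$, contradicting $T\subsetneq S$.

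For the converse, given $N_1\in\mathcal{N}(\B\cap\partial S,\partial S)$ and $N_2\in\mathcal{M}$ I set $N=N_1\cup N_2$ (note $S\notin N$) and verify $N\cup\{S\}$ is $\B$-nested using Proposition~\ref{complex-nestedset}. The intersection condition is routine: the only genuinely new pairs are $T_1\in N_1$ against $T_2\in N_2$, and these are automatically disjoint or comparable since $T_1\subsetneq S$ while $T_2$ is either disjoint from $S$ or strictly contains $S$. The work is in the disjoint-union condition for a disjoint subfamily $N'\subseteq N\cup\{S\}$ with $|N'|\ge2$. If $S\in N'$, or $N'\subseteq N_1$, or $N'\subseteq N_2$, the condition follows directly from the nestedness of $N_2\cup\{S\}$ (respectively of $N_1$). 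The genuinely new case is $N'=N_1'\sqcup N_2'$ with both parts nonempty; here every member of $N_2'$ must be disjoint from $S$, since one strictly containing $S$ would meet any member of $N_1'$, so $N_2'\cup\{S\}$ is a disjoint subfamily of the nested set $N_2\cup\{S\}$ of size $\ge 2$, whence $S\cup\bigcup N_2'\in\Delta\setminus\B$. Then $\bigcup N'\subseteq S\cup\bigcup N_2'$ gives $\bigcup N'\in\Delta$, and if $\bigcup N'\in\B$, then since $\bigcup N'$ meets $S$ through $N_1'$ but is neither contained in nor contains $S$, it has nontrivial intersection with $S\in\B$, so Proposition~\ref{complex-buildingset} forces $\bigcup N'\cup S=S\cup\bigcup N_2'\in\B$, contradicting the previous sentence. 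I expect this mixed case --- playing the building-set axiom off against the disjoint-union condition for $N_2\cup\{S\}$ --- to be the main obstacle; everything else is bookkeeping about which of the three relations $T$ bears to $S$.

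Finally, the two maps are mutually inverse, since from $N$ one recovers $N_1$ exactly as the elements of $N$ that are proper subsets of $S$; each map carries faces to faces by the two directions above; and the vertex sets of the two factors are disjoint, one consisting of subsets of $S$ and the other of non-subsets, as required for the Cartesian product. Hence the link of $\{S\}$ is isomorphic to $\mathcal{N}(\B\cap\partial S,\partial S)\times\mathcal{M}$, and applying Proposition~\ref{prop:buildingsetlink} to replace $\mathcal{M}$ by $\mathcal{N}(\B/S,\Delta/S)$ finishes the proof.
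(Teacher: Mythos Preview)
Your proof is correct and follows essentially the same approach as the paper's: split each nested set of the link into $N_1=\{T\subsetneq S\}$ and $N_2=\{T\not\subseteq S\}$, identify $N_2$ with $\mathcal{M}$ via Proposition~\ref{prop:buildingsetlink}, and check the two pieces do not interact. Your treatment of the mixed disjoint-union case is in fact more careful than the paper's---you explicitly show $\bigcup N'\in\Delta$ and invoke the building-set axiom to force $S\cup\bigcup N_2'\in\B$, whereas the paper asserts this step without justification---but the structure of the argument is the same.
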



\danger{Massively rewritten again, this time simpler and with more named things.}

\begin{proof}
Proposition \ref{complex-nestedset} implies that if a set $S$ is in a $\B$-nested set $N$, then no other set in $N$ has nontrivial intersection with $S$. This means that if $S' \in N$ and $S' \neq S$, then either $S' \subsetneq S$, $S \subsetneq S'$, or $S \cap S'=\emptyset$. Call the set consisting of the first case $N_1=\{S'\in N|S'\subsetneq S\}$, and $N_2=\{S'\in N| S' \not\subseteq S\}$ is the set consisting of the latter two cases. We find $N=N_1 \cup N_2 \cup \{S\}$ for any $\B$-nested set $N$ containing $S$.

It is clear that $N_1$ is a member of the restriction of $\B$ to the proper subset complex of $S$. For this proof, call the proper subset complex $P(S)$, and we say $N_1 \subseteq \mathcal{N}(\B\cap P(S),P(S))$. It is also clear that $N_2$ is a member of $\mathcal{M}$, the simplicial complex used in the proof of Proposition \ref{prop:buildingsetlink}. We note that these two complexes are completely disjoint, as $N_1$ must contain only sets which are subsets of $S$, and $N_2$ must contain only sets which are not subsets of $S$. As a result, the link of $S$ in $\mathcal{N}(\B,\Delta)$ is contained in the simplicial complex Cartesian product $\mathcal{N}(\B\cap P(S),P(S))\times \mathcal{M}$. In order to prove that they are equal, we prove that for $N_1 \in \mathcal{N}(\B\cap P(S),P(S))$ and $N_2 \in \mathcal{M}$, we find $N_1 \cup N_2 \cup \{S\}$ is $\B$-nested. This however is simple. We note $N_1, N_2$ must already be $\B$-nested, and if $T_1, T_2$ have nontrivial intersection for $T_1 \in N_1, T_2 \in N_2$, then we would find $S$ and $T_2$ have nontrivial intersection, meaning $N_2 \notin \mathcal{M}$. Similarly, for some set of disjoint sets $N'\subseteq N$ with $|N'|\ge 2$ and $\bigcup N'\in \B$, we would find that if $N'=N_1' \cup N_2'$, we would find the set $\{S\} \cup N_2'$ would be disjoint, have magnitude $\ge 2$, and would have a union in $\B$, meaning $N_2 \notin \mathcal{M}$.

As a result, the link of $S$ in $\mathcal{N}(\B,\Delta)$ is the Cartesian product of these two simplicial complexes, and because $\mathcal{M}$ is isomorphic to the pseudolink nested complex, we have proven the proposition.
\end{proof}

\section{Graph nested complexes} \label{sec:graph-nested-complexes}

	Graph associahedra and related graphic building sets and their nested complexes have been studied in many papers. A description of graph associahedra and nested complexes is given both in the introduction of this thesis, and in Subsection \ref{sub:simplex-nestohedra}.This section does the work to generalize this construction to define $\Delta$-nested complexes based on graphs.

	The study of graph associahedra and their associated complexes uses a set of terminology not used in the study of other nestohedra and nested complexes. In nested complex terminology, elements of a building set $\B$ are not given a name, and collections of compatible elements of $\B$ are called nested sets. We will define a building set whose elements are connected induced subgraphs, which are called \emph{tubes}, and collections of compatible tubes are called \emph{tubings}. This terminology is used in the paper introducing graph associahedra, \cite{carrdevadoss}.

	For clarity, we will refer to constructions defined in prior work as graph associahedra, tubings, tubes, etc. as \emph{classical} graph associahedra, classical tubings, etc.



\subsection{$\Delta$-graph Tubings}

	\begin{definition}
		For a simplicial complex $\Delta$ on a base set $\mathcal{S}$, a $\Delta$-graph is a pair $(G,\Delta)$ such that $G$ is a graph on $\mathcal{S}$ such that, if $\{i,j\}$ is an edge in $G$, then $\{i,j\}$ is a face of $\Delta$.
	\end{definition}

	\begin{definition}
		A \emph{tube} of a $\Delta$-graph $(G,\Delta)$ is any face of $\Delta$ which induces a connected subgraph of $G$.
	\end{definition}

	\begin{definition}
		Two tubes $t_1, t_2$ of a $\Delta$-graph $(G,\Delta)$ are \emph{weakly compatible} if one of the following is true:
\begin{enumerate}
\item  $t_1 \subset t_2$
\item $t_2 \subset t_1$
\item $t_1 \cap t_2 = \emptyset$ and there exist no edges between vertices in $t_1$ and $t_2$.
\end{enumerate}
	\end{definition}

	For the traditional graph case, when $G$ is connected, a collection of tubes is a tubing if and only if all tubes are pairwise weakly compatible. However, nested sets of simplicial complexes add an extra requirement for tubings.



	\begin{definition}
		A set of tubes $T$ is \emph{strongly compatible} if the union $\bigcup_{t \in T} t$ is a face of $\Delta$, and it is pairwise weakly compatible.
	\end{definition}

	Note that weak compatibility is only defined for pairs of tubes. Strong compatibility, on the other hand, applies to sets of tubes. A set of tubes must be pairwise weakly compatible. However, note that it is possible for a set of tubes to be pairwise strongly compatible, but not be strongly compatible.

	\begin{definition}
		A \emph{tubing} is any strongly compatible set of tubes.
	\end{definition}

	\begin{definition}
		The \emph{graphical building set} $\B_{(G,\Delta)}$ of a $\Delta$-graph $(G,\Delta)$ is the set of all tubes of $(G,\Delta)$.
\end{definition}

\begin{proposition}
The graphical building set $\B_{(G,\Delta)}$ of a $\Delta$-graph $(G,\Delta)$ is a $\Delta$-building set.
\end{proposition}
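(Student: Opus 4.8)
The plan is to verify the two conditions of Proposition \ref{complex-buildingset} directly for $\B = \B_{(G,\Delta)}$, the set of all tubes of $(G,\Delta)$. First I would observe that every tube is a nonempty face of $\Delta$ (the empty set induces the empty graph, which is not connected), so $\B_{(G,\Delta)} \subseteq \Delta \setminus \{\hat 0\}$ and it makes sense to ask whether it is a building set.

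For condition (1): for each $s \in \mathcal{S}$, the singleton $\{s\}$ is a face of $\Delta$ by the definition of a simplicial complex, and $G|_{\{s\}}$ is a single vertex, hence connected. Thus $\{s\}$ is a tube and lies in $\B_{(G,\Delta)}$.

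For condition (2): let $t_1, t_2 \in \B_{(G,\Delta)}$ with $t_1 \cap t_2 \ne \emptyset$, and suppose $t_1 \cup t_2 \in \Delta$. I must show $t_1 \cup t_2$ is a tube, i.e.\ that $G|_{t_1 \cup t_2}$ is connected. Pick a vertex $v \in t_1 \cap t_2$. Any vertex $u \in t_1 \cup t_2$ lies in $t_1$ or in $t_2$; in the first case, since $G|_{t_1}$ is connected and is an induced subgraph of $G|_{t_1 \cup t_2}$, there is a path from $u$ to $v$ inside $G|_{t_1 \cup t_2}$, and symmetrically in the second case. Hence every vertex of $t_1 \cup t_2$ is connected to $v$ within $G|_{t_1 \cup t_2}$, so that induced subgraph is connected. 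Combined with the hypothesis $t_1 \cup t_2 \in \Delta$, this shows $t_1 \cup t_2$ is a tube, i.e.\ $t_1 \cup t_2 \in \B_{(G,\Delta)}$. Both conditions of Proposition \ref{complex-buildingset} hold, so $\B_{(G,\Delta)}$ is a $\Delta$-building set.

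I do not expect any serious obstacle here: the proof is essentially bookkeeping against Proposition \ref{complex-buildingset}. The only point requiring a (one-line) argument rather than a definitional check is the connectivity of $G|_{t_1 \cup t_2}$, which follows immediately from the two pieces being connected and overlapping in a vertex; the only thing to be careful about is noting that induced subgraphs of the union restrict correctly, so that paths witnessing connectivity of $G|_{t_i}$ remain paths in $G|_{t_1 \cup t_2}$.
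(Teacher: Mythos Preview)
Your proof is correct and follows exactly the same approach as the paper: verify the two conditions of Proposition~\ref{complex-buildingset} by noting that singletons are tubes and that the union of two overlapping tubes is connected. In fact you give slightly more detail than the paper does, spelling out the common-vertex connectivity argument that the paper leaves implicit.
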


\begin{proof}
First, we see that every singleton set in $\Delta$ induces a single vertex subgraph, which is connected, so every singleton set of $\Delta$ is a tube.

Next, consider two tubes $t_1, t_2 \in \B_{(G,\Delta)}$. If $t_1 \cap t_2 \ne \emptyset$, then $t_1 \cup t_2$ must induce a connected subgraph. If $t_1 \cup t_2 \in \Delta$, then $t_1 \cup t_2$ is a tube in $\B_{(G,\Delta)}$.
\end{proof}



	\begin{proposition}
		For a $\Delta$-graph $(G,\Delta)$, the set of tubings of $(G,\Delta)$ is equal to the set of nested sets of the graphical building set of $(G,\Delta)$.
	\end{proposition}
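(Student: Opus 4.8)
The plan is to show directly that a set of tubes $T$ of $(G,\Delta)$ is a tubing if and only if it satisfies the two conditions characterizing $\B_{(G,\Delta)}$-nested sets in Proposition \ref{complex-nestedset}. Recall that, by definition, $T$ is a tubing precisely when it is pairwise weakly compatible and $\bigcup T \in \Delta$, while $T$ is $\B_{(G,\Delta)}$-nested precisely when (1) every pairwise-disjoint subset $N' \subseteq T$ with $|N'| \ge 2$ has $\bigcup N' \in \Delta$ and $\bigcup N' \notin \B_{(G,\Delta)}$, and (2) no two tubes in $T$ have nontrivial intersection. The key auxiliary observation, used repeatedly, is that a nonempty face of $\Delta$ fails to be a tube exactly when it induces a disconnected subgraph, and that a disjoint union of connected vertex sets with no edges between distinct parts induces a disconnected subgraph.

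For the forward direction, suppose $T$ is a tubing. Condition (2) is immediate: if $t_1, t_2 \in T$ had nontrivial intersection then none of the three clauses of weak compatibility could hold, contradicting pairwise weak compatibility. For condition (1), let $N' \subseteq T$ be pairwise disjoint with $|N'| \ge 2$. Since $\bigcup N' \subseteq \bigcup T \in \Delta$ and $\Delta$ is closed under taking subsets, $\bigcup N' \in \Delta$. Moreover, pairwise weak compatibility of the (disjoint) tubes in $N'$ forces there to be no edges between distinct members, so $G|_{\bigcup N'}$ is disconnected and $\bigcup N'$ is not a tube; hence $\bigcup N' \notin \B_{(G,\Delta)}$.

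For the converse, suppose $T$ is $\B_{(G,\Delta)}$-nested. First I verify pairwise weak compatibility: given distinct $t_1, t_2 \in T$, condition (2) rules out nontrivial intersection, so $t_1 \subsetneq t_2$, $t_2 \subsetneq t_1$, or $t_1 \cap t_2 = \emptyset$; in the first two cases weak compatibility holds outright. In the disjoint case, if some edge $\{i,j\}$ of $G$ had $i \in t_1$ and $j \in t_2$, then applying condition (1) to $N' = \{t_1, t_2\}$ gives $t_1 \cup t_2 \in \Delta$ and $t_1 \cup t_2 \notin \B_{(G,\Delta)}$; but $t_1 \cup t_2$, being the union of two connected sets joined by an edge, induces a connected subgraph and lies in $\Delta$, hence is a tube --- a contradiction. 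So no such edge exists and $t_1, t_2$ are weakly compatible. It remains to show $\bigcup T \in \Delta$. Since $T$ is finite, every tube in $T$ lies below a maximal tube of $T$, so $\bigcup T$ equals the union of the maximal tubes of $T$; and distinct maximal tubes, being incomparable, cannot have nontrivial intersection by condition (2), hence are disjoint. If there are at least two maximal tubes, condition (1) puts their union in $\Delta$; if there is only one, $\bigcup T$ is that single tube, which lies in $\B_{(G,\Delta)} \subseteq \Delta$. Either way $\bigcup T \in \Delta$, so $T$ is a tubing.

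The only genuinely delicate point is the last one --- establishing $\bigcup T \in \Delta$ --- since Proposition \ref{complex-nestedset}(1) only speaks about unions of pairwise-disjoint subfamilies; the resolution is to pass to the maximal tubes of $T$, which are automatically disjoint, and to treat the single-maximal-tube case separately. Everything else is a routine unwinding of the definitions of weak and strong compatibility against the two nested-set conditions, together with the elementary graph-theoretic fact connecting tubehood to connectedness.
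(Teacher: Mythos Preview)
Your proof is correct and follows essentially the same approach as the paper: both directions hinge on matching the two conditions of Proposition~\ref{complex-nestedset} against weak compatibility and strong compatibility, using the observation that disjoint compatible tubes have no edges between them so their union is disconnected. Your argument is in fact slightly more careful than the paper's, which simply asserts ``if $T$ is $\Delta$-nested, then $\bigcup T \in \Delta$'' without justification; you correctly note that condition~(1) only covers pairwise-disjoint subfamilies and resolve this by passing to the maximal tubes of $T$.
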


	\begin{proof}

	First, we prove that a tubing $T$ is a nested set. First, a tubing has pairwise weak compatibility, so no two tubes have nontrivial intersection. Secondly, if $T'=\{t_1, \ldots, t_k\} \subseteq T$ is a set of disjoint tubes with $|T'|\ge 2$, then we see that due to weak compatibility, $\bigcup T'$ induces $k$ graph components with no edges between them. As a result, $\bigcup T'$ is not a tube. Because of strong compatibility, $\bigcup T' \in \Delta$. As a result, $T$ must be a nested set.

	Now we must prove that if $T$ is nested, then $T$ is a tubing. For any two tubes $t_1, t_2 \in T$, we know $t_1 \subset t_2, t_2 \subset t_1$, or the two are disjoint. If the two are disjoint, then $t_1 \cup t_2$ cannot be a tube. If $t_1 \cup t_2 \in \Delta$ cannot be a tube, then there are no edges between $t_1$ and $t_2$. This shows $T$ is pairwise weakly compatible.

	In addition, if $T$ is $\Delta$-nested, then $\bigcup T \in \Delta$. As a result, $T$ is strongly compatible, and $T$ is a tubing.

\end{proof}

\begin{definition}
The \emph{$\Delta$-building set closure} of a set $E \subseteq \Delta$ is the minimal $\Delta$-building set containing $E$.
\end{definition}

It is trivial to show that the intersection of two $\Delta$-building sets is a $\Delta$-building set, so we see that a unique minimal $\Delta$-building set must exist.

\begin{proposition} \label{prop:graphminimalbuildingset}
The graphic building set of a $\Delta$-graph $G$ is the $\Delta$-building set closure of the edge set of $G$.
\end{proposition}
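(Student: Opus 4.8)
The plan is to show set equality by two inclusions. Write $\B_{(G,\Delta)}$ for the graphic building set and let $\mathcal{C}$ denote the $\Delta$-building set closure of the edge set $E$ of $G$, together with the singletons (which every building set must contain by the first condition of Proposition \ref{complex-buildingset}). Since Proposition \ref{prop:graphminimalbuildingset} already knows $\B_{(G,\Delta)}$ is a $\Delta$-building set, and it clearly contains every singleton and every edge (each edge $\{i,j\}$ is a face of $\Delta$ by the definition of a $\Delta$-graph, and induces a connected two-vertex subgraph, hence is a tube), we get $\mathcal{C} \subseteq \B_{(G,\Delta)}$ immediately from minimality of the closure.

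The substantive direction is $\B_{(G,\Delta)} \subseteq \mathcal{C}$: every tube lies in the closure of the edges and singletons. I would prove this by induction on the size of a tube $t$. If $|t| = 1$ then $t$ is a singleton and lies in $\mathcal{C}$ by the first building-set condition. If $|t| \ge 2$, then since $G|_t$ is connected, there is a vertex $v \in t$ such that $G|_{t \setminus \{v\}}$ is still connected — for instance, take $v$ to be a non-cut vertex of the connected graph $G|_t$ (a leaf of any spanning tree works). Then $t \setminus \{v\}$ is a face of $\Delta$ (subsets of faces are faces), induces a connected subgraph, and so is a tube of strictly smaller size, hence in $\mathcal{C}$ by induction. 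Since $G|_t$ is connected and $v \in t$, $v$ has a neighbor $w \in t \setminus \{v\}$, so the edge $\{v,w\}$ is in $E \subseteq \mathcal{C}$. Now $(t \setminus \{v\}) \cap \{v,w\} = \{w\} \ne \emptyset$, their union is $t$, and $t \in \Delta$; so the second condition of Proposition \ref{complex-buildingset} applied to the building set $\mathcal{C}$ forces $t = (t \setminus \{v\}) \cup \{v,w\} \in \mathcal{C}$. This completes the induction, giving $\B_{(G,\Delta)} \subseteq \mathcal{C}$, and hence equality.

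The one point requiring a little care — and the natural candidate for the main obstacle — is the existence of the removable vertex $v$: I am using the standard graph fact that every finite connected graph on at least two vertices has a vertex whose deletion keeps the graph connected (equivalently, a spanning tree has a leaf). This is elementary, but it is worth stating explicitly since the whole induction rests on it. Everything else is a direct application of the characterization of $\Delta$-building sets from Proposition \ref{complex-buildingset} and the definition of the closure, so no further machinery is needed.
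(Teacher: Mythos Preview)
Your proof is correct and follows essentially the same approach as the paper: both arguments rest on the spanning-tree fact that a tube can be built from (or stripped down to) edges one step at a time while staying connected. The paper orders the edges $e_1,\ldots,e_k$ of a spanning tree so that each partial union $\bigcup_{i\le j} e_i$ is connected and then applies the building-set condition repeatedly, whereas you run the same induction in reverse by peeling off a leaf; the content is identical.
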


\begin{proof}
Given a building set $\B$ and a subset $E \subseteq \B$, we find that $\B$ is the minimal $\Delta$-building set containing $E$ if $\B$ is a building set and, for every non-singleton set $S \in \B$, there exists a series $S_1, S_2, \ldots, S_k$ such that $\bigcup_{i=1}^k S_i=S$, and $\left(\bigcup_{i=1}^j S_i \right)$ and $S_k$ have nontrivial intersection.

Every tube $t$ induces a connected subgraph. Every connected graph contains a spanning tree. Every tree consists of edges $e_1, \ldots, e_k$ which can be ordered such that the graph consisting of edges $e_1, \ldots, e_i$ is connected for each $1\le i \le k$, and the edge $e_{i+1}$ connects a new vertex to a growing subtree. Bringing this together, if each edge is represented as a pair of vertices $e=\{u,v\}$, then $t=\bigcup_{i=1}^k e_i$, and $\bigcup_{i=1}^j e_i$ and $e_{j+1}$ intersect for each $1 \le j \le k-1$.
\end{proof}

	\begin{remark}
A theory of $\Delta$-graphs may be developed where the requirement that edges be faces of $\Delta$ is not necessary. However, if we define $\Delta$-graphs to allow such edges, we find them extraneous. If $G$ is a $\Delta$-graph, and $G'$ is the union of $G$ with a set of additional edges which are not faces in $\Delta$, then the sets of tubes and tubings of $G$ will be equal to the sets of tubes and tubings of $G'$. Furthermore, because we forbid edges not in $\Delta$, this means the edge set of $G$ is a subset of $\Delta$, and we can state Proposition \ref{prop:graphminimalbuildingset}. Finally, Chapter \ref{chap:hypercubes} makes great use of the convention that `forbidden subsets' use dashed edges, and this way one does not draw a dashed and an undashed edge between the same pair of vertices. For these reasons, we have made the decision to define $\Delta$-graphs in this way.
\end{remark}

\subsection{Reconnected complement} \label{sub:reconnectedcomplement}

Regarding classical graph tubes and tubings, the work in \cite{carrdevadoss} proves that the link of a classical $1$-tubing $\{t\}$ of a graph $G$ is combinatorially isomorphic to the product of the classical tubing complex of the graph induced by $t$ in $G$, and the classical tubing complex of the \emph{reconnected complement} of $t$ in $G$, where the reconnected complement is the graph obtained by taking the complement $G\backslash t$ of $t$ in $G$, and drawing edges connecting all pairs $(u,v)$ such that $u$ is connected to some vertex of $t$ and $v$ is connected to some vertex in $t$. In this subsection, we generalize this classical reconnected complement for $\Delta$-graphs.  

\begin{definition}
	Given a $\Delta$-graph $G$, a vertex $v \in G$ is \emph{reconnectable} to a tube $t$ if $t \cup \{v\}$ is in $\Delta$.
\end{definition}

The vertices reconnectable to a tube $t$ form the base set of the link $\Delta/t$.

\begin{definition} \label{def:reconnected-complement-graph}
The \emph{reconnected complement} of the tube $t$ in a $\Delta$-graph $G$ is the graph $G/t$ on the set of vertices not in $t$ but reconnectable to $t$, with edges defined as follows:
\begin{enumerate}
\item If $\{x,y\}$ is an edge in $G$, then $\{x,y\}$ is an edge in $G/t$.
\item If $x,y$ are both adjacent to vertices in $t$ and $\{x,y\}$ is a face of $\Delta/t$, then $\{x,y\}$ is an edge in $G/t$.
\end{enumerate}
\end{definition}

\begin{proposition} \label{prop:pseudolink-graph}
Given a $\Delta$-graph $G$ and a tube $t$ of $G$, the pseudolink of the graphic $\Delta$-building set of $G$ with respect to $t$ is equal to the graphic $(\Delta/t)$-building set of $G/t$.
\end{proposition}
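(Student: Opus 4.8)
The plan is to unwind both sides to their set-level descriptions and check they agree element-by-element. Recall that the pseudolink $\B/t$ of the graphic building set $\B = \B_{(G,\Delta)}$ is by definition
\[
\B/t = \{X \mid X \in \B,\ X \cup t \in \Delta,\ X \cap t = \emptyset\} \cup \{Y \backslash t \mid Y \in \B,\ t \subset Y\},
\]
while the graphic $(\Delta/t)$-building set of $G/t$ is, by definition, the set of tubes of the $\Delta$-graph $(G/t, \Delta/t)$ — that is, the faces of $\Delta/t$ which induce connected subgraphs of $G/t$. So I would prove the two set equalities: first that every tube of $(G/t,\Delta/t)$ lies in $\B/t$, and second that every element of $\B/t$ is a tube of $(G/t,\Delta/t)$. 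Throughout I would use that the vertex set of $G/t$ is exactly the base set of $\Delta/t$, namely the vertices reconnectable to $t$ but not in $t$ (stated just before Definition \ref{def:reconnected-complement-graph}), so membership in $\Delta/t$ is the right ambient notion on both sides.

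For the first inclusion: let $U$ be a tube of $(G/t,\Delta/t)$, so $U \in \Delta/t$ and $G/t|_U$ is connected. I would split into two cases according to whether $U$ uses any of the ``new'' edges of $G/t$ (the edges of type (2) in Definition \ref{def:reconnected-complement-graph}, joining two vertices each adjacent to $t$). If $U$ uses no new edge, then $G|_U = G/t|_U$ is connected, and since $U \in \Delta/t$ means $U \cup t \in \Delta$ and $U \cap t = \emptyset$, we get $U \in \B$ and hence $U$ lies in the first part of $\B/t$. If $U$ does use new edges, then $U \cup t$ induces a connected subgraph of $G$ (every new-edge crossing can be rerouted through $t$, which is itself connected as a tube), and $U \cup t \in \Delta$ since $U \in \Delta/t$; thus $Y := U \cup t$ is a tube of $G$ with $t \subset Y$ and $Y \backslash t = U$, so $U$ lies in the second part of $\B/t$. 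The contraction $G/t$ being designed precisely so that connectivity ``through $t$'' becomes genuine connectivity is the conceptual point here.

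For the reverse inclusion I would check the two pieces of $\B/t$ separately. If $X \in \B$ with $X \cup t \in \Delta$ and $X \cap t = \emptyset$, then $X \in \Delta/t$ and $G|_X$ connected gives $G/t|_X$ connected (type-(1) edges are preserved), so $X$ is a tube of $G/t$. If $Y \in \B$ with $t \subset Y$, set $U = Y \backslash t$; then $U \cup t = Y \in \Delta$ and $U \cap t = \emptyset$, so $U \in \Delta/t$, and I must show $G/t|_U$ is connected. This is the step I expect to be the main obstacle: $G|_Y$ is connected, but deleting $t$ may disconnect $G|_U$, and I need the added type-(2) edges to repair it. The argument is that any two vertices $u, u'$ of $U$ are joined by a path in $G|_Y$; each time such a path passes through $t$, it enters and leaves $t$ at vertices of $U$ adjacent to $t$, and since $Y = U \cup t \in \Delta$ forces every two-element subset of $U$ to lie in $\Delta/t$ (as $\Delta$ is closed under taking subsets), those entry/exit vertices are joined by a type-(2) edge of $G/t$; splicing these in yields a path in $G/t|_U$. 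One subtlety to handle carefully: a type-(2) edge $\{x,y\}$ requires $\{x,y\} \in \Delta/t$, i.e. $\{x,y\} \cup t \in \Delta$, which indeed holds because $\{x,y\} \cup t \subseteq Y \in \Delta$. Assembling these four short arguments gives the claimed equality of building sets.
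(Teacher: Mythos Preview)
Your proposal is correct and follows essentially the same two-inclusion strategy as the paper's proof, with the same case analysis (elements of $\B/t$ coming from the two halves of the pseudolink, and tubes of $G/t$ split by whether connectivity relies on the new type-(2) edges). If anything, you are slightly more careful than the paper: you explicitly verify that the needed type-(2) edge $\{x,y\}$ actually exists by checking $\{x,y\}\cup t \subseteq Y \in \Delta$, whereas the paper asserts that ``$G/t$ adds a clique connecting all components'' without spelling out the $\Delta/t$ condition.
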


\begin{proof}

If $\B_G$ is the graphic $\Delta$-building set of $G$, then $\B_G/t$ is the pseudolink of $\B_G$. We restate the definition of the pseudolink:

	\[
		\B_G/t = \{s|s \in \B_G, s \cup t \in \Delta, s \cap t = \emptyset\} \cup \{s \backslash t|s \in \B_G, t \subset s\}
\]

First, we prove that every element of $\B_G/t$ is a tube of $G/t$. Every element in $\B_G/t$ is either equal to $s$ in $\{s|s \in \B_G, s \cup t \in \Delta, s \cap t = \emptyset\}$, or $s \backslash t$ in $\{s \backslash t|s \in \B_G, t \subset s\}$.

Consider the case that $s \in \B_G, s \cup t \in \Delta,$ and $s \cap t  = \emptyset$. We find that $s,t$ are disjoint, and $s\cup t \in \Delta$ so all vertices of $s$ are reconnectable to $t$. Note that $G/t$ is a graph constructed from the graph induced by the vertices reconnectible to $t$ in $G$ by adding edges, so if $G|_s$ is connected, then $G/t|_s$ is connected. As a result, $s$ is a tube in $G/t$.

Consider the other case, that $s \in \B_G$ and $t \subset s$, and we will prove that $s \backslash t$ is a tube of $G/t$. Say that $G|_{s\backslash t}$ has $k$ components. If $k\ge 2$, the graph $G/t$ adds a clique connecting all components of $G|_{s\backslash t}$, meaning that $s \backslash t$ induces a connected graph, and $s\backslash t$ is a tube of $G/t$. As a result, $\B_G$ consists of $G/t$-tubes.

If $s$ is a tube of $G/t$, then consider the following. If $s$ is connected in $G$, then $s$ is a tube of $G$ with $s\cap t=\emptyset$, so $s \in \B_G/t$. If $s$ is not connected in $G$, then because the only edges between reconnectable vertices are between neighbors of $t$, this means every component in $G|_s$ contains a neighbor of $t$. This means $s \cup t$ is a tube, and therefore $(s \cup t)\backslash t=s$ is in $\B_G/t$. As a result, we have proven the proposition.
\end{proof}

\section{$\p$-nestohedra} \label{sec:nestohedra}




	\danger{This is a new subsection pulling some material out from the next subsection!!!!}

	\subsection{Definitions for polyhedra}

A \emph{polyhedron} is a subset of points in a vector space defined by a finite set of non-strict hyperplane inequalities of the form $\{\mathbf{c}_i \mathbf{x} \ge y_i| \mathbf{x} \in \R^n\}$ for a set of row vectors $\mathbf{c}_i \in (\R^n)^*$ and $y_i \in \R$. For every polyhedron, we may choose a minimal set of inequalities which define the polyhedron, each of which defines a facet. A full-dimensional polyhedron of dimension $n$ is \emph{simple} if every codimension $k$ face is contained in exactly $k$ facets. For the scope of this thesis, we will use $\p$ to refer to the class of full-dimensional simple polyhedra.

A \emph{polytope} is any polyhedron which is bounded. In this thesis, results are proven for the polyhedron case wherever possible.

	Consider a simple polyhedron $P$. Index the facets of $P$ by some set $\mathcal{S}$, so that the set of facets is $\{F_s|s \in \mathcal{S}\}$. For a subset $I \subseteq \mathcal{S}$, define the face $F_I$ as the intersection of all facets $F_s$ for $s \in I$. Because $P$ is simple, each non-empty face of $P$ has a unique expression as an intersection of facets of $P$.

	\begin{definition} \label{def:dualsimplicialcomplex}
		The \emph{dual simplicial complex}, written as $\Delta(P)$, of a simple polyhedron $P$ is the simplicial complex of sets $I \subseteq S$ such that the face $F_I \ne \emptyset$. 
\end{definition}

The \emph{lineality space} $lineal(P)$ of a polyhedron $P$ in $\R^n$ is the set of vectors $v \in \R^n$ such that $v+x \in P$ for all $x \in P$. This is a subspace. If $U$ is a complementary subspace to $lineal(P)$, then $P = (P \cap U) + lineal(P)$. We say that $P$ is \emph{pointed} if and only if $lineal(P)=\{0\}$, and this is equivalent to saying that $P$ has at least one vertex face.

The \emph{face lattice} of a polyhedron $P$ is the poset of all faces of $P$, including the empty face. For a simple polyhedron, the dual simplicial complex is isomorphic to the of the dual face lattice with the empty face removed. In this isomorphism, the set $S$ in the dual simplicial complex maps to the codimension-$|S|$ face $F_S$ of $P$. We define the following:

\begin{definition}
Two polyhedra $P, P'$ are \emph{combinatorially isomorphic} if their face lattices are related by an isomorphism taking each face $F$ in $P$ to a face $F'$ in $P'$ such that $dim(F)=dim(F')$. 
\end{definition}

\begin{remark} \label{remark:dual-simplicial-complex-non-uniqueness}
We note that if $P, P'$ are both simple, then $P, P'$ are combinatorially isomoprhic if and only if their dual simplicial complexes are isomorphic and $P, P'$ have either the same dimension, or the same dimension lineality space. We note that dual simplicial complex does not uniquely determine polyhedra up to combinatorial isomorphism. However, dual simplicial complexes do uniquely determine \emph{pointed} polyhedra up to combinatorial isomorphism.
\end{remark}

	\subsection{Defining $\p$-nestohedra}

	\begin{definition}
		A $P$-building set $\B$ is a building set of the dual simplicial complex of a simple polyhedron $P$. Nested sets of a $P$-building set are called \emph{$P$-nested}.
	\end{definition}


	\begin{definition}
		A \emph{$P$-nestohedron} of a $P$-building set $\B$ is a simple polyhedron whose dual simplicial complex is isomorphic to the nested complex of $\B$. Use the notation $\nest{P}{\B}$ to refer to a $P$-nestohedron of $\B$.
	\end{definition}

	

	We can define a method for constructing $P$-nestohedra by repeated truncation of $P$. Consider an inequality $ax \le b$, dividing the vector space into two halfspaces, the open halfspace $H_{>}$ defined by $ax > b$, and the closed halfspace $H_{\le}$ defined by $ax \le b$. The hyperplane $H$ is the set of points $ax=b$. We can define $H_<$ and $H_{\ge}$ similarly.

	\begin{definition}
		Consider a polyhedron $Q$ with a face $F$, and a halfspace $H_{\le}$ defined by $ax \le b$. We say that this inequality \emph{truncates} $Q$ by the face $F$ if $F\subset H_>$ and the following two conditions are true. First, if $F' \cap F \ne \emptyset$ and $F' \not\subseteq F$ for any face $F'$ of $Q$, then $H \cap F' \ne \emptyset$. Secondly, if $F' \cap F = \emptyset$ and $F'$ is a face of $Q$, then $F' \subset H_<$. If a hyperplane truncates $Q$ at a face $F$, define the \emph{truncation of $Q$ at the face $F$} as the polyhedron $Q \cap H_{\le}$. We may call this $\textrm{Tr}_H(Q)$.

\end{definition}

When $Q$ is a polytope and $F$ is a proper face, we note that any inequality which truncates $F$ separates all vertices of $F$ from all other vertices in $Q$, effectively `cutting off' that face. Figure \ref{fig:truncation-examples} shows some example truncations of faces of a cube and a square, including a facet truncation which amounts to the perturbation of a facet-defining hyperplane and therefore does not change the combinatorial structure of a simple polyhedron.

\begin{figure} [h]
\centering
\includegraphics[width=.75\textwidth]{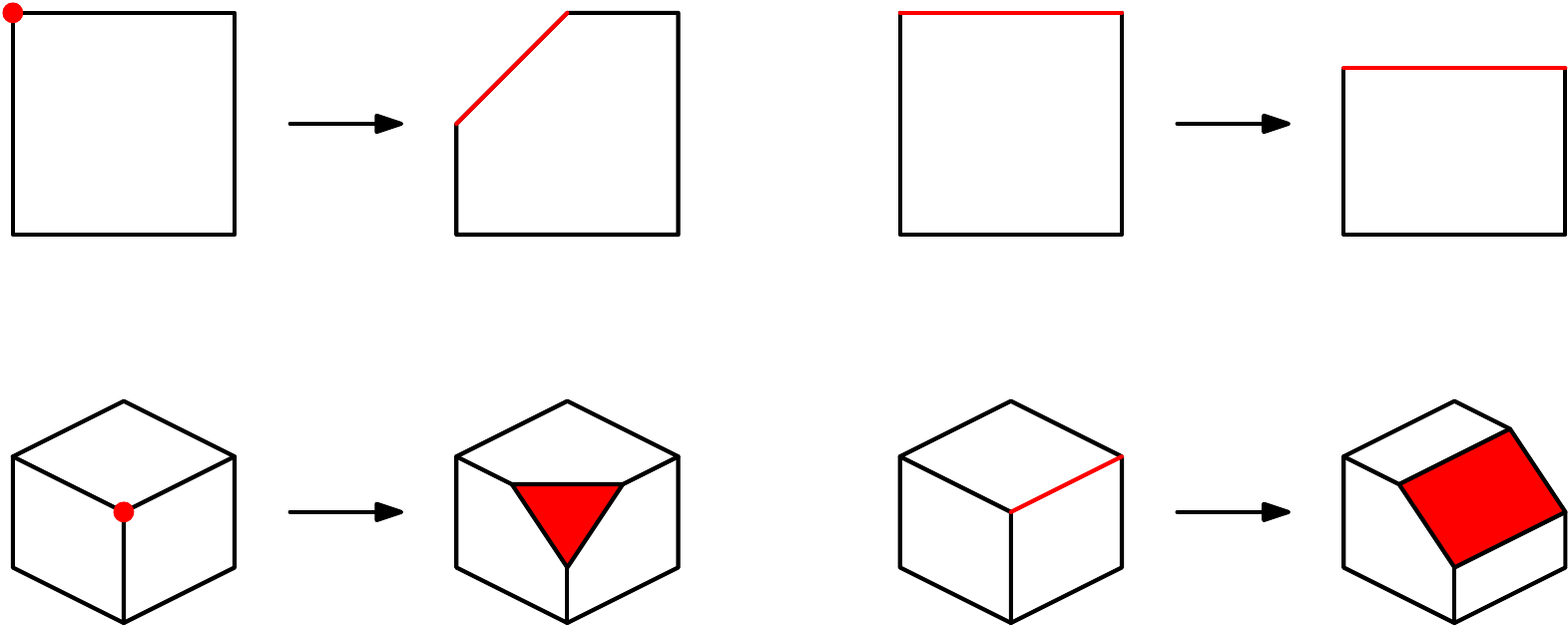}
\caption{Example truncations.} \label{fig:truncation-examples}
\end{figure}

 We note in the next proposition that we can always find a truncation for any face.

\begin{proposition}
For any face $F$ of a polyhedron $Q$ defined by an inequality $ax \le b$, there exists $\varepsilon > 0$ such that the inequality $ax \le b-\varepsilon$ truncates $F$ in $Q$.
\end{proposition}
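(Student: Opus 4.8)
The plan is to exhibit an explicit $\varepsilon$ for which $ax \le b-\varepsilon$ truncates $F$ by verifying the two defining conditions of a truncation against the (finitely many) faces of $Q$. First I would recall that $F$ itself is the face of $Q$ carved out by $ax = b$, so $F \subseteq H$ for the original hyperplane and in particular $F \subseteq H_{<}$ for the shifted inequality $ax \le b - \varepsilon$; thus $F \subset H_{>}$ as required (here $H_{>}$ refers to the halfspace of the \emph{new} inequality $ax > b-\varepsilon$). The real content is to choose $\varepsilon$ small enough that (i) every face $F'$ of $Q$ meeting $F$ but not contained in $F$ has $H \cap F' \ne \emptyset$, and (ii) every face $F'$ disjoint from $F$ lies strictly below the new hyperplane.

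For condition (i), fix such a face $F'$. Since $ax \le b$ on all of $Q$ and $ax = b$ on $F$, and $F' \cap F \ne \emptyset$, the linear functional $a$ restricted to $F'$ attains the value $b$ somewhere (at any point of $F' \cap F$) but $F' \not\subseteq F$ means $F'$ also contains points with $ax < b$ — indeed $F'$ contains a point $x'$ with $ax' < b$, because if $ax = b$ on all of $F'$ then $F' \subseteq F$ by simplicity/definition of the face $F$. So on $F'$ the functional $a$ takes a value equal to $b$ and a value strictly less than $b$; by convexity it takes every value in between, so for every $\varepsilon$ with $0 < \varepsilon < b - ax'$ the level set $\{ax = b-\varepsilon\} \cap F'$ is nonempty. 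Taking the minimum of $b - \min_{F'} a$ over the finitely many such faces $F'$ (this minimum is positive since each such face contains a point with $ax < b$) gives a threshold $\varepsilon_1 > 0$ below which (i) holds for all relevant $F'$. A small subtlety: one should treat unbounded $F'$ carefully, but since $ax \le b$ is bounded above on $Q$, the infimum of $b - ax$ over $F'$ is achieved or approached, and in either case stays at a definite positive distance is unnecessary — we only need it to be positive, which it is.

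For condition (ii), let $F'$ be a face of $Q$ with $F' \cap F = \emptyset$. I claim $\sup_{x \in F'} ax < b$. If not, then $ax$ gets arbitrarily close to $b$ on $F'$; since $F'$ is closed and $ax \le b$ on $Q$, either the sup is attained at some $x^* \in F'$ with $ax^* = b$ — but then $x^* \in F$, contradicting disjointness — or $F'$ is unbounded in a direction along which $a$ increases toward $b$, which again forces the face $\{ax = b\} \cap \overline{F'}$ to meet $F$. Hence each such $F'$ has $a$ bounded away from $b$; take $\varepsilon_2 > 0$ smaller than the minimum gap over the finitely many such faces. Then for $\varepsilon = \min(\varepsilon_1, \varepsilon_2) > 0$, both conditions hold, and $ax \le b - \varepsilon$ truncates $F$ in $Q$. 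The main obstacle is the bookkeeping around unbounded faces — making precise that ``$a$ bounded above on $Q$'' forces the relevant suprema over sub-faces to behave like maxima for the purpose of extracting positive $\varepsilon$'s — but this is routine once one observes that any face touching the value $b$ in its closure must intersect $F$.
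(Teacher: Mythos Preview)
The paper states this proposition without proof, treating it as a routine verification; so there is no argument to compare yours against. Your approach is correct in outline and is the natural one: use finiteness of the face set to extract a single positive $\varepsilon$ that works simultaneously for the two conditions in the definition of truncation.

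A couple of places deserve tightening. First, the sentence ``$F \subseteq H_{<}$ for the shifted inequality $ax \le b-\varepsilon$; thus $F \subset H_{>}$'' is internally inconsistent; on $F$ one has $ax = b > b-\varepsilon$, so $F \subset H_{>}$ directly, and the $H_{<}$ clause should simply be deleted. Second, your handling of unbounded faces in condition~(ii) is hand-wavy (``forces the face $\{ax=b\}\cap\overline{F'}$ to meet $F$'' is not quite an argument). The clean fix is to invoke the standard linear-programming fact that a linear functional bounded above on a polyhedron attains its maximum on that polyhedron; since $ax\le b$ on $Q$, the maximum of $a$ on any face $F'$ is attained at some point $x^*\in F'$, and if this maximum were $b$ then $x^*\in F'\cap F$, contradicting disjointness. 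This simultaneously removes the need for your separate treatment of $\min_{F'} a$ in condition~(i): you only need one point $x'\in F'$ with $ax'<b$, not the infimum, and such a point exists because $F'\not\subseteq F$. With those two edits the proof is complete.
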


%
%

\emph{Combinatorial blowup} is defined in \cite{feichtner2003}, where it is noted that it specializes to stellar subdivision of an abstract simplicial complex. We define a version of stellar subdivision here.

\begin{definition} \label{def:stellar-subdivision}
Given a simplicial complex $\Delta$ and a set $S$ in $\Delta$, the \emph{stellar subdivision} $\textrm{St}_S \Delta$ is a simplicial complex containing the elements
\begin{itemize}
\item $T \in \Delta$ such that $S \not\subseteq T$
\item Sets $T \cup \{h\}$ such that $S \not\subseteq T$ and $S \cup T \in \Delta$.
\end{itemize}
for some element $h \notin \Delta$. We say we replace the set $S \in \Delta$ with the element $h$.
\end{definition}

The \emph{tube promotion} process described in \cite{carrdevadoss} is a special case of a stellar subdivision operation applied to nested complexes. We note that when $P$ is a simple polyhedron, truncation is dual to stellar subdivision. \danger{I think that more broadly truncation is dual to combinatorial blowup, but I don't see that source easily.}

\begin{proposition}
If $P$ is a simple polyhedron with nonempty face $F_S$ defined by intersection of facets $\{F_s|s \in S\}$ for nonempty set $S$, and $H$ truncates $P$ at $F$, then $\Delta(\textrm{Tr}_{F_S} P)=\textrm{St}_T\left(\Delta(P)\right)$.
\end{proposition}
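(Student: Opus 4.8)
The plan is to compute $\Delta(Q)$ for $Q := \textrm{Tr}_{F_S} P = P \cap H_{\le}$ directly from Definition~\ref{def:dualsimplicialcomplex} and match it family-by-family with $\textrm{St}_S(\Delta(P))$ as given in Definition~\ref{def:stellar-subdivision} (I read the statement's right-hand side as $\textrm{St}_S$, the subscript $T$ being a typo). First I would fix the facets of $Q$: for each facet $F_s$ of $P$ the set $\tilde F_s := F_s \cap H_{\le}$ is cut out of $Q$ by the same supporting inequality, and it remains a facet of $Q$ precisely when $S \neq \{s\}$; when $S = \{s\}$ we have $F_s = F_S \subset H_{>}$, so $\tilde F_s = \emptyset$ and that facet vanishes (this is the facet-perturbation case mentioned before the proposition). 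Adjoining the new facet $F_h := P \cap H$, the index set of facets of $Q$ is $\{\,s \in \mathcal{S} : S \not\subseteq \{s\}\,\} \cup \{h\}$, which is exactly the vertex set of $\textrm{St}_S(\Delta(P))$. I would also note that $Q$ is simple, so $\Delta(Q)$ is defined and each nonempty face of $Q$ is the intersection of a uniquely determined set of these facets; this is the standard statement that truncating a simple polyhedron at a face preserves simplicity.

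The content is then one computation in two cases. For $T \subseteq \mathcal{S}$, distributing the intersection over $H_{\le}$ gives $\bigcap_{s\in T}\tilde F_s = \big(\bigcap_{s\in T}F_s\big)\cap H_{\le} = F_T \cap H_{\le}$, where $F_T$ is the face of $P$ indexed by $T$, and therefore $\big(\bigcap_{s\in T}\tilde F_s\big)\cap F_h = F_T \cap H$. So it suffices to prove the two equivalences
\[
F_T \cap H_{\le} \neq \emptyset \iff \big(\,T \in \Delta(P)\ \text{and}\ S \not\subseteq T\,\big),
\]
\[
F_T \cap H \neq \emptyset \iff \big(\,S\cup T \in \Delta(P)\ \text{and}\ S \not\subseteq T\,\big),
\]
where I use $F_{S\cup T} = F_S \cap F_T$. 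Forward directions: $F_T \cap H_{\le} \subseteq F_T$, so nonemptiness forces $T \in \Delta(P)$; if $S \subseteq T$ then $F_T \subseteq F_S \subset H_{>}$, contradicting both nonemptiness hypotheses; and if $S \cup T \notin \Delta(P)$ then $F_T \cap F_S = \emptyset$, so the second truncation condition gives $F_T \subset H_{<}$ and hence $F_T \cap H = \emptyset$. Backward directions: suppose $S \not\subseteq T$, so $F_T \not\subseteq F_S$. If $T \in \Delta(P)$ and $F_T \cap F_S = \emptyset$, the second truncation condition gives $F_T \subset H_{<}$, so $F_T \cap H_{\le} = F_T \neq \emptyset$; if $F_T \cap F_S \neq \emptyset$ — which is automatic as soon as $S \cup T \in \Delta(P)$, since then $F_T \cap F_S = F_{S\cup T} \neq \emptyset$ — the first truncation condition gives $F_T \cap H \neq \emptyset$, and a fortiori $F_T \cap H_{\le} \neq \emptyset$.

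Putting the two equivalences together: the faces of $Q$ avoiding $h$ are exactly the $T \in \Delta(P)$ with $S \not\subseteq T$, and the faces of $Q$ containing $h$ are exactly the sets $T' \cup \{h\}$ with $S \cup T' \in \Delta(P)$ and $S \not\subseteq T'$. These are precisely the two bullet points of Definition~\ref{def:stellar-subdivision}, so $\Delta(\textrm{Tr}_{F_S} P) = \textrm{St}_S(\Delta(P))$.

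I expect the routine case analysis to go through smoothly; the one spot that needs care is the first paragraph — pinning down the facets of $Q$ (in particular the degenerate facet-truncation case $S = \{s\}$, so that the facet indices of $Q$ really do coincide with the vertex set of the stellar subdivision) and invoking simplicity of $Q$, which is what makes ``$\bigcap_{s\in T}\tilde F_s$ is the face of $Q$ indexed by $T$'' a legitimate statement.
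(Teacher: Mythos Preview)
Your proposal is correct and follows essentially the same approach as the paper: both compute which intersections $F_T\cap H_{\le}$ and $F_T\cap H$ are nonempty via the two truncation conditions (splitting on whether $S\subseteq T$ and whether $S\cup T\in\Delta(P)$), and then match the resulting families to the two bullets of Definition~\ref{def:stellar-subdivision}. Your version is a bit more tidily organized into two explicit equivalences, and you handle the $|S|=1$ edge case and the facet bookkeeping more carefully than the paper does, but the substance is the same.
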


\begin{proof}

First, we take it as trivial that if a simple polyhedron $Q$ is $k$-dimensional, and $H$ is a hyperplane, then if $H\cap Q$ is not a face of $Q$, then $H \cap Q$ is $k-1$-dimensional, and $H_{\le} \cap Q$ is $k$-dimensional. Note $H\cap Q$ does not need to be a proper or nonempty face of $Q$. We also note that when $H$ truncates a face of a polyhedron $P$, then $H \cap P$ is not a nonempty face of $P$, and for any $k$-dimensional face $F$ such that $F \cap H$ is nonempty, $F \cap H$ is $k-1$-dimensional and $F \cap H_{\le}$ is $k$-dimensional.

Now, we wish to characterize all faces of $Tr_{F_S}(P)$. These faces are of two possible forms: the first is $F_T \cap H_{\le}$, and the second is $F_T \cap H$.

Consider the case where $S \subseteq T$. We find $F_T \subseteq F_T$, and $F_T \cap H_{\le} = F_T\cap H = \emptyset$. These faces are empty in the truncation.

Consider the case where $S \not\subseteq T$. If $S \cup T \in \Delta(P)$, then $F_S \cap F_T$ is nonempty, and by the definition of truncation, $F_T \cap H \ne \emptyset$. As a result, $F \cap H_{\le}$ is a $k$-dimensional face. If $S \cup T \notin \Delta(P)$, then $F_S \cap F_T = \emptyset$, and by the truncation definition, $F_T \subset H_<$. This means $F_T \cap H_{le}=F_T$.

As a result, we have characterized all faces of the form $F_T \cap H_{\le}$. If $S \subseteq T$, then $F_T \cap H_{\le}=\emptyset$, and if $S \not\subseteq T$ for $T \in \Delta(P)$, then $F_T \cap H_{\le}$ is a codimension-$|T|$ face.

Now consider the faces of the form $F_T \cap H$. If $S\not\subseteq T$ and $S \cup T \notin \Delta(P)$, then $F_T \subset H_<$, and $F_T \cap H=\emptyset$. If $S \not\subseteq T$ and $S \cup T \in \Delta(P)$, then as proven above, $F_T \cap H \ne\emptyset$. This face is codimension-$(|T|+1)$.

Now we consider the facets of $Tr_{F_S}P$. These are of the form $F_s \cap H_{\le}$ for all facet indices $s \in \mathcal{S}$, and the facet $H \cap P$. Each nonempty intersection of $k$ of these facets is codimension-$k$, meaning that it is simple and its non-empty face poset is indeed isomorphic to its dual simplicial complex. As a result, if we index faces of $Tr_{F_S}P$ as $F'_T=F_T \cap H_{\le}$ and $F'_{T \cup \{h\}}=F_T \cap H$, we find that $\Delta(Tr_{F_S}P)$ is exactly equal to $St_S(\Delta(P))$.
\end{proof}

We can understand the following theorem from \cite{feichtner2003} by remembering that combinatorial blowups specialize to stellar subdivision, which we have just proven is dual to truncation for simple polyhedra.


\begin{theorem}[\cite{feichtner2003}, Theorem 3.4] \label{thm:feichtnermainresult}
Let $\mathcal{L}$ be a semilattice and let $G$ be a semilattice-building set of $L$ with some chosen linear extension $G=\{G_1,\ldots,G_t\}$, with $G_i > G_j$ implying $i < j$. Let $\textrm{Bl}_x \mathcal{L}$ denote the combinatorial blowup of $\mathcal{L}$ at element $x$. Let $\textrm{Bl}_k \mathcal{L}$ denote the result of subsequent blowups $\textrm{Bl}_{G_k}(\textrm{Bl}_{G_{k-1}}(\ldots \textrm{Bl}_{G_1}\mathcal{L}))$. Then the final semilattice $\textrm{Bl}_t$ is equal to the face poset of the semilattice-nested complex of $G$.
\end{theorem}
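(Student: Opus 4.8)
The statement is quoted as Theorem~3.4 of \cite{feichtner2003}, so the expected move is to invoke that reference; what follows is how a self-contained argument would run. The plan is to induct on the number of elements of $G$ that are \emph{not} atoms of $\mathcal{L}$ — equivalently, on the number of blowups that actually alter the semilattice, since a combinatorial blowup at an atom is the identity operation (its effect on a face poset is the stellar subdivision at a vertex, which does nothing). In the base case $G$ consists only of atoms: then $\textrm{Bl}_t = \mathcal{L}$, and the building-set axiom of Definition~\ref{def:semilatticebuildingset}, applied to each $X$, forces $[\hat{0},X]$ to be a product of the two-element chains $[\hat{0},a_i]$ over the atoms $a_i \le X$, hence Boolean; so $\mathcal{L}$ is itself the face poset of a simplicial complex $\Delta_{\mathcal{L}}$. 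One then checks, using Definition~\ref{latticebuildingset}, that a set of atoms is $G$-nested precisely when it has an upper bound in $\mathcal{L}$, i.e.\ precisely when it is a face of $\Delta_{\mathcal{L}}$; so $\textrm{Bl}_t = \mathcal{L}$ is the face poset of $\mathcal{N}(G,\mathcal{L})$.

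For the inductive step I would first use the triviality of atom-blowups to reduce to the case where the first element $G_1$ of the chosen linear extension is a non-atom; being first, $G_1$ is maximal in $G$. Two ingredients are then needed. (i) $\textrm{Bl}_{G_1}\mathcal{L}$ is again a meet-semilattice, and $G' = (G \setminus \{G_1\})$ together with the single exceptional element produced by the blowup is a building set of $\textrm{Bl}_{G_1}\mathcal{L}$, whose inherited linear order is still compatible with the partial order; crucially $G'$ has one fewer non-atom element than $G$. (ii) There is a bijection between $G$-nested subsets of $\mathcal{L}$ and $G'$-nested subsets of $\textrm{Bl}_{G_1}\mathcal{L}$ that, on the level of simplicial complexes, realizes exactly the stellar subdivision of $\mathcal{N}(G,\mathcal{L})$ determined by $G_1$ — so that $\mathcal{N}(G',\textrm{Bl}_{G_1}\mathcal{L})$ and $\mathcal{N}(G,\mathcal{L})$ have the same face poset. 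Granting (i) and (ii), the inductive hypothesis applied to the pair $(\textrm{Bl}_{G_1}\mathcal{L},\,G')$ says the remaining blowups $\textrm{Bl}_{G_2},\dots,\textrm{Bl}_{G_t}$ yield the face poset of $\mathcal{N}(G',\textrm{Bl}_{G_1}\mathcal{L})$, which by (ii) is the face poset of $\mathcal{N}(G,\mathcal{L})$, closing the induction.

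The hard part is ingredient (ii): tracking how nested sets and the order relation transform under a single combinatorial blowup, since that blowup's order splits into several cases (pairs of old elements, pairs of exceptional elements, and mixed pairs), each of which must be matched against Definitions~\ref{def:semilatticebuildingset} and~\ref{latticebuildingset}. In the only case this thesis actually needs — $\mathcal{L}$ the face poset of a simplicial complex $\Delta$ — this bookkeeping is far lighter: combinatorial blowup is then literally the stellar subdivision of Definition~\ref{def:stellar-subdivision}, Proposition~\ref{complex-buildingset} reduces building sets to a single intersection--union closure condition, and Proposition~\ref{complex-nestedset} reduces nested sets to the disjoint-union and nontrivial-intersection conditions; so (ii) becomes a direct verification that the single stellar subdivision $\textrm{St}_{G_1}\,\mathcal{N}(\B,\Delta)$ reproduces $\mathcal{N}\bigl((\B\setminus\{G_1\})\cup\{h\},\,\textrm{St}_{G_1}\Delta\bigr)$. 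Thus for the applications here one may simply take the simplicial-complex specialization and run the same induction using only results already established above, leaving the general semilattice statement to \cite{feichtner2003}.
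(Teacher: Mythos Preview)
You correctly anticipated that the paper does not prove this theorem: it is stated with citation to \cite{feichtner2003} and used as a black box, the paper immediately applying it (together with the duality between stellar subdivision and truncation established just above) to deduce that $P$-nestohedra can be realized by iterated truncation. Your supplementary sketch---induction on the number of non-atom building-set elements, with the base case recognizing $\mathcal{L}$ itself as a simplicial face poset when $G$ consists only of atoms, and the inductive step tracking how a single blowup transforms the building set and the nested complex---is a faithful outline of the original Feichtner--Kozlov argument; your observation that the simplicial-complex specialization can be handled directly via Propositions~\ref{complex-buildingset} and~\ref{complex-nestedset} and Definition~\ref{def:stellar-subdivision} is well-taken and is in fact all the thesis ever needs.
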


%
%
%

%

Note that one such linear extension can easily be found for a building set, by ordering all elements in a building set by decreasing size. Applying this theorem, and knowing that in this case combinatorial blowups are dual to truncation, we prove the following.

\begin{theorem}
	For every $P$-building set $\B$, there exists a $P$-nestohedron of $\B$ obtained by listing the elements $\B=\{S_1,\ldots,S_k\}$ such that $|S_i| \ge |S_{i+1}|$ for all $i=1,\ldots, k-1$, and repeatedly truncating the faces $F_{S_1}, \ldots, F_{S_k}$ of $P$ in order.
\end{theorem}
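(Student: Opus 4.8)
The plan is to construct the polyhedron one truncation at a time, tracking the dual simplicial complex via the truncation/stellar-subdivision correspondence proved above, and then to recognize the resulting chain of stellar subdivisions as the iterated combinatorial blowup of Feichtner--Yuzvinsky (Theorem~\ref{thm:feichtnermainresult}). Throughout, the key observation is that a truncation only perturbs the old facets and adjoins exactly one new facet carrying a fresh index, so each $S_i \subseteq \mathcal{S}$ continues to name an intersection of facets in every polyhedron of the sequence; the phrase ``truncating the faces $F_{S_1},\ldots,F_{S_k}$ of $P$ in order'' therefore refers to truncating, at step $i$, the face indexed by $S_i$ in the polyhedron built so far.

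First I would set $P_0 = P$ and argue inductively that, having built a full-dimensional simple polyhedron $P_{i-1}$ with $\Delta(P_{i-1}) = \textrm{St}_{S_{i-1}}\cdots\textrm{St}_{S_1}(\Delta(P))$, the set $S_i$ still indexes a nonempty face of $P_{i-1}$, i.e.\ $S_i \in \Delta(P_{i-1})$. Indeed $S_i \in \B \subseteq \Delta(P)$, and for each $j < i$ we have $|S_j| \ge |S_i|$ with $S_j \ne S_i$, hence $S_j \not\subseteq S_i$; a one-line induction on the definition of stellar subdivision shows $S_i$ survives every $\textrm{St}_{S_j}$, so $S_i \in \Delta(P_{i-1})$. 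By the existence proposition there is a hyperplane $H_i$ truncating $P_{i-1}$ at $F_{S_i}$; put $P_i = \textrm{Tr}_{H_i}(P_{i-1})$. The proof of the truncation proposition shows $P_i$ is again full-dimensional and simple (every $k$-fold intersection of its facets has codimension $k$) and that $\Delta(P_i) = \textrm{St}_{S_i}(\Delta(P_{i-1}))$, closing the induction and yielding $\Delta(P_k) = \textrm{St}_{S_k}\cdots\textrm{St}_{S_1}(\Delta(P))$.

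Next I would invoke the remarks preceding Theorem~\ref{thm:feichtnermainresult}: combinatorial blowup of a simplicial complex is stellar subdivision, and listing a building set by weakly decreasing cardinality is a valid linear extension in the required sense, since $S_j \supsetneq S_i$ forces $|S_j| > |S_i|$ and hence $j < i$. Applying Theorem~\ref{thm:feichtnermainresult} to the semilattice $\Delta(P)$, the building set $\B$, and this linear extension, $\textrm{St}_{S_k}\cdots\textrm{St}_{S_1}(\Delta(P))$ is the face poset of $\mathcal{N}(\B,\Delta(P))$. Combining this with $\Delta(P_k) = \textrm{St}_{S_k}\cdots\textrm{St}_{S_1}(\Delta(P))$ gives $\Delta(P_k) \cong \mathcal{N}(\B,\Delta(P))$; since $P_k$ is a full-dimensional simple polyhedron, it is by definition a $P$-nestohedron of $\B$, which is the assertion.

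The main obstacle is the bookkeeping in the middle paragraph: verifying that at each stage $S_i$ still indexes a nonempty face (so a truncating hyperplane exists) and that simplicity and full-dimensionality are preserved under each truncation; the rest is assembling facts already in hand. One should also keep in mind the non-uniqueness recorded in Remark~\ref{remark:dual-simplicial-complex-non-uniqueness}, but this is harmless here, since the statement only asserts the existence of \emph{a} $P$-nestohedron and the definition of $P$-nestohedron asks only that the dual simplicial complex be isomorphic to the nested complex.
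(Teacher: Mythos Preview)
Your proposal is correct and follows essentially the same approach as the paper: order $\B$ by weakly decreasing cardinality, use the truncation/stellar-subdivision duality at each step, and invoke Theorem~\ref{thm:feichtnermainresult} to identify the final dual complex with the nested complex. If anything, your version is more careful than the paper's, which does not explicitly verify that $S_i$ remains a face after the earlier truncations or that simplicity is preserved; you handle both points cleanly.
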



\begin{proof}

Define $\B_0=\{\{s\}|s \in \mathcal{S}\}$ for facet set $\mathcal{S}$, and define $\B_i=\B_0 \cup \{S_1, \ldots, S_i\}$. We note that $P$ is the $P$-nestohedron of $\B_0$. Now we wish to prove that $\textrm{Tr}_{F_{S_i}} \nest{P}{\B_i}$ is equal to $\nest{P}{\B_{i+1}}$. Note that ordering $S_1, \ldots, S_k$ by size means that $S_i \supset S_j$ implies $i < j$, so Theorem \ref{thm:feichtnermainresult} applies. Because the stellar subdivision is dual to truncation,  we find $\Delta(\nest{P}{\B_{i+1}})$ is equal to the stellar subdivision $\textrm{St}_{S_{i+1}} (\Delta(\nest{P}{\B_{i}}))$, which is equal to the dual simplicial complex $\Delta(\textrm{Tr}_{F_{S_{i+1}}}\nest{P}{\B_{i}})$. This proves our theorem inductively.
\end{proof}

\subsection{Faces of $\p$-nestohedra}\label{sub:Pcomplexlinks}

Recall that we index facets of a simple polyhedron $P$ to be faces $\{F_s| s \in \mathcal{S}\}$, and faces of $P$ are of the form $F_S=\bigcap_{s \in S} F_s$ for $S \in \Delta(P)$. For a $P$-building set $\B$, we note that $\Delta(\nest{P}{\B})$ is equal to the nested complex $\mathcal{N}(\B,P)$. As a result, we can index faces of $\nest{P}{\B}$ with $\B$-nested sets, and define $\Phi_N$ to be the face of $\nest{P}{\B}$ dual to the nested set $N$.

We note some basic properties of faces of $\nest{P}{\B}$. We find that the face $\Phi_N$ is codimension-$|N|$, which means that facets of $\nest{P}{\B}$ are of the form $\Phi_{S}$, where $S \in \B$. For two nested sets $N_1, N_2$, we find that if $N_1 \cup N_2$ is $\B$-nested, then $\Phi_{N_1} \cap \Phi_{N_2}=\Phi_{N_1 \cup N_2}$, and otherwise $\Phi_{N_1}\cap \Phi_{N_2} = \emptyset$.

We also note that the dual simplicial complex of a face $\Phi_N$ is isomorphic to the link of $N$ in $\mathcal{N}(\B,P)$. Proposition \ref{prop:singletonlink} characterizes links in nested sets as isomorphic to Cartesian products of simplicial complexes, motivating us to find the following.

\begin{proposition} \label{prop:polyhedral-product}
		For two simple polyhedra $P$ and $Q$, the dual simplicial complex of the Cartesian product $P \times Q$ is combinatorially equivalent to the Cartesian product of $\Delta(P)$ and $\Delta(Q)$.
	\end{proposition}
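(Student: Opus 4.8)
The plan is to exhibit an explicit combinatorial isomorphism between the face lattice of $P \times Q$ and the face lattice of the polyhedron whose dual simplicial complex is the Cartesian product $\Delta(P) \times \Delta(Q)$, or, equivalently, to directly identify $\Delta(P \times Q)$ with $\Delta(P) \times \Delta(Q)$ as simplicial complexes. First I would recall that every nonempty face of a product polyhedron $P \times Q$ has the form $F \times F'$ where $F$ is a face of $P$ and $F'$ is a face of $Q$, and that $\dim(F \times F') = \dim F + \dim F'$; this is a standard fact about products of polyhedra, and it implies in particular that $P \times Q$ is simple whenever both $P$ and $Q$ are (a codimension-$k$ face $F \times F'$ with $F$ of codimension $i$ in $P$ and $F'$ of codimension $k-i$ in $Q$ lies in exactly $i + (k-i) = k$ facets, namely the products of the defining facets of $F$ with $Q$, and of $P$ with the defining facets of $F'$).

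Next I would set up the indexing carefully. Index the facets of $P$ by $\mathcal{S}_P$ and the facets of $Q$ by $\mathcal{S}_Q$, taking these to be disjoint sets. The facets of $P \times Q$ are then $\{F_s \times Q \mid s \in \mathcal{S}_P\} \cup \{P \times F_t \mid t \in \mathcal{S}_Q\}$, so I identify the facet set of $P \times Q$ with the disjoint union $\mathcal{S}_P \sqcup \mathcal{S}_Q$. For a subset $X \subseteq \mathcal{S}_P \sqcup \mathcal{S}_Q$, write $X = X_P \sqcup X_Q$ with $X_P \subseteq \mathcal{S}_P$ and $X_Q \subseteq \mathcal{S}_Q$. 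The intersection of the corresponding facets of $P \times Q$ is $\left(\bigcap_{s \in X_P} F_s\right) \times \left(\bigcap_{t \in X_Q} F_t\right) = F_{X_P} \times F_{X_Q}$, which is nonempty if and only if both $F_{X_P} \ne \emptyset$ and $F_{X_Q} \ne \emptyset$, i.e. if and only if $X_P \in \Delta(P)$ and $X_Q \in \Delta(Q)$. By the definition of the Cartesian product of simplicial complexes with disjoint vertex sets, this says precisely $X \in \Delta(P) \times \Delta(Q)$. Hence $\Delta(P \times Q) = \Delta(P) \times \Delta(Q)$ on the nose once the facet sets are identified as described.

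Finally I would invoke Remark \ref{remark:dual-simplicial-complex-non-uniqueness} (or just the isomorphism between the dual simplicial complex and the dual face lattice) to pass from the equality of dual simplicial complexes to combinatorial equivalence, being a little careful about the distinction between "combinatorially isomorphic" (which tracks dimensions) and "isomorphic dual complexes" (which does not, since dimension/lineality data is lost). Since the dimension statement $\dim(F \times F') = \dim F + \dim F'$ was already noted, the grading matches up and the face lattices are isomorphic as graded posets.

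I do not expect a serious obstacle here; the only mild subtlety is bookkeeping — making sure the facet index sets are taken disjoint so that the purely set-theoretic definition of the Cartesian product of simplicial complexes applies verbatim, and confirming that no "spurious" facet appears in $P \times Q$ (each minimal defining inequality of $P$ or of $Q$ pulls back to a minimal defining inequality of $P \times Q$, and there are no others). If one wants to be fully rigorous about simplicity rather than citing it, the one-line argument in the first paragraph above suffices.
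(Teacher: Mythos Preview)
Your proposal is correct and follows essentially the same approach as the paper's proof: both identify the facets of $P \times Q$ with the disjoint union of facets of $P$ and of $Q$, and observe that a collection of these facets has nonempty intersection precisely when the $P$-part and $Q$-part separately do. The paper's proof is terser and slightly less careful about the bookkeeping (it simply asserts that faces of $P \times Q$ are products of faces and calls the rest trivial), so your version is if anything a more complete write-up of the same argument.
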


\begin{proof}
The Cartesian product of two polyhedron is defined such that if $x \in \R^n$ is a point in a polyhedron $P$, and $y \in \R^m$ is a point in polyhedron $Q$, then $(x,y)=(x_1,\ldots,x_n,y_1,\ldots,y_m)\in \R^{n+m}$ is a point in $P \times Q$.

Note that if $c(x,y) \le b$ defines a face of $P \times Q$, then we can split $c$ and $b$ to find new inequalities $c_1 x \le b_1$ and $c_2 x \le b_2$. As a result, every face of $P \times Q$ is isomorphic to the product of a face of $P$ and a face of $Q$. It is then trivial to prove that every product of nonempty faces of $P$ and $Q$ is a face of $P \times Q$, and also that each such product is unique. This applies to the dual simplicial complexes as well.
\end{proof}

Note that if $\sum_{i=1}^n c_i x_i \le b$ defines a facet of $P$ in $\R^n$, then the same inequality defines a facet of $P \times Q$ in $\R^{n+m}$. The facet set of $P\times Q$ can be indexed by the disjoint union of the facet indexing sets of $P$ and $Q$.

The following is a result of Proposition \ref{prop:singletonlink} for the $\p$-building set case.

	\begin{theorem} \label{thm:reconnectednestohedra1}
		For every $P$-nestohedron $\nest{P}{\B}$, every facet of $\nest{P}{\B}$ is combinatorially isomorphic to the product of a $F$-nestohedron, where $F$ is some face of $P$, and a simplex-nestohedron.
\end{theorem}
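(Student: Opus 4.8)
The plan is to derive the statement from the link computation of Proposition~\ref{prop:singletonlink}, combined with the product formula of Proposition~\ref{prop:polyhedral-product}, and then use Remark~\ref{remark:dual-simplicial-complex-non-uniqueness} to pass from simplicial complexes back to polyhedra. By the description of faces of $\p$-nestohedra preceding the theorem, every facet of $\nest{P}{\B}$ has the form $\Phi_{\{S\}}$ for some $S \in \B$; since a face of a simple polyhedron is simple, $\Phi_{\{S\}}$ is a simple polyhedron whose dual simplicial complex is the link of the singleton $\{S\}$ in $\mathcal{N}(\B, \Delta(P))$. Proposition~\ref{prop:singletonlink} identifies this link with the Cartesian product of $\mathcal{N}(\B \cap P(S), P(S))$, where $P(S)$ is the proper subset complex of $S$, and the pseudolink nested complex $\mathcal{N}(\B/S, \Delta(P)/S)$.

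Next I would identify each of the two factors as the dual simplicial complex of a nestohedron. For the first factor, if $|S| = k$ then $P(S)$ is exactly the dual simplicial complex of a $(k-1)$-dimensional simplex $\sigma$ (every proper subset of the $k$-element facet set of $\sigma$ has nonempty intersection, and only the full set does not), and $\B \cap P(S)$ is a $P(S)$-building set by the restriction principle noted after Proposition~\ref{complex-buildingset}; hence $\mathcal{N}(\B \cap P(S), P(S))$ is the dual simplicial complex of the simplex-nestohedron $\nest{\sigma}{(\B \cap P(S))}$. For the second factor, because $P$ is simple the face $F := F_S$ is a simple polyhedron of dimension $\dim P - k$ whose facets are indexed by $\{t \notin S : S \cup \{t\} \in \Delta(P)\}$, and a direct check shows $\Delta(F) = \Delta(P)/S$; by the lemma stating that $\B/S$ is a $(\Delta(P)/S)$-building set, $\B/S$ is an $F$-building set, so $\mathcal{N}(\B/S, \Delta(P)/S)$ is the dual simplicial complex of the $F$-nestohedron $\nest{F}{(\B/S)}$.

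Combining these identifications with Proposition~\ref{prop:polyhedral-product}, the dual simplicial complex of $\Phi_{\{S\}}$ is combinatorially equivalent to $\Delta(\nest{F}{(\B/S)}) \times \Delta(\nest{\sigma}{(\B \cap P(S))})$, which is itself the dual simplicial complex of the simple polyhedron $\nest{F}{(\B/S)} \times \nest{\sigma}{(\B \cap P(S))}$. Finally I would upgrade this isomorphism of dual simplicial complexes to a combinatorial isomorphism of polyhedra via Remark~\ref{remark:dual-simplicial-complex-non-uniqueness}: both $\Phi_{\{S\}}$ and the product are simple and have dimension $\dim P - 1$ (for the product, $(\dim P - k) + (k-1) = \dim P - 1$), so the remark applies and the two polyhedra are combinatorially isomorphic.

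I expect the main obstacle to be organizational rather than conceptual: making precise the two identifications $P(S) \cong \Delta(\sigma)$ and $\Delta(P)/S \cong \Delta(F_S)$, checking that restricting the building set to $P(S)$ and forming the pseudolink $\B/S$ really do correspond to the building sets on $\sigma$ and on $F_S$ appearing above, and remembering that an isomorphism of dual simplicial complexes alone is insufficient --- the dimension bookkeeping supplied by Remark~\ref{remark:dual-simplicial-complex-non-uniqueness} is what actually closes the argument.
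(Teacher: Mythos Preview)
Your proposal is correct and follows essentially the same route as the paper: identify the facet's dual complex as the link of $\{S\}$, apply Proposition~\ref{prop:singletonlink} to split it as a product, recognize the two factors as the dual complexes of a simplex-nestohedron and an $F_S$-nestohedron, and conclude via Proposition~\ref{prop:polyhedral-product}. If anything, you are more careful than the paper in one spot---you explicitly invoke Remark~\ref{remark:dual-simplicial-complex-non-uniqueness} with the dimension count $(\dim P - k) + (k-1) = \dim P - 1$ to upgrade the simplicial isomorphism to a combinatorial isomorphism of polyhedra, whereas the paper leaves this implicit---while the paper in turn spells out the degenerate cases ($|S|=1$ or $\B/S=\emptyset$), which you could mention for completeness but which cause no real trouble.
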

	\begin{proof}
Every facet of $\nest{P}{\B}$ is of the form $\Phi_{\{S\}}$ for some set $S \in \B$, and the dual complex of $\Phi_{\{S\}}$ is equal to the link of $\{S\}$ in the nested complex of $\B$. Note, though, that we have already characterized the link of $\{S\}$ to be isomorphic to the Cartesian product of the nested complex $\mathcal{N}(\B/S,\Delta(P))$ and the nested complex of the restriction of $\B$ to the complex of proper subsets of $S$.

The link of the set $S$ in the dual complex of $P$ is the dual complex of the face $F_S$, so $\mathcal{N}(\B/S,\Delta(P))$ is the dual complex of an $F_S$-nestohedron. The complex of proper subsets of $S$ is the dual simplicial complex of a simplex, and so the nested complex of the restriction of $\B$ to this complex is the dual complex of a simplex-nestohedron.

We wish to make explicit that two edge cases, when $S$ is a singleton set or when $\B/S$ is empty, are not issues with this definition.

 First, we say that when a polyhedron $Q$ is a single point, $Q$ is a $0$-dimensional simplex which contains one facet. The dual simplicial complex of $Q$ only contains one set, the empty set, and is the proper set complex of a singleton set. A building set on $Q$ is empty, and the nested complex is empty. The nestohedron of a point is a point.

As a result, when $S$ is a singleton set, the set $\B$ restricted to the proper subset complex of $S$ is empty, and the associated simplex nestohedron is a point. When $\B/S$ is empty, the associated $F$-nestohedron is a point.
\end{proof}

This theorem can be applied to all faces of $\p$-nestohedra.

	\begin{corollary} \label{thm:reconnectednestohedra2}
		For every $P$-nestohedron $\nest{P}{\B}$, every face of $\nest{P}{\B}$ is combinatorially isomorphic to the product of a $F$-nestohedron, where $F$ is some face of $P$, and a set of simplex-nestohedra.
\end{corollary}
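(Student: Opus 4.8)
The plan is to deduce this from the facet case, Theorem~\ref{thm:reconnectednestohedra1}, by induction on the codimension of the face. Concretely, I would prove the following slightly stronger statement by strong induction on $k$: for every simple polyhedron $Q$, every $Q$-building set $\B$, and every codimension-$k$ face of $\nest{Q}{\B}$ (equivalently, every face $\Phi_N$ with $|N| = k$), that face is combinatorially isomorphic to a product of one $F$-nestohedron, with $F$ a face of $Q$, together with finitely many simplex-nestohedra. The base case $k=0$ is immediate, since $\nest{Q}{\B}$ is itself a $Q$-nestohedron and $Q$ is a face of itself (with an empty list of simplex factors); the case $k=1$ is exactly Theorem~\ref{thm:reconnectednestohedra1}.

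For the inductive step I would take a codimension-$k$ face $\Phi_N$ with $k \ge 2$ and pick any $S \in N$. Then $\Phi_N \subseteq \Phi_{\{S\}}$, and $\Phi_{\{S\}}$ is a facet of $\nest{Q}{\B}$, so by Theorem~\ref{thm:reconnectednestohedra1} there is a combinatorial isomorphism $\Phi_{\{S\}} \cong A \times B$ with $A$ an $F_S$-nestohedron and $B$ a simplex-nestohedron. By Proposition~\ref{prop:polyhedral-product}, faces of a product of polyhedra are exactly products of faces of the factors, so $\Phi_N$ corresponds to $A' \times B'$ for a face $A'$ of $A$ and a face $B'$ of $B$, and a dimension count gives $\operatorname{codim}_A(A') + \operatorname{codim}_B(B') = k-1$. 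In particular each of these codimensions is at most $k-1 < k$, so the inductive hypothesis applies to both factors: $A'$, a face of the $F_S$-nestohedron $A$, is a product of an $F'$-nestohedron with $F'$ a face of $F_S$ (hence a face of $Q$, since a face of a face is a face) and simplex-nestohedra; and $B'$, a face of a simplex-nestohedron, is a product of a nestohedron over a face of a simplex—which is again a simplex-nestohedron—and simplex-nestohedra. Collecting the two factorizations, $\Phi_N \cong A' \times B'$ is a product of a single $F'$-nestohedron, $F'$ a face of $Q$, with a list of simplex-nestohedra, completing the induction; taking $Q = P$ yields the corollary.

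The points needing care are bookkeeping rather than genuine obstacles: verifying that the recursion is well founded (the codimension strictly drops when passing from $\Phi_N$ in $\nest{Q}{\B}$ to $A'$ in $A$ or $B'$ in $B$), that a face of a face of $P$ is a face of $P$, that a face of a simplex is a simplex, and that the degenerate situations flagged at the end of the proof of Theorem~\ref{thm:reconnectednestohedra1} (a singleton $S$, or an empty pseudolink $\B/S$, so that one factor collapses to a point) cause no difficulty here, since a point is a $0$-dimensional simplex-nestohedron and contributes trivially to a product. An essentially equivalent alternative would be to iterate Proposition~\ref{prop:singletonlink} directly on the link $\mathcal{N}(\B,\Delta(P))/N$, peeling off one element of $N$ at a time and distributing the remaining elements across the resulting Cartesian factors; I would present the facet-induction version above, since it uses Theorem~\ref{thm:reconnectednestohedra1} as a black box.
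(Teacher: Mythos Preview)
Your proposal is correct and takes essentially the same approach as the paper: induction on codimension, using Theorem~\ref{thm:reconnectednestohedra1} as the key input and the fact that faces of a product are products of faces. The paper's write-up phrases the induction as descending one facet at a time (a facet of a product replaces exactly one factor by a facet of that factor), while you pick a single $S\in N$, split once via Theorem~\ref{thm:reconnectednestohedra1}, and then invoke strong induction on each factor; these are minor repackagings of the same argument.
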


	\begin{proof}
	See that $P$ is a face of $P$, so $\nest{P}{\B}$ is isomorphic to the product of an $F$-nestohedron, where $F$ is some face of $P$, and an empty set of simplex-nestohedra.

	Every facet of a product of polyhedron $Q_1 \times Q_2$ is isomorphic to the product of $Q_1$ and a facet of $Q_2$, or vice versa. Every facet of a simplex-nestohedron is the product of simplex-nestohedra. As a result, if $Q$ is the product of an $F$-nestohedron and a set of simplex-nestohedra, then every facet of $Q$ is isomorphic to either an $F$-nestohedron and a set of simplex-nestohedra, or an $F'$-nestohedron and a set of simplex-nestohedra, where $F'$ is a facet of $F$.

	Note that every proper face of a polyhedron can be expressed by repeatedly taking facets of the polyhedron. As a result, this theorem is proven for every face of $\nest{P}{\B}$ inductively.
\end{proof}

We can apply Proposition \ref{prop:pseudolink-graph} to restate our theorem for $\p$-graph associahedra.

\begin{proposition}
Every facet $\Phi_{\{t\}}$ of a $P$-graph associahedron is combinatorially isomorphic to the Cartesian product of the $F_t$-graph associahedron of the graph $G/t$, and the simplex-graph associahedron of the graph $G|_t$ induced by $t$ in $G$.
\end{proposition}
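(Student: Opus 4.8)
The plan is to obtain this as a direct specialization of Theorem~\ref{thm:reconnectednestohedra1} to the graphic building set, followed by an explicit identification of the two tensor factors using Proposition~\ref{prop:pseudolink-graph} and the definition of a tube. Concretely, let $\B = \B_{(G,\Delta)}$ be the graphic building set and let $t$ be a tube, so that $\{t\}$ is $\B$-nested and $\Phi_{\{t\}}$ is a facet of $\nest{P}{\B}$. Theorem~\ref{thm:reconnectednestohedra1} (whose proof runs through Proposition~\ref{prop:singletonlink} and Proposition~\ref{prop:polyhedral-product}) already tells us that $\Phi_{\{t\}}$ is combinatorially isomorphic to the product of (i) an $F_t$-nestohedron whose building set is the pseudolink $\B/t$ on $\Delta(P)/t$, and (ii) a simplex-nestohedron whose building set is the restriction of $\B$ to the proper subset complex $P(t)$ of $t$. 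So the work remaining is to show that factor (i) is the $F_t$-graph associahedron of $G/t$ and that factor (ii) is the simplex-graph associahedron of $G|_t$.

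For factor (i), the link $\Delta(P)/t$ is the dual simplicial complex of the face $F_t$ of $P$, so a nestohedron on $\Delta(P)/t$ is an $F_t$-nestohedron; and by Proposition~\ref{prop:pseudolink-graph} the pseudolink $\B_{(G,\Delta)}/t$ is precisely the graphic $(\Delta/t)$-building set of the reconnected complement $G/t$. Hence the nestohedron of that building set is, by definition, the $F_t$-graph associahedron of $G/t$.

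For factor (ii), note that $t \in \Delta$ forces every $2$-element subset of $t$ to be a face of $\Delta$, so $G|_t$ together with the proper subset complex $P(t)$ (which is itself the dual simplicial complex of a simplex) is a $\Delta$-graph; it then suffices to check $\B_{(G,\Delta)} \cap P(t) = \B_{(G|_t,\,P(t))}$. This is immediate from the definition of a tube: a proper subset $s \subsetneq t$ lies in $\B_{(G,\Delta)}$ iff $G|_s$ is connected iff $s$ is a tube of $(G|_t, P(t))$. Thus factor (ii) is the simplex-graph associahedron of $G|_t$, and combining these two identifications with Theorem~\ref{thm:reconnectednestohedra1} yields the proposition.

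The part I expect to require the most care is purely bookkeeping: making sure the two factor building sets just named agree \emph{on the nose} with those produced in the proof of Proposition~\ref{prop:singletonlink} — in particular that the restriction in factor (ii) is taken on the \emph{proper} subset complex (matching the boundary of a simplex, hence a simplex-graph associahedron rather than a one-point polyhedron), and that the extra edges of $G/t$ in Definition~\ref{def:reconnected-complement-graph} coincide exactly with the union-edges implicit in the pseudolink $\B_{(G,\Delta)}/t$. Both of these points, however, have already been discharged — the first by Theorem~\ref{thm:reconnectednestohedra1} and the second by Proposition~\ref{prop:pseudolink-graph} — so the present statement is genuinely a corollary obtained by transporting those results through the graphic dictionary, and I do not anticipate any essential new difficulty.
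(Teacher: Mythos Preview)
Your proposal is correct and follows exactly the approach the paper takes: the paper presents this proposition without proof, introducing it as the restatement of Theorem~\ref{thm:reconnectednestohedra1} obtained by applying Proposition~\ref{prop:pseudolink-graph}. Your write-up simply makes explicit the two identifications (pseudolink $=$ graphic building set of $G/t$, and restriction to $P(t)$ $=$ graphic building set of $G|_t$) that the paper leaves implicit.
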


Unrelated to the main argument of this section, we note this result for two disjoint polyhedra $P$ and $Q$.

\begin{proposition}\label{prop:nestohedron-of-product}

Consider the case where $P, Q$ are simple polyhedra, with $\B_P, \B_Q$ a $P,Q$-building set respectively, and disjoint facet-sets $\mathcal{S}_P, \mathcal{S}_Q$. We find that the $P\times Q$-nestohedron of $\B_P\cup \B_Q$ is equal to the Cartesian product of the $P,Q$-nestohedra of $\B_P$ and $\B_Q$.
\end{proposition}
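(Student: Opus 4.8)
The plan is to pass everything to dual simplicial complexes and combine Propositions~\ref{complex-buildingset}, \ref{complex-nestedset}, and~\ref{prop:polyhedral-product} with Remark~\ref{remark:dual-simplicial-complex-non-uniqueness}. Write $R = P \times Q$. By Proposition~\ref{prop:polyhedral-product} we may identify $\Delta(R)$ with the Cartesian product $\Delta(P) \times \Delta(Q)$, the facet set of $R$ being the disjoint union $\mathcal{S}_P \sqcup \mathcal{S}_Q$; in particular a subset $A \subseteq \mathcal{S}_P$ lies in $\Delta(R)$ exactly when $A \in \Delta(P)$, and symmetrically for $\mathcal{S}_Q$. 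First I would verify via Proposition~\ref{complex-buildingset} that $\B_P \cup \B_Q$ is an $R$-building set: every singleton of $\mathcal{S}_P \sqcup \mathcal{S}_Q$ lies in $\B_P$ or in $\B_Q$; and if $S_1, S_2 \in \B_P \cup \B_Q$ satisfy $S_1 \cap S_2 \neq \emptyset$, then since $\mathcal{S}_P \cap \mathcal{S}_Q = \emptyset$ both must lie in the same building set, say $\B_P$, so $S_1 \cup S_2 \subseteq \mathcal{S}_P$, and if moreover $S_1 \cup S_2 \in \Delta(R)$ then $S_1 \cup S_2 \in \Delta(P)$, whence $S_1 \cup S_2 \in \B_P$ because $\B_P$ is a $P$-building set.

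Next I would identify the nested complexes. The key claim is that a set $N \subseteq \B_P \cup \B_Q$ is $(\B_P \cup \B_Q)$-nested if and only if $N = N_P \cup N_Q$ where $N_P := N \cap \B_P$ is $\B_P$-nested and $N_Q := N \cap \B_Q$ is $\B_Q$-nested; since any set in $\B_P$ is disjoint from any set in $\B_Q$, this splitting of $N$ is unambiguous. I would check this directly against the two conditions of Proposition~\ref{complex-nestedset}. The nontrivial-intersection condition only constrains pairs of sets lying in a common $\B_P$ or $\B_Q$ (a $\B_P$-set and a $\B_Q$-set are disjoint), so it passes in both directions. For the disjoint-union condition, take a pairwise-disjoint $N' \subseteq N$ with $|N'| \geq 2$ and split it as $N' = N'_P \cup N'_Q$. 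If one of the two pieces is empty, the condition reduces to the corresponding condition for $N_P$ or $N_Q$, together with the remark above that a subset of $\mathcal{S}_P$ lies in $\Delta(R)$ iff it lies in $\Delta(P)$. If both pieces are nonempty, then $\bigcup N' = (\bigcup N'_P) \cup (\bigcup N'_Q)$ has support meeting both $\mathcal{S}_P$ and $\mathcal{S}_Q$, hence lies in neither $\B_P$ nor $\B_Q$; and it lies in $\Delta(R)$ precisely because $\bigcup N'_P \in \Delta(P)$ and $\bigcup N'_Q \in \Delta(Q)$ --- each being either a single building-set element or, if its index set has size at least two, a union forced into $\Delta(P)$ (resp.\ $\Delta(Q)$) by nestedness of $N_P$ (resp.\ $N_Q$). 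This shows $\mathcal{N}(\B_P \cup \B_Q, R)$ is exactly the Cartesian product of simplicial complexes $\mathcal{N}(\B_P, P) \times \mathcal{N}(\B_Q, Q)$.

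Finally I would assemble the two polyhedra. By Proposition~\ref{prop:polyhedral-product} the product $\nest{P}{\B_P} \times \nest{Q}{\B_Q}$ is a simple polyhedron whose dual simplicial complex is $\Delta(\nest{P}{\B_P}) \times \Delta(\nest{Q}{\B_Q}) \cong \mathcal{N}(\B_P, P) \times \mathcal{N}(\B_Q, Q)$, which by the previous paragraph is $\mathcal{N}(\B_P \cup \B_Q, R)$; so this product already satisfies the defining property of an $R$-nestohedron of $\B_P \cup \B_Q$. To get the stated equality with the nestohedron obtained by truncating $R$, I would note that both are full-dimensional simple polyhedra of dimension $\dim P + \dim Q$ with isomorphic dual simplicial complexes, so Remark~\ref{remark:dual-simplicial-complex-non-uniqueness} provides a combinatorial isomorphism between them.

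I expect the main obstacle to be the bookkeeping in the second paragraph: for a mixed, pairwise-disjoint subfamily $N'$ one must check that $\bigcup N'$ really is a face of $\Delta(R)$ and not merely a pair of faces of $\Delta(P)$ and $\Delta(Q)$ --- this is exactly where the identification $\Delta(R) = \Delta(P) \times \Delta(Q)$ from Proposition~\ref{prop:polyhedral-product} is needed --- and to handle the edge case where one of $N'_P$, $N'_Q$ is a singleton, so that nestedness of $N_P$ (or $N_Q$) does not apply directly and one instead uses that a single building-set element is already a face of its complex.
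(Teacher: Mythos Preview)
Your proposal is correct and follows exactly the same approach as the paper: showing that $N$ is $(\B_P\cup\B_Q)$-nested iff it splits as $N_P\cup N_Q$ with each piece nested in its own building set, which makes the nested complex of the union a Cartesian product. The paper's proof is a single sentence declaring this ``trivial,'' so your version is simply a careful expansion of what the paper leaves implicit, including the verification that $\B_P\cup\B_Q$ is a building set and the handling of mixed disjoint subfamilies.
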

d
\begin{proof}
This is trivial; we simply note that if $N_P$ and $N_Q$ are $\B_P$ and $\B_Q$-nested respectively, then $N_P\cup N_Q$ is $\B_P \cup \B_Q$-nested, and any $\B_P \cup \B_Q$-nested set $N$ can be decomposed in this manner.
\end{proof}

Note that this proves that every product of $\p$-nestohedra is itself a $\p$-nestohedron. However, it is not always the case that a $P\times Q$-nestohedron can be written as a product of a $P$-nestohedron and a $Q$-nestohedron; this depends on the building set of the $P\times Q$-nestohedron.


\subsection{Classical nestohedra and simplex nestohedra}\label{sub:simplex-nestohedra}

The paper \cite{postnikov} provides a definition for \emph{nestohedra}, \emph{building sets}, and \emph{nested complexes} in set-theoretic terms which closely parallel Definition \ref{complex-buildingset}. We will restate these definitions here, and explain their relationship to simplex-nestohedra. Throughout this thesis, we refer to these terms as classical building sets, classical nestohedra, and so forth to avoid confusion.

\begin{definition}
A classical building set $B$ of a set $S$ is any subset $B$ of the power set of $S$, satisfying the conditions
\begin{enumerate}
\item For each element $i\in S$, $\{i\} \in B$
\item If $I\in B, J \in B$ and $I \cap J \ne \emptyset$, then $I \cup J \in B$.
\end{enumerate}
\end{definition}

\begin{proposition}
There exists a unique set $B_{max} \subseteq B$ such that:
\begin{enumerate}
\item $B_{max}$ is a partition of $S$
\item For each set $I \in B_{max}$, $I$ is not a proper subset of any set in $B$.
\end{enumerate}
\end{proposition}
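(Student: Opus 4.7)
The plan is to show that $B_{\max}$ must be exactly the set of inclusion-maximal elements of $B$, and then verify that this set satisfies both conditions. Uniqueness will follow essentially from the definition, while existence requires invoking the building set axiom to get disjointness.

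First I would define $B_{\max}$ concretely as $\{I \in B \mid I \text{ is not a proper subset of any } J \in B\}$, i.e., the inclusion-maximal elements of $B$. Condition (2) then holds by construction. Since $S$ is finite and $B$ is a collection of subsets of $S$, every element of $B$ is contained in some inclusion-maximal element, so $B_{\max}$ is nonempty and every singleton $\{i\} \in B$ lies inside some element of $B_{\max}$. This gives the covering half of the partition claim: every $i \in S$ belongs to at least one $I \in B_{\max}$.

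The key step is showing that distinct members of $B_{\max}$ are disjoint. Suppose $I, J \in B_{\max}$ with $I \neq J$ and $I \cap J \neq \emptyset$. Then by the second building-set axiom, $I \cup J \in B$. Since $I \neq J$, at least one of $I, J$ is a proper subset of $I \cup J$, contradicting its maximality. Hence any two distinct elements of $B_{\max}$ are disjoint, and combined with the covering statement, $B_{\max}$ is a partition of $S$, establishing condition (1).

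For uniqueness, suppose $B'$ is any subset of $B$ satisfying (1) and (2). Condition (2) says every element of $B'$ is inclusion-maximal in $B$, so $B' \subseteq B_{\max}$. Conversely, let $I \in B_{\max}$ and pick any $i \in I$. Condition (1) for $B'$ gives a unique $J \in B'$ containing $i$. Then $I \cap J \supseteq \{i\} \neq \emptyset$, so by the building set axiom $I \cup J \in B$; maximality of both $I$ and $J$ forces $I = J$, hence $I \in B'$. Therefore $B' = B_{\max}$. The main subtlety, which is minor, is ensuring finiteness of $S$ (and thus of $B$) so that inclusion-maximal elements exist, but this is implicit in the combinatorial setting of the paper.
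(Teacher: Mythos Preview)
Your proof is correct and complete. The paper actually states this proposition without proof, treating it as a standard fact about classical building sets (it is well known from the literature, e.g., Postnikov's work on nestohedra). Your argument---defining $B_{\max}$ as the inclusion-maximal elements, using the union axiom to obtain disjointness, and deducing uniqueness from the observation that any set satisfying condition (2) must lie inside $B_{\max}$---is exactly the standard one, and the finiteness caveat you flag is indeed the only subtlety.
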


Sets in $B_{max}$ are referred to as connected components of $B$, and a classical building set $B$ on set $S$ is \emph{connected} if $B_{max}=\{S\}$.

\begin{definition}
A \emph{classical nested set} $N$ is any subset of a classical building set $B$ such that
\begin{enumerate}
\item $B_{max} \subseteq N$
\item For any two sets $I, J \in N$, we find $I\subseteq J$, $J \subseteq I$, or $I, J$ are disjoint
\item For any set of $k$ disjoint sets $I_1, \ldots, I_k \in N$, $k \ge 2$, we find $\bigcup_{i=1}^k I_i$ is not in $B$.
\end{enumerate}
\end{definition}

The collection of sets of the form $N\backslash B_{max}$, where $N$ is $B$-nested, is a simplicial complex, which we call the \emph{nested complex} of $B$.


\begin{definition}
A \emph{classical nestohedron} of a classical building set $B$ is a simple polyhedron whose dual simplicial complex is isomorphic to the nested complex of $B$.
\end{definition}

With these definitions restated for this thesis, we can prove the following statement about classical nested complexes.

\begin{proposition}\label{prop:product-of-simplices}
The nested complex of a classical building set $B$ with $B_{max}=\{S_1,\ldots,S_k\}$ is equal to the Cartesian product of nested complexes $\mathcal{N}(\B_i,\Delta_i)$, where $\Delta_i$ is the proper subset complex of $S_i$, and $\B_i$ is the $\Delta_i$-building set defined by restricting $B$ to $\Delta_i$.
\end{proposition}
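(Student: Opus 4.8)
The plan is to translate the statement about classical nested complexes into the language of $\Delta$-nested complexes already developed in this chapter, and then invoke Proposition \ref{prop:nestohedron-of-product} (or really its combinatorial shadow, the Cartesian product decomposition of nested complexes). First I would observe that a classical building set $B$ on a set $S$ is, up to the bookkeeping of $B_{\max}$, the same data as a $\Delta$-building set where $\Delta$ is the subset complex of $S$: the only discrepancy is that classical nested sets are required to contain all of $B_{\max}$ and are then quotiented by $B_{\max}$, whereas in the $\Delta$-nested setting we would work with the proper subset complex of each component. So the first step is to make precise the dictionary: a classical nested set $N$ (which contains $B_{\max}$) corresponds bijectively to the $\Delta'$-nested set $N \setminus B_{\max}$, where $\Delta'$ is the union (Cartesian product) of the proper subset complexes of the $S_i$, and $\B'$ is the restriction of $B$ to $\Delta'$. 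This is essentially the remark already made in the text that the nested complex of $B$ is the collection of sets $N \setminus B_{\max}$.

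Next I would break $N \setminus B_{\max}$ along the partition $\{S_1,\ldots,S_k\}$. Since $N$ is classical nested, condition (2) forces every element of $N$ other than the $S_i$ to either be contained in some $S_j$ or be disjoint from it; combined with $B_{\max}$ being a partition, this means every element $I \in N \setminus B_{\max}$ satisfies $I \subsetneq S_j$ for exactly one $j$. So $N \setminus B_{\max}$ decomposes as a disjoint union $N_1 \sqcup \cdots \sqcup N_k$ with $N_i \subseteq \{I \in B : I \subsetneq S_i\}$, and conversely any such tuple reassembles to a classical nested set. The remaining work is to check that $N_1 \sqcup \cdots \sqcup N_k$ is classical nested for $B$ if and only if each $N_i$ is $\B_i$-nested for $\Delta_i$ the proper subset complex of $S_i$. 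The forward direction is immediate from restricting conditions (2) and (3) of the classical nested definition to sets inside a single $S_i$; the reverse direction needs the observation that two sets $I \in N_i$, $I' \in N_j$ with $i \ne j$ are automatically disjoint (being proper subsets of distinct blocks of a partition), so the cross-block instances of conditions (2) and (3) are vacuous or handled by the fact that a union of sets spanning two different $S_i$'s cannot lie in any single element of $B$ — here I would cite that $B_{\max}$ being a partition means no element of $B$ straddles two components, which is exactly the content of the "connected components" proposition stated just above.

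Having established that classical nested sets for $B$ are in inclusion-preserving bijection with $k$-tuples $(N_1,\ldots,N_k)$ of $\B_i$-nested sets, the conclusion that the nested complex of $B$ equals the Cartesian product $\prod_i \mathcal{N}(\B_i,\Delta_i)$ follows directly from the definition of the Cartesian product of simplicial complexes (the faces are unions of faces, one from each factor, with disjoint vertex sets — and the vertex sets are disjoint precisely because the $S_i$ are). I would also note in passing that each $\Delta_i$, the proper subset complex of $S_i$, is the dual simplicial complex of a simplex, so $\mathcal{N}(\B_i,\Delta_i)$ is the nested complex of a simplex-nestohedron, which is why the proposition is the combinatorial statement behind realizing a disconnected classical nestohedron as a product of connected simplex-nestohedra.

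The main obstacle, such as it is, is purely bookkeeping: keeping straight the $\pm B_{\max}$ shift between the classical convention (nested sets contain $B_{\max}$, then one mods out) and the $\Delta$-nested convention (work over the proper subset complex from the start), and verifying carefully that conditions (2) and (3) in the classical definition restricted to sets inside a fixed block $S_i$ are literally conditions (2) and (3) for $\B_i$-nestedness over $\Delta_i$ — using in particular that the ambient complex $\Delta_i$ being the full proper subset complex of $S_i$ means the "$\bigcup N' \in \Delta$" clause of Proposition \ref{complex-nestedset} is automatic for any $N' \subsetneq$ the block, so only the "$\bigcup N' \notin \B_i$" clause carries content, matching classical condition (3). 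There is no real analytic or geometric difficulty; the proposition is a repackaging, and the only place to be careful is the cross-block vacuity argument, which rests on $B_{\max}$ being a genuine partition.
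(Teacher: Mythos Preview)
Your proposal is correct and follows essentially the same route as the paper: translate the classical data into the $\Delta$-building set language over the proper subset complex of each $S_i$, then decompose $N\setminus B_{\max}$ block by block along the partition. The paper's own proof is terser—it does the connected case $k=1$ explicitly and then declares the $k\ge 2$ decomposition ``trivial''—so your cross-block vacuity argument (that elements from different blocks are automatically disjoint and their union cannot lie in $B$) is precisely the content the paper elides. One small remark: you need not invoke Proposition~\ref{prop:nestohedron-of-product}, which is a polyhedral statement; the combinatorial Cartesian-product conclusion follows directly from your bijection and the definition of the product of simplicial complexes, as you yourself note.
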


\begin{proof}
Consider the case where $B_{max}$ contains only the base set $S$ of $B$. Write $\B=B \backslash \{S\}$. We see that for sets $I, J \in B$ with $I \subset S, J \subset S$ and $I \cap J \ne \emptyset$ that $I \cup J \in \B$ if and only if $I \cup J \ne S$. This corresponds to the $\Delta$-building set definition if $\Delta$ is the proper subset complex of $S$. As a result, $B$ is a classical building set if and only if $\B$ is a simplex-building set. It is then trivial to show that in this case, a set $N$ containing $S$ is $\B$-nested if and only if $N\backslash \{S\}$ is $B$-nested.

In the case where $B_{max}=\{S_1,\ldots,S_k\}$ with $k \ge 2$, define $\Delta=\prod_{i=1}^k \Delta_i$, where $\Delta_i$ is the proper subset complex of $S_i$. Define $\B_i$ as $B$ restricted to $\Delta_i$. It is then trivial to see that each building set $B$ decomposes into $B_{max}$ and a set of smaller building sets, and that each nested complex $N$ can be decomposed into the union of $B_{max}$ and a series of $\Delta_i$-nested sets $N_i$ of $\B_i$ for each $i \in [k]$.
\end{proof}

As an immediate consequence of Proposition \ref{prop:product-of-simplices}, we obtain the following statement about nestohedra.

\begin{proposition}\label{prop:simplex-nestohedra}
A polyhedron is a classical nestohedron if and only if it is combinatorially isomorphic to the Cartesian product of a set of simplex-nestohedra.
\end{proposition}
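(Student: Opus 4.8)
The plan is to deduce Proposition~\ref{prop:simplex-nestohedra} almost directly from Proposition~\ref{prop:product-of-simplices}, using the fact (Remark~\ref{remark:dual-simplicial-complex-non-uniqueness}) that a simple polyhedron's combinatorial type is pinned down by its dual simplicial complex together with its dimension, and that products of simple polyhedra behave well at the level of dual complexes (Proposition~\ref{prop:polyhedral-product}). The statement to prove is an ``if and only if'', so I would treat the two directions separately.

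\textbf{Forward direction.} Suppose $R$ is a classical nestohedron, so by definition $R$ is a simple polyhedron whose dual simplicial complex $\Delta(R)$ is isomorphic to the nested complex of some classical building set $B$, say with $B_{max} = \{S_1,\ldots,S_k\}$. By Proposition~\ref{prop:product-of-simplices}, that nested complex equals the Cartesian product of the complexes $\mathcal{N}(\B_i, \Delta_i)$, where $\Delta_i$ is the proper subset complex of $S_i$ (i.e.\ the boundary complex of a simplex on vertex set $S_i$) and $\B_i$ is the restriction of $B$ to $\Delta_i$. Each $\B_i$ is a $\Delta_i$-building set, so by the construction of $\p$-nestohedra (the truncation theorem in Section~\ref{sec:nestohedra}) there is a simplex-nestohedron $\nest{\Sigma_i}{\B_i}$ whose dual complex is $\mathcal{N}(\B_i,\Delta_i)$, where $\Sigma_i$ is a simplex. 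Now $\Delta(R)$ is isomorphic to $\prod_i \Delta(\nest{\Sigma_i}{\B_i})$, which by Proposition~\ref{prop:polyhedral-product} is the dual complex of the product $\prod_i \nest{\Sigma_i}{\B_i}$. Both $R$ and this product are simple polyhedra with isomorphic dual complexes; to invoke Remark~\ref{remark:dual-simplicial-complex-non-uniqueness} I need them to have the same dimension (or same lineality-space dimension). Since a classical nestohedron is a bona fide polytope (bounded, pointed) and each simplex-nestohedron is likewise bounded and pointed, both are pointed, so the dual complex determines the combinatorial type uniquely and $R$ is combinatorially isomorphic to $\prod_i \nest{\Sigma_i}{\B_i}$, a product of simplex-nestohedra.

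\textbf{Reverse direction.} Conversely, suppose $R$ is combinatorially isomorphic to a Cartesian product $\prod_{i=1}^k Q_i$ of simplex-nestohedra, where $Q_i = \nest{\Sigma_i}{\B_i}$ for a simplex $\Sigma_i$ on a vertex set $S_i$ and a $\Delta_i$-building set $\B_i$ (here $\Delta_i$ the proper subset complex of $S_i$). Taking the $S_i$ pairwise disjoint, I would assemble $B := \bigcup_i \B_i \cup \{S_1,\ldots,S_k\}$ on the ground set $S = \bigsqcup_i S_i$. I need to check $B$ is a classical building set: condition~(1) holds because each $\B_i$ contains all singletons of $S_i$; condition~(2) holds because any two intersecting sets in $B$ lie in a common $\B_i$ (disjoint $S_i$'s force this), and within that block the union is either a set of $\B_i$ or equals $S_i$, hence in $B$ either way. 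By construction $B_{max} = \{S_1,\ldots,S_k\}$. Then Proposition~\ref{prop:product-of-simplices} identifies the nested complex of $B$ with $\prod_i \mathcal{N}(\B_i,\Delta_i) = \prod_i \Delta(Q_i)$, which by Proposition~\ref{prop:polyhedral-product} is $\Delta(\prod_i Q_i)$. Since $R$ is combinatorially isomorphic to $\prod_i Q_i$, its dual complex is the nested complex of $B$, so $R$ is a classical nestohedron.

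\textbf{Main obstacle.} The routine parts are the building-set bookkeeping in the reverse direction and citing Proposition~\ref{prop:polyhedral-product}. The one genuine subtlety is the dimension/lineality matching needed to pass from ``isomorphic dual simplicial complexes'' to ``combinatorially isomorphic polyhedra'' via Remark~\ref{remark:dual-simplicial-complex-non-uniqueness}: I must be careful that classical nestohedra and simplex-nestohedra are both pointed (equivalently, bounded polytopes here), so that the dual complex does determine the combinatorial type uniquely and no spurious lineality spaces appear. Handling the degenerate cases where some $S_i$ is a singleton (so $\Sigma_i$ is a point and $Q_i$ is a point) also needs a sentence, but this is already anticipated by the edge-case discussion in the proof of Theorem~\ref{thm:reconnectednestohedra1}.
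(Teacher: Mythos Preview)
Your proposal is correct and follows exactly the route the paper intends: the paper gives no separate proof of Proposition~\ref{prop:simplex-nestohedra} at all, stating it only as ``an immediate consequence of Proposition~\ref{prop:product-of-simplices}.'' Your argument spells out precisely that deduction (via Proposition~\ref{prop:polyhedral-product} and Remark~\ref{remark:dual-simplicial-complex-non-uniqueness}), and your attention to the pointedness issue is more careful than the paper itself.
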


We also note the following as a result of Proposition \ref{prop:nestohedron-of-product}.

\begin{proposition}
The nestohedron of a classical building set $B$ with $B_{max}=\{S_1,\ldots,S_k\}$ is isomorphic to the $P$-nestohedron of $\B$, where $\B=B\backslash B_{max}$, and $P=\prod_{i=1}^k D_i$, where $D_i$ is the simplex with facet set $S_i$.
\end{proposition}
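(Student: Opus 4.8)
The strategy is to reduce the claim to two results already in hand: the product decomposition of a classical nested complex (Proposition~\ref{prop:product-of-simplices}) and the behaviour of $\p$-nestohedra under Cartesian products (Proposition~\ref{prop:nestohedron-of-product}). Write $\Delta_i$ for the proper subset complex of $S_i$, so $\Delta_i = \Delta(D_i)$, and let $\B_i = B \cap \Delta_i$ be the restriction of $B$ to $\Delta_i$; by the remark following the definition of restriction, each $\B_i$ is a $D_i$-building set. The one genuinely load-bearing step is the bookkeeping identity $\B = \bigcup_{i=1}^{k}\B_i$, and I expect this to be the main obstacle. To prove it I would use the standard fact (extracted from building-set axiom (2)) that every nonempty $T\in B$ is contained in a unique connected component $S_i$: existence, because a maximal element of $B$ containing $T$ is maximal in $B$; uniqueness, because $T\subseteq S_i$ and $T\subseteq S_j$ give $S_i\cap S_j\supseteq T\neq\emptyset$, hence $S_i\cup S_j\in B$, forcing $S_i=S_j$ by maximality. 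So $T\in B\setminus B_{max}$ iff $T\subsetneq S_i$ for exactly one $i$, which says precisely $\B=\bigcup_i\B_i$; moreover the $\B_i$ are pairwise disjoint and supported on the pairwise disjoint facet sets $S_i$ (disjoint because $B_{max}$ partitions $S$).

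Once this identification is in place, the remainder is assembly. Applying Proposition~\ref{prop:nestohedron-of-product} inductively over $i=1,\dots,k$ — each step legitimate since the facet sets are disjoint and products of simple polyhedra are simple — yields
\[
\nest{P}{\B}\;\cong\;\prod_{i=1}^{k}\nest{D_i}{\B_i},
\]
a Cartesian product of simplex-nestohedra. By Proposition~\ref{prop:polyhedral-product}, applied inductively, the dual simplicial complex of this product is $\prod_{i=1}^{k}\Delta(\nest{D_i}{\B_i})=\prod_{i=1}^{k}\mathcal{N}(\B_i,\Delta_i)$, which by Proposition~\ref{prop:product-of-simplices} is exactly the classical nested complex of $B$. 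Hence $\nest{P}{\B}$ is a simple polyhedron whose dual simplicial complex is isomorphic to the nested complex of $B$; that is, it \emph{is} a classical nestohedron of $B$. Since $P$ is a product of simplices, hence a polytope, and truncations of polytopes are polytopes, $\nest{P}{\B}$ is pointed, as is any classical nestohedron realized as a polytope, so by Remark~\ref{remark:dual-simplicial-complex-non-uniqueness} the dual simplicial complex determines the polyhedron up to combinatorial isomorphism, which finishes the argument. In fact one need not even invoke this uniqueness: ``the nestohedron of $B$'' and ``the $P$-nestohedron of $\B$'' are by definition \emph{any} simple polyhedra carrying the indicated dual complexes, and the computation shows those complexes coincide.

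The only point requiring care beyond the identity above is the degenerate case $|S_i|=1$: there $D_i$ is a single point, $\Delta_i=\{\emptyset\}$, and $\B_i=\emptyset$, which is consistent with the singleton $S_i$ lying in $B_{max}$ (so it is correctly dropped when forming $\B$) and with the point-factor conventions spelled out in the proof of Theorem~\ref{thm:reconnectednestohedra1}; such an $i$ simply contributes a trivial factor to every product appearing above, so it does not affect the conclusion.
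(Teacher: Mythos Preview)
Your proposal is correct and follows the same route the paper intends: the paper states this proposition simply ``as a result of Proposition~\ref{prop:nestohedron-of-product}'' with no further argument, and you have filled in precisely the details that statement elides, namely the decomposition $\B=\bigcup_i\B_i$ and the application of Proposition~\ref{prop:product-of-simplices} to identify the dual complex. Your treatment of the degenerate $|S_i|=1$ case and of pointedness is more careful than anything the paper spells out.
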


%

\begin{remark}
It should also be noted that every classical nestohedron on $S$ with building set $B$ is combinatorially isomorphic to a face of a $P$-nestohedron with building set $B$, where $P$ is a simplicial cone isomorphic to the product of $|S|$ rays. This is the construction one achieves when one does not require the set $B_{max}$ to be contained as as subset of every $B$-nested set. Combinatorially, this structure is not much more interesting than the nestohedron, and we note that this cone-nestohedron is isomorphic to the product of the classical nestohedron $\mathcal{K}_B$ times the product of $|B_{max}|$ rays. 
\end{remark}

We can now focus on defining classical graph associahedra. Graph associahedra are mentioned as a special case of nestohedra in \cite{postnikov}, and are studied in great depth in \cite{devadoss2011}. It is important to note here that two slightly different definitions of graph associahedra exist in the literature. When $G$ is a connected graph, the two definitions are equivalent, but when $G$ is not connected, the two are different, although both are nestohedra. Many papers elide the issue by focusing on the case where a graph is connected. Throughout this paper, the relevant definition is Definition \ref{def:graphassociahedra}, as these classical graph associahedra are isomorphic to simplex-graph associahedra.

Once we have proven that these graph associahedra are in fact simplex-graph associahedra, we will use the term simplex-graph associahedron over graph associahedron to avoid confusion.

\danger{I realized I was trying to prove stuff while elaborating on the implications of a definition, so I decided I might as well make a definition and a proposition.}

The following definition is equivalent to the one defined in \cite{devadoss2011}

\begin{definition}\label{def:graphassociahedra}
A \emph{classical graph associahedron} of a graph $G$ on set $S$ is a nestohedron whose building set $B$ contains all proper vertex sets of $G$ which induce connected subgraphs, as well as the entire vertex set of $G$.
\end{definition}

\begin{proposition}
The classical graph associahedron of a graph $G$ is equal to the simplex-graph associahedron of $G$.
\end{proposition}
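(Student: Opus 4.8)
The plan is to strip both sides down to the simplicial complex that defines them and check these complexes are literally the same, then invoke Remark \ref{remark:dual-simplicial-complex-non-uniqueness}. Write $V = V(G)$ and let $D$ be the simplex whose facet set is $V$; recall (as used in the proof of Theorem \ref{thm:reconnectednestohedra1}) that $\Delta(D)$ is the proper subset complex of $V$. First I would unwind the right-hand side: the simplex-graph associahedron of $G$ is the $D$-graph associahedron, i.e.\ the $D$-nestohedron $\nest{D}{\B_{(G,\Delta(D))}}$, where $\B_{(G,\Delta(D))}$ is the graphical building set of the $\Delta(D)$-graph $(G,\Delta(D))$. By the definition of a tube of a $\Delta$-graph, $\B_{(G,\Delta(D))}$ is exactly the collection of faces of $\Delta(D)$ — that is, proper subsets of $V$ — that induce connected subgraphs of $G$. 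Hence $\Delta(\nest{D}{\B_{(G,\Delta(D))}})$ is isomorphic to $\mathcal{N}(\B_{(G,\Delta(D))},\Delta(D))$.

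Next I would unwind the left-hand side. The classical graph associahedron of $G$ is a classical nestohedron of the classical building set $B$ consisting of all proper vertex subsets of $G$ inducing connected subgraphs together with $V$ itself; one checks quickly that this $B$ is indeed a classical building set. Since $V \in B$, every member of $B$ other than $V$ is a proper subset of a member of $B$, so $B_{\max} = \{V\}$ (this is precisely why Definition \ref{def:graphassociahedra} adjoins $V$). Now apply Proposition \ref{prop:product-of-simplices} in the case $k = 1$: the nested complex of $B$ equals $\mathcal{N}(\B_1,\Delta_1)$, where $\Delta_1$ is the proper subset complex of $V$ and $\B_1$ is the restriction of $B$ to $\Delta_1$. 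But $\Delta_1 = \Delta(D)$, and $\B_1 = B \setminus \{V\}$ is exactly the set of proper vertex subsets inducing connected subgraphs, i.e.\ $\B_1 = \B_{(G,\Delta(D))}$ from the previous paragraph.

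Combining the two paragraphs, the nested complex of $B$ — which by definition is the dual simplicial complex of any classical graph associahedron of $G$ — is identical to $\mathcal{N}(\B_{(G,\Delta(D))},\Delta(D))$, which is the dual simplicial complex of any simplex-graph associahedron of $G$. Thus the two notions impose the very same condition on a simple polyhedron, so the classes of polyhedra they define coincide; in particular the pointed (equivalently, polytopal) representatives form a single combinatorial isomorphism class by Remark \ref{remark:dual-simplicial-complex-non-uniqueness}, which is the assertion.

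I expect the only real friction to be bookkeeping: keeping the two parallel vocabularies straight (classical building sets and classical nested sets, whose nested sets must contain $B_{\max}$, versus $\Delta$-building sets and $\Delta$-nested sets, which need not), verifying that the collection of tubes $\B_{(G,\Delta(D))}$ is literally the same set system as $B \setminus \{V\}$, and checking the degenerate cases $|V| \le 2$ by inspection (for $|V| \ge 3$ every edge of $G$ is a proper $2$-subset of $V$, so $(G,\Delta(D))$ is a genuine $\Delta(D)$-graph; the small cases reduce to a point or a segment). There is no substantive geometric or combinatorial obstacle here — the content is entirely in matching the two sets of definitions and in the $k=1$ instance of Proposition \ref{prop:product-of-simplices}.
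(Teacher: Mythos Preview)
Your proposal is correct and follows essentially the same approach as the paper: both arguments reduce to the $k=1$ case of Proposition \ref{prop:product-of-simplices}, identifying the classical building set $B$ (with $B_{\max}=\{V\}$) with the $\Delta(D)$-building set $B\setminus\{V\}=\B_{(G,\Delta(D))}$ on the proper subset complex. The paper's proof is a one-sentence invocation of the bijection between connected classical nestohedra and simplex-nestohedra, while yours spells out the same identification more explicitly and adds the bookkeeping about $|V|\le 2$; no substantive difference.
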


\begin{proof}
We have already established a bijection between simplex-nestohedra and nestohedra for which $B_{max}$ contains an entire base set. As a result, we see that a simplex-graph associahedron is a nestohedron whose building set $B$ contains all proper vertex sets of $G$ which induce connected subgraphs, as well as the entire vertex set of $G$.
\end{proof}


We can define an alternate nestohedron whose building set only contains graph tubes. This is the definition of graph associahedra presented in \cite{postnikov}.

\begin{definition}
The \emph{component-product classical graph associahedron} of a graph $G$ on vertex set $\mathcal{S}$ is the nestohedron whose building set $B$ contains all subsets of $\mathcal{S}$ which induce connected subgraphs of $G$.
\end{definition}

When $G$ is not connected, $G$ is the product of the simplex-graph associahedra of each component subgraph of $G$, and will be lower-dimensional than the simplex-graph associahedron of $G$. We should note that this graph associahedron is less relevant to the research in this paper than the classical graph associahedron; in general, when we present an unconnected $\p$-graph, we do not expect $\p$-graph associahedra in general to be isomorphic to a product of $\p$-graph associahedra of component graphs.

	\subsection{Forbidden Subset Diagrams}\label{sub:forbiddensubsetdiagrams}

%
%
%

	Simplicial complexes can be defined in multiple ways. Given a base set $\mathcal{S}$ and a collection of subsets $S_1, \ldots, S_k$ of $\mathcal{S}$. We say that the \emph{forbidden subsets} $S_1, \ldots, S_k$ define a simplicial complex $\Delta$ consisting of all subsets $J$ of $\mathcal{S}$ such that $S_i \not\subseteq J$ for each $1 \le i \le k$.

	Note that it is possible for two sets of forbidden subsets to define the same simplicial complex. However, for any simplicial complex $\Delta$, there is a unique minimal set of forbidden subsets required to define $\Delta$. A set of forbidden subsets is minimal if and only if no two forbidden subsets are incomparable. We can call the minimal set of forbidden subsets defining $\Delta$ the \emph{circuit set} of $\Delta$. Every set of forbidden subsets can be turned into a circuit set by removing any subsets contained in other forbidden subsets.




	A \emph{forbidden subset diagram} is a hypergraph on a vertex set $\mathcal{S}$, whose edges are forbidden subsets. It is drawn with dashed edges connecting faces of two elements, and dashed shapes drawn around forbidden subsets containing more than 2 vertices, as in Figure \ref{fig:circuitdiagram}.

	\begin{figure}[h]
	\centering
	\includegraphics[width=.75\textwidth]{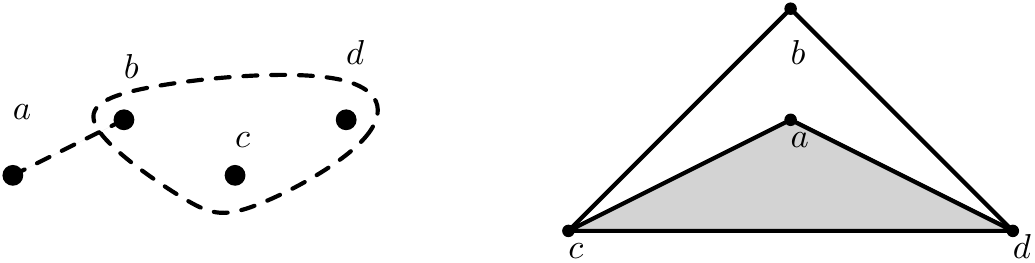}
	\caption{A forbidden subset diagram on the left, and the equivalent simplicial complex on the right.}\label{fig:circuitdiagram}
\end{figure}

	We refer to the forbidden subset diagram of the dual simplicial complex of a simple polyhedron as the forbidden subset diagram of that polyhedron. Figure \ref{fig:circuitdiagramprimitives} illustrates several low-dimensional examples. The forbidden subset diagram of a single ray is a graph containing one vertex and no edges. The forbidden subset diagram of a simplex on two vertices is a graph on two vertices with a single dashed edge, and the forbidden subset diagram of a simplex on $n$ vertices is a graph on $n$ vertices with a forbidden subset containing the entire graph.

	\begin{figure}[h]
	\centering
	\includegraphics[width=.75\textwidth]{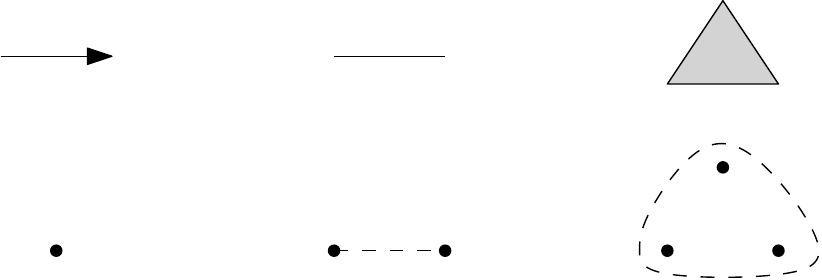}
	\caption{Simple polyhedra, and their forbidden subset diagrams below.}\label{fig:circuitdiagramprimitives}
\end{figure}

The following proposition is a trivial result from Proposition \ref{prop:polyhedral-product}:

	\begin{proposition}
		For two simple polyhedra $P_1, P_2$, the forbidden subset diagram of their Cartesian product, $P_1+P_2$, is equal to the disjoint union of the forbidden subset diagrams of $P_1$ and $P_2$.
	\end{proposition}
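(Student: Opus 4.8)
The plan is to invoke Proposition~\ref{prop:polyhedral-product}, which already tells us that the dual simplicial complex of $P_1 \times P_2$ is combinatorially equivalent to the Cartesian product $\Delta(P_1) \times \Delta(P_2)$ of simplicial complexes, and then simply translate this statement about simplicial complexes into the language of forbidden subset diagrams. So the real content of the proof is a lemma of the form: if $\Delta_1$ and $\Delta_2$ are simplicial complexes on disjoint vertex sets $\mathcal{S}_1$ and $\mathcal{S}_2$, then the circuit set of the Cartesian product $\Delta_1 \times \Delta_2 = \{S_1 \cup S_2 \mid S_1 \in \Delta_1, S_2 \in \Delta_2\}$ is exactly the union of the circuit set of $\Delta_1$ and the circuit set of $\Delta_2$. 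Combined with the convention that the forbidden subset diagram is the hypergraph on the base set whose edges are the circuit set, this immediately yields that the forbidden subset diagram of $\Delta_1 \times \Delta_2$ is the disjoint union of the two diagrams (disjoint as hypergraphs, since the vertex sets $\mathcal{S}_1, \mathcal{S}_2$ are disjoint and no circuit of the product can straddle both parts).

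First I would establish that a subset $J \subseteq \mathcal{S}_1 \cup \mathcal{S}_2$ lies in $\Delta_1 \times \Delta_2$ if and only if $J \cap \mathcal{S}_1 \in \Delta_1$ and $J \cap \mathcal{S}_2 \in \Delta_2$; this is immediate from the definition of the Cartesian product of simplicial complexes and the disjointness of the ground sets. Next I would observe that consequently the minimal non-faces (forbidden subsets) of $\Delta_1 \times \Delta_2$ are precisely the minimal non-faces of $\Delta_1$ together with the minimal non-faces of $\Delta_2$: a set $T$ that is a minimal non-face of the product cannot meet both $\mathcal{S}_1$ and $\mathcal{S}_2$ nontrivially (if it did, one of $T \cap \mathcal{S}_i$ is already a non-face of $\Delta_i$, hence already a non-face of the product and a proper subset of $T$, contradicting minimality), so $T \subseteq \mathcal{S}_i$ for some $i$, and then $T$ being a minimal non-face of the product is equivalent to $T$ being a minimal non-face of $\Delta_i$. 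This gives the circuit-set identity, and the proposition follows.

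I expect the main (and only non-routine) obstacle to be bookkeeping around the definition of ``disjoint union of forbidden subset diagrams'': one must be slightly careful that a forbidden subset diagram is a hypergraph on the \emph{full} base set, including isolated vertices (as in Figure~\ref{fig:circuitdiagramprimitives}, where a ray's diagram is a single vertex with no edges), so the disjoint union really does carry along all of $\mathcal{S}_1 \sqcup \mathcal{S}_2$ as vertices, not just those appearing in circuits. Since Proposition~\ref{prop:polyhedral-product} does the heavy lifting, the whole argument is short, and indeed the statement is flagged in the excerpt as ``a trivial result from Proposition~\ref{prop:polyhedral-product}''; the write-up just needs to make the translation between simplicial-complex language and diagram language explicit.

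\begin{proof}
By Proposition~\ref{prop:polyhedral-product}, $\Delta(P_1 \times P_2)$ is combinatorially equivalent to the Cartesian product $\Delta(P_1) \times \Delta(P_2)$, where the facet set of $P_1 \times P_2$ is indexed by the disjoint union $\mathcal{S}_1 \sqcup \mathcal{S}_2$ of the facet sets of $P_1$ and $P_2$. Since the forbidden subset diagram of a simple polyhedron depends only on its dual simplicial complex, it suffices to show that the circuit set of $\Delta_1 \times \Delta_2$ is the union of the circuit sets of $\Delta_1$ and $\Delta_2$, where $\Delta_i = \Delta(P_i)$.

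A subset $J \subseteq \mathcal{S}_1 \sqcup \mathcal{S}_2$ is a face of $\Delta_1 \times \Delta_2$ if and only if $J \cap \mathcal{S}_1 \in \Delta_1$ and $J \cap \mathcal{S}_2 \in \Delta_2$, directly from the definition of the Cartesian product of simplicial complexes. Suppose $T$ is a minimal non-face of $\Delta_1 \times \Delta_2$. If $T$ met both $\mathcal{S}_1$ and $\mathcal{S}_2$ nontrivially, then $J = T$ failing to be a face would force $T \cap \mathcal{S}_1 \notin \Delta_1$ or $T \cap \mathcal{S}_2 \notin \Delta_2$; in either case that intersection is a proper subset of $T$ which is itself a non-face of $\Delta_1 \times \Delta_2$, contradicting minimality of $T$. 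Hence $T \subseteq \mathcal{S}_i$ for some $i$, and then $T \notin \Delta_1 \times \Delta_2$ is equivalent to $T \notin \Delta_i$, while minimality of $T$ in the product is equivalent to minimality of $T$ in $\Delta_i$ (every proper subset of $T$ lies in $\mathcal{S}_i$, and is a face of the product precisely when it is a face of $\Delta_i$). Thus $T$ is a circuit of $\Delta_i$. Conversely, any circuit of $\Delta_i$ is, by the same equivalence, a minimal non-face of $\Delta_1 \times \Delta_2$.

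Therefore the circuit set of $\Delta(P_1 \times P_2)$ is the disjoint union of the circuit sets of $\Delta(P_1)$ and $\Delta(P_2)$, and since the vertex sets $\mathcal{S}_1$ and $\mathcal{S}_2$ are disjoint, the forbidden subset diagram of $P_1 \times P_2$ is the disjoint union of the forbidden subset diagrams of $P_1$ and $P_2$.
\end{proof}
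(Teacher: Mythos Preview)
Your proof is correct and follows exactly the approach the paper indicates: the paper gives no explicit proof, merely stating the proposition as ``a trivial result from Proposition~\ref{prop:polyhedral-product},'' and your argument fills in precisely the details one would expect—reducing to the simplicial-complex statement via that proposition and then verifying that the minimal non-faces of a Cartesian product of simplicial complexes on disjoint ground sets are the union of the minimal non-faces of the factors.
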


With this, we can very easily characterize the forbidden subset diagrams of Cartesian products of rays and simplices, especially 1-simplices whose forbidden subsets are dashed edges. Chapter \ref{chap:hypercubes} characterizes hypercube-graph associahedra using forbidden subset diagrams, and Chapter \ref{chap:res2} includes several cases, such as the Double Path Graph or Twisted Cycle Graph, whose facial enumeration requires enumeration of faces of $P$-graph associahedra where $P$ is the product of a hypercube and a set of rays. However, we note that a more fleshed-out theory of $\p$-graph associahedra is outside of the scope of this thesis.
%
%
%
%

	\section{Graphic matroid and edge-tubing complexes} \label{sub:subtrees}

One interesting application of $\Delta$-graph tubings is the theory of edge-tubings, or subtree tubings, presented here. Consider a graph $G=(V,E)$ consisting of vertex set $V$ and edge set $E$. Two edges $e_1, e_2$ are \emph{adjacent} if they share a vertex. The \emph{graphic matroid} is the simplicial complex $M_G$ with base set $E$ such that $E' \subseteq E$ is in $M_G$ if $(V,E')$ contains no cycles. These sets are called the \emph{independent sets} of the matroid $M_G$. The \emph{line graph} of a graph $G=(V,E)$ is the graph $L(G)=(E,E_{adj})$, where $\{e_1,e_2\}$ is an edge in $E_{adj}$ if edges $e_1,e_2 \in E$ share a vertex in $G$.

A \emph{subtree} of a graph $G$ is a collection of edges in $G$ which form a tree.

Given a subtree consisting of edges $e_1=\{u_1,v_1\}, \ldots, e_k=\{u_k,v_k\}$, the vertex set of that subtree is the set $e_1 \cup \cdots \cup e_k$, which is a subset of the vertex set $V$.

\begin{definition}
Two subtrees $t_1, t_2$ of a graph $G$ are compatible if and only if $t_1 \subseteq t_2, t_2 \subseteq t_1$, or the vertex sets of $t_1, t_2$ are disjoint.
\end{definition}

\begin{proposition}
Given a graph $G$, the simplicial complex of pairwise-compatible subtrees of $G$ is equal to the tubing complex $\mathcal{N}(L(G),M_G)$.
\end{proposition}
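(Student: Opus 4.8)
The plan is to expand both sides of the claimed identity over the common base set $E$ (the edge set of $G$) and check that they have the same faces. First I would confirm that $(L(G),M_G)$ is a $\Delta$-graph: an edge $\{e_1,e_2\}$ of $L(G)$ consists of two $G$-edges sharing a vertex, which together form a path and hence lie in the graphic matroid $M_G$. Next I would identify the tubes. A face $F\subseteq E$ of $M_G$ is precisely a forest of $G$, and $F$ induces a connected subgraph of $L(G)$ exactly when any two edges of $F$ are linked by a sequence of pairwise-adjacent edges of $F$, i.e.\ exactly when $F$ spans a connected subgraph of $G$. So the tubes of $(L(G),M_G)$ are exactly the subtrees of $G$, and the graphic building set $\B_{(L(G),M_G)}$ is the set of all subtrees.

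Then I would translate the compatibility conditions. The key observation is that, for two subtrees $t_1,t_2$, the weak-compatibility clause ``$t_1\cap t_2=\emptyset$ and there is no $L(G)$-edge between them'' is equivalent to ``$t_1$ and $t_2$ have disjoint vertex sets'': a common vertex of $t_1$ and $t_2$ lies either on a common edge (contradicting edge-disjointness) or on two distinct adjacent edges (producing an $L(G)$-edge between them), while disjoint vertex sets plainly forbid both. Hence pairwise weak compatibility of a set of subtrees is literally the subtree-compatibility relation of the definition above, and the only remaining point is that the extra strong-compatibility requirement --- that $\bigcup_{t\in T}E(t)\in M_G$, i.e.\ that this union is acyclic --- holds automatically for any pairwise-compatible family $T$ of subtrees.

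This last point is the substantive step, and I expect it to be the main obstacle. I would argue by contradiction: suppose $\bigcup_{t\in T}E(t)$ contains a cycle $C$ with edges $f_1,\dots,f_m$ in cyclic order, and choose for each $j$ a subtree $t_{(j)}\in T$ with $f_j\in E(t_{(j)})$. Let $S$ be one of $t_{(1)},\dots,t_{(m)}$ with the maximum number of edges; say $f_1\in E(S)$. Walking around $C$: if $f_j\in E(S)$, then $f_{j+1}$ shares an endpoint of $f_j$, which is a vertex of $S$, so $S$ and $t_{(j+1)}$ are not vertex-disjoint; by compatibility one contains the other, and maximality of $S$ forces $t_{(j+1)}\subseteq S$, so $f_{j+1}\in E(S)$. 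Inducting around the cycle gives $C\subseteq E(S)$, contradicting that $S$ is a tree.

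Combining the three steps: a set of subtrees of $G$ is pairwise compatible if and only if it is a tubing of $(L(G),M_G)$, which by the proposition identifying tubings with nested sets is exactly a face of $\mathcal{N}(L(G),M_G)$. Since both the complex of pairwise-compatible subtrees and $\mathcal{N}(L(G),M_G)$ are simplicial complexes on $E$ with the same faces, they coincide. Everything except the cycle argument in the third paragraph is routine unwinding of the definitions.
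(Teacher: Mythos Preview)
Your proof is correct and follows the same three-step outline as the paper: identify tubes with subtrees, match weak compatibility with subtree compatibility, and then show strong compatibility is automatic. The only real difference is in the last step. You give a careful cycle-walking argument; the paper dispatches it in one sentence by observing that a union of vertex-disjoint subtrees is a forest. Since in a pairwise-compatible family the union of all tubes equals the union of the \emph{maximal} ones, and maximal tubes are pairwise vertex-disjoint, acyclicity follows immediately. Your argument is not wrong, just longer than necessary; the reduction to maximal tubes is the shortcut you missed.
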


\begin{proof}
Consider a set $t$ of edges $e_1=\{u_1,v_1\}, \ldots, e_k=\{u_k,v_k\}$ of $G$. We note that $t$ induces a connected subgraph of $L(G)$ if and only if $t$ is connected in $G$. We also note that $t$ is cycle-free if and only if $t$ is an independent set in $M_G$. As a result, we find a set of edges is a subtree of $G$ if and only if it is a tube of the $M_G$-graph $L(G)$.

Now we consider weak compatibility rules. We need to show that $t_1, t_2$ are compatible subtree if and only if they are weakly compatible tubes in $L(G)$. This is true in the case that $t_1 \subseteq t_2$ or vice-versa. Now we note that two edges $e_1, e_2$ in $G$ have disjoint vertex sets if and only if there is no edge in $L(G)$ connecting the nodes $e_1, e_2$. From there, we see that two compatible disjoint subtrees have no shared vertices in their vertex sets if and only if $t_1$ and $t_2$ in $L(G)$ are not adjacent, and the subtrees $t_1, t_2$ are compatible as subtrees if and only if they are weakly compatible as tubes of $L(G)$.

Finally, we note that any union of subtrees with disjoint vertex sets cannot contain a cycle, and so any collection of weakly compatible tubes cannot have a union not in $L(G)$. As a result, a collection of subtrees is pairwise compatible in $G$ if and only if it is a tubing of $L(G)$ in $M_G$, proving our proposition.
\end{proof}

Call these tubings \emph{subtree tubings} of $G$. Figure \ref{fig:edge-tubings} shows a collection of subtree-tubings for a given graph, with subtrees drawn bold and in color. Note that the edges $abc$ do not form a subtree, and as a result do not form a tube in the $M_G$-graph $L(G)$.

\begin{figure}[h]
\centering
\includegraphics[width=.5\textwidth]{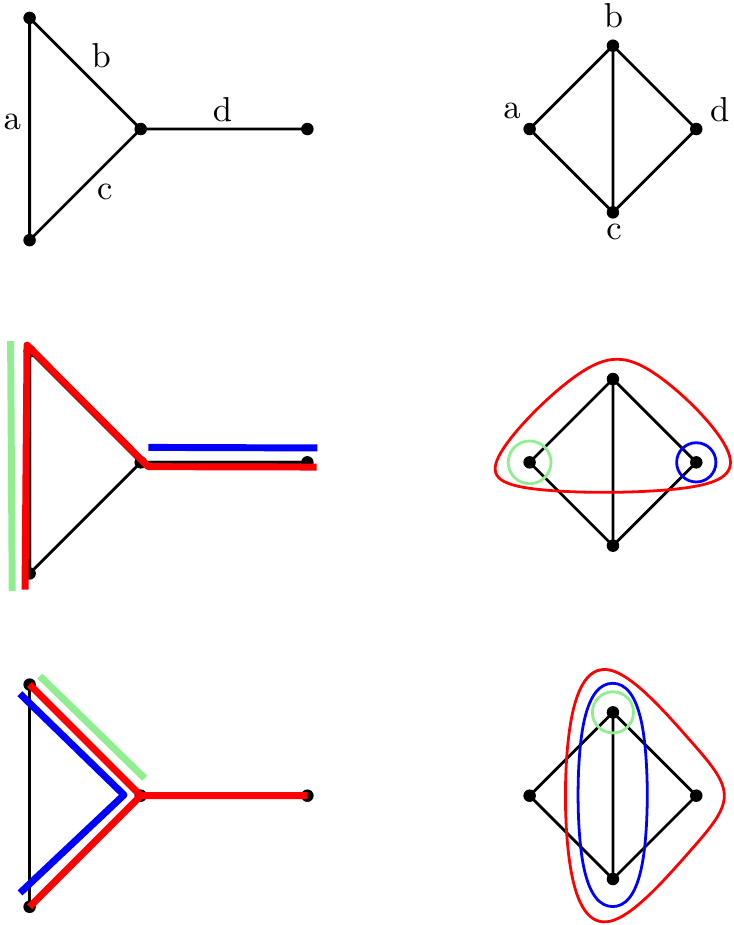}
\caption{An example graph (top left), its line graph (top right), and two subtree tubings with accompanying tubings in the line graph.}\label{fig:edge-tubings}
\end{figure}

We can draw subtree tubings similarly to standard tubings, by drawing shapes around edges instead of vertices. Figure \ref{fig:edge-tubings-cycle} shows a subtree tubing of a cycle on four vertices, and the corresponding graph tubing of its line-graph.

\begin{figure}[h]
\centering
\includegraphics[width=.5\textwidth]{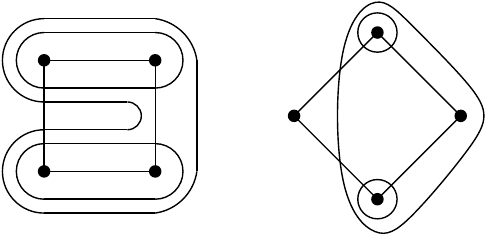}
\caption{subtree tubing of a cycle and its corresponding line-graph tubing.}\label{fig:edge-tubings-cycle}
\end{figure}

We are not familiar with any literature which describes these tubings. Clearly, when $G$ is a tree, $M_G$ is a Boolean lattice, and the set of subtree-tubings containing the entire graph $G$ is just equal to the classical graph tubing complex of $L(G)$; as a result, the graph associahedron of the line graph of a tree is connected to the subtree-tubing complex of that tree. Otherwise, we have not found these simplicial complexes in other contexts. 

%
%
%
%
\chapter{Normal fans of $\p$-nestohedra} \label{chap:normal-fans}

This chapter explores the normal fans of $\p$-nestohedra, in large part by restating results regarding preposets and the normal fans of classical nestohedra, and extending them to the $\Delta$-nested complex and $\p$-nestohedron cases. We start by describing preposets and braid cones in Sections \ref{sec:preposet-intro} and \ref{sec:classical-braid-cones}, describing existing results but providing an explicit realization of braid cones and focusing on preposets and braid cones defined by nested sets. We then introduce two new constructions, facial preposets and $\Delta$-braid cones, in Section \ref{sec:facial-preposets-etc}, which allow us to define fans from nested complexes. Section \ref{sec:fan-intersection} finds a way to define these nested complex fans as the coarsest common refinements of a set of simpler fans, and Section \ref{sec:barycentric-sub} characterizes nested complex fans as coarsenings of the barycentric subdivision of the fan of a simplicial complex. Finally, all of this comes together in Section \ref{sub:p-nestohedron-fans} to characterize the normal fans of $\p$-nestohedra.

\section{Preposets}\label{sec:preposet-intro}


A \emph{binary relation} $R$ on a set $S$ is a subset $R \subseteq S \times S$. A \emph{preposet} is a reflexive and transitive binary relation. This means that $(x,x) \in R$ for all $x \in S$, and if $(x,y),(y,z) \in R$, then $(x,z) \in R$. We will use the notation $x \preceq_R y$ to denote that $(x,y) \in R$.

An \emph{equivalence relation} $\equiv$ is a binary relation that is reflexive, symmetric, and transitive. Every preposet $Q$ defines an equivalence relation, defined such that $x \equiv_Q y$ if and only if $x \preceq_Q y$ and $y \preceq_Q x$. We can define a \emph{poset} as a preposet whose equivalence relation divides a set into singleton equivalence classes.
Every preposet $Q$ gives rise to a poset $Q / \equiv_Q$ on the equivalence classes of $S / \equiv_Q$.

Given two binary relations $R_1, R_2$ on a set $S$, let $R_1 \subseteq R_2$ denote containment as subsets of $S\times S$, and let $R_1 \cup R_2$ be the union of the two relations. If $R$ is a reflexive binary relation, define $\overline{R}$ to be the smallest preposet such that $R \subseteq \overline{R}$. We can call this the \emph{transitive closure} of $R$.

For preposets $P, Q$ on the same set $S$, note that the binary relation $P \cup Q$ is not necessarily a preposet. We note however that $\overline{P\cup Q}$ is.

A \emph{cover relation} in a preposet $Q$ is a special pair $x \lessdot_Q y$ such that $x \preceq_Q y$ and there exists no element $z$ such that $x \prec_Q z \prec_Q y$. The \emph{Hasse diagram} of a poset is an oriented graph on a poset whose edges are cover relations. The Hasse diagram of a preposet $Q$ is the Hasse diagram of the poset $Q/\equiv_Q$.

 Let $R^{op}$ be the \emph{opposite} of a binary relation $R$, such that $x \preceq_R y$ if and only if $y \preceq_{R^{op}} x$.

For two preposets $P$ and $Q$ on the same set, let us say that $Q$ is a \emph{contraction} of $P$ if there is a binary relation $R \subseteq P$ such that $Q = \overline{P\cup R^{op}}$. In other words, there is a way to obtain $Q$ from $P$ by merging certain equivalence classes in $P$ along relations in $P$, typically cover relations.

A preposet is a tree if its Hasse diagram is a tree. A preposet $Q$ is a \emph{rooted tree} if it is a tree, and for every equivalence class $I \in Q/\equiv_Q$, there is at most one set $J \in Q/\equiv_Q$ such that $I \preceq_{Q/\equiv_Q} J$. A forest is a preposet whose connected components are trees.

An \emph{order ideal} of a preposet $Q$ is a set $I$ such that, for $x,y \in Q$, if $x \preceq_Q y$ and $y \in I$, then $x \in I$.

%
%
%
%

\section{Preposets of classical nested sets and braid cones}\label{sec:classical-braid-cones}

Section 3 of \cite{postnikovfaces} outlines a bijection between preposets and braid cones. Braid cones are crucial to the study of generalized permutohedra, of which nestohedra are one example. In this section we will restate definitions and results for braid cones, before using them to define a generalization which we call \emph{$\Delta$-braid cones} or $\p$-braid cones.

\subsection{Preposets of classical nested sets}

\sloppy A \emph{principal order ideal} of a preposet $Q$ on vertex set $\mathcal{S}$ is an order ideal of the form ${I_y = \{x \in \mathcal{S}| x \preceq_Q y\}}$. The \emph{principal order ideal poset} of $Q$ is the collection of principal order ideals for all $y \in \mathcal{S}$ ordered by inclusion. For any preposet $Q$, the principal order ideal poset is isomorphic to the poset $Q/\equiv_Q$.

%

\danger{I have redefinted this preposet definition to something simpler to state, and slightly more general.}

\begin{definition} \label{def:preposet-from-nested-set-most-general}
For a set $N$ containing subsets of a base set $\mathcal{S}$, define a binary relation $P_N$ on the base set $\mathcal{S}$, such that $i \preceq j$ if and only if every set $I \in N$ that contains $j$ also contains $i$.
\end{definition}

We note that for a general set $N$ of subsets of a base set $\mathcal{S}$, this preposet is not unique and may not reflect all the structure of $N$. For example, if $N=\{\{1,2\},\{2,3\},\{1,3\}\}$ and $\mathcal{S}=\{1,2,3\}$, then $P_N$ is the empty preposet on $\mathcal{S}$. However, when $N$ is a nested set, we can define $P_N$ more concretely. Consider a classical building set $B$ on vertex set $\mathcal{S}$ and a $B$-nested set $N$. Remember that each classical nested set contains $B_{max}$ as a subset, so every vertex $i \in \mathcal{S}$ is contained in at least one set in $N$. For every element $i \in \mathcal{S}$, if $i \in I$ and $i \in J$ for $I, J \in N$, then the two sets cannot be disjoint or have nontrivial intersection, so either $I \subseteq J$ or $J \subseteq I$. As a result, for each element $i \in \mathcal{S}$, there is a unique smallest set $I_i \in N$ which contains $i$. The preposet $P_N$ can also be characterized in terms of these sets as follows.

\begin{proposition} \label{prop:smallest-containing-set-order-ideal}
For a classical $B$-nested set $N$ where $B$ has base set $\mathcal{S}$, the preposet $P_N$ is equal to the preposet on the base set $\mathcal{S}$ such that for any two elements $i, j \in \mathcal{S}$, $i \preceq j$ if and only if $I_i \subseteq I_j$.
\end{proposition}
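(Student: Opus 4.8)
The plan is to unwind the two descriptions of $P_N$ and check that their defining conditions agree for each pair of elements. Recall from the discussion immediately preceding the statement that, because $N$ is a classical $B$-nested set, every $i \in \mathcal{S}$ lies in some member of $N$ (since $B_{max} \subseteq N$), and any two members of $N$ sharing a common element are nested; hence for each $i \in \mathcal{S}$ the set $I_i \in N$, the smallest member of $N$ containing $i$, is well-defined. So what must be shown is that, for all $i,j \in \mathcal{S}$, the condition ``every $I \in N$ with $j \in I$ also satisfies $i \in I$'' (which is $i \preceq_{P_N} j$ by Definition \ref{def:preposet-from-nested-set-most-general}) is equivalent to $I_i \subseteq I_j$.

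For the forward implication I would assume that every member of $N$ containing $j$ also contains $i$. Applying this to $I_j$, which lies in $N$ and contains $j$, gives $i \in I_j$; so $I_j$ is a member of $N$ containing $i$, and minimality of $I_i$ yields $I_i \subseteq I_j$. For the converse I would assume $I_i \subseteq I_j$ and take any $I \in N$ with $j \in I$; minimality of $I_j$ among members of $N$ containing $j$ gives $I_j \subseteq I$, hence $I_i \subseteq I_j \subseteq I$, and since $i \in I_i$ we conclude $i \in I$. Combining the two directions gives $i \preceq_{P_N} j \iff I_i \subseteq I_j$, which is the claim.

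I do not expect a genuine obstacle here: all of the real content sits in the earlier observation that the smallest containing set $I_i$ exists, which rests on $N$ being nested together with $B_{max} \subseteq N$, and once that is available the equivalence is a one-line minimality argument in each direction. The only point I would take care to state explicitly is that nestedness of $N$ is used only through the well-definedness of the sets $I_i$, so this proposition is, in effect, just Definition \ref{def:preposet-from-nested-set-most-general} re-expressed in the nested case.
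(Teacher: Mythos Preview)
Your proposal is correct and is precisely the natural unfolding of the definitions; the paper in fact states this proposition without proof, treating it as immediate from the existence of the minimal containing sets $I_i$, so your argument is exactly the verification one would supply.
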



We can note that every set $I_i$ in $N$ is a principal order ideal of $P_N$, and we can state the following.

\begin{proposition}\label{prop:nested-sets-are-ideals}
For any classical nested set $N$, $N$ is the set of principal order ideals of $P_N$.
\end{proposition}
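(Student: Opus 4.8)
The plan is to show inclusion in both directions between $N$ and the set of principal order ideals of $P_N$. For the forward direction, I would take an arbitrary set $I \in N$ and show it is a principal order ideal. The natural candidate for the generator is any element $i \in I$ whose smallest containing set $I_i$ (in the sense of Proposition~\ref{prop:smallest-containing-set-order-ideal}) equals $I$ itself; such an element exists because, among all elements of $I$, at least one has $I$ as its smallest containing set — otherwise every element of $I$ would lie in a strictly smaller member of $N$, and since members of $N$ containing a common element form a chain, this would force $I$ to be a union of a chain of proper subsets lying strictly below it, which by nestedness cannot recover $I$ unless one of them equals $I$. Once we fix such an $i$, I would verify $I_i = \{j \in \mathcal{S} \mid j \preceq_{P_N} i\}$ directly: by Proposition~\ref{prop:smallest-containing-set-order-ideal}, $j \preceq_{P_N} i$ iff $I_j \subseteq I_i = I$, and the latter holds iff $j \in I$ (one direction since $j \in I_j$, the other since $j \in I$ forces $I_j \subseteq I$ by minimality of $I_j$ and the chain property). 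Hence $I$ is exactly the principal order ideal generated by $i$.

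For the reverse direction, I would take an arbitrary principal order ideal $I_y = \{x \in \mathcal{S} \mid x \preceq_{P_N} y\}$ and show $I_y \in N$. Again using Proposition~\ref{prop:smallest-containing-set-order-ideal}, $x \preceq_{P_N} y$ iff $I_x \subseteq I_y$, so $I_y = \{x \mid I_x \subseteq I_y\}$. I claim this set equals $I_y$ as a member of $N$ (the smallest element of $N$ containing $y$): every $x$ with $I_x \subseteq I_y$ satisfies $x \in I_x \subseteq I_y$, and conversely every $x \in I_y$ has $I_x \subseteq I_y$ by minimality of $I_x$ together with the chain property of $N$. So the principal order ideal generated by $y$ is literally the set $I_y \in N$.

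The main obstacle, and the step deserving the most care, is the existence argument in the forward direction: showing that for each $I \in N$ there is an element $i \in I$ with $I_i = I$. This rests on the structural fact that the members of $N$ containing a fixed vertex form a chain (which follows from Proposition~\ref{complex-nestedset}, or in the classical case from the absence of nontrivially intersecting pairs), and on an argument that $I$ cannot be written as a union of members of $N$ all strictly contained in $I$ — here I would invoke the disjoint-union / nested-set condition, noting that the minimal members of $N$ strictly below $I$ that cover $I$ would have to be pairwise disjoint with union $I$, contradicting the condition that such a disjoint union cannot itself lie in $B$ while $I \in B$. The remaining verifications are routine bookkeeping with the minimality of the sets $I_i$ and the chain property, and I would present them compactly.
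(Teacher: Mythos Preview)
Your argument is correct. The paper does not actually give a proof of this proposition; it simply precedes the statement with the one-line remark that ``every set $I_i$ in $N$ is a principal order ideal of $P_N$'' and leaves the rest implicit. Your write-up makes explicit exactly the details the paper suppresses: the reverse inclusion is immediate from that remark plus the computation $\{x : x \preceq_{P_N} y\} = I_y$, and the forward inclusion needs the existence of $i \in I$ with $I_i = I$, which you correctly derive from the disjoint-union condition in the definition of a nested set. So your approach is the natural elaboration of the paper's implicit reasoning, not a different route.

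One small simplification for the ``main obstacle'': rather than speaking of a chain of proper subsets, you can argue directly. The sets $\{I_j : j \in I\}$ are all contained in $I$ and cover $I$; by the no-nontrivial-intersection property their maximal members are pairwise disjoint, and if there were at least two, their union would be $I \in B$, violating the nested-set condition. Hence there is a unique maximal one, which must equal $I$, giving the desired $i$.
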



Figure \ref{fig:graph-poset} shows a classical nested set $N$ on the set $\{1,2,3,4\}$, and the resulting preposet $P_N$. For any classical $B$-nested set $N$, the preposet $P_N$ is a forest of rooted trees. Each component of this preposet is a rooted tree on a subset of $B_{max}$.


\begin{figure}[h]
\centering
\includegraphics[width=.5\textwidth]{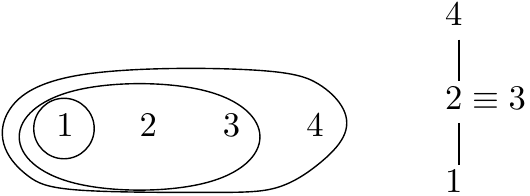}
\caption{A graph tubing and its associated preposet}\label{fig:graph-poset}
\end{figure}


\subsection{Defining braid cones}

The foundational \danger{I think that's a good choice of words} text on braid cones and faces of generalized permutohedra is \cite{postnikovfaces}. This subsection restates basic findings from this paper, and Section \ref{sec:facial-preposets-etc} of this thesis adapts their work to $\Delta$-braid cones.

For a finite set $\mathcal{S}$, define the vector space $\R^{\mathcal{S}}$ to be the $|\mathcal{S}|$-dimensional vector space with basis vectors $\{e_i|i \in \mathcal{S}\}$. A vector $x \in \R^{\mathcal{S}}$ contains components $x_i$ for $i \in \mathcal{S}$. Define the vector $(1,\ldots,1) = \sum_{i \in \mathcal{S}} e_i$. If $|\mathcal{S}| \ge 1$, we define the vector space $\R^{\mathcal{S}}/(1,\ldots,1)$ to be the set of equivalence classes of $\R^{\mathcal{S}}$ modulo $(1,\ldots,1)$. We write the equivalence class of a vector $x \in \R^{\mathcal{S}}$ as $[x]$, taking care not to confuse this notation with $[n]$ for integers. We note that comparison of components within one vector is well-defined on these equivalence classes; that is, if $[x] = [x']$, then $x_i \le x_j$ if and only if $x_i' \le x_j'$ for any $i,j$ in $[n+1]$. It also holds that $x_i-x_j=x_i'-x_j'$. We note that any $(|\mathcal{S}|-1)$ elements of $\mathcal{S}$ define a basis of $\R^{\mathcal{S}}$, and $\sum_{i \in \mathcal{S}} [e_i] = 0$.

A \emph{cone} in a vector space is any set that is closed under addition and multiplication by a nonnegative scalar. Polyhedral cones are cones that are polyhedra. Polyhedral cones can also be defined as any set in a vector space defined by a finite set of homogeneous linear inequalities of the form $ax \le 0$. A polyhedron is \emph{pointed} if it contains a vertex face, which for cones must be the origin of the vector space. Non-polyhedral cones exist, but are not covered in this thesis, and we will assume from here that any cone mentioned is a polyhedral cone. \danger{'assume for the scope of this thesis that we are referring to polyhedral cones' feels like a cop-out, and I never liked that language in textbooks, but I feel like it's probably the right thing to do huh.}


The \emph{braid arrangement} on a set $\mathcal{S}$ is the arrangement of hyperplanes in $\R^{\mathcal{S}}/(1,\ldots,1)$ defined by equalities of the form $x_i-x_j=0$ for $i \ne j$ in $\mathcal{S}$. When $\mathcal{S} = [n+1]$, this is the \emph{$n$-dimensional braid arrangement}, and these hyperplanes divide the space into cones of the form

\[
	C_\sigma = \{x_{\sigma(1)} \le x_{\sigma(2)} \le \cdots \le x_{\sigma(n+1)}\}
\]
where $\sigma$ is a permutation $\sigma \in \mathfrak{S}_{n+1}$. These cones are called \emph{Weyl chambers} of the type $A_n$ Coxeter group.

%
%
%
%

\begin{definition}
Given a preposet $Q$ on $\mathcal{S}$, define the \emph{braid cone} $K_Q$ of $Q$ as the cone in the space $\R^{\mathcal{S}}/(1,\ldots,1) \R$ defined by inequalities $x_i \le x_j$ for each relation $i \preceq_Q j$ in $Q$. \end{definition}


The following proposition is a rephrasing of \cite[Proposition 3.5]{postnikovfaces} in the notation used in this thesis.

\begin{proposition} \label{prop:preposet-list-1}
Given two preposets $Q, Q'$ on $\mathcal{S}$:
\begin{enumerate}
\item $K_{\overline{Q\cup Q'}}=K_Q \cap K_{Q'}$.
\item \label{statement:contractions} The preposet $Q$ is a contraction of $Q'$ if and only if $K_Q$ is a face of $K_{Q'}$.
\item \label{statement:poset} $Q$ is a poset if and only if $K_Q$ is full-dimensional, i.e., $(|\mathcal{S}|-1)$-dimensional.
\item The linear span of $K_Q$ is the cone $K_{\equiv_Q}$, where the equivalence relation $\equiv_Q$ is considered a preposet, and $K_{\equiv_Q}$ is the subspace defined by equations $x_i = x_j$ if $i \equiv_Q j$.
\item \label{statement:equiv-isomorphic-cone} The cone $K_{Q/\equiv_Q}$ is isomorphic to $K_Q$.
\item \label{statement:pointed-cones} $K_Q$ is pointed if and only if $Q$ is connected, i.e., the Hasse diagram of $Q$ is a connected graph.
\item If $Q$ is a poset, then the minimal set of inequalities describing $K_Q$ is $\{x_i \le x_j|i \lessdot_Q j\}$.
\item The Hasse diagram of $Q$ is a tree if and only if $\sigma$ is a full-dimensional simplicial cone.
\item When $\mathcal{S}=[n+1]$, for $\sigma \in \mathfrak{S}_{n+1}$, the cone $K_Q$ contains the Weyl chamber $C_\sigma$ if and only if $Q$ is a poset and $\sigma$ is its linear extension, that is $\sigma(1) \prec_Q \cdots \prec_Q \sigma(n+1)$.
\end{enumerate}
\end{proposition}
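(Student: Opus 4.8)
The plan is to reduce almost every item to \cite[Proposition 3.5]{postnikovfaces}, since Proposition \ref{prop:preposet-list-1} is that result transported into the present notation (working in $\R^{\mathcal{S}}/(1,\ldots,1)$ for an arbitrary finite $\mathcal{S}$ rather than the standard $\R^{n+1}$); the genuine work is to check that the translation is faithful and to supply the few elementary arguments not made explicit there. For item (1) I would note that $K_Q \cap K_{Q'}$ is exactly the solution set of the combined family $\{x_i \le x_j : (i,j) \in Q \cup Q'\}$, and that passing to the reflexive--transitive closure adds only inequalities that are already consequences of the rest, so the cone does not shrink; hence $K_{\overline{Q\cup Q'}} = K_Q \cap K_{Q'}$. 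The identity $\sum_{i \in \mathcal{S}}[e_i] = 0$ in the quotient, already recorded in the text, will be used repeatedly to see that constant vectors are the only ones killed.

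For items (3), (4), (5), (6) I would first pin down the lineality space and the linear span of $K_Q$ in terms of $Q$. A vector $[v]$ lies in $K_Q \cap (-K_Q)$ iff $v_i = v_j$ whenever $i \preceq_Q j$, equivalently (by transitivity) whenever $i \equiv_Q j$, equivalently $v$ is constant on each connected component of the Hasse diagram of $Q$; modulo $(1,\ldots,1)$ this forces $[v]=0$ precisely when $Q$ is connected, giving (6). Dually, $K_Q$ lies in the subspace $\{x_i = x_j : i \equiv_Q j\} = K_{\equiv_Q}$, and inside that subspace the cone of the quotient poset $Q/\equiv_Q$ is full-dimensional; this yields (4), yields (3) (full-dimensional $\iff \equiv_Q$ trivial $\iff Q$ a poset), and, via the coordinate identification of $K_{\equiv_Q} \subseteq \R^{\mathcal{S}}/(1,\ldots,1)$ with $\R^{\mathcal{S}/\equiv_Q}/(1,\ldots,1)$ sending blocks to points, yields (5).

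For items (2), (7), (8) I would argue in terms of facets. Turning the inequality $x_i \le x_j$ of $K_{Q'}$ into an equality is exactly adjoining the reversed relation $(j,i)$, so the faces of $K_{Q'}$ obtained this way are precisely the cones $K_{\overline{Q' \cup R^{\mathrm{op}}}}$ with $R \subseteq Q'$, i.e., the cones $K_Q$ for $Q$ a contraction of $Q'$ --- this is (2). For a poset $Q$, a non-cover relation $i \prec_Q k$ is implied by the chain of cover relations from $i$ up to $k$, so the inequalities indexed by cover relations already cut out $K_Q$; each such inequality is irredundant (some point of $K_Q$ saturates only it), giving the minimal description in (7). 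Then (8) is a count: a connected poset whose Hasse diagram is a tree on $|\mathcal{S}|$ vertices has exactly $|\mathcal{S}|-1$ cover relations, matching the codimension of $(1,\ldots,1)$, whereas any additional cover relation forces a cycle and hence more than $|\mathcal{S}|-1$ facets, so $K_Q$ is a full-dimensional simplicial cone iff the Hasse diagram is a tree. Finally (9): $C_\sigma \subseteq K_Q$ iff every defining inequality $x_i \le x_j$ of $K_Q$ holds throughout $C_\sigma$, which holds iff $i$ precedes $j$ in the order $\sigma(1),\ldots,\sigma(n+1)$ for every $i \preceq_Q j$, i.e., iff that order is a linear extension of $Q$; and since $C_\sigma$ is full-dimensional, the containment forces $K_Q$ full-dimensional, hence $Q$ a poset by (3).

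I expect the main obstacle to be item (2): one must verify both that every face of $K_{Q'}$ --- not merely those cut out by promoting a subset of the \emph{listed} inequalities to equalities --- arises from a contraction, and that $Q = \overline{Q' \cup R^{\mathrm{op}}}$ is the preposet whose cone is that face. This is precisely where the bookkeeping between the inequality description, the face, and the combinatorial operation on the preposet has to be reconciled, and it is also what makes the cover-relation description in (7) the correct notion of facet feeding into (8). Everything else is routine once the lineality/span computation and the minimal facet description are in hand.
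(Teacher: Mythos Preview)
Your approach is essentially the same as the paper's: the paper does not give a proof at all, but simply states that the proposition is a rephrasing of \cite[Proposition 3.5]{postnikovfaces}, which is exactly the reduction you propose. Your additional sketches for each item are correct and go beyond what the paper provides; the only minor slip is in the lineality computation for (6), where ``equivalently whenever $i \equiv_Q j$'' should read ``whenever $i$ and $j$ are comparable, hence whenever they lie in the same connected component'' --- but you arrive at the right conclusion in the next clause anyway.
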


Recalling that $Q/\equiv_Q$ is a poset, we can state a corollary to statements \ref{statement:poset} and \ref{statement:equiv-isomorphic-cone}.

\begin{corollary}
If the preposet $Q$ has $k+1$ equivalence classes then the cone $K_Q$ is $k$-dimensional.
\end{corollary}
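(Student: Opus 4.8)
The plan is to deduce this immediately from parts~\ref{statement:poset} and~\ref{statement:equiv-isomorphic-cone} of Proposition~\ref{prop:preposet-list-1}, using the fact that $Q/\equiv_Q$ is by definition a genuine poset living on the set of equivalence classes $\mathcal{S}/\equiv_Q$. If $Q$ has $k+1$ equivalence classes, then $\mathcal{S}/\equiv_Q$ has $k+1$ elements, and the ambient space $\R^{\mathcal{S}/\equiv_Q}/(1,\ldots,1)$ in which the braid cone $K_{Q/\equiv_Q}$ lives has dimension $(k+1)-1 = k$.

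First I would apply part~\ref{statement:poset} to the poset $Q/\equiv_Q$: since it is a poset, its braid cone $K_{Q/\equiv_Q}$ is full-dimensional in $\R^{\mathcal{S}/\equiv_Q}/(1,\ldots,1)$, hence $k$-dimensional. Next I would invoke part~\ref{statement:equiv-isomorphic-cone}, which says $K_{Q/\equiv_Q}$ is isomorphic to $K_Q$; since the isomorphism there is the linear isomorphism induced by collapsing each equivalence class of $\mathcal{S}$ to a single coordinate, it carries $K_{Q/\equiv_Q}$ onto $K_Q$ and preserves dimension. Combining the two facts gives that $K_Q$ is $k$-dimensional.

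There is essentially no obstacle here; the only point requiring a word of care is confirming that the isomorphism of part~\ref{statement:equiv-isomorphic-cone} is dimension-preserving, which is immediate once one notes it is the restriction to the cones of a linear isomorphism of the relevant quotient spaces. It is also worth noting the degenerate case $k+1=1$: then $Q$ has a single equivalence class, $\R^{\mathcal{S}/\equiv_Q}/(1,\ldots,1)$ is the zero space, and $K_Q$ is the single point $\{0\}$, which has dimension $k=0$, consistent with the statement.
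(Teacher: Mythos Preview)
Your proposal is correct and follows essentially the same route as the paper: both arguments use that $Q/\equiv_Q$ is a poset on $k+1$ elements so that $K_{Q/\equiv_Q}$ is full-dimensional (hence $k$-dimensional) by part~\ref{statement:poset}, and then invoke the isomorphism $K_{Q/\equiv_Q}\cong K_Q$ from part~\ref{statement:equiv-isomorphic-cone}. The paper additionally phrases the dimension count in terms of the linear span $K_{\equiv_Q}$, but this is the same observation you make about the ambient space $\R^{\mathcal{S}/\equiv_Q}/(1,\ldots,1)$.
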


This comes from the fact that $K_{\equiv_Q}$ is a $k$-dimensional space, and $K_{Q \equiv_Q}$ is full-dimensional in that space, and isomorphic to $K_Q$.

For a classical nested set $N$, define the notation $K_N=K_{P_N}$. 

The following is a result from \cite{postnikovfaces}.

\begin{proposition}\label{prop:classical-BS}
For a classical building set $B$, the set of cones $K_N$ such that $N$ is $B$-nested forms a fan, and the face poset of this fan is isomorphic to the nested complex of $B$.
\end{proposition}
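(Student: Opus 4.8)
The plan is to follow \cite{postnikovfaces}, using Proposition~\ref{prop:preposet-list-1} to convert every statement about the cones $K_N = K_{P_N}$ into a combinatorial statement about the preposets $P_N$ attached to $B$-nested sets by Definition~\ref{def:preposet-from-nested-set-most-general}. Throughout, a $B$-nested set is understood to contain $B_{max}$, and the nested complex of $B$ is the simplicial complex of the sets $N \setminus B_{max}$. The goal is to exhibit a bijection $N \setminus B_{max} \mapsto K_N$ and to check that it is an isomorphism of posets onto the face poset of the purported fan, along the way verifying the two fan axioms.

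First I would establish the face correspondence. Recall from the discussion following Proposition~\ref{prop:nested-sets-are-ideals} that $P_N$ is a forest of rooted trees, whose equivalence classes are exactly the nonempty fibres $\{i : I_i = S\}$, $S \in N$, of the smallest-containing-set map $i \mapsto I_i$ of Proposition~\ref{prop:smallest-containing-set-order-ideal}; in particular the Hasse edges of $P_N$ are in bijection with $N \setminus B_{max}$, and since this Hasse diagram is a forest, Proposition~\ref{prop:preposet-list-1} gives that $K_N$ is a simplicial cone. By Proposition~\ref{prop:nested-sets-are-ideals}, $N$ is recovered from $P_N$ as its set of principal order ideals, so $N \mapsto P_N$ is injective. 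The key elementary fact is that for $S \in N \setminus B_{max}$ the set $N \setminus \{S\}$ is again $B$-nested (the conditions defining a classical nested set are inherited by subsets that still contain $B_{max}$), and that $P_{N \setminus \{S\}}$ is obtained from $P_N$ by contracting the single Hasse edge joining the class $\{i : I_i = S\}$ to its parent. Iterating, every contraction of the forest $P_N$ is $P_{N'}$ for a unique $B$-nested $N' \subseteq N$, and conversely. By the contraction/face dictionary in Proposition~\ref{prop:preposet-list-1}, the faces of $K_N$ are therefore exactly the $K_{N'}$ with $N' \subseteq N$ nested; thus $\{K_N\}$ is closed under taking faces, and ``$K_{N'}$ is a face of $K_N$'' is equivalent to ``$N' \subseteq N$'' (equality of cones then forces equality of nested sets, so $N \mapsto K_N$ is injective).

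Next I would show that any two of these cones meet in a common face. By the first part of Proposition~\ref{prop:preposet-list-1}, $K_{N_1} \cap K_{N_2} = K_{\overline{P_{N_1} \cup P_{N_2}}}$, so it suffices to prove the identity $\overline{P_{N_1} \cup P_{N_2}} = P_{N_1 \cap N_2}$; as $N_1 \cap N_2$ is $B$-nested (a subset of $N_1$ containing $B_{max}$), the previous paragraph then identifies $K_{N_1} \cap K_{N_2}$ with the common face $K_{N_1 \cap N_2}$. One inclusion is immediate: by Definition~\ref{def:preposet-from-nested-set-most-general}, restricting a family of sets can only create relations, so $P_{N_1}, P_{N_2} \subseteq P_{N_1 \cap N_2}$, and $P_{N_1 \cap N_2}$ is transitive. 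The reverse inclusion $P_{N_1 \cap N_2} \subseteq \overline{P_{N_1} \cup P_{N_2}}$ is the heart of the argument and the step I expect to cost the most: given $i \preceq j$ in $P_{N_1 \cap N_2}$, that is, $i$ lies in the smallest set of $N_1 \cap N_2$ containing $j$, one must generate this relation by transitivity from relations of $P_{N_1}$ and of $P_{N_2}$. The natural route is to climb the two towers of sets of $N_1$, respectively $N_2$, containing $j$ --- each is a chain, since in a nested set two sets sharing an element are comparable (as recalled before Proposition~\ref{prop:smallest-containing-set-order-ideal}) --- and to interleave them, at each common member transferring the relation from one preposet to the other until the set witnessing $i \preceq j$ in $N_1 \cap N_2$ is reached. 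Some hypothesis of this kind is genuinely needed: for two arbitrary families of subsets the identity $\overline{P_{N_1} \cup P_{N_2}} = P_{N_1 \cap N_2}$ can fail, and it is exactly the nestedness of $N_1$ and $N_2$ that rules this out.

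Finally I would assemble the pieces. The two previous paragraphs verify the two fan axioms, so $\{K_N : N \text{ a } B\text{-nested set}\}$ is a fan. The map $N \setminus B_{max} \mapsto K_N$ is a bijection from the nested complex of $B$ onto the faces of this fan (injective by the above, surjective since the collection is already closed under faces), and by the face correspondence it carries the inclusion order on nested sets to the face order on cones, hence is an isomorphism of posets; when $B$ is connected one also has $\dim K_N = |N \setminus B_{max}|$, so the isomorphism is appropriately graded. Completeness of the fan, while not needed for the statement, holds as well, since this fan is the normal fan of the classical nestohedron of $B$ from Subsection~\ref{sub:simplex-nestohedra}.
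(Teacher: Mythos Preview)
The paper does not prove this proposition at all: it simply quotes it as a result from \cite{postnikovfaces}, and the one sentence that follows (``This fan is the normal fan of a classical $B$-nestohedron constructed by Minkowski sums of simplices as described in \cite{postnikov}'') is the only justification offered. In particular, the identity $K_{N_1}\cap K_{N_2}=K_{N_1\cap N_2}$ (equivalently $\overline{P_{N_1}\cup P_{N_2}}=P_{N_1\cap N_2}$), which you correctly flag as the heart of the matter, is never proved in the paper; it is later invoked as a known classical fact in the proof of Proposition~\ref{prop:facial-preposet-union-intersection}.

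Your plan is sound and is precisely the way one would unpack the cited result through the preposet/braid-cone dictionary of Proposition~\ref{prop:preposet-list-1}. The face-closure paragraph is correct: for a rooted forest every contraction arises by collapsing a subset of Hasse edges, and these correspond bijectively to $B$-nested subsets $N'\subseteq N$, so faces of $K_N$ are exactly the $K_{N'}$. For the intersection identity, your ``interleave the two chains of sets above $j$'' sketch points in the right direction but is the one place that remains genuinely hand-wavy; it would need a careful induction on, say, the height of $j$ in the combined forest. An alternative (and this is what the paper's one-line remark gestures at) is to sidestep the preposet computation entirely: once one knows the nestohedron of $B$ exists as a Minkowski sum of simplices \cite{postnikov}, its normal fan is automatically a complete fan whose face poset is the nested complex, and the cones are the $K_N$ by Proposition~\ref{prop:classical-nested-set-conic-hull}. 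Your direct combinatorial route has the advantage of being self-contained and of isolating exactly the statement $\overline{P_{N_1}\cup P_{N_2}}=P_{N_1\cap N_2}$ that the paper later needs.
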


This fan is the normal fan of a classical $B$-nestohedron constructed by Minkowski sums of simplices as described in \cite{postnikov}. We note that for two $B$-nested sets, $K_N \cap K_{N'} = K_{N \cap N'}$. In addition, the three statements are equivalent: $K_N \subseteq K_{N'}$, $K_N$ is a face of $K_{N'}$, and $N \subseteq N'$.

\subsection{Expressing braid cones as conic hulls of their rays} \label{sub:waterfall}

\danger{I don't think we need any serious consideration of what this means for infinite sums, but this definition does allow us to define cones as conic hulls of either (1) a set of vectors contained in rays, or (2) the set of rays themselves.}

Define the \emph{conic hull} $\conichull V$ of a set of vectors $V=\{v_s|s \in S\}$ indexed by some set $S$ to be the set of finite sums of the form $\sum_{s \in S} a_s v_s$, where $a_s \ge 0$ for each $s \in S$. The conic hull of a single nonzero vector is a one-dimensional pointed cone, called a \emph{ray}, and the conic hull of a cone is itself. Polyhedral cones are exactly the cones that can be defined as conic hulls of a finite set of vectors. When a polyhedral cone is pointed, it is equal to the conic hull of a set of vectors $v_1, \ldots, v_k$, such that each vector $v_i$ is contained in the interior of a ray face of the cone.

Every polyhedral cone is equal to the conic hull of a set of vectors, and every polyhedral pointed cone is equal to the conic hull of a set of vectors containing one vector in each ray face of the cone.

\begin{definition}
A \emph{ray preposet} is a preposet $R$ containing two equivalence classes: a \emph{lower class}, and an \emph{upper class}, such that $i \prec j$ for all $i$ in the lower class and $j$ in the upper class of $R$.
\end{definition}

We know from Proposition \ref{prop:preposet-list-1} that the cone $K_Q$ is pointed if and only if $Q$ is connected, and $K_Q$ is one-dimensional if and only if it is a preposet with 2 equivalence classes, so we can see $K_Q$ is a ray if and only if $Q$ is a ray preposet.

A \emph{ray contraction} of a preposet $Q$ is a ray preposet which is a contraction of $Q$. Ray contractions are obtained by contracting edges one-by-one in the Hasse diagram of a preposet until only one edge remains. Note that in general, there is not a bijection between edges in the Hasse diagram and rays of a preposet cone; Figure \ref{fig:ray-contractions} illustrates two possible ray contractions which arise from leaving one edge alone and contracting the rest of the edges. The choice and order of edge contraction generally matters.

In general, we can characterize braid cones as follows.

\begin{proposition}
Given a connected preposet $Q$ on a set $S$, the cone $K_Q$ is equal to the conic hull of the rays of the form $K_R$, where $R$ is a ray contraction of $Q$.
\end{proposition}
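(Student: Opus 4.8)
The plan is to prove both inclusions between $K_Q$ and $\conichull\{K_R : R \text{ a ray contraction of } Q\}$, using the correspondence between faces of $K_Q$ and contractions of $Q$ established in Proposition \ref{prop:preposet-list-1}.

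\medskip

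\textbf{The easy inclusion.} First I would observe that each ray contraction $R$ of $Q$ is in particular a contraction of $Q$, so by statement \ref{statement:contractions} of Proposition \ref{prop:preposet-list-1} the cone $K_R$ is a face of $K_{Q}$, hence $K_R \subseteq K_Q$. Since $K_Q$ is a cone (closed under addition and nonnegative scaling), it contains every finite nonnegative combination of vectors drawn from the $K_R$'s, i.e. $\conichull\{K_R\} \subseteq K_Q$. This direction needs essentially nothing beyond the definition of a cone and the cited proposition.

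\medskip

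\textbf{The hard inclusion.} For $K_Q \subseteq \conichull\{K_R\}$, the key structural fact I would invoke is that $K_Q$ is pointed (since $Q$ is connected, by statement \ref{statement:pointed-cones}), and that every pointed polyhedral cone is the conic hull of a choice of one vector from each of its extreme rays — this is stated explicitly in the subsection on expressing braid cones as conic hulls of rays. So it suffices to show that \emph{every} extreme ray face of $K_Q$ is of the form $K_R$ for some ray contraction $R$ of $Q$. An extreme ray of $K_Q$ is a one-dimensional face, hence by statement \ref{statement:contractions} it equals $K_R$ for some contraction $R$ of $Q$; and by the observation following the ray-contraction discussion ($K_Q$ one-dimensional $\iff$ $Q$ has two equivalence classes $\iff$ $Q$ is a ray preposet), this $R$ is a ray preposet, i.e. a ray contraction of $Q$. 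Thus every extreme ray of $K_Q$ arises as some $K_R$, and picking one generating vector per ray expresses $K_Q$ as the conic hull of a subset of $\{K_R\}$, which is certainly contained in $\conichull\{K_R\}$.

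\medskip

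\textbf{Main obstacle.} The only subtle point is making sure the dictionary ``faces of $K_Q$ $\leftrightarrow$ contractions of $Q$'' is being applied in the right direction: statement \ref{statement:contractions} gives that $K_R$ is a face of $K_{Q'}$ iff $R$ is a contraction of $Q'$, and I am using it to conclude that an arbitrary ray face of $K_Q$ \emph{is} $K_R$ for some contraction $R$ — this requires knowing that every face of a braid cone $K_{Q'}$ has the form $K_R$ for a contraction $R$, which is exactly the content of that statement (every face corresponds to some contraction). I would also note explicitly that a ray contraction always exists when $Q$ is connected: one contracts cover relations in the Hasse diagram one at a time, and since the Hasse diagram stays connected under edge contraction, the process terminates at a preposet with a single cover relation in its Hasse diagram, which is a ray preposet — this guarantees the generating set on the right-hand side is nonempty (and matches the picture in Figure \ref{fig:ray-contractions}). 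With these pieces in place the two inclusions combine to give equality.
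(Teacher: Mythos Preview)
Your proof is correct and follows essentially the same approach as the paper: use connectedness to get pointedness, use that a pointed polyhedral cone is the conic hull of its extreme rays, and then identify those extreme rays via the face--contraction dictionary of Proposition~\ref{prop:preposet-list-1} as exactly the $K_R$ for ray contractions $R$. Your write-up is in fact more careful than the paper's (you make the two inclusions explicit and note the existence of ray contractions), but the underlying argument is the same.
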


\begin{proof}
From Proposition \ref{prop:preposet-list-1}, we know from statement \ref{statement:pointed-cones} that connected preposets have pointed cones, and pointed hulls are equal to the conic hulls of ray faces. From statement \ref{statement:contractions}, we know that all faces of a cone $K_Q$ are cones of the face $K_{Q'}$ where $Q'$ is a contraction of $Q$, and we know a cone $K_{Q'}$ is a ray if and only if $Q'$ is a ray preposet, so $K_Q$ is equal to the conic hull of all rays of the form $K_R$, where $R$ is a ray contraction of $Q$.
\end{proof}

	\begin{figure}[h]
	\centering
	\includegraphics[width=.5\textwidth]{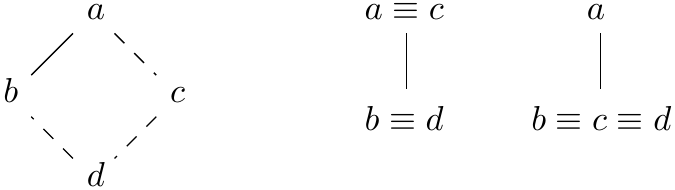}
	\caption{Contracting until no dashed edges remain leads to two possible ray contractions.} \label{fig:ray-contractions}
\end{figure}

When the Hasse diagram of a preposet $Q$ is a connected tree, edge contraction order does not matter, and ray contractions are in bijection with individual edges of the Hasse diagram. When $Q$ is also a rooted tree, then it is even simpler to characterize the ray contractions of $Q$.

\begin{proposition}\label{prop:rooted-ray-contractions}
If the preposet $Q$ on base set $\mathcal{S}$ is a rooted tree, then $R$ is a ray contraction of $Q$ if and only if the lower class of $R$ is the principal order ideal of a non-maximal element in $Q$.
\end{proposition}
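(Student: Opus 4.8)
The plan is to translate everything into the combinatorics of the Hasse diagram of $Q$. Because $Q$ is a rooted tree, the poset $Q/\equiv_Q$ has a Hasse diagram which is a tree, and each of its equivalence classes has at most one cover. I would first record two consequences: that $Q/\equiv_Q$ has a unique maximal class, which I will call the root; and that for any non-root class $c$, deleting from the Hasse-diagram tree the unique edge above $c$ disconnects it into the downward subtree rooted at $c$ together with the complementary subtree. Uniqueness of the root follows from the unique-cover property together with connectedness: walking upward along covers from any class reaches a maximal class, and a path in the tree between two distinct maximal classes would have to reverse direction at some class, forcing that class to have two covers. Writing $c=[v]$ for a non-root class, the set of elements of $\mathcal{S}$ lying in the downward subtree rooted at $[v]$ is exactly the principal order ideal $I_v=\{x\in\mathcal{S}: x\preceq_Q v\}$, since $x\preceq_Q v$ is equivalent to $[x]\le[v]$ in $Q/\equiv_Q$.

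Next I would invoke the fact noted just before the statement: when the Hasse diagram of $Q$ is a connected tree, the ray contractions of $Q$ are in bijection with its edges, the ray contraction attached to an edge $e$ being obtained by contracting all the other edges (in any order). Fix a non-root class $[v]$ and let $e$ be the edge joining it to its parent. Contracting every edge other than $e$ collapses each of the two components described above to a single class, so the resulting ray preposet $R$ has exactly two classes: the union of the downward subtree at $[v]$, namely $I_v$, and its complement $\mathcal{S}\setminus I_v$. Because the edge $e$ was not contracted, the relation from $[v]$ up to its parent survives unreversed, so $I_v$ is the lower class of $R$ and $\mathcal{S}\setminus I_v$ the upper class. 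This really is a two-class preposet, since the parent of $[v]$ does not lie in $I_v$ (otherwise $[v]$ and its parent would be comparable in both directions in the poset $Q/\equiv_Q$). Hence the ray contraction indexed by the edge above $[v]$ has lower class $I_v$.

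Finally I would read off the equivalence. Every ray contraction of $Q$ is indexed by some edge of the Hasse diagram, hence by some non-root class $[v]$, and therefore has lower class $I_v$, which is the principal order ideal of the non-maximal element $v$. Conversely, if $y$ is any non-maximal element of $Q$, then $[y]$ is a non-root class, and since $I_y$ depends only on $[y]$, the ray contraction indexed by the edge above $[y]$ has lower class $I_y$. The assignment $[v]\mapsto I_v$ is injective on non-root classes, because $[v]$ is the unique maximal class contained in $I_v$; so distinct non-root classes give distinct lower classes. Thus the lower classes of ray contractions of $Q$ are precisely the principal order ideals of non-maximal elements of $Q$, which is the claim.

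I do not expect a serious obstacle; the one thing requiring care is the interplay between $Q$ and $Q/\equiv_Q$, since ``principal order ideal'' and ``non-maximal element'' are phrased for $Q$ whereas the tree picture lives in $Q/\equiv_Q$. I would therefore be explicit that $I_y$ is determined by the class $[y]$, that ``$y$ non-maximal in $Q$'' means exactly ``$[y]$ non-root in $Q/\equiv_Q$'', and that orienting the root as the maximal class is what forces $I_v$ to be the lower rather than the upper class of the contraction.
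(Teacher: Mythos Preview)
Your proposal is correct and follows exactly the approach the paper intends: the paper does not actually supply a proof for this proposition, but the sentence immediately preceding it (``When the Hasse diagram of a preposet $Q$ is a connected tree, edge contraction order does not matter, and ray contractions are in bijection with individual edges of the Hasse diagram'') is the key fact, and your argument is a careful unpacking of that bijection in the rooted case. Your attention to the passage between $Q$ and $Q/\equiv_Q$ is appropriate and fills in details the paper leaves implicit.
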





\begin{lemma}
For any ray preposet $R$ with lower class $I$, $K_R$ is the conic hull of the vector $\sum_{i \in I} [-e_i]$.
\end{lemma}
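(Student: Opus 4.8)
The plan is to compute $K_R$ directly from the definition of a braid cone and then simplify inside the quotient space $\R^{\mathcal{S}}/(1,\ldots,1)\R$. Write $\mathbbm{1}_A$ for the $0/1$ indicator vector of a subset $A \subseteq \mathcal{S}$. First I would unpack the defining inequalities of $K_R$. Since $R$ is a ray preposet with lower class $I$ and upper class $\mathcal{S}\setminus I$, the relations of $R$ force, for a representative vector $x$, that $x_i = x_{i'}$ whenever $i,i'\in I$ (from the mutual relations inside the equivalence class $I$), that $x_j = x_{j'}$ whenever $j,j'\in \mathcal{S}\setminus I$, and that $x_i \le x_j$ whenever $i\in I$, $j\in\mathcal{S}\setminus I$. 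Hence every representative of a point of $K_R$ has the form $x = a\,\mathbbm{1}_I + b\,\mathbbm{1}_{\mathcal{S}\setminus I}$ with $a \le b$.

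Next I would pass to the quotient. Writing $x = a(1,\ldots,1) + (b-a)\,\mathbbm{1}_{\mathcal{S}\setminus I}$, we get $[x] = (b-a)\,[\mathbbm{1}_{\mathcal{S}\setminus I}]$ with $b-a \ge 0$, so $K_R = \{\,t\,[\mathbbm{1}_{\mathcal{S}\setminus I}] : t \ge 0\,\} = \conichull\{[\mathbbm{1}_{\mathcal{S}\setminus I}]\}$. Finally, $\mathbbm{1}_{\mathcal{S}\setminus I} = (1,\ldots,1) - \mathbbm{1}_I \equiv -\mathbbm{1}_I = \sum_{i\in I}(-e_i) \pmod{(1,\ldots,1)}$, so $[\mathbbm{1}_{\mathcal{S}\setminus I}] = \sum_{i\in I}[-e_i]$, which is exactly the claim. (Alternatively, one can invoke Proposition \ref{prop:preposet-list-1}: a ray preposet is connected with two equivalence classes, so $K_R$ is a pointed one-dimensional cone, i.e.\ a ray; it then suffices to check that $\sum_{i\in I}[-e_i]$ is a nonzero vector satisfying every defining inequality $x_i\le x_j$ of $K_R$, which is immediate, since a ray is the conic hull of any nonzero vector it contains.)

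There is essentially no serious obstacle here. The only points requiring a line of care are that both $I$ and $\mathcal{S}\setminus I$ are nonempty --- true because a ray preposet has exactly two equivalence classes --- which guarantees that $\sum_{i\in I}[-e_i] = [\mathbbm{1}_{\mathcal{S}\setminus I}]$ is a nonzero class in $\R^{\mathcal{S}}/(1,\ldots,1)\R$ (as $I$ is a proper nonempty subset, $\mathbbm{1}_{\mathcal{S}\setminus I}$ is not a multiple of $(1,\ldots,1)$), so that its conic hull is genuinely a ray rather than $\{0\}$; and keeping the bookkeeping straight between a representative in $\R^{\mathcal{S}}$ and its class in the quotient.
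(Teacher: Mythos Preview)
Your proposal is correct. Your parenthetical alternative is exactly the paper's argument: it notes that $K_R$ is a ray (from Proposition~\ref{prop:preposet-list-1}), checks that $\sum_{i\in I}[-e_i]$ lies in its interior by verifying the coordinate inequalities, and concludes. Your primary argument is a slightly more explicit variant, computing $K_R$ directly as $\{t[\mathbbm{1}_{\mathcal{S}\setminus I}]:t\ge 0\}$ before identifying the generator; this is marginally more self-contained since it does not appeal to the prior proposition for the dimension of $K_R$, but the two routes are close enough that neither offers a real advantage over the other.
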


\begin{proof}

Write $v = \sum_{i \in I} [-e_i]$. This is the equivalence class of the vector $x$ in $\R^{\mathcal{S}}$ with $x_i=0$ if $i \notin I$ and $x_i=-1$ if $i \in I$. We note then that $v_i = v_j$ if $i, j \in I$ or $i, j \notin I$, and $v_i < v_j$ if $i \in I, j \notin I$. These are then exactly the conditions regarding whether or not $i \equiv_R j$ or $i <_R j$ in $R$. As a result, $v_i \le v_j$ if and only if $i \le_R j$. This means that $v$ is in the interior of $K_R$, and as $K_R$ is a ray, this proves $K_R$ is equal to the conic hull of $v$.
\end{proof}

\begin{definition}
Given a preposet $Q$ on a set $S$, the \emph{principal order ideal vector} of an element $j \in S$ is the sum \[w_j^Q = \sum_{i \preceq_q j} [-e_i]
\]
\end{definition}


\begin{proposition} \label{prop:rooted-tree-conic-hull}
If $P/\equiv_P$ is a rooted tree on base set $\mathcal{S}$, then the cone $K_P$ is the conic hull of principal order ideal vectors $w_i^P$ for all $i \in \mathcal{S}$.
\end{proposition}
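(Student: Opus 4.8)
The plan is to assemble the statement from three facts already in hand: the characterization of a pointed braid cone as the conic hull of the cones of its ray contractions, Proposition \ref{prop:rooted-ray-contractions} describing which ray preposets are ray contractions of a rooted tree, and the Lemma giving the generating vector $\sum_{i \in I}[-e_i]$ of the cone of a ray preposet with lower class $I$.

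First I would observe that $K_P$ is pointed: since $P/\equiv_P$ is a tree it is connected, and the Hasse diagram of $P$ coincides with that of $P/\equiv_P$, so by statement \ref{statement:pointed-cones} of Proposition \ref{prop:preposet-list-1} the cone $K_P$ is pointed. Hence the preceding proposition applies, giving $K_P = \conichull\{K_R : R \text{ a ray contraction of } P\}$.

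Next I would apply Proposition \ref{prop:rooted-ray-contractions}: since $P/\equiv_P$ is a rooted tree, a ray preposet $R$ is a ray contraction of $P$ exactly when its lower class is the principal order ideal $I_j = \{i \in \mathcal{S} : i \preceq_P j\}$ of some non-maximal $j$. By the Lemma, the corresponding cone is $\conichull\left(\sum_{i \in I_j}[-e_i]\right) = \conichull\left(w_j^P\right)$. Combining the last two steps yields $K_P = \conichull\{w_j^P : j \text{ non-maximal in } P/\equiv_P\}$.

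Finally, to pass from ``non-maximal $j$'' to ``all $i \in \mathcal{S}$'', I would note that a rooted tree has a unique maximal equivalence class, its root $r$, with principal order ideal $I_r = \mathcal{S}$; thus $w_r^P = \sum_{i \in \mathcal{S}}[-e_i] = -\sum_{i \in \mathcal{S}}[e_i] = 0$ in $\R^{\mathcal{S}}/(1,\ldots,1)$, and every $i$ lying in a maximal class shares this zero vector. Adjoining zero vectors to a generating set leaves the conic hull unchanged, so $\conichull\{w_i^P : i \in \mathcal{S}\} = \conichull\{w_j^P : j \text{ non-maximal}\} = K_P$, as claimed. The only point requiring care — and where I expect the main (minor) obstacle to be — is the translation between elements of $\mathcal{S}$ and equivalence classes of $P$: ``non-maximal element'' must be read as ``element not in the root class'', distinct elements of one class produce identical vectors $w^P$, and one should confirm that Proposition \ref{prop:rooted-ray-contractions} really ranges over all non-maximal classes rather than, say, only over cover relations out of the root. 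The degenerate case $|\mathcal{S}| = 1$ (no ray contractions, $K_P = \{0\}$, $w_i^P = 0$) is consistent and can be dispatched in a line.
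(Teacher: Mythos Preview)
Your proposal is correct and follows essentially the same argument as the paper: express $K_P$ as the conic hull of its ray contractions, invoke Proposition \ref{prop:rooted-ray-contractions} to identify these with principal order ideals of non-maximal elements, use the lemma to translate to the vectors $w_i^P$, and finally observe that maximal elements contribute the zero vector so that the conic hull over all of $\mathcal{S}$ is unchanged. Your version is somewhat more careful about pointedness and the passage between elements and equivalence classes, but the structure is the same.
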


\begin{proof}
Recall that $K_P$ is equal to the conic hull of all cones of the form $K_R$, where $R$ is a ray contraction of $P$. Because $P$ is a rooted tree, the only ray contractions of $P$ are those which have lower class $I_i$, where $I_i$ is the principal order ideal of some non-maximal element $i \in \mathcal{S}$. As a result, if $i$ is a non-maximal element in $P$, and $R$ is a ray contraction with lower class $I_i$, then $K_R$ is equal to the conic hull of the vector $w_i^P$.

As a result, $K_P$ is equal to the conic hull of principal order ideal vectors $w_i^P$ for all non-maximal elements $i \in P$. When $m$ is a maximal element in $P$, we find that $w_m^P=\sum_{j \in S} [-e_j]$. As a result, $w_m^P \equiv 0$, and $K_P$ is equal to the conic hull of all vectors $w_i^P$ for $i \in \mathcal{S}$.
\end{proof}

In the special case where $N$ is a classical nested set of $B$ with $|B_{max}|=1$, Proposition \ref{prop:nested-sets-are-ideals} states that the sets of $N$ are exactly the principal order ideals of the preposet $P_N$. We note that $P_N$ is a rooted tree, so Proposition \ref{prop:rooted-tree-conic-hull} implies the following:

\begin{proposition}\label{prop:classical-nested-set-conic-hull}
For a nested set $N$ of a connected classical nested set $B$, the cone $K_N$ is equal to \allowbreak {${\conichull\{\sum_{i \in I} [-e_i]| I \in N\}}$}.
\end{proposition}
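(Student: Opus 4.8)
The plan is to assemble this proposition directly from the chain of results already in place, with essentially no new argument. First I would observe that since $B$ is a connected classical building set, $B_{max}$ consists of the single base set $\mathcal{S}$ of $B$; hence every $B$-nested set $N$ contains $\mathcal{S}$, and in particular every element of $\mathcal{S}$ lies in at least one set of $N$. This is exactly the ``$|B_{max}| = 1$'' situation flagged just before the statement, so Proposition \ref{prop:nested-sets-are-ideals} applies and tells us that $N$ is precisely the collection of principal order ideals of the preposet $P_N$.

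Next I would record that $P_N$ is connected. As noted after Proposition \ref{prop:nested-sets-are-ideals}, for any classical $B$-nested set the preposet $P_N$ is a forest of rooted trees whose components are rooted trees on subsets of $B_{max}$; since here $B_{max} = \{\mathcal{S}\}$ there is only one component, so $P_N/\equiv_{P_N}$ is a rooted tree on $\mathcal{S}$. This is exactly the hypothesis of Proposition \ref{prop:rooted-tree-conic-hull}, which then gives that $K_N = K_{P_N}$ is the conic hull of the principal order ideal vectors $w_i^{P_N} = \sum_{j \preceq_{P_N} i} [-e_j]$ for $i \in \mathcal{S}$.

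Finally I would match up the two index sets. For each $i \in \mathcal{S}$ the principal order ideal $I_i = \{j : j \preceq_{P_N} i\}$ is a set in $N$, and conversely, by Proposition \ref{prop:nested-sets-are-ideals}, every set in $N$ arises in this way. Therefore, as $i$ ranges over $\mathcal{S}$, the vector $w_i^{P_N} = \sum_{j \in I_i} [-e_j]$ ranges over exactly $\{\sum_{j \in I} [-e_j] \mid I \in N\}$ (possibly with repetitions, which do not affect a conic hull). Combining this with the previous paragraph yields $K_N = \conichull\{\sum_{i \in I} [-e_i] \mid I \in N\}$, as claimed.

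Since every ingredient is already established, I do not expect a genuine obstacle here; the only point requiring a moment's care is the bookkeeping in the last step — verifying that the generating family ``indexed by $i \in \mathcal{S}$'' (as it comes out of Proposition \ref{prop:rooted-tree-conic-hull}) and the family ``indexed by $I \in N$'' describe the same set of vectors — together with making explicit that the connectedness of $B$ is what forces $P_N$ to be a single rooted tree, so that Proposition \ref{prop:rooted-tree-conic-hull} applies directly rather than requiring a product decomposition.
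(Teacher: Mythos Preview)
Your proposal is correct and follows exactly the paper's own approach: the paper likewise notes that for $|B_{max}|=1$ the sets of $N$ are the principal order ideals of $P_N$ by Proposition~\ref{prop:nested-sets-are-ideals}, observes that $P_N$ is a rooted tree, and then invokes Proposition~\ref{prop:rooted-tree-conic-hull}. Your write-up is in fact more detailed than the paper's, which dispatches the result in a single sentence.
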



\section{Facial Preposets, $\Delta$-nested sets, and $\Delta$-braid cones}\label{sec:facial-preposets-etc}

This material was motivated by the desire to characterize the normal fans of $\p$-nestohedra in a manner analogous to the normal fans of classical nestohedra. Speaking informally, the normal fan of a $\p$-nestohedron looks like portions of the normal fans of classical nestohedra, stitched together. When two nested sets share the same support, the related cones in the normal fan behave very similarly to the braid cones of the related classical nested sets. However, we need to be able to manage the fact that two $\B$-nested sets for a $\Delta$-building set $\B$ will likely not have the same support. If we want a set of propositions similar to Proposition \ref{prop:preposet-list-1} for $\Delta$-braid cones, we will need to deal with the fact that preposets defined from $\B$-nested sets should have the same support so that contractions and unions of preposets are well-defined, which leads us to the decision to define facial preposets the way we do in the following subsection.

\subsection{Introducing facial preposets}

Consider a simplicial complex $\Delta$ with base set $S$. We introduce an element $\infty$, and if $Q$ is a preposet on $\mathcal{S} \cup \{\infty\}$, then define the \emph{finite elements} of $Q$ to be the elements $\{x \in \mathcal{S}| x \prec_Q \infty\}$, and the \emph{infinite elements} to be the elements $\{x \in \mathcal{S}| x \equiv_Q \infty\}$. Now define $Q_{finite}$ to be the preposet on the finite elements of $Q$, ordered by $\preceq_Q$.

\begin{definition}
Given a simplicial complex $\Delta$ on base set $\mathcal{S}$, a \emph{facial preposet} of $\Delta$ is a preposet on $\mathcal{S} \cup \{\infty\}$ if $x \preceq_Q \infty$ for all $x \in \mathcal{S}$, and the base set of $Q_{finite}$ is a face of $\Delta$.
\end{definition}

We note that every facial preposet has a connected Hasse diagram. We also note that every contraction of a facial preposet is a facial preposet.

Facial preposets will be used to define cones we will call \emph{$\Delta$-braid cones}. Subsection \ref{sub:delta-braid-cones-from-nested} describes a method for defining $\Delta$-braid cones from $\Delta$-nested sets, and defining cones corresponding to $\Delta$-preposets is our primary motivation for introducing facial preposets.

\subsection{$\Delta$-braid cones arising from facial preposets}

Classical braid cones are traditionally defined by linear inequalities, and we have been able to provide an alternate realization for some of them by finding sets of vectors whose conic hull is a braid cone. In this subsection we will define a new class of cones from facial preposets, called $\Delta$-braid cones, defined primarily as conic hulls of certain vectors. The definition of these $\Delta$-braid cones is defined specifically to allow for the definition of cones from $\Delta$-nested sets in subsection \ref{sub:delta-braid-cones-from-nested}.

For a simplicial complex $\Delta$ on set $S$ with indexed vector set $V=\{v_s|s \in V\}$, define $\mathcal{F}(\Delta,V)$ to be the set of cones $C^V_I=\conichull\{v_s|s \in I\}$ for each face $I \in \Delta$. This is not always a fan; later we will only focus on the case where $\Delta$ is the dual simplicial complex of a simple polyhedron, which will guarantee that we can make a fan to represent $\Delta$. For now, call it a cone complex.


\begin{definition}
A cone complex $\mathcal{F}(\Delta,V)$ is \emph{non-degenerate} if $C^V_I \cap C^V_{I'}=C^V_{I \cap I'}$ for all $I, I' \in \Delta$, and each cone $C^V_I$ is $|I|$-dimensional.
\end{definition}

We recall that a fan is a collection of cones such that any intersection of two cones in the fan is a cone in the fan. It is possible for a cone complex to fail to be a fan, and it is also possible for a cone complex to be a fan but be degenerate by failing dimensionality requirements. Figure \ref{fig:degenerate-fans} illustrates these possible cases in two dimensions. When $\mathcal{F}(\Delta,V)$ is non-degenerate, the poset of cones defined by inclusion is isomorphic to $\Delta$.

\begin{figure}[h]
\centering
\includegraphics[width=.8\textwidth]{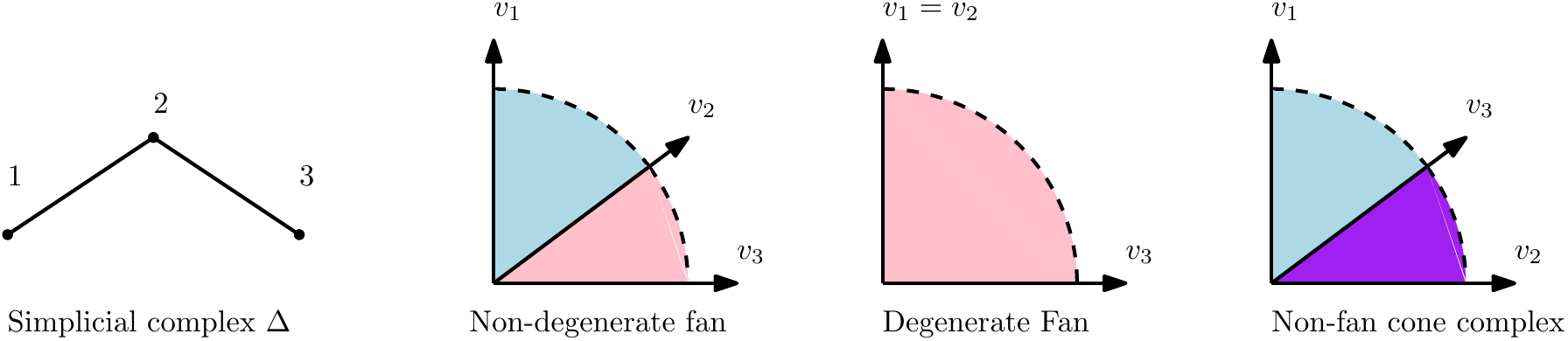}
\caption{A non-degenerate fan and two degenerate cone complexes.}\label{fig:degenerate-fans}
\end{figure}

Ray preposets are connected preposets with two equivalence classes, and so we know that only connected preposets have ray contractions. When $Q$ is a facial preposet, $Q$ is connected, and so there exist ray contractions of $Q$. In addition, these ray contractions are all facial preposets. In the same way that we can characterize braid cones of connected preposets as conic hulls of rays defined by ray contractions, we can define $\Delta$-braid cones as conic hulls of rays defined by ray contractions.


\begin{definition}
When a facial preposet $R$ is a ray preposet, define the \emph{$\Delta$-braid cone} $K_R^V$ to be the ray of the vector $\sum_{i \in R_{finite}} v_i$. When a facial preposet $Q$ is not a ray preposet, define $K_Q^V$ to be the conic hull of all rays $K_R^V$ where $R$ is a ray contraction of $Q$.
\end{definition}

\subsection{Properties of $\Delta$-braid cones}

In this subsection, we wish to prove a $\Delta$-braid cone analogue to Proposition \ref{prop:classical-BS}.
First, we will have to prove an analogue to Proposition \ref{prop:preposet-list-1}. We do this for certain cones by defining linear isomorphisms which map $\Delta$-braid cones onto classical braid cones and which allow us to apply Proposition \ref{prop:preposet-list-1} directly. For example, in order to characterize the intersection of two $\Delta$-braid cones, we will find a linear isomorphism mapping both cones onto classical braid cones, and then characterize the intersection of their images.

A map taking $\Delta$-braid cones to classical braid cones implies a map from facial preposets to preposets. There are two obvious ways to turn facial preposets into preposets. The first method, removing the element $\infty$ from a facial preposet, is very useful for characterizing the normal fan of a simplex-nestohedron as both a classical nestohedron and as a $\p$-nestohedron, and we devote Subsection \ref{sub:simplex-case} to studying this correspondence. The second method for defining a preposet from a facial preposet $Q$ is to forget that $Q$ is a facial preposet, and treat the element $\infty$ in $Q$ like any other element. We note that if $Q$ is a facial preposet on the set $S$, then $Q$ is a preposet on the set $S \cup \{\infty\}$. When we treat $Q$ as a preposet, we will refer to it as the \emph{finitized} preposet of $Q$.

\begin{lemma} \label{lemma:finitized}
For a facial preposet $Q$ on base set $\mathcal{S}$, define the vector set $V$ in the vector space $\R^\mathcal{S}/(1,\ldots,1)$ such that $v_{i} = [-e_i]$ for each $i \in \mathcal{S}$. We find that the cone $K_Q$ of the finitized preposet $Q$ is equal to the cone $K^V_Q$.
\end{lemma}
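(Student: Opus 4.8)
The plan is to realize both $K_Q$, the braid cone of the finitized preposet $Q$, and the $\Delta$-braid cone $K^V_Q$ as conic hulls of the \emph{same} finite family of rays, those attached to the ray contractions of $Q$, and then to check that corresponding rays are generated by the identical vector. Throughout I would work in $\R^{\mathcal{S}}/(1,\ldots,1)$, where $v_i = [-e_i]$, so that a sum $\sum_{i \in L}[-e_i]$ with $L \subseteq \mathcal{S}$ has the same meaning on both sides.

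First, since $Q$ is a facial preposet its Hasse diagram is connected, so its finitization is a connected preposet, and the characterization of braid cones of connected preposets proved earlier applies: $K_Q = \conichull\{K_R : R \text{ a ray contraction of } Q\}$. By definition, $K^V_Q = \conichull\{K^V_R : R \text{ a ray contraction of } Q\}$ as well. A ray contraction of $Q$ is simply a contraction of $Q$ that is a ray preposet, so both conic hulls are indexed by the same set of preposets $R$, and it suffices to prove $K_R = K^V_R$ for each such $R$.

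Next I would pin down $R_{finite}$. Write $L$, $U$ for the lower and upper classes of the ray preposet $R$, so $i \prec_R j$ for all $i \in L$, $j \in U$. Since every contraction of a facial preposet is again facial, $R$ is facial, hence $x \preceq_R \infty$ for every $x$; this forces $\infty \in U$, because $\infty \in L$ together with any $j \in U$ would give both $\infty \prec_R j$ and $j \preceq_R \infty$. Consequently the finite elements of $R$ are exactly the members of $L$, i.e.\ $R_{finite} = L \subseteq \mathcal{S}$. The earlier lemma expressing the braid cone of a ray preposet in terms of its lower class then gives $K_R = \conichull\bigl(\sum_{i \in L}[-e_i]\bigr)$, while by definition $K^V_R$ is the ray through $\sum_{i \in R_{finite}} v_i = \sum_{i \in L}[-e_i]$. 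These are the same ray, so $K_R = K^V_R$, and taking conic hulls over all ray contractions $R$ yields $K_Q = K^V_Q$.

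The genuinely load-bearing step, and the only real obstacle, is the observation that $\infty$ must fall in the upper class of every ray contraction: this is what identifies $R_{finite}$ with the lower class and so forces the classical ray $K_R$ and the $\Delta$-braid ray $K^V_R$ to be spanned by literally the same vector. The rest is bookkeeping --- matching the two quotient spaces via the projection that forgets the $\infty$-coordinate, and noting that the one ray contraction with $L = \mathcal{S}$ (possible only if $Q$ has no infinite elements) produces $\sum_{i \in \mathcal{S}}[-e_i] = [0]$ on the $\Delta$-braid side, i.e.\ the zero ray, so it is harmless on both sides.
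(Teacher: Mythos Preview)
Your argument is correct and is essentially the paper's own proof: verify $K_R=K^V_R$ for each ray contraction $R$ (using that $R$ is again facial, so $\infty$ lies in the upper class and $R_{\text{finite}}$ coincides with the lower class), then take conic hulls over all ray contractions. One small slip in your closing remark: the finitized braid cone $K_Q$ lives in $\R^{\mathcal{S}\cup\{\infty\}}/(1,\ldots,1)$, and there $\sum_{i\in\mathcal{S}}[-e_i]=[e_\infty]\neq 0$, so the ray contraction with lower class $\mathcal{S}$ yields a genuine ray rather than the origin and requires no special handling.
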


\begin{proof}
We first prove this for the case that $Q$ is a ray preposet as well as being a facial preposet. In this case, if $Q$ has lower class $I$, we find that $Q_{finite}$ is the preposet on $I$ such that all elements are equivalent, and we find $K^V_Q$ is equal to the conic hull of the single vector $\sum_{i \in I} v_i=\sum_{i \in I} [-e_i]$, which is equal to $K_Q^V$.

Now consider the case where $Q$ is not a ray preposet. We note that every contraction of a facial preposet is a facial preposet, so if $R$ is a ray contraction of $Q$ then it is also a facial preposet. As a result, the conic hull of $K_Q$ is equal to the conic hull of all rays $K_R$ where $R$ is a ray contraction of $Q$, which is equal to the conic hull of $K_R^V$ for any ray contraction $R$ of $Q$, which we have defined as $K_Q^V$. 
\end{proof}

\begin{proposition}  \label{prop:preposet-list-2}
Consider two facial preposets $Q, Q'$ of a simplicial complex $\Delta$, such that $Q/\equiv_Q, Q'/\equiv_{Q'}$ are rooted trees. Assume that $V$ is a set of vectors such that $\mathcal{F}(\Delta,V)$ is non-degenerate.
\begin{enumerate}
\item If $I$ is a face of $\Delta$, then the cone $C^V_I$ is equal to the cone $K_{Q''}$, where $Q''$ is the facial preposet defined as follows: $i \prec j$ for all $i \in I, j \in \mathcal{S}\backslash I$, and $j \equiv \infty$ for all $j \in \mathcal{S}\backslash I$.

\item If $I$ is a face of $\Delta$, then  the base set of $Q_{finite}$ is a subset of $I$ if and only if the cone $K_Q^V$ is contained in the cone $C^V_I$.

\item The preposet $Q$ is a contraction of $Q'$ if and only if $K_Q^V$ is a face of $K_{Q'}^V$.

\item \label{statement:cone-intersection} The cone $K_{\overline{Q \cup Q'}}$ is equal to the cone $K_Q^V \cap K_{Q'}^V$.

\item $Q \subseteq Q'$ if and only if $K_{Q'}^V \subseteq K_Q^V$.

\item The dimension of $K_Q^V$ is the number of equivalence classes in $Q_{finite}$.
%
\end{enumerate}
\end{proposition}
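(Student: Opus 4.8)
The plan is to derive each of the six statements from the corresponding fact about \emph{classical} braid cones (Proposition~\ref{prop:preposet-list-1} together with the Corollary following it), using as a bridge the principle that a $\Delta$-braid cone $K_Q^V$ is, up to a linear isomorphism of its linear span, the classical braid cone of the finitized preposet of $Q$. For the canonical vectors $V=\{[-e_i]\}$ this is exactly Lemma~\ref{lemma:finitized}; for a general $V$ with $\mathcal{F}(\Delta,V)$ non-degenerate I would manufacture such an isomorphism face by face, as sketched below. Throughout one needs the bookkeeping that finitization is transparent: contracting a cover relation of a facial preposet is the same whether or not one remembers the distinguished role of $\infty$, $\overline{Q\cup Q'}$ formed among facial preposets is the transitive closure of the union of the finitized preposets, and ``$Q\subseteq Q'$'' has the same meaning in either setting; moreover, adding relations to a facial preposet can only move finite elements into the $\infty$-class, so $\operatorname{support}(Q_{finite})$ decreases monotonically under contraction and under passage to $\overline{Q\cup Q'}$ (in particular $\overline{Q\cup Q'}$ is again facial, its finite support being a subset of the face $\operatorname{support}(Q_{finite})$), and $Q\subseteq Q'$ forces $\operatorname{support}(Q'_{finite})\subseteq\operatorname{support}(Q_{finite})$.

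Statement~(1) I would prove directly from the ray-contraction definition: $C_I^V=\conichull\{v_i\mid i\in I\}$, while $K_{Q''}^V$ is the conic hull of the rays $K_R^V$ over ray contractions $R$ of $Q''$; in the finitized $Q''$ every non-$\infty$ class is a singleton $\{i\}$ with $i\in I$ covered by the one top class, so every ray contraction has finite part a single $\{i_0\}\subseteq I$ and $K_R^V$ is the ray through $v_{i_0}$, whence $K_{Q''}^V=\conichull\{v_i\mid i\in I\}=C_I^V$, which Lemma~\ref{lemma:finitized} identifies with the advertised $K_{Q''}$. Next comes the single-cone reduction: set $I_0=\operatorname{support}(Q_{finite})$, a face of $\Delta$. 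Every ray contraction of $Q$ has finite part inside $I_0$, so $K_Q^V\subseteq C_{I_0}^V$ — already the forward implication of~(2). Non-degeneracy makes $C_{I_0}^V$ an $|I_0|$-dimensional cone with $|I_0|$ generators, so $\{v_i\mid i\in I_0\}$ is a basis of $\operatorname{span}C_{I_0}^V$; mapping $v_i\mapsto[-e_i]$ identifies $\operatorname{span}C_{I_0}^V$ with $\R^{I_0\cup\{\infty\}}/(1,\ldots,1)$, and since $Q/\equiv_Q$ is a rooted tree, Proposition~\ref{prop:rooted-tree-conic-hull} shows this isomorphism carries $K_Q^V$ onto the classical braid cone of the finitized $Q$. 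That is Lemma~\ref{lemma:finitized} localized to the face $I_0$, and it is the engine for the rest.

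Statement~(6) follows at once: under the isomorphism $\dim K_Q^V$ equals the dimension of the classical braid cone of the finitized $Q$, which by the Corollary to Proposition~\ref{prop:preposet-list-1} is one less than its number of equivalence classes, and that number exceeds the number of classes of $Q_{finite}$ by exactly one (the extra class being the infinite elements together with $\infty$), so the two corrections cancel. For~(3), the monotonicity above gives $\operatorname{support}(Q_{finite})\subseteq\operatorname{support}(Q'_{finite})$ when $Q$ is a contraction of $Q'$, so both cones lie in $\operatorname{span}C_{I_0'}^V$ for the larger support $I_0'$; applying the engine there turns the claim into statement~\ref{statement:contractions} of Proposition~\ref{prop:preposet-list-1}. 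For~(5), the forward direction is clear (a larger preposet means more inequalities, hence a smaller cone, and the supports are then comparable so the cones sit in a common space); the converse uses that a preposet is recovered from its classical braid cone as the set of pairs $(i,j)$ with the cone contained in $\{x_i\le x_j\}$, once one checks that $K_{Q'}^V\subseteq K_Q^V$ already forces $\operatorname{support}(Q'_{finite})\subseteq\operatorname{support}(Q_{finite})$. Finally~(2): by~(1), $C_I^V=K_{Q''}^V$ with $Q''$ a rooted-tree facial preposet of finite support $I$, and a short check shows $Q''\subseteq Q$ if and only if $\operatorname{support}(Q_{finite})\subseteq I$; combining this with~(5) is exactly~(2).

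I expect~(4) to be the real obstacle. One inclusion, $K_{\overline{Q\cup Q'}}^V\subseteq K_Q^V\cap K_{Q'}^V$, is just~(5) applied to $\overline{Q\cup Q'}\supseteq Q$. The difficulty in the reverse inclusion is that the finite supports $I_0,I_0'$ need not be comparable and $I_0\cup I_0'$ need not be a face, so there is no single $C_I^V$ housing both cones at once; and non-degeneracy pins down only $C_{I_0}^V\cap C_{I_0'}^V=C_{I_0\cap I_0'}^V$, not how the linear spans of $C_{I_0}^V$ and $C_{I_0'}^V$ meet. The plan is therefore to show first that any point of $K_Q^V\cap K_{Q'}^V$ already lies in $\operatorname{span}C_{I_0\cap I_0'}^V$ — using the simplicial (rooted-tree) shape of the two cones, not merely non-degeneracy — and only then to apply the $v_i\mapsto[-e_i]$ isomorphism on the face $I_0\cap I_0'$ and invoke the first part of Proposition~\ref{prop:preposet-list-1} for the intersection of classical braid cones. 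Making that first step airtight, together with the companion ``span forces support'' checks needed for the converses of~(2) and~(5), is where the argument demands the most care; everything else is a direct translation.
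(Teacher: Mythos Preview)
Your overall strategy---localize to a single face of $\Delta$, identify $\{v_i\}_{i\in I}$ with $\{[-e_i]\}$ via a linear isomorphism, and invoke Proposition~\ref{prop:preposet-list-1}---is exactly the paper's. The logical order differs, and the paper's order dissolves the difficulties you flag in~(4),~(5), and the converse of~(2).

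The paper proves~(2) directly and first. For the converse, if $j\in Q_{finite}\setminus I$, let $J$ be the principal order ideal of $j$; then $\sum_{i\in J}v_i\in K_Q^V$ lies in the relative interior of $C_J^V$, which by non-degeneracy meets $C_I^V$ only if $J\subseteq I$, a contradiction. No appeal to~(5) is needed.

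For~(4), you already note that non-degeneracy gives $C_{I_0}^V\cap C_{I_0'}^V=C_{I_0\cap I_0'}^V$; this means $K_Q^V\cap K_{Q'}^V$ lies in the \emph{cone} $C_{I_0\cap I_0'}^V$, not merely its span, so your span worry evaporates. The missing step in your plan is this: replace $Q$ by the facial preposet $P$ obtained by declaring every element of $I_0\setminus I_0'$ equivalent to $\infty$ (and likewise $Q'\rightsquigarrow P'$). Then $P$ is a contraction of $Q$ with finite support contained in $I_0\cap I_0'$, and since $C_{I_0\cap I_0'}^V$ is a face of $C_{I_0}^V$, the intersection $K_Q^V\cap C_{I_0\cap I_0'}^V$ is a face of $K_Q^V$; by~(3) that face is some $K_R^V$ with $R$ a contraction of $Q$, and minimality forces $R=P$. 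Now $P,P'$ both have finite support in $I_0\cap I_0'$, so the single isomorphism $v_i\mapsto[-e_i]$ on that face carries both to classical braid cones, and Proposition~\ref{prop:preposet-list-1}(1) applies. A short check gives $\overline{P\cup P'}=\overline{Q\cup Q'}$, finishing~(4).

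Finally,~(5) is a one-liner from~(4): $Q\subseteq Q'\iff \overline{Q\cup Q'}=Q'\iff K_{Q'}^V=K_Q^V\cap K_{Q'}^V\iff K_{Q'}^V\subseteq K_Q^V$. This avoids the separate ``span forces support'' verification you were bracing for.
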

\begin{proof}
\begin{enumerate}
\item The facial preposet $Q''$ has a finite set whose equivalence classes are the singleton subsets of $I$, and is such that all elements of $I$ are incomparable in the preposet. This facial preposet is a rooted tree, with maximal equivalence class $(\mathcal{S} \cup \{\infty\}) \backslash I$, and edges in the Hasse diagram between the equivalence class and the elements in $I$. As a result, the ray contractions of this preposet are the ray preposets whose lower class only contains one element in $I$. The conic hull of these rays is the conic hull of the vectors $\{v_i|i \in I\}$, which is exactly the cone $C^V_I$.
%

\item 

Consider the case where the base set of $Q_{finite}$ is a subset of $I$. Every facial ray contraction $R$ of $Q$ has a finite set that is a subset of $I$, and so is a sum of a subset of vectors of the form $v_i$ for $i \in I$. As a result, $K_Q^V$ is a conic hull of a collection of vectors in the cone $C^V_I$, and so $K_Q^V \subseteq C^V_I$.

Now consider the case that $Q_{finite}$ is not a subset of $I$. This means there exists at least one element $j \in Q_{finite}$ such that $j \notin I$. Define $J$ to be the principal ideal of $j$ in $Q_{finite}$. The vector $v_J= \sum_{i \in J} v_i$ is in $K_Q^V$ but not in $C^V_I$, and is contained in the interior of $C^V_J$. Because $\mathcal{F}(\Delta,V)$ is a fan, we note that the interior of $C^V_J$ intersects with $C^V_I$ if and only if $C^V_J \subseteq C^V_I$. Because $\mathcal{F}(\Delta,V)$ is non-degenerate, we also note that $C^V_J \subseteq C^V_I$ if and only if $J \subseteq I$, which is not the case. As a result, $v_J \in K_Q^V$ is not contained in $C^V_I$, and $K_Q^V \not\subseteq C^V_I$.

\item \label{item:prop-with-isomorphism} First, note that if $Q$ is a contraction of $Q'$, then the base set of the finite set of $Q$, which we are writing as $I$, is a subset of the base set of the finite set $I'$ of $Q'$, and $K_Q^V, K_{Q'}^V \subseteq C^V_{I'}$. Secondly, if $K_{Q}$ is a face of $K_{Q'}$, then $K_Q \subseteq C^V_{I'}$ and $I \subseteq I'$. As a result, it is sufficient to prove that this is true in the case where $I \subseteq I'$.

 Define a linear map on the linear span of $C^V_{I'}$, into the space $\R^{I'\cup\{\infty\}}/(1,\ldots,1)$, defined by $T(v_i) = [-e_i]$ for $i \in I'$. We note that $\{v_i|i \in I'\}$ are linearly independent, and the set of vectors $[-e_i]$ for $i \in I'$ are all linearly independent, with $\sum_{i \in I'} [-e_i] = [e_\infty]$. This map is therefore a linear isomorphism. We can see from Lemma \ref{lemma:finitized} and the definition of $\Delta$-braid cones that $T$ is an isomorphism such that $T(K_{Q'}^V)=K_{Q'}$ and $T(K_Q^V)=K_Q$. In addition, from Proposition \ref{prop:preposet-list-1}, $K_Q$ is a face of $K_{Q'}$ if and only if $Q$ is a contraction of $Q'$, and the linear isomorphism proves that $K_Q^V$ is a face of $K_{Q'}^V$ if and only if $Q$ is a contraction of $Q'$.

\item We wish to prove that $K_{\overline{Q \cup Q'}}^V = K^V_Q \cap K^V_{Q'}$. Note that the linear isomorphism $T$ defined in part (3) cannot be defined on the linear span of the vectors $\{v_i|i \in I \cup I'\}$, as the vectors might not be linearly independent. However, we know the cone $C^V_{I \cap I'}$ is simplicial, and the vectors $\{v_i|i \in I \cap I'\}$ must be independent. As a result, it is our goal in this proof to find new preposets $P$ and $P'$, such that $K_P^V=K_Q^V \cap C^V_{I'}$ and $K_{P'}^V=K_{Q'}^V \cap C^V_I$, and then prove that $\overline{P \cup P'}=\overline{Q \cup Q'}$, and $K_P^V \cap K_{P'}^V=K_{\overline{P\cup P'}}^V$.

We define $P$ as the preposet defined by taking $Q$ and defining $x \equiv_P \infty$ for all $x \in I\backslash I'$. This removes elements from $Q_{finite}$ that are not contained in $Q'_{finite}$. The finite set of $P$ is a subset of $I \cap I'$, but may possibly not be equal to $I \cap I'$. We find that $P$ is a contraction of $Q$, and therefore $K_P^V$ is a face of $K_Q^V$ and is contained in $C^V_{I \cap I'}$. Note that $C^V_{I \cap I'}$ is a face of $C^V_{I}$, which contains $K_Q^V$, so $C^V_{I \cap I'} \cap K_Q^V$ is a face of $K_Q^V$. From this, we find that $K_P^V$ is a face of $K_Q^V \cap C^V_{I \cap I'}$. However, $P$ is the minimal contraction of $Q$ with finite set contained in $I \cap I'$. As a result, $K_P^V=K_Q^V \cap C^V_{I \cap I'}$.

We can define $P'$ in an analogous way, being the minimal contraction of $Q'$ with finite set contained in $I \cap I'$, and find $K_{P'}^V = K_{Q'}^V \cap C^V_{I \cap I'}$. We then see that $K_P^V \cap K_{P'}^V = K_Q^V \cap K_{Q'}^V$.

Now consider $\overline{P \cup P'}$. It is trivial to see that $\overline{P \cup P'}=\overline{Q \cup Q'}$. All we need to do now is prove that $K_{\overline{P\cup P'}}^V=K_P^V \cap K_{P'}^V$. Now, we need only define the map $T$, defined before for part (3), on the set of vectors $\{v_i|i \in I \cap I'\}$. With this linear isomorphism, we can find a mapping taking $K_P^V$ to $K_P$, $K_{P'}^V$ to $K_{P'}$, and $K_{\overline{P\cup P'}}^V$ to $K_{\overline{P\cup P'}}$. Using Proposition \ref{prop:preposet-list-1}, we find that $K_{\overline{P\cup P'}}=K_P \cap K_{P'}$, and because of the linear isomorphism $T$, we have proven our statement.

\item Note that $Q \subseteq Q'$ if and only if $Q'=\overline{Q \cup Q'}$, which is true if and only if $K_{Q'}^V = K_Q^V \cap K_{Q'}^V$, which is true if and only if $K_{Q'}^V \subseteq K_Q^V$.

\item The cone $K_Q^V$ is isomorphic to the cone $K_Q$, which is the full-dimensional cone of $Q/\equiv_Q$ inside the linear span of $K_Q$. The preposet $Q/\equiv_Q$ has $k+1$ elements, where $k$ is the number of equivalence classes of $Q_{finite}$, and so $K_Q$ is $k$-dimensional, and $K_Q^V$ is $k$-dimensional.

\end{enumerate}
\end{proof}

Note that several statements in Proposition \ref{prop:preposet-list-1} do not have parallels in Proposition \ref{prop:preposet-list-2}. For instance, every facial preposet is connected by the equivalence class containing infinity, and so every $\Delta$-braid cone is pointed, which is not the case for general preposets and braid cones.

\subsection{$\Delta$-facial preposets arising from $\Delta$-nested sets}

Definition \ref{def:preposet-from-nested-set-most-general} defines a preposet $P_N$ from a set $N$ of subsets of a base set $\mathcal{S}$. We define a similar construction to $P_N$, but this time defining a preposet on a base set including the element $\infty$.


\begin{definition}
Consider a simplicial complex $\Delta$ on base set $\mathcal{S}$ and a $\Delta$-building set $\B$. For a $\B$-nested set $N$, define $Q(N)$ to be the preposet on $\mathcal{S} \cup \{\infty\}$ such that $i \preceq j$ if and only if every set in $N$ which contains $j$ also contains $i$.
\end{definition}

We note that $\infty$ is contained in no set in $N$, so $i \preceq \infty$ for all $i \in \mathcal{S}$, and $i \equiv \infty$ if and only if $i \notin \bigcup N$, meaning the finite set of $Q(N)$ is $\bigcup{N}$, which is a face of $\Delta$. As a result, $Q(N)$ is a facial preposet.

We note that $Q(N)$ defined for a nested set over a building set with base set $\mathcal{S}$ is equal to $P_N$ defined on the base set $\mathcal{S}\cup\{\infty\}$. For a $\B$-nested set $N$, we find that for each element $i$ in the set $\bigcup N=\bigcup_{I \in N} I$, there is a unique minimal set $I_i \in N$ such that $i \in I_i$. The following proposition is directly analogous to Proposition \ref{prop:smallest-containing-set-order-ideal}, but applies to facial preposets of $\Delta$-nested sets instead of preposets of classical nested sets.

\begin{proposition}
The facial preposet $Q(N)$ defined by a $\B$-nested set $N$ of a simplicial complex $\Delta$ is equal to the facial preposet of $\Delta$ such that if $i, j \in \bigcup N$, then $i \preceq j$ if and only if $I_i \subseteq I_j$, and $i \equiv \infty$ for all $i \in S \backslash (\bigcup N)$.
\end{proposition}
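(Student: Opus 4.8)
The plan is to follow the proof of Proposition~\ref{prop:smallest-containing-set-order-ideal} almost verbatim, the only new feature being that we must separately track the adjoined element $\infty$. Fix a $\B$-nested set $N$ of $\Delta$ on base set $\mathcal{S}$. The one genuine input is the observation, recorded just before the statement, that for each $i \in \bigcup N$ the sets of $N$ containing $i$ form a chain and hence have a unique minimal element $I_i$; this is where I would invoke Proposition~\ref{complex-nestedset}, since no two members of $N$ have nontrivial intersection, so any two members of $N$ both containing $i$ are comparable.

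First I would verify the description of $\preceq_{Q(N)}$ on pairs of finite elements. Given $i,j \in \bigcup N$: if $I_i \subseteq I_j$, then any $K \in N$ with $j \in K$ satisfies $I_j \subseteq K$ by minimality of $I_j$, so $i \in I_i \subseteq I_j \subseteq K$, whence $i \preceq_{Q(N)} j$ by the definition of $Q(N)$. Conversely, if $i \preceq_{Q(N)} j$, then $I_j \in N$ contains $j$ and therefore contains $i$; since $I_i$ is the least member of $N$ containing $i$ (and the members of $N$ containing $i$ form a chain), this forces $I_i \subseteq I_j$.

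Next I would handle $\infty$. By construction $i \preceq_{Q(N)} \infty$ for every $i \in \mathcal{S}$, because no set of $N$ contains $\infty$; this is also precisely what makes $Q(N)$ a facial preposet with finite set $\bigcup N$. And $\infty \preceq_{Q(N)} i$ holds iff every set of $N$ containing $i$ also contains $\infty$, which, again because no set of $N$ contains $\infty$, happens exactly when no set of $N$ contains $i$, i.e. when $i \in \mathcal{S}\backslash(\bigcup N)$. Hence $i \equiv_{Q(N)} \infty$ precisely on $\mathcal{S}\backslash(\bigcup N)$, while $i \prec_{Q(N)} \infty$ for $i \in \bigcup N$. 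Assembling the finite-part description with these relations involving $\infty$ gives exactly the facial preposet in the statement. I do not expect any real obstacle here: the entire content is the chain structure supplying $I_i$, and the rest is bookkeeping about the role of $\infty$.
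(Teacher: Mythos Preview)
Your proposal is correct and matches the paper's intent: the paper does not give an explicit proof but simply notes the proposition is ``directly analogous to Proposition~\ref{prop:smallest-containing-set-order-ideal},'' and your argument is exactly the analogue one would write out, with the only new work being the verification of the $\infty$-class, which you handle correctly.
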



We note that $Q(N)$ is a rooted tree, and $Q(N)_{finite}$ is a forest of rooted trees. We will use the notation $K_{N}^V = K_{Q(N)}^V$ to denote the $\Delta$-braid cone defined by a $\Delta$-nested set.


\begin{proposition} \label{prop:nested-subset-preposet}
For two $\B$-nested sets $N, N'$ of a simplicial complex $\Delta$, the following statements are equivalent:
\begin{enumerate}
\item $N \subseteq N'$
\item $Q(N)$ is a contraction of $Q({N'})$
\item $Q({N'}) \subseteq Q(N)$
\item If $V$ is a vector set such that $\mathcal{F}(\Delta,V)$ is non-degenerate, then $K_{N}^V$ is a face of $K^V_{N'}$.
\item If $V$ is a vector set such that $\mathcal{F}(\Delta,V)$ is non-degenerate, then $K_{N}^V \subseteq K_{N'}^V$.
\end{enumerate}
\end{proposition}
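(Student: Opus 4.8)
The approach is to deduce the equivalence from Proposition~\ref{prop:preposet-list-2} wherever possible and to supply two genuinely combinatorial implications by hand, $(3)\Rightarrow(1)$ and $(1)\Rightarrow(2)$. Since $Q(N)$ and $Q(N')$ are rooted-tree facial preposets of $\Delta$, Proposition~\ref{prop:preposet-list-2} applies to them for any $V$ with $\mathcal{F}(\Delta,V)$ non-degenerate: its statement~(5) gives $Q(N')\subseteq Q(N)\iff K^V_{N}\subseteq K^V_{N'}$, which is $(3)\Leftrightarrow(5)$, and its statement~(3) gives that $Q(N)$ is a contraction of $Q(N')$ iff $K^V_N$ is a face of $K^V_{N'}$, which is $(2)\Leftrightarrow(4)$. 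Next, $(1)\Rightarrow(3)$ is immediate: if $N\subseteq N'$ and $i\preceq_{Q(N')}j$, then every member of $N$ containing $j$ is a member of $N'$ containing $j$ and hence contains $i$, so $i\preceq_{Q(N)}j$. And $(2)\Rightarrow(3)$ is immediate from the definition of contraction, since $Q(N)=\overline{Q(N')\cup R^{\mathrm{op}}}$ contains $Q(N')$. Once $(3)\Rightarrow(1)$ and $(1)\Rightarrow(2)$ are established, the chain $(1)\Rightarrow(3)\Rightarrow(1)$ together with $(1)\Rightarrow(2)\Rightarrow(3)$, $(3)\Leftrightarrow(5)$, and $(2)\Leftrightarrow(4)$ gives the full equivalence.

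For $(3)\Rightarrow(1)$, assume $Q(N')\subseteq Q(N)$. One first checks $\bigcup N\subseteq\bigcup N'$: if $j\in\bigcup N\setminus\bigcup N'$ then $\infty\preceq_{Q(N')}j$, hence $\infty\preceq_{Q(N)}j$, contradicting that $j$ is a finite element of $Q(N)$. Fix $I\in N$, and write $I^{N}_x$, $I^{N'}_x$ for the smallest members of $N$, $N'$ containing $x$. Choose $j_0\in I$ not contained in any strictly smaller member of $N$ — such $j_0$ exists since otherwise $I$ would be a disjoint union of strictly smaller members of $N$, impossible for an element of $\B$ by Proposition~\ref{complex-nestedset} — so that $I=\{x:x\preceq_{Q(N)}j_0\}=I^{N}_{j_0}$. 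For each $k\in I$ we have $k\in\bigcup N'$, so $I^{N'}_k$ is defined, and $Q(N')\subseteq Q(N)$ gives $I^{N'}_k\subseteq I^{N}_k\subseteq I$. Let $J_1,\dots,J_m$ be the maximal members of $\{J\in N':J\subseteq I\}$; by Proposition~\ref{complex-nestedset} they are pairwise disjoint, and since every $k\in I$ lies in $I^{N'}_k$, which is contained in some $J_\ell$, their union is $I$. If $m\ge2$ this exhibits $I\in\B$ as a disjoint union of two or more members of $N'$, again contradicting Proposition~\ref{complex-nestedset}; hence $m=1$, $J_1=I$, and $I\in N'$. Thus $N\subseteq N'$.

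For $(1)\Rightarrow(2)$, assume $N\subseteq N'$, so $Q(N')\subseteq Q(N)$ by the above, and set $R=\{(i,j):i\preceq_{Q(N')}j\text{ and }i\equiv_{Q(N)}j\}\subseteq Q(N')$. I claim $\overline{Q(N')\cup R^{\mathrm{op}}}=Q(N)$, which says precisely that $Q(N)$ is a contraction of $Q(N')$. The inclusion $\subseteq$ holds since $R^{\mathrm{op}}\subseteq Q(N)$ and $Q(N)$ is transitively closed. For $\supseteq$, let $i\preceq_{Q(N)}j$; the cases $j\notin\bigcup N$ go through directly via $\infty$, so assume $i,j\in\bigcup N$ and put $I=I^{N}_j\ni i$. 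Let $M$ be the smallest member of $N'$ containing both $i$ and $j$ (it exists with $M\subseteq I$, since $I\in N\subseteq N'$ contains both and the members of $N'$ containing a fixed element form a chain), and pick $m$ with $I^{N'}_m=M$. Then $m\in M\subseteq I$ forces $I^{N}_m\subseteq I$, while $M\subseteq I^{N}_m$ (as $I^{N}_m\in N'$ contains $m$ and $M$ is the smallest such) gives $j\in M\subseteq I^{N}_m$, hence $I=I^{N}_j\subseteq I^{N}_m$; so $I^{N}_m=I$, i.e.\ $m\equiv_{Q(N)}j$. Since $i,j\in M=I^{N'}_m$ we get $i\preceq_{Q(N')}m$ and $j\preceq_{Q(N')}m$, so $(j,m)\in R$ and $(m,j)\in R^{\mathrm{op}}$; therefore $i\preceq m\preceq j$ in $\overline{Q(N')\cup R^{\mathrm{op}}}$, as needed.

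The main obstacle is $(1)\Rightarrow(2)$. For arbitrary rooted-tree preposets it is false that $Q_1\subseteq Q_2$ forces $Q_2$ to be a contraction of $Q_1$, so the proof must use the building-set axioms in an essential way; concretely, the key point is that a common upper bound for $i$ and $j$ in $N'$ can always be taken in the same $Q(N)$-equivalence class as $j$, which is exactly what allows the required equivalence classes to be merged by a contraction. The argument for $(3)\Rightarrow(1)$ is of the same flavour — both ultimately rest on Proposition~\ref{complex-nestedset}, i.e.\ on the fact that a member of $\B$ is never a disjoint union of several others — but it is lighter. All remaining implications are direct consequences of Proposition~\ref{prop:preposet-list-2} or immediate from the definitions.
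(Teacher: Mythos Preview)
Your proof is correct. The equivalences $(2)\Leftrightarrow(4)$ and $(3)\Leftrightarrow(5)$ via Proposition~\ref{prop:preposet-list-2}, and the combinatorial argument for $(3)\Rightarrow(1)$ via maximal $N'$-sets inside $I$, coincide with the paper's treatment almost verbatim.

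The genuine point of departure is $(1)\Rightarrow(2)$. The paper argues by induction on $|N'\setminus N|$: reducing to $N'=N\cup\{I\}$, it shows that adding a single set corresponds to a single cover-relation contraction in the Hasse diagram (splitting into the cases ``$I$ maximal in $N'$'' and ``$I$ not maximal'', the latter handled by restricting to the classical situation on the support $\bigcup N'$). You instead construct the entire contracting relation at once, namely $R=\{(i,j)\in Q(N'):i\equiv_{Q(N)}j\}$, and verify $\overline{Q(N')\cup R^{\mathrm{op}}}=Q(N)$ directly; the key step --- producing an $m$ with $I^{N'}_m=M$ and $I^{N}_m=I^{N}_j$ --- is exactly where the building-set axiom enters, and your observation that this is the crux (a generic rooted tree need not admit such an $m$) is spot on. Your approach is a bit more explicit and avoids the case split and the appeal to the classical fan result, at the cost of a slightly more delicate element-chasing argument; the paper's inductive approach trades that for a simpler local picture at each step.
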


\begin{proof}

First we should note some analogous statements for classical $B$-nested sets. If $M, M'$ are $B$-nested for a classical building set $B$, then the following four statements are equivalent: $M \subseteq M'$, $P_{M'} \subseteq P_{M}$, $P_M$ is a contraction of $P_{M'}$, and $K_M$ is a face of $K_{M'}$.

Consider the case where $N'=N \cup \{I\}$. We wish to prove that $Q(N)$ can be obtained by contracting a single cover relation in $Q(N')$. If $I$ is maximal in $N'$, define the set $J=I \cap Q(N)_{finite}$. We leave it as trivial that $Q(N)$ is found by contracting the equivalence classes $J$ and $Q(N)_{infinite}$ in $Q(N')$. As a result, $Q(N')$ is a contraction of $Q(N)$ in this case. Alternatively, if $I$ is not maximal in $N'$, then the base sets of $Q(N)_{finite}$ and $Q(N')_{finite}$ are equal. If we restrict $\B$ to the set $\bigcup N'$, then we find $N, N$ are both nested sets of a classical building set $B=\{S \in \B| S \subseteq \bigcup N\}$. Because $N, N'$ are $B$-nested and $N \subseteq N'$, we know that $P_{N'}$ is a contraction of $P_N$. Because $P_N=Q(N)_{finite}, P_{N'}=Q(N)_{finite}$, this means $Q(N')$ is a contraction of $Q(N)$. This argument has applied to the case that $|N'|=|N|+1$, but applied inductively, we find (1) implies (2).

Proposition \ref{prop:preposet-list-2} means that (2) implies (3).

We wish to prove that (3) implies (1). Assume that $Q(N') \subseteq Q(N)$. For an element $i$ in the preposet $Q(N)$, define principal ideals $I_i=\{x| x \preceq_{Q(N)} i\}$, and $I'_i=\{x|x \preceq_{Q(N')} i\}$. We find that for each $i$ in the base set $\mathcal{S}$ of $\Delta$ that either $I_i \in N$ or $I_i=\mathcal{S} \cup \{\infty\}$, and similarly for $Q(N')$.

We also can see that each set $I_i$ is equal to the union $\bigcup_{j \in J} I'_j$ for some set $J \subseteq \mathcal{S}$. We wish to prove that $I_i \in N'$. If not, then there exists a minimal subset $J \subseteq \mathcal{S}$ such that $\bigcup_{j \in J} I'_j=I_i$. Because this set is minimal, we find $I'_j, I'_{j'}$ are incomparable for any $j, j' \in J$. As a result, there exists a subset of $N'$ of order $\ge 2$ whose union is $I_i$. We know that $I_i \in \B$, which is a contradiction. Proposition \ref{complex-nestedset} states that no union of a subset of order $\ge 2$ of incomparable elements of a $\B$-nested set is equal to a set in $\B$. As a result, we find that every set $I \in N$ is contained in $N'$, and $N \subseteq N'$.

We know (2) and (4) are equivalent according to Proposition \ref{prop:preposet-list-2}. Similarly, we know (3) and (5) are equivalent according to the same proposition.
\end{proof}

\begin{proposition} \label{prop:facial-preposet-union-intersection}
For two $\B$-nested sets $N, N'$, we find $\overline{Q(N') \cup Q(N)}=Q(N \cap N')$.
\end{proposition}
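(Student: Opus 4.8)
The plan is to prove the two inclusions of preposets (viewed as subsets of $(\mathcal{S}\cup\{\infty\})^2$) separately. First note that $N\cap N'$ is $\B$-nested: both conditions of Proposition \ref{complex-nestedset} are inherited by subsets of a nested set, so $Q(N\cap N')$ is defined. The inclusion $\overline{Q(N')\cup Q(N)}\subseteq Q(N\cap N')$ is then immediate from the definition of $Q(\cdot)$: if $M'\subseteq M$ then every set of $M'$ containing $j$ is a set of $M$ containing $j$, so $Q(M)\subseteq Q(M')$; applying this to $N\cap N'\subseteq N$ and $N\cap N'\subseteq N'$ gives $Q(N),Q(N')\subseteq Q(N\cap N')$, and since $Q(N\cap N')$ is transitive it contains the transitive closure of the union.

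For the reverse inclusion I would use the concrete description that, for a $\B$-nested set $M$, one has $i\preceq_{Q(M)}j$ if and only if $j\notin\bigcup M$ or $i\in\mathrm{sm}_M(j)$, where $\mathrm{sm}_M(j)$ denotes the smallest set of $M$ containing $j$ (the facial-preposet form of the characterization preceding this proposition). Fix $i\preceq_{Q(N\cap N')}j$ and split on whether $j\in\bigcup(N\cap N')$. If it is, put $B=\mathrm{sm}_{N\cap N'}(j)\in N\cap N'$, so $i\in B$. Now restrict to the Boolean subcomplex of subsets of $B$: the sets $N_B=\{S\in N:S\subseteq B\}$ and $N'_B=\{S\in N':S\subseteq B\}$ are classical nested sets over the connected classical building set $\{S\in\B:S\subseteq B\}$ (both contain $B$, its unique maximal element), and $N_B\cap N'_B=(N\cap N')_B$ with $\mathrm{sm}_{(N\cap N')_B}(j)=B\ni i$. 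The classical identity $K_M\cap K_{M'}=K_{M\cap M'}$ (Proposition \ref{prop:classical-BS} and the remark following it), together with Proposition \ref{prop:preposet-list-1}(1) and the fact that a braid cone determines its preposet, yields $\overline{P_{N_B}\cup P_{N'_B}}=P_{(N\cap N')_B}$; hence $i\preceq_{P_{(N\cap N')_B}}j$, i.e.\ there is a chain from $i$ to $j$ alternating $P_{N_B}$- and $P_{N'_B}$-relations, necessarily inside $B$. Finally, for $x,y\in B$ any $S\in N$ containing $y$ is comparable to $B$, so either $S\subseteq B$ (hence $S\in N_B$) or $B\subseteq S$ (hence $x\in B\subseteq S$); this shows $x\preceq_{P_{N_B}}y\Rightarrow x\preceq_{Q(N)}y$, so the chain lifts to $\overline{Q(N)\cup Q(N')}$.

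The remaining case $j\notin\bigcup(N\cap N')$ — where $i$ is arbitrary — has no classical analogue and I expect it to be the main obstacle; here the goal is to show $j$ lies in the infinite class $Z=\{x:\infty\preceq_{\overline{Q(N)\cup Q(N')}}x\}$ of the closure, after which $i\preceq_{\mathrm{closure}}\infty\preceq_{\mathrm{closure}}j$ finishes. I would analyze $W:=\mathcal{S}\setminus Z$: every $z\in W$ lies in $\bigcup N\cap\bigcup N'$ (otherwise $\infty\preceq_{Q(N)}z$ or $\infty\preceq_{Q(N')}z$), and $W$ is closed under $z\mapsto\mathrm{sm}_N(z)$ and $z\mapsto\mathrm{sm}_{N'}(z)$ (a relation $y\preceq_{Q(N)}z$ with $y\in Z$ would force $z\in Z$). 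Thus $W$ is simultaneously a union of sets of $N$ and a union of sets of $N'$, all contained in the face $\bigcup N\in\Delta$ (the union of a nested set is a face, since its maximal sets are pairwise disjoint and Proposition \ref{complex-nestedset} applies), so $W\in\Delta$. Taking the maximal $N$-sets and the maximal $N'$-sets lying in $W$ and forming the intersection hypergraph on them, each connected component has its union in $\B$ (Proposition \ref{complex-buildingset}, since every partial union lies in the face $W$), whereas a disjoint family of $\ge 2$ sets of $N$ cannot have union in $\B$ (Proposition \ref{complex-nestedset}); hence each component is a single $N$-set equal to a single $N'$-set, which therefore belongs to $N\cap N'$. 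Consequently $W\subseteq\bigcup(N\cap N')$, so $j\notin\bigcup(N\cap N')$ forces $j\notin W$, i.e.\ $j\in Z$, completing the proof.
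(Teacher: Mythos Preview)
Your proof is correct and takes a genuinely different route from the paper's. Both arguments establish the easy inclusion $\overline{Q(N)\cup Q(N')}\subseteq Q(N\cap N')$ the same way, and both ultimately invoke the classical identity $\overline{P_M\cup P_{M'}}=P_{M\cap M'}$, but they organize the hard inclusion differently. The paper first proves an auxiliary lemma: for any face $I\in\Delta$, taking the closure with the ``flat'' facial preposet $Q_I$ restricts $N$ to $M=\{S\in N:S\subseteq I\}$, i.e.\ $\overline{Q_I\cup Q(N)}=Q(M)$. Applying this with $I=\bigcup N'$ and $I'=\bigcup N$ produces classical nested sets $M,M'$ supported in the common face $I\cap I'$, after which a single linear isomorphism onto a braid space reduces everything to the classical statement, and one checks $M\cap M'=N\cap N'$.

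You instead argue pointwise on a relation $i\preceq_{Q(N\cap N')} j$ and split on whether $j\in\bigcup(N\cap N')$. In the first case you localize inside a single set $B\in N\cap N'$ and lift a classical chain back up, which is a cleaner and more local use of the classical fact than the paper's global restriction. Your second case --- showing that the complement $W$ of the infinite class of the closure is covered by sets of $N\cap N'$ via the intersection-hypergraph argument --- is entirely combinatorial and has no counterpart in the paper; it replaces the geometric lemma and the linear-isomorphism machinery. The paper's approach integrates better with the cone framework it has been building, while yours is more self-contained and yields the pleasant structural fact that $\mathcal{S}\setminus Z\subseteq\bigcup(N\cap N')$ as a byproduct.
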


\begin{proof}

Note that $Q(N \cap N')$ is the smallest common contraction of $Q(N)$ and $Q(N')$, and as a result $\overline{Q(N) \cup Q(N')} \subseteq Q(N\cap N')$. However, proving that the two preposets are actually equal is not trivial. We will instead rely upon the fact that when $B$ is a classical building set, and $M, M'$ are $B$-nested, then the set of cones of $B$-nested sets is a fan, and specifically, $K_M \cap K_{M'}=K_{M \cap M'}$ and $\overline{P(M) \cup P(M')}=P(M \cap M')$. We will use a linear isomorphism similar to the one used in Proposition \ref{prop:preposet-list-2} part (4).



For any face $I \in \Delta$, define the facial preposet $Q_I$ as the unique facial preposet with finite set equal to $I$, and $i, j$ incomparable for all distinct $i, j \in I$. We wish to prove a lemma that $\overline{Q_I \cup Q(N)}=Q(M)$, where $M=\{S \in N|S \subseteq I\}$.

We can see that any elements not in $I$ are contracted to $\infty$ in this preposet, and so we note that the base set of $(\overline{Q_I \cup Q(N)})_{finite}$ is the set of all elements $i \in I$ such that $j \not \preceq i$ for any $j \notin I$. In addition, $(\overline{Q_I \cup Q(N)})_{finite}$ is a subpreposet of $Q(N)_{finite}$, with $i \preceq j$ in $(\overline{Q_I \cup Q(N)})_{finite}$ if and only if $i, j \in (\overline{Q_I \cup Q(N)})_{finite}$ and $i \preceq_{Q(N} j$.

For any element $i \in Q(N)$, the principal order ideal $I_i$ of $i$ in $Q(N)$ is the smallest set in $N$ containing $i$. We note that $j \in I_i$ if and only if $j \preceq_{Q(N)} i$. We also note that $I_i \subseteq I$ if and only if $j \in I$ for all $j \preceq_{Q(N)}$. Not only is $i \in (\overline{Q_I \cup Q(N)})_{finite}$, but the entire set $I_i$ is a subset, and $I_i$ is the principal order ideal of $i$ in $(\overline{Q_I \cup Q(N)})_{finite}$.

As a result, we have found that for each $i \in \bigcup{N}$, $i \in (\overline{Q_I \cup Q(N)})_{finite}$ if and only if $I_i \subseteq I$, and the principal order ideal of $i$ in $(\overline{Q_I \cup Q(N)})_{finite}$ is $I_i$. This means that the set of principal order ideals of $(\overline{Q_I \cup Q(N)})_{finite}$ is equal to the set of all sets $I_i \in N$ such that $I_i \subseteq I$. This is exactly the set $M$, and because $M \subseteq N$, $M$ is a $\B$-nested set. This means that because the set of principal order ideals of $(\overline{Q_I \cup Q(N)})_{finite}$ is equal to $M$, we find $(\overline{Q_I \cup Q(N)})_{finite} = Q(M)_{finite}$, and therefore $\overline{Q_I \cup Q(N)} = Q(M)$, proving this lemma.

Now we wish to apply this lemma. Define $I=\bigcup{N'}$ and $I'=\bigcup{N}$. We can define nested sets $M=\{S \in N|S \subseteq I\}$ and $M'=\{S \in N'|S \subseteq I\}$. As we have just proven, $Q(M) = \overline{Q_I \cup Q(N)}$ and $Q(M') = \overline{Q_{I'} \cup Q(N')}$. In addition, statement (1) of Proposition \ref{prop:preposet-list-2} shows that $K_{Q_I}^V = C^V_I$, and $K_{Q_{I'}}=C^V_{I'}$. For any $\Delta$ we can find a vector set $V$ such that $\mathcal{F}(\Delta,V)$ is non-degenerate, and from statement \ref{statement:cone-intersection} of Proposition \ref{prop:preposet-list-2}, we see that $K_M^V = C^V_{I} \cap K_N^V$ and $K_{M'}^V = C^V_{I'} \cap K_{N'}^V$. Finally, we notice that $K_M^V \cap K_{M'}^V = K_N^V \cap K_{N'}^V$, which is a subset of the cone $C^V_{I \cap I'}$. This is true if and only if $\overline{Q(N)\cup Q(N')} = \overline{Q(M)\cup Q(M')}$.

We recall that $\overline{P(M) \cup P(M')}=P(M \cap M')$ for two classical nested sets. Using the same linear isomorphism as in the proof of statement \ref{item:prop-with-isomorphism} of Proposition \ref{prop:preposet-list-2}, we find an isomorphism mapping $K_M^V$ to $K_M$, and similarly for $M'$ and $M \cap M'$. Now, because $K_M \cap K_{M'} = K_{M \cap M'}$, we find that $K_M^V \cap K_{M'}^V = K_{M \cap M'}^V$. 

Finally, because $K_M^V \cap K_{M'}^V = K_{\overline{Q(M) \cup Q(M')}}$, and $\overline{Q(M)\cup Q(M')}=\overline{Q(N)\cup Q(N')}$, this implies $K_{\overline{Q(N)\cup Q(N')}}^V = K_{Q(M\cap M')}^V$, which means $\overline{Q(N)\cup Q(N')}=Q(M \cap M')$. Note now that $M \cap M'=N \cap N'$, proving that $\overline{Q(N) \cup Q(N')}=Q(N\cap N')$.
\end{proof}

\subsection{$\Delta$-braid cones arising from $\Delta$-nested sets}\label{sub:delta-braid-cones-from-nested}

\begin{proposition}
When $\Delta$ is a simplicial complex with vector set $V$ such that $\mathcal{F}(\Delta,V)$ is non-degenerate, and $N$ is a nested set for a building set $\B$, we find $K_N^V=\conichull\{\sum_{i \in I} v_i|I \in N\}$.
\end{proposition}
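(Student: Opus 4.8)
The plan is to compute $K_N^V = K_{Q(N)}^V$ straight from the definition of a $\Delta$-braid cone, which expresses such a cone as the conic hull of the rays attached to its ray contractions. Recall from the discussion following the definition of $Q(N)$ that $Q(N)$ is a facial preposet and that $Q(N)/{\equiv}$ is a rooted tree whose unique maximal equivalence class is the infinite class, with the principal order ideal of each finite element $i \in \bigcup N$ equal to $I_i$, the smallest element of $N$ containing $i$. Since a facial preposet is a ray contraction of itself, one may write $K_{Q(N)}^V = \conichull\{K_R^V : R \text{ a ray contraction of } Q(N)\}$ whether or not $Q(N)$ happens to be a ray preposet, so it suffices to enumerate the ray contractions of $Q(N)$ and identify their cones.

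The key combinatorial step is to show that the lower classes of the ray contractions of $Q(N)$ are exactly the sets of $N$. By Proposition~\ref{prop:rooted-ray-contractions}, since $Q(N)/{\equiv}$ is a rooted tree, the lower classes of its ray contractions are precisely the principal order ideals of its non-maximal elements, i.e.\ the sets $I_i$ for $i \in \bigcup N$. Each such $I_i$ lies in $N$ by definition, so it remains to check that conversely every $I \in N$ arises as some $I_i$. For this I would take the maximal elements of $\{J \in N : J \subsetneq I\}$: these are pairwise incomparable, hence pairwise disjoint by Proposition~\ref{complex-nestedset}, and by the same proposition their union cannot lie in $\B$; since $I \in \B$, their union is a proper subset of $I$ (trivially so when the list is empty). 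Choosing $i \in I$ outside this union forces $I_i = I$. This identification is the one place the structure of nested sets (Proposition~\ref{complex-nestedset}) does real work, and I expect it to be the main obstacle, although it is short.

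Finally I would read off the cones. For a ray contraction $R$ of $Q(N)$ with lower class $I \in N$, the finite part $R_{finite}$ is exactly that lower class $I$ (the infinite class of $R$ absorbs $\infty$ together with everything outside $I$), so by the definition of $\Delta$-braid cones $K_R^V = \conichull\{\sum_{i \in I} v_i\}$. Assembling the pieces gives $K_N^V = K_{Q(N)}^V = \conichull\bigl\{\sum_{i \in I} v_i : I \in N\bigr\}$. The degenerate cases fit the same formula: if $Q(N)$ is a ray preposet then $Q(N)_{finite}$ is a single equivalence class and $N$ is the single set $\bigcup N$, while if $N = \emptyset$ then $Q(N)$ has only the infinite class, no ray contractions, and $K_N^V = \{0\} = \conichull\emptyset$. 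Non-degeneracy of $\mathcal{F}(\Delta,V)$ is not needed for the equality itself, but it does guarantee that the vectors $v_i$ with $i \in I$ are linearly independent, so that each $\sum_{i \in I} v_i$ is nonzero and each $K_R^V$ is a genuine ray.
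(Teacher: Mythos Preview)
Your proof is correct, but it takes a genuinely different route from the paper's.

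The paper's argument is a one-line reduction: it applies the linear isomorphism $T(v_i)=[-e_i]$ from the proof of Proposition~\ref{prop:preposet-list-2}(\ref{item:prop-with-isomorphism}) to carry $K_N^V$ onto the classical braid cone $K_N$, then invokes Proposition~\ref{prop:classical-nested-set-conic-hull} (which already says $K_N=\conichull\{\sum_{i\in I}[-e_i]:I\in N\}$ in the classical setting) and pulls back by $T^{-1}$. All of the combinatorics about principal order ideals and ray contractions of rooted trees has been packaged into that earlier proposition, so the paper does not repeat it here.

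You instead compute $K_{Q(N)}^V$ directly from the definition of $\Delta$-braid cones: you enumerate the ray contractions of the rooted tree $Q(N)$ via Proposition~\ref{prop:rooted-ray-contractions}, show their lower classes are exactly the sets of $N$ (essentially re-proving Proposition~\ref{prop:nested-sets-are-ideals} in this facial-preposet setting, using Proposition~\ref{complex-nestedset} to produce an element $i$ with $I_i=I$), and then read off the generating rays. This is more self-contained and makes the combinatorial step explicit, at the cost of redoing work the paper has already done in the classical case. Your remark that non-degeneracy is only needed to guarantee each $\sum_{i\in I}v_i$ is nonzero (so that each $K_R^V$ is a genuine ray) rather than for the conic-hull equality itself is a correct and worthwhile observation that the paper does not make.
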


\begin{proof}
We use the same map $T(v_i)=[-e_i]$ from the proof of part \ref{item:prop-with-isomorphism} of Proposition \ref{prop:preposet-list-2} to establish an isomorphism mapping the cone $K_N^V$ to the cone $K_{N}$. Proposition \ref{prop:classical-nested-set-conic-hull} states that $K_N$ is equal to the conic hull of vectors $\{\sum_{i \in I}[-e_i]|I \in N\}$, and so by the inverse of $T$, $K_N^V$ is the conic hull of vectors of the form $\sum_{i \in I} v_i$ for $I \in N$.
\end{proof}

\begin{proposition}\label{prop:rank-preserving}
For a $\B$-nested set $N$, the number of equivalence classes of $Q(N)_{finite}$ is equal to $|N|$, and for a vector set $V$ such that $\mathcal{F}(\Delta,V)$ is non-degenerate, the cone $K_N^V$ is $|N|$-dimensional.
\end{proposition}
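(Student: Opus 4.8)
The plan is to establish the equivalence-class count first and then read off the dimension from Proposition \ref{prop:preposet-list-2}. The guiding observation is that the equivalence classes of $Q(N)_{finite}$ are governed by the ``minimal containing sets'': for $i \in \bigcup N$, let $I_i$ denote the unique minimal element of $N$ containing $i$ (this exists because $N$ is nested, so any two sets of $N$ containing $i$ are comparable). I claim the classes of $Q(N)_{finite}$ are in bijection with $N$ via $S \mapsto \{\,i : I_i = S\,\}$, and once that is in hand the dimension statement is immediate.

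First I would verify that $i \equiv_{Q(N)} j$ iff $I_i = I_j$: by definition $i \equiv_{Q(N)} j$ exactly when the chain of sets of $N$ containing $i$ equals the chain of sets of $N$ containing $j$, and each such chain is determined by its minimum. So the classes of $Q(N)_{finite}$ biject with $\{I_i : i \in \bigcup N\}$. The substantive step is surjectivity onto $N$: given $S \in N$, suppose no vertex of $S$ has $S$ as its minimal containing set, so every $i \in S$ lies in some $T \in N$ with $T \subsetneq S$. Taking maximal such $T$'s, nestedness forces them to be pairwise disjoint, they cover $S$, and there must be at least two of them (one alone would equal $S$). Then $S$ is the disjoint union of at least two sets of $N$, contradicting Proposition \ref{complex-nestedset}, which forbids such a union from lying in $\B$; but $S \in \B$. (One should also note the easy case where $S$ has no proper subsets in $N$ at all, in which case $I_i = S$ for every $i \in S$.) Hence the map is a bijection and $Q(N)_{finite}$ has exactly $|N|$ equivalence classes. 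This is essentially the facial-preposet analogue of Proposition \ref{prop:nested-sets-are-ideals}, and I would present it as such.

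For the dimension, I would simply invoke Proposition \ref{prop:preposet-list-2}: the text following the definition of $Q(N)$ records that $Q(N)$ is a rooted tree, and $\mathcal{F}(\Delta,V)$ is non-degenerate by hypothesis, so statement (6) of that proposition applied with the preposet $Q(N)$ gives $\dim K_{Q(N)}^V$ equal to the number of equivalence classes of $Q(N)_{finite}$, which we computed to be $|N|$. Since $K_N^V = K_{Q(N)}^V$ by definition, this completes the argument. The only place I anticipate any real care is the surjectivity step above — pinning down that the maximal proper subsets of $S$ in $N$ genuinely supply at least two disjoint pieces so that Proposition \ref{complex-nestedset} applies; everything else is bookkeeping about principal order ideals.
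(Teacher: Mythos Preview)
Your proof is correct and takes essentially the same route as the paper: both hinge on the fact that the equivalence classes of $Q(N)_{finite}$ (equivalently, its principal order ideals) are in bijection with the sets of $N$, and both invoke statement (6) of Proposition~\ref{prop:preposet-list-2}. The paper's proof is terser---it simply asserts that $Q(N)_{finite}$ is a forest preposet with $|N|$ principal ideal sets and that $K_N^V$ is a simplicial cone with $|N|$ extremal rays---whereas you spell out the surjectivity of $S \mapsto \{i : I_i = S\}$ via the contradiction with Proposition~\ref{complex-nestedset}. The one minor difference in order of deduction is that the paper first obtains the dimension from the simplicial-cone description (using the preceding proposition that $K_N^V = \conichull\{\sum_{i\in I} v_i : I \in N\}$) and then reads off the equivalence-class count from Proposition~\ref{prop:preposet-list-2}, while you go the other direction; both are valid since statement (6) is an equality.
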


\begin{proof}
If $N$ contains $|N|$ sets, then $Q(N)_{finite}$ is a forest preposet with $|N|$ principal ideal sets, and $K_N^V$ is a simplicial cone with $|N|$ extremal rays, meaning it is $|N|$-dimensional. In addition, according to Proposition \ref{prop:preposet-list-2}, $Q(N)_{finite}$ has $|N|$ equivalence classes.
\end{proof}

\begin{definition} \label{def:Delta-V-B}
For a $\Delta$-building set $\B$ and cone complex $\mathcal{F}(\Delta,V)$, the collection of cones $K^V_N$ where $N$ is a $\B$-nested set is written as $\mathcal{F}(\Delta,V,\B)$.
\end{definition}

We call $\mathcal{F}(\Delta,V,\B)$ a \emph{nested set fan complex} of $\B$. When $\B$ is the graphical building set of a $\Delta$-graph $G$, we will write this as $\mathcal{F}(\Delta,V,G)$.

We will find as a corollary to Theorem \ref{thm:common-refinement} that when $\mathcal{F}(\Delta,V)$ is a non-degenerate fan, then $\mathcal{F}(\Delta,V,\B)$ is a fan which refines $\mathcal{F}(\Delta,V)$.


\begin{proposition}\label{prop:Delta-complex-fan}
When $\mathcal{F}(\Delta,V)$ is non-degenerate and $\B$ is a $\Delta$-building set, then the face poset of the fan $\mathcal{F}(\Delta,V,\B)$ is isomorphic to the nested complex of $\B$.
\end{proposition}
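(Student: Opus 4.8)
The plan is to show the map sending a $\B$-nested set $N$ to the cone $K_N^V$ is a poset isomorphism between the nested complex $\mathcal{N}(\B,\Delta)$ and the face poset of $\mathcal{F}(\Delta,V,\B)$. The raw materials are already assembled: Proposition \ref{prop:nested-subset-preposet} gives exactly the equivalences $N \subseteq N' \iff K_N^V$ is a face of $K_{N'}^V \iff K_N^V \subseteq K_{N'}^V$, and Proposition \ref{prop:facial-preposet-union-intersection} together with statement \ref{statement:cone-intersection} of Proposition \ref{prop:preposet-list-2} gives $K_N^V \cap K_{N'}^V = K_{\overline{Q(N)\cup Q(N')}}^V = K_{Q(N\cap N')}^V = K_{N\cap N'}^V$. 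So most of the work is bookkeeping; the content is in verifying that $\mathcal{F}(\Delta,V,\B)$ is genuinely a fan.

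First I would establish that the assignment $N \mapsto K_N^V$ is injective. This follows from Proposition \ref{prop:nested-subset-preposet}: if $K_N^V = K_{N'}^V$ then $K_N^V \subseteq K_{N'}^V$ and $K_{N'}^V \subseteq K_N^V$, hence $N \subseteq N'$ and $N' \subseteq N$, so $N = N'$. Next I would check the fan axioms. Every cone $K_N^V$ is a pointed polyhedral cone (every facial preposet is connected through $\infty$). For the face-closure axiom: any face of $K_N^V$ is of the form $K_{Q'}^V$ where $Q'$ is a contraction of $Q(N)$; I need every such face to itself be some $K_{N''}^V$ for a $\B$-nested set $N'' \subseteq N$. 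Since $Q(N)$ is a rooted tree, its ray contractions have lower classes that are principal order ideals, i.e. sets $I_i \in N$; more generally a contraction of $Q(N)$ corresponds to choosing a down-closed subcollection, and Proposition \ref{prop:nested-subset-preposet} identifies the subsets $N'' \subseteq N$ with the contractions $Q(N'')$ of $Q(N)$ — so I would argue that every face $K_{Q'}^V$ of $K_N^V$ equals $K_{N''}^V$ for the $N'' \subseteq N$ with $Q(N'') = Q'$. For the intersection axiom: given $N, N'$ both $\B$-nested, $K_N^V \cap K_{N'}^V = K_{N\cap N'}^V$ by Proposition \ref{prop:facial-preposet-union-intersection}, and $N \cap N'$ is $\B$-nested since it is a subset of the $\B$-nested set $N$ (Proposition \ref{complex-nestedset}'s conditions are inherited by subsets). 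Hence $\mathcal{F}(\Delta,V,\B)$ is a fan.

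Then the poset isomorphism is immediate: $N \mapsto K_N^V$ is a bijection onto the cones of the fan, and by Proposition \ref{prop:nested-subset-preposet} it is order-preserving and order-reflecting with respect to $\subseteq$ versus the face relation. Since the face poset of $\mathcal{N}(\B,\Delta)$ is inclusion of nested sets, this completes the proof. The main obstacle I anticipate is the face-closure step — making precise the correspondence between contractions of the rooted tree $Q(N)$ and the $\B$-nested subsets $N'' \subseteq N$, and confirming that every such face lies in the complex rather than being some spurious extra cone; I would handle this by invoking Proposition \ref{prop:nested-subset-preposet}(2) in the direction "$Q(N)$ is a contraction of $Q(N')$ iff $N \subseteq N'$" to show the contractions of $Q(N)$ are exactly the $Q(N'')$ for $N'' \subseteq N$, so no faces escape the family. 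A minor point worth a sentence is that $N = \emptyset$ (resp. small $N$) corresponds to the zero cone (resp. rays), handled by the conventions already set up for degenerate/edge cases.
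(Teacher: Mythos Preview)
Your proposal is correct and follows essentially the same route as the paper: injectivity of $N \mapsto K_N^V$, then the order equivalence from Proposition~\ref{prop:nested-subset-preposet}. Two minor differences are worth noting. First, the paper's injectivity argument uses dimension (via Proposition~\ref{prop:rank-preserving} and the intersection formula $K_N^V \cap K_{N'}^V = K_{N\cap N'}^V$) rather than your direct appeal to the order-reflecting property; both work. Second, you spend effort verifying the fan axioms (face-closure and intersection), whereas the paper does not prove here that $\mathcal{F}(\Delta,V,\B)$ is a fan at all---it defers that to Theorem~\ref{thm:common-refinement} (stellar subdivisions / coarsest common refinement), and in this proposition only establishes the poset isomorphism assuming the fan structure. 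Your extra work is not wasted, but be aware that the face-closure step you flag as the ``main obstacle'' is exactly what the paper sidesteps by relying on the later result.
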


\begin{proof}
We have a map taking the $\B$-nested set $N$ to the cone $K_N^V$. We note that each cone $K_N^V$ is $|N|$-dimensional. Now if $N \ne N'$ for two $\B$-nested sets, then $K_{N}^V \cap K_{N'}^V = K_{N \cap N'}^V$, which is lower dimensional, and so $K_N^V \ne K_{N'}^V$. As a result, this map is injective, and must be a bijection. We also note that $N \subseteq N'$ if and only if $K_N^V \subseteq K_N^{V'}$, and so this is an isomorphism.
\end{proof}

\subsection{Simplex Case} \label{sub:simplex-case}

%

%


When a classical building set $B$ is connected, we can define a simplex-building set $\B$ by removing $B_{max}$ from $B$, and if $N$ is a $B$-nested set, then $N \backslash B_{max}$ is a $\B$-nested set. In this section we describe a similar isomorphism, this time between the preposets obtained from classical nested sets of connected building sets, and preposets of simplex-nested sets.


Consider the preposet obtained by taking a facial preposet $Q$, and removing the element $\infty$. Call this preposet $Q\backslash \infty$. Note we are removing only the element $\infty$, and not the equivalence class containing $\infty$.

\begin{proposition} \label{prop:preposet-correspondence}
If $N$ is a classical $B$-nested set on base set $\mathcal{S}$ and $B$ is connected, then write $N'=N \backslash B_{max}$, and $\B=B\backslash B_{max}$. 
We find that the preposet $P_N$ is equal to $Q(N')\backslash \infty$.
\end{proposition}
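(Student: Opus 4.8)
The plan is to unwind both definitions and check that, once one uses that $B$ connected means $B_{max}=\{\mathcal{S}\}$, the relations $P_N$ and $Q(N')\backslash\infty$ are literally the same binary relation on the same ground set $\mathcal{S}$.

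First I would record the consequences of $B$ being connected: $B_{max}=\{\mathcal{S}\}$, so $N'=N\backslash\{\mathcal{S}\}$ and $N=N'\cup\{\mathcal{S}\}$. In particular every classical $B$-nested set contains the full set $\mathcal{S}$, and, as noted just before the proposition, $N'$ is a $\mathcal{B}$-nested set for the simplex-building set $\mathcal{B}$, so $Q(N')$ is defined. Next I would check that the ground sets agree: $Q(N')$ is a preposet on $\mathcal{S}\cup\{\infty\}$, and $Q(N')\backslash\infty$ deletes only the single element $\infty$ (not its equivalence class), leaving a preposet on $\mathcal{S}$ whose relation is the restriction of that of $Q(N')$; and $P_N$ is by Definition \ref{def:preposet-from-nested-set-most-general} a preposet on $\mathcal{S}$. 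So it suffices to compare the two relations on pairs $i,j\in\mathcal{S}$.

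Then I would do the comparison directly. By Definition \ref{def:preposet-from-nested-set-most-general}, $i\preceq_{P_N}j$ holds if and only if every $I\in N$ with $j\in I$ also satisfies $i\in I$. By the definition of $Q(N')$, restricted to $\mathcal{S}$, the relation $i\preceq_{Q(N')\backslash\infty}j$ holds if and only if every $I\in N'$ with $j\in I$ also satisfies $i\in I$. Since $N=N'\cup\{\mathcal{S}\}$ and the single extra set $\mathcal{S}$ contains $i$ automatically, it contributes no constraint, so the quantification over $N$ is equivalent to the quantification over $N'$. Hence $i\preceq_{P_N}j$ if and only if $i\preceq_{Q(N')\backslash\infty}j$, and the two preposets coincide.

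I do not expect a real obstacle here; the only things to be careful about are the bookkeeping that ``removing $\infty$'' removes just that element and not its whole equivalence class, and the observation that connectedness of $B$ is exactly what is needed --- if $B_{max}$ contained a proper subset of $\mathcal{S}$, that set would in general impose a genuine constraint on $P_N$ not visible in $Q(N')$, so the equality would fail.
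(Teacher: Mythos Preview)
Your proof is correct and follows essentially the same approach as the paper: both arguments unwind the definitions, use that $B$ connected gives $B_{max}=\{\mathcal{S}\}$, and observe that the extra set $\mathcal{S}\in N$ imposes no constraint on the relation, so $P_N=P_{N'}$ on $\mathcal{S}$, which is exactly $Q(N')\backslash\infty$. Your version is slightly more explicit about the ground-set bookkeeping and the role of connectedness, but the substance is identical.
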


\begin{proof}

The preposet $P_N$ over base set $\mathcal{S}$ is defined in Definition \ref{def:preposet-from-nested-set-most-general} as the preposet on $\mathcal{S}$ such that $i \preceq j$ if and only if every set containing $j$ also contains $i$. We should note that because $B$ is connected, $B_{max}=\{\mathcal{S}\}$. We then see that $P_N = P_{N'}$ defined over $\mathcal{S}$.

The preposet $Q(N')$ is defined as the preposet on $\mathcal{S}\cup\{\infty\}$ such that $i \preceq j$ if and only if every set in $N'$ containing $j$ also contains $i$. This is equal to the preposet $P_{N'}$, but with an added element $\infty$. Removing the infinite element of $\mathcal{S}\cup\{\infty\}$ then yields $Q(N')\backslash \infty = P_N$.
\end{proof}

We see this illustrated in Figure \ref{fig:graph-poset-2}, where the classical nested set $N$ from Figure \ref{fig:graph-poset} has $B_{max}=\{\mathcal{S}\}$ removed, and $Q(N')$ is equivalent to $P_N$ with an added infinite element.

\begin{remark}
We note that for every classical $B$-nested set $N$, the infinite set of $Q(N')$ will contain at least one element in $\mathcal{S}$. As such, the reader may consider an alternate definition of facial preposets, equivalent to removing $\infty$. However, we can find a pair of $\B$-nested sets $N, N'$ for $\Delta$-building set $\B$ such that $N\ne N'$ but $Q(N) \backslash \infty = Q(N') \backslash \infty$. As a minimal example: if $\Delta$ is a simplicial complex with a single element $\{1\}$, and $\B=\{\{1\}\}$, then define $N=\{1\}$ and $N'=\emptyset$. We find $Q(N)$ is the preposet with relation $1 \prec \infty$ and $Q(N')$ is the preposet such that $1 \equiv \infty$, but $Q(N)\backslash\infty = Q(N')\backslash \infty$. As a result, we see that $Q(N)$ encodes information about $N$ that $Q(N)\backslash \infty$ does not.
\end{remark}

\begin{figure}
\centering
\includegraphics[width=.5\textwidth]{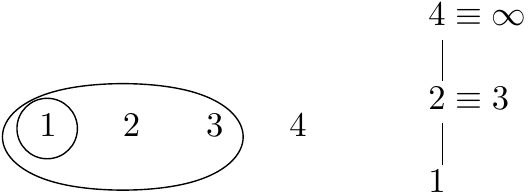}
\caption{A simplex-graph tubing and its associated preposet}\label{fig:graph-poset-2}
\end{figure}



\begin{proposition} \label{prop:minus-infinity}
If $\Delta$ is the simplicial complex consisting of proper subsets of $\mathcal{S}$ and $v_i=[-e_i]$ in $\R^\mathcal{S}/(1,\ldots,1)\R$ for each $i \in \mathcal{S}$, then for each facial preposet $Q$, the cone $K^V_Q$ is equal to the cone $K_{Q\backslash \infty}$.
\end{proposition}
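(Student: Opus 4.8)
The plan is to realize both cones as the conic hull of the \emph{same} family of rays, those coming from ray contractions. By definition, for a facial preposet $Q$ that is not itself a ray preposet, $K_Q^V$ is the conic hull of the rays $K_R^V$ over all ray contractions $R$ of $Q$ (and when $Q$ is a ray preposet this agrees with the definition $K_Q^V=\conichull\{\sum_{i\in Q_{finite}}v_i\}$). On the other side, I first observe that $Q\backslash\infty$ is a \emph{connected} preposet on $\mathcal{S}$: since $\Delta$ is the complex of proper subsets of $\mathcal{S}$, the base set of $Q_{finite}$ is a proper subset of $\mathcal{S}$, so $Q$ has at least one infinite element; and since $x\preceq_Q\infty$ for every $x$, every element of $\mathcal{S}$ is $\preceq$ every infinite element in $Q\backslash\infty$, so the infinite elements form a single top equivalence class comparable above everything. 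Hence, by the characterization of the braid cone of a connected preposet as the conic hull of its ray contractions (the proposition stated just before Figure~\ref{fig:ray-contractions}), $K_{Q\backslash\infty}=\conichull\{K_{R'} : R' \text{ a ray contraction of } Q\backslash\infty\}$. If $Q$ has a single equivalence class, forcing $Q_{finite}=\emptyset$, both sides are $\{\mathbf{0}\}$ and we are done, so assume otherwise.

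Next I would set up the bijection $R\mapsto R\backslash\infty$ between the ray contractions of $Q$ and those of $Q\backslash\infty$. Deleting $\infty$ from a contraction of $Q$ gives a contraction of $Q\backslash\infty$, which is still a ray preposet, so $R\backslash\infty$ is a ray contraction of $Q\backslash\infty$. The map is injective because a facial ray contraction $R$ of $Q$ is determined by its lower class $L$, its upper class being $(\mathcal{S}\backslash L)\cup\{\infty\}$. For surjectivity, let $R'$ be a ray contraction of $Q\backslash\infty$ with lower class $L$. Every infinite element of $Q$ is maximal in $Q\backslash\infty$ (it lies above everything, as observed), and since contractions preserve the equivalence relation such an element stays equivalent to all the others in the top class; hence the infinite elements all lie in the upper class of $R'$ and $L$ consists only of finite elements, so $L\subsetneq\mathcal{S}$. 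Letting $R$ be the facial preposet with lower class $L$ and upper class $(\mathcal{S}\backslash L)\cup\{\infty\}$, one checks that contracting $Q$ along the same relations that turn $Q\backslash\infty$ into $R'$ produces exactly $R$: the only thing to verify is that no finite element gets merged with $\infty$, which holds because any finite element forced equivalent to an element of the top class is, via the relation $\preceq_Q\infty$, already forced equivalent to $\infty$ in $Q$. Thus $R$ is a ray contraction of $Q$ with $R\backslash\infty=R'$.

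Finally I would match the rays. If $R$ is a ray contraction of $Q$ with lower class $L$, then $R_{finite}=L$, so by definition $K_R^V$ is the ray spanned by $\sum_{i\in L}v_i=\sum_{i\in L}[-e_i]$; and $R\backslash\infty$ is the ray preposet on $\mathcal{S}$ with lower class $L$, so by the lemma computing the braid cone of a ray preposet, $K_{R\backslash\infty}=\conichull\{\sum_{i\in L}[-e_i]\}$. Hence $K_R^V=K_{R\backslash\infty}$ for every ray contraction $R$ of $Q$. Feeding this identification through the two conic-hull descriptions above together with the bijection of ray contractions yields $K_Q^V=K_{Q\backslash\infty}$, which is the claim.

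The step I expect to be the real obstacle is the surjectivity half of the bijection, i.e.\ checking that a ray contraction of $Q\backslash\infty$ lifts to a ray contraction of $Q$, since that is where one must argue both that the infinite elements cannot escape the top equivalence class under a contraction and that reinstating $\infty$ introduces no unwanted mergers. Everything else is bookkeeping with the definitions of ray contraction, of the $\Delta$-braid cone $K_\cdot^V$, and of the vectors $v_i=[-e_i]$; in particular, non-degeneracy of $\mathcal{F}(\Delta,V)$ is not needed for this argument.
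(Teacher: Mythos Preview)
Your approach is essentially the same as the paper's: both proofs handle the ray preposet case directly, observe that $Q\backslash\infty$ is connected (because $Q_{finite}$ is a proper subset of $\mathcal{S}$), and then match the ray contractions of $Q$ with those of $Q\backslash\infty$ so that the two conic hulls coincide. The paper simply asserts the bijection (``$R$ is a facial preposet and a ray contraction of $Q$ if and only if $R\backslash\infty$ is a ray contraction of $Q\backslash\infty$'') without justification, whereas you supply an argument for it; so your write-up is actually more detailed at exactly the step you flagged as the obstacle. One small phrasing issue in your surjectivity paragraph: the thing to verify is not that ``no finite element gets merged with $\infty$'' but rather that no element of the \emph{lower} class $L$ gets merged with $\infty$ (finite elements of $Q$ lying in the upper class of $R'$ must and do merge with $\infty$, via the infinite elements of $Q$ already sitting in that upper class); the substance of your argument is correct once read this way.
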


\begin{proof}

We know that the base set of $Q_{finite}$ is a proper subset of $\mathcal{S}$. As a result, when $Q$ is a ray preposet, then $Q\backslash \infty$ is a ray preposet. In this case, the cone $K_{Q \backslash \infty}$ is equal to the ray of $\sum_{i \in Q_{finite}} -e_i$, which is exactly equal to $K^V_Q$.

Consider the case when $Q$ is not a ray preposet. We note that the base set of $Q_{finite}$ must be a face of $\Delta$, which in this case is the set of proper subsets of $\mathcal{S}$. As a result, there exists at least one element $i \in \mathcal{S} \backslash Q_{finite}$. This means that $Q\backslash \infty$ is connected, and so $K_{Q\backslash \infty}$ is equal to the conic hull of the rays of its ray contractions.

Note that $R$ is a facial preposet and a ray contraction of $Q$ if and only if $R\backslash \infty$ is a ray contraction of $Q \backslash \infty$. As a result, we see that $K_{Q\backslash \infty}$ is equal to the conic hull of the cones of ray contractions of $Q\backslash \infty$, which is equal to the conic hull defined as $K_Q^V$.
\end{proof}

If we write $N'=N\backslash B_{max}$ for any classical $B$-nested set $N$ and connected classical building set $B$, we then find that $P_N=Q(N')\backslash \infty$ implies $K_N=K_{N'}^V$. This implies the following.

\begin{proposition}
Say $\Delta$ is the simplicial complex consisting of proper subsets of $\mathcal{S}$ and $v_i=[-e_i]$ in $\R^\mathcal{S}/(1,\ldots,1)\R$ for each $i \in \mathcal{S}$. Say that $B$ is a connected classical building set on $\mathcal{S}$, and $\mathcal{B}=B\backslash B_{max}$. Then the nested fan of $B$, containing all cones of the form $K_N$ where $N$ is $B$-nested, is equal to the fan $\mathcal{F}(\Delta,V,\B)$.
\end{proposition}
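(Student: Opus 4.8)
The plan is to prove the two fans equal by showing they consist of exactly the same cones, using the bijection between classical $B$-nested sets and $\B$-nested sets together with the preposet identities already established. Since $B$ is connected, $B_{max}=\{\mathcal{S}\}$, so every $B$-nested set $N$ contains $\mathcal{S}$, and the map $N \mapsto N' := N \backslash \{\mathcal{S}\}$ is a bijection from the $B$-nested sets onto the $\B$-nested sets (this is the correspondence underlying Proposition \ref{prop:simplex-nestohedra}). Because the classical nested fan is an honest fan by Proposition \ref{prop:classical-BS} and $\mathcal{F}(\Delta,V,\B)$ is an honest fan by Proposition \ref{prop:Delta-complex-fan}, it suffices to prove $K_N = K_{N'}^V$ for each such pair; then the two collections of cones coincide and the fans are equal.

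For a fixed $B$-nested set $N$, we have $K_N = K_{P_N}$ by definition. Proposition \ref{prop:preposet-correspondence} gives $P_N = Q(N') \backslash \infty$. Proposition \ref{prop:minus-infinity}, whose hypotheses are precisely the present ones (the complex of proper subsets of $\mathcal{S}$, with $v_i = [-e_i]$ in $\R^\mathcal{S}/(1,\ldots,1)\R$), gives $K_{Q \backslash \infty} = K_Q^V$ for every facial preposet $Q$; applying it to $Q = Q(N')$ and recalling $K_{N'}^V = K_{Q(N')}^V$ by definition yields $K_N = K_{P_N} = K_{Q(N')\backslash\infty} = K_{Q(N')}^V = K_{N'}^V$. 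Letting $N$ range over all $B$-nested sets, $N'$ ranges over all $\B$-nested sets, so $\{K_N : N \text{ is } B\text{-nested}\} = \{K_{N'}^V : N' \text{ is } \B\text{-nested}\} = \mathcal{F}(\Delta,V,\B)$, which is the assertion. As a sanity check, the same identity drops out of the conic-hull descriptions: $K_N = \conichull\{\sum_{i\in I}[-e_i] : I \in N\}$ by Proposition \ref{prop:classical-nested-set-conic-hull}, while $K_{N'}^V = \conichull\{\sum_{i\in I}[-e_i] : I \in N'\}$ by the conic-hull proposition of Subsection \ref{sub:delta-braid-cones-from-nested}; since $N = N' \cup \{\mathcal{S}\}$ and $\sum_{i\in\mathcal{S}}[-e_i] = 0$ in $\R^\mathcal{S}/(1,\ldots,1)\R$, the extra generator is redundant and the two conic hulls agree.

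The one step that is not purely formal — and hence the main (mild) obstacle — is checking that $\mathcal{F}(\Delta,V)$ is non-degenerate for this particular $\Delta$ and $V$, since that non-degeneracy is a standing hypothesis of Propositions \ref{prop:minus-infinity} and \ref{prop:Delta-complex-fan}. This is the statement that $\mathcal{F}(\Delta,V)$ is the normal fan of a simplex: any proper subset of $\{[-e_i]\}_{i\in\mathcal{S}}$ is linearly independent in $\R^\mathcal{S}/(1,\ldots,1)\R$, so each $C^V_I$ is a simplicial cone of dimension $|I|$, and $C^V_I \cap C^V_{I'} = C^V_{I\cap I'}$ because these cones are exactly the faces of the complete simplicial fan cut out by this vector configuration. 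I would spell this out (or cite the standard fact) before invoking the two propositions above. The degenerate-looking edge cases — the empty nested set, or $|\mathcal{S}| = 1$ — are covered by the same identities, since they send $\{0\}$ to $\{0\}$, so nothing special is needed there.
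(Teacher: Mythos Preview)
Your proof is correct and follows exactly the same route as the paper: use the bijection $N\mapsto N'=N\backslash\{\mathcal{S}\}$, apply Proposition~\ref{prop:preposet-correspondence} to get $P_N=Q(N')\backslash\infty$, then Proposition~\ref{prop:minus-infinity} to get $K_{Q(N')\backslash\infty}=K_{Q(N')}^V=K_{N'}^V$. The paper's own argument is a single sentence to this effect; your additional conic-hull sanity check and your remark on verifying non-degeneracy of $\mathcal{F}(\Delta,V)$ are extra care the paper does not spell out (and note that the proof of Proposition~\ref{prop:minus-infinity} itself does not actually invoke non-degeneracy, so that hypothesis is only needed for citing Proposition~\ref{prop:Delta-complex-fan}).
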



\begin{remark}
We note that if $N$ is a classical $B$-nested set for connected classical building set $B$ on base set $\mathcal{S}$, and $v_i=[-e_i]$ for all $v_i \in V$, then we have just proven that we can find $\Delta$-nested set $N'$ such that $P_N = Q(N')\backslash \infty$ and $K_N = K_{N'}^V$. However, if $Q$ is a preposet on $\mathcal{S}$, then we cannot always find a facial preposet $Q'$ such that $K_Q=K_{Q'}^V$. We know this because braid cones are not always contained in the maximal cones of the braid arrangement; for instance, the preposet $Q_0$ such that $i, j$ are incomparable on $\mathcal{S}$ has cone $K_{Q_0}$ equal to the entire vector space $\R^{\mathcal{S}}/(1,\ldots,1)$. Meanwhile, for each facial preposet $Q'$, we know $K_{Q'}^V \subseteq C_I^V$ for some set $I$, which must be a proper subset of the entire vector space. As a result, $K_Q \ne K_{Q'}^V$ for any facial preposet $Q'$. As a result, we must keep in mind that $\Delta$-braid cones \emph{do not generalize braid cones}. Instead, they generalize a certain subset of braid cones, those whose preposets have a unique maximal equivalence class.
\end{remark}

%
%
%
%

\section{Fan intersection theorem} \label{sec:fan-intersection}


Recall the definition of stellar subdivision for simplicial complexes in Definition \ref{def:stellar-subdivision}. We can define stellar subdivision for simplicial fans.

\begin{definition}
Given a non-degenerate fan $\mathcal{F}(\Delta,V)$, a face $I \in \Delta$, and a vector $v_h$ in the relative interior of $C^V_I$, the \emph{stellar subdivision} of $\mathcal{F}(\Delta,V)$ is the fan $\mathcal{F}(\textrm{St}_I(\Delta),V \cup \{v_h\})$, where $\textrm{St}_I(\Delta)$ is the stellar subdivision of $\Delta$ replacing the set $I$ with the element $h$.
\end{definition}

The stellar subdivision of a simplicial fan corresponds to the geometric construction of deleting an $n$-dimensional simplicial cone, and adding in $n$ new cones which fill the hole left by the previous cone. As a result, the union of all cones in a fan, called the \emph{support} of the fan, is equal to the union of all cones in one of its stellar subdivisions. We note that for non-degenerate $\mathcal{F}(\Delta,V)$, Theorem \ref{thm:feichtnermainresult} implies that $\mathcal{F}(\Delta,V,\B)$ is the result of repeated stellar subdivision of $\mathcal{F}(\Delta,V)$.

Consider a simplicial complex $\Delta$ and vector set $V$ such that $\mathcal{F}(\Delta,V)$ is non-degenerate. If $I$ is a face of $\Delta$, define $\B_I$ to be the building set containing $I$ and all singleton subsets of the base set $\mathcal{S}$ of $\Delta$. This fan is a stellar subdivision of $\mathcal{F}(\Delta,V)$, replacing the cone $C^V_I$ with the ray equal to the conic hull of $\sum_{i \in I} v_i$.

\begin{definition}
The \emph{coarsest common refinement} of $n$ fans $\mathcal{F}_1, \ldots, \mathcal{F}_n$ is the set of cones $\{C_1 \cap \cdots \cap C_n | C_1 \in \mathcal{F}_1, \ldots, C_n \in \mathcal{F}_n\}.$
\end{definition}



\begin{lemma}\label{lemma:common-refinement-lemma}
In order to prove that a cone $C$ is in the coarsest common refinement of a finite set of fans $\mathcal{F}_1, \ldots, \mathcal{F}_k$, it is sufficient to prove two conditions. The first is that $C$ is equal to the intersection of some subset of cones from the fan set, but not necessarily one cone from every fan. The second is that $C$ is a subset of the intersection of some set of cones $C_1 \in \mathcal{F}_1, \ldots, C_k \in \mathcal{F}_k$.
\end{lemma}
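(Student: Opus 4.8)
The plan is to unwind the definition of the coarsest common refinement and then derive the conclusion from the two hypotheses by an elementary set-theoretic argument, whose only real content is that each $\mathcal{F}_i$ is closed under intersection.

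First I would normalize the first hypothesis. It says $C = \bigcap_{D \in \mathcal{D}} D$ for some finite collection $\mathcal{D}$ of cones, each belonging to one of the fans $\mathcal{F}_1,\dots,\mathcal{F}_k$. Grouping the members of $\mathcal{D}$ by which fan they lie in and using that each $\mathcal{F}_i$ is a fan, hence closed under intersection of its cones, I can replace all cones of $\mathcal{D}$ lying in $\mathcal{F}_i$ by their common intersection, which is again a cone of $\mathcal{F}_i$. After this reduction there is a (nonempty) subset $J \subseteq \{1,\dots,k\}$ and cones $D_i \in \mathcal{F}_i$ for $i \in J$ with $C = \bigcap_{i \in J} D_i$.

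Next I would invoke the second hypothesis: there exist cones $C_i \in \mathcal{F}_i$, one for each $i \in \{1,\dots,k\}$, with $C \subseteq C_1 \cap \cdots \cap C_k$, so in particular $C \subseteq C_i$ for every $i$. I then define $E_i := D_i$ for $i \in J$ and $E_i := C_i$ for $i \notin J$, so $E_i \in \mathcal{F}_i$ for all $i$, and compute
$$\bigcap_{i=1}^{k} E_i \;=\; \Bigl(\bigcap_{i \in J} D_i\Bigr) \cap \Bigl(\bigcap_{i \notin J} C_i\Bigr) \;=\; C \cap \Bigl(\bigcap_{i \notin J} C_i\Bigr) \;=\; C,$$
the last equality holding because $C \subseteq C_i$ for each $i \notin J$. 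Since $C$ is now exhibited as an intersection of exactly one cone from each fan, it lies in the coarsest common refinement by definition, which proves the lemma.

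As for the main obstacle: there is essentially none beyond bookkeeping — the one point that must not be skipped is the reduction in the first step, which uses that each $\mathcal{F}_i$ is a fan, so that "some subset of cones from the fans" can be repackaged as "at most one cone per fan" without changing the intersection. I would also add a remark that this lemma is precisely what makes the later support and stellar-subdivision arguments (e.g.\ in the proof of Theorem~\ref{thm:common-refinement}) manageable, since it lets one certify membership in a common refinement via a local intersection identity together with a global containment, instead of having to name a cone from every fan at once.
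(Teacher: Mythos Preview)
Your proof is correct and follows essentially the same approach as the paper: group the cones from the first hypothesis by fan using closure under intersection, then use the cones from the second hypothesis to fill in any indices not already covered. The paper's version is cosmetically more uniform---it defines $C_i' = C_i \cap \bigcap\{D \in K : D \in \mathcal{F}_i\}$ for every $i$ at once rather than splitting into $J$ and its complement---but the content is identical.
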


\begin{proof}
Say that $C \subseteq \bigcap_{i=1}^k C_i$ for some set of cones $C_1 \in \mathcal{F}_1, \ldots, C_k \in \mathcal{F}_k$. Say also that $C = \bigcap_{D \in K} D$, where $K$ is a set of cones such that for each $D \in K$, $D \in \mathcal{F}_i$ for some $i$.

For each $i \in [k]$, define $C_i'$ as the intersection of cone $C_i$, and every cone $D \in K$ such that $D \in \mathcal{F}_i$. Because fans are closed under intersection, we find $C_i'\in \mathcal{F}_i$, and now find $C = \bigcap_{i =1}^k C_i'$, proving $C$ is in the coarsest common refinement of these fans.
\end{proof}

\begin{theorem} \label{thm:common-refinement}
For a $\Delta$-building set $\B$, the fan $\mathcal{F}(\Delta,V,\B)$ is equal to the coarsest common refinement of fans $\mathcal{F}(\Delta,V,\B_I)$ for each $I \in \B$.
\end{theorem}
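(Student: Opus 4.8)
The plan is to prove set equality in both directions. For the easy direction, recall that $\mathcal{F}(\Delta,V,\B)$ consists of the cones $K_N^V$ where $N$ is $\B$-nested, and that $\B_I \subseteq \B$ for every $I \in \B$ (each $\B_I$ contains only $I$ and the singletons). The plan is to show first that every $\B_I$-nested set is $\B$-nested, so $\mathcal{F}(\Delta,V,\B_I)$ refines nothing more than what $\mathcal{F}(\Delta,V,\B)$ already does; more precisely, I would argue directly that $\mathcal{F}(\Delta,V,\B)$ \emph{refines} each $\mathcal{F}(\Delta,V,\B_I)$ by showing each cone $K_N^V$ (for $N$ $\B$-nested) is contained in some cone of $\mathcal{F}(\Delta,V,\B_I)$. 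This last containment follows because $K_N^V = \conichull\{\sum_{i\in J} v_i \mid J \in N\}$ (the conic-hull description proven just before Proposition~\ref{prop:rank-preserving}) and each generator $\sum_{i \in J} v_i$ lies in $C^V_{\bigcup N} \subseteq C^V_I$ or in the appropriate subdivided cone depending on whether $I \subseteq \bigcup N$; one checks the relevant $\B_I$-nested set explicitly. Hence every cone of $\mathcal{F}(\Delta,V,\B)$ sits inside a cone of each $\mathcal{F}(\Delta,V,\B_I)$, so $\mathcal{F}(\Delta,V,\B)$ refines the coarsest common refinement.

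For the reverse inclusion — that the coarsest common refinement is no finer than $\mathcal{F}(\Delta,V,\B)$ — I would take an arbitrary cone $C$ in the coarsest common refinement and show $C$ is a cone of $\mathcal{F}(\Delta,V,\B)$. Here I would invoke Lemma~\ref{lemma:common-refinement-lemma}: it suffices to exhibit, for each full-dimensional $\B$-nested cone $K_N^V$, that $K_N^V$ itself arises as an intersection of cones drawn from the various $\mathcal{F}(\Delta,V,\B_I)$, namely $K_N^V = \bigcap_{I \in N} K_{N_I}^V$ where $N_I$ is a suitable $\B_I$-nested set, together with the containment condition of the lemma. The key computational fact is that intersections of $\Delta$-braid cones are again $\Delta$-braid cones, controlled by $\overline{Q(N)\cup Q(N')} = Q(N \cap N')$ (Proposition~\ref{prop:facial-preposet-union-intersection}) and the cone-intersection statement \ref{statement:cone-intersection} of Proposition~\ref{prop:preposet-list-2}. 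So the intersection over $I \in N$ of the cones $K_{N_I}^V$ has associated facial preposet $\overline{\bigcup_{I \in N} Q(N_I)}$, and I would check this equals $Q(N)$ by verifying both preposets have the same principal order ideals, using that $N = \bigcup_{I \in N}\{I\}$ and that each singleton $\{I\}$ is $\B_I$-nested.

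The main obstacle I anticipate is the bookkeeping in the reverse direction: showing that an arbitrary cone of the coarsest common refinement — which a priori is just some intersection $\bigcap_I C_I$ with $C_I \in \mathcal{F}(\Delta,V,\B_I)$ — actually coincides with $K_N^V$ for a genuinely $\B$-nested $N$, rather than merely being contained in one. This requires knowing that the intersection does not "overshoot": one must rule out that combining constraints from different $\B_I$'s produces a preposet whose principal order ideals fail the nestedness conditions of Proposition~\ref{complex-nestedset}. The cleanest route is probably to reduce, via the linear isomorphism $T(v_i) = [-e_i]$ used repeatedly in Section~\ref{sec:facial-preposets-etc}, to the classical setting where Proposition~\ref{prop:classical-BS} already guarantees that $\{K_N\}$ forms a fan and that intersections of $B$-nested cones are $B$-nested cones; then transport that statement back through $T$. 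Once the classical fan structure is in hand, Theorem~\ref{thm:feichtnermainresult} (repeated stellar subdivision) gives that $\mathcal{F}(\Delta,V,\B)$ is obtained from $\mathcal{F}(\Delta,V)$ by the same blowups that each $\mathcal{F}(\Delta,V,\B_I)$ performs individually, which is exactly the content of "coarsest common refinement."
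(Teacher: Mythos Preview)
Your two ``directions'' are not actually opposite directions. Showing that each $K_N^V$ is contained in some cone of every $\mathcal{F}(\Delta,V,\B_I)$ is precisely the second condition of Lemma~\ref{lemma:common-refinement-lemma}, and showing that $K_N^V$ equals an intersection of cones drawn from the various $\mathcal{F}(\Delta,V,\B_I)$ is the first condition of that same lemma. Together they yield exactly one inclusion: $\mathcal{F}(\Delta,V,\B)\subseteq\text{ccr}$ as sets of cones. Neither piece addresses the reverse inclusion. (The paper carries out both conditions essentially as you sketch: for the intersection it writes $Q(N)=\bigcup_{i\in\bigcup N} Q(M_i)$ with $M_i$ a $\B_{I_i}$-nested set built from singletons and the principal order ideal $I_i$; for the containment it does a case split on whether $I\subseteq\bigcup N$, and when it is, proves there is a unique $Q(N)$-maximal element in $I$ using the nestedness condition---a nontrivial step you pass over.)

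The ``main obstacle'' you worry about---showing every cone of the coarsest common refinement is $K_N^V$ for some $\B$-nested $N$---is exactly the missing direction, and your suggestion to transport it through $T$ to the classical setting would run into the same difficulty you flag: the classical statement (Proposition~\ref{prop:classical-BS}) only tells you intersections of cones from a \emph{single} building set behave well, not intersections mixing cones from different $\B_I$'s. The paper bypasses this entirely. Both $\mathcal{F}(\Delta,V,\B)$ and each $\mathcal{F}(\Delta,V,\B_I)$ are obtained from $\mathcal{F}(\Delta,V)$ by repeated stellar subdivision (Theorem~\ref{thm:feichtnermainresult}), so all of these fans---and hence the coarsest common refinement---have the same support. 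Once you know $\mathcal{F}(\Delta,V,\B)\subseteq\text{ccr}$ as sets of cones and that the two fans have equal support, they must coincide: a proper containment of fans with equal support is impossible. This support argument is the step you are missing, and it replaces the delicate analysis you anticipated.
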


\begin{proof}

The coarsest common refinement of the fans of the form $\mathcal{F}(\Delta,V,\B_I)$, and the fan $\mathcal{F}(\Delta,V,\B)$, both cover the same space. As a result, one fan cannot be a proper subset of the other, and $\mathcal{F}(\Delta,V,\B)$ and the coarsest common refinement are equal if and only if every cone in $\mathcal{F}(\Delta,V,\B)$ is in the coarsest common refinement.

We use Lemma \ref{lemma:common-refinement-lemma} to prove that every cone $K_N^V$ in $\mathcal{F}(\Delta,V,\B)$ is in the coarsest common refinement of the fans $\mathcal{F}(\Delta,V,\B_I)$. To do this, we first identify a set of cones in the union of these fans whose intersection is $K_N^V$, and then for each $I \in \B$ we must prove that $K_N^V$ is a subset of some cone in $\mathcal{F}(\Delta,V,\B_I)$.

One important note for this theorem: Propositions \ref{prop:preposet-list-2} and \ref{prop:facial-preposet-union-intersection} only apply when two nested sets share the same $\Delta$-building set. If two sets $N, N'$ are nested but under different building sets, then the propositions do not apply.

Consider a $\B$-nested set $N$. For each pair $i,I_i$ such that $I_i \in N$ and $I_i$ is the principal order ideal of $i$ in $Q(N)$, define the set $M_{i}$ such that $M_{i}$ contains the set $I_i$ and all singleton subsets of $Q(N)_{finite}$ except for $\{i\}$. This set is $\B_{I_i}$-nested. Note that the set $Q(M_{i})_{finite}$ contains only the relations $j \preceq i$ for $j \in I_i$. This means that for all $j$ in base set $\mathcal{S}$, $j \preceq_{Q(M_i)} i$ if and only if $j \preceq_{Q(N)} i$. We now note:

\[
	Q(N) = \bigcup_{i \in \bigcup N} Q(M_i).
\]

Using Proposition \ref{prop:preposet-list-2}, we note the following:

 $$K_{Q(N)}^V = \bigcap_{i \in \bigcup N} K_{Q(M_i)}.$$

Because each cone $K_{Q(M_i)}$ is contained in some fan $\mathcal{F}(\Delta,V,\B_{I_i})$, we have satisfied our first condition.

For our second condition, we must now prove that for every set $I \in \B$, there exists a $\B_I$-nested set $M_I$ such that $Q(M_I) \subseteq Q(N)$, and therefore $K_{N}^V \subseteq K_{M_I}^V$. For each $I \in \B$, we consider two cases.

If $I$ is not a subset of the base set of $\bigcup{N}$, then choose $M_I=\{\{x\}|x \in \bigcup{N}\}$. We note that $M_I$ only contains singleton sets and $I$, so $M_I$ must be $\B_I$-nested. Because the base set of $Q(N)_{finite}$ is $\bigcup{N}=Q(M_I)$, this means that $Q(M_I)$ has the same finite base set as $Q(N)$, but such that $Q(M_I)_{finite}$ contains only incomparable elements. As a result, $Q(M_I) \subseteq Q(N)$.

In the second case, consider the case where $I$ is a subset of the base set of $\bigcup{N}$. We first wish to prove that there exists a maximal element $i \in I$ under the partial relation $Q(N)$. If $I \in N$, then we know $I$ is the principal ideal of some element $i \in I$. Otherwise, assume there is no such element $i$, and that there exist elements $i_1,\ldots, i_k$ belonging to $k \ge 2$ equivalence classes, and generating principal ideals $I_1, \ldots, I_k$. Note that because $I$ and $I_j$ intersect for each $1 \le j \le k$, by building set rules, the set $I \cup I_1 \cup \cdots \cup I_k \in \B$. In addition, we should be able to see that these sets cover $I$ and are disjoint. As a result, the disjoint sets $I_1, \ldots, I_k \in N$ have a union $I_1 \cup \cdots \cup I_k \in \B$, which is a contradiction as it means $N$ is not nested. As a result, there exists an element $i \in I$ such that $j \preceq_{Q(N)} i$ for all $j \in I$. Because of this, we return to the case where $I \subseteq \bigcup N$. We pick a maximal element $i \in I$, and in this case define $M_I$ to be equal to the $\B_I$-nested set containing the sets $\{j\}$ for each element $j \in (\bigcup N )\backslash \{i\}$, and containing the set $I$. We note that $Q(M_I)$ in this case is the facial preposet with finite base set $\bigcup N$ and relations $j \preceq i$ for all $j \in I$. We note that $j \preceq i$ for all $j \in I$ in $Q(N)$, meaning that $Q(M_I) \subseteq Q(N)$.

As a result, for every set $I \in \B$, there exists a $\B_I$-nested set such that $Q(M_I) \subseteq Q(N)$, and from \ref{prop:preposet-list-2}, we note that $K_{Q(N)} \subseteq K_{Q(M_I)}$.

We have thus proven with our lemma that every cone $K_N^V$ is contained in the coarsest common refinement of these fans, and so $\mathcal{F}(\Delta,V,\B)$ is a subset of the coarsest common refinement. We should note that the two fans have the same support: recall that $\mathcal{F}(\Delta,V,\B)$ is the result of repeated stellar subdivision of $\mathcal{F}(\Delta,V)$, and so its support would be equal to the support of $\mathcal{F}(\Delta,V)$. Note as well that each fan $\mathcal{F}(\Delta,V,\B_I)$ has support equal to that of $\mathcal{F}(\Delta,V)$, and so their common refinement has support equal to that of $\mathcal{F}(\Delta,V)$. As a result, $\mathcal{F}(\Delta,V,\B)$ is a subset of the coarsest common refinement of all fans $\mathcal{F}(\Delta,V,\B_I)$, but because the two fans have equal support, they must be equal.
\end{proof}

\begin{corollary} \label{cor:union}
For any set of $\Delta$-building sets $\B_1 \cup \cdots \cup \B_m=\B$ and vector set $V$ such that $\mathcal{F}(\Delta,V)$ is non-degenerate, the fan $\mathcal{F}(\Delta,V,\B)$ is equal to the coarsest common refinement of the fans $\mathcal{F}(\Delta,V,\B_1), \cdots, \mathcal{F}(\Delta,V,\B_m)$.
\end{corollary}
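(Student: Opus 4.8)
The plan is to reduce Corollary \ref{cor:union} to Theorem \ref{thm:common-refinement} by a double-inclusion argument at the level of fans, using only the fact that $\mathcal{F}(\Delta,V,\B)$ is determined by its list of cones $K_N^V$ for $N$ ranging over $\B$-nested sets, together with the monotonicity statement of Proposition \ref{prop:nested-subset-preposet}. Write $\mathcal{G}$ for the coarsest common refinement of $\mathcal{F}(\Delta,V,\B_1),\ldots,\mathcal{F}(\Delta,V,\B_m)$. First I would observe that each $\B_j \subseteq \B$, so by Proposition \ref{prop:nested-subset-preposet} every $\B_j$-nested set is also a $\B$-nested set (a $\B_j$-nested set $N$ satisfies the conditions of Proposition \ref{complex-nestedset} relative to $\B_j$, and since $\B_j \subseteq \B$ and unions of disjoint members are excluded from $\B$ exactly when excluded from $\B_j$... — actually the cleanest route is: $\B_j$-nested implies $\B$-nested is \emph{not} automatic, so I must instead argue via refinement of fans). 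Let me restructure.

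The cleaner approach: for each $j$, note that $\B_j \subseteq \B$, so every $\B$-building set $\B$ contains $\B_j$, and I claim $\mathcal{F}(\Delta,V,\B)$ refines $\mathcal{F}(\Delta,V,\B_j)$. This follows from Theorem \ref{thm:common-refinement} applied to $\B$: that theorem writes $\mathcal{F}(\Delta,V,\B)$ as the coarsest common refinement of the fans $\mathcal{F}(\Delta,V,\B_I)$ over all $I \in \B$; applying it also to $\B_j$ writes $\mathcal{F}(\Delta,V,\B_j)$ as the coarsest common refinement over $I \in \B_j \subseteq \B$. Since the index set for $\B$ contains the index set for $\B_j$, the refinement for $\B$ refines that for $\B_j$. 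Hence $\mathcal{F}(\Delta,V,\B)$ refines each $\mathcal{F}(\Delta,V,\B_j)$, so it refines their coarsest common refinement $\mathcal{G}$; equivalently, each cone of $\mathcal{G}$ is a union of cones of $\mathcal{F}(\Delta,V,\B)$.

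For the reverse inclusion I would again invoke Theorem \ref{thm:common-refinement}: $\mathcal{F}(\Delta,V,\B)$ is the coarsest common refinement of the fans $\mathcal{F}(\Delta,V,\B_I)$ for $I \in \B = \B_1 \cup \cdots \cup \B_m$. Each such $I$ lies in some $\B_j$, and for that $j$ the fan $\mathcal{F}(\Delta,V,\B_j)$ refines $\mathcal{F}(\Delta,V,\B_I)$ (by the previous paragraph's argument with $\B_j$ in place of $\B$ and $\{I\}$ contributing to the index set), so $\mathcal{G}$, which refines $\mathcal{F}(\Delta,V,\B_j)$, also refines $\mathcal{F}(\Delta,V,\B_I)$. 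Since $\mathcal{G}$ refines every $\mathcal{F}(\Delta,V,\B_I)$ with $I \in \B$, and $\mathcal{F}(\Delta,V,\B)$ is the \emph{coarsest} common refinement of exactly those fans, $\mathcal{G}$ refines $\mathcal{F}(\Delta,V,\B)$. Combined with the first inclusion, $\mathcal{G}$ and $\mathcal{F}(\Delta,V,\B)$ refine each other, and since both have support equal to that of $\mathcal{F}(\Delta,V)$ (each being obtained by stellar subdivisions, as noted in the proof of Theorem \ref{thm:common-refinement}), two fans with the same support that mutually refine one another are equal.

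The main obstacle, and the step to state carefully, is the transitivity-of-refinement bookkeeping: I must be precise that ``coarsest common refinement over a larger index set refines coarsest common refinement over a smaller index set,'' which is immediate from the definition of coarsest common refinement as the fan of all intersections, plus the fact that adding more fans to intersect can only subdivide further. I would isolate this as a one-line observation before the two inclusions. Everything else is a direct appeal to Theorem \ref{thm:common-refinement} and the support equality already established in its proof; no new computation with preposets or cones is needed.
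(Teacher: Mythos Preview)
Your proposal is correct and follows exactly the route the paper intends: the paper states this as a bare corollary of Theorem~\ref{thm:common-refinement} with no explicit proof, and your argument is the natural unpacking of that dependence, using that the coarsest common refinement over the index set $\B = \bigcup_j \B_j$ equals the coarsest common refinement of the coarsest common refinements over each $\B_j$. The mutual-refinement plus equal-support bookkeeping you spell out is sound, though in a final write-up you could compress it to a single line by invoking associativity of coarsest common refinement directly.
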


\section{Barycentric subdivision} \label{sec:barycentric-sub}

A \emph{chain} of a simplicial complex $\Delta$ is an ordered list $S=(I_1,\ldots,I_k)$ of faces of $\Delta$, such that $I_1 \subsetneq \cdots \subsetneq I_k$.

For every simplicial complex $\Delta$, the \emph{maximal building set} is the building set containing every face in $\Delta$, which is $\Delta$ itself. The nested sets of this building set are the sets of the form $N=\{S_1,\ldots,S_k\}$, where $S_1 \subseteq \cdots \subseteq S_k$ and $S_k \in \Delta$. As a result, we find the following.

\begin{proposition}
The nested sets of the maximal building set of a simplicial complex $\Delta$ are exactly the chains of $\Delta$.
\end{proposition}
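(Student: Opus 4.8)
The plan is to unwind the definitions on both sides and check that the two notions of subset coincide. Recall from Definition~\ref{def:semilatticenestedcomplex} (applied to a simplicial complex via Proposition~\ref{complex-nestedset}) that a set $N \subseteq \B$ is $\B$-nested exactly when no two elements of $N$ have nontrivial intersection, and every disjoint subfamily $N' \subseteq N$ with $|N'| \ge 2$ has union in $\Delta$ but not in $\B$. When $\B = \Delta$ is the maximal building set, the second condition can never be satisfied by a family of two or more disjoint sets, since the union of such sets, being a face of $\Delta$, lies in $\B = \Delta$. Hence a $\B$-nested set can contain no two disjoint sets at all, and also no two sets with nontrivial intersection; so any two elements of $N$ must be comparable under inclusion. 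That is precisely the statement that $N$ is a chain (after discarding repetitions, which do not occur in a set), with the extra data that the top element lies in $\Delta$, which is automatic since every element of $\B = \Delta$ is a face of $\Delta$.

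First I would write the forward direction: suppose $N = \{S_1, \ldots, S_k\}$ is $\B$-nested with $\B = \Delta$. I take any two $S_i, S_j \in N$ and observe that, by Proposition~\ref{complex-nestedset}, they cannot have nontrivial intersection, so $S_i \subseteq S_j$, $S_j \subseteq S_i$, or $S_i \cap S_j = \emptyset$. The disjoint case is ruled out: if $S_i \cap S_j = \emptyset$, then $\{S_i, S_j\}$ is a disjoint subfamily of size $2$, and the first condition of Proposition~\ref{complex-nestedset} forces $S_i \cup S_j \notin \B = \Delta$; but $S_i \cup S_j$ is then not a face of $\Delta$, contradicting the fact that $S_i \cup S_j \subseteq S_\ell$ for no reason --- more carefully, we just need that the union of the whole nested set lies in $\Delta$, which follows because $N$ is nested, hence in particular $S_i \cup S_j \in \Delta$, a direct contradiction. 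So the elements of $N$ are pairwise comparable, i.e., they form a chain $S_1 \subsetneq \cdots \subsetneq S_k$ after relabeling, and $S_k \in \Delta$.

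Then I would write the reverse direction: if $N = \{S_1, \ldots, S_k\}$ with $S_1 \subsetneq \cdots \subsetneq S_k$ and $S_k \in \Delta$, then every $S_i \subseteq S_k \in \Delta$ is a face of $\Delta = \B$, so $N \subseteq \B$. No two elements have nontrivial intersection since any two are comparable, verifying the second condition of Proposition~\ref{complex-nestedset}. For the first condition: any subfamily $N' \subseteq N$ with $|N'| \ge 2$ contains two comparable sets, hence is not a family of pairwise-disjoint sets, so the hypothesis of the first condition is vacuous. Therefore $N$ is $\B$-nested.

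I do not anticipate a serious obstacle here; the only thing to be careful about is the bookkeeping around whether a ``chain'' in the sense of the preceding paragraph in the excerpt is required to have its top element in $\Delta$ (it is, by the definition of chain given, since all $I_j$ are faces of $\Delta$) and whether the empty nested set and singleton nested sets are correctly accounted for (they are: the empty chain and the one-element chains). The mild subtlety worth a sentence is that as \emph{sets} of faces there are no repeated entries, so a nested set really is a strict chain, matching the definition $I_1 \subsetneq \cdots \subsetneq I_k$ exactly.
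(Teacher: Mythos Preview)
Your argument is correct and follows the same route the paper does: unwind Proposition~\ref{complex-nestedset} with $\B=\Delta$ and observe that condition~(1) becomes self-contradictory (the union must be in $\Delta$ but not in $\B=\Delta$), forcing all elements of a nested set to be pairwise comparable. The paper actually treats this as essentially a one-line observation in the sentence preceding the proposition, so your write-up is more detailed than what appears there.

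One comment on exposition: the passage in your forward direction where you write ``contradicting the fact that $S_i \cup S_j \subseteq S_\ell$ for no reason --- more carefully, \ldots'' reads as a false start left in the text. The clean statement is simply that condition~(1) of Proposition~\ref{complex-nestedset} demands $S_i\cup S_j \in \Delta$ and $S_i\cup S_j \notin \B$ simultaneously, which is impossible when $\B=\Delta$; you should delete the abandoned sentence fragment and go straight to that.
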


As a result, we can define the facial preposet $Q(S)$ of a chain $S$ of a simplicial complex $\Delta$, and we will write $C_S^V$ to denote the cone $K^V_{Q(S)}$. Note that $\mathcal{F}(\Delta,V,\B)$ denotes the nested complex fan of a $\Delta$-building set $\B$. For clarity, we point out that if $\B$ is the maximal building set of $\Delta$, then the resulting fan is written $\mathcal{F}(\Delta,V,\Delta)$.




\begin{definition}
The \emph{barycentric subdivision} of a non-degenerate simplicial fan $\mathcal{F}(\Delta,V)$ with vector set $V$ is the nested complex fan of the maximal building set of $\Delta$, or $\mathcal{F}(\Delta,V,\Delta)$.
\end{definition}

This fan is the set of all cones $C^V_S$ where $S$ is a chain of $\Delta$. We should note that barycentric subdivisions of simplicial complexes are typically defined by finding the barycenter of each simplex face and are therefore unique. The barycenter of a simplicial cone, however, does not have a standard definition, and our definition of barycentric subdivision of a simplicial fan depends on the choice of vectors in $V$, and rescaling of vectors in $V$ will result in different barycentric subdivisions.

%
%
%

\begin{proposition}\label{prop:barycentric}
For every $\Delta$-building set $\B$ of a simplicial complex where $\mathcal{F}(\Delta,V)$ is non-degenerate, the fan $\mathcal{F}(\Delta,V,\B)$ is a coarsening of the barycentric subdivision of $\mathcal{F}(\Delta,V)$.
\end{proposition}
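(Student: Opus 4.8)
The plan is to deduce this directly from Corollary \ref{cor:union} (or, alternatively, Theorem \ref{thm:common-refinement}), by exploiting the fact that the maximal building set is the largest $\Delta$-building set and therefore contains $\B$.

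First I would record that the maximal building set $\Delta_{\max}$ — the set of all nonempty faces of $\Delta$ — is genuinely a $\Delta$-building set: both conditions of Proposition \ref{complex-buildingset} are immediate, since every singleton lies in it and any union of its members that happens to lie in $\Delta$ again lies in it. By definition $\mathcal{F}(\Delta,V,\Delta_{\max})$ is the barycentric subdivision of $\mathcal{F}(\Delta,V)$. Now any $\Delta$-building set $\B$ satisfies $\B \subseteq \Delta_{\max}$, so we have the (trivial) union decomposition $\Delta_{\max} = \B \cup \Delta_{\max}$ into $\Delta$-building sets, and Corollary \ref{cor:union} applies to it.

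Applying Corollary \ref{cor:union} to this decomposition gives that $\mathcal{F}(\Delta,V,\Delta_{\max})$ equals the coarsest common refinement of $\mathcal{F}(\Delta,V,\B)$ and $\mathcal{F}(\Delta,V,\Delta_{\max})$. In particular the barycentric subdivision is a common refinement of $\mathcal{F}(\Delta,V,\B)$, hence a refinement of it, so every cone of $\mathcal{F}(\Delta,V,\B)$ is a union of cones of the barycentric subdivision. Since each $\mathcal{F}(\Delta,V,\B_I)$, and ultimately $\mathcal{F}(\Delta,V,\B)$ and $\mathcal{F}(\Delta,V,\Delta_{\max})$, arises by repeated stellar subdivision of $\mathcal{F}(\Delta,V)$ (as noted before Theorem \ref{thm:common-refinement}), all these fans share the support of $\mathcal{F}(\Delta,V)$; therefore $\mathcal{F}(\Delta,V,\B)$ is by definition a coarsening of the barycentric subdivision, which is what we want.

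There is essentially no substantive obstacle here; the work is bookkeeping. The points to be careful about are: that $\Delta_{\max}$ really qualifies as a building set, so that Corollary \ref{cor:union} and the very notion of the barycentric subdivision fan are legitimate; that the orientation of ``coarsening'' is correct (a common refinement of $\mathcal{F}_1$ is a refinement of $\mathcal{F}_1$, so $\mathcal{F}_1$ is a coarsening of it, not the reverse); and that the fans involved all share a common support. If one wishes to avoid Corollary \ref{cor:union}, the same conclusion follows from Theorem \ref{thm:common-refinement}: the family $\{\mathcal{F}(\Delta,V,\B_I) : I \in \B\}$ is a subfamily of $\{\mathcal{F}(\Delta,V,\B_I) : I \in \Delta_{\max}\}$, and the coarsest common refinement of a larger family of fans refines the coarsest common refinement of any subfamily, so the barycentric subdivision refines $\mathcal{F}(\Delta,V,\B)$.
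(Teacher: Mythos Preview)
Your proposal is correct and rests on the same tool as the paper, Corollary~\ref{cor:union}. Your primary argument applies the corollary to the trivial decomposition $\Delta = \B \cup \Delta$, whereas the paper decomposes both $\B$ and $\Delta$ into the elementary building sets $\B_I$ and observes that $\{\B_I : I \in \B\}$ is a subfamily of $\{\B_I : I \in \Delta\}$, so the coarsest common refinement over the larger family refines that over the smaller; this is precisely the alternative you sketch at the end. Both routes are short and equivalent in spirit; your main argument is marginally more direct since it avoids passing through the $\B_I$'s.
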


\begin{proof}
Define $\B_I$ to be the building set consisting of a set $I \in \Delta$ and all singleton faces of $\Delta$. We know $\B=\bigcup_{I \in \B} \B_I$ and $\Delta=\bigcup_{I\in \Delta} \B_I$. From Corollary \ref{cor:union}, the fan $\mathcal{F}(\Delta,V,\B)$ is the coarsest common refinement of fans $\mathcal{F}(\Delta,V,\B_I)$ for all $I \in \B$. Now we see the fan $\mathcal{F}(\Delta,V,\Delta)$ is the coarsest common refinement of fans $\mathcal{F}(\Delta,V,\B_I)$ for all $I \in \Delta$, so $\mathcal{F}(\Delta,V,\Delta)$ is a refinement of $\mathcal{F}(\Delta,V,\B)$.
\end{proof}

As a result, every $\Delta$-braid cone is equal to a convex union of cones of the form $C_S^V$. This is similar to the braid arrangement, where every maximal-dimension braid cone is equal to a union of Weyl chambers. Note however that while every convex union of Weyl chambers is a braid cone, not every convex union of maximal-dimension cones $C_S^V$ is a $\Delta$-braid cone; as an example, the entire vector space containing $\mathcal{F}(\Delta,V)$ if it is a complete fan. This is one way in which our $\Delta$-braid cone construction is not a perfect generalization of braid cones.

We define the \emph{maximal graph} of a simplicial complex $\Delta$ to be the $\Delta$-graph containing the edge $\{i,j\}$ whenever $\{i,j\}$ is a face of $\Delta$. 

\begin{proposition}
The graphical building set of the maximal graph of a simplicial complex is equal to the maximal building set of that simplicial complex.
\end{proposition}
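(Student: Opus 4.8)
The plan is to unwind both definitions and observe that they describe the very same collection of sets, namely all nonempty faces of $\Delta$. Recall that the maximal building set of $\Delta$ consists of every nonempty face of $\Delta$, while the graphical building set $\B_{(G,\Delta)}$ of the maximal graph $G$ consists of all tubes of $(G,\Delta)$, i.e.\ all faces $S \in \Delta$ for which $G|_S$ is connected. Since every tube is by definition a face of $\Delta$, one inclusion is immediate, so the entire content of the proposition is the reverse inclusion: every nonempty face of $\Delta$ is a tube of its maximal graph.

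Before the core step I would dispatch the small hygiene point that the maximal graph $G$ genuinely is a $\Delta$-graph: its edges are by construction exactly the two-element faces of $\Delta$, so the defining requirement that every edge of $G$ be a face of $\Delta$ holds automatically. (Consequently $\B_{(G,\Delta)}$ is a $\Delta$-building set, though we do not even need this, as we will exhibit the equality of the two building sets directly.)

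For the core step, fix a nonempty face $S \in \Delta$ and show $G|_S$ is connected. If $|S| = 1$ this is trivial, since a single vertex is connected. If $|S| \ge 2$, take any two distinct vertices $i, j \in S$; because $\Delta$ is closed under taking subsets and $S \in \Delta$, the pair $\{i,j\} \subseteq S$ is a face of $\Delta$, hence an edge of $G$ by the definition of the maximal graph. Thus $G|_S$ is the complete graph on $S$, which is connected, so $S$ is a tube. Therefore the tubes of the maximal graph are exactly the nonempty faces of $\Delta$, which is the maximal building set.

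I do not anticipate a genuine obstacle here; the proposition is essentially a restatement of the fact that a complete graph on any vertex set is connected. The only point requiring a moment's care is the convention on the empty set — it is excluded from every building set as $\hat{0}$, and is likewise not counted as a tube — so that ``nonempty faces of $\Delta$'' is the common description on both sides of the claimed equality.
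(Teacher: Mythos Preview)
Your proof is correct. The paper actually states this proposition without proof, treating it as immediate from the definitions; your argument---that every nonempty face $S$ of $\Delta$ induces a complete (hence connected) subgraph of the maximal graph because every two-element subset of $S$ is a face of $\Delta$ and therefore an edge---is exactly the one-line justification the paper is implicitly relying on.
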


The maximal graph of a simple polyhedron is the maximal graph of its dual simplicial complex, and is equal to the facet adjacency graph of that polyhedron.

%
%

\section{$\p$-nestohedron case} \label{sub:p-nestohedron-fans}

The fan $\mathcal{F}(\Delta,V)$ is a geometric representation of a simplicial complex. This general framework can be used for the specific case where $\mathcal{F}(\Delta,V)$ is the normal fan of a simple polyhedron, and can be used for $\p$-nestohedra. 

Given a polyhedron $P$ with facet set $S$, define a set $V$ of normal vectors $\{v_i|i \in S\}$. The normal fan of $P$ is equal to the fan $\mathcal{F}(\Delta(P),V)$, with cones $C^V_I=\conichull\{v_i|i \in I\}$ for every face $I$ of the dual simplicial complex $\Delta(P)$. 

\subsection{Nestohedron fan}

\danger{Normal fans are actually mentioned elsewhere in the paper, but this is the first explicit definition. I think the other mentions are enough in passing that this is fine.}

For a polyhedron $P$ in a vector space $V$ and vector $v \in V$, define $F_v(P)$ as the face of $P$ which maximizes the function $x \cdot v$ for all $x \in P$, defined for all vectors $v$ such that a maximum of the function $x \cdot v$ exists. \danger{This feels odd, but necessary; we want to have normal cones be dual to nonempty faces in our definition, and so we can't have the entire-vector-space-V here.} Define the \emph{normal cone} $NC(F,P)$ of a face $F$ of $P$ to be the set of vectors $v \in V$ such that $F \subseteq F_v(P)$. We note that for each $v$ where $F_v(P)$ is defined, $v$ is in the interior of $NC(F_v(P),P)$, and we can say $v$ is dual to $F_v(P)$.

\begin{definition}
The \emph{normal fan} of a polyhedron $P$ is the collection of cones $NC(F,P)$ for all nonempty faces $F$ of $P$.
\end{definition}

We note that if $F_1 \subseteq F_2$, then $NC(F_2,P) \subseteq NC(F_1,P)$, and we find that if $P$ is simple, then the poset of faces of the normal fan of $P$ is isomorphic to the dual simplicial complex of $P$. The rays of the normal fan of $P$ are dual to the facets of $P$, and if a vector $v$ is dual to a facet $F$, then that vector is a \emph{normal vector} of $F$.

\begin{proposition}
The fan $\mathcal{F}(P,V,\B)$ is the normal fan of a nestohedron $\nest{P}{\B}$, obtained by listing all sets $\{I_1,\ldots, I_k\} \in \B$ in descending order of size, and truncating faces $F_I$ of $P$ with hyperplanes normal to the vectors $v_I=\sum_{i \in I} v_i$.
\end{proposition}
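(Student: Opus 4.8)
The plan is to combine the truncation theorem for $\p$-nestohedra (the theorem asserting that $\nest{P}{\B}$ is obtained by truncating the faces $F_{S_1},\dots,F_{S_k}$ of $P$ in descending order of size) with the fact, coming from Theorem~\ref{thm:feichtnermainresult}, that $\mathcal{F}(\Delta,V,\B)$ is obtained from $\mathcal{F}(\Delta,V)$ by repeated stellar subdivision, and with Proposition~\ref{prop:Delta-complex-fan}, all while tracking exactly which hyperplane is used at each truncation. Write $\B_0=\{\{s\}:s\in\mathcal{S}\}$, $\B_i=\B_0\cup\{S_1,\dots,S_i\}$, and let $P_i$ be the polyhedron obtained from $P$ by performing the first $i$ truncations, the $j$-th of them with a hyperplane normal to $v_{S_j}=\sum_{s\in S_j}v_s$. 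I will prove by induction on $i$ that $\mathcal{F}(P,V,\B_i)$ is the normal fan of $P_i$.

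For the base case, a $\B_0$-nested set is precisely a set $\{\{s\}:s\in I\}$ with $I\in\Delta(P)$, and the conic-hull description of $\Delta$-braid cones of nested sets gives $K^V_{\{\{s\}:s\in I\}}=\conichull\{v_s:s\in I\}=C^V_I=NC(F_I,P)$, so $\mathcal{F}(P,V,\B_0)$ is the normal fan of $P=P_0$. For the inductive step, the face of $P_i$ truncated at step $i+1$ is dual to the $\B_i$-nested set $N_i=\max((\B_i)_{\subseteq S_{i+1}})$, which by Lemma~\ref{lemma:partition-lemma} is a partition of $S_{i+1}$; hence the conic-hull description gives the simplicial cone $K^V_{N_i}=\conichull\{\sum_{s\in J}v_s:J\in N_i\}$, whose generators sum to $v_{S_{i+1}}$. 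Therefore $v_{S_{i+1}}$ lies in the relative interior of $K^V_{N_i}$, so it is dual to exactly this face, and by the proposition guaranteeing existence of truncating hyperplanes there is a hyperplane $H$ normal to $v_{S_{i+1}}$ that truncates it.

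It then remains to check that this particular truncation realizes, on normal fans, the stellar subdivision of $\mathcal{F}(P,V,\B_i)$ at $K^V_{N_i}$ with new ray $\conichull(v_{S_{i+1}})$. This is the metric counterpart of the already-proven identity $\Delta(\textrm{Tr}_{F_S}Q)=\textrm{St}_S(\Delta(Q))$: each old facet of $P_i$ keeps its normal vector, the one new facet $H\cap P_i$ has normal vector $v_{S_{i+1}}$, and running through the faces $F_T\cap H_{\le}$ and $F_T\cap H$ exactly as in that proof, one reads off that the normal cone of $F_T\cap H_{\le}$ is $\conichull\{v_s:s\in T\}$ and the normal cone of $F_T\cap H$ is $\conichull(\{v_s:s\in T\}\cup\{v_{S_{i+1}}\})$, so the normal fan of $P_{i+1}$ is $\mathcal{F}(\textrm{St}_{S_{i+1}}(\Delta(P_i)),\,V\cup\{v_{S_{i+1}}\})$. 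On the combinatorial side, Theorem~\ref{thm:feichtnermainresult} (combinatorial blowup being dual to stellar subdivision) identifies this same fan with $\mathcal{F}(P,V,\B_{i+1})$, which closes the induction. Taking $i=k$ gives the result, noting that the steps corresponding to singleton $S_i$ merely reposition facet-defining hyperplanes and hence change neither the polyhedron's combinatorial type nor the fan.

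The step I expect to be the main obstacle is this geometric bookkeeping: confirming that truncating with a hyperplane \emph{normal to} $v_{S_{i+1}}$, rather than an arbitrary admissible truncating hyperplane, produces a polyhedron whose normal fan is exactly the stellar subdivision introducing the ray through $v_{S_{i+1}}$ — in particular that $v_{S_{i+1}}$ lands in the relative interior of the cone being subdivided and that every other normal cone is left untouched. Once that is in place, the remainder is a routine induction assembled from results established earlier in the chapter.
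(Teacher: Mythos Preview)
Your proof is correct, but it takes a substantially different route from the paper's. The paper gives a two-line argument: by Proposition~\ref{prop:Delta-complex-fan}, the face poset of $\mathcal{F}(P,V,\B)$ is isomorphic to the nested complex of $\B$, hence combinatorially isomorphic to the normal fan of $\nest{P}{\B}$; both fans are simplicial and have the same set of rays (namely $\conichull(v_I)$ for $I\in\B$, the former by definition and the latter because each truncating hyperplane is chosen normal to $v_I$); and a combinatorial isomorphism of simplicial fans that is the identity on rays forces the fans to coincide. No induction, no tracking of intermediate truncations.

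Your approach instead reproves the equality step by step, verifying inductively that the normal fan of $P_i$ is $\mathcal{F}(P,V,\B_i)$ by showing that each truncation realises a specific stellar subdivision on the fan side. This is more laborious but also more transparent geometrically: it makes explicit \emph{why} the ray $\conichull(v_{S_{i+1}})$ lands in the interior of the correct cone (via the partition $N_i=\max((\B_i)_{\subseteq S_{i+1}})$ from Lemma~\ref{lemma:partition-lemma}) and ties the normal-fan picture directly to the combinatorial-blowup picture of Theorem~\ref{thm:feichtnermainresult}. The paper's argument buys brevity by leaning on the simplicial-fan rigidity fact; yours buys a self-contained geometric narrative at the cost of the extra bookkeeping you flagged.
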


\begin{proof}
Proposition \ref{prop:Delta-complex-fan} proves that the fan $\mathcal{F}(P,V,\B)$ is isomorphic to the the nested complex of $\B$, and as a result, should be combinatorially isomorphic to the normal fan of $\nest{P}{\B}$. Secondly, the rays of the normal fan of a nestohedron $\nest{P}{\B}$ truncated by the method specified will have rays which are the rays of vectors $v_I=\sum_{i \in I} v_i$ for all $I \in \B$. Because the two fans are combinatorially isomorphic by a map taking all rays to themselves, the two fans are equal.
\end{proof}

Note that this fan $\mathcal{F}(P,V,\B)$ is a coarsening of the barycentric subdivision of the normal fan $\mathcal{F}(P,V)$. For some polyhedra and choices of $V$, the barycentric subdivision of the normal fan has combinatorial significance. For instance, when $P$ is a simplex or a hypercube and $V$ is the set of normal unit vectors, the barycentric subdivision is the type $A$ or type $B$ braid arrangement, respectively.

\subsection{Minkowski sums}

The \emph{Minkowski sum} of two polyhedra $P, P'$ is the set of points $$P+P'=\{x+x'|x\in P,x' \in P'\}.$$

For any face $F$ of $P$, $F'$ of $P'$, the set $F+F'$ is a subset of $P+P'$, but is not necessarily a face of $P+P'$. The combinatorics of the resultant Minkowski sum depend on the normal fans of the summands. The following proposition is well-known, but bears repeating.


\begin{proposition}
Given two polyhedra $P, P'$ with normal fans $\mathcal{F}_1, \mathcal{F}_2$, the normal fan of $P+P'$ is equal to the coarsest common refinement of $\mathcal{F}_1$ and $ \mathcal{F}_2$.
\end{proposition}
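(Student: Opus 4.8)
Given two polyhedra $P, P'$ with normal fans $\mathcal{F}_1, \mathcal{F}_2$, the normal fan of $P+P'$ is the coarsest common refinement of $\mathcal{F}_1$ and $\mathcal{F}_2$.

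The plan is to argue directly from the variational characterization of faces via linear functionals, which is exactly how the normal fan was defined earlier in the excerpt (via $F_v(P)$). The key observation is the additivity of support functions: for any direction vector $v$ in the lineality-complement where the maximum exists, $F_v(P + P') = F_v(P) + F_v(P')$. This follows because maximizing $x \cdot v$ over $P + P'$ decouples: $\max_{z \in P+P'} z \cdot v = \max_{x \in P} x\cdot v + \max_{x' \in P'} x'\cdot v$, and a point $x + x'$ achieves the maximum if and only if $x$ maximizes over $P$ and $x'$ maximizes over $P'$ separately. So first I would establish this face-additivity lemma carefully, noting that $F_v(P+P')$ is defined precisely when both $F_v(P)$ and $F_v(P')$ are defined, i.e. when $v$ lies in the (common) support of both normal fans.

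Next I would translate this into the statement about normal cones. By definition the normal cone $NC(G, P+P')$ of a face $G$ of $P+P'$ is $\{v : G \subseteq F_v(P+P')\}$. Using the lemma, every face $G$ of $P+P'$ has the form $F + F'$ for $F$ a face of $P$ and $F'$ a face of $P'$, and for a generic interior direction $v$ of a given cone, $F_v(P+P') = F_v(P) + F_v(P')$ identifies $F$ and $F'$ uniquely. Then I would show $NC(F+F', P+P') = NC(F,P) \cap NC(F',P')$: a direction $v$ satisfies $F + F' \subseteq F_v(P+P') = F_v(P)+F_v(P')$ if and only if $F \subseteq F_v(P)$ and $F' \subseteq F_v(P')$ (the ``only if'' uses that a Minkowski sum of sets is contained in a Minkowski sum $A+B$ with $A,B$ faces iff each summand sits inside the corresponding piece, which needs a small convexity argument about extreme points or about the decoupling of the maximum). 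This shows every cone of the normal fan of $P+P'$ is an intersection of a cone of $\mathcal{F}_1$ with a cone of $\mathcal{F}_2$, hence belongs to the coarsest common refinement.

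Finally I would argue the reverse inclusion: every cone $C_1 \cap C_2$ with $C_i \in \mathcal{F}_i$ appears as a normal cone of $P+P'$. Taking $C_i = NC(F_i, P)$ respectively $NC(F_i', P')$ and picking $v$ in the relative interior of $C_1 \cap C_2$ (assuming the intersection is full-dimensional in its span; lower-dimensional intersections are handled by taking faces), we get $F_v(P+P') = F_v(P) + F_v(P')$ with $F_v(P) \supseteq$ the face indexed by $C_1$ and similarly for $C_2$, and the computation above gives $NC(F_v(P+P'), P+P') = C_1 \cap C_2$. Since both the normal fan of $P+P'$ and the coarsest common refinement are fans with the same support (the common support of $\mathcal{F}_1$ and $\mathcal{F}_2$, which must agree since $P, P'$ lie in the same ambient space up to recession-cone considerations), mutual inclusion of their cones forces equality. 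The main obstacle I anticipate is bookkeeping around non-pointed polyhedra and directions where the maximum is not attained: I need the support functions (equivalently the normal fans) of $P$ and $P'$ to have the same domain of definition, or else restrict attention to directions in the intersection of the two supports — this is a routine but genuinely necessary subtlety, and is the only place the argument is not completely formal.
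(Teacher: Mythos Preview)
Your proposal is correct and follows essentially the same route as the paper: both argue via the additivity $F_v(P+P')=F_v(P)+F_v(P')$ of maximizing faces, deduce that every face of $P+P'$ is a Minkowski sum of faces, and conclude that $NC(F_v(P+P'),P+P')=NC(F_v(P),P)\cap NC(F_v(P'),P')$. If anything you are more careful than the paper, which leaves the reverse inclusion and the non-pointed/support issues implicit.
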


\begin{proof}
	
For any vector $v$, the faces $F_v(P), F_v(P')$ exist if and only if the sets of vectors which maximize the function $x \cdot v$ on $P$ and $P'$ are $F_v(P)$ and $F_v(P')$, respectively. It is then trivial to find that if $x''\in P+P'$ is equal to the sum of a pair of points $x \in P, x' \in P'$, then $x'' \in F_v(P+P')$ if and only if it can be written as a sum $x+x'$ where $x \in F_v(P), x' \in F_v(P')$. As a result, $F_v(P+P')=F_v(P)+F_v(P')$.

This shows us that every face of $P+P'$ is a Minkowski sum of faces of $P$ and $P'$. We now consider the cone $NC(F_v(P+P'),P+P')$, and a vector $v'$. We say that $v' \in NC(F_v(P+P'),P+P')$ if and only if the function $x'' \cdot v'$ is maximized on the face $F_v(P+P')$ of $P+P'$, which we know to be the case if and only if the functions $x \cdot v'$ and $x' \cdot v'$ are maximized on the faces $F_v(P)$ of $P$ and $F_v(P')$ of $P'$, respectively. As a result, $v'$ is contained in $NC(F_v(P+P'),P+P')$ if and only if $v' \in NC(F_v(P),P)$ and $v' \in NC(F_v(P'),P')$.
\end{proof}

For a simple polyhedron $P$ with $P$-building set $\B$ and normal vector set $V$, define $P_I$ to be a truncation of $P$ at the face $F_I$ by a hyperplane normal to the vector $v_I=\sum_{i \in I} v_i$. This polyhedron is a nestohedron of the building set $\B_I$ containing only singleton sets and the set $I$, and the normal fan of $P_I$ is equal to the fan $\mathcal{F}(P,V,\B_I)$. The following proposition is a direct corollary of Theorem \ref{thm:common-refinement}.

\begin{proposition}
Given a simple polyhedron $P$ and $P$-building set $\B$, with normal vector set $V$ indexed by facets of $P$ defining truncations $P_I$ for each $I \in \B$, the Minkowski sum of all polyhedra $P_I$ for $I \in \B$ is a $P$-nestohedron of $\B$.
\end{proposition}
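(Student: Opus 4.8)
The plan is to combine the Minkowski-sum-to-common-refinement principle (the proposition immediately preceding this one) with the fan intersection theorem \ref{thm:common-refinement}. First I would recall that, by the preceding proposition, the normal fan of a Minkowski sum $\sum_{I \in \B} P_I$ is the coarsest common refinement of the normal fans of the summands $P_I$. Iterating the two-summand statement over the finite index set $\B$ gives that the normal fan of $\sum_{I \in \B} P_I$ equals the coarsest common refinement of the fans $\{\,\textrm{normal fan of } P_I \mid I \in \B\,\}$; since coarsest common refinement is associative and commutative on a finite collection of fans, no ordering issue arises.

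Next I would identify each summand's normal fan explicitly. By the construction of $P_I$ as the truncation of $P$ at $F_I$ by a hyperplane normal to $v_I = \sum_{i \in I} v_i$, the polyhedron $P_I$ is precisely the $\B_I$-nestohedron $\nest{P}{\B_I}$, where $\B_I$ consists of $I$ together with all singleton facet-indices; and by the proposition characterizing $\mathcal{F}(P,V,\B)$ as a normal fan, the normal fan of $P_I$ is $\mathcal{F}(P,V,\B_I)$. Substituting, the normal fan of $\sum_{I \in \B} P_I$ is the coarsest common refinement of the fans $\mathcal{F}(P,V,\B_I)$ over all $I \in \B$.

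Now I would invoke Theorem \ref{thm:common-refinement} directly: that theorem states exactly that $\mathcal{F}(P,V,\B)$ is the coarsest common refinement of the fans $\mathcal{F}(P,V,\B_I)$ for $I \in \B$. Hence the normal fan of $\sum_{I \in \B} P_I$ equals $\mathcal{F}(P,V,\B)$. Finally, I would appeal to the proposition identifying $\mathcal{F}(P,V,\B)$ as the normal fan of the $P$-nestohedron $\nest{P}{\B}$; two polyhedra with the same normal fan are the ones we want to compare, so I would note that the Minkowski sum is a simple polyhedron whose normal fan is $\mathcal{F}(P,V,\B)$ and therefore whose dual simplicial complex is the nested complex of $\B$, making it a $P$-nestohedron of $\B$ by definition. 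The main obstacle, if any, is purely bookkeeping: one must be a little careful that Theorem \ref{thm:common-refinement} is stated for the building set $\B$ itself (and its $\B_I$), and that the $P_I$ built by truncation really do have the claimed normal fans rather than merely combinatorially isomorphic ones — but the earlier proposition on $\mathcal{F}(P,V,\B)$ as a normal fan already handles the ``on the nose'' equality of fans via a ray-preserving isomorphism, so the argument goes through without further geometric work.
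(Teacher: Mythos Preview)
Your proposal is correct and follows essentially the same approach as the paper: the paper states this proposition as ``a direct corollary of Theorem \ref{thm:common-refinement}'' and gives no further argument, while you have spelled out exactly the chain of implications (Minkowski sum $\to$ common refinement of normal fans $\to$ each summand has normal fan $\mathcal{F}(P,V,\B_I)$ $\to$ apply Theorem \ref{thm:common-refinement} $\to$ resulting fan is $\mathcal{F}(P,V,\B)$) that makes it a corollary.
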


\danger{New proposition!}

Consider a $P$-graph $G$. We define a connected component of $G$ \emph{as a $P$-graph} to be the $P$-graph containing all edges in that connected component. \danger{awkward language ahead of me!}

\begin{proposition} \label{prop:connected-components-P-graph}
	For a $P$-graph $G$ with connected components, we can realize the $P$-graph associahedron as the Minkowski sum of the $P$-graph associahedra of each connected component of $G$.
\end{proposition}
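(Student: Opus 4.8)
The plan is to reduce the statement to the fan-theoretic results already established, in particular Theorem \ref{thm:common-refinement} (and its Corollary \ref{cor:union}) together with the fact that the normal fan of a Minkowski sum is the coarsest common refinement of the normal fans of the summands. First I would fix notation: let $G$ be a $P$-graph with connected components $G_1,\ldots,G_m$, each regarded as a $P$-graph containing exactly the edges of that component (so each $G_c$ has the same vertex set $\mathcal{S} = \Delta(P)$'s base set, but only the edges inside the $c$-th component). Let $\B_{(G,\Delta(P))}$ be the graphic building set of $G$ and let $\B_c$ be the graphic building set of $G_c$. The key combinatorial fact I would record first is that $\B_{(G,\Delta(P))} = \bigcup_{c=1}^m \B_c$: a tube of $G$ is a face of $\Delta(P)$ inducing a connected subgraph, and a connected induced subgraph necessarily lies inside a single component of $G$, so it is a tube of exactly one $G_c$ (and conversely every tube of every $G_c$ is a tube of $G$, since adding the extra edges of $G$ between components does not affect connectedness within a component). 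The singleton sets appear in every $\B_c$, which is harmless for the union.

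Next I would invoke the normal-fan machinery. Choose a normal vector set $V = \{v_s \mid s \in \mathcal{S}\}$ indexed by the facets of $P$, so that the normal fan of $P$ is the non-degenerate fan $\mathcal{F}(\Delta(P),V)$. By the last proposition of Section \ref{sub:p-nestohedron-fans}, the $P$-nestohedron of a building set $\B$ (realized by truncating faces $F_I$ along hyperplanes normal to $v_I = \sum_{i\in I} v_i$) has normal fan $\mathcal{F}(\Delta(P),V,\B)$; in particular the $P$-graph associahedron of $G_c$ has normal fan $\mathcal{F}(\Delta(P),V,G_c) = \mathcal{F}(\Delta(P),V,\B_c)$, and the $P$-graph associahedron of $G$ has normal fan $\mathcal{F}(\Delta(P),V,\B_{(G,\Delta(P))})$. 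Now apply Corollary \ref{cor:union} with the decomposition $\B_{(G,\Delta(P))} = \bigcup_c \B_c$: this gives that $\mathcal{F}(\Delta(P),V,\B_{(G,\Delta(P))})$ is the coarsest common refinement of the fans $\mathcal{F}(\Delta(P),V,\B_c)$. On the other hand, the proposition on Minkowski sums says the normal fan of $\sum_c \mathcal{K}_{G_c}$ (the Minkowski sum of the component $P$-graph associahedra) is the coarsest common refinement of their normal fans, i.e. of the $\mathcal{F}(\Delta(P),V,\B_c)$. Hence the Minkowski sum and the $P$-graph associahedron of $G$ have the same normal fan. Since both are simple polyhedra of the same dimension as $P$ (truncation does not change dimension, and Minkowski sum with polyhedra whose normal fans refine that of $P$ keeps the dimension), having identical normal fans makes them combinatorially isomorphic — in fact they coincide as the $P$-nestohedron of $\B_{(G,\Delta(P))}$ realized by the stated truncations.

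I would present this essentially as: record the building-set union identity, then chain Corollary \ref{cor:union} with the Minkowski-sum/normal-fan proposition. The one genuine point requiring care — and what I expect to be the main obstacle — is the claim $\B_{(G,\Delta(P))} = \bigcup_c \B_c$, specifically that a connected induced subgraph of $G$ on a face $S$ of $\Delta(P)$ cannot ``bridge'' two components of $G$; this is immediate for the graph itself but one must make sure the $\Delta$-graph framework (where edges must be faces of $\Delta$) does not introduce edges between components through the simplicial-complex structure. It does not: edges of $G$ are edges of some $G_c$ by definition of connected component, and ``tube'' only ever refers to connectivity in $G$ (equivalently in the relevant $G_c$), not to faces of $\Delta$ per se. A secondary, minor subtlety is that the singleton sets belong to all the $\B_c$, so the union is not disjoint; but Corollary \ref{cor:union} is stated for arbitrary (not necessarily disjoint) unions of building sets, so no adjustment is needed. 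Finally I would note the dimension/pointedness check needed to pass from ``same normal fan'' to ``combinatorially isomorphic'' via Remark \ref{remark:dual-simplicial-complex-non-uniqueness}, though here it is automatic since both polyhedra live in the same ambient space with the same lineality as $P$.
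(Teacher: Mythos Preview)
Your proposal is correct and follows essentially the same approach as the paper: observe that the graphic building set of $G$ is the union of the graphic building sets of its components, then invoke Corollary \ref{cor:union} together with the Minkowski-sum/normal-fan proposition to conclude. The paper's proof is much terser (it simply states the building-set union and then asserts the Minkowski sum realization), but your more careful unpacking of the steps and your attention to the subtleties (singletons in every $\B_c$, the bridging concern, the dimension check) are all appropriate and do not deviate from the intended line of argument.
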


\begin{proof}
If $G$ has connected components $C_1, \ldots, C_k$, then the set of tubes of $G$ is equal to the union of all possible tubes of its connected components. As a result, the graphical building set of $G$ is equal to the union of all the graphical building sets of $C_1, \ldots, C_k$, and so we can realize a $P$-graph associahedron of $G$ as the Minkowski sum of $P$-graph associahedra for each graph $C_1, \ldots, C_k$ as a $P$-graph.
\end{proof}

One would hope for a generalization of the results in \cite{postnikov}, where nestohedra are constructed as Minkowski sums of lower-dimensional simplices. When $P$ is a simplex, we note that each $P_I$ is isomorphic to the sum of $P$ and a lower-dimensional simplex. When $P$ is a more complicated polyhedron, such as a hypercube, the summands are more complex, as in Figure \ref{fig:minkowski-truncation}.

\begin{figure}
\centering
\includegraphics[width=.5\textwidth]{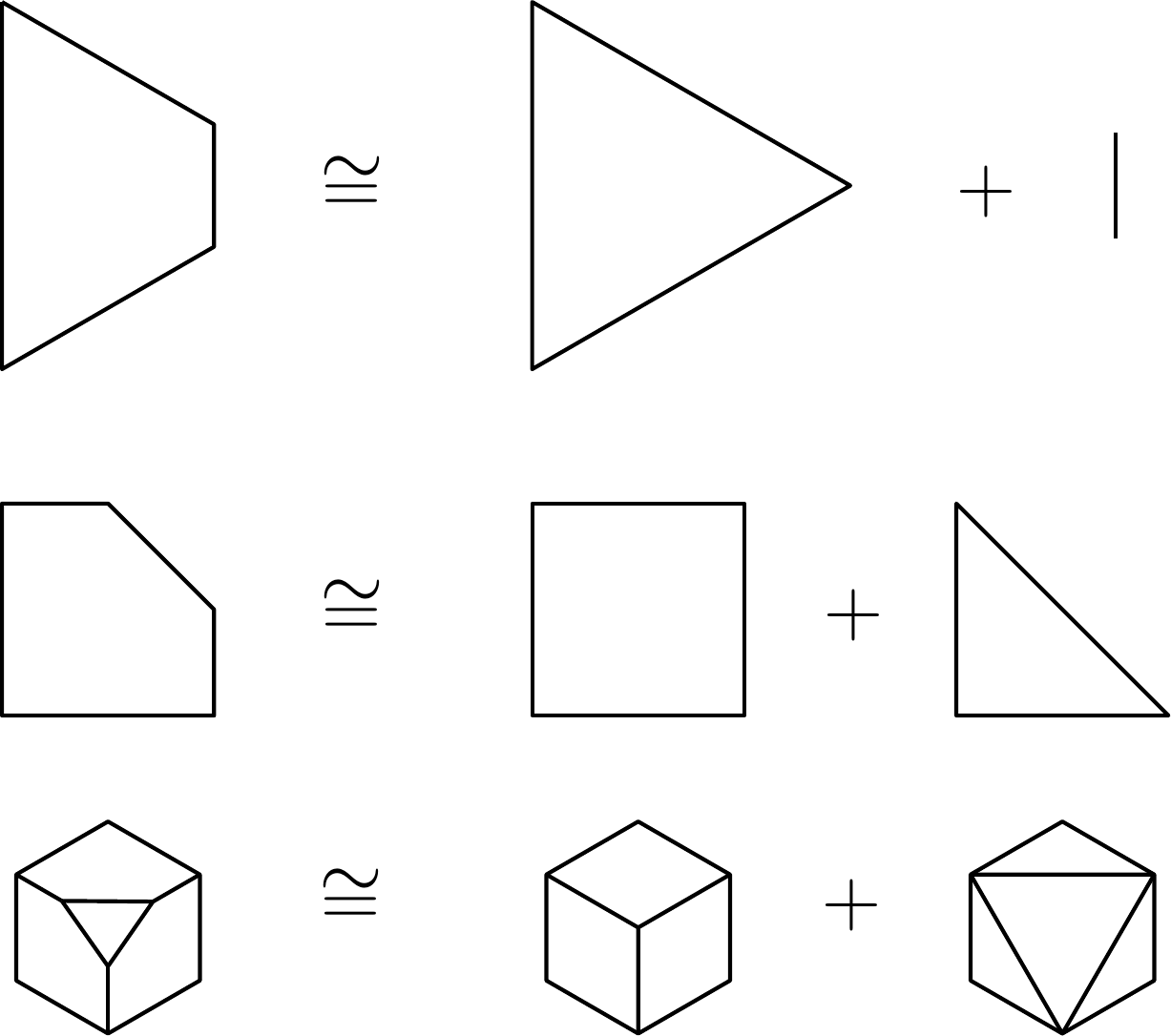}
\caption{Truncations rewritten as Minkowski summands for various polyhedra.}\label{fig:minkowski-truncation}
\end{figure}

	\danger{Okay: so a standard nestohedron? Is that good terminology?}

\chapter{Combinatorics of Hypercube Graph Associahedra}\label{chap:hypercubes}

This chapter aims to summarize all major properties of hypercube graph associahedra.

\section{Hypercube Graphs}

Define the conventions $\pm[n]=\{-n,\ldots,-1,1,\ldots,n\}$, and in the vector space $\R^n$, for a vector $(x_1,\ldots,x_n) \in \R^n$ define $x_{-i}=-x_i$ for all $i \in [n]$. We define a standard $n$-dimensional hypercube to be the set of points defined by inequalities $\{\mathbf{x} \in \R^n| x_i \le 1 \, \forall i \in \pm[n]\}$, with the facets indexed by elements of $\pm[n]$ with $F_i$ defined by the inequality $x_i \le 1$. 

The $n$-dimensional hypercube is the Cartesian product of $n$ $1$-simplices. In Subsection \ref{sub:forbiddensubsetdiagrams}, we saw that the forbidden subset diagram of a $1$-simplex consists of a dashed edge on two vertices, so the forbidden subsets of an $n$-dimensional hypercube are $\{\{1,-1\},\ldots,\{n,-n\}\}$. The forbidden subset diagram contains $n$ pairs of vertices connected by dashed edges.

The tubes and tubings of hypercube graphs are easily characterized. If we consider a hypercube graph $G$ as a graph consisting of solid and dashed edges, then we find that $t$ is a tube if and only if the subgraph induced by $t$ is connected and contains no dashed edges. If $T$ is a collection of tubes, then $T$ is a tubing if and only if it satisfies the typical conditions of pairwise compatibility and if the set $\bigcup T=\bigcup_{t\in T} t$ induces a graph with no dashed edges. In this case, two tubes $t_1, t_2$ are weakly compatible if one of the following is true: $t_1 \subset t_2, t_2 \subset t_1$, or $t_1 \cap t_2 = \emptyset$ and $t_1, t_2$ are not adjacent (ie, there exists no solid edge between vertices in $t_1$ and $t_2$). As a result, a collection of hypercube graph tubes is a tubing if and only if all pairs of tubes are weakly compatible, and there exist no dashed edges between any tubes.

\begin{definition}
A simplicial complex $\Delta$ is a \emph{flag complex} if and only if, for every set $S$ such that $\{i,j\}$ is a face of $\Delta$ for all $i, j \in S$, the set $S$ is a face of $\Delta$.
\end{definition}

We note that a set of hypercube graph tubes is a tubing if and only if it contains no pairs of weakly incompatible tubes, and it contains no forbidden subsets of the hypercube. However, we note that a set of hypercube tubes contains a forbidden subset if and only if it contains a pair of tubes $t_1, t_2$ such that $t_1 \cap -t_2 \ne \emptyset$. Because of this, we find the following:

\begin{proposition} \label{prop:hypercube-pairwise-compatibility}
The tubing complex of a hypercube graph is a flag complex.
\end{proposition}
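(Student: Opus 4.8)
The plan is to unwind the definition of \emph{flag complex} for the tubing complex and reduce the verification to two conditions, each of which is manifestly a condition on pairs. Recall that the base set of the tubing complex of a hypercube graph is the set of all hypercube-graph tubes, and that a set of tubes is a tubing precisely when it is strongly compatible, i.e.\ pairwise weakly compatible and its union is a face of the dual simplicial complex $\Delta$ of the hypercube. So to show the tubing complex is flag I must show: if $T$ is a set of tubes such that every two-element subset $\{t_1,t_2\}\subseteq T$ is a tubing, then $T$ itself is a tubing.

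First I would dispose of weak compatibility. Weak compatibility is a relation defined on \emph{pairs} of tubes, so if every two-element subset of $T$ is a tubing — in particular pairwise weakly compatible — then $T$ is pairwise weakly compatible. Nothing further is needed here.

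The substantive point is that the remaining requirement, $\bigcup_{t\in T} t \in \Delta$, is also detectable on pairs. Here I would invoke two facts already recorded above: (i) the forbidden subsets of the $n$-dimensional hypercube are exactly the dashed edges $\{i,-i\}$ for $i\in[n]$, so every forbidden subset has size two; and, more directly, (ii) a set of hypercube-graph tubes has a union containing a forbidden subset if and only if it contains two tubes $t_a,t_b$ with $t_a\cap(-t_b)\neq\emptyset$. Since every $\{t_1,t_2\}\subseteq T$ is a tubing, each such pair has union in $\Delta$, and hence $t_1\cap(-t_2)=\emptyset$ for all pairs; the degenerate case $t_a=t_b$ cannot produce a forbidden subset because a tube, by definition, contains no dashed edge. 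By (ii), $\bigcup_{t\in T} t$ then contains no forbidden subset, i.e.\ it lies in $\Delta$. Combining this with pairwise weak compatibility, $T$ is strongly compatible, hence a tubing, and the tubing complex is flag.

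The only place that requires care — and the conceptual heart of the argument — is step (ii): strong compatibility is \emph{not} a purely pairwise notion for a general $\Delta$-graph, because "the union is a face of $\Delta$" can genuinely fail the flag condition. What rescues us in the hypercube case is precisely that the circuit set consists entirely of two-element sets, so membership of a union in $\Delta$ is governed by its pairs of vertices, and the translation to "$t_a\cap(-t_b)\neq\emptyset$" lets one read this off from pairs of \emph{tubes}. I would therefore present that translation carefully before concluding, since it is the one nontrivial input and the reason the result is special to hypercube graphs rather than general $\Delta$-graphs.
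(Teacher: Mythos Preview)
Your proposal is correct and follows essentially the same approach as the paper: the paper observes, just before stating the proposition, that a set of hypercube tubes contains a forbidden subset if and only if some pair $t_1,t_2$ satisfies $t_1\cap(-t_2)\neq\emptyset$, so both obstructions to being a tubing are pairwise. Your write-up is more careful (in particular you handle the degenerate $t_a=t_b$ case explicitly), but the argument is the same.
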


As an aside, we note that when $B$ is a connected classical building set, a subset $N$ of $B$ is nested if and only if every pair of sets $S_1, S_2 \in N$ is compatible; as a result, classical graph tubings are typically characterized only by pairwise strongly compatibility for connected graphs, as these are the flag tubing complexes.

\begin{definition}
We say that a bijection $\phi$ between flag tubing complexes \emph{preserves compatibility conditions} if and only if, for every pair of tubes $t, t'$ in the domain of $\phi$, we find $t, t'$ are strongly compatible if and only if $\phi(t), \phi(t')$ are strongly compatible.
\end{definition}

As a result, we see that two flag tubing complexes are isomorphic if and only if there is a map between them which preserves compatibility conditions.

\begin{example}

We introduce a running example of a three-dimensional hypercube graph in Figure \ref{fig:ygraph}. This hypercube graph consists of three edges $\{1,2\},\{2,3\}, \{2,-3\}$, and three forbidden subsets $\{1,-1\}, \{2,-2\}, \{3,-3\}$. A figure on the right illustrates the corresponding facets in the cube.


\begin{figure} [h!]
\centering
\includegraphics[width=.75\textwidth]{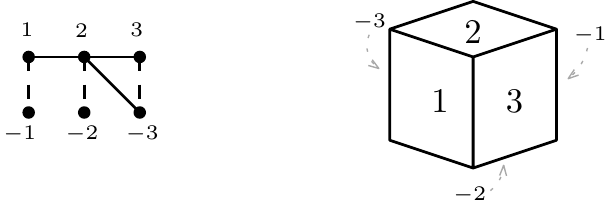}
\caption{Example hypercube graph and corresponding $(n=3)$ hypercube.} \label{fig:ygraph}
\end{figure}

Tubes of this graph are collections of vertices which induce connected subgraphs and avoid containing any vertex pairs of the form $\{i,-i\}$. Figure \ref{fig:ygraph2} illustrates all possible tubes of this graph. Note that tubes can be singleton vertices, such as $\{-1\}$ or $\{-2\}$, and they can mix positive and negative vertices, such as $\{1,2,-3\}$. However, they cannot induce a graph with any dashed edges, such as the set $\{2,3,-3\}$.

\begin{figure} [h!]
\centering
\includegraphics[width=.8\textwidth]{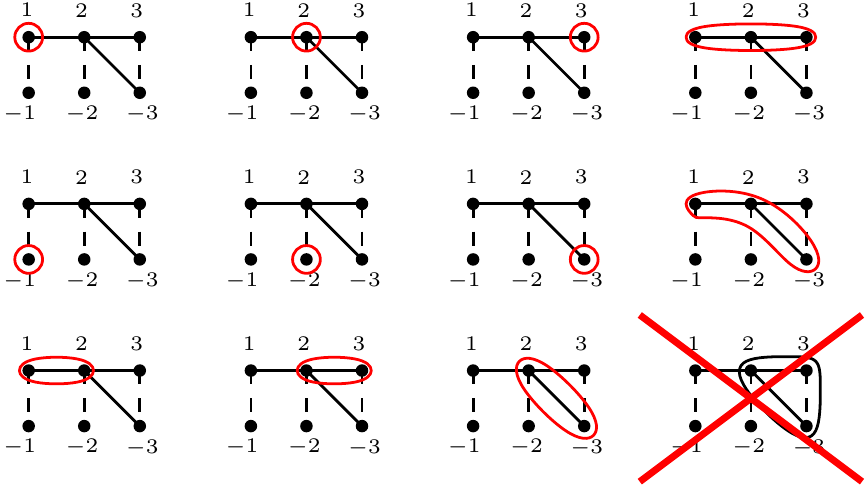}
\caption{All tubes of a hypercube graph, and one indicated non-tube.} \label{fig:ygraph2}
\end{figure}

 Figure \ref{fig:ygraph3} shows six valid hypercube tubings on the left side, and three pairs of hypercube graph tubes which fail the tubing criteria on the right hand side. On the right side, the first example shows a pair of tubes adjacent by a solid edge, the second example shows a pair of tubes adjacent by a dashed edge, and the third example shows a pair of non-trivially intersecting hypercube-graph tubes.

\begin{figure}[h!]
\centering
\includegraphics[width=.65\textwidth]{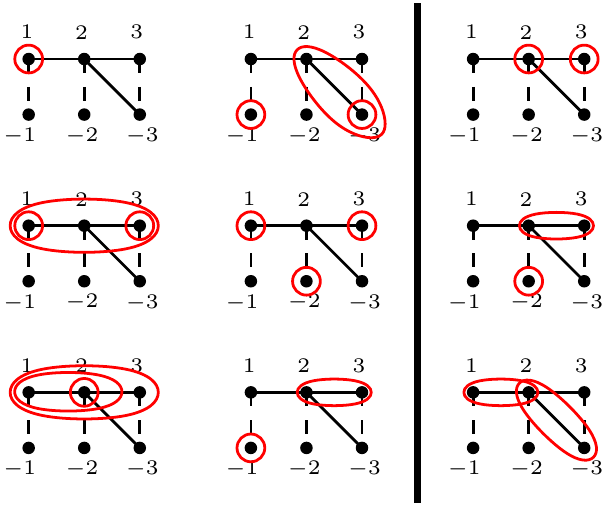}
\caption{Examples of valid and invalid tubings.}\label{fig:ygraph3}
\end{figure}

\end{example}

%
%
%
%
%
%
%
%
%
%

\section{Reconnected Complements}

Definition \ref{def:reconnected-complement-graph} defines the reconnected complement $G/t$ of a $\Delta$-graph $G$ with tube $t$. The vertex $x$ is in $G/t$ if and only if $x \notin t$ but $\{x\} \cup t \in \Delta$, and the edge $\{x,y\}$ is in $G/t$ if and only if either $\{x,y\}$ is an edge in $G$, or $x,y$ are both adjacent to $t$ and $\{x,y\}$ is a face of $\Delta/t$. The following proposition comes immediately from the definition of $G/t$.

\begin{proposition}
When $G$ is a hypercube-graph, $G/t$ is defined by removing both $t$ and $-t$, and adding edges to $G \backslash (\pm t)$ between former neighbors of $t$ wherever a dashed edge does not already exist.
\end{proposition}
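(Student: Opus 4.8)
The plan is to derive this directly from Definition~\ref{def:reconnected-complement-graph}, specializing each clause to the hypercube-graph setting. First I would unpack what ``$\{x\}\cup t\in\Delta$'' means when $\Delta$ is the dual simplicial complex of a hypercube, i.e.\ when the forbidden subsets are exactly the pairs $\{i,-i\}$. Since $t$ is a tube it contains no such pair, so $t\cup\{x\}$ fails to be a face of $\Delta$ precisely when $x=-i$ for some $i$ with $i\in t$, that is, when $x\in -t$. Hence the vertex set of $G/t$ is $\{x\in\pm[n]\mid x\notin t,\ x\notin -t\}=\pm[n]\setminus(\pm t)$, which is the vertex set of $G\setminus(\pm t)$; this is the ``removing both $t$ and $-t$'' assertion.

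Next I would handle the edges. Clause~(1) of Definition~\ref{def:reconnected-complement-graph} says every (solid) edge of $G$ between surviving vertices is kept, which matches ``edges of $G\setminus(\pm t)$''. Clause~(2) adds an edge $\{x,y\}$ whenever $x,y$ are both adjacent (via solid edges) to $t$ and $\{x,y\}$ is a face of $\Delta/t$. Here I would compute $\Delta/t$: a pair $\{x,y\}$ with $x,y\notin\bigcup(\text{elements of }t)$ is a face of $\Delta/t$ iff $\{x,y\}\cup t\in\Delta$, i.e.\ iff $\{x,y\}$ is not a forbidden pair and neither $x$ nor $y$ lies in $-t$. Since we are already restricting to surviving vertices $x,y\notin\pm t$, the only obstruction left is $y=-x$; so $\{x,y\}\in\Delta/t$ iff $y\ne -x$. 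Therefore clause~(2) adds an edge between two former neighbors of $t$ exactly when they are not a dashed pair --- which is the statement ``adding edges \dots wherever a dashed edge does not already exist.''

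Combining these two observations gives the proposition. I would write it as a short paragraph rather than a displayed argument: the vertex computation, then the two edge clauses, then the conclusion that $G/t$ is obtained from $G\setminus(\pm t)$ by inserting a solid edge between each pair of former $t$-neighbors that is not a dashed pair. I do not anticipate a genuine obstacle here; the only mild subtlety is being careful that ``adjacent to $t$'' in clause~(2) refers to solid-edge adjacency in $G$ (dashed edges are forbidden subsets, not edges of the $\Delta$-graph in the sense of the hypercube-graph convention), and that the forbidden-pair condition is the only thing that survives the restriction to $\pm[n]\setminus(\pm t)$ in both $\Delta$ and $\Delta/t$. Since the excerpt explicitly says the proposition ``comes immediately from the definition,'' the write-up should be just a few sentences making these specializations explicit.
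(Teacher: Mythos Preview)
Your proposal is correct and matches the paper's approach exactly: the paper offers no proof beyond the remark that the proposition ``comes immediately from the definition of $G/t$,'' and your write-up is precisely the specialization of Definition~\ref{def:reconnected-complement-graph} to the hypercube forbidden-subset structure that this remark is pointing to. The two mild subtleties you flag (that adjacency means solid-edge adjacency, and that the only surviving obstruction in $\Delta/t$ for a pair of vertices in $\pm[n]\setminus(\pm t)$ is $y=-x$) are handled correctly.
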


Figure \ref{fig:reconnected-complement-hypercube} illustrates this process.

\begin{figure}[h!]
\centering
\includegraphics[width=.75\textwidth]{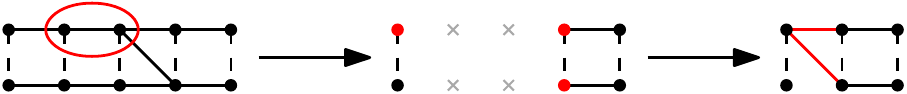}
\caption{An example of the reconnected complement process.} \label{fig:reconnected-complement-hypercube}
\end{figure}

\section{Standard cut hypercube graph associahdedra}

The standard basis vectors for $\R^n$ are $e_i$ for $i \in [n]$. We can also define vectors $e_{-i}=-e_i$ for each $i \in [n]$, and the vectors $e_i$ for $i \in \pm[n]$ are normal to the facets of the standard hypercube. Recall the definition of $\mathcal{F}(\Delta,V,\B)$ and $\mathcal{F}(P,V,G)$ in Definition \ref{def:Delta-V-B}. We use this definition to characterize hypercube-graph associahedra whose normal fans are well-behaved.

\begin{definition}\label{def:standard-cut}
Define a \emph{standard cut hypercube graph associahedron} of a hypercube graph $G$ to be any hypercube graph associahedron whose normal fan is equal to the fan $\mathcal{F}(Q_n,V,G)$, where $V$ is the set of facet normal vectors $v_i = e_i$ for $i \in \pm[n]$ and $Q_n$ is the standard $n$-dimensional hypercube.
\end{definition}

We note that the normal fan of the standard cut hypercube-graph associahedron of the maximal hypercube-graph is the barycentric subdivision of the normal fan of the hypercube. We note that for dimension $n$, each maximal cone of this fan is a cone of the form $\conichull\{e_{\sigma(1)}, e_{\sigma(1)}+e_{\sigma(2)}, \ldots, e_{\sigma(1)}+\cdots + e_{\sigma(n)}\}$, where $\sigma$ is a signed permutation on $[n]$. These are the maximal cones of the type $B_n$ Coxeter fan, which is defined by hyperplanes of the form $x_i = x_j, x_i=-x_j$, and $x_i=0$ for any $i,j \in [n]$. As a result, we see clearly that the standard cut hypercube-graph associahedron of the maximal hypercube-graph is normal to the type $B_n$ Coxeter fan. The following then comes from Proposition \ref{prop:barycentric}:

\begin{proposition}
Every standard-cut hypercube-graph associahedron is normal to a fan that coarsens the type $B_n$ Coxeter fan.
\end{proposition}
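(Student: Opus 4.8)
The plan is to unwind Definition~\ref{def:standard-cut} and then apply Proposition~\ref{prop:barycentric} directly. By definition, the normal fan of a standard-cut hypercube-graph associahedron of a hypercube-graph $G$ is $\mathcal{F}(Q_n,V,G)$ with $V=\{e_i \mid i\in\pm[n]\}$, and by Definition~\ref{def:Delta-V-B} this is the nested set fan complex $\mathcal{F}(\Delta(Q_n),V,\B_{(G,\Delta(Q_n))})$ of the graphical building set of $G$ inside the dual simplicial complex of the cube. So it suffices to show that this fan coarsens the type $B_n$ Coxeter fan, and then note that the associahedron is normal to it.

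First I would verify that the cone complex $\mathcal{F}(\Delta(Q_n),V)$ is non-degenerate, which is the hypothesis Proposition~\ref{prop:barycentric} requires. The faces of $\Delta(Q_n)$ are exactly the subsets $I\subseteq\pm[n]$ containing no antipodal pair $\{i,-i\}$ (these are the forbidden subsets of the cube); for any such $I$ the vectors $\{e_i\mid i\in I\}$ are nonzero and pairwise non-antipodal, hence linearly independent, so $C^V_I$ is a simplicial cone of dimension $|I|$, and two such cones meet in $C^V_{I\cap I'}$ because they are faces of the normal fan of $Q_n$. Thus $\mathcal{F}(\Delta(Q_n),V)$ is non-degenerate and is in fact the normal fan of $Q_n$.

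Next I would identify the barycentric subdivision of $\mathcal{F}(\Delta(Q_n),V)$ with the type $B_n$ Coxeter fan. By definition this barycentric subdivision is $\mathcal{F}(\Delta(Q_n),V,\Delta(Q_n))$, the nested set fan of the maximal building set of $\Delta(Q_n)$; since the graphical building set of the maximal graph of a simplicial complex equals that maximal building set, this fan is precisely the normal fan of the standard-cut hypercube-graph associahedron of the maximal hypercube-graph, which the discussion immediately preceding this proposition shows to be the type $B_n$ Coxeter fan, its maximal cones being the $\conichull\{e_{\sigma(1)},\,e_{\sigma(1)}+e_{\sigma(2)},\,\ldots,\,e_{\sigma(1)}+\cdots+e_{\sigma(n)}\}$ over signed permutations $\sigma$ of $[n]$.

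Finally, Proposition~\ref{prop:barycentric} gives that $\mathcal{F}(\Delta(Q_n),V,\B_{(G,\Delta(Q_n))})$ is a coarsening of this barycentric subdivision, hence a coarsening of the type $B_n$ Coxeter fan; since this fan is the normal fan of the standard-cut hypercube-graph associahedron of $G$, the proposition follows. The only step that involves any actual work is the non-degeneracy check, and even that reduces to linear independence of distinct, non-antipodal standard basis vectors; everything else is a short chain of already-established results, so I do not expect a serious obstacle.
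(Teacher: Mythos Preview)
Your proposal is correct and follows essentially the same approach as the paper: the paper simply states that the result ``comes from Proposition~\ref{prop:barycentric}'' after observing that the barycentric subdivision of the normal fan of the standard hypercube is the type $B_n$ Coxeter fan. Your write-up is in fact more careful, since you explicitly verify the non-degeneracy hypothesis that the paper leaves implicit.
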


Such a polytope is called a \emph{deformation of the type $B_n$ Coxeter permutahedron} \cite{coxeterpermutahedra}. The relationship between hypercube-graph associahedra and type $B_n$ combinatorics is a rich area for future research, as discussed in Chapter \ref{chap:res3}.

\chapter{Enumeration Methods}\label{chap:methods}

\newcommand{\Acom}{\mathbb{A}}
\newcommand{\Bcom}{\mathbb{B}}
\newcommand{\Ag}{\mathbb{A}}
\newcommand{\Bg}{\mathbb{B}}
\newcommand{\Cg}{\mathbb{C}}

In this chapter, we aim to establish general methods for enumerating faces of $\p$-associahedra and tubings of $\Delta$-graphs. We define two separate methods for doing this. When applied to a family of $\p$-graph associahedra, the method outlined in Section \ref{sec:method-one} will calculate the $f$-polynomials of $\p$-graph associahedra by summing the $f$-polynomials of each facet. These facet $f$-polynomials are found by finding the induced subgraph and reconnected complement of every tube of every $\p$-graph in a family. This method is best applied when this operation of taking reconnected complements is, in some sense, closed; by analogy, the reconnected complement of any tube in a cycle graph is always a cycle, and every tube is a path, whereas the reconnected complement of a tree graph is often not a tree. In the $\p$-graphs studied in this thesis, this method results in a partial differential equation in two variables, and solving this differential equation results in an enumeration of the tubing complexes of these graphs.

The method outlined in Section \ref{sec:method-two} is less straightforward. When applied to a $\p$-graph $G$, it specifies a subset of vertices $X$ of a $\p$-graph, and sorts the tubings of $G$ by cases, according to which tube intersecting with the vertex set $X$ is maximal. This case-by-case method is powerful and is often capable of calculating $f$-polynomials without any differential equations involved. However, it can also require the calculation of $f$-polynomials of several auxiliary $\Delta$-graph associahedra.

In both of these methods, we will establish recursive systems of dependencies. For example, every facet of an associahedron is a product of associahedra, and the first method can be used to define the $f$-polynomials of associahedra. Using the same method, every facet of a cyclohedron is a product of an associahedron and a cyclohedron, and we can define a differential equation relating the $f$-polynomials of cyclohedra and associahedra. As a result, we can calculate $f$-polynomials of $\p$-graph associahedra by relating them to graphs whose $f$-polynomials are known or easier to calculate.

\section{Bivariate $f$-polynomials of families of simplicial complexes}

Define a \emph{power series} to be a symbolic sum $a(s)=\sum_{k\ge 0} a_k s^k$. We say that $a(s)$ expressed as a function for a real number $s$ is a \emph{generating function} of the series $\{a_0,a_1,\ldots\}$. Power series can be defined for multiple variables, such as $a(s,t)=\sum_{k\ge 0} a_{k,n} s^k t^n$. If $a_n(s)=\sum_{k \ge 0} a_{k,n}s^k$, then we can also write $a(s,t)$ as $\sum_{n \ge 0} a_n(s) t^n$.

For a simplicial complex $\Delta$, define the \emph{f-vector} of $\Delta$ to be the vector $(f^\Delta_0,f^\Delta_1, \ldots, f^\Delta_n)$, where $f^\Delta_k$ is the number of faces of $\Delta$ containing $k$ elements. The \emph{f-polynomial} of $\Delta$ is the polynomial $f^\Delta(s) = \sum_{k\ge 0} f^\Delta_k s^k$.

Consider a family of simplicial complexes $\Delta_0, \Delta_1, \ldots$ such that each complex $\Delta_n$ is rank $n$, where the rank of a simplicial complex is the largest sized face. We define the \emph{bivariate f-polynomial} of this family to be the bi-variate power series $$f^\Delta(s,t) = \sum_{n,k \ge 0} f^{\Delta_n}_k s^k t^n.$$

For a polyhedron $P$, define the \emph{f-vector} of $P$ to be the vector $(f^P_0,\ldots, f^P_n)$, where $f^P_k$ is the number of $k$-dimensional faces of $P$. The \emph{f-polynomial} of $P$ is the polynomial $f^P(x)=\sum_{k\ge 0} f_k^P x^k$.

When $\Delta$ is the dual simplicial complex of an $n$-dimensional simple polyhedron $P$, the f-vectors of $\Delta$ and $P$ are mirror images of each other, with $f^\Delta_k=f^P_{n-k}$ for each $0 \le k \le n$. We then find that $f^P(x)= x^n f^\Delta(x^{-1})$. When $P$ is a simplicial complex with lineality space of dimension $j$, then the dual simplicial complex of $P$ is isomorphic to the dual simplicial complex of the $(n-j)$-dimensional pointed polyhedron, $P / lineal(P)$. We find that $f^P(x) = x^j f^{P/lineal(P)}(x)$, and the first $j$ entries of the $f$-vector of $P$ are zeros. For this reason, we will often find it simpler to work with pointed polyhedra, so that $\Delta(P)$ always has rank equal to the dimension of $P$. \danger{pointed polyhedra mention}

\begin{remark}
Our definition of $f$-vector and $f$-polynomial of a polyhedron $P$ is constructed so as to be dual to the $f$-vector and $f$-polynomial of the dual simplicial complex of $P$. As a result, these $f$-vectors do not have an entry for the $-1$-dimensional face $\emptyset$. This definition varies from definitions for $f$-vectors and $f$-polynomials used in some other publications.
\end{remark}

Consider a family of pointed polyhedra $\{P_0, P_1, \ldots\}$, such that each polyhedron $P_n$ is $n$-dimensional. We define the \emph{bivariate f-polynomial} of this family to be the bi-variate polynomial $$f^P(x,y) = \sum_{n,k \ge 0} f^{P_n}_k x^k y^n.$$

\begin{proposition} \label{prop:change-of-basis}

For a family of simple pointed polyhedra $\{P_0,P_1,\ldots\}$ with dual simplicial complexes $\{\Delta_0,\Delta_1,\ldots\}$, the change of variables $x=1/s$ and $y=st$ gives the equality $$f^P(x,y)=f^\Delta(s,t).$$

\end{proposition}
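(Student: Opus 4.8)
The plan is to verify the identity termwise: substitute the change of variables directly into the definition of the bivariate $f$-polynomial of the polyhedral family, and then reindex the resulting double sum so that it coincides with the bivariate $f$-polynomial of the family of dual simplicial complexes.

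First I would recall the single-polyhedron duality already established in the excerpt: since each $P_n$ is a simple \emph{pointed} polyhedron of dimension $n$, its dual simplicial complex $\Delta_n = \Delta(P_n)$ has rank exactly $n$, and the two $f$-vectors are reversals of one another, namely $f^{\Delta_n}_k = f^{P_n}_{n-k}$ for $0 \le k \le n$, with all other entries equal to $0$. In particular, the only nonzero terms in $\sum_k f^{P_n}_k x^k$ are those with $0 \le k \le n$.

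Next I would substitute $x = 1/s$ and $y = st$ into $f^P(x,y) = \sum_{n,k \ge 0} f^{P_n}_k x^k y^n$, giving $\sum_{n,k} f^{P_n}_k s^{n-k} t^n$, where effectively $0 \le k \le n$. Then, inside the inner sum, I would replace $k$ by $j = n-k$: as $k$ runs from $0$ to $n$, so does $j$, and $f^{P_n}_k = f^{P_n}_{n-j} = f^{\Delta_n}_j$ by the duality above. This converts the sum into $\sum_n \sum_{j=0}^n f^{\Delta_n}_j s^j t^n = \sum_{n,j \ge 0} f^{\Delta_n}_j s^j t^n = f^\Delta(s,t)$, which is the claimed equality.

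The only point requiring care — rather than a genuine obstacle — is the legitimacy of the reindexing. One must confirm that the nonzero monomials of $f^P(x,y)$ are exactly those with $0 \le k \le n$ (this is where pointedness and $\dim P_n = n$ enter, ensuring $\Delta_n$ has rank $n$ and no $-1$-dimensional face intrudes), so that after the substitution every monomial $s^{\,n-k} t^n$ has nonnegative exponents and the answer is an honest power series in $s$ and $t$; thus $x = 1/s$ never produces negative powers of $s$ in the final expression and no formal-Laurent or convergence subtleties arise. Everything else is bookkeeping.
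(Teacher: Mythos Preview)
Your proof is correct and follows essentially the same approach as the paper: both use the duality $f^{\Delta_n}_k = f^{P_n}_{n-k}$ and verify the identity termwise under the substitution, with the paper working from the $\Delta$ side via $s=1/x$, $t=xy$ while you work from the $P$ side. Your extra care about the index range and the role of pointedness is a welcome clarification but not a different idea.
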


\begin{proof}
If each polyhedron $P_n$ is dual to a simplicial complex $\Delta_n$, then we note $f_k^{\Delta_n} = f_{n-k}^{P_n}$. Also see that the change of variables reversed gives $s=1/x$ and $t=xy$. This means that $f_k^{\Delta_n} s^k t^n = f_{n-k}^{P_n} (1/x)^k (xy)^n = f_{n-k}^{P_n} x^{n-k} y^n$. Taking a sum over all $n,k$ proves the proposition.
\end{proof}

This allows for equations involving bivariate generating functions of families of simplicial complexes to be applied to families of simple polyhedra.

\begin{proposition}
	For two simplicial complexes $\Delta_1, \Delta_2$, $f^{\Delta_1 \times \Delta_2}(s)=f^{\Delta_1}(s) f^{\Delta_2}(s)$.
\end{proposition}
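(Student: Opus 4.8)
The plan is to prove the multiplicativity of the $f$-polynomial under Cartesian product directly from the definition of the Cartesian product of simplicial complexes, which was given earlier in the excerpt as $\Delta_1 \times \Delta_2 = \{S_1 \cup S_2 \mid S_1 \in \Delta_1,\ S_2 \in \Delta_2\}$ for complexes with disjoint vertex sets. The key observation is that because the vertex sets are disjoint, every face $F$ of $\Delta_1 \times \Delta_2$ has a \emph{unique} decomposition $F = S_1 \cup S_2$ with $S_1 = F \cap (\text{vertex set of }\Delta_1) \in \Delta_1$ and $S_2 = F \cap (\text{vertex set of }\Delta_2) \in \Delta_2$, and moreover $S_1 \cap S_2 = \emptyset$, so $|F| = |S_1| + |S_2|$.

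First I would set up the bijection: the map $(S_1, S_2) \mapsto S_1 \cup S_2$ is a bijection from $\Delta_1 \times \Delta_2$ (viewed as pairs) onto the face set of the product complex. Injectivity and surjectivity both follow from disjointness of the ambient vertex sets as noted above. Next I would count faces of a fixed size: the number of faces of $\Delta_1 \times \Delta_2$ with $k$ elements is
\[
f^{\Delta_1 \times \Delta_2}_k = \sum_{i+j=k} f^{\Delta_1}_i f^{\Delta_2}_j,
\]
since a $k$-element face corresponds to a pair consisting of an $i$-element face of $\Delta_1$ and a $j$-element face of $\Delta_2$ with $i+j=k$. This is exactly the Cauchy product formula for coefficients.

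Finally I would translate this into generating functions: multiplying the two polynomials $f^{\Delta_1}(s) = \sum_i f^{\Delta_1}_i s^i$ and $f^{\Delta_2}(s) = \sum_j f^{\Delta_2}_j s^j$ gives a polynomial whose $s^k$-coefficient is $\sum_{i+j=k} f^{\Delta_1}_i f^{\Delta_2}_j = f^{\Delta_1 \times \Delta_2}_k$, which is precisely $f^{\Delta_1 \times \Delta_2}(s)$. Hence $f^{\Delta_1 \times \Delta_2}(s) = f^{\Delta_1}(s)\, f^{\Delta_2}(s)$.

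There is essentially no hard part here; this is a routine verification. The only point requiring a moment of care is making explicit that the empty face is handled correctly — the empty set is a face of both complexes (it corresponds to $i = 0$ or $j = 0$), and $\emptyset = \emptyset \cup \emptyset$ is the empty face of the product, so the constant term $f_0 = 1$ on both sides matches and the bijection genuinely includes the empty face. I would state this explicitly to avoid any off-by-one worry in the indexing, but otherwise the proof is just the observation that a disjoint-union decomposition of faces turns face-counting into a convolution, which is the combinatorial content of polynomial multiplication.
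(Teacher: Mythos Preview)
Your proof is correct and follows essentially the same approach as the paper: both arguments observe that disjointness of the vertex sets gives a unique decomposition of each face of the product into a pair of faces, so that face counts satisfy the convolution $f^{\Delta_1\times\Delta_2}_k = \sum_{i+j=k} f^{\Delta_1}_i f^{\Delta_2}_j$, which is exactly the coefficient identity for the product of the two $f$-polynomials. Your version is a bit more explicit about the bijection and the handling of the empty face, but the underlying argument is identical.
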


\begin{proof}
	We note that $\Delta_1 \times \Delta_2$ consists of the union of all faces of $\Delta_1$ and all faces of $\Delta_2$. As a result, every face containing $k$ elements in $\Delta_1 \times \Delta_2$ is the union of a face containing $i$ elements in $\Delta_1$ and $k-i$ elements in $\Delta_2$. This means $f^{\Delta_1 \times \Delta_2}_k = \sum_{i=0}^k f^{\Delta_1}_i f^{\Delta_2}_{k-i}$. As a result of this convolution, $f^{\Delta_1 \times \Delta_2}(s)=f^{\Delta_1}(s) f^{\Delta_2}(s)$.
\end{proof}

\begin{proposition} \label{prop:product-of-polyhedra-f-polynomials}
If $P_1$ and $P_2$ are simple polyhedra, then $f^{P_1 \times P_2}(x)=f^{P_1}(x) f^{P_2}(x)$.
\end{proposition}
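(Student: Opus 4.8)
The plan is to reduce the statement to the structure of faces of a Cartesian product of polyhedra, which was already established in the proof of Proposition \ref{prop:polyhedral-product}. That proof shows that every nonempty face of $P_1 \times P_2$ is of the form $F_1 \times F_2$ for a unique pair of nonempty faces $F_1$ of $P_1$ and $F_2$ of $P_2$, and conversely that every such product is a face of $P_1 \times P_2$. So the face poset of $P_1 \times P_2$ (with empty faces removed) is the product of the face posets, and the only additional ingredient needed is that dimensions add: if $P_1 \subseteq \R^{n}$ and $P_2 \subseteq \R^{m}$, then $F_1 \times F_2 \subseteq \R^{n+m}$ has affine span equal to the product of the affine spans of $F_1$ and $F_2$, so $\dim(F_1 \times F_2) = \dim F_1 + \dim F_2$.

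First I would record the face-product bijection together with this dimension identity. Then, fixing $k \ge 0$, I would partition the $k$-dimensional faces of $P_1 \times P_2$ according to the dimension $i = \dim F_1$ of the first factor; by the bijection and dimension additivity, the number of $k$-faces of $P_1 \times P_2$ with $\dim F_1 = i$ is exactly $f^{P_1}_i f^{P_2}_{k-i}$, so
\[
f^{P_1 \times P_2}_k = \sum_{i=0}^{k} f^{P_1}_i f^{P_2}_{k-i}.
\]
Recognizing the right-hand side as the convolution of the coefficient sequences of $f^{P_1}(x)$ and $f^{P_2}(x)$, summing over all $k$ yields $f^{P_1 \times P_2}(x) = f^{P_1}(x) f^{P_2}(x)$, exactly as in the proof of the analogous statement for simplicial complexes given just above.

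There is essentially no serious obstacle here; this is one of the \emph{trivial}-type statements flagged in the paper. The only points requiring a moment's care are: (i) confirming that the $f$-polynomial convention of this chapter omits the empty face, so that the convolution runs over genuine (nonempty) faces on both sides and no index shift appears; and (ii) the edge case where $P_1$ or $P_2$ has a nonzero lineality space, so that its minimal nonempty face has positive dimension — in that situation the low-degree coefficients of the corresponding $f$-polynomial simply vanish, the convolution formula above holds verbatim, and the identity is unaffected. Alternatively, one can first deduce the statement for pointed polyhedra from Proposition \ref{prop:polyhedral-product}, the multiplicativity of $f$-polynomials for Cartesian products of simplicial complexes, and the relation $f^P(x) = x^{\dim P} f^{\Delta(P)}(x^{-1})$, and then reduce the general case to the pointed case via the decomposition $P = (P \cap U) + \mathrm{lineal}(P)$.
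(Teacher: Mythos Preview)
Your proposal is correct. Your main argument (direct face bijection plus dimension additivity, then convolution) is a valid route, but it differs from the paper's proof, which is a one-line reduction to the simplicial-complex case via duality: with $\Delta_i = \Delta(P_i)$ and $\dim P_i = n_i$, the paper writes
\[
f^{P_1 \times P_2}(x)=x^{n_1+n_2}f^{\Delta_1 \times \Delta_2}(1/x)=x^{n_1} f^{\Delta_1}(1/x)\,x^{n_2} f^{\Delta_2}(1/x) = f^{P_1}(x) f^{P_2}(x),
\]
using Proposition~\ref{prop:polyhedral-product} for $\Delta(P_1\times P_2)\cong\Delta_1\times\Delta_2$ and the preceding proposition for multiplicativity of simplicial $f$-polynomials. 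This is exactly the alternative you sketch at the end of your proposal. Your direct approach has the virtue of not invoking duality or the pointed/unpointed distinction at all, while the paper's approach is shorter because it cashes in on work already done; both are fine for a statement of this weight.
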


\begin{proof}
If $\Delta_1, \Delta_2$ are the dual simplicial complexes of $P_1, P_2$ respectively, with $P_1$ $n$-dimensional and $P_2$ $m$-dimensional, then $$f^{P_1 \times P_2}(x)=x^{n+m}f^{\Delta_1 \times \Delta_2}(1/x)=x^n f^{\Delta_1}(1/x) x^m f^{\Delta_2}(1/x) = f^{P_1}(x) f^{P_2}(x).\qedhere$$
\end{proof}

For a family of $\p$-graphs $G=\{G_0, G_1, \ldots\}$, in this chapter we will write the bivariate $f$-polynomial of the family of tubing complexes of $G$ as $f^{\Delta(G)}(s,t)$. Meanwhile, we will write the bivariate $f$-polynomial of the family of pointed $\p$-nestohedra of $G$ as $f^{P(G)}(x,y)$.

\section{Atomic link sum polynomial method for calculating $f$-polynomials}\label{sec:method-one}

In introducing this section, we first provide a simplified version of the algorithm defined in this section. In a simple $n$-dimensional polyhedron, every vertex is contained in $n$ facets. As a result, we can count the number of vertices in an $n$-dimensional polyhedron by counting the number of vertices in each facet, adding the vertex counts for each facet together, and then dividing by $n$. If we are counting the number of vertices in a $\p$-nestohedron using this method, it helps to recall that according to Theorem \ref{thm:reconnectednestohedra1}, every facet of a $\p$-nestohedron is isomorphic to the product of two lower-dimensional $\p$-nestohedra. Also, according to Proposition \ref{prop:product-of-polyhedra-f-polynomials}, the number of vertices in the product of two polyhedra is equal to the product of the number of vertices in each polyhedron. As a result, we can calculate the number of vertices in each facet $\Phi_I$ of a $P$-nestohedron $\nest{P}{\B}$ by rewriting it as the product of two polyhedra, the simplex-nestohedron of $\B$ restricted to $I$, and the $F_I$-nestohedron of the building set $\B/I$. After finding the number of vertices of each nestohedron, we take the product and find the number of vertices in the facet $\Phi_I$. After finding the number of vertices in each facet $\Phi_I$ of the nestohedron $\nest{P}{\B}$, we sum the values up and divide by $n$.

The enumeration method outlined in this section introduces two complications to this method. First of all, every simple polyhedron is dual to a simplicial complex, and so we can get more general results by proving everything for enumeration of maximal faces in simplicial complexes. Second of all, we wish to enumerate not only the vertices of simple polyhedra, but instead enumerate all $k$-dimensional faces. We can do this by taking products of bivariate $f$-polynomials.

\subsection{Atomic link sum polynomial method for calculating $f$-polynomials of simplicial complexes}

An atomic element of a poset with minimal element is an element which covers the minimal element. The atomic elements of a simplicial complex are the singleton faces of that simplicial complex, and so we call the link of a singleton face of a simplicial complex an \emph{atomic link}.

Define the \emph{atomic link sum polynomial} of a simplicial complex $\Delta$ to be the sum
\[
	R^\Delta(s) = \sum_{x \in \mathcal{S}} f^{\Delta / \{x\}} (s)
\]
of $f$-polynomials of the links of every singleton subset of the base set $\mathcal{S}$ of $\Delta$. For every face $F$ of $\Delta$ with $|F| \ge 1$, there is a face $F \backslash \{x\}$ in $\Delta/\{x\}$ for each $x \in F$. This means that each face of size $k$ is counted $k$ times in $\sum_{x \in \mathcal{S}} f^{\Delta/\{x\}}_k$, and $\sum_{x \in \mathcal{S}} f^{\Delta/\{x\}}_k = k f_{k+1}^\Delta$ for all $k \ge 1$. As a result, we write $R^\Delta(s) = f_1^\Delta+2f_2 ^\Delta s + 3 f_3^\Delta s^2 + \cdots$. This can be reduced to a derivative:
\[
	R^\Delta (s) = D_s f^\Delta(s),
\]
where $D_s$ is the differential operator representing the derivative with respect to $s$. When $\Delta$ is a family $\{\Delta_0,\Delta_1,\ldots\}$, we define $R^\Delta(s,t)=\sum_{n\ge 0} R^{\Delta_n}(s) t^n$.

We consider a set of families of simplicial complexes indexed by $\sigma$ in a set $S$, such that for each element $\sigma \in S$, we find $\Acom^\sigma = \{\Acom^\sigma_0,\Acom^\sigma_1, \ldots\}$, such that each simplicial complex $\Acom^\sigma_n$ is rank $n$. We call each element $\sigma \in S$ a \emph{shape}. We can consider another set $\Bcom$ of families, a set $\Bcom^\tau$ for each element $\tau$ in a set $T$, such that each complex $\Bcom^\tau_n$ is rank $n$. For a family $\Delta$ of simplicial complexes and pair $\Acom,\Bcom$ of sets of families, we say that $\Delta$ is \emph{atomically closed under $\Acom$ and $\Bcom$} if for each complex $\Delta_n$, for every singleton face  $F$ of $\Delta_n$, there exists shapes $\sigma \in S$ and $\tau \in T$ and integer $i \in \{0,\ldots,n-1\}$ such that the atomic link of $F$ is isomorphic to the product $\Acom^\sigma_i \times \Bcom^\tau{n-1-i}$.

\bigskip

Consider a family of simplicial complexes $\Delta=\{\Delta_0,\Delta_1,\ldots\}$ which is atomically closed under $\Acom$ and $\Bcom$. Consider an atomic link of some complex $\Delta_n$; it is possible that there is more than one possible way to write this link as the product of a complex in $\Acom$ and a complex in $\Bcom$. In this case, for each atomic link in $\Delta_n$, we will choose a unique triple $(i,\sigma,\tau)$ of elements such that the atomic link is isomorphic to $\Acom_i^\sigma \times \Bcom^\tau_{n-1-i}$, called the \emph{canonical decomposition} of that atomic link. Now, for each $n$, we define $r_{\sigma,\tau}(n,i)$ to be the number of atomic links of $\Delta_n$ whose canonical decomposition is $(i,\sigma,\tau)$.

As a result, for each $n$ we have counted the combinatorial types of each atomic link of $\Delta_n$, such that for each $i, \sigma, \tau$, we find the product $\Acom_i^\sigma \times \Bcom_{n-1-i}^\tau$ appears $r_{\sigma,\tau}(n,i)$ times, without any overcounting. We then note that the $f$-polynomial of $\Acom_i^\sigma \times \Bcom_{n-1-i}^\tau$ is the product $f^{\Acom_i^\sigma}(s) \times f^{\Bcom_{n-1-i}^\tau}(s)$. As a result, we find

\[
	R^{\Delta_n} (s) = \sum_{\sigma \in S, \tau \in T} \sum_{i=0}^{n-1} r_{\sigma,\tau}(n,i) f^{\Acom_i^\sigma}(s) \times f^{\Bcom_{n-1-i}^\tau}(s) .
\]

Multiplying by $t^n$ for each $n$ and summing over all $n\ge 0$ gives the following.

\[
	R^\Delta(s,t) =  \sum_{n \ge 0} \sum_{\sigma \in S, \tau \in T} t \sum_{i=0}^{n-1} r_{\sigma,\tau}(n,i) f^{\Acom_i^\sigma}(s)t^i \times f^{\Bcom_{n-1-i}^\tau}(s) t^{n-1-i}.
\]

Consider the case that $r_{\sigma, \tau}(n,i)$ can be written as a sum of functions separable in $i,n-1-i$, of the form \[r_{\sigma,\tau}(n,i) = \sum_{j=1}^k r_{\sigma,\tau}^{\Acom,j}(i) r_{\sigma,\tau}^{\Bcom,j}(n-1-i).\]

In this event, we can express the sum $R^\Delta(s,t)$ as

\[
	R^\Delta(s,t) = \sum_{n \ge 0} \sum_{ \sigma \in S, \tau \in T}\sum_{j \ge 1}^k t \sum_{i=0}^{n-1} \left[r^{\Acom,j}_{\sigma,\tau}(i) f^{\Acom_i^{\sigma}}(s)t^i\right] \times \left[r^{\Bcom,j}_{\sigma,\tau} (n-1-i)f^{\Bcom_{n-1-i}^\tau}(s) t^{n-1-i}\right]
\]

	This is a convolution of the power series $r_{\sigma,\tau}^{\Acom,j}(n) f^{\Acom^\sigma_n}(s)t^n$ and $r_{\sigma,\tau}^{\Bcom,j}(n) f^{\Bcom^\tau_n}(s)t^n$. As a result, we can rewrite

\[
	R^\Delta(s,t) = \sum_{j =1}^k \sum_{\sigma \in S, \tau \in T} t
\left( \sum_{n\ge 0} r_{\sigma,\tau}^{\Acom,j}(n) f^{\Acom^\sigma_n} (s) t^n\right)
\left( \sum_{n\ge 0} r_{\sigma,\tau}^{\Bcom,j}(n) f^{\Bcom^\tau_n} (s) t^n\right) .
\]

Given a power series $a(t)=\sum_{n \ge 0} a_n t^n$, define $O_t^r$ to be the linear operator on power series defined by $O_t^r\left[a(t)\right]\sum_{n\ge 0} r(n) a_n t^n$.  To simplify notation and avoid nested scripts, we will define $O_t^{\Acom,\sigma,\tau,j}=O_t^{r^{\Acom,j}_{\sigma,\tau}}$. Noting that $R^\Delta(s,t)=D_s f^{\Delta}(s,t)$, we can now arrive at a final equation.

\begin{proposition} \label{prop:diff-eq}
For a family $\Delta$ of simplicial complexes atomically closed under $\Acom$ and $\Bcom$, with counting function $r_{\sigma,\tau}(n,i)$ separable into functions of the form $r_{\sigma,\tau}^{\Acom,j}(i)$ and $r_{\sigma,\tau}^{\Bcom,j}(n-1-i)$ and defining linear operators $O_t^{\Acom,\sigma,\tau,j}$ and $O_t^{\Bcom,\sigma,\tau,j}$ on power series, the following partial differential equation holds:
\[
	D_s f^{\Delta} (s,t) = t \sum_{\sigma \in S, \tau \in T}\sum_{j=1}^k O^{\Acom,\sigma,\tau,j}_t\left[f^{\Acom^\sigma}(s,t)\right] O_{t}^{\Bcom,\sigma,\tau,j} \left[f^{\Bcom^\tau}(s,t)\right]
	.
\]

\end{proposition}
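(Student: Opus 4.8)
The plan is to obtain the proposition by stringing together the identities developed in the paragraphs preceding the statement; the only substantive input is that the atomic link sum polynomial is a derivative of the $f$-polynomial, and everything else is formal power-series bookkeeping. First I would verify the single-complex identity $R^{\Delta}(s) = D_s f^{\Delta}(s)$ by an incidence count: a face $F$ of $\Delta$ with $|F| = k \ge 1$ contributes the face $F \backslash \{x\}$ of size $k-1$ to the atomic link $\Delta/\{x\}$ for each of its $k$ elements $x$, so $\sum_{x \in \mathcal{S}} f^{\Delta/\{x\}}_{k-1} = k\, f^{\Delta}_{k}$, which is exactly the coefficient relation defining $D_s$ applied to $f^{\Delta}(s)$. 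Applying this coefficientwise in $t$ to a family gives $R^{\Delta}(s,t) = D_s f^{\Delta}(s,t)$.

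Next I would use the hypothesis that $\Delta$ is atomically closed under $\Acom$ and $\Bcom$: for every atom $F$ of every $\Delta_n$, fix its canonical decomposition $(i,\sigma,\tau)$ with $\Delta_n/\{F\} \cong \Acom^\sigma_i \times \Bcom^\tau_{n-1-i}$, and let $r_{\sigma,\tau}(n,i)$ count the atoms of $\Delta_n$ carrying that canonical triple. By the product formula for $f$-polynomials of Cartesian products of simplicial complexes (the simplicial analogue stated just above Proposition \ref{prop:product-of-polyhedra-f-polynomials}), this yields
\[
R^{\Delta_n}(s) = \sum_{\sigma \in S,\ \tau \in T}\ \sum_{i=0}^{n-1} r_{\sigma,\tau}(n,i)\, f^{\Acom^\sigma_i}(s)\, f^{\Bcom^\tau_{n-1-i}}(s),
\]
with no overcounting precisely because the decomposition chosen for each atom is canonical.

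Then I would pass to generating functions. Multiplying by $t^n$ and summing over $n$, the inner $i$-sum has total $t$-degree $n$, so I would extract one factor of $t$ and attach $t^i$ to the $\Acom$-term and $t^{n-1-i}$ to the $\Bcom$-term. After substituting the separability hypothesis $r_{\sigma,\tau}(n,i) = \sum_{j=1}^{k} r^{\Acom,j}_{\sigma,\tau}(i)\, r^{\Bcom,j}_{\sigma,\tau}(n-1-i)$, the double sum over $i$ and $n$ becomes, for each fixed $\sigma,\tau,j$, the Cauchy product of the series $\sum_m r^{\Acom,j}_{\sigma,\tau}(m)\, f^{\Acom^\sigma_m}(s)\, t^m$ and $\sum_m r^{\Bcom,j}_{\sigma,\tau}(m)\, f^{\Bcom^\tau_m}(s)\, t^m$, which are $O_t^{\Acom,\sigma,\tau,j}\big[f^{\Acom^\sigma}(s,t)\big]$ and $O_t^{\Bcom,\sigma,\tau,j}\big[f^{\Bcom^\tau}(s,t)\big]$ by definition of the operators $O_t^{r}$. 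Combining with $R^{\Delta}(s,t) = D_s f^{\Delta}(s,t)$ from the first step then produces the claimed partial differential equation.

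The main obstacle --- really the only point needing care --- is the bookkeeping around the canonical decomposition together with the interchanges of the sums over $n$, $i$, $\sigma$, $\tau$, and $j$. One must confirm that ``atomically closed'' does allow a well-defined choice of one canonical triple per atom, so that an atomic link admitting several product presentations is not counted more than once, and that all rearrangements are legitimate for formal power series in $s$ and $t$. The latter is immediate, since for any fixed bidegree only finitely many terms contribute, so no convergence issue arises and the Cauchy-product step is purely formal.
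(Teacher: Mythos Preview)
Your proposal is correct and follows essentially the same approach as the paper: the paper's argument is precisely the chain you describe, namely the incidence identity $R^{\Delta}(s)=D_s f^{\Delta}(s)$, the expansion of $R^{\Delta_n}(s)$ via the canonical product decompositions of atomic links, passage to the bivariate series with one factor of $t$ pulled out, and recognition of the remaining inner sum as a Cauchy product via the separability hypothesis. Your remarks about the need for a canonical choice to avoid overcounting and the purely formal nature of the series rearrangements match the paper's treatment exactly.
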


We should note that for a family of simplicial complexes, we assume $f^{\Delta_n}_0=1$ for each $n \ge 0$. This means that $f^{\Delta}(0,t)=1+t+t^2 \cdots = \frac{1}{1-t}$. This then creates a boundary condition $f^\Delta(0,t)=\frac{1}{1-t}$. This boundary condition is sufficient for the cases and operators presented in this thesis.

\begin{remark}
For many functions $r(n)$, the linear operator $O_t^r$ is fairly simple. For instance, when $r(n)=\delta(n)$, the function such that $\delta(n)=1$ if and only if $n=0$ and $0$ otherwise, we find $O_t^r[a(t)]=a(0)$. When $r(n)=n$, we find $O_t^r[a(t)]=t D_t [a(t)]$. Finally, $O_t^{rs}=O_t^r O_t^{s}$ and $O_t^{r+s}=O_t^r + O_t^s$, so we can calculate $O_t^r$ easily whenever $r$ is a polynomial or the $\delta$ function. We also note that $O_t^{\delta(n-i)}[a(t)]=a^{(i)}(0)/i!$.
\end{remark}

Now we can restate this for simple polyhedra. In the event that each simplicial complex in $\Delta, \Acom$, and $\Bcom$ of rank $n$ is isomorphic to the dual simplicial complex of an $n$-dimensional pointed simple polyhedron, we can perform a change of variables $x=1/s, y=st$ and note that $f^{\Delta}(s,t)=f^{P(\Delta)} (x,y)$. Performing this change of variables allows us to reformulate an earlier expression:

\[
	D_s f^\Delta(s,t) = \sum_{\sigma \in S, \tau \in T} \sum_{j=1}^k xy \left(r^{\Acom,j}_{\sigma,\tau}(n) f^{P(\Acom^\sigma_n)}(x)y^n\right)
	\left(r^{\Bcom,j}_{\sigma,\tau}(n) f^{P(\Bcom^\tau_n)}(x)y^n\right).
\]

We can rewrite this using the same linear operator on power series $O_y^{\Acom,\sigma,j}$, this time using the variable $y$ instead of $t$. It should be emphasized that one does not need to re-calculate these linear operators beyond replacing every use of $t$ with $y$.

We can also rewrite $D_s f^\Delta(s,t)=xy D_y f^{P(\Delta)}(x,y)-x^2 D_x f^{P(\Delta)}(x,y)$. After dividing by $x$, we can finally write the equation as follows.

\begin{proposition} \label{prop:diff-eq-polyhedra}
When each simplicial complex in $\Delta, \Acom,$ and $\Bcom$ from Proposition \ref{prop:diff-eq} is dual to a pointed simple polyhedron, the following differential equation holds:
\[
	(yD_y -x D_x) f^{P(\Delta)}(x,y) = \sum_{\sigma \in S, \tau \in T} \sum_{j =1}^k y O^{\Acom,\sigma,\tau,j}_y \left[f^{P(\Acom^\sigma)}(x,y)\right]O^{\Bcom,\sigma,\tau,j}_y\left[f^{P(\Bcom^\tau)}(x,y)\right].
\]
\end{proposition}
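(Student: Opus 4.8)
The plan is to derive Proposition \ref{prop:diff-eq-polyhedra} as a direct translation of Proposition \ref{prop:diff-eq} through the change of variables $x = 1/s$, $y = st$ established in Proposition \ref{prop:change-of-basis}. Since Proposition \ref{prop:diff-eq} is already proved (it is the culmination of the preceding discussion in the section), the only work here is bookkeeping: tracking how each term transforms, and in particular how the differential operator $D_s$ on the left-hand side becomes the operator $(yD_y - xD_x)$ after the substitution and after clearing a factor of $x$.

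First I would recall that for each family member of rank/dimension $n$ dual to a pointed simple polyhedron, Proposition \ref{prop:change-of-basis} gives $f^{\Delta}(s,t) = f^{P(\Delta)}(x,y)$ under $x=1/s$, $y=st$ (equivalently $s = 1/x$, $t = xy$), and likewise for the families $\Acom^\sigma$ and $\Bcom^\tau$. Next I would handle the left-hand side. Treating $f^{P(\Delta)}(x,y)$ as a function of $(x,y)$ with $x = x(s,t) = 1/s$ and $y = y(s,t) = st$, the chain rule gives $\partial_s f^\Delta = (\partial_s x)\,\partial_x f^{P(\Delta)} + (\partial_s y)\,\partial_y f^{P(\Delta)} = (-1/s^2)\partial_x f^{P(\Delta)} + t\,\partial_y f^{P(\Delta)}$. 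Rewriting $1/s^2 = x^2$ and $t = xy$ (since $t = xy$), this is $D_s f^\Delta(s,t) = -x^2 D_x f^{P(\Delta)}(x,y) + xy\, D_y f^{P(\Delta)}(x,y) = x\big(yD_y - xD_x\big) f^{P(\Delta)}(x,y)$. Dividing through by $x$ at the end will produce the operator $(yD_y - xD_x)$ on the left.

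Then I would handle the right-hand side of Proposition \ref{prop:diff-eq}, term by term. Each summand is $t\, O_t^{\Acom,\sigma,\tau,j}[f^{\Acom^\sigma}(s,t)]\, O_t^{\Bcom,\sigma,\tau,j}[f^{\Bcom^\tau}(s,t)]$. The key observation, already noted in the text, is that the linear operator $O_t^r$ acts on a power series $\sum_n a_n t^n$ by $\sum_n r(n) a_n t^n$, so it only ``reads'' the exponent of $t$; under the substitution, the coefficient of $t^n$ in $f^{\Acom^\sigma}(s,t)$ becomes, after collecting powers of $y = st$, the object $f^{P(\Acom^\sigma_n)}(x) y^n$ (this is exactly the content of the displayed equation preceding the proposition, where $f^{\Delta_n}(s) s^? \cdots$ regroups so that the power of $y$ equals $n$). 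Hence $O_t^{\Acom,\sigma,\tau,j}$ becomes the same operator with $t$ replaced by $y$, i.e. $O_y^{\Acom,\sigma,\tau,j}$, acting on $f^{P(\Acom^\sigma)}(x,y) = \sum_n f^{P(\Acom^\sigma_n)}(x) y^n$; similarly for the $\Bcom$ factor. The prefactor $t$ becomes $xy$. So the right-hand side of Proposition \ref{prop:diff-eq} equals $x \sum_{\sigma,\tau}\sum_{j} y\, O_y^{\Acom,\sigma,\tau,j}[f^{P(\Acom^\sigma)}(x,y)]\, O_y^{\Bcom,\sigma,\tau,j}[f^{P(\Bcom^\tau)}(x,y)]$.

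Finally, equating the two sides of Proposition \ref{prop:diff-eq} after these substitutions gives $x(yD_y - xD_x) f^{P(\Delta)}(x,y) = x \sum_{\sigma,\tau}\sum_j y\,O_y^{\Acom,\sigma,\tau,j}[\cdots]\,O_y^{\Bcom,\sigma,\tau,j}[\cdots]$, and dividing by $x$ yields the claimed identity. I expect the main obstacle — really the only nonroutine point — to be justifying cleanly that $O_t^r$ transports to $O_y^r$ under the substitution, i.e. that the regrouping of powers in the mixed sum indeed makes the exponent of $y$ match the rank index $n$ that $r$ is evaluated at; this is precisely what the chain of displayed equations just before the proposition already establishes, so I would cite that derivation rather than redo it. A secondary point worth a sentence is that the change of variables is invertible and polynomial/Laurent-polynomial in each graded piece, so no convergence issues arise and the operator identities are legitimate at the level of formal power series in $y$ (equivalently in $t$).
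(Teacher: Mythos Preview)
Your proposal is correct and follows essentially the same approach as the paper: apply the change of variables $x=1/s$, $y=st$ from Proposition~\ref{prop:change-of-basis} to Proposition~\ref{prop:diff-eq}, use the chain rule to rewrite $D_s f^{\Delta}(s,t) = x\,(yD_y - xD_x)f^{P(\Delta)}(x,y)$, observe that $t$ becomes $xy$ and that each operator $O_t^r$ becomes $O_y^r$ since the exponent of $t$ matches the exponent of $y$, and then divide both sides by $x$. The paper presents exactly this derivation in the discussion immediately preceding the proposition.
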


\subsection{$\p$-graph case}

Consider a family $P=\{P_0,P_1,\ldots\}$ of simple pointed polyhedra and a family of graphs $G=\{G_0,G_1, \ldots\}$ such that each graph $G_n$ is a $P_n$-graph. We wish to be able to calculate the $f$-vectors of the simplicial complex $\mathcal{N}(G_n,P_n)$ and the polyhedron $\nest{P_n}{G_n}$. We will use this method to calculate bivariate $f$-polynomial generating functions $f^{\Delta(G)}(s,t)$ and $f^{P(G)}(x,y)$.

The definition of atomically-closed families of simplicial complexes is inspired by self-similarity of associahedra and graph associahedra. Recall that every facet of a $\p$-graph associahedron is combinatorially isomorphic to the product of the associahedron of a $\p$-graph $G/t$ and a simplex-graph associahedron $G|_t$. Say that a family of $\p$-graphs $G=\{G_0,G_1,\ldots\}$ is \emph{facet-closed} under a pair of sets of $\p$-graph families $\Ag$ and $\Bg$ indexed by $n$, $\sigma$ and $\tau$ if, for every tube $t \in G_n$, the simplex graph $G|_t$ is isomorphic to a simplex-graph $\Ag_i^\sigma$ in $\Ag$ and the reconnected complement of $t$ is isomorphic to a $\p$-graph $\Bg_{n-1-i}^\tau$ in $\Bg$. We also say that the families of their $\p$-graph associahedra are facet-closed. Note that we are making a shift in convention here: while in the previous subsection, $\Ag$ and $\Bg$ were sets of families of simplicial complexes, we note that here $\Ag$ and $\Bg$ are $\p$-graphs, with each graph $\Ag_n^\sigma$ a $\p$-graph on an $n$-dimensional simplex, and each graph $\Bg_n^\tau$ a $\p$-graph of an $n$-dimensional simple polyhedron.

We take note of some well-known facet-closed families of graph associahedra. The associahedron is a simplex-graph associahedron of a path graph. Every tube of a path graph induces a path graph, and the reconnected complement is a path graph, so we say that path simplex-graphs are facet-closed under path simplex-graphs and path simplex-graphs, and associahedra are facet-closed under associahedra. Alternatively, every facet of a cyclohedron is isomorphic to the product of a cyclohedron and an associahedron, so it is facet-closed under the families of associahedra and cyclohedra. For a much larger family, consider the set of all $\p$-graphs. Every facet of a $\p$-graph associahedron is the product of lower-dimensional $\p$-graph associahedra, and so this set of graphs is facet-closed if we choose a shape set $S$ with infinite possible shapes.

There are several features in families of graphs which make them good candidates for atomic link sum enumeration. First, we wish for a small number of possible tube shapes, and a small number of possible reconnected complement shapes. Secondly, we consider how the functions $r_{\sigma,\tau}^{\Ag,j}$ and $r_{\sigma,\tau}^{\Bg,j}$ are written. If these consist of only polynomials and $\delta$ functions, then $O_t^{\Ag,\sigma,\tau,j}$ and $O_t^{\Bg,\sigma,\tau,j}$ are likely to be easy to calculate and express in terms of differential operators. Third, we consider self-similarity. If one of the families $\Ag^\sigma$ or $\Bg^\tau$ is equal to the family $\Delta$, then the final equation will involve both $D_s f^\Delta(s,t)$ on the left hand side, and will involve a linear operator $O_t$ operating on $f^\Delta(s,t)$ on the right hand side. If all values of $f^{\Ag^\sigma}, f^{\Bg^\sigma}$ are known, then $D_s f^\Delta$ can be calculated directly and we can integrate to find $f^\Delta$. However, self-similarity can generate a more difficult partial differential equation to solve. In this thesis, we have proven results for several cases which do involve self-similarity by creating a partial differential equation, and using it to verify a hypothesis.

\begin{proposition}
Consider a family of simplicial complexes $\Delta=\{\Delta_0,\Delta_1,\ldots\}$ and a collection of $\Delta$-graphs $G=\{G_0,G_1,\ldots\}$ such that each graph $G_n$ is a $\Delta_n$-graph. Say that a family of graphs $G$ decomposes into families of graphs $\Ag$ and $\Bg$, such that for every tube shape $\sigma$ and reconnected complement shape $\tau$ in $G_n$, there are $r_{\sigma,\tau}(n,i)$ tubes of $G_n$ containing $i+1$ elements whose induced simplex graph is isomorphic to $\Ag_i^\sigma$ and whose reconnected complement graph is isomorphic to $\Bg_{n-1-i}^\tau$. Assume that each $r_{\sigma,\tau}(n,i)$ is separable into a finite sum of products of functions of the form $r^{\Ag,j}_{\sigma,\tau}$ and $r^{\Bg,j}_{\sigma,\tau}$. If $f^G, f^{\Ag^\sigma}, f^{\Bg^\tau}$ are taken as the bivariate $f$-polynomials of the nested complexes of the respective families of graphs, we find the following:
\[
	D_s f^{\Delta(G)} (s,t) = t \sum_{\sigma \in S, \tau \in T}\sum_{j=1}^k O^{\Ag,\sigma,\tau,j}_t\left[f^{\Delta(\Ag^\sigma)}(s,t)\right] O_{t}^{\Bg,\sigma,\tau,j} \left[f^{\Delta(\Bg^\tau)}(s,t)\right].
\]
\end{proposition}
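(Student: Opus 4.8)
The plan is to deduce this proposition directly from Proposition \ref{prop:diff-eq} by recognizing the family of tubing complexes $\Delta(G) = \{\mathcal{N}(G_0,\Delta_0),\mathcal{N}(G_1,\Delta_1),\ldots\}$ as a family that is atomically closed under the sets of families $\Acom = \{\Delta(\Ag^\sigma)\}_{\sigma \in S}$ and $\Bcom = \{\Delta(\Bg^\tau)\}_{\tau \in T}$, with counting functions equal to the prescribed $r_{\sigma,\tau}(n,i)$. Once this identification is made, the asserted partial differential equation is literally the conclusion of Proposition \ref{prop:diff-eq} with $\Acom^\sigma$ replaced by $\Delta(\Ag^\sigma)$ and $\Bcom^\tau$ by $\Delta(\Bg^\tau)$, and with $D_s f^{\Delta(G)}(s,t)$ identified with the bivariate atomic link sum polynomial $R^{\Delta(G)}(s,t)$ as in the definition of that polynomial; the operators $O_t^{\Ag,\sigma,\tau,j}$ and $O_t^{\Bg,\sigma,\tau,j}$ are then exactly the operators $O_t^{\Acom,\sigma,\tau,j}$ and $O_t^{\Bcom,\sigma,\tau,j}$ appearing there.

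First I would set up the bijection between atomic links and tubes. The singleton faces of $\mathcal{N}(G_n,\Delta_n)$ are exactly the one-element tubings $\{t\}$ for $t$ a tube of $G_n$, so each atomic link of $\mathcal{N}(G_n,\Delta_n)$ is the link of some $\{t\}$. By Proposition \ref{prop:singletonlink}, this link is isomorphic to the Cartesian product of the nested complex of the graphic building set $\B_{(G_n,\Delta_n)}$ restricted to the proper subset complex $P(t)$ of $t$, and the nested complex $\mathcal{N}(\B_{(G_n,\Delta_n)}/t,\Delta_n/t)$. The first factor is the simplex-graph tubing complex of the induced subgraph $G_n|_t$ (since a subset of $t$ induces a connected subgraph of $G_n$ iff it does so in $G_n|_t$, and every subset of $t$ is already a face of $\Delta_n$); this graph is on $|t| = i+1$ vertices, and by the facet-closure hypothesis the complex is isomorphic to $\Delta(\Ag^\sigma_i)$ for the appropriate shape $\sigma$. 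By Proposition \ref{prop:pseudolink-graph}, the second factor is the tubing complex of the reconnected complement $G_n/t$, a $(\Delta_n/t)$-graph whose ambient complex has rank $n-1-i$; by facet-closure it is isomorphic to $\Delta(\Bg^\tau_{n-1-i})$. Hence every atomic link of $\mathcal{N}(G_n,\Delta_n)$ decomposes as $\Delta(\Ag^\sigma_i)\times\Delta(\Bg^\tau_{n-1-i})$ for some $i,\sigma,\tau$, which is precisely the atomic-closure condition.

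Next I would match the counting functions and invoke the general theorem. Taking the graph-theoretic pair $(|t|-1,\sigma,\tau)$ as the canonical decomposition of the atomic link $\{t\}$, the number of atomic links of $\mathcal{N}(G_n,\Delta_n)$ with canonical decomposition $(i,\sigma,\tau)$ equals the number of tubes $t$ of $G_n$ with $|t|=i+1$, $G_n|_t\cong\Ag^\sigma_i$, and $G_n/t\cong\Bg^\tau_{n-1-i}$, which is exactly $r_{\sigma,\tau}(n,i)$. Since by hypothesis $r_{\sigma,\tau}(n,i) = \sum_{j=1}^k r^{\Ag,j}_{\sigma,\tau}(i)\,r^{\Bg,j}_{\sigma,\tau}(n-1-i)$, all hypotheses of Proposition \ref{prop:diff-eq} are satisfied, and applying it to $\Delta(G)$ yields the stated equation (the boundary datum $f^{\Delta(G)}(0,t)=\tfrac{1}{1-t}$ is inherited from $f^{\mathcal{N}(G_n,\Delta_n)}_0 = 1$ for all $n$).

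The main difficulty here is bookkeeping rather than a substantive obstruction: one must be careful that the rank indices align (a tube of size $i+1$ contributes an $\Ag^\sigma_i$ of rank $i$ and a $\Bg^\tau_{n-1-i}$, whose product has rank $n-1$, as an atomic link of a rank-$n$ complex should), that "simplex-graph tubing complex of $G_n|_t$" genuinely matches the restriction of $\B_{(G_n,\Delta_n)}$ to $P(t)$ via the identification of simplex-graph associahedra with classical graph associahedra in Definition \ref{def:graphassociahedra}, and that the canonical-decomposition convention is the graph-theoretic one throughout. A further minor point: distinct tubes may yield isomorphic atomic links, but this causes no overcounting, since $R^{\Delta(G)}(s,t)$ sums over all singleton faces — equivalently, all tubes — exactly as the canonical-decomposition count in Proposition \ref{prop:diff-eq} does.
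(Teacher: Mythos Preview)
Your proposal is correct and follows exactly the approach the paper intends: the proposition is stated in the paper without proof as a direct specialization of Proposition~\ref{prop:diff-eq}, and your argument fills in precisely the verification needed---identifying atomic links of the tubing complex with links of singleton tubings $\{t\}$, invoking Proposition~\ref{prop:singletonlink} for the product decomposition, Proposition~\ref{prop:pseudolink-graph} to recognize the second factor as the reconnected-complement tubing complex, and then matching the counting functions so that Proposition~\ref{prop:diff-eq} applies verbatim. The bookkeeping concerns you flag (rank alignment, the identification of the restricted building set with the simplex-graph $G_n|_t$) are exactly the right points to check and are handled by the results you cite.
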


The following is a restatement of this proposition for the $\p$-graph case.

\begin{proposition} \label{prop:p-graph-theorem}
Consider a family of simple pointed polyhedra $P=\{P_0,P_1,\ldots\}$ such that each polyhedron $P_n$ is $n$-dimensional, and a family of $P$-graphs $G=\{G_0,G_1,\ldots\}$ such that each graph $G_n$ is a $P_n$-graph. Say that a family of graphs $G$ decomposes into a pair of sets of families of $\p$-graphs $\Ag$ and $\Bg$ indexed by variables $n\ge 0$ and shapes $\sigma \in S, \tau \in T$, such that for every tube $t$ in graph $G_n$ with $|t|=i+1$ with shape $\sigma$ and reconnected complement shape $\tau$, the simplex-graph $G_n|_t$ is isomorphic to the simplex-graph $\Ag_i^\sigma$, and the reconnected complement of $t$ in $G_n$ is a $F_t$-graph isomorphic to the $\p$-graph $\Bg_{n-1-i}^\tau$. Assume that each $r_{\sigma,\tau}(n,i)$ is separable into a finite sum of products of functions of the form $r^{\Ag,j}_{\sigma,\tau}$ and $r^{\Bg,j}_{\sigma,\tau}$. If $f^G, f^{\Ag^\sigma}, f^{\Bg^\tau}$ are taken as the bivariate $f$-polynomials of the $\p$-graph associahedra of their respective families of graphs, we find the following:
\[
(yD_y -x D_x) f^{P(G)}(x,y) = \sum_{\sigma \in S, \tau \in T} \sum_{j =1}^k y O^{\Ag,\sigma,\tau,j}_y \left[f^{P(\Ag^\sigma)}(x,y)\right]O^{\Bg,\sigma,\tau,j}_y\left[f^{P(\Bg^\tau)}(x,y)\right].
\]
\end{proposition}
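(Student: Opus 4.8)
The plan is to derive Proposition \ref{prop:p-graph-theorem} as a direct corollary of Proposition \ref{prop:diff-eq-polyhedra} (itself a corollary of Proposition \ref{prop:diff-eq}) by verifying that the hypotheses of that proposition hold for the family of tubing complexes $\{\mathcal{N}(G_n, P_n)\}_{n \ge 0}$. In other words, the entire content of this statement is a translation: the abstract machinery of atomically-closed families has already been set up for simplicial complexes, and $\p$-graph associahedra simply provide a concrete instance of it. So the proof is a matter of matching definitions carefully.

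First I would recall the key structural input: by Theorem \ref{thm:reconnectednestohedra1} (and its graph-theoretic refinement, the proposition stating that each facet $\Phi_{\{t\}}$ of a $P$-graph associahedron is combinatorially isomorphic to the product of the $F_t$-graph associahedron of $G/t$ and the simplex-graph associahedron of $G|_t$), every atomic link of the dual simplicial complex $\Delta(\nest{P_n}{G_n}) = \mathcal{N}(G_n, P_n)$ is isomorphic to such a product. Here the atomic links of $\mathcal{N}(G_n,P_n)$ correspond exactly to its singleton faces, i.e. to the tubes $t$ of $G_n$; the link of $\{t\}$ is dual to the facet $\Phi_{\{t\}}$. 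So the family $\{\mathcal{N}(G_n,P_n)\}$ is atomically closed under $\Ag$ (the simplex-graph families, contributing $\Ag_i^\sigma$ when $|t| = i+1$ and $G_n|_t$ has shape $\sigma$) and $\Bg$ (the $\p$-graph families, contributing $\Bg_{n-1-i}^\tau$ when $G_n/t$ has shape $\tau$), in the sense of the definition given before Proposition \ref{prop:diff-eq}. The counting functions $r_{\sigma,\tau}(n,i)$ in that abstract setup are precisely the ones named in the hypothesis here: the number of tubes $t$ of $G_n$ with $|t| = i+1$, induced simplex-graph of shape $\sigma$, and reconnected complement of shape $\tau$. The hypothesis that each $r_{\sigma,\tau}(n,i)$ is separable into a finite sum $\sum_j r^{\Ag,j}_{\sigma,\tau}(i)\, r^{\Bg,j}_{\sigma,\tau}(n-1-i)$ is exactly the separability hypothesis of Proposition \ref{prop:diff-eq}, and the operators $O_y^{\Ag,\sigma,\tau,j}$, $O_y^{\Bg,\sigma,\tau,j}$ are the operators $O_t^r$ (with $t$ renamed $y$) attached to these functions.

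With this dictionary in place, I would invoke Proposition \ref{prop:diff-eq-polyhedra} directly: since each simplicial complex of rank $n$ appearing in $\Delta = \{\mathcal{N}(G_n,P_n)\}$, in each $\Ag^\sigma$, and in each $\Bg^\tau$ is dual to a pointed simple polyhedron (the relevant $\p$-graph associahedron, which is pointed because $P_n$ is pointed and truncation preserves pointedness), the differential equation
\[
(yD_y - xD_x)\, f^{P(\Delta)}(x,y) = \sum_{\sigma \in S,\, \tau \in T} \sum_{j=1}^k y\, O^{\Ag,\sigma,\tau,j}_y\!\left[f^{P(\Ag^\sigma)}(x,y)\right] O^{\Bg,\sigma,\tau,j}_y\!\left[f^{P(\Bg^\tau)}(x,y)\right]
\]
holds, which upon substituting the notation $f^{P(G)} = f^{P(\Delta(G))}$, $f^{P(\Ag^\sigma)}$, $f^{P(\Bg^\tau)}$ for the bivariate $f$-polynomials of the $\p$-graph associahedra of the respective graph families is exactly the claimed identity. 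It also bears noting that one could alternatively prove the simplicial-complex version first (the unnumbered proposition stated just before Proposition \ref{prop:p-graph-theorem}) via Proposition \ref{prop:diff-eq} and then apply the change of variables $x = 1/s$, $y = st$ from Proposition \ref{prop:change-of-basis} together with the identity $D_s f^\Delta(s,t) = xy\,D_y f^{P(\Delta)}(x,y) - x^2 D_x f^{P(\Delta)}(x,y)$; dividing through by $x$ then produces the $(yD_y - xD_x)$ form.

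The only genuine obstacle — and it is a bookkeeping obstacle rather than a mathematical one — is making sure the "canonical decomposition" issue is handled correctly. A single atomic link of $\mathcal{N}(G_n,G_n)$ might a priori be expressible as a product $\Ag_i^\sigma \times \Bg_{n-1-i}^\tau$ in more than one way (different shape labels giving isomorphic simplicial complexes), so one must fix, as in the abstract setup, a unique triple $(i,\sigma,\tau)$ per atomic link and confirm that the $r_{\sigma,\tau}(n,i)$ defined in the hypothesis count with respect to that fixed choice, avoiding any double-counting. In practice, for the concrete $\p$-graph families treated later in the thesis the shapes are chosen so that $G_n|_t$ and $G_n/t$ determine $\sigma$ and $\tau$ unambiguously, so this subtlety does not bite; but the proof should flag it. Beyond that, one should confirm the edge cases where $t$ is a singleton (so $G_n|_t$ is a point, a $0$-dimensional simplex, and $\Ag_0^\sigma$ is a point) and where $G_n/t$ is empty (so $\Bg$ contributes a point), both of which are already accommodated in Theorem \ref{thm:reconnectednestohedra1}. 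With those checks recorded, the proof is complete.
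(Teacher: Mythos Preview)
Your proposal is correct and matches the paper's approach: the paper presents this proposition as a direct restatement of the preceding simplicial-complex proposition (itself an instance of Proposition~\ref{prop:diff-eq}) for the $\p$-graph case, invoking exactly the facet decomposition from Theorem~\ref{thm:reconnectednestohedra1} that you cite. Your write-up is in fact more detailed than the paper's, which offers no separate proof and simply labels the result ``a restatement of this proposition for the $\p$-graph case.''
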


\section{Maximal tube sub-complexes for calculating $f$-polynomials} \label{sec:method-two}

The previous section relies upon a method of overcounting, where faces of simplicial complexes or simple polyhedra are counted more than once and our count is adjusted accordingly. This section presents a method where tubings are divided by cases, and each tubing is counted exactly once. The broad concept is as follows: for any tube $t$, we wish to characterize all tubings $T$ that contain the tube $t$, such that $t$ is maximal in $T$. Next, we wish to pick a set of tubes $t_1, \ldots, t_k$ such that only one tube out of the set can ever be maximal in a tubing at the same time. Finally, we characterize the set of tubings that do not contain any tube in the set $t_1, \ldots, t_k$.

	\subsection{Maximal tube sub-complexes}

\newcommand{\nestmax}[3]{\mathcal{N}_{/#3 \max}(#1, #2)} 

		\begin{definition}
		The maximal tube sub-complex $\nestmax{G}{\Delta}{t}$ is the set of all tubings such that $t \notin T$, $\{t\} \cup T$ is a valid tubing and $t$ is maximal in $\{t\} \cup T$.
	\end{definition}

	This is equivalent to taking the set of tubings $T$ such that $t \in T$ and $t$ is maximal in $T$, and removing $t$ from each $T$. It is also equivalent to taking the complex $\mathcal{N}(G,\Delta)$, removing any tubes which contain $t$ as a proper subset, and then taking the link of $t$ in this new complex.

	Define $n(t)$ to be the \emph{neighborhood} of a tube $t$ in $G$; that is, if $v \in t$ or $v$ is adjacent to a vertex in $t$, then $v \in n(t)$. Define $xn(t) = n(t) \backslash t$ to be the \emph{exclusive neighborhood} of $t$. For a graph $G$ and vertex subset $S$, we define the graph $G \backslash S$ to be just the graph induced on the complement of $S$ in the vertex set of $G$. For a simplicial complex $\Delta$ and subset $S$ of the base set of $\Delta$, we define $\Delta \backslash S$ to be the simplicial complex obtained by deleting any faces containing any elements of $S$. We note that a tube $t$ of $G$ is a tube of $G \backslash S$ if and only if $t \cap S = \emptyset$. From there, the following proposition is trivial:

	\begin{proposition} \label{prop:deleted-vertices-in-graph-stuff}
		The tubing complex $\mathcal{N}(G \backslash S,\Delta \backslash S)$ for any subset $S$ of the base set of $\Delta$ is equal to the set of tubings in the complex $\mathcal{N}(G,\Delta)$ such that $t \cap S = \emptyset$ for each tube $t$ in the tubing.
	\end{proposition}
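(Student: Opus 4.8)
The plan is to deduce this from the identification of tubings with nested sets of the graphical building set, together with Proposition \ref{prop:restriction} on restrictions. First I would check the statement one level down, at the level of tubes: a set $t$ is a tube of the $(\Delta\backslash S)$-graph $G\backslash S$ if and only if $t$ is a tube of $(G,\Delta)$ and $t\cap S=\emptyset$. This is immediate from the definitions, since a face of $\Delta\backslash S$ is by definition a face of $\Delta$ disjoint from $S$, and for such a $t$ the induced subgraph $(G\backslash S)|_t$ is literally the same graph as $G|_t$ (both the vertices of $t$ and the edges among them are untouched by deleting $S$, as $t$ meets $S$ nowhere), so $t$ induces a connected subgraph of one iff it does of the other. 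Consequently the graphical building set of $(G\backslash S,\Delta\backslash S)$ is exactly the restriction $\B_{(G,\Delta)}\cap(\Delta\backslash S)$.

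Next I would invoke Proposition \ref{prop:restriction}: a set $N$ is $\bigl(\B_{(G,\Delta)}\cap(\Delta\backslash S)\bigr)$-nested if and only if $N$ is $\B_{(G,\Delta)}$-nested and $\bigcup N\in\Delta\backslash S$. Every $\B_{(G,\Delta)}$-nested set already satisfies $\bigcup N\in\Delta$, so the extra condition $\bigcup N\in\Delta\backslash S$ simply says $\bigcup N\cap S=\emptyset$, which holds precisely when $t\cap S=\emptyset$ for every $t\in N$. Translating both sides through the proposition that the tubings of a $\Delta$-graph are the nested sets of its graphical building set then gives the asserted equality: $\mathcal{N}(G\backslash S,\Delta\backslash S)$ is exactly the set of tubings of $(G,\Delta)$ all of whose tubes avoid $S$.

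If one prefers not to route through Proposition \ref{prop:restriction}, the same conclusion follows by checking compatibility by hand once the tubes have been matched: for a collection of tubes all disjoint from $S$, the containment and disjointness clauses of weak compatibility are purely set-theoretic and hence unchanged, an edge of $G$ between two such tubes survives in the induced subgraph $G\backslash S$, and the strong-compatibility requirement $\bigcup T\in\Delta$ is equivalent to $\bigcup T\in\Delta\backslash S$ because $\bigcup T$ misses $S$. There is no genuine obstacle in this argument; the only points that deserve to be spelled out are that ``face of $\Delta\backslash S$'' means ``face of $\Delta$ avoiding $S$'' and that restricting a graph to a vertex set disjoint from $S$ is insensitive to whether $S$ has already been deleted, after which everything reduces to a bookkeeping check.
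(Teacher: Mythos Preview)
Your proposal is correct and essentially matches the paper's approach: the paper simply notes that a tube $t$ of $G$ is a tube of $G\backslash S$ if and only if $t\cap S=\emptyset$, and then declares the proposition trivial without further argument. Your write-up fleshes out precisely the details the paper omits, and routing through Proposition~\ref{prop:restriction} is a clean way to handle the tubing level once the building sets are identified.
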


	Recall that $G/t$ is the reconnected complement of the $\Delta$-graph $G$, and it is a graph of the link $\Delta/t$ obtained by connecting neighbors of $t$. We now define the removal of vertices $xn(t)$ from $G/t$.

	\begin{definition}
		The \emph{neighborless complement} of a tube $t$ in a $\Delta$-graph $G$ is the $(\Delta/t)\backslash xn(t)$-graph $(G/t) \backslash xn(t)$.
	\end{definition}

	We can now prove a statement:

	\begin{proposition}
		For a $\Delta$-graph $G$ with tube $t$, the tubing complex, $\mathcal{N}((G/t)\backslash xn(t),(\Delta/t)\backslash xn(t))$, of the neighborless complement is equal to the set of tubings $T$ in $\nestmax{G}{\Delta}{t}$ such that, for every tube $t' \in T$, $t' \cap t = \emptyset$.
	\end{proposition}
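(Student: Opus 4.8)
The plan is to show set equality in both directions between $\mathcal{N}((G/t)\backslash xn(t),(\Delta/t)\backslash xn(t))$ and the subcollection of $\nestmax{G}{\Delta}{t}$ consisting of tubings $T$ with $t' \cap t = \emptyset$ for all $t' \in T$. The key observation that organizes everything is the following: if $T$ is a tubing in $\nestmax{G}{\Delta}{t}$, then $T \cup \{t\}$ is a tubing of $G$ in which $t$ is maximal, so by weak compatibility every $t' \in T$ satisfies $t' \subseteq t$, $t \subseteq t'$, or $t' \cap t = \emptyset$; since $t$ is maximal we cannot have $t \subsetneq t'$, and the restriction we are imposing is precisely that we also forbid $t' \subsetneq t$. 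So the right-hand side is exactly the set of tubings $T$ such that $T \cup \{t\}$ is a valid tubing of $G$ and every element of $T$ is disjoint from $t$. I would state this reformulation first.

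Next I would unwind what it means for a set $S$ of vertices, disjoint from $t$, to be a tube of the neighborless complement. By Definition~\ref{def:reconnected-complement-graph} and Proposition~\ref{prop:pseudolink-graph}, $S$ is a tube of $G/t$ iff $S \in \Delta/t$ (i.e. $S \cup t \in \Delta$, $S \cap t = \emptyset$) and $S$ induces a connected subgraph of $G/t$. Deleting $xn(t)$ further requires $S \cap xn(t) = \emptyset$, i.e. $S$ contains no neighbor of $t$. But once $S$ contains no neighbor of $t$, the edges of $G/t$ internal to $S$ are exactly the edges of $G$ internal to $S$ (the "reconnecting" edges of Definition~\ref{def:reconnected-complement-graph}(2) only appear among neighbors of $t$), so $S$ induces a connected subgraph of $G/t$ iff $S$ induces a connected subgraph of $G$, i.e. $S$ is a tube of $G$. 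Summarizing: the tubes of the neighborless complement are exactly the tubes $s$ of $G$ with $s \cap n(t) = \emptyset$ and $s \cup t \in \Delta$. I would record this as a lemma-style paragraph.

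For the forward inclusion, take $T \in \mathcal{N}((G/t)\backslash xn(t),(\Delta/t)\backslash xn(t))$. Each $t' \in T$ is then a tube of $G$ with $t' \cap n(t) = \emptyset$ and $t' \cup t \in \Delta$, so in particular $t' \cap t = \emptyset$ and there are no edges between $t'$ and $t$, giving weak compatibility of $t'$ with $t$; since $T$ is a tubing of $(G/t)\backslash xn(t)$, any two $t',t'' \in T$ are weakly compatible there, and because neither meets $n(t)$, the weak-compatibility relation (disjointness plus no edge, or nesting) is inherited from the ambient graph $G$. Strong compatibility requires $\bigcup T \in \Delta/t \backslash xn(t)$, hence $(\bigcup T) \cup t \in \Delta$, which is exactly what is needed for $T \cup \{t\}$ to be strongly compatible in $G$. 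So $T \cup \{t\}$ is a tubing of $G$ with $t$ maximal and every element disjoint from $t$, i.e. $T$ lies in the right-hand side. For the reverse inclusion, take such a $T$; each $t' \in T$ is disjoint from $t$ and weakly compatible with $t$, so there is no edge from $t'$ to $t$, forcing $t' \cap n(t) = \emptyset$, and $t' \cup t \subseteq \bigcup(T \cup \{t\}) \in \Delta$ gives $t' \in \Delta/t\backslash xn(t)$; by the lemma $t'$ is a tube of the neighborless complement, pairwise weak compatibility is inherited, and $\bigcup T \cup t \in \Delta$ gives strong compatibility in $(\Delta/t)\backslash xn(t)$ since $(\bigcup T) \cap xn(t) = \emptyset$. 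Hence $T$ is a tubing of the neighborless complement.

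The main obstacle I anticipate is the bookkeeping around the two subtly different "disjoint from $t$" conditions that appear: membership in $\nestmax{G}{\Delta}{t}$ only forces the trichotomy $t' \subseteq t$, $t \subseteq t'$, or $t' \cap t = \emptyset$ via weak compatibility, and one must carefully use maximality of $t$ together with the hypothesis $t' \cap t = \emptyset$ to pin down that $t'$ avoids all of $n(t)$, not just $t$ itself — this is where the edge structure of the reconnected complement versus $G$ has to be handled precisely, and it is the step most prone to an off-by-a-case error. Everything else (strong vs.\ weak compatibility, the link/deletion manipulations) is routine given Propositions~\ref{prop:pseudolink-graph} and~\ref{prop:deleted-vertices-in-graph-stuff}.
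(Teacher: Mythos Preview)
Your proposal is correct and follows essentially the same approach as the paper. The paper's proof is terser: it first observes that for compatible tubes $t,t'$ one has $t\subset t'$ iff $t'\cap xn(t)\ne\emptyset$, then invokes the reconnected-complement correspondence and Proposition~\ref{prop:deleted-vertices-in-graph-stuff} to conclude directly, whereas you spell out the two inclusions and check strong compatibility explicitly; but the underlying argument---tubes disjoint from $t$ in $\nestmax{G}{\Delta}{t}$ are exactly tubes of $G$ avoiding all of $n(t)$ with $s\cup t\in\Delta$, and such tubes coincide with tubes of the neighborless complement because the reconnecting edges live only among $xn(t)$---is the same.
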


	\begin{proof}
	We wish to characterize the tubes in $\nestmax{G}{\Delta}{t}$. This complex contains all tubes $t' \subset t$, which are tubes of the simplex-graph $G|_t$, where $G|_t$ is the simplex-graph induced by $t$. It also contains all tubes $t'$ such that $t, t'$ are compatible and $t' \not\subseteq t$ and $t \not\subseteq t'$. We note that if $t, t'$ are compatible, then $t \subset t'$ if and only if $t' \cap xn(t) \ne \emptyset$. Recall also that $t'$ is a tube compatible with but not contained in $t$ if and only if $t' \backslash t$ is a tube of $G/t$. As a result, we note that for two compatible tubes $t, t'$, we find $t \not\subseteq t', t' \not\subseteq t$ if and only if $t'$ is a tube of $G/t$ not containing any vertex in $xn(t)$. 
	
	Now we note that $T$ is a tubing in $\nestmax{G}{\Delta}{t}$ if and only if it is a tubing in $G/t$ containing no vertex in $xn(t)$ in any of its tubes. Proposition \ref{prop:deleted-vertices-in-graph-stuff} shows that this is the case if and only if $T$ is a tubing of $(G/t)\backslash xn(t)$.
\end{proof}

	Figure \ref{fig:neighborless-complement} illustrates the finding of the neighborless complement of a tube in a hypercube-graph in three steps. First, we take the reconnected complement; this removes the vertices in $t$ from $G$, as well as any vertices not in $\Delta/t$, which in this case is the vertices of $-t$. Finally, we remove all neighbors of $t$.

\begin{figure}
\centering
\includegraphics[width=.5\textwidth]{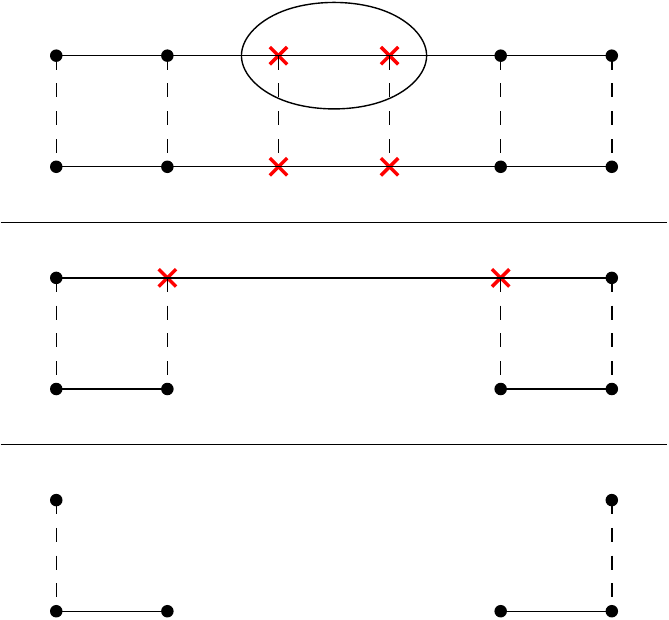}
\caption{Reconnected complement and neighborless complement of a tube in a hypercube graph.}\label{fig:neighborless-complement}
\end{figure}

\danger{The nested complex notation says $\mathcal{N}(\B,\Delta)$, so $\mathcal{N}(G, \Delta|t\max )$ might be the correct version. Maybe even $\mathcal{N}(G,\Delta,t=\max)$ ?? One problem is that I have $G|_t$. So maybe $\mathcal{N}_{t\max}(G,\Delta)$. I am converting this to a newcommand to make things easier on me.}


	For a graph $G$, refer to $\mathcal{N}(G)$ as the simplex-graph tubing complex of $G$. We can now characterize maximal tubing complexes entirely.

	\begin{proposition}
		The complex $\nestmax{G}{\Delta}{t}$ is equal to the Cartesian product of tubing complexes $\mathcal{N}(G|_t)$ and $\mathcal{N}((G/t)\backslash xn(t),(\Delta/t)\backslash xn(t))$.
	\end{proposition}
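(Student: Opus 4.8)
The plan is to decompose an arbitrary tubing $T \in \nestmax{G}{\Delta}{t}$ into a part contained in $t$ and a part disjoint from $t$, and to show that these two parts range independently over the two factor complexes. First I would set $T_1 = \{t' \in T \mid t' \subseteq t\}$ and $T_2 = \{t' \in T \mid t' \cap t = \emptyset\}$. Since $T \cup \{t\}$ is a tubing and $t$ is maximal in it, Proposition~\ref{complex-nestedset} (via the identification of tubings with nested sets) guarantees that every $t' \in T$ is weakly compatible with $t$; because $t' \not\supseteq t$ (by maximality of $t$) and $t'$ cannot have nontrivial intersection with $t$, each $t'$ either satisfies $t' \subsetneq t$ or $t' \cap t = \emptyset$. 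Hence $T = T_1 \sqcup T_2$, and this decomposition is well-defined on every element of $\nestmax{G}{\Delta}{t}$.

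Next I would identify the two pieces with the claimed factors. For $T_1$: every $t' \subseteq t$ is a tube of the induced simplex-graph $G|_t$, and $T_1$ is strongly compatible there because $\bigcup T_1 \subseteq t \in \Delta$ and pairwise weak compatibility is inherited; conversely any tubing of $G|_t$ consists of tubes $\subsetneq t$ (proper, since $t$ is excluded) compatible with $t$. So $T_1$ ranges exactly over $\mathcal{N}(G|_t)$. For $T_2$: I would invoke the proposition already proved just above in the excerpt, which states that the set of tubings in $\nestmax{G}{\Delta}{t}$ all of whose tubes are disjoint from $t$ is exactly $\mathcal{N}((G/t)\backslash xn(t),(\Delta/t)\backslash xn(t))$. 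Thus $T_2$ ranges over $\mathcal{N}((G/t)\backslash xn(t),(\Delta/t)\backslash xn(t))$.

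It then remains to show the correspondence $T \mapsto (T_1,T_2)$ is a bijection of simplicial complexes, i.e.\ that $T_1 \cup T_2$ is a tubing in $\nestmax{G}{\Delta}{t}$ whenever $T_1 \in \mathcal{N}(G|_t)$ and $T_2 \in \mathcal{N}((G/t)\backslash xn(t),(\Delta/t)\backslash xn(t))$, and that every pair arises this way. The forward direction is the decomposition above. For the reverse: any $t_1 \in T_1$ satisfies $t_1 \subseteq t$ and any $t_2 \in T_2$ satisfies $t_2 \cap t = \emptyset$ and, being a tube of $(G/t)\backslash xn(t)$, has $t_2 \cap xn(t) = \emptyset$, so $t_2$ is disjoint from all of $n(t)$ and in particular has no edge to any vertex of $t_1 \subseteq t$ and $t_1 \cap t_2 = \emptyset$; hence $t_1,t_2$ are weakly compatible. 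Weak compatibility within $T_1$ and within $T_2$ is given. For strong compatibility of $T_1 \cup T_2 \cup \{t\}$ one needs $\bigcup(T_1 \cup T_2) \cup t \in \Delta$: since $\bigcup T_1 \cup t = t$ and $\bigcup T_2 \cup t \in \Delta$ (because $T_2$ is a tubing of the graph on $(\Delta/t)\backslash xn(t)$, so $\bigcup T_2$ lies in $\Delta/t$, meaning $\bigcup T_2 \cup t \in \Delta$), a routine check — possibly the main technical obstacle — shows the full union is in $\Delta$; here I would use that membership in $\Delta$ only depends on the union as a set and that $\bigcup T_2$ is disjoint from $t$. Finally, disjointness of the two factor complexes (no tube is simultaneously $\subseteq t$ and disjoint from $t$, as $t \neq \emptyset$) together with Proposition~\ref{prop:polyhedral-product}-style reasoning for simplicial complex Cartesian products gives that $\nestmax{G}{\Delta}{t} = \mathcal{N}(G|_t) \times \mathcal{N}((G/t)\backslash xn(t),(\Delta/t)\backslash xn(t))$. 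The hard part will be the bookkeeping in verifying strong compatibility of the recombined tubing, i.e.\ that the union condition on $\Delta$ survives reassembly; everything else is a direct translation of the two preceding propositions.
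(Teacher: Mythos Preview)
Your proposal is correct and follows essentially the same approach as the paper's proof: partition the tubes of $T$ into those contained in $t$ and those disjoint from $t$, identify each part with the appropriate factor using the preceding proposition, and observe that a tube in $G|_t$ and a tube in the neighborless complement are automatically disjoint and non-adjacent (since the latter avoids all of $n(t)$), so the two parts can be freely recombined. The paper's version is terser---it declares the recombination step ``trivial''---whereas you spell out the strong-compatibility check; note that this check is indeed immediate since $\bigcup T_1 \subseteq t$ forces $\bigcup(T_1 \cup T_2 \cup \{t\}) = t \cup \bigcup T_2 \in \Delta$.
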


	\begin{proof}

	We know that the set of tubes $t'$ in the base set of $\nestmax{G}{\Delta}{t}$ such that $t' \cap t=\emptyset$ are just tubes of the neighborless complement of $t$. The tubes $t'$ in $\nestmax{G}{\Delta}{t}$ such that $t' \cap t \ne \emptyset$ are exactly the tubes which are subsets of $t$, and all such proper subsets are tubes of the simplex-graph $G|_t$. We then note that any tube $t'$ in the neighborless complement and any tube $t''$ in $G|_t$ are disjoint and non-adjacent, and so it is trivial to see that every tubing $T$ is equal to the disjoint union of a tubing in the neighborless complement of $t$ and $G|_t$, and the maximal tube complex of $t$ is equal to the product of the two complexes.
	\end{proof}

	Now that we have defined the maximal tube complex of a tube $t$, we need to find an easy method to characterize a set of tubes such that no two tubes in the set can ever both be maximal. We also wish to be able to easily enumerate the set of tubings which contain no tubes in this set.

	Consider a $\Delta$-graph $G$. For a set $X$ of vertices of $G$, define $intset_X$ to be the set of tubes of $G$ that have nonempty intersection with $X$. We define a \emph{kingmaker set} to be a set $X$ of vertices of a $\Delta$-graph $G$ such that, for any two compatible tubes $t_1, t_2 \in intset_X$, we find $t_1 \subset t_2$ or $t_2 \subset t_1$. There are several ways this condition may hold; for instance, every clique in a graph is a kingmaker set, even including every 1-clique.

	We note that for every tubing $T$ of a graph with kingmaker set $X$, either there exists a unique tube $t \in intset_{X}$ such that $t \in T$ and $t$ is maximal in $T$, or no tube in $T$ intersects with $X$. In the first case, $T \backslash \{t\}$ is in $\nestmax{G}{\Delta}{t}$. In the second case, $T \in \mathcal{N}(G\backslash X, \Delta\backslash X)$. We recall that $f^{\mathcal{N}(G,\Delta)}(s)=\sum_{k\ge 0} f_k s^k$ counts the number of tubings of size $k$ in $G$. As a result, the $f$-vector of the set of tubings such that $t$ is maximal is equal to $s$ times the $f$-vector of $\nestmax{G}{\Delta}{t}$, giving us the following equation.

	\begin{proposition} \label{prop:kingmaker-partition}
		For a $\Delta$-graph $G$ with kingmaker set $X$,
		\[
			f^{\mathcal{N}(G,\Delta)} (s) = f^{\mathcal{N}(G\backslash X, \Delta \backslash X)} + s \sum_{t \in intset_X} f^{\nestmax{G}{\Delta}{t}}(s)
			.
		\]
	\end{proposition}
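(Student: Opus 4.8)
The plan is to prove the identity combinatorially, by splitting every tubing $T$ of $G$ into one of two disjoint cases according to whether some tube of $T$ meets $X$, and identifying each case with one of the two terms on the right-hand side.

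The first case consists of the tubings $T$ in which every tube is disjoint from $X$. By Proposition~\ref{prop:deleted-vertices-in-graph-stuff} these are precisely the tubings of $\mathcal{N}(G \backslash X, \Delta \backslash X)$ (with the empty tubing included), and since this correspondence preserves the number of tubes in a tubing, this case contributes exactly $f^{\mathcal{N}(G\backslash X, \Delta\backslash X)}(s)$ to $f^{\mathcal{N}(G,\Delta)}(s)$.

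The second case is the heart of the argument. Given a tubing $T$ with at least one tube meeting $X$, I would look at the collection $\{t' \in T : t' \cap X \neq \emptyset\}$. Any two of its members are compatible tubes belonging to $intset_X$, so by the kingmaker hypothesis one is contained in the other; hence this collection is a chain and has a unique largest element $t$. The key observation is that $t$ is not merely largest among tubes meeting $X$ but maximal in all of $T$: if some $t' \in T$ satisfied $t \subsetneq t'$, then $t'$ would also meet $X$ and would strictly dominate $t$ in the chain, which is impossible. Therefore $T \backslash \{t\}$ is a tubing with $\{t\} \cup (T \backslash \{t\}) = T$ a valid tubing in which $t$ is maximal, i.e. $T \backslash \{t\} \in \nestmax{G}{\Delta}{t}$ with $t \in intset_X$. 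Conversely, for any $t \in intset_X$ and $T' \in \nestmax{G}{\Delta}{t}$, the set $\{t\} \cup T'$ is a valid tubing and the same comparison argument shows $t$ is its unique maximal tube meeting $X$; so $T \mapsto (t,\, T \backslash \{t\})$ is a bijection from the second-case tubings onto the disjoint union over $t \in intset_X$ of the complexes $\nestmax{G}{\Delta}{t}$.

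Finally, since this bijection drops the tube count by exactly one, the second case contributes $s \sum_{t \in intset_X} f^{\nestmax{G}{\Delta}{t}}(s)$, and adding the two contributions gives the stated formula. The only step that requires genuine care is the passage from ``locally maximal among tubes meeting $X$'' to ``maximal in $T$'', which is exactly where the kingmaker property is used; the rest is routine bookkeeping with the size grading and an appeal to Proposition~\ref{prop:deleted-vertices-in-graph-stuff}.
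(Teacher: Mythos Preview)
Your proof is correct and follows essentially the same approach as the paper: partition tubings according to whether some tube meets $X$, identify the first case with $\mathcal{N}(G\backslash X,\Delta\backslash X)$ via Proposition~\ref{prop:deleted-vertices-in-graph-stuff}, and in the second case extract the unique maximal tube $t\in intset_X$ to land in $\nestmax{G}{\Delta}{t}$. If anything, you are more careful than the paper, which asserts but does not spell out the step that the largest tube meeting $X$ is in fact maximal in all of $T$; your observation that any strictly larger tube would also meet $X$ is exactly the right justification.
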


	Finally, we recall that we can write $\nestmax{G}{\Delta}{t}$ as a Cartesian product of tubing complexes. If $\mathcal{N}(G|_t)$ is the simplex-graph tubing complex induced by $G|_t$, we get the following equation.

	\begin{proposition} \label{prop:kingmaker-partition-2}
		For a $\Delta$-graph $G$ with kingmaker set $X$,
		\[
			f^{\mathcal{N}(G,\Delta)} (s) = f^{\mathcal{N}(G\backslash X, \Delta \backslash X)} + s \sum_{t \in intset_X} 
	f^{\mathcal{N}(G|_t)}(s)
	f^{\mathcal{N}((G/t)\backslash xn(t),(\Delta/t)\backslash xn(t))}(s).
		\]
	\end{proposition}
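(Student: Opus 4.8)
The plan is to obtain this statement as an immediate consequence of Proposition \ref{prop:kingmaker-partition} together with the product decomposition of maximal tube sub-complexes and the multiplicativity of $f$-polynomials under Cartesian product. So the proof will be short: it is essentially a substitution.

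First I would invoke Proposition \ref{prop:kingmaker-partition}, which already gives
\[
f^{\mathcal{N}(G,\Delta)}(s) = f^{\mathcal{N}(G\backslash X, \Delta\backslash X)}(s) + s\sum_{t \in intset_X} f^{\nestmax{G}{\Delta}{t}}(s).
\]
Then I would recall the preceding proposition, which states that $\nestmax{G}{\Delta}{t}$ is the Cartesian product of the simplex-graph tubing complex $\mathcal{N}(G|_t)$ and the tubing complex $\mathcal{N}((G/t)\backslash xn(t),(\Delta/t)\backslash xn(t))$ of the neighborless complement. The hypotheses needed for the Cartesian product of simplicial complexes to be well-defined — disjointness of the underlying vertex sets — were already verified there, since the proper subsets of $t$ and the tubes of the neighborless complement are disjoint and non-adjacent, so I would simply cite that result rather than re-prove it.

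Next I would apply the identity $f^{\Delta_1 \times \Delta_2}(s) = f^{\Delta_1}(s)\, f^{\Delta_2}(s)$ (the simplicial-complex multiplicativity proposition proved earlier in Chapter \ref{chap:methods}) to rewrite each summand $f^{\nestmax{G}{\Delta}{t}}(s)$ as the product $f^{\mathcal{N}(G|_t)}(s)\, f^{\mathcal{N}((G/t)\backslash xn(t),(\Delta/t)\backslash xn(t))}(s)$. Substituting this into the display from Proposition \ref{prop:kingmaker-partition} yields exactly the claimed formula, completing the argument.

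I do not anticipate a genuine obstacle here: every ingredient has already been established, and the only thing to be careful about is bookkeeping — making sure the factor of $s$ (accounting for the reinsertion of the maximal tube $t$ into each tubing counted by $\nestmax{G}{\Delta}{t}$) is carried through correctly, and that the case $t \cap S = \emptyset$ / "no tube meets $X$" term $f^{\mathcal{N}(G\backslash X,\Delta\backslash X)}$ is stated with the same conventions as in Proposition \ref{prop:kingmaker-partition}. If anything deserves a sentence of justification, it is the remark that in the product decomposition the $f$-polynomial of $\mathcal{N}(G|_t)$ is taken with respect to the simplex-graph tubing complex (i.e. with $B_{max}$ included as the full vertex set of $t$), matching the convention under which the neighborless-complement decomposition was stated.
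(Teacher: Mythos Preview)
Your proposal is correct and matches the paper's approach exactly: the paper presents this proposition as an immediate consequence of Proposition \ref{prop:kingmaker-partition} together with the Cartesian product decomposition $\nestmax{G}{\Delta}{t} \cong \mathcal{N}(G|_t) \times \mathcal{N}((G/t)\backslash xn(t),(\Delta/t)\backslash xn(t))$ and the multiplicativity of $f$-polynomials, and does not supply a separate proof block.
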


	\begin{remark}
		What is the rank of $\mathcal{N}(G\backslash X, \Delta\backslash X)$ or $\nestmax{G}{\Delta}{t}$? We recall that if $\Delta$ is a pure simplicial complex of rank $n$ with face $S$, then $\Delta/S$ is rank $n-|S|$, allowing us to very nicely index our terms and create a function of bivariate generating polynomials. In contrast, if $X$ is a set in the base set of $\Delta$, then $\Delta \backslash X$ could have any rank between $n-|X|$ and $n$. We know the rank of $\mathcal{N}(G|_t)$ is $(|t|-1)$, and if $\Delta$ is a pure rank $n$ simplicial complex, then $(\Delta/t)$ is rank $(n-|t|)$. The rank of $(\Delta/t)\backslash xn(t)$ is then not fixed, and depends on the vertices in $xn(t)$.
	\end{remark}

	\subsection{Calculating $f$-polynomial through maximal tube enumeration}

	In this subsection, we will apply Proposition \ref{prop:kingmaker-partition-2} in order to calculate the bivariate $f$-polynomial of a family of $\Delta$-graph nested complexes. Consider a family of simplicial complexes $\Delta=\{\Delta_0,\Delta_1,\ldots\}$, where each simplicial complex $\Delta_n$ has rank $n$. Consider a family of $\Delta$-graphs $\{G_0,G_1,\ldots\}$ such that each graph $G_n$ is a $\Delta_n$-graph.

	We first wish to calculate the $f$-polynomials of each nested complex $\mathcal{N}(\Delta_n,G_n)$. We will pick a kingmaker $X_n$ for each graph $G_n$. We also define a pair $\Ag, \Bg$ of sets of families of simplicial complex-graphs, such that $\Ag_n^\sigma$ is a simplicial complex-graph of a rank $n$ complex for each shape $\sigma \in S$, and $\Bg_n^\tau$ is a simplicial complex-graph of a rank $n$ complex for each shape $\tau \in T$, and a constant $\gamma_{\sigma,\tau}$ for each pair $\sigma, \tau$, as follows.
For each tube $t \in intset_{X_n}$ with $i+1$ vertices, the complex $\nestmax{G}{\Delta}{t}$ is isomorphic to the product of complexes $\mathcal{N}(G|_t)$ and $\mathcal{N}((G_n/t)\backslash xn(t),(\Delta_n/t)\backslash xn(t)))$. We know that $\mathcal{N}(G|_t)$ must be rank $i$. We also know that the reconnected complement nested complex has rank $n-1-i$, but the neighborless complement nested complex may have a rank that is of lesser value. We make an assumption that for a pair $\sigma, \tau$, the neighborless complement nested complex of a tube of shape $\sigma$ and neighborless complement of shape $\tau$ has rank $n-1-i-\gamma_{\sigma,\tau}$ for some constant $\gamma_{\sigma,\tau}$. With these assumptions, we define graph families $\Ag^\sigma$ and $\Bg^\tau$ such that for any tube $t$ containing $i+1$ vertices of shape $\sigma$ and has neighborless complement of shape $\tau$, we define $\Ag_i^\sigma$ which is isomorphic to the simplex-graph $\mathcal{N}(G|_t)$, and $\Bg_{n-1-i-\gamma_{\sigma,\tau}}^\tau$ isomorphic to the neighborless complement simplicial-complex graph $\mathcal{N}((G_n/t)\backslash xn(t),(\Delta_n/t)\backslash xn(t)))$. We may say the tube $t$ \emph{splits} the graph $G_n$ into graphs $\Ag_i^\sigma$ and $\Bg_{n-1-i-\gamma_{\sigma,\tau}}^\tau$.



Finally, assume that there exists a constant nonnegative number $\rho$ such that the simplicial complex $\Delta_n \backslash X$ is rank $n-\rho$ for each $n \ge \rho$. We define a family $\{\Cg_0,\Cg_1,\ldots\}$ of simplicial complex-graphs such that each $\Cg_{n-\rho}$ is the complement graph, or the simplicial complex graph $G_n \backslash X_n$ on complex $\Delta_n \backslash X_n$. As a result, if $r_{\sigma,\tau}(n,i)$ is the number of tubes of shape $\sigma$ containing $i+1$ vertices with neighborless complement of shape $\tau$, we find the following result:
	\[
		f^{\Delta(G_n)} (s) = f^{\Delta(\Cg_{n-\rho})}(s) + s\sum_{\sigma \in S, \tau \in T}
		\sum_{i=0}^{n-1-\gamma_{\sigma, \tau}}
		r_{\sigma, \tau}(n,i)
		f^{\Delta(\Ag^\sigma_i)} (s)
		f^{\Delta(\Bg^\tau{n-1-\gamma_{\sigma, \tau}-i})}(s).
	\]
	We can then create a bivariate generating function by multiplying by $t^n$ and summing for each $n \ge \rho$.
		\begin{align*}
		\sum_{n \ge \rho} f^{\Delta(G_n)} (s)t^n =& 
		\sum_{n \ge \rho} f^{\Delta(\Cg_{n-\rho})}(s)t^\rho +\\
		&\sum_{n \ge \rho} s\sum_{\sigma \in S, \tau \in T}t^{1+\gamma_{\sigma, \tau}}
		\sum_{i=0}^{n-1-\gamma_{\sigma, \tau}}
		r_{\sigma, \tau}(n,i)
		f^{\Delta(\Ag^\sigma_i)} (s)
		f^{\Delta(\Bg^\tau_{n-1-\gamma_{\sigma, \tau}-i})}(s)
		t^{n-1-\gamma_{\sigma, \tau}}
	\end{align*}
We want to rewrite this equation by summing for all $n$, not just $n \ge \rho$. We will introduce a pair of error terms. Define  \[\epsilon_L=\sum_{n=0}^{\rho-1} f^{\Delta(G_n)} (s)t^n\] for the left hand side, and
		\[
		\epsilon_R =
		\sum_{n =0}^{\rho-1} f^{\Delta(\Cg_{n-\rho})}(s)t^\rho +
		\sum_{n =0}^{\rho-1} s\sum_{\sigma \in S, \tau \in T}t^{1+\gamma_{\sigma, \tau}}
		\sum_{i=0}^{n-1-\gamma_{\sigma, \tau}}
		r_{\sigma, \tau}(n,i)
		f^{\Delta(\Ag^\sigma_i)} (s)
		f^{\Delta(\Bg^\tau{n-1-\gamma_{\sigma, \tau}-i})}(s)
		t^{n-1-\gamma_{\sigma, \tau}}.
	\]
We are now able to rewrite an earlier equation as
		\begin{align*}
		&\left(\sum_{n \ge 0} f^{\Delta(G_n)} (s)t^n \right)-\epsilon_L+\epsilon_R=\\
		&\sum_{n \ge 0}\left( f^{\Delta(\Cg_{n-\rho})}(s)t^\rho +
		s\sum_{\sigma \in S, \tau \in T}t^{1+\gamma_{\sigma, \tau}}
		\sum_{i=0}^{n-1-\gamma_{\sigma, \tau}}
		r_{\sigma, \tau}(n,i)
		f^{\Delta(\Ag^\sigma_i)} (s)
		f^{\Delta(\Bg^\tau_{n-1-\gamma_{\sigma, \tau}-i})}(s)
		t^{n-1-\gamma_{\sigma, \tau}}\right)
	\end{align*}

	We again assume that we are able to rewrite $r_{\sigma, \tau}(n,i)$ as a sum of separable functions, $r_{\sigma, \tau}(n,i)=\sum_{j=1}^k r_{\sigma, \tau}^{\Ag,j}(i) r_{\sigma, \tau}^{\Bg,j}(n-1-i-\gamma_{\sigma, \tau})$ for each pair $\sigma, \tau$. With these two substitutions, we can write

	\[
		f^{\Delta(G)} (s) -\epsilon_L+\epsilon_R= f^{\Delta(\Cg)}(s)t^\rho +\]

		\[
		\sum_{n\ge 0}
		\sum_{\sigma \in S, \tau \in T}
		\sum_{j=1}^k
		st^{1+\gamma_{\sigma, \tau}}
		\sum_{i=0}^{n-1-\gamma_{\sigma, \tau}}
		r_{\sigma, \tau}^{\Ag,j}(i)
		f^{\Delta(\Ag^\sigma_i)} (s)
		t^i
		r_{\sigma, \tau}^{\Bg,j}(n-1-i-\gamma_{\sigma, \tau})
		f^{\Delta(\Bg^\tau_{n-1-\gamma_{\sigma, \tau}-i})}(s)
		t^{n-1-i-\gamma_{\sigma, \tau}}.
	\]

	We can then rewrite this convolution as a product of power series. Using the same definition of the linear operator $\sum_{n\ge0} r_{\sigma, \tau}^{\Ag,j}(n) a_nt^n=O_t^{\Ag,\sigma, \tau,j}[a(t)]$, we get a sum of products of linear operations on $f^{\Delta(\Ag^\sigma)}$ and $f^{\Delta(\Bg^\tau)}$, which we can rewrite as the equation in the following proposition.

	\begin{proposition} \label{prop:maximal-tube-enumeration-big-result}
	Consider the case where $\Delta=\{\Delta_0,\Delta_1,\ldots\}$ is a family of simplicial complexes with $\Delta_n$ rank $n$, and graph family $G=\{G_0,G_1,\ldots\}$ is such that $G_n$ is a $\Delta_n$-graph, and $X_n$ is a kingmaker set of $G_n$. Consider the case that each tube $t$ of shape $\sigma$ containing $i+1$ vertices and having neighborless complement of shape $\tau$ splits $G_n$ into an induced graph isomorphic to simplex-graph $\Ag_i^\sigma$, and a neighborless complement graph isomorphic to $\Bg_{n-1-\gamma_{\sigma,\tau}-i}$. We also assume there is a constant value $\rho$ such that each complex $\Delta_n \backslash X_n$ is rank $n-\rho$ for $n \ge \rho$, and define $C_{n-\rho}$ equal to the $\Delta_n\backslash X_n$-graph $G_n\backslash X_n$. In this case, we find the following equation:
	\[
		f^{\Delta(G)}(s,t)-\epsilon_L+\epsilon_R
		=
		f^{\Delta(\Cg)}(s,t)t^\rho +
		st\sum_{\sigma \in S, \tau \in T} t^{\gamma_{\sigma, \tau}} \sum_{j=1}^k
		O_t^{\Ag,\sigma, \tau,j}\left[f^{\Delta(\Ag^\sigma)}(s,t)\right]
		O_t^{\Bg,\sigma, \tau,j}\left[f^{\Delta(\Bg^\tau)}(s,t)\right].
	\]
	\end{proposition}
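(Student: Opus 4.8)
The plan is to assemble the claimed identity by a careful bookkeeping argument built on top of Proposition \ref{prop:kingmaker-partition-2}, which already gives, for each fixed $n$, the single-complex identity
\[
f^{\mathcal{N}(G_n,\Delta_n)}(s) = f^{\mathcal{N}(G_n\backslash X_n,\Delta_n\backslash X_n)}(s) + s\sum_{t \in intset_{X_n}} f^{\mathcal{N}(G_n|_t)}(s)\, f^{\mathcal{N}((G_n/t)\backslash xn(t),(\Delta_n/t)\backslash xn(t))}(s).
\]
First I would fix the indexing conventions: group the tubes $t \in intset_{X_n}$ by their shape $\sigma \in S$ and the shape $\tau \in T$ of their neighborless complement, so that $\mathcal{N}(G_n|_t) \cong \Delta(\Ag^\sigma_i)$ with $i=|t|-1$ and $\mathcal{N}((G_n/t)\backslash xn(t),\dots) \cong \Delta(\Bg^\tau_{n-1-\gamma_{\sigma,\tau}-i})$ by the hypothesis that $t$ splits $G_n$ into those two graphs. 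Using Proposition \ref{prop:product-of-polyhedra-f-polynomials} (equivalently its simplicial-complex analogue proved just before it) to turn the product of complexes into a product of $f$-polynomials, and letting $r_{\sigma,\tau}(n,i)$ count the tubes of a given shape and size, the single-$n$ identity becomes
\[
f^{\Delta(G_n)}(s) = f^{\Delta(\Cg_{n-\rho})}(s) + s\sum_{\sigma\in S,\tau\in T}\sum_{i=0}^{n-1-\gamma_{\sigma,\tau}} r_{\sigma,\tau}(n,i)\, f^{\Delta(\Ag^\sigma_i)}(s)\, f^{\Delta(\Bg^\tau_{n-1-\gamma_{\sigma,\tau}-i})}(s),
\]
valid for $n \ge \rho$ (this is exactly where the constant rank $n-\rho$ of $\Delta_n\backslash X_n$ is used, to identify $G_n\backslash X_n$ with $\Cg_{n-\rho}$).

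Next I would multiply by $t^n$ and sum. The subtlety is that the identity above only holds for $n \ge \rho$, whereas the bivariate $f$-polynomial $f^{\Delta(G)}(s,t)$ sums over all $n \ge 0$; this is precisely what the error terms $\epsilon_L$ and $\epsilon_R$ are designed to absorb. I would define $\epsilon_L$ as the sum of the left-hand sides over $0 \le n \le \rho-1$ and $\epsilon_R$ as the sum of the right-hand sides over the same range, so that subtracting $\epsilon_L$ from $f^{\Delta(G)}(s,t)$ and adding $\epsilon_R$ produces an identity where both sides are genuine sums over all $n \ge 0$. Then, after reindexing the inner sum by $i$ and $n-1-\gamma_{\sigma,\tau}-i$ (the two exponents of $t$ split cleanly as $t^{\gamma_{\sigma,\tau}}\cdot t^i \cdot t^{n-1-\gamma_{\sigma,\tau}-i}$ after pulling out one factor of $st^{1+\gamma_{\sigma,\tau}}$), each summand over $\sigma,\tau$ becomes a Cauchy product of two power series in $t$.

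The final step uses the separability hypothesis $r_{\sigma,\tau}(n,i) = \sum_{j=1}^k r_{\sigma,\tau}^{\Ag,j}(i)\, r_{\sigma,\tau}^{\Bg,j}(n-1-i-\gamma_{\sigma,\tau})$ to factor the coefficient across the two halves of the convolution, so that the double sum over $i$ and the outer index becomes, for each $j$, the product of the power series $\sum_i r_{\sigma,\tau}^{\Ag,j}(i) f^{\Delta(\Ag^\sigma_i)}(s) t^i$ and $\sum_m r_{\sigma,\tau}^{\Bg,j}(m) f^{\Delta(\Bg^\tau_m)}(s) t^m$. By the definition of the linear operator $O_t^r$ (namely $O_t^r[\sum a_n t^n] = \sum r(n) a_n t^n$) and the abbreviation $O_t^{\Ag,\sigma,\tau,j} = O_t^{r^{\Ag,j}_{\sigma,\tau}}$, these are exactly $O_t^{\Ag,\sigma,\tau,j}[f^{\Delta(\Ag^\sigma)}(s,t)]$ and $O_t^{\Bg,\sigma,\tau,j}[f^{\Delta(\Bg^\tau)}(s,t)]$, which yields the stated equation after restoring the prefactor $st^{1+\gamma_{\sigma,\tau}} = st\cdot t^{\gamma_{\sigma,\tau}}$. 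I expect the main obstacle to be purely organizational rather than conceptual: keeping the ranges of summation, the shifts by $\gamma_{\sigma,\tau}$ and $\rho$, and the two error terms consistent so that nothing is double-counted or dropped when passing from ``$n \ge \rho$'' to ``all $n \ge 0$'' — in particular verifying that the neighborless complement really does have the claimed rank $n-1-\gamma_{\sigma,\tau}-i$ uniformly in each shape class, since that rank (unlike in the pure reconnected-complement case) is not forced and is only assumed to be constant within a shape pair.
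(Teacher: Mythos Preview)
Your proposal is correct and follows essentially the same route as the paper: start from Proposition~\ref{prop:kingmaker-partition-2} at each fixed $n$, group tubes by shape pairs $(\sigma,\tau)$ to obtain the single-$n$ identity valid for $n\ge\rho$, multiply by $t^n$ and sum, introduce the error terms $\epsilon_L,\epsilon_R$ to extend the sum to all $n\ge0$, then use the separability hypothesis to rewrite each Cauchy convolution as a product of the operator expressions $O_t^{\Ag,\sigma,\tau,j}[f^{\Delta(\Ag^\sigma)}]\cdot O_t^{\Bg,\sigma,\tau,j}[f^{\Delta(\Bg^\tau)}]$. Your closing remark that the difficulty is purely organizational (tracking the shifts by $\rho$ and $\gamma_{\sigma,\tau}$ and the rank assumption on the neighborless complement) matches the paper's treatment exactly.
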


	We now finish by taking note of $\epsilon_L$ and $\epsilon_R$. When $\rho=0$, these are both empty sums, and $\epsilon_L=\epsilon_R=0$. When $\rho=1$, $\epsilon_L=f^{\mathcal{N}(\Delta_0,G_0)}(s)$, and because $\Delta_0$ is is a rank $0$ simplicial complex, its tubing complex must be the empty set, and $\epsilon_L=1$. In calculating $\epsilon_R$, we calculate the terms for $n=0$, but indices $n-\rho$ and $n-1-\gamma_\sigma-i$ are all less than zero, and so $\epsilon_R = 0$. The calculation of $\epsilon_L, \epsilon_R$ is more complicated when $\rho \ge 2$ and requires manual calculation, but we do not need this case for this thesis and so we omit this case.

		\subsection{Half-Open Polyhedra}\label{sub:halfopen}

	Using this method to enumerate tubings of $\Delta$-graphs requires the calculation of the graphs $(G/t)\backslash xn(t)$ and $G\backslash X_n$, and the simplicial complexes $(\Delta_n/t)\backslash xn(t)$ and $\Delta_n\backslash X_n$ When $\Delta$ is the dual simplicial complex of a polyhedron, we know that $\Delta_n/t$ is also dual to a polyhedron. However, we have no guarantee that $(\Delta_n/t)\backslash xn(t)$ or $\Delta_n\backslash X_n$ is dual to a polyhedron. In general, if $P$ is a a simple polyhedron and $S$ is a set of facets of $P$, it is possible for $\Delta(P)\backslash S$ to not be dual to any polyhedron. We will define a new class of shapes related to polyhedra, called half-open polyhedra, in order to understand this.

	\begin{definition} \label{def:shaving}
	A \emph{half-open polyhedron} is any collection of points in a vector space $\R^n$ defined by a finite set of inequalities $\{cx \le a\}$ and a finite set of strict inequalities $\{dx < b\}$.
	\end{definition}

	Every half-open polyhedron is convex, and half-open polyhedra are a class of shapes which include polyhedra. Define the \emph{shaving} of a polyhedron $P$ by a set of proper faces $F_1,\ldots, F_k$ to be the set $P \backslash (F_1 \cup F_2 \cup \cdots \cup F_k)$.

	\begin{proposition}
	For any polyhedron $P$ and set of proper faces $S=\{F_1,\ldots, F_k\}$, the shaving of $P$ by $S$ is a half-open polyhedron.
	\end{proposition}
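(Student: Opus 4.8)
The plan is to use the defining property of a face directly. Recall that, by definition, every proper face $F_j$ of $P$ arises as the set of points of $P$ on which some valid inequality $c_j x \le b_j$ is tight; that is, $c_j x \le b_j$ holds for all $x \in P$ and $F_j = \{x \in P : c_j x = b_j\}$. Consequently, removing $F_j$ from $P$ amounts to strengthening that single inequality to a strict one: $P \setminus F_j = \{x \in P : c_j x < b_j\}$. The shaving of $P$ by $\{F_1,\ldots,F_k\}$ is then obtained by imposing all $k$ of these strict inequalities simultaneously, together with the finitely many non-strict inequalities defining $P$, which is exactly the form demanded by Definition \ref{def:shaving}.

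Concretely, I would proceed as follows. First, fix a finite system of non-strict inequalities $\{c_i x \le a_i\}_{i \in I}$ defining $P$. Next, for each $j \in \{1,\ldots,k\}$, invoke the definition of a face to obtain a row vector $c_j$ and scalar $b_j$ with $c_j x \le b_j$ valid on $P$ and $F_j = \{x \in P : c_j x = b_j\}$. Then verify the set identity $P \setminus F_j = \{x \in P : c_j x < b_j\}$: the inclusion $\supseteq$ is immediate, since $c_j x < b_j$ forces $x \notin F_j$, and $\subseteq$ holds because every $x \in P$ satisfies $c_j x \le b_j$, so $x \notin F_j$ forces the inequality to be strict. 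Finally, by De Morgan, $P \setminus (F_1 \cup \cdots \cup F_k) = \bigcap_{j=1}^k (P \setminus F_j) = \{x : c_i x \le a_i \text{ for all } i \in I,\ c_j x < b_j \text{ for } j = 1,\ldots,k\}$, a finite conjunction of non-strict and strict linear inequalities, hence a half-open polyhedron.

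I do not expect a genuine obstacle here; the argument is essentially a bookkeeping exercise. The one point requiring care is the precise reading of \emph{face}: the proof needs $F_j$ to be \emph{exactly} the equality locus $\{x \in P : c_j x = b_j\}$ of a supporting inequality, not merely a subset of one, so I would state that characterization (valid for all faces of a polyhedron) explicitly before using it. Degenerate cases are harmless: an empty face contributes no strict inequality and can be discarded, and when $k = 0$ the shaving is $P$ itself, which is already a half-open polyhedron.
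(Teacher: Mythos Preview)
Your proposal is correct and essentially identical to the paper's proof: both choose, for each proper face $F_j$, a supporting inequality $c_j x \le b_j$ whose equality locus in $P$ is exactly $F_j$, and then observe that the shaving is defined by the original non-strict inequalities of $P$ together with the strict inequalities $c_j x < b_j$. Your write-up is slightly more careful in verifying the set identity $P\setminus F_j=\{x\in P:c_jx<b_j\}$ and applying De~Morgan, but the approach is the same.
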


	\begin{proof}
		For each face $F_i$, there exists a vector $c_i$ in the normal cone of $F_i$ such that $c_i \cdot x$ is maximized on $F_i$, with $c_i \cdot x = a_i$ for all $x \in F_i$. The shaving of $P$ by $S$ is the half-open polyhedron defined by all non-strict inequalities defining $P$, and adding strict inequalities $c_i \cdot x < a_i$ for all $1 \le i \le k$.
	\end{proof}

	We define faces of half-open polyhedra in the same way we define faces of polyhedra.

		\begin{definition}
	A \emph{face} of a half-open polyhedron $Q$ is any set of points $F$ such that there exists a linear inequality $cx \le b$ for all $x \in Q$ and $x \in F$ if and only if $cx=b$ and $x\in Q$.
	\end{definition}

	\begin{proposition}
		Every half-open polyhedron is the shaving of a polyhedron by some set of faces.
	\end{proposition}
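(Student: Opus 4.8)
The plan is to reverse the construction of the preceding proposition: given a half-open polyhedron $Q$, relax each of its strict inequalities to a non-strict one to obtain an honest polyhedron $P$, and then check that the faces of $P$ cut off by those strict inequalities are exactly what must be shaved away to recover $Q$.

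Concretely, using Definition \ref{def:shaving}, write $Q = \{x \in \R^n : c_i x \le a_i \text{ for } i \in I,\ d_j x < b_j \text{ for } j \in J\}$ for finite index sets $I, J$. Set $P = \{x \in \R^n : c_i x \le a_i \text{ for } i \in I,\ d_j x \le b_j \text{ for } j \in J\}$; this is a polyhedron, and $Q \subseteq P$. For each $j \in J$ put $G_j = \{x \in P : d_j x = b_j\}$. Since the inequality $d_j x \le b_j$ is valid on all of $P$ by construction, $G_j$ is a face of $P$. The key identity is then purely set-theoretic: a point $x \in P$ lies in $Q$ if and only if $d_j x < b_j$ for every $j \in J$, i.e.\ if and only if $x \notin G_j$ for every $j$; hence $Q = P \setminus \bigcup_{j \in J} G_j$, which is precisely the shaving of $P$ by the faces $\{G_j : j \in J\}$.

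The only real care needed is to arrange that the shaved faces can be taken to be \emph{proper} faces, matching the definition of shaving. If $G_j = P$ for some $j$, then $d_j x < b_j$ fails at every point of $P$, so $Q = \emptyset$; in that case $Q$ is exhibited as the shaving of the empty polyhedron (which is a polyhedron, being cut out by e.g.\ $x_1 \le 0$ together with $-x_1 \le -1$) by the empty set of faces. If $G_j = \emptyset$ for some $j$, then $d_j x < b_j$ already holds throughout $P$, so that strict inequality may be deleted from the presentation of $Q$ without changing $Q$; after deleting all such $j$ and discarding the trivial empty case, every remaining $G_j$ is a proper face of $P$, and $Q$ is its shaving.

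I expect the main (minor) obstacle to be exactly this bookkeeping around degenerate faces — the whole polyhedron appearing as a ``face'' cut off by a strict inequality, the empty face cut off by a redundant strict inequality, and the empty half-open polyhedron — none of which is deep, but all of which must be dispatched so that the statement is literally true as phrased. The core content, namely the identity $Q = P \setminus \bigcup_{j \in J} G_j$ together with the observation that each $G_j$ is a face of $P$, is immediate from the definitions.
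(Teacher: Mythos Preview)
Your approach is essentially the same as the paper's: relax each strict inequality $d_j x < b_j$ to $d_j x \le b_j$ to obtain the polyhedron, and observe that the original half-open polyhedron is recovered by shaving off the faces where equality holds. The paper's proof is a terse two-line version of this (and even contains a typo, writing ``$Q'$ is a shaving of $Q$'' when it means the reverse); your version is more careful, in particular your handling of the degenerate cases $G_j = P$, $G_j = \emptyset$, and $Q = \emptyset$ is genuinely needed to match the definition of shaving by \emph{proper} faces, and the paper simply glosses over this.
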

\begin{proof}
Consider a set of inequalities defining a half-open polyhedron $Q$ of the form $\{cx \le a\}$ and $\{dx < b\}$. Now define $Q'$ as the polyhedron defined by replacing every strict inequality $\{dx < b\}$ with the inequality $dx \le b$. We see that $Q'$ is a shaving of $Q$.
\end{proof}


	To further understand half-open polyhedra, we note that if $P'$ is a nonempty shaving of a polyhedron $P$, then we note that $relint(P) \subseteq P' \subseteq P$, where $relint(P)$ is the relative interior of $P$. This then means that the topological closure of $P'$ is equal to $P$.

	The \emph{face lattice} of a half-open polyhedron $P$ is the set of faces of $P$ ordered under inclusion. We define combinatorial isomorphism the same for half-open polyhedra as for polyhedra, but note that it behaves very differently. 
	
	\begin{definition}
	Two half-open polyhedra $P, P'$ are \emph{combinatorially isomorphic} if their face posets are related by a dimension-preserving isomorphism.
	\end{definition}

	Whereas we can typically imagine that every combinatorial isomorphism between polyhedra can be realized by a homeomorphism, this is not the case for half-open polyhedra. Figure \ref{fig:unintuitive-isomorphism} shows two half-open polyhedra, each a hexagon with four vertices shaved off. The two half-open polyhedra are combinatorially isomorphic, even if any combinatorial isomorphism would have to permute the order of the faces on the boundary. Figure \ref{fig:half-open-isomorphism}, on the other hand, shows a more intuitive example of combinatorially isomorphic half-open polyhedra; one case where we have shaved two sides off a square, one case where we have shaved one side from a triangle, and one cone.

	\begin{figure}
	\centering
	\includegraphics[width=.75\textwidth]{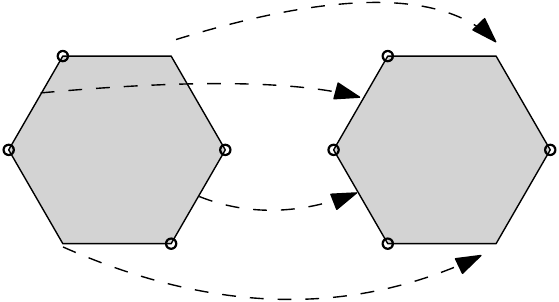}
	\caption{A surprising combinatorial isomorphism.}\label{fig:unintuitive-isomorphism}
\end{figure}

	\begin{figure}
	\centering
	\includegraphics[width=.75\textwidth]{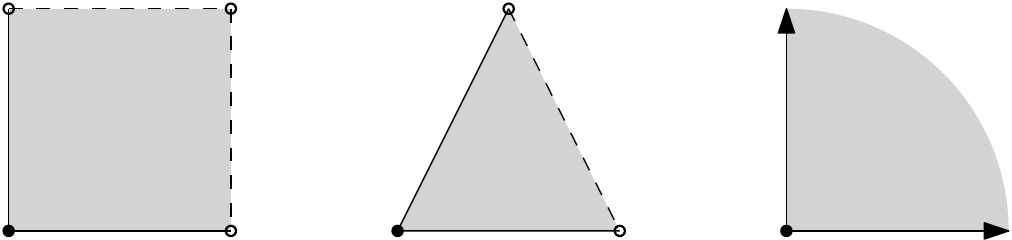}
	\caption{Three combinatorially isomorphic half-open polyhedra}\label{fig:half-open-isomorphism}
\end{figure}

	A \emph{simple} half-open polyhedron is a half-open polyhedron such that every codimension-$k$ face is contained in $k$ facets. We note that every shaving of a simple polyhedron is a simple half-open polyhedron. The \emph{dual simplicial complex} of a half-open polyhedron $P$ is a simplicial complex such that if $F_s$ for $s \in \mathcal{S}$ is the facet set of $P$, then $I$ is in the dual simplicial complex of $P$ if and only if $\bigcap_{s \in I} F_s \ne \emptyset$. These definitions are identical to those for polyhedra.

	We include the next definition to clarify Definition \ref{def:shaving} by way of contrast.

	\begin{definition}
		For a simple polyhedron $P$ defined by facet-defining inequalities $c_i x \le a_i$ for each facet $F_1,\ldots,F_k$ of $P$, the polyhedron defined by the \emph{removal} of a facet $F_j$ from $P$ is the polyhedron defined by inequalities $c_ i x \le a_i$ where $i \ne j$.
	\end{definition}

	There are some cases where shaving a polyhedron by a facet gives a half-open polyhedron isomorphic to the removal of that facet, but they are not isomorphic in general. Figure \ref{fig:facet-removal} demonstrates the difference between shaving by a facet, and removal of a facet. We note that the combinatorial type of the removal of a facet depends on the angle of adjacent facets, and not just the combinatorial type of the facet. For polytopes, we can define a projective transformation for each facet such that the shaving of that facet is isomorphic to the removal of that facet. However, this is not always possible for unbounded polyhedra or shaving multiple facets.

Figure \ref{fig:half-open-polyhedra} shows a case where the shaving of several facets gives a half-open polyhedron is not combinatorially isomorphic to any polyhedron, as the resulting polyhedron would have to contain three line faces and no other nonempty proper faces.
	
	\begin{figure}[h]
	\centering
	\includegraphics[width=.75\textwidth]{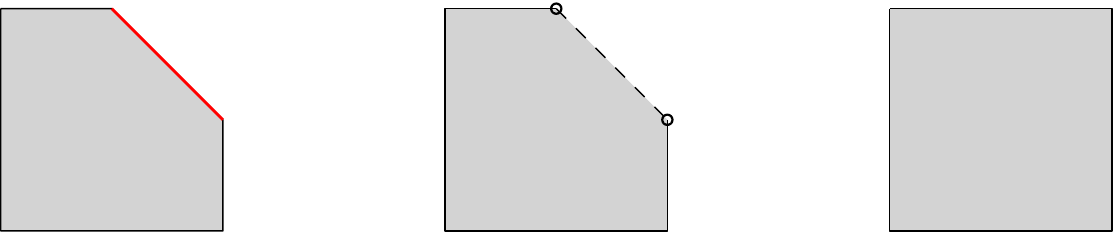}
	\caption{A pentagon, the shaving of the pentagon by a facet, and the removal of that facet.} \label{fig:facet-removal}
\end{figure}

	\begin{figure}[h]
	\centering
	\includegraphics[width=.5\textwidth]{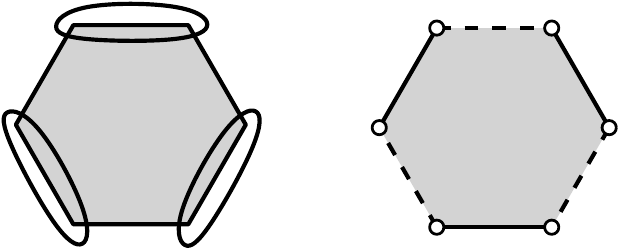}
	\caption{Shaving three facets from a hexagon to obtain a half-open polyhedron.} \label{fig:half-open-polyhedra}
\end{figure}

	\begin{proposition} \label{prop:shaved-simplex}
		When $P$ is an $n$-dimensional simplex, the shaving of $P$ by a facet is combinatorially isomorphic to an $n$-dimensional simplicial cone.
	\end{proposition}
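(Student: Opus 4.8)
The plan is to work directly with the very concrete combinatorics of the $n$-simplex and to exhibit an explicit, dimension-preserving isomorphism of face posets. Label the facets of $P$ as $F_1,\dots,F_{n+1}$ with opposite vertices $v_1,\dots,v_{n+1}$ (so $v_i\notin F_i$ and $v_i\in F_j$ for $j\neq i$), and, since a simplex is symmetric under relabeling facets, take without loss of generality $Q = P\backslash F_{n+1}$. Because $P$ is a simple polyhedron, $Q$ is a simple half-open polyhedron by the result quoted just before the statement, so by the definition of combinatorial isomorphism for half-open polyhedra it suffices to compare their face posets. Recall that each nonempty face of $P$ has the form $F_J=\bigcap_{j\in J}F_j$ for a proper subset $J\subsetneq\{1,\dots,n+1\}$, and that $F_J$ is the convex hull of the vertices $v_k$ with $k\notin J$.

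The first step is to pin down the faces of $Q$. Since $\overline{Q}=P$, any inequality $cx\le b$ valid on $Q$ is valid on $P$, so every face $\{x\in Q : cx=b\}$ of $Q$ equals $F_J\cap Q=F_J\backslash F_{n+1}$ for some face $F_J$ of $P$; conversely, taking $c=\sum_{j\in J}c_j$ for the facet inequalities $c_jx\le a_j$ shows that $F_J\backslash F_{n+1}$ is a face of $Q$ whenever it is nonempty. The set $F_J\backslash F_{n+1}$ is nonempty exactly when $F_J\not\subseteq F_{n+1}$, which, reading off vertex sets, happens precisely when $n+1\notin J$. Moreover distinct such $J$ give distinct faces of $Q$, since $F_J\backslash F_{n+1}$ contains the relative interior of $F_J$ and hence recovers $F_J$ as its closure, and $\dim(F_J\backslash F_{n+1})=\dim F_J=n-|J|$. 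Thus the nonempty faces of $Q$ are in inclusion-reversing, dimension-respecting bijection with the subsets $J\subseteq\{1,\dots,n\}$; here $J=\emptyset$ gives $Q$ itself and $J=\{1,\dots,n\}$ gives the single vertex $\{v_{n+1}\}$.

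The second step is to compare with an $n$-dimensional simplicial cone $C=\conichull\{r_1,\dots,r_n\}$ with $r_1,\dots,r_n$ linearly independent. Its nonempty faces are exactly $\conichull\{r_i : i\in S\}$ for $S\subseteq\{1,\dots,n\}$, with dimension $|S|$ and ordered by inclusion of the $S$. The assignment $J\mapsto S=\{1,\dots,n\}\backslash J$ then sends the nonempty faces of $Q$ bijectively to the nonempty faces of $C$, turning the inclusion-reversing correspondence above into an order-preserving one, with $\dim F_J=n-|J|=|S|$. Both posets also contain the empty set as a formal minimum face, so this extends to a dimension-preserving isomorphism of the full face posets, whence $Q$ is combinatorially isomorphic to $C$.

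The step I expect to require the most care is the first one: correctly identifying the faces of the half-open polyhedron $Q$ from the definition of "face," and checking that the correspondence with faces of $P$ is injective and dimension-preserving, since (as the earlier figures warn) the face lattice of a half-open polyhedron can behave counterintuitively. Everything afterward is bookkeeping with subsets of $\{1,\dots,n\}$. An alternative would be to note that $Q$ and $C$ have the same dual simplicial complex (the full subset complex on $n$ elements) and invoke a uniqueness statement; but that route would still require establishing that a simple half-open polyhedron is combinatorially determined by its dual simplicial complex, so the direct poset computation is the cleaner path.
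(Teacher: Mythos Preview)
Your argument is correct. The paper actually states this proposition without proof, offering only the forbidden subset diagram illustration (Figure~\ref{fig:shaving-forbidden-subsets}): shaving a facet deletes that vertex from the diagram together with the unique forbidden subset containing it, leaving the full subset complex on the remaining $n$ facets, which is the dual simplicial complex of a simplicial cone. Your direct computation of the face poset of $Q$ is exactly what makes that picture rigorous, and your caution about not simply invoking ``same dual simplicial complex implies combinatorially isomorphic'' is well placed in the half-open setting; the paper sidesteps this by treating the result as evident from the diagram.
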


	This operation can be visualized on the forbidden subset diagram of a simplex. Figure \ref{fig:shaving-forbidden-subsets} shows that the shaving of a facet $F$ removes the facet $F$ from the forbidden subset diagram, as well as the forbidden subset containing $F$, but not the rest of the vertices in the forbidden subset.

\begin{figure}[h]
	\centering
	\includegraphics[width=.75\textwidth]{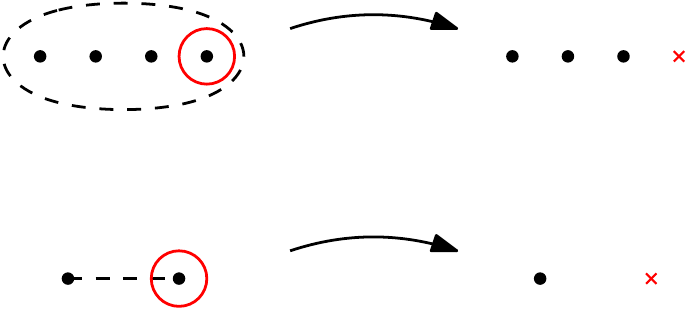}
	\caption{Forbidden subset diagrams of $3$- and $1$-dimensional simplices, and their resulting forbidden subset diagrams after shaving a facet.} \label{fig:shaving-forbidden-subsets}
\end{figure}

	The motivation behind all of this discussion of half-open polyhedra comes in the following proposition.

	\begin{proposition}
		When $P$ is a simple polyhedron with facet index set $\mathcal{S}$, and $X \subseteq \mathcal{S}$, the shaving of $P$ by all facets $F_s$ for $s \in X$ has a dual simplicial complex equal to $\Delta(P)\backslash X$.
\end{proposition}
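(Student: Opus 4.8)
The plan is to show that the facet structure of the shaved polyhedron matches $\Delta(P) \backslash X$ exactly, by tracking which facets survive the shaving and which intersections of surviving facets remain nonempty. First I would set up notation: let $P$ be defined by facet-defining inequalities $c_s x \le a_s$ for $s \in \mathcal{S}$, and let $P'$ denote the shaving of $P$ by the facets $\{F_s \mid s \in X\}$, which by the earlier proposition on shavings is the half-open polyhedron defined by the inequalities $c_s x \le a_s$ for $s \in \mathcal{S}\backslash X$ together with the strict inequalities $c_s x < a_s$ for $s \in X$. I would first argue that the facets of $P'$ are precisely the sets $F_s \cap P'$ for $s \in \mathcal{S}\backslash X$, and that for $s \in X$ the set $F_s \cap P' = \emptyset$ since every point of $P'$ satisfies $c_s x < a_s$ strictly. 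This requires checking that $F_s \cap P'$ is genuinely a facet of $P'$ (nonempty and of the right codimension) for each $s \notin X$, which follows because $P'$ contains the relative interior of $P$, so its closure is $P$, and the face $F_s$ of $P$ meets that closure in a set whose relative interior lies in $P'$.

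Next I would identify the dual simplicial complex of $P'$. By definition, $I \subseteq \mathcal{S}\backslash X$ lies in $\Delta(P')$ if and only if $\bigcap_{s \in I} (F_s \cap P') \ne \emptyset$, i.e. if and only if there is a point of $P$ lying on all facets $F_s$ for $s \in I$ but off every facet $F_s$ for $s \in X$. The key claim is that such a point exists if and only if $F_I = \bigcap_{s \in I} F_s$ is a nonempty face of $P$ \emph{and} $F_I$ is not contained in any $F_s$ with $s \in X$ — equivalently, $I \cup \{s\} \notin \Delta(P)$ is false... more carefully, the condition is that $F_I \ne \emptyset$ and $I$ together with no element of $X$ is forced, which is exactly the statement that $I \in \Delta(P)$ and $I$ is a face of the deletion $\Delta(P)\backslash X$. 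The forward direction is immediate. For the converse, if $F_I$ is a nonempty face of $P$ not contained in any $F_s$ for $s \in X$, then for each $s \in X$ the set $F_I \cap F_s$ is a proper face of $F_I$, and since a face of a polyhedron is not a finite union of proper subfaces, the relative interior of $F_I$ avoids all of these, producing the desired point.

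I would then combine these observations: the facets of $P'$ are indexed by $\mathcal{S}\backslash X$, and a subset $I \subseteq \mathcal{S}\backslash X$ indexes a nonempty face of $P'$ exactly when $I \in \Delta(P)$, since any $I \subseteq \mathcal{S}\backslash X$ with $I \in \Delta(P)$ automatically has $F_I$ not contained in any $F_s$ for $s \in X$ (if it were, then $I \cup \{s\}$ would be in $\Delta(P)$ but $s \notin I$, which is fine, so I need the relative-interior argument above to conclude $F_I \cap P' \ne \emptyset$). Recalling that $\Delta(P)\backslash X$ is by definition the subcomplex of $\Delta(P)$ consisting of faces using no element of $X$, this gives $\Delta(P') = \Delta(P)\backslash X$ as desired. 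Finally I would note simplicity is preserved — every shaving of a simple polyhedron is a simple half-open polyhedron, as remarked earlier — so the dual simplicial complex is well-defined in the stated sense.

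The main obstacle I expect is the converse direction of the key claim: rigorously establishing that a nonempty face $F_I$ of $P$ which is not contained in any shaved facet $F_s$ ($s\in X$) contains a point of $P'$. The cleanest route is the standard fact that a face (or any convex polyhedron) is not the union of finitely many of its proper faces, so the relative interior of $F_I$ is disjoint from each $F_I \cap F_s$; I would want to state this carefully, perhaps as a short lemma, and make sure the argument also covers the edge cases where $I = \emptyset$ (so $F_I = P$ and the claim is just that $P'$ is nonempty, which holds by hypothesis since we only shave proper faces) and where $\mathcal{S}\backslash X$ is small.
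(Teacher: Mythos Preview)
Your approach is correct and considerably more thorough than the paper's. The paper's proof is essentially a one-line sketch: shaving by a facet $F_s$ removes precisely those faces of $P$ contained in $F_s$, which dually corresponds to deleting from $\Delta(P)$ every face containing $s$; iterating over $s \in X$ gives the result. Your proposal unpacks this into a genuine two-direction verification, in particular supplying the relative-interior argument needed to show that faces \emph{not} contained in any shaved facet actually survive.

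The one place your write-up wobbles is the parenthetical in the third paragraph, where you note that $F_I \subseteq F_s$ would only give $I\cup\{s\}\in\Delta(P)$, ``which is fine,'' and then fall back on the relative-interior argument. But that argument needs each $F_I \cap F_s$ to be a \emph{proper} face of $F_I$, i.e.\ it already presupposes $F_I \not\subseteq F_s$. The missing step is to invoke simplicity explicitly: since $P$ is simple and $F_I \ne \emptyset$, the face $F_I$ has codimension exactly $|I|$, whereas $F_{I\cup\{s\}} = F_I \cap F_s$ (if nonempty) has codimension $|I|+1$; hence $F_I \cap F_s \subsetneq F_I$ whenever $s \notin I$. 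With this stated, your union-of-proper-faces argument goes through cleanly.
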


	\begin{proof}
		Shaving by a facet $F_s$ removes all faces contained in that facet, which is analogous to removing any face $S$ of $\Delta(P)$ which contains $s$. We repeat this for every element $s \in X$.
	\end{proof}

	We can then refer to the half-open polyhedron obtained by shaving $P$ by a facet set $X$ as $P \backslash X$. We note that $P \backslash X$ is not always combinatorially isomorphic to a polyhedron. However, Proposition \ref{prop:shaved-simplex} shows that the shaving of a single facet from a simplex is isomorphic to a polyhedron, and it is trivial to show that the shaving of a set of facets from a product of simplices is isomorphic to a simple polyhedron that is the product of rays and simplices. Most importantly for our application, the shaving of any hypercube is isomorphic to the product of a set of $1$-simplices and rays.

	\subsection{$\p$-graph case}

	If $G$ is a family of $\p$-graphs, with each polyhedron $P_n$ being pointed and $n$-dimensional, then we recall Proposition \ref{prop:change-of-basis}, which will relate the generating functions $f^{P(G)}(x,y)=f^{\Delta(G)}(s,t)$ by the change of variables $x=1/s, y=st$. We then apply this change of variables to Proposition \ref{prop:maximal-tube-enumeration-big-result}.

%


	\begin{proposition} \label{prop:enumeration-maximal-tube-polyhedra}
When the sets of families of simplicial-complex graphs $G, \Ag, \Bg,$ and $\Cg$ in Proposition \ref{prop:maximal-tube-enumeration-big-result} are all $\p$-graphs of families of simple pointed polyhedra, and $\rho \le 1$, we find
	\[
		f^{P(G)}(x,y)-\epsilon_L= f^{P(\Cg)}(x,y) (xy)^\rho +
		y\sum_{\sigma \in S, \tau \in T} (xy)^{\gamma_{\sigma,\tau}} \sum_{j=1}^k
		O_y^{\Ag,\sigma,\tau,j}\left[f^{P(\Ag^\sigma)}(x,y)\right]
		O_y^{\Bg,\sigma,\tau,j}\left[f^{P(\Bg^\tau)}(x,y)\right]
	\]
where $\epsilon_L=1$ if $\rho=1$ and $\epsilon_L=0$ if $\rho=0$.
\end{proposition}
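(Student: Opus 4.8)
The plan is to obtain Proposition \ref{prop:enumeration-maximal-tube-polyhedra} as a direct translation of Proposition \ref{prop:maximal-tube-enumeration-big-result} under the change of variables $x=1/s$, $y=st$ supplied by Proposition \ref{prop:change-of-basis}. Since every family $G$, $\Ag$, $\Bg$, $\Cg$ consists of $\p$-graphs of families of simple pointed polyhedra, each simplicial complex of rank $n$ appearing in these families is the dual simplicial complex of an $n$-dimensional pointed simple polyhedron, so Proposition \ref{prop:change-of-basis} applies to each of the bivariate $f$-polynomials $f^{\Delta(G)}$, $f^{\Delta(\Ag^\sigma)}$, $f^{\Delta(\Bg^\tau)}$, $f^{\Delta(\Cg)}$, giving $f^{\Delta(G)}(s,t)=f^{P(G)}(x,y)$ and likewise for the others. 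First I would substitute these equalities into the conclusion of Proposition \ref{prop:maximal-tube-enumeration-big-result}.

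The next step is bookkeeping on the monomial prefactors. On the right-hand side of Proposition \ref{prop:maximal-tube-enumeration-big-result}, the term $f^{\Delta(\Cg)}(s,t)\,t^\rho$ becomes $f^{P(\Cg)}(x,y)\,t^\rho$; since $t=xy$, we have $t^\rho=(xy)^\rho$, yielding $f^{P(\Cg)}(x,y)(xy)^\rho$. Similarly the prefactor $st\,t^{\gamma_{\sigma,\tau}}$ in front of the double sum becomes $st\,(xy)^{\gamma_{\sigma,\tau}}$; here $st=(1/s\cdot s)\cdot t = \ldots$ — more carefully, $st = s\cdot st$? No: $st$ with $s=1/x$ gives $st=t/x=xy/x=y$. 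So $st\,t^{\gamma_{\sigma,\tau}}=y(xy)^{\gamma_{\sigma,\tau}}$, matching the claimed form. The linear operators $O_t^{\Ag,\sigma,\tau,j}$ and $O_t^{\Bg,\sigma,\tau,j}$ act on power series in $t$ by multiplying the coefficient of $t^n$ by a function of $n$; since the substitution sends $t^n$ to $(xy)^n=y^n$ in the relevant graded pieces (the coefficient of $t^n$ in $f^\Delta(s,t)$ becomes the coefficient of $y^n$ in $f^P(x,y)$), these operators become the operators $O_y^{\Ag,\sigma,\tau,j}$ and $O_y^{\Bg,\sigma,\tau,j}$ acting in the variable $y$, exactly as the remark preceding Proposition \ref{prop:diff-eq-polyhedra} already noted (one just replaces $t$ by $y$). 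I would state this replacement explicitly rather than re-deriving the operators.

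Finally I would handle the error terms $\epsilon_L$ and $\epsilon_R$. Using the paragraph following Proposition \ref{prop:maximal-tube-enumeration-big-result}: when $\rho=0$ both are empty sums so $\epsilon_L=\epsilon_R=0$; when $\rho=1$, $\epsilon_R=0$ and $\epsilon_L=f^{\mathcal{N}(\Delta_0,G_0)}(s)$, and since $\Delta_0$ is rank $0$ its tubing complex is the empty complex with $f$-polynomial $1$, so $\epsilon_L=1$. Thus the hypothesis $\rho\le 1$ forces $\epsilon_R=0$ (so it disappears from the translated equation) and $\epsilon_L\in\{0,1\}$ as in the statement. Combining the three steps gives precisely the displayed equation. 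The only genuinely delicate point — and the one I would be most careful to justify — is that applying the change of variables termwise to the convolution/operator expression is legitimate: this is safe because Proposition \ref{prop:change-of-basis} is an equality of formal bivariate power series that is graded-compatible (the coefficient of $s^k t^n$ on one side equals the coefficient of $x^{n-k}y^n$ on the other), so every operation used — the linear operators $O_t$, the outer sums over $\sigma,\tau,j$, and the product of power series realizing the convolution — commutes with the substitution. There are no convergence issues to worry about since everything is formal.
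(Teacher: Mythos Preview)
Your proposal is correct and follows exactly the approach the paper takes: the paper simply states that one applies the change of variables $x=1/s$, $y=st$ from Proposition \ref{prop:change-of-basis} to Proposition \ref{prop:maximal-tube-enumeration-big-result}, and the handling of $\epsilon_L,\epsilon_R$ for $\rho\le 1$ is precisely the content of the paragraph following that proposition. Your write-up is in fact more detailed than the paper's, which gives no further justification beyond naming the change of variables.
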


	\begin{remark}

	We should note what happens when $\rho =1$. If $G_n$ is a $P_n$-graph where $P_n$ is $n$-dimensional, then $P_n \backslash X_n$ is an $n$-dimensional half-open polyhedron, but its dual simplicial complex is a rank $n-1$ complex. This is because the half-open polyhedron $P_n\backslash X_n$ has no vertices. As a result, we will find the complex $\mathcal{N}(G_n\backslash X_n,P_n\backslash X_n)$ is rank $n-1$ as well. We are making the assumption in our proposition that $P_n\backslash X_n$ is isomorphic to some simple polyhedron, which we know is not possible for all half-open polyhedra. In this case, $P_n\backslash X_n$ is isomorphic to an $n$-dimensional unpointed polyhedron, which is isomorphic to the product of a line and a pointed $n-1$-dimensional polyhedron. We then define $C_{n-1}=G_{n}\backslash X_n$ as a $\p$-graph on this $n-1$-dimensional polyhedron. We see this happen often in the case of hypercube graph associahedra, and we will consider an example when $X_n=\{1,-1\}$. See Figure \ref{fig:shaving-dimension} which illustrates the case where $P_2$ is a square, and $X_2=\{1,-1\}$, a pair of opposing facets, and $G_2$ is a $P_2$-graph with no edges. The resulting half-open polyhedron $P_2\backslash X_2$ is unpointed, and its dual simplicial complex is isomorphic to that of a 1-simplex. This can be seen directly on the level of forbidden subset diagrams, as we see that the forbidden subset diagram of $P_2\backslash X_2$ is just the forbidden subset diagram of a $1$-simplex.
\end{remark}

\begin{figure}[h]
	\centering
	\includegraphics[width=.5\textwidth]{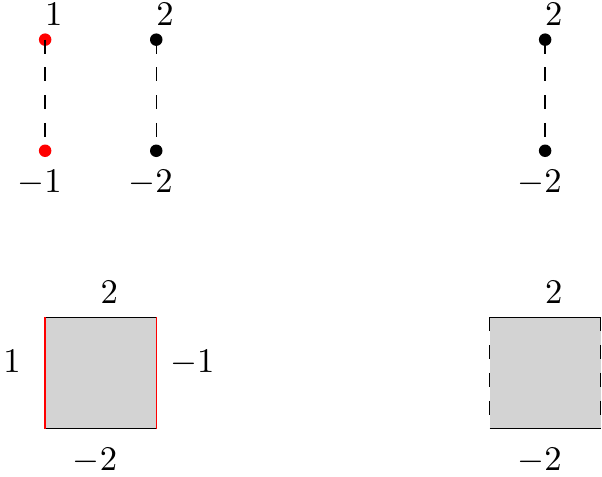}
	\caption{The forbidden subset diagram of a square, and the forbidden subset diagram of a square shaved by two facets.} \label{fig:shaving-dimension}
\end{figure}

\chapter{Examples and Enumeration}
\label{chap:res2}


	For a family of $\p$-graphs $G=\{G_0,G_1,\ldots\}$, we can calculate the generating functions of the bivariate $f$-polynomials of either the $\p$-graph associahedra of $G$, or the $\p$-graph tubing complexes of $G$. In Chapter \ref{chap:methods}, we wrote these functions as $f^{P(G)}(x,y)$, the generating function enumerating the $f$-polynomials of $\p$-graph associahedra of graphs in $G$, and $f^{\Delta(G)}(s,t)$, the generating function enumerating the $f$-polynomials of $\p$-graph tubing complexes of graphs in $G$.

	In this chapter, we will write $f^{G}(x,y)$ to denote the $f$-polynomial $f^{P(G)}(x,y)$, and we will often omit the $(x,y)$ part of this expression and write $f^{G}$. This is done in the spirit of simplifying several complicated expressions. Almost all calculations are performed using the $f$-polynomials of $\p$-graph associhahedra, and when the $f$-polynomial of a tubing complex is calculated, it will be specified as such, as $f^{\Delta(G)}(s,t)$. We also omit several indices during computations wherever it is unambiguous. For instance, when it is understood that path tubes with reconnected complements isomorphic to path graphs are being counted, we may write $r(n,i)$ instead of $r_{path,path}(n,i)$.

	Section \ref{sec:specialcases} lists different families of hypercube-graphs and focuses on interesting combinatorial properties of each hypercube-graph associahedron, as well as giving results enumerating the $f$-polynomials of each hypercube-graph associahedron. The results are sorted by hypercube graph type, with subsections \ref{sub:cubeahedra} and \ref{sub:doublecubeahedra} providing results for cubeahedra and double cubeahedra graphs respectively, and other subsections providing results for other hypercube graphs. The proofs for many of these enumeration results are not given directly in this section. Instead, Section \ref{sec:proofs} contains proofs for these results. One reason to separate the results from the proofs is the fact that many of the enumerative proofs are interdependent, and so the proofs are easier to follow if kept in a different section.

	As for notation: we say a graph contains a path $(v_1,\ldots,v_k)$ if it contains the edges $\{v_1,v_2\}, \{v_2,v_3\}, \ldots, \{v_{k-1},v_k\}$. A graph contains the cycle $(v_1,\ldots,v_k)$ if it contains the path $(v_1,\ldots,v_k)$ and the edge $\{v_1,v_k\}$.

\section{Families of Hypercube Graph Associahedra}\label{sec:specialcases}

	This section lists interesting hypercube graphs and states results calculating the bivariate $f$-polynomials for their associahedra. We note that some of these hypercube-graph associahedra are related to existing known classical graph associahedra. Figure \ref{polytopetable} lists four graphs with well-studied graph associahedra: the path graph, the complete graph, the cycle graph, and the star graph. We find three basic ways to turn a graph on $n$ vertices into a hypercube graph: the cubeahedron case, the double cubeahedron case, and the omni-graph case. In several cases, the cubeahedron or double cubeahedron of an existing graph corresponds to well-known polyhedron, and for some of these cases, we are able to find enumeration results which were not previously known. In other cases, we have defined what appear to be un-discovered polytopes.

	In addition to these hypercube graphs derived from simplex-graphs, we find several hypercube graphs which are not derived by taking copies of simplex-graphs. These include the twisted path and twisted cycle hypercube-graphs, the Pell and companion Pell hypercube-graphs, and the near double path graph. Figure \ref{graphs} shows a gallery of hypercube graphs.
	
	\begin{figure}
		\centering
		\caption{Notable hypercube graphs for the $4$-dimensional hypercube. The top left graph has vertices labeled with members of $\pm[4]$, with dashed lines connecting vertices corresponding to opposing facets; these are not actual edges in the hypercube graph. From top left to bottom right: an empty graph, a full adjacency graph, a $2K_n$ graph, a single path graph, a double path graph, a twisted path graph, a twisted cycle graph, and a single $K_n$ graph.}
		\label{graphs}
	\end{figure}


	
%

	\begin{figure}
	\begin{center}
\scalebox{.9}{	\begin{tabular}{|c|c|c|c|}
	\hline
	 & Graph Associahedron & Graph Cubeahedron \cite{devadoss2011} & Graph Double Cubeahedron \\
	\hline
	Path & $A_{n-1}$ associahedron & $A_n$ associahedron \cite{devadoss2011} & type $A_n$ linear biassociahedron\cite{barnard} \\
	$K_n$ & $A_{n-1}$ permutahedron & Stellohedron \cite{devadoss2011} & $A_n$ permutahedron \cite{multiplihedron}\\
	$n$-cycle & $B_{n-1}$ associahedron & Halohedron \cite{devadoss2011} & {\color{red} Cycle double cubeahedron} \\
	$n$-star & 	stellohedron & {\color{red} Stellar Cubeahedron} & {\color{red} Stellar Double Cubeahedron}\\
	\hline
	\end{tabular}}
	\caption{Polytopes obtained from graph associahedra, cubeahedra, and double cubeahedra of graphs with well-studied graph associahedra. Citations are given for cases where these polytopes have arisen before; entries in red apparently do not exist in the literature.} \label{polytopetable}
	\end{center}
\end{figure}

	\subsection{Graphs on Positive Vertices (Cubeahedra)} \label{sub:cubeahedra}

	Given a graph $G$ on vertices $[n]$, we define the hypercube graph $G^+$ as the graph on $\pm[n]$ such that for positive vertices $i,j \in [n]$, $\{i,j\} \in G^+$ if and only if $\{i,j\} \in G$, and there are no edges incident to negative vertices. We will prove that the hypercube graph associahedron of $G^+$ is isomorphic to the \emph{cubeahedron} of $G$ defined in \cite{devadoss2011}. The cubeahedron is defined by design tubes, which are defined as follows.

	\begin{definition}
		A \emph{round tube} is any subset of $[n]$ which induces a connected subgraph of $G$. A \emph{square tube} is a single element in $[n]$. Both are called \emph{design tubes}. Two design tubes are compatible if either
	\begin{enumerate}
		\item $t_1, t_2$ are both round, and either $t_1 \subset t_2, t_2 \subset t_1$, or $t_1, t_2$ are disjoint and not adjacent.
		\item One or both of $t_1, t_2$ is square, and $t_1, t_2$ are disjoint.
	\end{enumerate}
		A \emph{design tubing} is any set of pairwise compatible design tubes.
	\end{definition}

%

	\begin{proposition}
		For any graph $G$, the design tubing complex of $G$ is isomorphic to the hypercube-graph tubing complex of $G^+$.
	\end{proposition}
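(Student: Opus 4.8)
The plan is to exhibit an explicit bijection between the design tubes of $G$ and the tubes of $G^+$ and to check that it carries design tubings to hypercube-graph tubings and back. First I would characterize the tubes of $G^+$. The only solid edges of $G^+$ lie among the positive vertices and are exactly the edges of $G$, while every negative vertex $-i$ is isolated. Hence a tube of $G^+$ containing a negative vertex $-i$ cannot contain anything else (a connected induced subgraph cannot contain an isolated vertex together with another vertex), so it is the singleton $\{-i\}$; and any tube disjoint from the negative vertices is a subset of $[n]$ inducing a connected subgraph of $G$, i.e.\ a round tube of $G$. Conversely each round tube of $G$, and each singleton $\{-i\}$, is a face of the hypercube's dual complex (it contains no dashed pair $\{j,-j\}$) inducing a connected subgraph, hence a tube of $G^+$. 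So the tubes of $G^+$ are precisely the round tubes of $G$, viewed inside $\pm[n]$, together with the singletons $\{-i\}$ for $i \in [n]$.

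Next I would define $\phi$ on design tubes by sending a round tube $t$ to $t$ itself and the square tube $\{i\}$ to $\{-i\}$. By the previous paragraph this is a bijection from the design tubes of $G$ onto the tubes of $G^+$; note that the set $\{i\}$ occurs twice among the design tubes, once as a round tube and once as a square tube, matching the two distinct tubes $\{i\}$ and $\{-i\}$ of $G^+$.

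Then I would verify that $\phi$ preserves the tubing relation. Both complexes are flag in the relevant sense: a design tubing is by definition a set of pairwise compatible design tubes, and by Proposition~\ref{prop:hypercube-pairwise-compatibility} (together with the observation in the text that a set of hypercube tubes contains a forbidden subset iff some pair $t_1,t_2$ has $t_1 \cap (-t_2) \ne \emptyset$) a set of tubes of $G^+$ is a tubing iff every pair is weakly compatible and no pair $t_1,t_2$ has $t_1 \cap (-t_2) \ne \emptyset$. So it suffices to check, for two distinct design tubes $t_1, t_2$, that $t_1,t_2$ are compatible iff $\phi(t_1),\phi(t_2)$ are weakly compatible in $G^+$ with $\phi(t_1)\cap(-\phi(t_2))=\emptyset$, which I would do in three cases. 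If $t_1,t_2$ are both round, then $\phi(t_i)=t_i$ are positive sets, so $\phi(t_1)\cap(-\phi(t_2))=\emptyset$ automatically, adjacency in $G$ coincides with solid-edge adjacency in $G^+$, and the two conditions are literally the same. If $t_1=r$ is round and $t_2=\{i\}$ is square, then $\phi(t_2)=\{-i\}$ is isolated and of opposite sign to $r$, so $r$ and $\{-i\}$ are automatically disjoint, non-nested, and non-adjacent in $G^+$; the only remaining constraint is $r\cap(-\{-i\})=r\cap\{i\}=\emptyset$, i.e.\ $i\notin r$, which is exactly the design requirement that the square tube $\{i\}$ be disjoint from $r$. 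If $t_1=\{i\}$ and $t_2=\{j\}$ are both square with $i\ne j$, then $\{-i\},\{-j\}$ are distinct isolated negative singletons, hence weakly compatible, and $\{-i\}\cap(-\{-j\})=\{-i\}\cap\{j\}=\emptyset$, matching the fact that two distinct square tubes are always compatible. Thus $\phi$ extends to an isomorphism of the two simplicial complexes.

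The main obstacle, such as it is, is the bookkeeping in the second case: recognizing that the design rule ``a square tube must be disjoint from any tube it is compatible with'' is encoded on the hypercube side not by weak compatibility (which is vacuous here, since $-i$ is isolated) but by the absence of a dashed edge, and that this absence is in turn equivalent to the positive tube omitting the vertex $i$.
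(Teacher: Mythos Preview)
Your proof is correct and follows essentially the same approach as the paper's: the same bijection $\phi$ (round $t\mapsto t$, square $\{i\}\mapsto\{-i\}$), the same reduction to pairwise compatibility via the flag property of hypercube tubing complexes, and the same three-case check. Your write-up is if anything a bit more explicit than the paper's in first characterizing all tubes of $G^+$ and in isolating that the square-tube disjointness condition is encoded on the hypercube side by the dashed-edge constraint rather than by weak compatibility.
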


	\begin{proof}
		Recall from Proposition \ref{prop:hypercube-pairwise-compatibility} that a set of tubes in a hypercube-tube graph are a tubing if and only if they are pairwise compatible. Because design tubings are also defined by pairwise compatibility of design tubes, we find that the design tubing complex of $G$ and the hypercube-graph tubing complex of $G^+$ are isomorphic if and only if there exists a bijection $\phi$ mapping design tubes of $G$ onto hypercube-graph tubes of $G^+$, such that $t_1, t_2$ are compatible if and only if $\phi(t_1), \phi(t_2)$ are compatible.
	
		We define a bijection $\phi$ between design tubes of $G$ and hypercube-graph tubes of $G^+$. If $t$ is a round tube, then $\phi(t)=t$. We find this is a tube in $G^+$. If $t$ is a square tube, then $\phi(t)=-t$ is a singleton tube in $G^+$. We will show that pairwise compatibility is preserved by this mapping. If $t_1, t_1$ are both round tubes, then $t_1, t_2$ are compatible if and only if $t_1 \subset t_2, t_2 \subset t_1$, or $t_1, t_2$ are disjoint and not adjacent. These are exactly the terms of compatibility for tubes on positive vertices of $G^+$, and so $t_1, t_2$ are compatible as design tubes if and only if they are compatible in $G^+$. We find that any two square tubes are compatible, which is true of negative singleton tubes in $G^+$. Finally, we find that if $t_1$ is round and $t_2$ is square, we find that $t_1, t_2$ are compatible as design tubes if and only if $t_1, t_2$ are disjoint. This is equivalent to saying that $-t_2$ is not in $t_1$, which is the term of compatibility for a positive and negative term. As a result, $t_1, t_2$ are compatible if and only if $\phi(t_1), \phi(t_2)$ are. This proves the proposition.
	\end{proof}

	Figure \ref{fig:design-tubing} is an illustration of the map taking a design tubing for a path graph to its equivalent hypercube graph tubing. We define the \emph{cubeahedron} of a graph $G$ as a polyhedron obtained by truncation of a hypercube, and which is dual to the design tubing complex of $G$. We then see that the cubeahedron is combinatorially isomorphic to the hypercube-graph associahedron of $G^+$.

	\begin{figure}
\centering
\includegraphics[width=\textwidth]{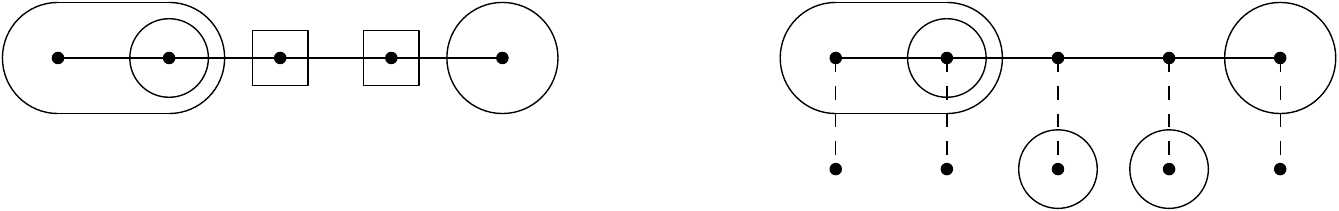}
\caption{A design tubing and an equivalent hypercube graph tubing.} \label{fig:design-tubing}
\end{figure}

	\subsubsection{Path on Positive Vertices and the Associahedron}

	
	Define the \emph{single path hypercube graph} to be the hypercube graph consisting of a path on the vertices $(1, 2, \ldots, n)$. We know that the type $A_n$ associahedron is the simplex graph associahedron for the path graph on $n+1$ vertices. The following is \cite[Proposition 15]{devadoss2011}, along with a sketch of the proof:

	\begin{proposition} \label{prop:cited-associahedron-result}
		The single path hypercube graph associahedron is combinatorially isomorphic to the associahedron.
	\end{proposition}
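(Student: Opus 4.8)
The goal is to show that the single path hypercube graph associahedron on $\pm[n]$ (where the only solid edges form the path $(1,2,\ldots,n)$ on the positive vertices, with no edges touching negative vertices) is combinatorially isomorphic to the classical type $A_n$ associahedron. By Proposition~\ref{prop:hypercube-pairwise-compatibility} the tubing complex of any hypercube graph is a flag complex, and likewise the classical associahedron's tubing complex (that of a path on $n+1$ vertices) is a flag complex, so by the criterion noted after that proposition it suffices to exhibit a bijection between hypercube-graph tubes of the single path hypercube graph $G^+$ and classical tubes of the path graph $P_{n+1}$ on vertices $\{0,1,\ldots,n\}$ (say) that preserves pairwise strong compatibility in both directions.

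\textbf{Constructing the bijection.} First I would enumerate the tubes of $G^+$. A hypercube-graph tube must induce a connected subgraph with no dashed edge; since negative vertices are isolated and $\{i,-i\}$ is forbidden, a tube is either (a) a connected subpath $\{a,a+1,\ldots,b\}$ of the positive path with $1 \le a \le b \le n$, which is a proper subset of $\pm[n]$ unless it equals all of $[n]$ --- and even $[n]$ is proper in $\pm[n]$, so all such subpaths including the full one are tubes; or (b) a single negative vertex $\{-i\}$ for $i \in [n]$. This matches the ``round tube / square tube'' description: positive subpaths are round tubes of the underlying path graph on $[n]$, and the singletons $\{-i\}$ are the square tubes, exactly as in the cubeahedron discussion (indeed $G^+$ realizes the cubeahedron of the path graph on $[n]$, which is already cited as the $A_n$ associahedron in \cite{devadoss2011}). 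To get a direct bijection with tubes of the path $P_{n+1}$ on $\{0,1,\ldots,n\}$, I would send a positive subpath $\{a,\ldots,b\}$ to the classical tube $\{a,\ldots,b\}$, and the square tube $\{-i\}$ to the classical tube $\{0,1,\ldots,i-1\}$ (the ``initial segment'' tube ending just before $i$). One checks this is a bijection onto the set of proper tubes of $P_{n+1}$: proper subpaths of $\{0,\ldots,n\}$ not containing $0$ correspond to positive subpaths, and proper subpaths containing $0$ are exactly the initial segments $\{0,\ldots,i-1\}$ for $1 \le i \le n$, matching the $\{-i\}$.

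\textbf{Checking compatibility.} The main verification is that two hypercube tubes $t_1,t_2$ are strongly compatible in $G^+$ if and only if their images are strongly compatible in $P_{n+1}$. There are three cases. When both $t_1,t_2$ are positive subpaths, compatibility on each side reduces to ``nested or disjoint-and-nonadjacent,'' and since the path structure on the positive vertices is identical to the corresponding segment of $P_{n+1}$, these agree; one must also note the union condition for strong compatibility is automatic in a path (no union of disjoint subpaths can fail to be a face). When $t_1 = \{-i\}$ and $t_2 = \{-j\}$ are both square, any two square tubes are compatible in $G^+$ (negative vertices are isolated and $\{-i\}\cap\{-j\}=\emptyset$ for $i\ne j$); correspondingly two initial segments $\{0,\ldots,i-1\}$ and $\{0,\ldots,j-1\}$ are always nested, hence compatible --- these match. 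The interesting case is $t_1=\{a,\ldots,b\}$ positive, $t_2=\{-i\}$ square: in $G^+$ these are disjoint (they share no vertex), not adjacent (there are no solid edges to $-i$), and their union $\{a,\ldots,b,-i\}$ contains no dashed edge precisely when $i \notin \{a,\ldots,b\}$; so they are strongly compatible iff $i\notin[a,b]$. On the other side, the classical tubes $\{a,\ldots,b\}$ and $\{0,\ldots,i-1\}$ are compatible iff one contains the other or they are disjoint; since the second always contains $0$ and the first never does, containment forces $\{a,\ldots,b\}\subseteq\{0,\ldots,i-1\}$, i.e.\ $b \le i-1$, i.e.\ $i \notin [a,b]$ --- and disjointness gives $a > i-1$, i.e.\ again $i\notin[a,b]$ (and one rules out adjacency, which would be $a = i$, but $a=i$ means $i\in[a,b]$, incompatible on the hypercube side too since then $\{i,-i\}$ is a dashed edge in the union). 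So the conditions coincide. The step I expect to be the main obstacle --- or at least the one demanding the most care --- is exactly this mixed case and the bookkeeping that the ``initial segment'' image of a square tube interacts correctly with adjacency versus disjointness in $P_{n+1}$; once that is pinned down, the flag-complex criterion finishes the argument. (An alternative, perhaps cleaner, route is simply to invoke the cited cubeahedron isomorphism: $G^+$ for the path graph is the path cubeahedron, which is the $A_n$ associahedron by \cite[Proposition 15]{devadoss2011}; I would present the explicit bijection above as the self-contained verification.)
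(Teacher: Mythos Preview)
Your proposal is correct and follows essentially the same approach as the paper's (sketched) proof: an explicit bijection that fixes round tubes and sends each square tube to a ``tail'' segment of the path on $n+1$ vertices. The only cosmetic difference is that you use initial segments $\{0,\ldots,i-1\}$ on the vertex set $\{0,\ldots,n\}$, whereas the paper uses terminal segments $\{i+1,\ldots,n+1\}$ on $[n+1]$; these are related by reflecting the path, and your case-by-case compatibility check (which the paper omits) is accurate.
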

	\begin{proof}
		Define a map $f$ on the set of design tubes of a path on $n$ vertices, such that $f(t)=t$ for all round tubes, and $f(\{i\})=\{i+1,\ldots,n+1\}$ for all square tubes. This map defines a bijection between design tubes of the path graph on $n$ vertices, and tubes of the simplex-path graph on $[n+1]$. This map preserves compatibility, and induces an isomorphism between tubing complexes.
	\end{proof}

We will make a stronger statement in Proposition \ref{prop:linear-cluster-fan}. Recall Definition \ref{def:standard-cut} of a standard cut hypercube graph associahedron. We will further prove that the normal fan of a standard cut single path hypercube graph associahedron is linearly isomorphic to the linear $c$-cluster fan. In preparation, we will give some background of Coxeter theory and root systems.

Consider a finite Coxeter group $W$, represented in the usual way by a group of reflections in a vector space. A \emph{root system} associated with $W$ is a set of normal vectors to the set of reflecting hyperplanes of $W$. The group $W$ has a distinguished set of reflections $s_1, \ldots, s_n$, with simple roots $\alpha_1, \ldots, \alpha_n$ such that each root $\alpha_i$ is normal to the reflecting hyperplane of $s_i$. We note that every root in a root system can be written as a sum of integer multiples of simple roots.


The type $A_n$ Coxeter group can be realized as the set of permutations on $[n+1]$, and is generated by transpositions $s_1, \ldots, s_n$ such that each element $s_i$ for $1 \le i \le n$ is the transposition $(i \;\; i+1)$. We can define a type $A_n$ root system containing roots of the form $e_i-e_j$ for $i \ne j$ in $[n+1]$, and define the group action of $A_n$ on the vector space $\R^{n+1}$ such that for any vector, the permutation $\sigma$ permutes the coordinates of that vector. We may define a set of simple roots $\alpha_i = e_i-e_{i+1}$ for each $i \in [n]$, and note that the root $e_i-e_j$ for $i < j$ is equal to $\alpha_i + \cdots + \alpha_{j-1}$. We then define $\beta_{i,j}= \alpha_i + \alpha_{i+1} + \cdots + \alpha_{j}= e_i-e_{j+1}$ for any $1 \le i \le j \le n$. We note that the action of $s_i$ on $\alpha_i$ is $s_i \alpha_i = -\alpha_i$.

Given a Coxeter system with generators $s_1, \ldots, s_n$, a \emph{Coxeter element} is any element which can be written as $c=s_{w(1)}, \ldots, s_{w(n)}$, where $w$ is a permutation on $[n]$.

There are multiple types of Coxeter elements. One type, a \emph{bipartite} Coxeter element, sorts the elements such that $\{s_{\sigma(1)}, \ldots, s_{\sigma(k)}\}$ and $\{s_{\sigma(k+1)}, \ldots, s_{\sigma(n)}\}$ partition the generators of $W$ such that any two elements from the same set commute. These elements and their cluster fans are studied extensively in \cite{ysystems}.

When $W$ is the type $A_n$ Coxeter group, we can represent each element $s_i$ as the transposition $s_i = (i \;\; i+1)$ on $[n+1]$. The \emph{linear element} is the element $s_1 s_2 \cdots s_n$.

\begin{definition}
Given a set of simple roots $\alpha_1, \ldots, \alpha_n$ in a root system $\Phi$, the set of \emph{almost positive roots} is the set of roots in $\Phi$ that can either be written as a sum of nonnegative multiples of simple roots, or are equal to $-\alpha_i$ for some $i$.
\end{definition}

We note that in type $A_n$ with a set of simple roots $\alpha_1, \ldots, \alpha_n$, the set containing $-\alpha_i$ for all $i \in [n]$, and $\beta_{i,j}$ for all $1 \le i \le j \le n$ is a set of almost positive roots.


	The $c$-compatibility rules for a Coxeter element $c$ are as follows: for a Coxeter system with simple roots $\{\alpha_1, \ldots, \alpha_n\}$ and simple reflections $\{s_1, \ldots, s_n\}$, we define an operation on almost-positive roots:
	\[
		\sigma_i(\beta) := \begin{cases} \beta & \text{ for $\beta = -\alpha_j, j\ne i$} \\
		s_i \beta & \text{ otherwise} \end{cases}
	\]
	We write $[\beta:\alpha_i]$ as the coefficient of $\alpha_i$ in the expansion of $\beta$ in the basis of simple roots. 	We define \emph{$c$-compatibility} as a family of symmetric binary relations of the form $||_c$ where $c$ is a Coxeter element. We write that a simple reflection $s_i$ is \emph{initial} in $c$ if there is a reduced word for $c$ starting with $s_i$. If $s_i$ is initial in $c$, then $s_i c s_i$ is another Coxeter element. There is a unique family of relations with the properties
	\begin{enumerate}
		\item $-\alpha_i ||_c \beta$ if and only if $[\beta:\alpha_i]=0$.
		\item Given $s_i$ initial in $c$, we have $\beta_1 ||_c \beta_2$ if and only if $\sigma_i(\beta_1) ||_{s_i c s_i} \sigma_i(\beta_2)$.
	\end{enumerate}

We also define an operation $\tau = \sigma_{w(1)}  \cdots  \sigma_{w(n)}$. We then find that $\beta_1 ||_c \beta_2$ if and only if $\tau(\beta_1) ||_{c c c^{-1}} \tau (\beta_1)$, which is then equivalent to $\tau(\beta_1) ||_{c} \tau (\beta_2)$.

	\begin{proposition}
		The tubing complex of the $n$-dimensional path hypercube graph associahedron is isomorphic to the complex of pairwise-compatible roots for the linear Coxeter element $c=s_1\cdots s_n$ in $A_n$.
	\end{proposition}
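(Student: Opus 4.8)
The plan is to exhibit an explicit bijection between the tubes of the $n$-dimensional single path hypercube graph and the almost positive roots of the type $A_n$ root system associated to the linear Coxeter element $c = s_1 \cdots s_n$, and then to check that this bijection carries (strong/pairwise) hypercube-graph compatibility to $c$-compatibility. Since by Proposition \ref{prop:hypercube-pairwise-compatibility} the hypercube tubing complex is a flag complex, and since the $c$-cluster complex is also flag (being the clique complex of the $c$-compatibility relation), it suffices to match the vertex sets and the binary compatibility relations; the higher faces then take care of themselves. Concretely, I would send a round tube $\{i, i+1, \ldots, j\}$ (a connected subgraph on positive vertices) to the positive root $\beta_{i, j-1} = e_i - e_j$, and send a singleton negative-vertex tube $\{-i\}$ to the negative simple root $-\alpha_i$. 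The associahedron-isomorphism of Proposition \ref{prop:cited-associahedron-result} already guarantees this is a bijection on vertices of the complexes once one recalls that tubes of the simplex-path graph on $[n+1]$ correspond to diagonals of an $(n+3)$-gon, which in turn correspond bijectively to almost positive roots; I would cite this correspondence and simply pin down the indexing so that the two labelings line up.

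The core work is then verifying compatibility in three cases, mirroring the case split in the definition of design tubes. First, two negative singletons $\{-i\}, \{-j\}$ are always hypercube-compatible (they are disjoint, non-adjacent, and induce no dashed edge since $i \ne j$), and correspondingly $-\alpha_i \,\|_c\, -\alpha_j$ always holds since $[-\alpha_j : \alpha_i] = 0$ for $i \ne j$. Second, for a negative singleton $\{-i\}$ and a round tube $t = \{a, \ldots, b\}$: these are hypercube-compatible iff $i \notin t$ (the only way a dashed edge could appear between $\{-i\}$ and $t$ is if $i \in t$, and since $t$ consists of positive vertices and $\{-i\}$ of a negative vertex they are automatically disjoint and non-adjacent otherwise). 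On the root side, $-\alpha_i \,\|_c\, \beta_{a, b-1}$ iff $[\beta_{a,b-1} : \alpha_i] = 0$, i.e.\ iff $i \notin \{a, \ldots, b-1\}$. Here I expect a minor indexing subtlety — whether it is $i \notin \{a,\dots,b\}$ or $i \notin \{a,\dots,b-1\}$ — which I would resolve by being careful that the tube $\{a,\dots,b\}$ has $b - a + 1$ vertices but corresponds to the root $e_a - e_b$ spanning simple roots $\alpha_a, \ldots, \alpha_{b-1}$; a small recalibration of the bijection (e.g.\ allowing the ``top'' tube to meet the boundary) fixes this, exactly as the $f(\{i\}) = \{i+1, \ldots, n+1\}$ shift does in the cited proof.

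Third, and this is the step I expect to be the main obstacle, I must check that two round tubes $t_1, t_2$ are weakly compatible (nested or disjoint-and-non-adjacent) iff the corresponding positive roots $\beta_{a,b}, \beta_{c,d}$ are $c$-compatible for the linear $c$. The cleanest route is to use the recursive characterization of $c$-compatibility via initial reflections: $s_1$ is initial in the linear element $c = s_1 s_2 \cdots s_n$, and $s_1 c s_1 = s_2 s_1 s_3 s_4 \cdots s_n$ is again a linear-type element on a relabeled index set, so one can set up an induction on $n$ peeling off $s_1$, using property (2) of the $c$-compatibility axioms together with the explicit action of $\sigma_1$ on the roots $-\alpha_j$ and $\beta_{a,b}$. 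In parallel, peeling the vertex $1$ off the path graph corresponds to restriction/reconnected-complement operations on tubes, and I would match the two recursions step by step. An alternative, possibly shorter, route is to invoke the known model of the linear $c$-cluster complex of type $A_n$ as the complex of non-crossing diagonals of a convex polygon with a specified ``snake'' triangulation, under which pairwise non-crossing of diagonals translates directly into tube nesting/disjointness; I would present whichever of these two arguments produces the fewest special cases, and the base cases $n = 1, 2$ would be checked by hand. Once all three compatibility cases are verified, flagness of both complexes upgrades the vertex-and-edge matching to a full simplicial isomorphism, completing the proof.
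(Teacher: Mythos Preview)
Your setup matches the paper's: the same bijection $\phi([i,j])=\beta_{i,j}$, $\phi(\{-i\})=-\alpha_i$ (your $\beta_{i,j-1}$ indexing is off by one, as you suspected; the paper's choice makes the negative/positive case line up exactly), the flag reduction to pairwise compatibility, and the negative/negative and negative/positive cases are handled identically.

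The divergence is in the positive/positive case, and your approach (a) has a real gap. You assert that $s_1 c s_1 = s_2 s_1 s_3 s_4 \cdots s_n$ is ``again a linear-type element on a relabeled index set,'' but it is not: the $A_n$ Dynkin diagram has only the reversal automorphism, so the only linear Coxeter elements are $s_1\cdots s_n$ and $s_n\cdots s_1$, and $s_2 s_1 s_3\cdots s_n$ is neither. Thus an induction on $n$ that presumes the new Coxeter element is again linear does not go through; you would be forced to prove the statement for \emph{all} Coxeter elements simultaneously, which is a different (and harder) theorem.

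The paper avoids this by using the full rotation $\tau=\sigma_1\sigma_2\cdots\sigma_n$ instead of a single $\sigma_1$. Because conjugating $c$ by itself gives $c$ back, $\tau$ preserves $\|_c$-compatibility for the \emph{same} $c$. The paper computes $\tau$ explicitly on almost positive roots (e.g.\ $\tau\beta_{j,k}=\beta_{j+1,k+1}$ when $k<n$, $\tau\beta_{j,n}=-\alpha_j$, $\tau(-\alpha_i)=\beta_{1,i}$), then for two positive tubes $[i,j],[k,l]$ with $l\ge j$ applies $\tau^{n-l}$ to shift so one tube ends at $n$, and one more $\tau$ to turn $\beta_{k',n}$ into $-\alpha_{k'}$, reducing to the already-verified negative/positive case. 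This is the clean fix for your approach (a): iterate the full Coxeter rotation rather than peeling off one reflection. Your approach (b) via the polygon model would also work, but in the paper that correspondence is established \emph{after} this proposition (as Proposition~\ref{prop:linear-cluster-compatibility}), so invoking it here would be circular within the paper's logical order.
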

\begin{proof}

	We can characterize $s_j \beta_{j,k}$ as follows:
	\begin{itemize}
\item If $i=j=k$, then $s_i \beta_{j,k}=(i, i+1) (e_i-e_{i+1})=-\alpha_i$.
\item If $i=j < k$, then $s_i \beta_{j,k} = \beta_{j+1,k}$
\item If $i=j-1$, then $s_i \beta_{j,k} = \beta_{j-1, k}$
\item If $j < i=k$, then $s_i \beta_{j,k}=\beta_{j,k-1}$
\item If $i=k+1$, then $s_i \beta_{j,k}=\beta_{j,k+1}$
\item Otherwise, $s_i \beta_{j,k}=\beta_{j,k}$.
\end{itemize}

Consider the action of $\tau=\sigma_1 \cdots \sigma_n$ on almost positive roots.

\begin{itemize}
\item If $k < n$, then we find $\tau \beta_{j,k} = \beta_{j+1,k+1}$. This is because we notice that $\sigma_{i} \beta_{i,j} = s_i \beta_{j,k}=\beta_{j,k}$ for all $i > k+1$. We then calculate $\sigma_1 \cdots \sigma_{k+1} \beta_{j,k}=(\sigma_1 \cdots \sigma_k) s_{k+1} \beta_{j,k}=(\sigma_1 \cdots \sigma_k)\beta_{j,k+1}$. For all values of $i$ such that $j+1 < i \le k$, we note that $\sigma_i \beta_{j,k+1}=\beta_{j,k+1}$. From there, we calculate $(\sigma_1 \cdots \sigma_{j+1}) \beta_{j,k+1}=(\sigma_1 \cdots \sigma_{j-1}) s_j \beta_{j,k+1} = (\sigma_1 \cdots \sigma_{j-1}) \beta_{j+1, k+1}$, and from there notice that this will be equal to $\beta_{j+1, k+1}$.
\item If $k=n$, then we will find $\tau \beta_{j,n} = - \alpha_j$. We notice that if $j < k$, then $\sigma_k \beta_{j,k}=s_k \beta_{j,k}=\beta_{j,k-1}$. As a result, $(\sigma_{j+1} \cdots \sigma_n) \beta_{j,n} = \beta_{j,j}$. From there, $\sigma_j \beta_{j,j} = s_j \beta_{j,j} = -\alpha_j$, and $\sigma_i (-\alpha_j) = -\alpha_j$ for all $i \ne j$.

\item $\tau (-\alpha_i) = \sigma_1 \cdots \sigma_n (-\alpha_i)$, which will then be equal to $(\sigma_1 \cdots \sigma_{i-1}) \sigma_i (-\alpha_i)$. We then get $\sigma_i (-\alpha_i) = \beta_{i,i}$. Now, we note recursively that $\sigma_{j-1} \beta_{j,i} = \beta_{j-1,i}$, and so $(\sigma_1 \cdots \sigma_{i-1}) \beta_{i,i} = \beta_{1,i}$.
\end{itemize}

We will now define a map $\phi$ taking tubes of the $n$-dimensional path hypercube graph to almost positive roots of $A_n$. For any $1\le i \le j \le n$, define $[i,j]=\{i,i+1,\ldots,j\}$. We define $\phi([i,j])=\beta_{i,j}$, and define $\phi(\{-i\})=-\alpha_i$. This is a bijection taking positive tubes to positive roots and negative singleton tubes to negative simple roots.

Now, we begin to determine compatibility conditions. We note that a negative tube $\{-i\}$ and a positive tube $[j,k]$ are compatible if and only if $i \notin [j,k]$. This is exactly the $||_c$ compatibility relation between a negative simple root $-\alpha_i$ and a positive root $\beta_{j,k}$. Now note that any two negative tubes are compatible, and any two negative simple roots are compatible under $||_c$. Now, we must prove compatibility conditions for two positive tubes $\beta_{i,j}, \beta_{k,l}$. We wish to prove that any two distinct tubes $t_1, t_2$ are compatible if and only if $\phi(t_1) ||_c \phi(t_2)$.

Consider two positive tubes $[i,j], [k,l]$, such that $l \ge j$. We wish to prove these tubes are compatible if and only if $\beta_{i,j} ||_c \beta_{k,l}$.  Note that shifting these tubes rightward by $n-l$ units does not change compatibility, so these tubes are compatible if and only if the tubes $[i+(n-l),j+(n-l)]$ and $[k+(n-l),n]$ are compatible. The image of these tubes are the tubes $\tau^{n-l} \beta_{i,j}, \tau^{n-l} \beta_{k,l}$, which are $||_c$-compatible if and only if $\beta_{i,j}, \beta_{k,l}$ are. As a result, we can focus only on the case of compatibility for tubes $[i,j], [k,n]$.

We note that $[i,j] \subseteq [k,n]$ if and only if $k \le i$, and the two tubes are disjoint and not adjacent if and only if $j < k-1$. As a result, we say that $[i,j], [k,n]$ are incompatible if and only if $i+1 \le k \le j+1$. Furthermore, if $j = n$, then $[i,j], [k,n]$ must be compatible.

Now consider $\tau \beta_{i,j}$ and $\tau \beta_{k,n}$. We note that $\tau \beta_{k,n}=-\alpha_k$. If $j=n$, then $\tau \beta_{i,j}=-\alpha_j$, and the two roots are $||_c$ compatible. If $j \ne n$, then $\tau \beta_{i,j}=\beta_{i+1,j+1}$. We note that $\beta_{i+1,j+1}$ and $-\alpha_k$ are compatible if and only if $k \notin [i+1,j+1]$. As a result, $\beta_{i,j}$ and $\beta_{k,n}$ are compatible if and only if $[i,j]$ and $[k,n]$ are compatible.

As a result, we have proven for every case that the tubes $t_1, t_2$ are compatible in the $n$-dimensional path hypercube graph if and only if $\phi(t_1)||_c \phi(t_2)$.
\end{proof}

\begin{definition}
The \emph{$c$-cluster fan} for a Coxeter element $c$ is the fan containing all cones $\conichull\{\beta_1,\ldots,\beta_k\}$ for every set $\{\beta_1, \ldots, \beta_k\}$ of pairwise $||_c$-compatible almost positive roots.
\end{definition}

\begin{proposition}\label{prop:linear-cluster-fan}
The normal fan of the standard cut hypercube graph associahedron of the $n$-dimensional path hypercube graph associahedron is linearly isomorphic to the linear $c$-cluster fan of type $A_n$.
\end{proposition}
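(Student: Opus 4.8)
The plan is to produce the required linear isomorphism by hand, pulling back the correspondence between tubes and almost positive roots that was established in the proposition immediately preceding this one. Recall from Definition~\ref{def:standard-cut} that the normal fan of the standard cut hypercube graph associahedron of the $n$-dimensional path hypercube graph $G$ is the fan $\mathcal{F}(Q_n,V,G)$, where $V=\{v_i\mid i\in\pm[n]\}$ with $v_i=e_i$; since $V$ is the set of facet normals of the simple polytope $Q_n$, the cone complex $\mathcal{F}(Q_n,V)$ is the (non-degenerate) normal fan of $Q_n$, so Proposition~\ref{prop:Delta-complex-fan} applies and $\mathcal{F}(Q_n,V,G)$ is a genuine fan whose cones are exactly $K_N^V=\conichull\{\sum_{i\in t}v_i\mid t\in N\}$ as $N$ ranges over tubings of $G$. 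The tubes of $G$ are precisely the positive intervals $[i,j]=\{i,i+1,\dots,j\}\subseteq[n]$ (singletons included) and the negative singletons $\{-i\}$, and the preceding proposition gives a bijection $\phi$ from these tubes onto the almost positive roots of type $A_n$, with $\phi([i,j])=\beta_{i,j}$ and $\phi(\{-i\})=-\alpha_i$, which extends to an isomorphism of simplicial complexes since both the hypercube tubing complex (Proposition~\ref{prop:hypercube-pairwise-compatibility}) and the cluster complex are flag.

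Next I would define the linear map $L$ from $\R^n$ to the span of the type $A_n$ root system, i.e. the hyperplane $\{x\in\R^{n+1}\mid\sum_i x_i=0\}$, by $L(e_i)=\alpha_i=e_i-e_{i+1}$ for $i\in[n]$. The simple roots $\alpha_1,\dots,\alpha_n$ form a basis of the root space, so $L$ is a linear isomorphism of the two $n$-dimensional ambient spaces. The one computation to do is that $L$ sends the ray generator of each tube to the corresponding root: telescoping, $L\bigl(\sum_{k=i}^{j}e_k\bigr)=\sum_{k=i}^{j}\alpha_k=e_i-e_{j+1}=\beta_{i,j}=\phi([i,j])$, and $L(v_{-i})=L(-e_i)=-\alpha_i=\phi(\{-i\})$. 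Hence $L\bigl(\sum_{i\in t}v_i\bigr)=\phi(t)$ for every tube $t$ of $G$.

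From this the statement follows. For any tubing $N$ of $G$ we get $L(K_N^V)=\conichull\{\phi(t)\mid t\in N\}$, and by the preceding proposition $\{\phi(t)\mid t\in N\}$ is exactly a set of pairwise $||_c$-compatible almost positive roots, so $L(K_N^V)$ is a cone of the linear $c$-cluster fan by its definition. Since $\phi$ is a bijection between tubings of $G$ and sets of pairwise $||_c$-compatible almost positive roots, and since distinct tubings index distinct cones $K_N^V$ by Proposition~\ref{prop:Delta-complex-fan}, the assignment $K_N^V\mapsto L(K_N^V)$ is a bijection from the cones of $\mathcal{F}(Q_n,V,G)$ onto the cones of the linear $c$-cluster fan; as $L$ is injective and linear it preserves inclusions of cones in both directions, so the two fans are linearly isomorphic.

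The genuinely laborious part — checking that $\phi$ preserves $||_c$-compatibility, with the root-shifting computations involving $\tau=\sigma_1\cdots\sigma_n$ — is already carried out in the preceding proposition, so I do not expect a serious obstacle here. What remains to be careful about is minor bookkeeping: choosing the correct ambient space for the $c$-cluster fan, verifying the non-degeneracy hypothesis needed to invoke the $\Delta$-braid-cone description of $K_N^V$, and, most importantly, confirming that the \emph{single} linear map $L(e_i)=\alpha_i$ matches all ray generators on the nose rather than merely matching cones up to a per-cone rescaling. Once these are in place the proof is immediate.
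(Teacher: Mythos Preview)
Your proposal is correct and follows essentially the same approach as the paper: define the linear isomorphism $e_i\mapsto\alpha_i$, check that it carries each tube's ray generator $\sum_{i\in t}v_i$ to the almost positive root $\phi(t)$, and conclude via the preceding proposition that the cones match. Your write-up is in fact more careful than the paper's in spelling out the telescoping computation, the non-degeneracy hypothesis, and the flag-complex justification.
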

\begin{proof}
Define a linear isomorphism $h$ taking $e_i$ to $\alpha_i$ for each $i \in [n]$. Each facet $\Phi_t$ for a tube $t$ is normal to a vector $v_t$, and we find $h(v_t) = \phi(t)$, where $\phi$ is the map taking the tube $[i,j]$ to root $\beta_{i,j}$ and $\{-i\}$ to $-\alpha_i$. For each tubing $T$, the cone dual to $\Phi_T$ is the conic hull of vectors $v_t$ for all $t \in T$. The image of this cone under the map $h$ is the cone $\conichull\{\phi(t)|t \in T\}$. These are exactly the cones in the linear $c$-cluster fan of type $A_n$.
\end{proof}


We have defined a bijection between sets of compatible almost-positive roots and hypercube graph tubings. We have also established a bijection between hypercube graph tubings of a path on $\pm[n]$ and simplex-graph tubings of a path on $[n+1]$. We will now define a bijection between diagonals of a polygon and almost-positive roots of type $A_n$ which matches with our choice of generators.

Consider a polygon with $n+3$ vertices labeled $\{0,1,\ldots,n+2\}$ in order around the polygon. Define $D_{i,j}$ as the diagonal connecting vertex $i$ to vertex $j$ with $i < j$. A diagonal $D_{i,j}$ exists for all $i,j \in \{0,\ldots,n+2\}$ such that $i +2 \le j$, with the exception that there exists no diagonal $D_{0,n+2}$. We will define a bijection between diagonals and almost positive roots. We define $g(D_{i,n+2})=-\alpha_i$, and $g(D_{i,j})=\alpha_{i+1} + \cdots + \alpha_{j-1}$ when $j \le n+1$. Note that $g$ maps diagonals to positive roots, and is different from $f$, which maps design tubes/hypercube-graph tubes to path-graph tubes.

\begin{proposition} \label{prop:linear-cluster-compatibility}
Two diagonals $D, D'$ are non-crossing if and only if $g(D), g(D')$ are linear-$c$ compatible. 
\end{proposition}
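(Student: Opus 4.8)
The plan is to establish a dictionary between three combinatorial models --- non-crossing diagonals of an $(n+3)$-gon, almost positive roots of type $A_n$ with the linear Coxeter element $c = s_1 \cdots s_n$, and hypercube-graph tubes of the $n$-dimensional path hypercube graph --- and then reduce the claimed statement to a compatibility comparison that has essentially already been done in the preceding proposition. Concretely, recall that the previous proposition constructed a bijection $\phi$ from tubes of the $n$-dimensional path hypercube graph to almost positive roots of $A_n$, sending the positive tube $[i,j]$ to $\beta_{i,j} = \alpha_i + \cdots + \alpha_j$ and $\{-i\}$ to $-\alpha_i$, and proved that two tubes are weakly compatible if and only if their images are $\|_c$-compatible. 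So to prove Proposition \ref{prop:linear-cluster-compatibility} it suffices to show that the map $g$ on diagonals, composed appropriately with $\phi^{-1}$, sends non-crossing pairs of diagonals to weakly compatible pairs of tubes (and crossing pairs to incompatible pairs).

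First I would pin down the combined bijection. Given the diagonal $D_{i,j}$ with $i+2 \le j \le n+1$, we have $g(D_{i,j}) = \alpha_{i+1} + \cdots + \alpha_{j-1} = \beta_{i+1,j-1}$, which is $\phi$ applied to the positive tube $[i+1, j-1]$; and $g(D_{i,n+2}) = -\alpha_i = \phi(\{-i\})$. So define $\psi = \phi^{-1} \circ g$, a bijection from diagonals of the $(n+3)$-gon to tubes of the path hypercube graph: $\psi(D_{i,j}) = [i+1,j-1]$ for $j \le n+1$, and $\psi(D_{i,n+2}) = \{-i\}$. Since $\phi$ is a compatibility-preserving bijection between the hypercube tubing complex and the $\|_c$-compatibility complex of almost positive roots (by the previous proposition), the statement ``$D, D'$ non-crossing $\iff g(D), g(D')$ are $\|_c$-compatible'' is equivalent to ``$D, D'$ non-crossing $\iff \psi(D), \psi(D')$ are weakly compatible as hypercube-graph tubes.'' This is now a purely elementary claim about diagonals of a polygon versus intervals on a path, which I would verify by case analysis.

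The case analysis splits according to how many of the two diagonals are "long" (i.e. of the form $D_{i,n+2}$, mapping to a negative singleton tube). If both are long, $D_{i,n+2}$ and $D_{k,n+2}$ share the vertex $n+2$, hence are non-crossing, and any two negative singleton tubes $\{-i\}, \{-k\}$ are weakly compatible --- consistent. If exactly one is long, say $D_{i,n+2}$ and $D_{k,\ell}$ with $\ell \le n+1$: these cross iff $i$ lies strictly between $k$ and $\ell$ in the cyclic order, i.e. iff $k < i < \ell$, which (since $k+1 \le i+1$ and $i+1 \le \ell - 1$ rearrange to $k \le i \le \ell-1$, with strictness matching endpoint-sharing) is exactly the condition $i \in [k+1, \ell-1]$; meanwhile $\{-i\}$ and the positive tube $[k+1,\ell-1]$ are weakly compatible iff they are disjoint, i.e. iff $i \notin [k+1,\ell-1]$ --- so crossing corresponds to incompatibility, again consistent. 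If neither is long, $D_{i,j}$ and $D_{k,\ell}$ with $j,\ell \le n+1$ map to positive tubes $[i+1,j-1]$ and $[k+1,\ell-1]$; here I would use the standard fact that two polygon diagonals are non-crossing iff the corresponding intervals $\{i, i+1, \dots, j\}$ and $\{k, \dots, \ell\}$ of boundary edges are nested or non-overlapping-and-non-adjacent, and check that this translates exactly to the weak compatibility condition for the two shifted tubes ($[i+1,j-1] \subseteq [k+1,\ell-1]$, or vice versa, or disjoint and non-adjacent on the path). I do not expect any real obstacle here: the only mildly fiddly point is getting the off-by-one shifts between polygon vertices, boundary edges, and path vertices exactly right, and correctly matching "diagonals sharing an endpoint" with "intervals that are adjacent but non-crossing." That bookkeeping is the main thing to be careful about, but it is routine rather than deep.

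Alternatively, if one prefers to avoid reusing $\phi$, the same proposition could be proved directly by the classical "diagonal rotation" argument: rotating all diagonals of the $(n+3)$-gon by one step is a symmetry preserving the crossing relation, and it corresponds under $g$ to the action of $\tau = \sigma_1 \cdots \sigma_n$ on almost positive roots (this is implicitly what the rotation computations in the previous proof establish), so one reduces to checking compatibility of $D_{i,j}$ against a fixed "long" diagonal $D_{k,n+2}$, which is the one-long case above. I would mention this as a remark but carry out the proof via $\psi$ since the heavy lifting (matching $\|_c$ to weak tube compatibility, including all the $\tau$-orbit reductions) is already packaged in the immediately preceding proposition.
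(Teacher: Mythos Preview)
Your proposal is correct, and it takes a somewhat different route from the paper's own proof. The paper does \emph{not} factor through the hypercube-tube model via $\psi = \phi^{-1}\circ g$; instead it argues directly on almost positive roots. It first handles the ``one long diagonal'' case (your middle case) by the same computation you give, and then reduces the ``two short diagonals'' case to the one-long case by rotation: shifting both diagonals so that the larger endpoint becomes $n+2$ corresponds to applying a power of $\tau$ on the root side, and $\tau$ preserves $\|_c$-compatibility. In other words, the paper's primary argument is exactly the ``alternative'' you sketch in your final paragraph, and it never carries out the direct three-case check that you propose as your main line.

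What each approach buys: your route is conceptually cleaner, since all the $\tau$-dynamics have already been absorbed into the proof of the previous proposition and you simply invoke that result as a black box; the remaining work is an elementary comparison of polygon diagonals with path intervals. The paper's route is more self-contained for this specific proposition (it does not need to detour through hypercube tubes) but re-uses the rotation machinery. Your warning about the off-by-one bookkeeping in the two-short-diagonals case is apt---in particular, diagonals sharing an endpoint map to tubes separated by a one-vertex gap, hence non-adjacent on the path, which is exactly what is needed---but this is indeed routine and your plan goes through.
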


\begin{proof}

Consider two diagonals $D_{i,j}$ and $D_{k,n+2}$. If $j=n+2$, then the two diagonals share a point and are non-crossing, and we note that $g(D_{i,j})=-\alpha_i$ and $g(D_{k,n+2})=-\alpha_k$, so the two roots are linear-$c$ compatible. Consider the case where $j < n+2$. We note that $g(D_{i,j})=\alpha_{i+1}+\cdots+\alpha_{j-1}$. We also note that $g(D_{k,n+2})=-\alpha_k$. These two roots are incompatible if and only if $i+1 \le k \le j-1$. In addition, we note that $D_{k,n+2}$ splits the polygon into two smaller polygons with vertex sets $\{0,1,\ldots, k,n+2\}$ and $\{k,k+1,\ldots,n+2\}$. We note that $D_{i,j}$ crosses $D_{k,n+2}$ if and only if each vertex $i, j$ lies in a different polygon and neither vertex is shared by the two polygons. As a result, we find the two diagonals cross if and only if $i \in \{0,\ldots, k-1\}$ and $j \in \{k+1, \ldots, n+1\}$. This is equivalent to the condition that $i+1 \le k \le j-1$, and so $g(D_{i,j})$ and $g(D_{k,n+2})$ are compatible if and only if $D_{i,j}$ and $D_{k,n+2}$ are non-crossing.

We then consider rotational symmetry. For two diagonals $D_{i,j}, D_{k,l}$ with $j \le l < n+2$, we find the two are compatible if and only if $D_{i+1,j+1}, D_{k+1,l+1}$ are. We however can find that $g(D_{i+1,j+1})=\tau(g(D_{i,j}))$, which preserves compatibility. As a result, we can find that $D_{i,j}, D_{k,l}$ are noncrossing if and only if $D_{i+(n+2-l),j+(n+2-j)}, D_{k+(n+2-l), n+2}$ are, which are noncrossing if and only if $g(D_{i+(n+2-l),j+(n+2-j)}),  g(D_{k+(n+2-l), n+2})$ are compatible, which are compatible if and only if $g(D_{i,j}), g(D_{k,l})$ are.
\end{proof}

Now we have established that there are bijections between partial triangulations of a polygon on $n+2$ vertices, almost positive roots of type $A_n$, single path hypercube-graph tubings on $\pm[n]$ vertices, and tubings of the path graph on $n+1$ vertices. We illustrate these relations in several diagrams. Recall that $g$ is the map taking each diagonal $D$ to an almost positive root, $f$ is the map taking a design path-tube to a simplex-path tube, and $\phi$ is the map taking design path-tubes to almost-positive roots.

First, figure \ref{fig:negative-roots-triangulation} shows the negative simple roots in the $n=4$ case. We note that a diagonal intersects with $D_{i,n+2}$ if and only if the vector $g(D_i,n+2)$ contains $-\alpha_i$ in its support. This agrees in essence with the method of assigning almost-positive roots to diagonals in a polygon used in \cite{ysystems}, except their method chooses a different set of diagonals to be the negative simple roots, forming what is known as a "snake path." Figure \ref{fig:diagonals-2} shows a triangulation consisting of diagonals and the almost-positive root associated with each diagonal, noting that the root labeled $\alpha_1+\alpha_2$ crosses exactly the diagonals labeled $-\alpha_1$ and $-\alpha_2$.

This shows  $g(D)$ for each diagonal in the triangulation. Figure \ref{fig:diagonals-4} shows the hypercube graph tube $\phi^{-1}(g(D))$ associated with each diagonal in the same tubing. If we apply the bijection $f$ used in Proposition \ref{prop:cited-associahedron-result}, we find that Figure \ref{fig:diagonals-3} shows the path tube $f^{-1}(\phi^{-1}(g(D)))$ for each diagonal $D$. Define $\theta(D)=f^{-1}(\phi^{-1}(g(D)))$. Because each map $f, \phi, g$ is a bijection which preserves compatibility or crossing conditions, we find that the following lemma holds.

\begin{lemma} \label{lemma:diagonal-tubes-map}
For a polygon on $n+3$ vertices, two diagonals $D, D'$ cross if and only if the two tubes $\theta(D), \theta(D')$ of the path graph on $[n+1]$ are incompatible.
\end{lemma}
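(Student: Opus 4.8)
The statement to prove is Lemma \ref{lemma:diagonal-tubes-map}, which asserts that for a polygon on $n+3$ vertices, two diagonals $D, D'$ cross if and only if the tubes $\theta(D) = f^{-1}(\phi^{-1}(g(D)))$ and $\theta(D')$ of the path graph on $[n+1]$ are incompatible.

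The plan is to assemble the lemma as a chain of equivalences, each already established earlier in the excerpt, so that no new combinatorial work is needed. First I would recall that $\theta$ is by definition the composite $\theta = f^{-1} \circ \phi^{-1} \circ g$, where $g$ maps diagonals of the $(n+3)$-gon to almost-positive roots of type $A_n$, $\phi$ maps single-path hypercube-graph tubes to almost-positive roots, and $f$ maps design path-tubes to simplex-path tubes on $[n+1]$ (from the proof sketch of Proposition \ref{prop:cited-associahedron-result}). Each of these three maps is a bijection, so $\theta$ is a well-defined bijection from diagonals of the polygon to tubes of the path graph on $[n+1]$. The key point is that each of the three bijections converts one notion of incidence into another: Proposition \ref{prop:linear-cluster-compatibility} says $D$ and $D'$ are non-crossing if and only if $g(D)$ and $g(D')$ are linear-$c$ compatible; the proposition proved just before (that the tubing complex of the $n$-dimensional path hypercube graph is isomorphic to the complex of pairwise-compatible roots for the linear Coxeter element, via $\phi$) says that $\phi^{-1}(g(D))$ and $\phi^{-1}(g(D'))$ are compatible hypercube-graph tubes if and only if $g(D), g(D')$ are linear-$c$ compatible; and Proposition \ref{prop:cited-associahedron-result}, whose proof exhibits $f$ as a compatibility-preserving bijection, says that the design/hypercube-graph tubes $\phi^{-1}(g(D)), \phi^{-1}(g(D'))$ are compatible if and only if their images $\theta(D), \theta(D')$ under $f^{-1}$ are compatible as tubes of the path graph on $[n+1]$.

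Concretely, the proof I would write is short: chain these together to get
\[
D, D' \text{ cross} \iff g(D), g(D') \text{ not linear-}c\text{ compatible} \iff \phi^{-1}(g(D)), \phi^{-1}(g(D')) \text{ incompatible} \iff \theta(D), \theta(D') \text{ incompatible},
\]
and then note that the negation of each equivalence gives the ``if and only if'' for the non-crossing/compatible versions, which is what the lemma states (phrased via incompatibility). One small point of care: the middle equivalence concerns hypercube-graph tube \emph{compatibility} (pairwise, which by Proposition \ref{prop:hypercube-pairwise-compatibility} and the flag property determines the tubing complex), and I should make sure I am invoking the pairwise version of $\phi$'s compatibility statement, not anything about unions of many tubes; since the lemma only discusses pairs $D, D'$ this is exactly what is available.

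The main obstacle — and it is a mild one — is bookkeeping about which direction each cited map goes and confirming that ``preserves compatibility'' in each source proposition is stated as a genuine two-way equivalence (it is: $\phi$ and $g$ are both shown to satisfy biconditionals, and $f$ is described as ``preserves compatibility'' with the intent that $t_1, t_2$ compatible $\iff f(t_1), f(t_2)$ compatible). I would also double-check the domains line up: $g$ lands in almost-positive roots, $\phi^{-1}$ takes almost-positive roots to single-path hypercube-graph tubes on $\pm[n]$, $f^{-1}$ takes those (identified with design tubes) to tubes of the path graph on $[n+1]$ — so $\theta$ is exactly the composite the statement names. Once those identifications are checked, the lemma follows immediately with no computation, so I would present it as a two- or three-sentence corollary-style argument.

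\begin{proof}
By construction, $\theta = f^{-1} \circ \phi^{-1} \circ g$ is a composite of the three bijections $g$ (diagonals of the $(n+3)$-gon to almost-positive roots of type $A_n$), $\phi^{-1}$ (almost-positive roots to tubes of the $n$-dimensional single path hypercube graph, identified with design tubes of the path on $n$ vertices), and $f^{-1}$ (design tubes to tubes of the path graph on $[n+1]$); hence $\theta$ is itself a bijection. We chain the compatibility results proved above. Proposition \ref{prop:linear-cluster-compatibility} states that $D$ and $D'$ are non-crossing if and only if $g(D)$ and $g(D')$ are linear-$c$ compatible. The proposition identifying the tubing complex of the $n$-dimensional path hypercube graph with the complex of pairwise $||_c$-compatible roots (via $\phi$) states that $g(D)$ and $g(D')$ are linear-$c$ compatible if and only if the hypercube-graph tubes $\phi^{-1}(g(D))$ and $\phi^{-1}(g(D'))$ are (weakly) compatible. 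Finally, the bijection $f$ of Proposition \ref{prop:cited-associahedron-result} preserves compatibility, so these two hypercube-graph tubes are compatible if and only if their images $\theta(D) = f^{-1}(\phi^{-1}(g(D)))$ and $\theta(D') = f^{-1}(\phi^{-1}(g(D')))$ are compatible as tubes of the path graph on $[n+1]$. Composing these equivalences, $D$ and $D'$ are non-crossing if and only if $\theta(D)$ and $\theta(D')$ are compatible; negating, $D$ and $D'$ cross if and only if $\theta(D)$ and $\theta(D')$ are incompatible.
\end{proof}
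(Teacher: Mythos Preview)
Your proposal is correct and matches the paper's approach exactly: the paper states the lemma as an immediate consequence of the sentence preceding it, namely that each of $f$, $\phi$, $g$ is a bijection preserving compatibility/crossing conditions, and you have simply spelled out that chain of equivalences in detail.
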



\begin{figure}
\centering
\includegraphics[width=.5\textwidth]{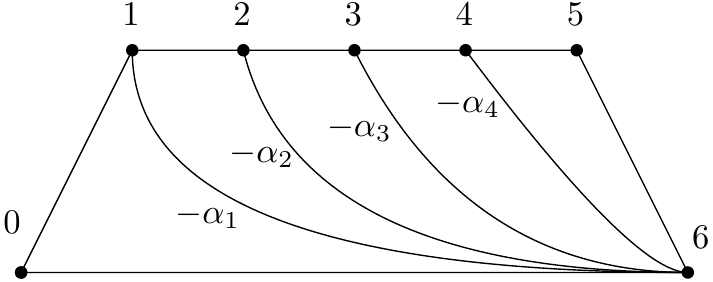}
\caption{Triangulation associated with negative simple roots.} \label{fig:negative-roots-triangulation}
\end{figure}


\begin{figure}
\centering
\includegraphics[width=.5\textwidth]{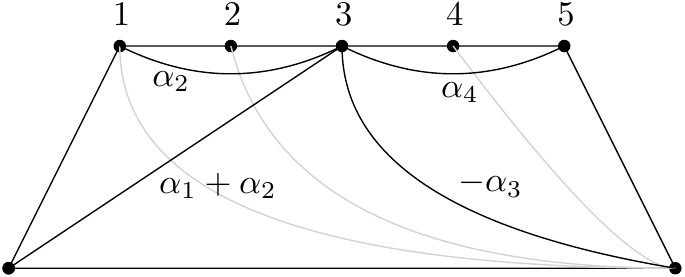}
\caption{Almost positive roots associated with diagonals of a triangulation.} \label{fig:diagonals-2}
\end{figure}

\begin{figure}
\centering
\includegraphics[width=.75\textwidth]{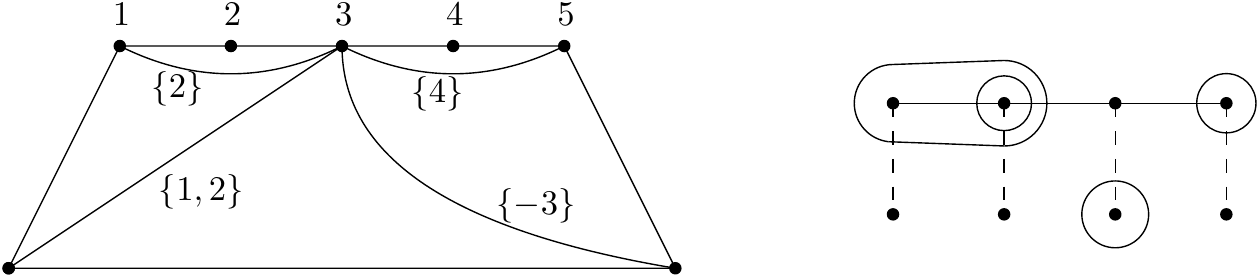}
\caption{Single path hypercube graph tubing associated with diagonals of the same triangulation.} \label{fig:diagonals-4}
\end{figure}

\begin{figure}
\centering
\includegraphics[width=.75\textwidth]{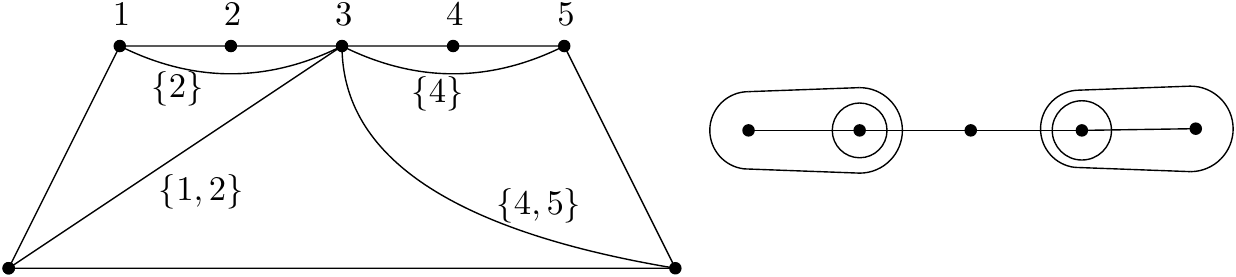}
\caption{Simplex path graph tubing associated with diagonals of the same triangulation.} \label{fig:diagonals-3}
\end{figure}

	\subsubsection{Complete Graph on Positive Vertices and the Stellohedron}
	
	Define the \emph{single $K_n$ hypercube graph} to be the graph on the hypercube consisting of the complete graph $K_n$ on vertices in $[n]$, and isolated vertices for each element in $-[n]$. This graph is $K_n^+$. The \emph{stellohedron} is the simplex graph associahedron of the complete bipartite $K_{n,1}$ graph; this graph associahedron is mentioned in \cite{postnikovfaces}.
	
	\begin{proposition}
		The hypercube graph associahedron for the single $K_n$ graph is isomorphic to the $n$-dimensional stellohedron.
	\end{proposition}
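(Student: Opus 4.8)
The plan is to exhibit a compatibility-preserving bijection between the hypercube-graph tubes of the single $K_n$ hypercube graph and the simplex-graph tubes of the complete bipartite graph $K_{n,1}$, and then invoke Proposition \ref{prop:hypercube-pairwise-compatibility} together with the analogous flag property of the simplex-graph tubing complex of $K_{n,1}$ to conclude that the two tubing complexes are isomorphic (and hence the dual simple polytopes are combinatorially isomorphic). Write $K_{n,1}$ on vertex set $[n] \cup \{\ast\}$, with $\ast$ adjacent to all of $[n]$ and no edges among $[n]$. First I would enumerate the tubes of each side. The tubes of the single $K_n$ hypercube graph $K_n^+$ on $\pm[n]$ are: every nonempty proper subset of $[n]$ (each induces a clique, hence connected, with no dashed edges), and every singleton $\{-i\}$ for $i \in [n]$. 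The tubes of $K_{n,1}$ are: every singleton $\{i\}$ for $i \in [n]$; every set of the form $S \cup \{\ast\}$ with $S \subseteq [n]$ (these are connected since $\ast$ joins everything, and they are proper as long as $S \neq [n]$ — but note the full set $[n]\cup\{\ast\}$ is not a tube, as tubes are proper); and the singleton $\{\ast\}$. Subsets $S \subseteq [n]$ with $|S| \ge 2$ that do not contain $\ast$ are \emph{not} tubes of $K_{n,1}$, since they induce an edgeless, disconnected graph.

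Next I would define the bijection $\psi$. For a subset $\emptyset \neq S \subsetneq [n]$ of positive vertices in $K_n^+$, set $\psi(S) = ([n]\setminus S) \cup \{\ast\}$; this is a proper tube of $K_{n,1}$ since $[n]\setminus S \neq [n]$ (as $S$ is nonempty). In particular when $S = [n]\setminus\{i\}$ we get $\psi(S) = \{i,\ast\}$, and I should double-check that the empty case lines up: $S$ ranges over all nonempty proper subsets, so $[n]\setminus S$ ranges over all nonempty proper subsets, hence $\psi(S)$ ranges over all $T \cup \{\ast\}$ with $\emptyset \subsetneq T \subsetneq [n]$. For a negative singleton $\{-i\}$ in $K_n^+$, set $\psi(\{-i\}) = \{i\}$. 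That accounts for all tubes of $K_{n,1}$ except $\{\ast\}$ itself; this is the familiar ``off by one'' phenomenon already seen in the path case (Proposition \ref{prop:cited-associahedron-result}), where the hypercube graph on $\pm[n]$ corresponds to a simplex graph on $n+1$ vertices and the whole-set tube of the latter plays a special role. I would handle it exactly as in the cited proof: the single $K_n$ hypercube graph associahedron is $n$-dimensional (truncating an $n$-cube), the stellohedron on $K_{n,1}$ is also $n$-dimensional, and the bijection $\psi$ between tubes corresponds to the bijection between facets; the extra tube $\{\ast\}$ of $K_{n,1}$ corresponds to a facet but the bookkeeping works out because $K_{n,1}$ has $n+1$ vertices while we truncate from an $n$-cube with $2n$ facets paired into $n$ opposing pairs. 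Concretely I would instead phrase $\psi$ as a bijection onto \emph{all} tubes of $K_{n,1}$ by the standard trick: identify the design-tube picture. Following Proposition \ref{prop:cited-associahedron-result}'s template, the cleanest route is to first note $K_n^+$ gives the graph cubeahedron of $K_n$, which by Figure \ref{polytopetable} / \cite{devadoss2011} is the stellohedron, i.e.\ the simplex-graph associahedron of $K_{n,1}$; so one may cite the cubeahedron computation directly and only needs to verify the design-tube/tube dictionary, exactly as done for the path.

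The key verification is that $\psi$ preserves weak compatibility in both directions. I would check the three cases. (i) Two positive tubes $S_1, S_2$ of $K_n^+$: since both induce cliques, $S_1$ and $S_2$ are compatible iff $S_1 \subseteq S_2$, $S_2 \subseteq S_1$, or $S_1 \cap S_2 = \emptyset$ — but in a complete graph disjoint nonempty sets are always adjacent, so in fact they are compatible iff one contains the other. Their images $([n]\setminus S_1)\cup\{\ast\}$ and $([n]\setminus S_2)\cup\{\ast\}$ always share $\ast$, so they are compatible iff one contains the other, which holds iff $[n]\setminus S_1 \subseteq [n]\setminus S_2$ or vice versa, iff $S_2 \subseteq S_1$ or $S_1 \subseteq S_2$. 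Containment is reversed but nesting is preserved, so compatibility matches. (ii) A positive tube $S$ and a negative singleton $\{-i\}$: these are compatible in $K_n^+$ iff $\{-i\}$ and $S$ are disjoint and non-adjacent, i.e.\ (since $-i$ is an isolated vertex) simply iff $-i \notin S$, which is automatic — so \emph{every} positive tube is compatible with every negative singleton. Wait: that's not right; $S$ consists of positive vertices only, so $-i \notin S$ always, hence $S$ and $\{-i\}$ are always disjoint, and since $-i$ is isolated there are no edges between them, so they are always weakly compatible. On the $K_{n,1}$ side, $\psi(S) = ([n]\setminus S)\cup\{\ast\}$ and $\psi(\{-i\}) = \{i\}$: these are compatible iff $\{i\} \subseteq \psi(S)$, $\psi(S) \subseteq \{i\}$ (impossible), or they are disjoint and non-adjacent. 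If $i \notin [n]\setminus S$, i.e.\ $i \in S$, then $\{i\}$ and $\psi(S)$ are disjoint but $i$ is adjacent to $\ast \in \psi(S)$, so they are \emph{in}compatible — contradiction with ``always compatible'' on the hypercube side. So my bijection as stated is wrong; I need to recalibrate. The fix, mirroring the path case, is $\psi(\{-i\}) = \{i, \ast\}$ won't be a singleton... Let me instead set $\psi(S) = S$ for positive $S$ with the interpretation that a positive tube $S$ maps to the \emph{round} design tube $S$ of $K_n$, and $\psi(\{-i\}) = \{i\}$ maps to the \emph{square} design tube. Then I use the established fact (Proposition just before this one in the excerpt) that the design tubing complex of $G$ equals the hypercube tubing complex of $G^+$, and separately that the stellohedron is the graph cubeahedron of $K_n$ (cited from \cite{devadoss2011}, entry in Figure \ref{polytopetable}). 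So the honest plan is: (1) cite that $K_n^+$'s hypercube-graph associahedron $=$ graph cubeahedron of $K_n$ (proven proposition above); (2) cite from \cite{devadoss2011} that the graph cubeahedron of $K_n$ is the $n$-dimensional stellohedron; OR, to be self-contained, give the design-tube-to-$K_{n,1}$-tube bijection following the path-case template: round tube $t \mapsto t \subseteq [n]$ (a tube of $K_{n,1}$ not containing $\ast$... but those aren't tubes!).

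So the genuinely correct self-contained bijection must be: round tube $t \subseteq [n] \mapsto ([n]\setminus t) \cup \{\ast\}$, and square tube $\{i\} \mapsto \{\ast\}$? No — that's not injective. The resolution, exactly as in Proposition \ref{prop:cited-associahedron-result}, is that the stellohedron's defining graph $K_{n,1}$ has $n+1$ vertices and the map sends square tube $\{i\} \mapsto $ the tube ``everything except a neighborhood'' — I would look up the precise map $f$ in \cite{devadoss2011}'s Proposition 15 analogue and transcribe it. The main obstacle, then, is pinning down the exact compatibility-preserving bijection between design tubes of $K_n$ and tubes of $K_{n,1}$, managing the dimension/vertex-count mismatch (the ``$+1$'' vertex) correctly; once that bijection is written, checking it preserves the three compatibility cases is routine, and flag-complex-ness of both complexes (Proposition \ref{prop:hypercube-pairwise-compatibility} for one side, and the fact that $K_{n,1}$ is connected so its classical graph tubing complex is flag for the other) upgrades the tube bijection to a simplicial complex isomorphism, hence a combinatorial isomorphism of the dual simple polytopes. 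I would structure the final proof as: state the bijection $\psi$ on tubes explicitly; verify $\psi$ and $\psi^{-1}$ send tubes to tubes; verify compatibility in the three cases; invoke flagness to conclude isomorphism of tubing complexes; conclude the two simple polytopes are combinatorially isomorphic.
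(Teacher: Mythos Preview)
Your initial bijection $\psi$ is essentially the inverse of the paper's map and would have worked perfectly, but two concrete errors derailed you.

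First, you mis-enumerated the tubes of $K_n^+$. In the hypercube-graph setting a tube is any face of $\Delta(Q_n)$ inducing a connected subgraph with no dashed edges; the full set $[n]$ \emph{is} such a face (the facets $F_1,\ldots,F_n$ meet at a vertex of the cube), so $[n]$ is a tube of $K_n^+$. Extending your formula to $S=[n]$ gives $\psi([n]) = \emptyset \cup \{\ast\} = \{\ast\}$, exactly the ``missing'' tube you were worried about. There is no off-by-one.

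Second, in case (ii) you forgot the dashed-edge condition. Two hypercube-graph tubes form a tubing only if their union is a face of $\Delta(Q_n)$, i.e.\ contains no pair $\{i,-i\}$. Thus a positive tube $S$ and the singleton $\{-i\}$ are compatible in $K_n^+$ if and only if $i \notin S$, not ``always''. With this correction your case (ii) check goes through cleanly: on the $K_{n,1}$ side, $\psi(S) = ([n]\setminus S)\cup\{\ast\}$ and $\psi(\{-i\}) = \{i\}$ share no vertex but $i$ is adjacent to $\ast$, so they are compatible iff $\{i\}\subseteq\psi(S)$, i.e.\ iff $i\notin S$. The conditions match.

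So your first attempt was the right proof; the paper does exactly this (writing the map in the reverse direction, $\phi(t)=[n+1]\setminus t$ for $t\ni n{+}1$ and $\phi(t)=-t$ otherwise). You abandoned it because of the two slips above, and the subsequent scramble through design tubes and citations is unnecessary.
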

	
	\begin{proof}

		The tubes of $K_n^+$ are either subsets of $[n]$, or negative singleton subsets of $-[n]$. Label $K_{n,1}$ as the graph on $[n+1]$ such that $n+1$ is connected to every other vertex. The tubes of $K_{n,1}$ are either subsets containing the center vertex $n+1$, or they are singleton subsets of $[n]$.


		We define a map from tubes of $K_{n,1}$ to tubes of $K_n^+$. Define $\phi(t)=[n+1] \backslash t$ whenever $t$ contains $n+1$, and define $\phi(t)=-t$ whenever $t$ does not contain $n+1$. This is a bijection between tubes, and we need to prove that it preserves compatibility.

		For two tubes $t_1, t_2$ that contain $n+1$, we find $t_1, t_2$ are compatible if and only if one tube is a subset of the other. We find that this is true if and only if one tube out of $[n+1]\backslash t_1, [n+1]\backslash t_2$ is a subset of the other. As simplex-graph tubes in a clique are compatible if and only if one is contained in the other, this means that $t_1, t_2$ are compatible if and only if $\phi(t_1), \phi(t_2)$ are compatible.

		We find any two tubes $t_1, t_2$ not containing $n+1$ must be compatible, and we find as well that $-t_1, -t_2$ must be compatible. As a result, in this case $t_1, t_2$ are compatible if and only if $\phi(t_1), \phi(t_2)$ are compatible.

		For a tube $t_1$ containing $n+1$ and a singleton tube $t_2$ not containing $n+1$, we find the two are compatible if and only if $t_2 \subset t_1$. This is then only possible if $t_2 \not\subseteq [n+1]\backslash n_1$, which are the conditions that a negative singleton tube $-t_2$ is compatible with $t_1$ in $K_{n,1}$. As a result, we find that $\phi$ preserves compatibility conditions and the nested complexes of $K_{n,1}$ and $K_n^+$ are isomorphic.
	\end{proof}

	Figure \ref{fig:stellahedron-as-hypercube-graph} shows $K_{n,1}$ and $K_n^+$, with vertices arranged radially. Figure \ref{fig:stellahedron-as-hypercube-graph-2} shows two equivalent tubings of these graphs, with each tube $t$ and its image $\phi(t)$ colored the same.

	\begin{figure}
\centering
\includegraphics[width=.75\textwidth]{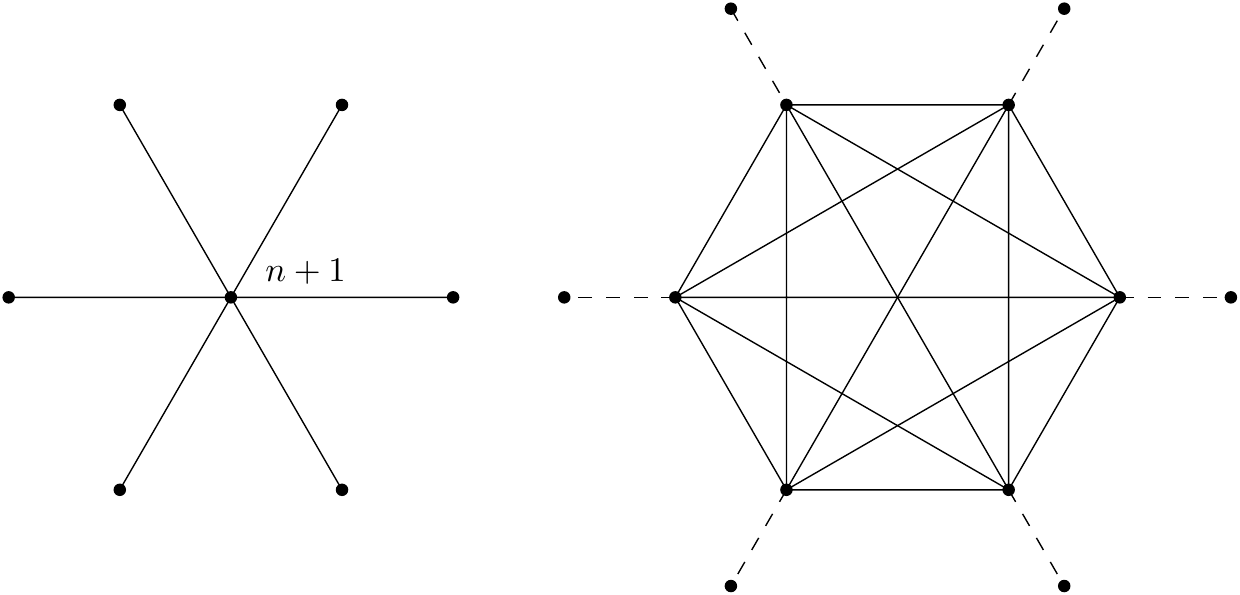}
\caption{The graphs $K_{n,1}$ and $K_n^+$.} \label{fig:stellahedron-as-hypercube-graph}
\end{figure}

	\begin{figure}
\centering
\includegraphics[width=.75\textwidth]{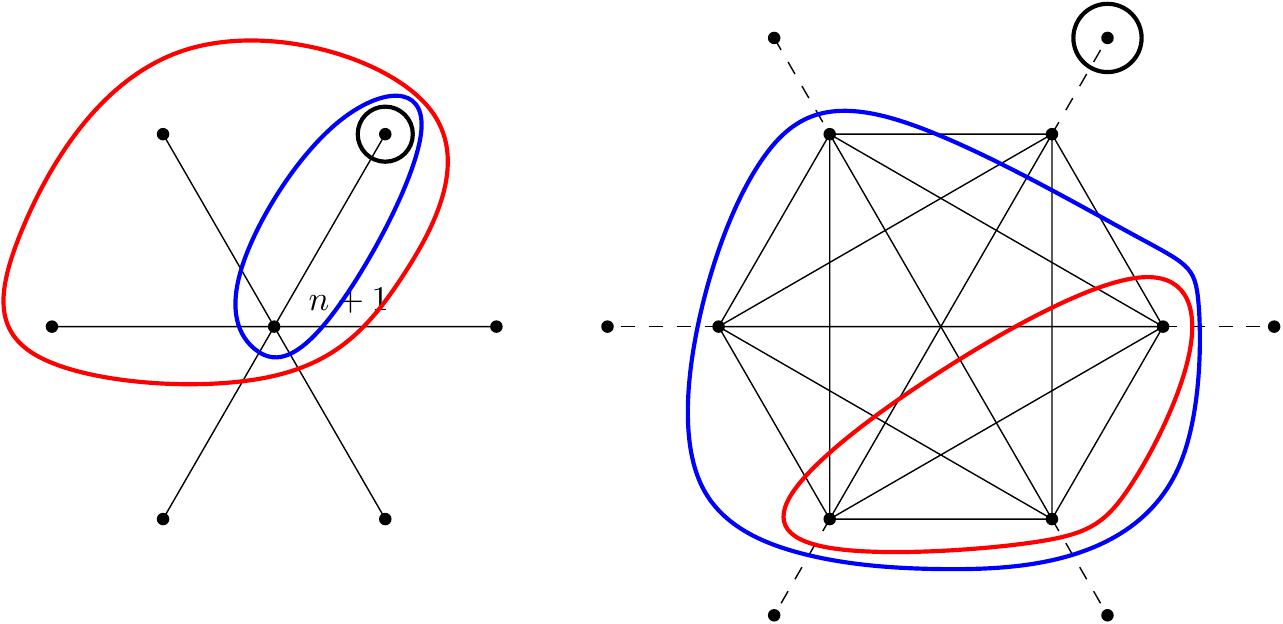}
\caption{Equivalent tubings of the graphs $K_{n,1}$ and $K_n^+$.} \label{fig:stellahedron-as-hypercube-graph-2}
\end{figure}

	\subsubsection{Cycle on Positive Vertices and the Halohedron}

	The single cycle hypercube graph is the graph on $\pm[n]$ with cycle $(1,2,\ldots,n)$. Its tubing complex is isomorphic to the design tubing complex of a cycle on $n+1$ vertices. The cubeahedron of the cycle graph is known as the \emph{halohedron} \cite{devadoss2011}. Up until now, its $f$-polynomial was not known, and not even an enumeration of the number of vertices of the halohedron was known. We provide here an enumeration of the faces of the halohedron:

	\begin{theorem} \label{thm:halohedron-formula}
		The bivariate generating function for the number of faces of dimension $k$ of the $n$-dimensional halohedron is

		\[
			f^H(x,y) = \frac{1+(2+x)y}{2\sqrt{1-2(2+x) y + x^2 y^2}}+\frac{1}{2} .
		\]
	\end{theorem}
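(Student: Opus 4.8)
The plan is to realize the $n$-dimensional halohedron $H_n$ as the $\p$-graph associahedron of the single cycle hypercube graph $G_n$ on $\pm[n]$ — cycle $(1,2,\dots,n)$ on the positive vertices, isolated negatives — and then run the atomic link sum method of Proposition~\ref{prop:p-graph-theorem}. First I would classify the tubes of $G_n$ and, for each tube $t$, identify the induced simplex-graph $G_n|_t$ and the reconnected complement $G_n/t$ via the hypercube-graph description of the reconnected complement. Three shapes occur: (i) a proper arc $t$ of the cycle of length $k$ (with $1\le k\le n-1$, and $n$ arcs of each length), for which $G_n|_t$ is a path on $k$ vertices and $G_n/t$ is again a single cycle hypercube graph, now on $n-k$ vertices (with the usual conventions when $n-k\le 2$); (ii) the whole positive cycle $[n]$, for which $G_n|_{[n]}$ is the $n$-cycle and $G_n/[n]$ is empty, i.e.\ a point; (iii) a negative singleton $\{-i\}$ (there are $n$), for which $G_n|_t$ is a point and $G_n/t$ is a single path hypercube graph on $n-1$ vertices, hence an associahedron by Proposition~\ref{prop:cited-associahedron-result}. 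By Theorem~\ref{thm:reconnectednestohedra1} (specialized to $\p$-graph associahedra) and Proposition~\ref{prop:product-of-polyhedra-f-polynomials}, the $f$-polynomial of each facet of $H_n$ is then a product: $f^{A_{k-1}}f^{H_{n-k}}$ for an arc, $f^{C_{n-1}}$ for the whole cycle, and $f^{A_{n-1}}$ for a negative singleton, where $A_m$, $C_m$, $H_m$ denote the $m$-dimensional associahedron, cyclohedron, and halohedron.

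Next I would feed this into Proposition~\ref{prop:p-graph-theorem} (equivalently Proposition~\ref{prop:diff-eq-polyhedra}). The family $\{G_n\}$ is facet-closed under the associahedron family (for the path and negative-singleton induced graphs and for the path-hypercube reconnected complements), the cyclohedron family (for the whole-cycle induced graph), and — this is a self-similar case — the halohedron family itself (for the arc reconnected complements), together with the one-point family. The counting functions are $r_{\mathrm{arc}}(n,i)=n=(i+1)+(n-1-i)$, $r_{\mathrm{cyc}}(n,i)=\delta(n-1-i)$, and $r_{\mathrm{neg}}(n,i)=n\,\delta(i)$, all separable as required, and the associated operators $O_y$ are elementary (scalings and $yD_y$). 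Assembling the terms should yield the partial differential equation
\[
(yD_y - xD_x)\,f^H \;=\; y\,f^A f^H + y^2 D_y\!\big(f^A f^H\big) + y\,f^C + y\,f^A + y^2 D_y f^A ,
\]
where $f^A$, $f^C$, $f^H$ are the bivariate $f$-polynomials of the families of associahedra, cyclohedra, and halohedra. Since $H_0$ is a point, the initial condition is $f^H(x,0)=1$, and the PDE then determines $f^H$ coefficient-by-coefficient in powers of $y$.

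Finally I would invoke the known closed forms for the inputs: the classical algebraic generating function for $f^A(x,y)$, and $f^C(x,y)=\big(1-2(2+x)y+x^2y^2\big)^{-1/2}$ for the cyclohedron (the latter obtainable by the same method applied to cycle simplex-graphs). It then suffices to check that the claimed expression — equivalently $f^H(x,y)=\tfrac12\big(1+(2+x)y\big)f^C(x,y)+\tfrac12$ — satisfies the displayed PDE and $f^H(x,0)=1$. As a sanity check it predicts $f^{H_0}=1$, $f^{H_1}=2+x$, $f^{H_2}=x^2+5x+5$, matching the point, the segment, and the pentagonal $2$-dimensional halohedron; these small cases also confirm that the degenerate collapses of the cycle graph to a path or a point need no separate treatment, since $f^{C_0}=f^{A_0}=1$ and $f^{C_1}=f^{A_1}=2+x$.

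I expect the main obstacle to be the last step: substituting $\sqrt{Q}$ with $Q=1-2(2+x)y+x^2y^2$, together with the algebraic function $f^A$, into the PDE, clearing denominators, and reducing to a polynomial identity — the bookkeeping of the $f^A$-terms is the only genuinely laborious part. A secondary point needing care is verifying that the separable decompositions of $r_{\mathrm{arc}}$, $r_{\mathrm{cyc}}$, $r_{\mathrm{neg}}$ really produce exactly the operators appearing above, so that no spurious terms are introduced.
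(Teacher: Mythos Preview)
Your approach via the atomic link sum method (Proposition~\ref{prop:p-graph-theorem}) is sound and genuinely different from the paper's, which instead uses maximal tube enumeration (Proposition~\ref{prop:enumeration-maximal-tube-polyhedra}) with kingmaker set $X_n=\{1,-1\}$.  Both routes work, and yours leads to a self-similar PDE in $f^H$ rather than the paper's closed algebraic expression in terms of known pieces.

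However, your displayed PDE is wrong, for exactly the reason you flagged as a secondary concern.  The decomposition $r_{\mathrm{arc}}(n,i)=n=(i+1)+(n-1-i)$ is only valid for $0\le i\le n-2$; there is \emph{no} arc of length $n$, since the unique size-$n$ positive tube is the whole cycle.  In the convolution framework of Proposition~\ref{prop:p-graph-theorem} the sum runs over $0\le i\le n-1$, so your separable decomposition silently inserts $n$ phantom ``arcs'' at $i=n-1$, each contributing $f^{A_{n-1}}\cdot f^{H_0}=f^{A_{n-1}}$.  Concretely, at the $y^1$ coefficient your RHS gives $3$ while the LHS gives $2$; at $y^3$ your RHS gives $63+63x+13x^2$ while the LHS gives $48+48x+10x^2$.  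So the claimed $f^H$ does \emph{not} satisfy your PDE, and the verification step would fail.

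The fix is to take $r_{\mathrm{arc}}(n,i)=n\bigl(1-\delta(n-1-i)\bigr)$.  The correction term $-n\,\delta(n-1-i)=-(i+1)\,\delta(n-1-i)$ is separable and produces exactly $-y\bigl(f^A+yD_yf^A\bigr)$, which cancels your negative-singleton contribution $yf^A+y^2D_yf^A$ term for term.  The corrected PDE is therefore simpler than the one you wrote:
\[
(yD_y - xD_x)\,f^H \;=\; y\,f^A f^H + y^2 D_y\!\bigl(f^A f^H\bigr) + y\,f^C \;=\; yD_y\!\bigl(y\,f^A f^H\bigr) + y\,f^C,
\]
and this one is satisfied by the claimed formula (check: $y^1$ gives $2=2$, $y^2$ gives $10+5x=10+5x$, $y^3$ gives $48+48x+10x^2$ on both sides).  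The paper, in its own method, runs into the same issue for path tubes in $intset_{X_n}$ and handles it explicitly with the factor $(1-\delta(n-1-i))$; you need the analogous correction here.

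A minor point: the paper reserves $f^C$ for the cis double path family and uses $f^B$ for the cyclohedron, so be careful with notation if you splice your argument into the paper's framework.
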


		The proof of this theorem is presented in Subsection \ref{sub:halohedron} using maximal tube enumeration.

	When $x=0$, this generating function gives the number of vertices of each $n$-dimensional halohedron.
	
	\begin{proposition}
	The generating function for the vertex count of each $n$-dimensional halohedron is given by the function
	\[
		f^H(0,y) = \frac{1+2y}{2 \sqrt{1-4y}}+\frac{1}{2}
	\]

\end{proposition}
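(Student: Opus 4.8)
The plan is to obtain this as an immediate specialization of Theorem \ref{thm:halohedron-formula}. Recall that the $f$-polynomial $f^H(x,y) = \sum_{n,k\ge 0} f^{H_n}_k x^k y^n$ records the number of $k$-dimensional faces of the $n$-dimensional halohedron $H_n$, so the $0$-dimensional faces—the vertices—are exactly the coefficients of $x^0$. Thus I would simply substitute $x=0$ into the formula
\[
f^H(x,y) = \frac{1+(2+x)y}{2\sqrt{1-2(2+x) y + x^2 y^2}}+\frac{1}{2}.
\]
Setting $x=0$ kills the $x^2 y^2$ term under the radical, turns $2+x$ into $2$, and the radicand becomes $1 - 4y$. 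The numerator becomes $1+2y$, giving
\[
f^H(0,y) = \frac{1+2y}{2\sqrt{1-4y}}+\frac{1}{2},
\]
which is precisely the claimed generating function.

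The only substantive point to justify is why the coefficient of $x^0$ in $f^H(x,y)$ genuinely counts vertices, i.e. why evaluating the bivariate $f$-polynomial at $x=0$ is the right operation. This follows directly from the definition of the bivariate $f$-polynomial of a family of pointed polyhedra given in Chapter \ref{chap:methods}: $f^{P_n}(x) = \sum_{k\ge 0} f^{P_n}_k x^k$ with $f^{P_n}_k$ the number of $k$-dimensional faces, so $f^{P_n}(0) = f^{P_n}_0$ is the vertex count, and summing $f^{P_n}(0) y^n$ over $n$ gives $f^H(0,y)$. Since each halohedron $H_n$ is a genuine simple polytope (hence pointed), this applies verbatim. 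I would also remark, as the surrounding text already does, that the radius of convergence and analytic-function manipulations are harmless since we are working with formal power series in $y$ with polynomial coefficients in $x$, and the substitution $x=0$ is well-defined coefficientwise.

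There is essentially no obstacle here: the proposition is a one-line corollary of the theorem, and the ``hard part'' (deriving the closed form for $f^H(x,y)$ by the maximal-tube enumeration of Subsection \ref{sub:halohedron}) has already been done. The only care needed is algebraic bookkeeping in the substitution—checking that $1 - 2(2+x)y + x^2y^2 \mapsto 1-4y$ and $1+(2+x)y \mapsto 1+2y$—which is routine. I would present the proof as: ``Setting $x=0$ in Theorem \ref{thm:halohedron-formula} and using that the coefficient of $x^0$ in $f^H(x,y)$ enumerates the vertices of each $H_n$ yields the stated formula.''
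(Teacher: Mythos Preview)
Your proposal is correct and matches the paper's approach exactly: the paper simply states that setting $x=0$ in Theorem~\ref{thm:halohedron-formula} yields the vertex-count generating function, and does not elaborate further. Your justification of why $x=0$ extracts vertex counts is, if anything, more detailed than what the paper provides.
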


%
%
%
%

	\subsubsection{Stellocubahedra} \danger{stellacubohedron?}

	Define the \emph{single star hypercube graph} as the hypercube graph on $\pm[n]$ with edges $(1,i)$ for all $i=2,\ldots,n$. This is the graph $K_{n,1}^+$. The single star hypercube graph associahedron is isomorphic to the design graph associahedron of the bipartite $K_{n,1}$ graph. Call this polytope the \emph{stellocubahedron}.

	\begin{proposition}
		Facets of the stellocubahedron are either lower-dimensional stellocubeahera, lower-dimensional hypercubes, or products of stellohedra.
	\end{proposition}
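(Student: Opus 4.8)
The plan is to apply the facet-characterization result for $\p$-graph associahedra (Theorem~\ref{thm:reconnectednestohedra1}, as specialized to $\p$-graph associahedra in the proposition following Corollary~\ref{thm:reconnectednestohedra2}) to the single star hypercube graph $G = K_{n,1}^+$. Every facet $\Phi_{\{t\}}$ of this polytope is combinatorially isomorphic to the product of the $F_t$-graph associahedron of the reconnected complement $G/t$ and the simplex-graph associahedron of the induced subgraph $G|_t$. So the proof reduces to a case analysis over the tubes $t$ of $K_{n,1}^+$: for each tube, identify $G|_t$, identify $F_t$ (a face of the hypercube, hence a product of $1$-simplices), and identify $G/t$ as a $\p$-graph on that face, then recognize the two factors as named polytopes.

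First I would enumerate the tubes of $K_{n,1}^+$. Using the description of hypercube-graph tubes, a tube is a connected dashed-edge-free vertex subset; since the only solid edges are $\{1,i\}$ for $i \in \{2,\dots,n\}$ (on positive vertices) and all negative vertices are isolated, the tubes fall into three families: (i) negative singletons $\{-i\}$; (ii) positive singletons $\{i\}$; and (iii) positive sets containing the center vertex $1$, i.e. $\{1\} \cup S$ for $S \subseteq \{2,\dots,n\}$ nonempty. For each family I would compute $G|_t$ and $G/t$. For a negative singleton $t = \{-i\}$, the induced graph $G|_t$ is a point, so that factor is trivial, and using the hypercube reconnected-complement rule (remove $t$ and $-t$, add edges between former neighbors where no dashed edge exists), $G/t$ is a single star hypercube graph on the remaining $n-1$ coordinates (deleting a leaf or, if $i=1$, leaving an empty graph on $n-1$ coordinates) — in the $i=1$ case $F_t$ is a hypercube and $G/t$ has no edges, so the facet is a lower-dimensional hypercube; for $i \ne 1$, the facet is a lower-dimensional stellocubahedron. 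For a positive singleton $t = \{i\}$ with $i \ne 1$: $G|_t$ is a point; $G/t$ lives on the face $F_t$ (a hypercube of dimension $n-1$) with the vertex $i$ and $-i$ removed and, since $i$'s only neighbor was $1$, no new edges, so $G/t$ is again a single star hypercube graph — a lower-dimensional stellocubahedron. For $t = \{1\}$: removing $1$ and $-1$ disconnects everything, $G/t$ has no solid edges on the hypercube $F_t$, giving a lower-dimensional hypercube. The interesting case is family (iii), $t = \{1\}\cup S$: here $G|_t$ is a star graph $K_{|S|,1}$ on $|t|$ vertices, whose simplex-graph associahedron is the stellohedron; and $G/t$ is a $\p$-graph on the face $F_t$ (a product of $1$-simplices indexed by the coordinates not in $\pm t$) — the vertices of $-t$ are removed, and the former neighbors of $t$ inside that face are the positive vertices in $\{2,\dots,n\}\setminus S$, which were only adjacent to $1 \in t$, so new edges are added among them, making $G/t$ a complete graph $K_{n-1-|S|}$ on those positive vertices (a $K_m^+$-type graph on the remaining coordinates), whose $\p$-graph associahedron is a product of stellohedra by the single-$K_n$ result — or, more carefully, $G/t$ restricted to that face is of the form $K_m^+$ whose associahedron is the stellohedron, so the facet is a product of stellohedra.

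Having done the case analysis, I would then assemble the conclusion: every facet is a product whose two factors each land in one of $\{$point, hypercube, stellocubahedron, stellohedron$\}$, and after collapsing trivial factors and using $\nest{P\times Q}{\cdots}$ decomposition (Proposition~\ref{prop:nestohedron-of-product}), each facet is combinatorially isomorphic to a lower-dimensional stellocubahedron, a lower-dimensional hypercube, or a product of stellohedra — exactly the claim. I would present this as a short lemma-free proof citing the facet product theorem and the two earlier isomorphism results (the single $K_n$ graph gives the stellohedron, and $K_{n,1}^+$ gives the stellocubahedron by definition).

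The main obstacle I anticipate is getting the reconnected complement $G/t$ exactly right in case (iii) and confirming it really is (isomorphic to) a disjoint union / $K_m^+$-type graph whose associahedron decomposes as a product of stellohedra, rather than something more complicated: one must carefully check which vertices survive in $F_t$, that the newly added clique edges among former neighbors of $t$ do not accidentally connect to surviving negative vertices (they cannot, since those are non-adjacent to everything and the dashed-edge rule only blocks edges, it never forces them), and that the resulting graph is genuinely of the single-$K_n$-plus-isolated-negatives form so the stellohedron-product identification applies. A secondary subtlety is bookkeeping the degenerate sub-cases ($t=\{1\}$, $t=\{-1\}$, $|S| = n-1$) to make sure each still yields one of the three listed polytope types (with the convention that a $0$-dimensional or empty factor is absorbed), so that the statement holds uniformly across all facets.
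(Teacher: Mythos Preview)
Your approach is essentially the same as the paper's: enumerate the tubes of $K_{n,1}^+$, compute the induced subgraph and the reconnected complement for each, and identify the two factors. The paper's proof is terser but follows the same case split (tubes containing $1$; the tube $\{-1\}$; singleton tubes not containing $1$ or $-1$).

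There is one concrete error in your case analysis. For $t=\{1\}$ you claim that ``removing $1$ and $-1$ disconnects everything, $G/t$ has no solid edges on the hypercube $F_t$, giving a lower-dimensional hypercube.'' This forgets the reconnection step you correctly applied in case (iii): the former neighbors of $t=\{1\}$ are all of $\{2,\dots,n\}$, and the reconnected complement adds edges among them, so $G/t$ is the single $K_{n-1}$ hypercube graph $K_{n-1}^+$, whose associahedron is a stellohedron, not a hypercube. Thus the facet for $t=\{1\}$ is a (degenerate) product of stellohedra, not a hypercube. This is exactly how the paper treats it, folding $t=\{1\}$ into the ``tubes containing $1$'' case with no special handling. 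The overall statement still holds, but your identification of that particular facet type is wrong.
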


	\begin{proof}
		There are three cases of tubes in the stellar hypercube graph associahedron: tubes containing $1$, the tube $\{-1\}$, and singleton tubes not containing $1$ or $-1$. The reconnected complement of a tube containing $1$ is the hypercube graph with a complete graph on positive vertices, whose hypercube-graph associahedron is isomorphic to a stellohedron, and so the facet associated with such a tube is combinatorially isomorphic to the product of two stellohedra. The reconnected complement of the tube $\{-1\}$ is an empty hypercube graph, and so the associated facet is isomorphic to a hypercube. The reconnected complement of a singleton tube not containing $1$ or $-1$ is a single star hypercube graph, and the associated facet is a stellocubeahedron.
	\end{proof}

	We do not have a formula for the $f$-polynomial of the stellocubeahedron. However, we do have a formula counting its vertices.

	\begin{proposition}
		The $n$-dimensional stellocubeahedron has
		\[
			2^{n-1} + \sum_{0 \le j \le k \le n-1} \frac{(n-1)!}{j!(n-1-k)!}
		\]
		vertices for all $n \ge 1$.
\end{proposition}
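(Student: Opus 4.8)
The plan is to count the vertices of the $n$-dimensional stellocubeahedron, i.e.\ the maximal tubings of the single star hypercube graph $K_{n,1}^{+}$, through the facet-summation identity: in any simple $n$-dimensional polyhedron every vertex lies in exactly $n$ facets, so $n\,v_{n}=\sum_{\Phi}(\#\text{ vertices of }\Phi)$, the sum over facets $\Phi$, where $v_{n}$ denotes the vertex count. By Theorem \ref{thm:reconnectednestohedra1} and Proposition \ref{prop:product-of-polyhedra-f-polynomials}, the vertex count of a facet $\Phi_{\{t\}}$ is the product of the vertex count of the simplex-graph associahedron of $G|_{t}$ and the vertex count of the $F_{t}$-graph associahedron of $G/t$. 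So I would first, for each of the three families of tubes identified in the preceding proposition, work out the reconnected complements. Writing $s_{j}$ for the vertex count of the $j$-dimensional stellohedron: a tube $\{1\}\cup S$ with $|S|=j$ has $G|_{t}=K_{1,j}$ (the $j$-dimensional stellohedron, $s_{j}$ vertices) and $G/t$ the single-$K_{n-1-j}$ hypercube graph (the $(n-1-j)$-dimensional stellohedron, $s_{n-1-j}$ vertices), and there are $\binom{n-1}{j}$ such tubes; the tube $\{-1\}$ has $G|_{t}$ a point and $G/t$ the empty hypercube graph, an $(n-1)$-cube with $2^{n-1}$ vertices; and each of the $2(n-1)$ singleton tubes $\{\pm i\}$ with $i\ge 2$ has $G|_{t}$ a point and $G/t$ again a single star hypercube graph on $n-1$ pairs, with $v_{n-1}$ vertices. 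Summing gives the recurrence
\[
n\,v_{n}\;=\;2(n-1)\,v_{n-1}\;+\;2^{\,n-1}\;+\;\sum_{j=0}^{n-1}\binom{n-1}{j}\,s_{j}\,s_{n-1-j}.
\]

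Next I would pin down $s_{j}$. Either citing the known vertex count of the stellohedron or running the identical facet argument on $K_{n,1}$ (its tubes being the singleton leaves, with reconnected complement a smaller stellohedron, and the sets $\{0\}\cup S$ with $S\subsetneq[n]$, with reconnected complement a complete graph, hence a permutahedron) yields $s_{j}=\sum_{k=0}^{j}j!/k!$, equivalently the exponential generating function $S(y):=\sum_{j}s_{j}\,y^{j}/j!=e^{y}/(1-y)$.

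The last step is to check the base case $v_{1}=2$ and verify that the claimed closed form satisfies the recurrence, concluding by induction. I would first simplify the double sum in the statement: with $m=n-1$, reindexing $(j,k)\mapsto(j,\,m-k)$ and using $\sum_{i=0}^{\ell}\binom{\ell}{i}(\ell-i)!=\sum_{i=0}^{\ell}\ell!/i!=s_{\ell}$, one gets $\sum_{0\le j\le k\le n-1}\tfrac{(n-1)!}{j!\,(n-1-k)!}=\sum_{j=0}^{n-1}\binom{n-1}{j}s_{j}=:w_{n-1}$, so the claim reads $v_{n}=2^{\,n-1}+w_{n-1}$. Substituting this into the recurrence and cancelling the $2^{\,n-1}$-terms, the required identity collapses to $(m+1)\,w_{m}=2m\,w_{m-1}+\sum_{j=0}^{m}\binom{m}{j}s_{j}s_{m-j}$ for all $m\ge 0$ (with $w_{-1}:=0$); passing to exponential generating functions, where $W(y):=\sum_{m}w_{m}y^{m}/m!=S(y)e^{y}=e^{2y}/(1-y)$, this becomes $W+yW'=2yW+S^{2}$, a one-line verification since $W=e^{2y}/(1-y)$ and $S^{2}=e^{2y}/(1-y)^{2}$.

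I expect the main obstacle to be the bookkeeping in the first step: correctly identifying the three tube families, their induced subgraphs, and their reconnected complements — in particular that reconnecting the former neighbors of a tube through vertex $1$ produces a clique on the surviving positive vertices, turning $G/t$ into a single-$K$ hypercube graph whose associahedron is a stellohedron — together with the multiplicities $\binom{n-1}{j}$. Once the recurrence and the closed form for $s_{j}$ are in hand, the generating-function verification in the final step is routine.
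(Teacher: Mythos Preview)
Your proof is correct, but the paper takes a much shorter, direct route. Rather than setting up the facet–summation recurrence and closing it with generating functions, the paper counts maximal tubings by a single case split on the vertices $1$ and $-1$ (this is an instance of the maximal-tube method of Section~\ref{sec:method-two} with kingmaker set $\{1,-1\}$). In any maximal tubing, either the tube $\{-1\}$ appears (forcing vertex $1$ to be uncovered and the remaining tubes to be a choice of $\{i\}$ or $\{-i\}$ for each $i\ge 2$, giving $2^{n-1}$ tubings), or some tube contains $1$; the maximal such tube $t=\{1\}\cup S$ with $|S|=k$ then forces the negative singletons $\{-i\}$ for $i\notin t$ and leaves exactly a maximal tubing of the star $G|_t$, of which there are $s_k=\sum_{j=0}^k k!/j!$. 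Summing over the $\binom{n-1}{k}$ choices of $S$ immediately gives $2^{n-1}+\sum_{k}\binom{n-1}{k}s_k$, which is the double sum in the statement.

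Your approach reaches the same number via $n\,v_n = 2(n-1)v_{n-1} + 2^{n-1} + \sum_j \binom{n-1}{j} s_j s_{n-1-j}$, which is perfectly valid and your generating-function check $W+yW'=2yW+S^2$ is clean. The trade-off is clear: the paper's argument is one paragraph with no induction or EGFs, while your method exercises the atomic link sum machinery and the stellohedron EGF $S(y)=e^y/(1-y)$, and incidentally produces the nice identity $W(y)=e^{2y}/(1-y)$ for the quantity $w_m=\sum_j\binom{m}{j}s_j$. Both are fine; the direct count is what the paper records.
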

\begin{proof}
The maximal tubings of this graph can be counted as follows: either a tubing contains the tube $\{-1\}$, or it contains a maximal tube of size $k+1$ containing~$1$. This tube will induce a star graph on $k+1$ vertices, and will have $\sum_{i=0}^{k} \frac{k!}{i!}$ maximal tubings. There are $${n-1 \choose k}$$ possible tubes of size $k+1$ containing $1$, giving us the count
	\[
		sc(n) = 2^{n-1} + \sum_{k=0}^{n-1} {n-1 \choose k} \sum_{j=0}^{k} \frac{k!}{j!}
	\]
	maximal tubings for $n>0$, and $sc(0)=1$.
\end{proof}

	\begin{remark}
		As written now, the maximal tubing subcomplex enumeration method is not sufficient to calculate the $f$-polynomial of the stellocubeahedron. Methods such as atomic link sum enumeration and maximal tube sub-complex enumeration require the counting functions form $r_{\sigma,\tau}(n,i)$ to be written as finite sums of separable functions in $i$ and $n-1-i$. However, the number of tubes containing $1$ and $i$ other vertices is ${n-1 \choose i}$. This is written as $r_\sigma(n,i)=\frac{(n-1)!}{i! (n-1-i)!}$, which cannot be written as a finite sum of separable functions. We believe that a modification to the technique using mixed ordinary and exponential generating functions would be more capable of performing these operations.
	\end{remark}

	\subsection{Double Graphs on Positive and Negative Vertices (Double Cubeahedra)} \label{sub:doublecubeahedra}

	 Given a graph $G$ on $[n]$, the \emph{double graph} $2G$ is the hypercube graph on $\pm[n]$ that has edges $\{i,j\}$ and $\{-i,-j\}$ if $\{i,j\}$ is an edge of $G$, and no other edges. Tubes of double graphs can be expressed as \emph{signed tubes} and \emph{signed tubings}.

	\begin{definition}
		For a graph $G$, a \emph{signed tube} of $G$ is a subset of vertices of $G$ which induces a connected subgraph, and which is labeled with a sign, either positive or negative.
	\end{definition}

	\begin{definition}
		Two signed tubes of $G$ are \emph{compatible} if
		\begin{enumerate}
			\item They agree in sign, and either: one is contained in the other, or they are disjoint but not adjacent.
			\item They disagree in sign and are disjoint.
		\end{enumerate}
	\end{definition}

	A \emph{signed tubing} is a collection of pairwise compatible signed tubes. Signed tubes and tubings of a graph $G$ are in bijection with tubes and tubings of $2G$, with positive tubes corresponding to tubes on positive vertices and negative tubes corresponding to tubes on negative vertices. This bijection gives us the following proposition:

	\begin{proposition}
		The complex of signed tubings of a graph $G$ is isomorphic to the nested complex of the hypercube-graph $2G$.
\end{proposition}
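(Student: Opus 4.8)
The plan is to establish the isomorphism by constructing an explicit bijection between signed tubes of $G$ and tubes of the hypercube-graph $2G$, and then verifying that this bijection preserves the relevant compatibility conditions, both pairwise (weak) and at the level of whole tubings (strong). Since the tubing complex of any hypercube-graph is a flag complex (Proposition \ref{prop:hypercube-pairwise-compatibility}), and the signed-tubing complex is likewise determined by pairwise compatibility of signed tubes, it suffices to exhibit a compatibility-preserving bijection on the level of (signed) tubes in the sense of the definition preceding the statement.

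First I would define the map: given a signed tube $t$ of $G$ with underlying connected vertex set $S \subseteq [n]$, send it to the vertex set $S \subseteq \pm[n]$ if $t$ is positively signed, and to $-S = \{-i \mid i \in S\}$ if $t$ is negatively signed. Because the edges of $2G$ on positive vertices are exactly a copy of the edges of $G$, and likewise on negative vertices, the induced subgraph of $S$ (resp. $-S$) in $2G$ is connected and contains no dashed edge, so the image is a genuine hypercube-graph tube of $2G$. Conversely, every tube of $2G$ lies entirely among positive vertices or entirely among negative vertices --- it cannot contain both $i$ and $-i$ (dashed edge) and there are no edges of $2G$ between a positive and a negative vertex other than the dashed ones, so a connected dashed-edge-free vertex set is either $\subseteq [n]$ or $\subseteq -[n]$ --- and each such tube pulls back to a uniquely-signed connected subset of $G$. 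Hence the map is a bijection.

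Next I would check compatibility case by case, matching the two clauses of the definition of compatibility for signed tubes against the weak compatibility rules for hypercube-graph tubes (together with the no-dashed-edge-between-tubes condition). If $t_1, t_2$ agree in sign, their images live among the same sign class, and the only edges present there are the copies of $G$-edges; so nesting corresponds to nesting, and disjoint-and-nonadjacent corresponds to disjoint-and-nonadjacent, with no dashed edges between them since the two sets are disjoint as subsets of $[n]$ (so no pair $\{j,-j\}$ arises). If $t_1,t_2$ disagree in sign, their images lie in opposite sign classes; there are no solid edges of $2G$ between opposite sign classes, so the images are automatically weakly compatible precisely when they are disjoint as hypercube-graph tubes --- and disjointness of $S$ and $-S'$ in $\pm[n]$ is equivalent to $S \cap S' = \emptyset$ in $[n]$, matching clause (2). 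The one subtlety to be explicit about is the dashed-edge-between-tubes condition: two images of opposite sign, say $S$ and $-S'$, have a dashed edge between them exactly when $S \cap S' \neq \emptyset$, which is exactly the failure of clause (2); so the hypercube-graph tubing condition matches the signed-tubing condition here too. Finally, since both complexes are flag, a set of (signed) tubes is a (signed) tubing iff it is pairwise compatible, so the tube-level bijection extends to the claimed isomorphism of simplicial complexes. The main obstacle --- really the only nontrivial point --- is getting the opposite-sign disjointness-versus-dashed-edge bookkeeping exactly right, i.e.\ confirming that ``disjoint as signed tubes'' translates to ``disjoint, non-adjacent, and no dashed edge between'' in $2G$; everything else is routine.
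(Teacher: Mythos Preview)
Your proof is correct and follows exactly the approach the paper takes --- indeed, the paper simply states the bijection (positive signed tubes $\leftrightarrow$ tubes on $[n]$, negative signed tubes $\leftrightarrow$ tubes on $-[n]$) in the sentence preceding the proposition and gives no further argument. One small wording issue: for opposite signs, the images $S\subseteq[n]$ and $-S'\subseteq -[n]$ are \emph{always} disjoint in $\pm[n]$ (hence always weakly compatible), so the constraint $S\cap S'=\emptyset$ comes entirely from the no-dashed-edge-between-tubes condition, exactly as you correctly identify in your next sentence.
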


	Figure \ref{fig:signed-tubing} shows a tubing of a double path graph and an equivalent signed tubing. 

	\begin{figure}[h]
	\begin{center}
	\includegraphics[width=.75\textwidth]{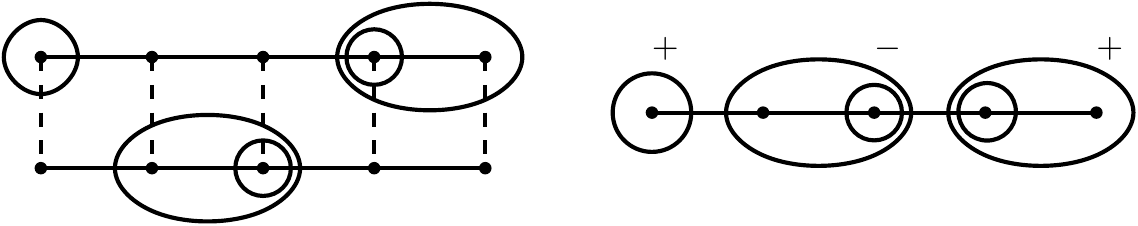}
	\end{center}
	\caption{A double path graph tubing and the corresponding signed tubing.} \label{fig:signed-tubing}
\end{figure}

	\begin{proposition}	\label{prop:double-graph-minkowski-sum}
		The hypercube graph associahedron of a double graph $2G$ can be realized as the Minkowski sum of the hypercube graph associahedron of $G^+$ and its antipodal inverse.
	\end{proposition}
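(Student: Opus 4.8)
The plan is to prove this via normal fans and Minkowski sums, using the machinery of Section~\ref{chap:normal-fans} (specifically Theorem~\ref{thm:common-refinement} and its corollaries about Minkowski summands) together with the structural description of double-graph tubes. The key observation is that the set of tubes of $2G$ splits into two families: tubes lying entirely on positive vertices, and tubes lying entirely on negative vertices. There are no mixed tubes because an edge of $2G$ never joins a positive and a negative vertex, so a connected induced subgraph of $2G$ cannot contain vertices of both signs; and there is no dashed-edge obstruction to worry about separately since a tube on positive vertices never contains a pair $\{i,-i\}$. Thus the graphical building set $\B_{2G}$ is the union of two building sets, $\B^+$ consisting of the positive tubes and $\B^-$ consisting of the negative tubes, where $\B^+$ is exactly the graphic building set of $G^+$ (the positive tubes of $G^+$ are the connected subsets of the positive clique-copy of $G$, together with every negative singleton; but note that in $2G$ the negative singletons also appear, so one has to be slightly careful — see the obstacle paragraph below).

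First I would set up the normal-fan description. Fix the standard hypercube $Q_n$ with facet normal vectors $v_i = e_i$ for $i \in \pm[n]$, realizing both associahedra as standard-cut associahedra whose normal fans are $\mathcal{F}(Q_n, V, 2G)$, $\mathcal{F}(Q_n, V, G^+)$. By Corollary~\ref{cor:union}, since $\B_{2G} = \B^+ \cup \B^-$, the fan $\mathcal{F}(Q_n, V, \B_{2G})$ is the coarsest common refinement of $\mathcal{F}(Q_n, V, \B^+)$ and $\mathcal{F}(Q_n, V, \B^-)$. The Minkowski-sum proposition at the end of Section~\ref{sub:p-nestohedron-fans} (the one stating that a $P$-nestohedron is the Minkowski sum of the truncations $P_I$, dually that the normal fan of a Minkowski sum is the common refinement of the summand normal fans) then says: the hypercube-graph associahedron of $2G$ has normal fan equal to the common refinement of the normal fan of the $G^+$-associahedron and the normal fan of the $\B^-$-nestohedron. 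So it remains to identify the $\B^-$-nestohedron of $Q_n$ as the antipodal inverse (i.e. the image under $x \mapsto -x$) of the $G^+$-associahedron.

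Next I would exhibit the antipodal symmetry. The map $\iota : \R^n \to \R^n$, $\iota(x) = -x$, sends the standard hypercube $Q_n$ to itself, and on facets it swaps $F_i \leftrightarrow F_{-i}$; equivalently it is the relabeling $i \mapsto -i$ of $\pm[n]$. Under this relabeling, the positive clique-copy of $G$ in $2G$ maps to the negative clique-copy, so $\iota$ carries the building set $\B^+$ onto $\B^-$ (and carries negative singletons to positive singletons, which are also present in both). Therefore the $\B^-$-nestohedron, constructed by truncating $Q_n$ along the faces $F_I$ for $I \in \B^-$ with hyperplanes normal to $\sum_{i\in I} v_i$, is exactly the $\iota$-image of the $\B^+$-nestohedron constructed the same way — i.e. the antipodal inverse of the $G^+$-graph associahedron. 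Combining with the previous paragraph, the $2G$-associahedron is the Minkowski sum of the $G^+$-associahedron and its antipodal inverse, as claimed. (Alternatively, and perhaps more cleanly, one can invoke Proposition~\ref{prop:connected-components-P-graph} or the decomposition $\B_{2G} = \B^+ \cup \B^-$ directly via the Minkowski-summand description without passing through fans at all — the summand for each tube $I \in \B_{2G}$ is the truncation $Q_n{}_I$, and grouping these by sign gives the $G^+$-associahedron's summands and their antipodal images.)

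The main obstacle I expect is the bookkeeping around the singleton tubes and the exact definitions of $G^+$ and $\B^+$: in $G^+$ the negative vertices are isolated, so the tubes of $G^+$ are the connected positive subsets plus all $2n$ singletons $\{i\}, \{-i\}$; in $2G$ the tubes are the connected positive subsets, the connected negative subsets, and again all $2n$ singletons. So $\B_{2G}$ is genuinely $\B^+ \cup \B^-$ with the singleton part shared, and one must check that the Minkowski-sum / common-refinement identity is insensitive to this overlap — which it is, since refining a fan against a coarsening of itself changes nothing, and equivalently a repeated Minkowski summand (a singleton truncation, which is just a parallel translate of a facet hyperplane and does not change combinatorial type) contributes nothing new. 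I would make this precise by noting that $\mathcal{F}(Q_n,V,\B^+)$ already refines the fan $\mathcal{F}(Q_n,V,\B_0)$ of the bare cube, so adding the shared singletons a second time via $\B^-$ is vacuous, and then Corollary~\ref{cor:union} applies cleanly. The rest is routine once the sign-decomposition of tubes of $2G$ is established.
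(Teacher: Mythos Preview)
Your proposal is correct and follows essentially the same route as the paper: decompose the graphic building set of $2G$ as $\B_{G^+}\cup\B_{G^-}$, apply Corollary~\ref{cor:union} to get the normal fan of the $2G$-associahedron as the coarsest common refinement, then identify the $G^-$-associahedron as the antipodal image of the $G^+$-associahedron and invoke the Minkowski-sum/normal-fan correspondence. Your extra care about the shared singleton tubes is fine but unnecessary---Corollary~\ref{cor:union} is stated for arbitrary unions of building sets, so the overlap is already absorbed.
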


	\begin{proof}
		Define $G^-$ as the hypercube graph containing edges $\{-i,-j\}$ for each edge $\{i,j\}$ in $G$, and no other edges. We note that the set of tubes of $2G$ is equal to the union of the set of tubes of $G^+$, and the set of tubes of $G^-$. As a result, the graphic building set of $2G$ is equal to the union of graphic building sets of $G^+$ and $G^-$. As a result of Corollary \ref{cor:union}, the normal fan of a standard cut $2G$ hypercube graph associahedron is equal to the coarsest common refinement of the normal fans of standard cut hypercube graph associahedra of $G^+$ and $G^-$. We also note that the antipodal inverse of a standard cut hypercube graph associahedron of $G^+$ is a standard cut hypercube graph associahedron of $G^-$. As a result, we find we can realize the hypercube graph associahedron of $2G$ as the Minkowski sum of the hypercube graph associahedron of $G^+$ and its antipodal inverse.
	\end{proof}

	\subsubsection{\texorpdfstring{$2K_n$ Graphs and the type $A_n$ permutahedron}{2Kn Graphs and the type An permutahedron}}
	
	The \emph{$2K_n$ hypercube graph} is the hypercube graph on $\pm[n]$ consisting of a complete graph on $[n]$ and a complete graph on $[-n]$. This is a special case of the double graph $2G$ we have just described. The type $A_n$ permutahedron is the orbit of a generic point under the type $A_n$ reflection group, and is isomorphic to the simplex graph associahedron of the $K_{n+1}$ graph.
	
	\begin{proposition} \label{prop:type-A-permutahedron}
		The hypercube graph associahedron of the $2K_n$ graph is combinatorially isomorphic to the type $A_n$ permutahedron.
	\end{proposition}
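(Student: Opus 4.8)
The plan is to exhibit a compatibility-preserving bijection between tubes of the hypercube-graph $2K_n$ on $\pm[n]$ and tubes of the complete graph $K_{n+1}$ on $[n+1]$, and then invoke the fact (used repeatedly in this section) that an isomorphism of flag tubing complexes induces a combinatorial isomorphism of the associated simple polytopes. The key observation is that tubes of $K_{n+1}$ are exactly the proper nonempty subsets of $[n+1]$, while tubes of $2K_n$ are the nonempty subsets of $[n]$ (with the positive sign) together with the nonempty subsets of $-[n]$ (with the negative sign), subject to the hypercube constraint that a tube may not contain both $i$ and $-i$. First I would set up the map $\phi$: send a proper subset $S \subsetneq [n+1]$ with $n+1 \notin S$ to the positive tube $S \subseteq [n]$, and send a proper subset $S$ with $n+1 \in S$ to the negative tube $-([n+1]\setminus S) = \{-j : j \in [n]\setminus S\}$. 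This is a bijection onto the tube set of $2K_n$: every nonempty proper subset of $[n]$ arises once as a positive tube, and every nonempty proper subset of $-[n]$ arises once (from $S = [n+1]\setminus(-t)$ with $n+1\in S$) as a negative tube.

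Next I would verify that $\phi$ preserves strong compatibility. Since both complexes are flag (Proposition \ref{prop:hypercube-pairwise-compatibility} for $2K_n$, and the complete-graph case is the classical flag situation), it suffices to check pairwise compatibility, and a set of tubes is strongly compatible iff it is pairwise weakly compatible together with the no-forbidden-subset condition; but in $2K_n$ the union of pairwise-compatible tubes automatically avoids forbidden pairs $\{i,-i\}$ (two tubes containing $i$ and $-i$ respectively would have a forbidden edge between them, violating pairwise compatibility as noted before Proposition \ref{prop:hypercube-pairwise-compatibility}), so pairwise checking genuinely suffices. In $K_{n+1}$, two tubes $S_1, S_2$ are compatible iff $S_1 \subseteq S_2$, $S_2 \subseteq S_1$, or $S_1 \cap S_2 = \emptyset$ (disjointness forces non-adjacency automatically is false in a complete graph — rather, disjoint tubes in $K_{n+1}$ are \emph{never} compatible unless... wait, in $K_{n+1}$ every two vertices are adjacent, so disjoint tubes are always adjacent and hence incompatible); thus two tubes of $K_{n+1}$ are compatible iff one contains the other. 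The checking then breaks into three cases by sign of the images: (i) both images positive, i.e. $n+1\notin S_1, S_2$, where $S_i \subseteq S_j$ in $K_{n+1}$ iff the same containment holds among the positive tubes — and in $2K_n$ positive tubes sitting inside a clique are compatible iff one contains the other; (ii) both images negative, i.e. $n+1 \in S_1, S_2$, where $S_1 \subseteq S_2$ iff $[n+1]\setminus S_2 \subseteq [n+1]\setminus S_1$ iff the negative tube images nest the other way, again matching clique-compatibility on $-[n]$; (iii) mixed, $n+1 \in S_1, n+1\notin S_2$: here $S_2 \subseteq S_1$ is the only possible nesting (since $n+1 \in S_1 \setminus S_2$, we cannot have $S_1 \subseteq S_2$; and disjointness is impossible unless $S_2 = \emptyset$), and $S_2 \subseteq S_1$ iff $S_2 \cap ([n+1]\setminus S_1) = \emptyset$ iff the positive tube $S_2$ and the negative tube $-([n+1]\setminus S_1)$ have disjoint underlying vertex sets in $\pm[n]$ and hence (being opposite-sign, disjoint, with no solid edge between positive and negative vertices in $2K_n$) are weakly compatible, and their union avoids forbidden subsets exactly because $S_2 \cap ([n+1]\setminus S_1) = \emptyset$ means no $j$ has both $j$ and $-j$ in the union.

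With $\phi$ established as a bijection of tube sets preserving pairwise compatibility in both directions, it extends to an isomorphism of the flag tubing complexes, hence $\Delta(K_{n+1}) \cong \Delta(2K_n)$ as simplicial complexes. Since both the type $A_n$ permutahedron and the $2K_n$ hypercube-graph associahedron are simple polytopes (bounded, full-dimensional, simple) of the same dimension $n$, Remark \ref{remark:dual-simplicial-complex-non-uniqueness} gives that they are combinatorially isomorphic. I expect the main obstacle to be getting the mixed case (iii) exactly right — in particular confirming that opposite-sign tubes in $2K_n$ are weakly compatible precisely when their underlying vertex sets are disjoint (there being no solid edges between $[n]$ and $-[n]$ in $2K_n$) and that the forbidden-subset condition on the union reduces to that same disjointness — and double-checking the direction of the nesting reversal under complementation in the both-negative case. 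A sanity check worth including: counting vertices, the $A_n$ permutahedron has $(n+1)!$ vertices, and under $\phi$ a maximal tubing of $2K_n$ corresponds to a maximal chain in the Boolean lattice of proper nonempty subsets of $[n+1]$, of which there are $(n+1)!$, confirming the bijection at the level of facets of the dual complexes.
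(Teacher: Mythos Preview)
Your proof is correct and follows essentially the same approach as the paper: the same bijection (yours is the inverse of the paper's map $\phi$, which goes from $2K_n$-tubes to $K_{n+1}$-tubes via $t\mapsto t$ for positive $t$ and $t\mapsto [n+1]\setminus(-t)$ for negative $t$) together with the same three-case compatibility check. One harmless slip: in case~(iii) you write ``disjointness is impossible unless $S_2=\emptyset$,'' which is false (e.g.\ $S_1=\{n+1\}$, $S_2=\{1\}$), but since you already correctly established that compatible $K_{n+1}$-tubes must nest, this parenthetical does not affect the argument.
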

	
	\begin{proof}
		Consider the set of tubes in the graph $2K_n$; these consist of all subsets of positive vertices $[n]$, and all subsets of negative vertices $-[n]$. The tubes of the $K_{n+1}$ simplex-graph are precisely the set of proper subsets of $[n+1]$. We define a map $\phi$ on tubes of $2K_n$ as follows: for any tube $t \subseteq [n]$, define $\phi(t)=t$, and for any tube $t \subseteq -[n]$, define $\phi(t)=[n+1] \backslash -t$, where $-t=\{-i:i \in t\}$. The map $\phi$ is a bijection between tubes of $2K_n$ and tubes of the simplex-graph $K_{n+1}$. We then must prove that $\phi$ preserves compatibility conditions.

		If $t_1, t_2$ are both tubes on positive vertices in $2K_n$, then we note that they are compatible in $2K_n$ if and only if they are compatible in $K_{n+1}$. We note that a tube $t_1 \subseteq t_2$ if and only if $[n+1] \backslash -t_2$ is a subset of $[n+1]\backslash -t_1$, and vice versa. We note that for negative tubes in $2K_n$ and any tubes in $K_{n+1}$, two tubes are compatible if and only if one is contained in the other. As a result, $t_1, t_2$ are compatible if and only if $\phi(t_1), \phi(t_2)$ are compatible.

		Now consider the case where $t_1$ is a tube on positive vertices, and $t_2$ is a tube on negative vertices. We note that $t_1, t_2$ are compatible if and only if $t_1, -t_2$ are disjoint. This is then true if and only if $t_1 \subseteq [n+1] \backslash -t_2$. Now we note that $\phi(t_2)$ contains $n+1$, and $\phi(t_1)$ does not, so $\phi(t_1), \phi(t_2)$ are compatible if and only if $\phi(t_2)$ contains $\phi(t_1)$. As a result of this and the other cases, $t_1, t_2$ in $2K_n$ are compatible if and only if $\phi(t_1), \phi(t_2)$ are. As a result, $\phi$ preserves compatibility conditions.
		\end{proof}

	
		As an aside, we point out the connection between this hypercube-graph associahedron, and the \emph{graph multiplihedron}, as defined in \cite[Theorem 17]{multiplihedron}. The realization of the complete graph multiplihedron can be defined by truncation of a hypercube in that paper, and the reader will find that the complete graph multiplihedron is a standard cut hypercube graph associahedron of $2K_n$.



%
	
	\subsubsection{Double Path graph and Coxeter Bi-Catalan Combinatorics}
	
	Define the \emph{double path hypercube graph} to be the hypercube graph on $\pm[n]$ which consists of $(1,\ldots,n)$ and $(-1,\ldots,-n)$. This hypercube graph associahedron has connections to linear $c$-cluster fans, like the single path hypercube graph. Given a type $W$ Coxeter group and a Coxeter element $c$, there exists a $c$-cluster fan. We consider a definition from \cite{barnard}:

	\begin{definition}
		The \emph{$c$-bicluster fan} of a Coxeter element $c$ of a Coxeter group $W$ is a fan equal to the coarsest common refinement of a $c$-cluster fan and its antipodal inverse. 
\end{definition}

	We recall from Proposition \ref{prop:double-graph-minkowski-sum} that the standard cut normal fan of $2G$ can be realized as the coarsest common refinement of the normal fan of a standard cut hypercube graph associahedron of $G^+$ and its antipodal inverse. When $G$ is a path graph, we recall from Proposition \ref{prop:linear-cluster-fan} that the normal fan of an $n$-dimensional standard cut hypercube path graph associahedron is a linear $c$-cluster fan of type $A_n$. These two lead to the following proposition:

	\begin{proposition}
		The normal fan of a standard cut $n$-dimensional double path hypercube graph associahedron is equal to the linear $c$-bicluster fan of type $A_n$.
	\end{proposition}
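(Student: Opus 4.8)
The plan is to reduce the statement to two results already proved: Proposition~\ref{prop:double-path-as-minkowski} style decomposition from Proposition~\ref{prop:double-graph-minkowski-sum}, and the cluster-fan identification of Proposition~\ref{prop:linear-cluster-fan}, together with the elementary fact that a linear isomorphism carries coarsest common refinements and antipodal inverses to coarsest common refinements and antipodal inverses. Write $P$ for the path graph on $[n]$, so the double path hypercube graph is $2P$, and let $V=\{e_i\mid i\in\pm[n]\}$ be the standard facet-normal vector set of the hypercube $Q_n$ from Definition~\ref{def:standard-cut}.

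First I would invoke Proposition~\ref{prop:double-graph-minkowski-sum} (or, equivalently, Corollary~\ref{cor:union} applied to the fact that the graphic building set of $2P$ is the union of the graphic building sets of $P^+$ and $P^-$): the normal fan of a standard cut $n$-dimensional $2P$ hypercube graph associahedron, namely $\mathcal{F}(Q_n,V,2P)$, is the coarsest common refinement of $\mathcal{F}(Q_n,V,P^+)$ and $\mathcal{F}(Q_n,V,P^-)$. Next I would identify the two summands. By Proposition~\ref{prop:linear-cluster-fan}, the linear isomorphism $h$ sending $e_i\mapsto\alpha_i$ for $i\in[n]$ carries $\mathcal{F}(Q_n,V,P^+)$ onto the linear $c$-cluster fan of type $A_n$. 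For $P^-$, the key observation is that the involution $e_i\mapsto e_{-i}=-e_i$ on $\R^n$ is precisely the antipodal map; it sends the graph $P^+$ to $P^-$ while permuting the standard hypercube facet normals among themselves, so $\mathcal{F}(Q_n,V,P^-)$ is exactly the antipodal inverse of $\mathcal{F}(Q_n,V,P^+)$.

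Finally, since $h$ is a linear isomorphism it commutes with the antipodal map ($h(-C)=-h(C)$) and preserves intersections of cones, hence it carries the coarsest common refinement of $\mathcal{F}(Q_n,V,P^+)$ and its antipodal inverse to the coarsest common refinement of $h(\mathcal{F}(Q_n,V,P^+))$ and its antipodal inverse. By the preceding paragraph the source fan is the normal fan of the standard cut double path hypercube graph associahedron, and the target fan is the coarsest common refinement of the linear $c$-cluster fan of type $A_n$ with its antipodal inverse, which is by definition the linear $c$-bicluster fan of type $A_n$.

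The main obstacle is the only nonformal step: verifying that $\mathcal{F}(Q_n,V,P^-)$ is \emph{literally} the antipodal inverse of $\mathcal{F}(Q_n,V,P^+)$ rather than merely combinatorially isomorphic to an antipodally-reflected copy. This amounts to checking that negating every hypercube facet normal is a symmetry of the standard cut construction, which is exactly where the precise choice $v_i=e_i$, $v_{-i}=e_{-i}=-e_i$ in Definition~\ref{def:standard-cut} does the work; one reconciles this with the statement already used inside the proof of Proposition~\ref{prop:double-graph-minkowski-sum} that the antipodal inverse of a standard cut $G^+$ associahedron is a standard cut $G^-$ associahedron. Everything else---preservation of coarsest common refinement under a linear isomorphism, and the unwinding of the definition of the $c$-bicluster fan---is routine.
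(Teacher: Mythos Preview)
Your proposal is correct and follows essentially the same route as the paper: combine Proposition~\ref{prop:double-graph-minkowski-sum} (the standard cut $2G$ normal fan is the coarsest common refinement of the $G^+$ fan and its antipodal inverse) with Proposition~\ref{prop:linear-cluster-fan} (the $P^+$ fan is, via $h:e_i\mapsto\alpha_i$, the linear $c$-cluster fan), and then read off the definition of the $c$-bicluster fan. The paper's argument is the one-paragraph version of exactly this; you are simply more explicit about why the linear isomorphism $h$ transports both the antipodal map and the coarsest common refinement, and about why $\mathcal{F}(Q_n,V,P^-)$ is literally $-\mathcal{F}(Q_n,V,P^+)$ under the choice $v_{-i}=-e_i$.
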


	Theorem 2.20 of \cite{barnard} makes statements about the enumeration of cones in the bipartite $c$-bicluster fan, and Remark 2.14 of that paper makes explicit that the type $A_n$-biassociahedron, which is dual to the type $A_n$ bipartite $c$-bicluster fan, has the same $f$-vector as the $n$-dimensional cyclohedron, although the two are not combinatorially isomorphic. However, the paper does not attempt to provide enumeration for the linear $c$-bicluster fan. We find that the enumeration is the same as follows:

	\begin{proposition}\label{prop:doublepath}
		The $f$-vector of the $n$-dimensional double path hypercube graph associahedron is equal to the $f$-vector of the $n$-dimensional cyclohedron.
	\end{proposition}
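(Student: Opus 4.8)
The plan is to compute the bivariate $f$-polynomial generating function $f^{DP}(x,y)$ of the double path hypercube graph associahedra using the atomic link sum method (Proposition \ref{prop:p-graph-theorem}) or the maximal tube enumeration method (Proposition \ref{prop:enumeration-maximal-tube-polyhedra}), and then verify that it agrees with the known generating function for the $f$-polynomials of the cyclohedra. Since the cyclohedron's $f$-vector is classical and has a well-documented generating function, the core work is purely an enumeration of tubes and reconnected complements of the double path hypercube graph. By Proposition \ref{prop:double-graph-minkowski-sum}, the double path associahedron is the Minkowski sum of the single path associahedron of $G^+$ and its antipodal image; however, a direct facet-by-facet count is cleaner than trying to exploit the Minkowski structure combinatorially.

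First I would set up the facet recursion. A tube $t$ of the double path hypercube graph on $\pm[n]$ is a subpath of $(1,\dots,n)$ or a subpath of $(-1,\dots,-n)$; by symmetry it suffices to handle positive tubes and double the count (being careful about the fully-symmetric case of the whole graph, which contributes no tube since tubes are proper). For a positive tube $t = \{i,\dots,j\}$ with $|t| = i+1$ vertices, the induced simplex graph $G|_t$ is a path on $i+1$ vertices, whose simplex-graph associahedron is the $A_i$ associahedron. The reconnected complement $G/t$ is obtained, per the proposition after Definition \ref{def:reconnected-complement-graph}, by deleting $t$ and $-t$ and reconnecting former neighbors of $t$ across non-dashed edges. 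I would carefully enumerate the cases (tube at the end of the path vs.\ tube in the interior): when $t$ is at an end of the positive path, $G/t$ is again a double path hypercube graph on fewer vertices; when $t$ is interior, reconnecting the two ends of the positive path (there is no dashed edge there, since the two endpoints of the remaining positive path are not a $\pm$ pair unless degenerate) produces a positive path joined up but also the negative path is untouched, so $G/t$ is again a double path graph, possibly with one positive path glued into a shorter path --- the key observation I expect is that $G/t$ stays within the family of double path hypercube graphs (this is the ``facet-closed'' property that makes the method applicable). Once I confirm $G/t$ is always a double path graph of the appropriate smaller dimension, the family is facet-closed under path simplex-graphs and double path hypercube graphs, and I can count $r(n,i)$, the number of tubes of size $i+1$: this is a simple count of subpaths of two disjoint paths of total length $n$, giving a quantity linear in $n$ and $i$, hence separable.

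With the counting function in hand and separable, Proposition \ref{prop:p-graph-theorem} yields a partial differential equation $(yD_y - xD_x)f^{DP}(x,y) = \text{(RHS involving } f^{A}(x,y) \text{ and } f^{DP}(x,y))$, where $f^A(x,y)$ is the (known) generating function for associahedra. This is the self-similar case mentioned in the text, so rather than solve the PDE from scratch I would take the cyclohedron generating function $f^C(x,y)$ as a hypothesis and verify it satisfies the same PDE with the same boundary condition $f(0,t) = \tfrac{1}{1-t}$ after the change of variables in Proposition \ref{prop:change-of-basis}. Since the cyclohedron is the graph associahedron of the cycle graph, and every facet of the cyclohedron is a product of a cyclohedron and an associahedron, $f^C$ satisfies a structurally identical recursion; I would check that the $r(n,i)$ for the double path matches the $r(n,i)$ for the cycle graph (both should count subpaths, and a cycle on $n+1$ vertices has $n+1$ subpaths of each size $\le n$, versus two disjoint paths of total $n$ vertices --- I need these to produce the same operator $O_y$ after accounting for the dimension bookkeeping).

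The main obstacle I anticipate is precisely this last matching: verifying that the reconnected-complement structure and tube counts of the double path graph reproduce \emph{exactly} the recursion of the cyclohedron, rather than merely a recursion with the same solution by coincidence. In particular I must handle the boundary/degenerate cases (one of the two paths empty, tubes adjacent to the ``fold'' where reconnection happens) correctly, and confirm there is no subtle off-by-one in how $G/t$ for an interior tube reconnects --- it is conceivable the reconnected complement is a double path with one component a \emph{cycle} in some degenerate small case, which would break facet-closure. If the direct recursion match proves delicate, the fallback is to solve the PDE explicitly: it should be a first-order linear PDE in $f^{DP}$ once $f^A$ is substituted, solvable by the method of characteristics, and then compare the closed form against the standard cyclohedron generating function $f^C(x,y)$ (which can be extracted from e.g.\ the Narayana-type enumeration of cyclohedron faces). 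Either route closes the proof; I expect the recursion-matching route to be shorter once the reconnected complement cases are pinned down.
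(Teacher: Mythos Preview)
Your proposal has a genuine gap in the facet-closure claim. You assert that for an interior positive tube $t=\{a,\ldots,b\}$ with $1<a\le b<n$, the reconnected complement $G/t$ is again a double path hypercube graph because ``the negative path is untouched''. But in a hypercube graph the reconnected complement deletes not only $t$ but also $-t$ (those vertices are not in $\Delta/t$), so the negative path loses the block $\{-a,\ldots,-b\}$ and breaks into two disjoint pieces $(-1,\ldots,-(a-1))$ and $(-(b+1),\ldots,-n)$. Only the positive side gets reconnected (since only $a-1$ and $b+1$ are neighbors of $t$); the negative side does not. Hence $G/t$ has one positive path and two negative paths, and the double path family is \emph{not} facet-closed under itself. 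Your atomic-link-sum PDE therefore cannot be written in terms of $f^{DP}$ and $f^A$ alone, and the hoped-for match with the cyclohedron recursion does not materialize directly.

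The paper avoids this by using the maximal tube method (Proposition \ref{prop:enumeration-maximal-tube-polyhedra}) with kingmaker $X_n=\{1,-1\}$, so that only \emph{end}-tubes enter the recursion; crucially, the relevant complement is the \emph{neighborless} complement, which for an end-tube of size $i+1$ is a new auxiliary $\p$-graph, the ``missing vertex double path graph'' $M_{n-1-i}$ (the double path with one endpoint vertex deleted, living over a hypercube-times-ray). This yields $f^{D}=1+xyf^{D}+2yf^{A}f^{M}$, and a parallel application to $M_n$ gives $f^{M}=1+xyf^{D}+yf^{A}f^{M}$. These two algebraic equations are solved simultaneously for $f^{D}$, and substituting the known closed form of $f^{A}$ simplifies to exactly $f^{B}$, the cyclohedron generating function. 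No PDE is needed; the key missing idea in your plan is the introduction of this auxiliary family $M_n$.
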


		Subsection \ref{sub:doublepath} details the method by which we find the bivariate $f$-polynomial of the double path hypercube graph associahedron. While the two polyhedra share the same $f$-vectors, they are not combinatorially isomorphic, which becomes clear in $\ge 3$ dimensions. Figure \ref{fig:1-a3_biassociahedron-again} shows the double path hypercube-graph associahedron in 3 dimensions.

	\begin{figure}[h!]
\centering
\includegraphics[width=.3\linewidth]{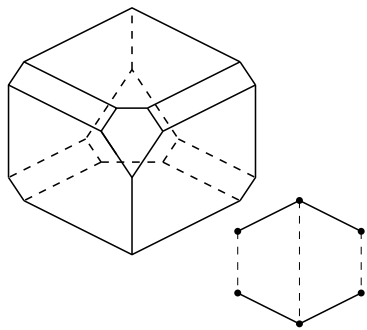}
\caption{Type $A_3$ linear biassociahedron as a hypercube-graph associahedron.} \label{fig:1-a3_biassociahedron-again}
\end{figure}

	In addition to the interpretation of the double path hypercube graph associahedron in terms of Coxeter combinatorics, we can realize the tubing complex of this graph via a construction we call \emph{linear triangulations with no trapped vertices.}

	Consider a line with $n+2$ labeled points, labeled $0,1,\ldots,n+1$	sequentially. We define a set of arcs to be the set of curves passing either over or under the line. We write $D_{i,j}^+$ to be the arc connecting point $i$ to point $j$ with $i < j$ passing over the line, and $D_{i,j}^-$ connecting $i < j$ passing under the line.

	Define a map $h$ from arcs to signed tubes in the path graph on $n$ vertices, such that $h(D_{i,j}^\pm) = [i+1,j-1]^\pm$. We say that the vertices in $h(D^+)$ are under a positive arc $D^+$, and the vertices in $h(D^-)$ are over a negative arc $D^-$. We say a vertex is \emph{trapped} if it is both under and over an arc.

	\begin{proposition}
		The collection of sets of noncrossing arcs with no trapped vertices on $n+2$ vertices is isomorphic to the double path hypercube graph tubing complex.
	\end{proposition}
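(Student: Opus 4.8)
The plan is to exhibit the evident bijection $h$ between (non-trivial) arcs on the $n+2$ labeled points and signed tubes of the path graph on $[n]$, and to promote it to an isomorphism of simplicial complexes by checking it on pairs only. Both complexes are flag: the complex of signed tubings is the tubing complex of the hypercube graph $2G$ with $G$ a path, hence flag by Proposition \ref{prop:hypercube-pairwise-compatibility}; and the complex of noncrossing arc sets with no trapped vertices is flag because "noncrossing" is patently a pairwise condition, while a vertex $v$ is trapped in a set $S$ of arcs exactly when $S$ contains one over-arc (a positive arc) with $v$ beneath it and one under-arc (a negative arc) with $v$ above it, so $S$ has a trapped vertex if and only if some two-element subset of $S$ traps a vertex. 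Using the principle already invoked in the excerpt for flag tubing complexes — a flag complex is determined by its $1$-skeleton, so a vertex bijection preserving the edge relation is an isomorphism — it therefore suffices to show that $h$ is a bijection and that two distinct arcs $D_1,D_2$ are noncrossing and jointly trap no vertex if and only if $h(D_1)$ and $h(D_2)$ are compatible signed tubes.

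First I would check the bijection: an arc $D_{i,j}^\pm$ with $j\ge i+2$ goes to the signed tube $[i+1,j-1]^\pm$, a contiguous (hence connected) subinterval of $[n]$ carrying the same sign, and conversely every signed tube $[a,b]^\pm$ equals $h(D_{a-1,b+1}^\pm)$, with $0\le a-1$ and $b+1\le n+1$ so the endpoint labels are legal; the extremal tubes $[1,n]^\pm$ correspond to $D_{0,n+1}^\pm$, and arcs joining adjacent points are discarded as they would name the empty tube. Throughout I would keep careful track of the shift between the point labels $0,\dots,n+1$ and the vertex labels $1,\dots,n$: a vertex $v\in[n]$ lies under $D_{i,j}^+$ precisely when $i<v<j$, i.e. precisely when $v\in h(D_{i,j}^+)$ as a set, and symmetrically for "over" and negative arcs; in particular the pair $\{D^+,D^-\}$ traps a vertex if and only if $h(D^+)\cap h(D^-)\neq\emptyset$.

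The core is the pairwise case analysis. For a pair of opposite sign, the two arcs sit on opposite sides of the line and so never cross, and the pair traps a vertex if and only if $h(D_1)\cap h(D_2)\neq\emptyset$; this matches exactly the rule that two opposite-sign signed tubes are compatible iff disjoint. For a pair of the same sign, no vertex can ever be trapped (that needs one over-arc and one under-arc), so compatibility of the arcs is just the condition that they do not cross, and this must be matched with the same-sign rule: one tube contains the other, or the two tubes are disjoint and non-adjacent in the path. Writing $D_1=D_{i,j}^+$, $D_2=D_{k,l}^+$, the arcs are noncrossing iff the endpoint-intervals $[i,j]$ and $[k,l]$ are nested or weakly separated, and I would verify, interleaving-case by interleaving-case, that nested endpoint-intervals yield nested tubes $[i+1,j-1],[k+1,l-1]$, that separated endpoint-intervals (including those sharing an endpoint) yield disjoint non-adjacent tubes, and that a genuine crossing $i<k<j<l$ yields tubes that are either nontrivially intersecting or disjoint but adjacent (the adjacency occurring exactly in the boundary case $j=k+1$), hence in every crossing case incompatible.

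The main obstacle I expect is this last same-sign computation: the boundary sub-cases — arcs sharing an endpoint, endpoint-intervals that abut, and crossings that degenerate into adjacency of tubes — must all be reconciled with the "disjoint and non-adjacent" clause of signed-tube compatibility, and the point/vertex label shift makes off-by-one errors easy. Once the pairwise equivalence is in hand, flagness of both complexes lifts $h$ to a dimension-preserving bijection of faces, i.e. an isomorphism of simplicial complexes; combined with the already-established identification of signed tubings with tubings of the double path hypercube graph, this yields the claim.
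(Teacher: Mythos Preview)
Your proposal is correct and complete. The flagness observation for the arc complex is a nice touch that the paper leaves implicit, and your direct interval case analysis for same-sign arcs is sound; in particular you correctly identify that the degenerate crossing $i<k=j-1<j<l$ produces disjoint but adjacent tubes, which is exactly the boundary case to worry about.

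The paper's own proof takes a different route for the same-sign case: rather than redoing the interval combinatorics, it invokes Lemma~\ref{lemma:diagonal-tubes-map}, which was established earlier via the chain of bijections through almost-positive roots and the linear $c$-cluster fan (polygon diagonals $\leftrightarrow$ almost-positive roots $\leftrightarrow$ single-path hypercube tubes $\leftrightarrow$ simplex-path tubes). The paper ``unfolds'' a polygon on $n+2$ vertices onto the line so that same-sign arcs become polygon diagonals, and then reads off the crossing/compatibility equivalence from that lemma. The opposite-sign case is handled in the paper exactly as you do it.

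Your approach is more elementary and self-contained: it avoids the Coxeter-theoretic detour entirely and works purely with interval arithmetic on $[n]$. The paper's approach buys economy if one already has the cluster-fan machinery in hand, but at the cost of a somewhat hand-wavy ``unfolding'' step. Either argument is fine.
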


	\begin{proof}
		We note that the map $h(D^\pm_{i,j})$ is the same as the map used in Proposition \ref{lemma:diagonal-tubes-map}, except here we map the diagonal $D^+_{i,j}$ to a positive tube and $D^-_{i,j}$ to a negative tube, whereas the map in that proposition maps $D_{i,j}$ to a tube in a path graph. Furthermore, we can define a map taking $(n-1)+3$ vertices of a polygon to the $n+2$ labeled points on our line, essentially 'unfolding' the polygon. As a result, we find that two arcs $D, D'$ matching in sign are compatible if and only if the two signed tubes $h(D), h(D')$ are compatible on the positive or negative path on $n$ vertices.

		Now consider two arcs with opposite signs. We find that the vertex $i$ is both under and over a pair of arcs if it is contained in $h(D^+)$ and $h(D^-)$ for some pair of arcs with opposite signs. As a result, for any set $C$ of arcs, we find that two arcs $D, D'$ in $C$ do not cross, and trap no vertex, if and only if the tubes $h(D), h(D')$ are compatible. As a result, $C$ is a set of noncrossing arcs trapping no vertex if and only if $\{h(D)|D \in C\}$ is a signed path tubing complex on $[n]$. 
	\end{proof}

	Figure \ref{fig:double-path-bijection} shows an example of a collection of noncrossing arcs with no trapped vertices and an equivalent double path hypercube graph tubing. 

\begin{figure}
\centering
\includegraphics[width=.8\textwidth]{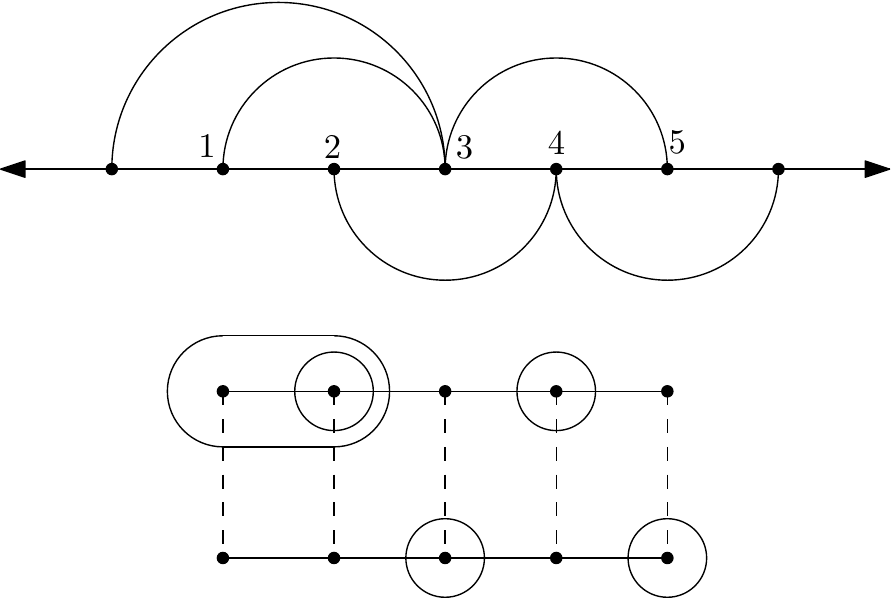}
\caption{A set of noncrossing arcs with no trapped vertices for $n=5$, and an equivalent double path hypercube graph tubing.}\label{fig:double-path-bijection}
\end{figure}

There is a standard Catalan recurrence between rooted binary trees and triangulations of a polygon, made by drawing an internal node in the center of every triangle, and drawing a leaf node or a root node on every edge of the polygon. Figure \ref{fig:double-path-bijection-2} shows an interesting variation of this, drawn on a linear triangulation with no trapped vertices.

\begin{figure}
\centering
\includegraphics[width=.8\textwidth]{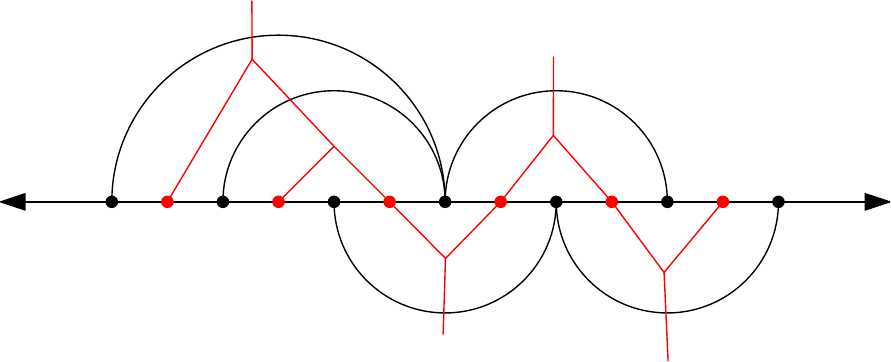}
\caption{A set of noncrossing arcs with no trapped vertices for $n=5$, with positive and negative rooted binary trees drawn.}\label{fig:double-path-bijection-2}
\end{figure}

	\subsubsection{Double Cycle Graph}

	Define the \emph{double cycle hypercube graph} as the double graph of the cycle on $[n]$ consisting of cycles $(1,\ldots,n)$ and $(-1,\ldots,-n)$. We find the following:

	\begin{proposition}\label{prop:doublecycle}
		The bivariate $f$-polynomial of the family of double cycle hypercube graph associahedra is equal to

		\[
			f^{DC}(x,y) = (xy+2y) f^B + 2y\left[(1+yD_y)f^A\right] (f^C-1)
		\]
where $f^A$ is the bivariate $f$-polynomial of the family of associahedra, $f^B$ is the bivariate $f$-polynomial of the family of cyclohedra, and $f^C$ is the bivariate $f$-polynomial of the cis-double path graph described in Subsection \ref{sub:cistransgraphs}. This function restricted to $x=0$ gives the generating function for the number of vertices of each polytope:

		\[
			f^{DC}(0,y) = 1 + \frac{1-\sqrt{1-4y}-2y}{1-4y} + \frac{2y}{\sqrt{1-4y}}.
		\]
	\end{proposition}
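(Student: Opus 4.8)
The plan is to apply the $\p$-graph maximal-tube enumeration of Proposition~\ref{prop:enumeration-maximal-tube-polyhedra} to the family $\{2C_n\}$ of double cycle hypercube graphs. As a preliminary remark, note that $2C_n$ splits as a $\p$-graph into the positive cycle $C_n^{+}$ and the negative cycle $C_n^{-}$, so by Proposition~\ref{prop:connected-components-P-graph} (equivalently Proposition~\ref{prop:double-graph-minkowski-sum}) its associahedron is a Minkowski sum of two halohedra; since $f$-polynomials are not multiplicative over Minkowski sums this does not finish the proof, and I would instead count tubings directly. I would take the kingmaker set $X_n=\{1\}$, a single vertex of the positive cycle, which is a kingmaker set because every $1$-clique is one, and since compatibility in a hypercube graph is pairwise (Proposition~\ref{prop:hypercube-pairwise-compatibility}) the partition of Proposition~\ref{prop:kingmaker-partition-2} applies.

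First I would classify the tubes of $2C_n$ that meet $X_n$. These are the whole positive cycle $[n]$, for which $G|_{[n]}=C_n$ is a cyclohedron and the reconnected complement $G/[n]$ is empty; and the proper arcs through the vertex $1$, of which there are exactly $i+1$ of size $i+1$ (such an arc protrudes $\ell=0,\dots,i$ steps in one direction). Each such arc has $G|_t\cong P_{i+1}$, contributing an $i$-dimensional associahedron, and the multiplicity $i+1$ is precisely what turns the operator $O_y^{r}$ with $r(i)=i+1$ into $(1+yD_y)$ acting on $f^A$. The decisive step is to identify the neighborless complement $(G/t)\backslash xn(t)$ and the complement graph $2C_n\backslash X_n$, together with their shaved-hypercube bases: the reconnected complement of a proper arc closes the surviving positive vertices back into a cycle and leaves the surviving negative vertices as a path, deleting the exclusive neighborhood then removes two adjacent vertices of that cycle and the corresponding facets of $F_t\cong Q_m$, and I would show that the resulting graphs are, up to isomorphism, the cis-double path graphs of Subsection~\ref{sub:cistransgraphs} (possibly on a product of a hypercube with rays). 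Thus every generating function that occurs is one of $f^A$, $f^B$, $f^C$; because $X_n$ shaves a single facet of $Q_n$ there is no dimension loss, so $\rho=0$ and the error term $\epsilon_L$ vanishes. Substituting into Proposition~\ref{prop:enumeration-maximal-tube-polyhedra}, passing to polyhedral generating functions via Proposition~\ref{prop:change-of-basis}, and collecting terms — an auxiliary recursion for the generating function of $2C_n\backslash\{1\}$ may be needed to see the coefficient $(xy+2y)$ of $f^B$ — gives $f^{DC}(x,y)=(xy+2y)f^B+2y\bigl[(1+yD_y)f^A\bigr](f^C-1)$.

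For the second identity I would set $x=0$. At $x=0$ the bivariate $f$-polynomials become vertex-counting generating functions: $f^A(0,y)=\frac{C(y)-1}{y}$ where $C(y)=\frac{1-\sqrt{1-4y}}{2y}$ is the Catalan series, $f^B(0,y)=\frac{1}{\sqrt{1-4y}}$ since the $n$-dimensional cyclohedron has $\binom{2n}{n}$ vertices, and $f^C(0,y)$ is the cis-double path vertex generating function provided by the computation of Subsection~\ref{sub:cistransgraphs}. Substituting these, evaluating $(1+yD_y)f^A(0,y)$ in closed form, and simplifying the resulting radical expression yields $f^{DC}(0,y)=1+\frac{1-\sqrt{1-4y}-2y}{1-4y}+\frac{2y}{\sqrt{1-4y}}$.

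The hard part will be the identification step in the second paragraph: recognizing the reconnected and neighborless complements, with their shaved bases, as cis-double path graphs (or such graphs times rays), and dealing with the degenerate small cases $n\le 2$ so that the error terms $\epsilon_L,\epsilon_R$ really do vanish. Once that dictionary is in place the remainder is routine power-series manipulation and a check that the arc-counting function is separable in the required sense.
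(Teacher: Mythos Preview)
Your approach is workable but chooses a suboptimal kingmaker set. You take $X_n=\{1\}$, a single positive vertex; the paper instead takes the symmetric choice $X_n=\{1,-1\}$, which buys two things at once. First, the complement $DC_n\backslash\{1,-1\}$ is exactly the double path hypercube graph $D_{n-1}$ (both cycles lose one vertex and become paths on $n-1$ vertices), whose bivariate $f$-polynomial is already known to equal $f^B$ by Proposition~\ref{prop:doublepath}; this supplies the $xy\,f^B$ term directly via the $\rho=1$ mechanism of Proposition~\ref{prop:enumeration-maximal-tube-polyhedra}. Second, the positive/negative symmetry means there are $2(i+1)$ path tubes and $2$ cycle tubes in $intset_{X_n}$, producing the factor of $2$ in $2y[(1+yD_y)f^A](f^C-1)$ and the $2y\,f^B$ term with no further argument.

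With your $X_n=\{1\}$, by contrast, the complement $DC_n\backslash\{1\}$ is a new $\p$-graph: a positive path on $n-1$ vertices together with the \emph{intact} negative cycle on $n$ vertices, sitting on $Q_{n-1}\times(\text{ray})$. Its $f$-polynomial is not among $f^A$, $f^B$, $f^C$, so your claim that ``every generating function that occurs is one of $f^A,f^B,f^C$'' is premature; you would indeed need the auxiliary recursion you allude to --- essentially a halohedron-style computation on the negative side --- before the pieces reassemble into the stated formula. Your identification of the neighborless complements of proper arcs with cis double path graphs is correct (it is exactly $C_{n-1-i}$, up to swapping positive and negative labels), and the operator $(1+yD_y)$ from $r(i)=i+1$ is right. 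So nothing in your outline is wrong; it simply defers half of the work to an auxiliary graph that the paper's symmetric kingmaker set eliminates outright.
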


		Subsection \ref{sub:doublecycle} details the process by which the case by case method is used to count the tubings of $DC_n$.

%
%
%
%

	\subsubsection{Double Star Graph}

	Define the \emph{double star hypercube graph} as the double graph of the graph $K_{n-1,1}$. We can write it as the hypercube graph on vertices $\pm[n]$ with edges $\{1,i\}$ and $\{-1,-i\}$ for each $i \in \{2,\ldots,n\}$. We call its hypercube graph associahedron the \emph{double stellar cubeahedron}.

	\begin{proposition}
		Every facet of an $n$-dimensional double stellar cubeahedron is combinatorially isomorphic to the product of two stellohedra, or an $n-1$-dimensional double stellar cubeahedron.
\end{proposition}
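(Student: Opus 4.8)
The plan is to enumerate the tubes of the double star hypercube graph $G = 2K_{n-1,1}$ on $\pm[n]$ and, for each tube $t$, compute the two factors in the facet decomposition already established for $P$-graph associahedra: $\Phi_{\{t\}}$ is combinatorially isomorphic to the Cartesian product of the simplex-graph associahedron of $G|_t$ with the $F_t$-graph associahedron of the reconnected complement $G/t$, where $F_t$ is the corresponding face of the hypercube. So the whole argument reduces to identifying these factors in each case.

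First I would classify the tubes. Because $G$ has no solid edge joining a positive to a negative vertex, every connected induced subgraph lies entirely within the positive star (center $1$, leaves $2,\dots,n$) or entirely within the negative star, and any such subgraph automatically avoids the dashed pairs $\{i,-i\}$. Hence every tube is either a \emph{leaf tube} $\{j\}$ or $\{-j\}$ with $j\in\{2,\dots,n\}$, or a \emph{central tube} $\{1\}\cup S$ or $\{-1\}\cup(-S)$ with $S\subseteq\{2,\dots,n\}$ (the singletons $\{1\},\{-1\}$ and the full positive/negative stars being the extreme central tubes). This list exhausts all tubes.

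Next I would treat the two families. For a leaf tube $t=\{j\}$: $G|_t$ is a single vertex, so its simplex-graph associahedron is a point; and using the description of the reconnected complement of a hypercube-graph (delete $t$ and $-t$, then join former neighbors of $t$ wherever no dashed edge exists), the only neighbor of $\{j\}$ is the center $1$, so no edges are added, and $G/t$ is, up to relabeling, the $(n-1)$-dimensional double star hypercube graph on $\pm(\{1\}\cup(\{2,\dots,n\}\setminus\{j\}))$. Multiplying by a point, $\Phi_{\{t\}}$ is an $(n-1)$-dimensional double stellar cubeahedron; the negative leaf case is identical. For a central tube $t=\{1\}\cup S$ with $|S|=k$, put $L=\{2,\dots,n\}\setminus S$: then $G|_t$ is the star $K_{k,1}$, whose simplex-graph associahedron is the $k$-dimensional stellohedron, while $G/t$ has vertex set $L\cup(-L)$ with $L$ turned into a clique (its members are exactly the former neighbors of $t$, and no dashed edge joins two positive vertices) and every vertex of $-L$ isolated (its only neighbor $-1$ lay in $-t$). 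Thus $G/t$ is the single $K_{|L|}$ hypercube graph $K_{|L|}^{+}$ on $L\cup(-L)$, sitting on the face $F_t$, which is an $|L|$-cube with facet set matching $L\cup(-L)$; its hypercube-graph associahedron is the $|L|$-dimensional stellohedron. Hence $\Phi_{\{t\}}$ is a product of a $k$-dimensional and an $|L|$-dimensional stellohedron, with $k+|L|=n-1$; the negative case is symmetric, and reading a point as a $0$-dimensional stellohedron covers the degenerate subcases $S=\emptyset$ and $L=\emptyset$. Since leaf tubes give an $(n-1)$-dimensional double stellar cubeahedron and central tubes give a product of two stellohedra, the proposition follows.

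The main obstacle is not conceptual but careful bookkeeping in the central-tube case: one must verify that exactly the positive leaves outside $S$ survive in $G/t$, that they form a clique, that the matching negative leaves survive but become isolated, and that the resulting graph on $L\cup(-L)$ is genuinely $K_{|L|}^{+}$ with facet indexing matching the face $F_t$ of the cube, so that the earlier identification of the single $K_m$ hypercube graph associahedron with the $m$-dimensional stellohedron applies verbatim and the dimensions add up. Once this identification is pinned down, the rest is a direct application of the already-proven facet decomposition.
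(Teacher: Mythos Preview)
Your proposal is correct and follows essentially the same approach as the paper: classify the tubes of the double star hypercube graph into those containing a center $\pm 1$ (central tubes) and singleton leaf tubes, then compute the induced subgraph and reconnected complement in each case. The paper's own proof is considerably terser than yours---it simply asserts that central tubes induce star graphs with single-$K_m$ reconnected complements, and that leaf singletons have double-star reconnected complements---so your more careful bookkeeping (identifying exactly which vertices survive, why the positive leaves form a clique, and why the negative leaves become isolated) is a welcome elaboration, but the underlying argument is the same.
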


	\begin{proof}
		Tubes of an $n$-dimensional double stellar cubeahedron either contain $1$ or $-1$, or they are singleton tubes not containing $1$ or $-1$. If a tube contains $1$ or $-1$, then its induced graph is a star graph, and its reconnected complement is a single complete-graph hypercube graph. If a tube is a singleton tube not containing $1$ or $-1$, its reconnected complement is an $n-1$-dimensional double star hypercube graph. Because the hypercube graph of a single complete-graph hypercube graph is a stellohedron, we find that each facet is isomorphic to the product of two stellohedra, or an $n-1$-dimensional double stellar cubeahedron.
	\end{proof}


	We can count the number of maximal tubings directly as follows:

	\begin{proposition}
		Vertices of the $n$-dimensional double stellar cubeahedron are counted by the expression
	\[
dsc(n) = 2\sum_{k=0}^{n-1} {n-1 \choose k} \left( \sum_{j=0}^{k} \frac{k!}{j!}\right) = 2 (n-1)!\sum_{i=0}^{n-1} \frac{2^i}{i!}
.
\]

\end{proposition}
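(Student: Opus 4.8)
The plan is to count the vertices of the double stellar cubeahedron as the number of maximal tubings of the double star hypercube graph $G$ on $\pm[n]$ (with edges $\{1,i\}$ and $\{-1,-i\}$ for $i\in\{2,\dots,n\}$), and to organize that count with the kingmaker set $X=\{1,-1\}$ via Proposition \ref{prop:kingmaker-partition-2}. First I would record the tube structure: since every solid edge is incident to a center ($1$ or $-1$) and the leaves are mutually nonadjacent, the tubes of $G$ are exactly the leaf singletons $\{i\},\{-i\}$ together with the ``positive stars'' $\{1\}\cup S$ and ``negative stars'' $\{-1\}\cup(-S)$ for $S\subseteq\{2,\dots,n\}$. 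Thus $intset_X$, the set of tubes meeting $X=\{1,-1\}$, consists of the positive stars and the negative stars; a sign-flip symmetry of $G$ exchanges these two families, so it suffices to analyze tubes containing the center $1$. One should also check that $\{1,-1\}$ is a genuine kingmaker set: two tubes of $intset_X$ that lie in a common tubing must both contain $1$ or both contain $-1$ (a positive star and a negative star have union containing the forbidden pair $\{1,-1\}$, so they cannot coexist), hence they overlap and are nested.

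Next I would apply Proposition \ref{prop:kingmaker-partition-2} with $X=\{1,-1\}$, which writes $f^{\mathcal{N}(G,\Delta)}(s)$ as $f^{\mathcal{N}(G\setminus X,\Delta\setminus X)}(s)$ plus $s\sum_{t\in intset_X} f^{\mathcal{N}(G|_t)}(s)\,f^{\mathcal{N}((G/t)\setminus xn(t),(\Delta/t)\setminus xn(t))}(s)$, where the summand is the $f$-polynomial of $\nestmax{G}{\Delta}{t}$. The vertex count $dsc(n)$ is the coefficient of $s^n$ in $f^{\mathcal{N}(G,\Delta)}(s)$, since the double star cubeahedron is an $n$-dimensional simple polytope. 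The term $f^{\mathcal{N}(G\setminus X,\Delta\setminus X)}(s)$ contributes nothing to this coefficient: deleting both $1$ and $-1$ leaves the edgeless hypercube graph on the $(n-1)$-dimensional cube $\pm\{2,\dots,n\}$, whose nested complex (a subset complex on $2(n-1)$ elements avoiding opposite pairs) has rank $n-1$. So $dsc(n)$ is the $s^n$-coefficient of the sum over $t\in intset_X$.

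Then, for a tube $t=\{1\}\cup S$ with $|S|=k$ (so $|t|=k+1$, with $k=0$ allowed), I would identify the two factors. The induced simplex-graph $G|_t$ is a star on $k+1$ vertices, so $f^{\mathcal{N}(G|_t)}(s)$ has rank $k$ with $s^k$-coefficient equal to the number of maximal tubings of a star on $k+1$ vertices, namely $\sum_{j=0}^{k}k!/j!$ (this is the stellohedron count already used in the stellocubeahedron computation). For the neighborless complement: the exclusive neighborhood $xn(t)$ is the set $\{2,\dots,n\}\setminus S$ of remaining positive leaves; forming $G/t$ deletes $t$ and $-t$ and makes these former neighbors a clique, and deleting $xn(t)$ then removes that clique, leaving the edgeless graph on the $n-1-k$ negative leaves $-(\{2,\dots,n\}\setminus S)$, with no surviving opposite pairs. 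Its nested complex is therefore the full subset complex on $n-1-k$ elements, of rank $n-1-k$ and top coefficient $1$. Since the $f$-polynomial of a Cartesian product of simplicial complexes is the product of the $f$-polynomials, the contribution of $t$ to the $s^n$-coefficient is $\bigl(\sum_{j=0}^{k}k!/j!\bigr)\cdot 1$. Summing over the $\binom{n-1}{k}$ positive stars of each size $k$ and doubling for the negative stars gives
\[
dsc(n)=2\sum_{k=0}^{n-1}\binom{n-1}{k}\sum_{j=0}^{k}\frac{k!}{j!}.
\]

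Finally I would obtain the closed form by a routine manipulation: writing $\binom{n-1}{k}\frac{k!}{j!}=\frac{(n-1)!}{(n-1-k)!\,j!}$, substituting $a=n-1-k$, and grouping the resulting double sum $\sum_{a+j\le n-1}\frac{1}{a!\,j!}$ by $m=a+j$ yields $(n-1)!\sum_{m=0}^{n-1}\frac{1}{m!}\sum_{a=0}^{m}\binom{m}{a}=(n-1)!\sum_{m=0}^{n-1}\frac{2^m}{m!}$, hence $dsc(n)=2(n-1)!\sum_{i=0}^{n-1}2^i/i!$. The main obstacle is the bookkeeping in the third step: correctly identifying $(G/t)\setminus xn(t)$ as an edgeless graph (equivalently a product of rays) and tracking the ranks of all the pieces, so that the ``edgeless-cube'' term $f^{\mathcal{N}(G\setminus X,\Delta\setminus X)}$ — which would otherwise contribute a spurious $2^{n-1}$ — provably drops out of the top-degree coefficient. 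The final identity and the star-tubing count are routine and cited, respectively.
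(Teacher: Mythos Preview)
Your proposal is correct and follows essentially the same approach as the paper: partitioning maximal tubings according to the maximal tube containing $1$ (or, by symmetry, $-1$), recognizing that this tube induces a star graph with $\sum_{j=0}^{k}k!/j!$ maximal tubings, and that the remaining tubes are forced to be the $n-1-k$ negative singletons. The paper argues this directly without explicitly invoking the kingmaker framework, whereas you route it through Proposition~\ref{prop:kingmaker-partition-2} and carefully verify that the $G\setminus X$ term has rank $n-1$ and so contributes nothing to the top coefficient, but the substance of the two arguments is the same.
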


\begin{proof}

Consider a maximal tubing. It either contains vertex $1$ or $-1$ in its support. The number of maximal tubings is equal to twice the number of maximal tubings containing $1$ in their supports, so by symmetry we can count the number of maximal tubings containing the vertex $1$ and multiply by $2$.

Consider a maximal tubing $T$ containing $1$ in its support. There exists a maximal tube $t$ of $T$ containing $1$, and all other maximal tubes will be negative singleton tubes. The remaining tubes will be tubings on the graph induced by $t$, which will be a star graph. To enumerate over the set of maximal tubings containing $1$ in their support, we need to enumerate over the set of all maximal tubes containing $1$, and then count the number of star graph tubings for each such tube.

For $0 \le k \le n-1$, there are ${n-1 \choose k}$ tubes of size $k+1$ containing $1$. Each such tube induces a star graph on $k+1$ vertices, and there are $\sum_{j=0}^{k} \frac{k!}{j!}$ maximal tubings on a star graph on $k+1$ vertices. As a result, we find the number of maximal tubings containing $1$ in their supports is equal to
\[
\frac{1}{2}dsc(n) = \sum_{k=0}^{n-1} {n-1 \choose k} \left( \sum_{j=0}^{k} \frac{k!}{j!}\right).
\]
Multiplying this by $2$ gives the total number of maximal tubings. We can rewrite this as
\[
dsc(n) = 2 (n-1)! \sum_{k=0}^{n-1} \sum_{j=0}^k \frac{1}{j!(n-1-k)!}.
\]
We note that this indexing gives all values of $j, k$ such that $j+(n-1-k) \le n-1$. We can then reindex with $(n-1-k)=l$, to get
\[
dsc(n) = 2(n-1)!\sum_{i=0}^{n-1} \sum_{j+l= i} \frac{1}{j! l!}.
\]
We note that $\sum_{j+l=i}\frac{1}{j!l!} = 2^i/i!$. This means we have calculated
\[
dsc(n) = 2 (n-1)!\sum_{i=0}^{n-1} \frac{2^i}{i!}.\qedhere
\]
\end{proof}

This is the same sequence as \cite[Sequence A195254]{oeis}.

%
%
%
%
%

	\subsection{Twisted Path and Twisted Cycle Graphs} \label{sub:twisted cycle and path}
	
	
	Define the \emph{twisted path graph}, or $TP_n$, as a hypercube graph on $\pm[n]$ consisting of a path graph along vertices $(1,\ldots, n, \allowbreak -1, \ldots, -n)$.

	\begin{proposition}\label{prop:twistedpath}
		The bivariate $f$-polynomial of the family of twisted path hypercube graph associahedra is equal to

		\[
			f^{TP}(x,y) =
		\left(	1 + \frac{xy}{(1-xy)(1+yf^A)-2yf^A} + yf^A(f^C-1) \right)
 \left(1+y\left[(2+D_y)f^A\right]\right)
		\]
		where $f^A$ is the bivariate $f$-polynomial for the family of associahedra, and $f^C$ is the bivariate $f$-polynomial for the family of cyclohedra. The generating function for the number of vertices in the $n$-dimensional twisted path hypercube graph associahedron is
		\[
			f^{TP}(0,y) = \frac{1-2y+\sqrt{1-4y}}{2(1-4y)}
		\]
	\end{proposition}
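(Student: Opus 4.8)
The plan is to apply the maximal tube sub-complex method of Section \ref{sec:method-two}, specifically Proposition \ref{prop:enumeration-maximal-tube-polyhedra}, with kingmaker set $X_n = \{n, -n\}$ (the two ``middle'' vertices of the twisted path, which form a clique since the twisted path contains the edge $\{n,-n\}$). Since $\{n,-n\}$ is an edge of $TP_n$, it is a clique and hence a kingmaker set: any two compatible tubes meeting $\{n,-n\}$ must be nested. First I would identify $\intset_{X_n}$: a tube meeting $\{n,-n\}$ is a subpath of $(1,\dots,n,-1,\dots,-n)$ containing $n$ or $-n$ (or both), i.e.\ an interval $[i,j]$ of the linear order with $i \le $ (position of $n$ or $-n$) $\le j$. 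These tubes come in a handful of shapes: intervals contained entirely in the positive half ending at $n$, intervals contained entirely in the negative half starting at $-1$, and intervals straddling the ``seam'' $n,-1$ and containing both $n$ and $-n$ or just one of them. For each I would compute the induced graph $G|_t$ (always a path, so $\Ag^\sigma_i$ is the $i$-simplex path graph, whose $\p$-graph associahedron is the associahedron, with bivariate $f$-polynomial $f^A$) and the neighborless complement $(G_n/t)\backslash xn(t)$.

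The key step is the case analysis of the neighborless complement $\Bg^\tau$. Deleting a subpath $t$ together with its exclusive neighborhood $xn(t)$ from the twisted path leaves one of: (a) the empty graph (when $t\cup xn(t)$ covers everything, e.g.\ $t$ is large or straddles the seam and consumes both ends), giving the constant term; (b) a single path on the positive side and nothing on the negative side (or vice versa), giving an associahedron factor $f^A$; (c) \emph{two} disjoint paths — one positive and one negative — when $t$ is a seam-straddling tube that removes a middle chunk but leaves tails $1,\dots,a$ and $-1,\dots$ wait, no: removing $t=[i,j]$ straddling the seam leaves $1,\dots,i-1$ on the left and $-(j'+1),\dots,-n$ on the right where the right tail wraps to contain $-n$; but once we also delete $\pm t$ (recall from Proposition on hypercube reconnected complements that $G/t$ deletes both $t$ and $-t$), the remaining graph is a path on the far-positive tail plus a path on the far-negative tail — but crucially these two tails are joined in $G/t$ only if both were neighbors of $t$, which after deleting $xn(t)$ they are not, so we get a disconnected pair, but since a neighborless complement of a seam tube in a twisted path is again... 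I would check carefully whether it is a \emph{cycle} (type $f^C$) — indeed, I expect the seam-straddling case with both ends consumed to produce, after reconnection logic, a structure whose nested complex matches the cyclohedron, explaining the $f^C$ factor. The appearance of $f^C$ in the formula, and the combination $yf^A(f^C-1)$, strongly suggests: seam tubes that leave a ``wrapped'' remainder give a cyclohedron-shaped neighborless complement, while the induced tube graph is a path ($f^A$), and $(f^C-1)$ because the remainder must be nonempty in that sub-case.

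Having assembled the decomposition, I would verify that each counting function $r_{\sigma,\tau}(n,i)$ is a finite sum of products of separable functions (the number of intervals of a given length and type in a linear order of size $2n$ is an affine function of $n$ and $i$, so this holds), extract the linear operators $O_y$ (only $\delta$-functions and the operators $1$ and $yD_y$ appear, since $r$'s are constant or linear), and plug into Proposition \ref{prop:enumeration-maximal-tube-polyhedra} with $\rho = 1$ (since $TP_n \backslash \{n,-n\}$ splits into disjoint positive and negative paths, an unpointed polyhedron, dropping rank by one, giving $\epsilon_L = 1$ and the $(xy)^1$ factor on the $\Cg$ term). The complement family $\Cg$ is the disjoint union of a positive path and a negative path — whose $\p$-graph associahedron, by Proposition \ref{prop:connected-components-P-graph}, is a Minkowski sum, so its bivariate $f$-polynomial involves a product/convolution related to $(f^A)^2$ — but after the $x$-substitution this should collapse into the $\frac{xy}{(1-xy)(1+yf^A)-2yf^A}$ term, which I recognize as the generating function for ``cis-double-path'' type structures (the $f^C$ of Subsection \ref{sub:cistransgraphs} satisfies a similar quadratic). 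Assembling all terms and simplifying yields the claimed $f^{TP}(x,y)$; setting $x=0$ and using the known closed form $f^A(0,y)$ (the Catalan generating function, $f^A(0,y) = \frac{1-\sqrt{1-4y}}{2y}$ up to normalization) together with the known cyclohedron vertex generating function reduces the expression to $\frac{1-2y+\sqrt{1-4y}}{2(1-4y)}$. The main obstacle I anticipate is \textbf{correctly determining the shape of the neighborless complement for the seam-straddling tubes} — distinguishing when the leftover is empty, a single path, or a cyclohedron-type graph — since this is where the non-obvious $f^C$ dependence enters, and an error there propagates to the whole formula; I would cross-check the resulting vertex count against small cases ($n=1,2,3$) computed by hand.
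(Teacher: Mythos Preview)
Your proposal rests on a misreading of the twisted path. The path is $(1,\ldots,n,-1,\ldots,-n)$, so the solid edge at the seam is $\{n,-1\}$; the pair $\{n,-n\}$ is a \emph{forbidden} subset (a dashed edge), not a clique, and $-n$ sits at the far end of the path rather than in the middle. It happens that $\{n,-n\}$ is still a kingmaker --- any tube through $n$ and any tube through $-n$ are separated by that dashed edge and hence incompatible --- but the ensuing case analysis is not the one you sketch, and in particular no neighborless complement is ever cyclohedron-shaped.

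The paper instead takes the kingmaker $X_n=\{n,-1\}$, the genuine seam edge. Removing these two vertices from $TP_n$ leaves exactly the trans double path graph $T_n$ of Subsection~\ref{sub:cistransgraphs}, which is already $n$-dimensional, so $\rho=0$ (no $\epsilon_L$ correction, no $(xy)^{\rho}$ factor). Every tube in $intset_{X_n}$ is a subpath of size $i+1$ meeting position $n$ or $n+1$; there are exactly $i+2$ of them, each inducing a path $A_i$ and each with neighborless complement isomorphic to $T_{n-1-i}$ --- a single shape pair, with no seam/non-seam split needed. Proposition~\ref{prop:enumeration-maximal-tube-polyhedra} then yields $f^{TP}$ as $f^{T}$ times the factor produced by $r(n,i)=i+2$, and substituting $f^{T}=1+xyf^{M}+yf^{A}(f^{C}-1)$ together with the closed form for $f^{M}$ gives the stated expression. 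The $f^{C}$ appearing there is the \emph{cis double path} series of Subsection~\ref{sub:cistransgraphs}, entering only through the expansion of $f^{T}$; it is not the cyclohedron series, so your attempt to locate a cyclohedral neighborless complement is chasing a phantom.
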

		Subsection \ref{sub:twistedpath} details the case by case method for calculating this bivariate $f$-polynomial, done by calculating the $f$-polynomials of several intermediate $\p$-graph associahedra.

	Define the \emph{twisted cycle graph} or $TC_n$ as the hypercube graph consisting of a cycle on vertices $(1,\ldots, n, -1, \ldots, -n)$.

	\begin{proposition}\label{prop:twisetdcycle}
		The bivariate $f$-polynomial of the twisted cycle graph is

		\[
			f^{TC}(x,y)=\frac{(1-xy)-2y}{(1-xy)^2-4y}.
		\]
	\end{proposition}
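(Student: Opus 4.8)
The plan is to compute the bivariate $f$-polynomial $f^{TC}(x,y)$ of the twisted cycle hypercube graph associahedra by using the maximal tube sub-complex method of Section~\ref{sec:method-two}, just as is done for the double cycle and twisted path cases. The twisted cycle graph $TC_n$ has a cycle $(1,\ldots,n,-1,\ldots,-n)$ through all $2n$ vertices. The key structural observation is that $TC_n$ is highly symmetric: every tube is a path in this cycle, and because dashed edges forbid any tube from containing both $i$ and $-i$, the longest possible tube has $n$ vertices. I would first choose a kingmaker set $X$ --- the natural candidate is a single vertex, say $X=\{1\}$, or perhaps the edge $\{1,2\}$ --- and apply Proposition~\ref{prop:kingmaker-partition-2} to split the count of tubings into those avoiding $X$ and those having a unique maximal tube meeting $X$.

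The main computation is to identify, for each tube $t$ meeting $X$, the induced simplex-graph $G|_t$ (always a path, hence an associahedron factor) and the neighborless complement $(TC_n/t)\backslash xn(t)$. The hard part will be carefully tracking what the neighborless complement of $t$ looks like: removing $t$, its mirror $-t$, and the exclusive neighborhood $xn(t)$ from the cycle $TC_n$ leaves a path graph (possibly empty, possibly shortened at both ends), and one must verify that the reconnected edges added in forming $G/t$ do not create anything other than a path --- this is where the twisted cycle's self-similarity to twisted paths and ordinary paths enters. I expect the neighborless complement to be a (possibly half-open) path-polyhedron, so that $\Bg^\tau$ is a family of path graph associahedra (associahedra), shaved appropriately; the shaving from deleting neighbors will contribute factors of $xy$ via the $\gamma_{\sigma,\tau}$ exponents as in Proposition~\ref{prop:maximal-tube-enumeration-big-result}. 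I would then also handle the complementary term $f^{\mathcal{N}(TC_n\backslash X,\,\Delta\backslash X)}$: deleting $X$ from the twisted cycle breaks it into a twisted-path-like graph, whose $f$-polynomial is already available from Proposition~\ref{prop:twistedpath}.

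Assembling these pieces, the counting function $r_{\sigma,\tau}(n,i)$ for tubes of length $i+1$ should be constant (by the cyclic symmetry, the number of tubes of each length containing a fixed vertex, or meeting a fixed edge, is independent of $n$ for $n$ large), hence trivially separable, and the associated power-series operators $O_y$ are elementary (identity or multiplication-type). After the change of variables $x=1/s$, $y=st$ of Proposition~\ref{prop:change-of-basis} and an application of Proposition~\ref{prop:enumeration-maximal-tube-polyhedra}, the recursion should collapse to a closed-form rational function because all the ingredient $f$-polynomials ($f^A$, $f^C$, $f^{TP}$) are known and, crucially, the self-referential term for $f^{TC}$ should appear only linearly, allowing one to solve algebraically for $f^{TC}(x,y)$. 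I would finish by simplifying the resulting expression to the claimed form
\[
f^{TC}(x,y)=\frac{(1-xy)-2y}{(1-xy)^2-4y},
\]
and sanity-check it by setting $x=0$ to recover a vertex-count generating function and by verifying low-dimensional cases ($n=1,2,3$) directly against the tubings of $TC_1,TC_2,TC_3$. The main obstacle, as noted, is the bookkeeping of the neighborless complement and the shaving exponents $\gamma_{\sigma,\tau}$; everything downstream of that is routine generating-function manipulation.
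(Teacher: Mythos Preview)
Your approach is genuinely different from the paper's and, in outline, is viable --- but several of your concrete claims are off, and one of them matters.

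The paper does \emph{not} use the maximal tube method here. It uses the atomic link sum method of Section~\ref{sec:method-one}: every tube of $TC_n$ is an arc of the $2n$-cycle, there are $r(n,i)=2n$ tubes of size $i+1$, each inducing a path $A_i$ with reconnected complement isomorphic to $TP_{n-1-i}$. This yields the PDE
\[
(yD_y-xD_x)f^{TC}=2y\bigl(f^A\,yD_yf^{TP}+(f^A+yD_yf^A)\,f^{TP}\bigr),
\]
but the paper does not solve it: it \emph{guesses} the closed form from OEIS and then checks that it satisfies the PDE and the boundary condition $f^{\Delta(TC)}(0,t)=\tfrac{1}{1-t}$. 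So your direct algebraic route would, if carried out, actually yield a stronger argument than the paper's verification.

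Two corrections to your plan. First, with $X=\{1\}$ (or with the more natural kingmaker $X=\{1,-1\}$, which \emph{is} a kingmaker since any tubing containing both $1$ and $-1$ violates the forbidden pair), the number of tubes in $intset_X$ of size $i+1$ is not constant: it is $i+1$ (respectively $2(i+1)$), because an arc of length $i+1$ can contain the anchor vertex in $i+1$ positions. This is still separable, so the method goes through, but the operators are $(1+yD_y)$-type, not identity. Second, the complements are not twisted paths. Taking $X=\{1,-1\}$, one finds $TC_n\setminus X\cong D_{n-1}$ (the double path hypercube graph), so $\rho=1$ and the complement term is $xy\,f^{D}$. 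For a tube $t$ of size $i+1$, rotational symmetry of $TC_n$ lets you take $t=\{1,\dots,i+1\}$; its neighborless complement is the \emph{trans} double path graph $T_{n-1-i}$ (two disjoint paths with the shaving pattern of Subsection~\ref{sub:cistransgraphs}), \emph{not} an associahedron factor. So the ingredients you need are $f^D$ and $f^T$, and the identity
\[
f^{TC}=1+xy\,f^{D}+2y\,(f^A+yD_yf^A)\,f^{T}
\]
drops out directly (no self-referential $f^{TC}$ term appears at all). Plugging in the known $f^{D}=f^{B}$ and $f^{T}$ and simplifying then gives the stated rational function without any guesswork --- a genuine advantage over the paper's PDE-and-verify approach.
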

		By calculating $f$-vectors of lower-dimensional example cases, we were able to find a related integer series on OEIS \cite[Sequence A127674]{oeis}, the even rows of nonzero coefficients of Chebyshev polynomials. By reversing rows in this triangular series and making every entry positive, we were able to find a hypothesis bivariate $f$-polynomial. In Subsection \ref{sub:twistedcycle}, we use the facet sum polynomial method to define a partial differential equation
		\[
		(yD_y-xD_x)f^{TC}(x,y) = 2y \left[ f^A f^{TP} + (yD_yf^A)f^{TP}+f^A (yD_yf^{TP})\right].
		\]
		satisfied by $f^{TC}$, and prove Proposition \ref{prop:twisetdcycle} by verifying that the function satisfies the PDE.

	\begin{proposition}
	When $n \ge 0$, the number of maximal tubings of the $n$-dimensional twisted cycle hypercube graph is $2^{2n-1}$.
\end{proposition}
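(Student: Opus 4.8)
The plan is to read the vertex count of the twisted cycle hypercube graph associahedron directly off the bivariate $f$-polynomial established in Proposition~\ref{prop:twisetdcycle}. A maximal tubing of $TC_n$ is a maximal face of the tubing complex of $TC_n$, and this complex is the dual simplicial complex of the $n$-dimensional twisted cycle hypercube graph associahedron; its maximal (size-$n$) faces are exactly the $0$-dimensional faces (vertices) of that polytope. Hence the number of maximal tubings of $TC_n$ equals $f^{TC_n}_0$, the coefficient of $x^0y^n$ in $f^{TC}(x,y)=\sum_{n,k} f^{TC_n}_k x^k y^n$. So the first step is merely to observe that it suffices to compute the univariate specialization $f^{TC}(0,y)=\sum_{n\ge 0} f^{TC_n}_0\, y^n$.

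Next I would substitute $x=0$ into the closed form of Proposition~\ref{prop:twisetdcycle}, $f^{TC}(x,y)=\dfrac{(1-xy)-2y}{(1-xy)^2-4y}$, which gives
\[
f^{TC}(0,y)=\frac{1-2y}{1-4y}.
\]
Then I would extract coefficients: writing $\dfrac{1-2y}{1-4y}=(1-2y)\sum_{n\ge 0}4^n y^n$, the coefficient of $y^n$ for $n\ge 1$ is $4^n-2\cdot 4^{n-1}=2\cdot 4^{n-1}=2^{2n-1}$. Equivalently, clearing denominators yields $(1-4y)\,f^{TC}(0,y)=1-2y$, i.e. the recursion $f^{TC_n}_0=4\,f^{TC_{n-1}}_0$ for $n\ge 2$ together with $f^{TC_1}_0=2$, which solves to $f^{TC_n}_0=2\cdot 4^{n-1}=2^{2n-1}$. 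This establishes the claim for all $n\ge 1$; the constant term of the series is $1$, which is the unique (empty) maximal tubing of the $0$-dimensional case, so the uniform formula $2^{2n-1}$ is to be read as the $n\ge 1$ statement.

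The substantive work has already been done in Proposition~\ref{prop:twisetdcycle}, whose proof (Subsection~\ref{sub:twistedcycle}) verifies that $f^{TC}$ satisfies the partial differential equation $(yD_y-xD_x)f^{TC}=2y\big[f^A f^{TP}+(yD_y f^A)f^{TP}+f^A(yD_y f^{TP})\big]$ arising from the atomic link sum polynomial method of Proposition~\ref{prop:p-graph-theorem}; granting that, the present statement is a one-line coefficient extraction, and the only points needing care are the duality between maximal tubings and vertices and the $n=0$ boundary value. If instead one wanted an argument independent of Proposition~\ref{prop:twisetdcycle}, I would use the maximal tube sub-complex method of Section~\ref{sec:method-two}: take a kingmaker set consisting of two antipodal vertices of the $2n$-cycle and apply Proposition~\ref{prop:kingmaker-partition} to sort maximal tubings by the maximal tube meeting that set. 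The main obstacle on that route would be showing that this decomposition produces precisely the $4$-to-$1$ recursion $v_n=4v_{n-1}$, which is the combinatorial content behind $2^{2n-1}$.
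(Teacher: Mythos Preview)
Your proof is correct and follows essentially the same approach as the paper: substitute $x=0$ into the closed form of Proposition~\ref{prop:twisetdcycle} to obtain $f^{TC}(0,y)=\dfrac{1-2y}{1-4y}$ and read off the coefficients. The only cosmetic difference is that the paper extracts coefficients via the partial-fraction rewrite $\dfrac{1-2y}{1-4y}=\dfrac{1}{2}+\dfrac{1}{2}\cdot\dfrac{1}{1-4y}$, whereas you multiply out $(1-2y)\sum_{n\ge 0}4^n y^n$; your explicit handling of the $n=0$ boundary case is also a welcome addition, since the stated formula $2^{2n-1}$ only literally holds for $n\ge 1$.
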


	\begin{proof}
		The number of vertices of each hypercube graph associahedron is counted by the generating function $f^{TC}(0,y)=\frac{1-2y}{1-4y} = \frac{1}{2} + \frac{1}{2} \frac{1}{1-4y}$.
	\end{proof}

	\subsection{Omni-graph, generating convex hull of \texorpdfstring{$2^n$}{2 to the n} copies of a graph associahedron}
	
	Consider a graph $G$ on $[n]$. Define the \emph{omni-graph} $\Omega G$ as the hypercube graph containing edges $\{i,j\}, \{i,-j\}, \{-i,j\}, \{-i,-j\}$ for each edge $\{i,j\}$ in $G$. We can consider tubings of the omni-graph $\Omega G$ as being similar to signed tubes of $G$, except that a sign is given to each vertex within a tube. Figure \ref{fig:omnigraph} shows one such tubing of a path graph. We call these tubes \emph{vertex-signed tubes}.

\begin{figure}
\centering
\includegraphics[width=\textwidth]{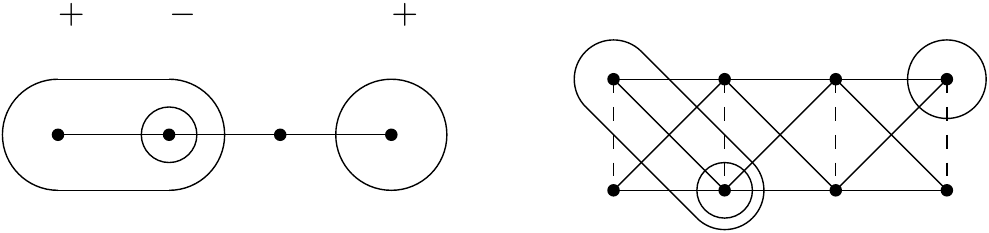}
\caption{Omni-graph tubing of a path graph.}\label{fig:omnigraph}
\end{figure}

\begin{proposition}
For a graph $G$ on $n$ vertices, the omni-graph $\Omega G$ has dual simplicial complex with $f$-polynomial
\[
	f_i^{\Delta(\Omega G)}= 2^n f_{i-1}^{\Delta(G)} + \sum_{T \in \mathcal{T}(G,i)} 2^{\left|\bigcup T\right|}.
\]
for every $1 \le i \le n$, where $\mathcal{T}(G,i)$ is the set of all tubings of the simplex-graph $G$ containing $i$ tubes.
\end{proposition}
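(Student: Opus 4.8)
The plan is to count faces of the dual simplicial complex $\Delta(\Omega G)$ by partitioning its faces (i.e.\ the tubings of the omni-graph $\Omega G$) according to whether or not a given face ``uses a dashed edge'' in the only sense that matters here: whether it contains a pair of vertices $\{k,-k\}$ inside a single tube or split across two tubes. Recall from Proposition~\ref{prop:hypercube-pairwise-compatibility} and the surrounding discussion that a set of hypercube-graph tubes is a tubing if and only if it is pairwise weakly compatible and its union contains no pair $\{k,-k\}$. So every tubing $T$ of $\Omega G$ has a well-defined \emph{underlying set of positions} $\pi(T) \subseteq [n]$, obtained by forgetting signs: each vertex-signed tube $t$ maps to the unlabeled vertex set $|t| \subseteq [n]$, and the no-dashed-edge condition guarantees this is well-defined on each tube. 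The key structural observation I would establish first is that $\pi(T)$ is a tubing of the \emph{simplex-graph} $G$ (not the classical graph associahedron necessarily, but the simplex-graph tubing complex $\mathcal{N}(G)$ as used throughout Chapter~\ref{chap:methods}), because the four omni-edges $\{i,j\},\{i,-j\},\{-i,j\},\{-i,-j\}$ are present for each edge $\{i,j\}$ of $G$ exactly so that weak compatibility of signed tubes projects to weak compatibility of their shadows, and the union condition on $\bigcup T$ forces $\bigcup \pi(T)$ to be an honest face of the simplex.

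Next I would split the count of $i$-element faces into two cases according to whether $\pi(T)$ has $i$ tubes or $i-1$ tubes. The first case is where every vertex of $[n]$ appears among the tubes of $T$ with a consistent sign — equivalently, the fiber over a fixed simplex-graph tubing $S$ with $i$ tubes. Here every vertex $k \in [n]$ that lies in $\bigcup S$ must receive exactly one sign (the sign it carries in the unique minimal tube of $S$ containing it, by Proposition~\ref{complex-nestedset}-style reasoning, propagated outward consistently), while each vertex $k \notin \bigcup S$ is unconstrained; but wait — I need to recount. Actually the clean statement is: given the shadow tubing $S = \pi(T)$, a lift of $S$ to $\Omega G$ amounts to a choice of sign for each of the $n$ vertices, because once a vertex $k$ appears in some tube it must carry the same sign in every tube containing it (nesting), and vertices not appearing anywhere still ``belong'' to the infinite region and get no constraint from the tubing but also contribute no signed data. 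The subtlety is that the $n$-vertex count $2^n$ appears only in the term indexed by $f_{i-1}^{\Delta(G)}$, suggesting that the correct case split is: either $\bigcup T = \pm[n]$ in the sense that the shadow is a \emph{maximal-support} configuration forcing all $n$ signs (contributing $2^n$ per tubing of $G$ with $i-1$ tubes, since such configurations secretly correspond to $(i-1)$-tubings of $G$ — here I would chase the indexing carefully), or $\bigcup T$ is a proper face, in which case the lift count is $2^{|\bigcup T|}$ and we sum over $T \in \mathcal{T}(G,i)$. The main step is therefore to pin down exactly why the ``all signs free'' case is governed by $f_{i-1}^{\Delta(G)}$ with multiplier $2^n$ rather than $f_i$; I expect this comes from the fact that a tube filling out a full opposing pair is disallowed, so the top-dimensional behavior of $\Omega G$ relative to $G$ shifts the index by one, analogous to the cubeahedron/associahedron dimension shift seen in Proposition~\ref{prop:cited-associahedron-result}.

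Concretely, the key steps in order are: (1) prove the shadow map $\pi\colon T \mapsto \{|t| : t \in T\}$ is well-defined and sends tubings of $\Omega G$ to faces of $\mathcal{N}(G)$ (using the no-dashed-edge conditions and the omni-edge structure); (2) for a fixed shadow $S$ with $\bigcup S$ a proper subset of $[n]$, show the fiber $\pi^{-1}(S)$ has size $2^{|\bigcup S|}$ by showing a lift is exactly a consistent sign assignment to the vertices of $\bigcup S$, freely chosen, with the rest of the sign data determined (or vacuous); (3) identify the remaining tubings — those whose shadow is not a proper-subset tubing, i.e.\ those ``at the boundary'' — and show these are in bijection with $(i-1)$-tubings of $G$ each lifted in $2^n$ ways, producing the $2^n f_{i-1}^{\Delta(G)}$ term; (4) add the two contributions to get the stated formula. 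The hard part will be step~(3): making precise which tubings fall outside the image of the proper-subset analysis and establishing the clean bijection with lower-index tubings of $G$ together with the exact power of $2$. I would handle this by examining what it means for the union of a tubing of $\Omega G$ to be ``as large as possible'' subject to containing no opposing pair — it must hit exactly one of each pair $\{k,-k\}$ — and then showing the residual combinatorial data (which of each pair, plus how the tubes nest) decomposes as a free $2^n$ choice times a tubing of $G$ with one fewer tube, the index drop coming precisely from the forbidden ``full'' tube. Once the two cases are shown to be exhaustive and disjoint, summing gives the formula, and the restriction $1 \le i \le n$ is exactly the range where both terms make sense.
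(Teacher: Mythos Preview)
Your approach is essentially the paper's, but you are circling the key observation without landing on it, and this is why step~(3) looks hard to you when it is in fact the easiest part.

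The observation you are missing is this: the tubes of $\Omega G$ are exactly the vertex-signed connected subsets of $[n]$. Among these, the vertex-signed copies of the \emph{full set} $[n]$ are tubes of $\Omega G$ (there are $2^n$ of them, one per sign assignment), but their shadow $[n]$ is \emph{not} a tube of the simplex-graph $G$ because it is not a proper subset. Every other tube of $\Omega G$ is a vertex-signed proper connected subset, i.e.\ a vertex-signed simplex-graph tube. So the correct split is not ``is the shadow union all of $[n]$?''\ but simply ``does the tubing contain a vertex-signed copy of $[n]$ as one of its tubes?'' These two splits are equivalent (when $G$ is connected the unique maximal tube in such a tubing must have shadow $[n]$), but the second phrasing makes the index shift transparent.

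If the tubing does contain a vertex-signed $[n]$, it is the unique maximal tube; removing it leaves $i-1$ tubes whose shadows form a simplex-graph $(i-1)$-tubing, and whose signs are forced by the maximal tube. The free data is the sign pattern on $[n]$, giving $2^n$ choices per $(i-1)$-tubing of $G$: this is the $2^n f_{i-1}^{\Delta(G)}$ term. If the tubing does not contain a vertex-signed $[n]$, all $i$ shadows are simplex-graph tubes and form an $i$-tubing $T$ of $G$; the fiber is exactly the sign assignment on $\bigcup T$, giving $2^{|\bigcup T|}$. Your step~(2) already has this. There is no ``boundary'' case to chase and no dimension-shift analogy to the cubeahedron needed; the $i-1$ comes from deleting one tube, nothing subtler.
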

\begin{proof}
If $T \in \mathcal{T}(G,i)$ is a simplex-graph tubing, then there are $2^{|\bigcup T|}$ vertex-signed tubing copies of $T$ in the omni-graph of $G$, recalling that $|\bigcup T|$ is the union of all tubes in $T$. In addition, there exist $2^n$ vertex-signed copies of the tube $[n]$, which is not a simplex-graph tube. As a result, the set of tubings of $\Omega G$ containing $i$ tubes consists of vertex-signed tubings of $G$ containing $i$ tubes, and tubings containing $i-1$ tubes of $G$ and one vertex-signed copy of $[n]$.
\end{proof}


There is no formula relating the number of tubes in a tubing, and the number of vertices in $\bigcup T$. However, we note that when $i=n$, the set $\mathcal{T}(G,i)$ is empty, allowing us to write a simpler formula.

		\begin{proposition}
		If a graph $G$ on $[n]$ has $k$ maximal tubings, the omni-graph $\Omega G$ has $2^n k$ maximal tubings.
	\end{proposition}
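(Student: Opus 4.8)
The plan is to exhibit a bijection between the maximal tubings of the omni-graph $\Omega G$ and the set of pairs consisting of a maximal tubing of $G$ together with a choice of sign for each of the $n$ vertices. Since a maximal tubing of an $n$-dimensional graph associahedron corresponds to a vertex, a maximal tubing uses exactly $n$ tubes; from the previous proposition we know that when $i = n$ the set $\mathcal{T}(G, n)$ of $n$-tube simplex-graph tubings of $G$ accounts for the ``vertex-signed copies'' term, and in that case every maximal tubing $T$ of $G$ has $\bigl|\bigcup T\bigr| = n$, because a maximal tubing of a connected graph on $[n]$ must include the full vertex set $[n]$ as a tube (or, if $G$ is disconnected, must include all of its connected components, whose union is still $[n]$).

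First I would recall from the previous proposition the identity $f_i^{\Delta(\Omega G)} = 2^n f_{i-1}^{\Delta(G)} + \sum_{T \in \mathcal{T}(G,i)} 2^{|\bigcup T|}$, and specialize it to $i = n$. The first term $2^n f_{n-1}^{\Delta(G)}$ counts tubings of $\Omega G$ that use $n-1$ tubes of $G$ plus one vertex-signed copy of $[n]$; but such a configuration is already a maximal tubing of $\Omega G$ only if it is actually maximal, and I would need to check that these are precisely the omni-tubings in which the full vertex set appears signed. The second term $\sum_{T \in \mathcal{T}(G,n)} 2^{|\bigcup T|}$ counts the omni-tubings arising from vertex-signed copies of genuine $n$-tube tubings of $G$. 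The key observation is that in both summands the exponent of $2$ equals $n$: in the first term because we signed all of $[n]$; in the second because, as noted above, every maximal tubing of $G$ (whether or not $G$ is connected) has $\bigcup T = [n]$. Hence $f_n^{\Delta(\Omega G)} = 2^n f_{n-1}^{\Delta(G)} + 2^n \sum_{T \in \mathcal{T}(G,n)} 1 = 2^n\bigl(f_{n-1}^{\Delta(G)} + |\mathcal{T}(G,n)|\bigr)$.

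Next I would identify $f_{n-1}^{\Delta(G)} + |\mathcal{T}(G,n)|$ with $k$, the number of maximal tubings of $G$. Here ``maximal tubing of $G$'' means a face of top dimension in the tubing complex of the classical graph associahedron of $G$; when $G$ is connected this is literally the set of $n$-tube tubings, i.e. $\mathcal{T}(G,n)$, and the term $f_{n-1}^{\Delta(G)}$ should vanish or be absorbed appropriately — so I would need to be careful about whether $\mathcal{T}(G,n)$ in the statement of the earlier proposition already includes the full-vertex-set tube or not. If the convention is that $\mathcal{T}(G,i)$ counts only tubings whose tubes are all \emph{proper} subsets (so that the full tube $[n]$ is excluded, matching the simplex-graph convention where $[n]$ is the whole polyhedron rather than a facet), then the $f_{n-1}^{\Delta(G)}$ term exactly supplies the ``one more tube'' needed to complete an $(n-1)$-tube configuration into a maximal one by adjoining $[n]$, and $f_{n-1}^{\Delta(G)} + |\mathcal{T}(G,n)| = k$ is precisely the count of all maximal tubings. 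Reconciling these conventions is the step I expect to be the main obstacle: the rest is a one-line substitution giving $f_n^{\Delta(\Omega G)} = 2^n k$, i.e. $\Omega G$ has $2^n k$ maximal tubings.

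Finally, to make the argument fully self-contained and avoid convention pitfalls, I would instead present the direct bijective proof: given a maximal tubing $T'$ of $\Omega G$, its support $\bigcup T'$ must be all of $\pm[n]$ restricted appropriately — more precisely, forgetting signs, the tubes of $T'$ project to a maximal tubing $T$ of $G$ (the projection map $\{-i, i\} \mapsto i$ sends each tube of $\Omega G$ to a tube of $G$, and maximality is preserved since $\dim = n$ on both sides), and the sign data records, for each $i \in [n]$, whether $i$ or $-i$ was used. Conversely, a maximal tubing $T$ of $G$ together with a sign vector $\varepsilon \in \{\pm 1\}^n$ lifts uniquely to a maximal tubing of $\Omega G$ by replacing each vertex $i$ in each tube by $\varepsilon_i \cdot i$; one checks that weak compatibility and the no-dashed-edge condition are preserved because $\Omega G$ has every sign combination of each edge of $G$. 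This gives a bijection between maximal tubings of $\Omega G$ and $\{\pm 1\}^n \times \{\text{maximal tubings of } G\}$, of cardinality $2^n k$, which proves the proposition.
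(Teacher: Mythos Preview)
Your first attempt is on the right track but you are tying yourself in knots over a convention that the paper resolves in one line. In the paper, $\mathcal{T}(G,i)$ denotes tubings of the \emph{simplex}-graph $G$, and simplex-graph tubes are by definition \emph{proper} subsets of $[n]$. The simplex on $[n]$ has dimension $n-1$, so its tubing complex has rank $n-1$, and therefore $\mathcal{T}(G,n)=\emptyset$. The paper's entire proof is that sentence: set $i=n$ in the preceding formula, the sum over $\mathcal{T}(G,n)$ vanishes, and $f_n^{\Delta(\Omega G)} = 2^n f_{n-1}^{\Delta(G)} = 2^n k$. There is no need to reconcile $f_{n-1}^{\Delta(G)} + |\mathcal{T}(G,n)|$ with $k$; the second summand is simply zero.

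Your bijective argument at the end is morally correct but contains a slip that stems from the same convention issue. You write ``maximality is preserved since $\dim = n$ on both sides,'' but the simplex-graph associahedron of a graph on $n$ vertices is $(n-1)$-dimensional, so its maximal tubings have $n-1$ tubes, not $n$. What actually happens under projection is that a maximal $\Omega G$-tubing (with $n$ tubes) always contains a vertex-signed copy of $[n]$ as its largest tube; this tube projects to the full set $[n]$, which is \emph{not} a simplex-graph tube and must be discarded. The remaining $n-1$ tubes project to a maximal simplex-graph tubing of $G$, and the discarded top tube records exactly the sign vector in $\{\pm 1\}^n$. So the bijection is
\[
\{\text{maximal $\Omega G$-tubings}\} \;\longleftrightarrow\; \{\pm 1\}^n \times \{\text{maximal simplex-graph tubings of $G$}\},
\]
which gives $2^n k$. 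This is precisely the bijection implicit in the previous proposition's proof, just specialized to the top dimension; the paper simply invokes that formula rather than re-deriving the bijection.
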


	Figure \ref{fig:omni-path} shows the omni-graph associahedron of a path graph on $3$ vertices. This realization is equal to the convex hull of $8$ copies of the graph associahedron of a path on $3$ vertices, which form $2$-dimensional faces. In general, for a connected graph $G$ on $n$ vertices, we find that there exist $2^n$ maximal tubes, each containing exactly one member of each set $\{i,-i\}$ for $i \in [n]$. We find that each maximal tubing must contain exactly one of these maximal tubes. Now dually, we find that every vertex of the hypercube-graph associahedron of $\Omega G$ is contained in exactly one facet $\Phi_t$ of a maximal tube $t$ of $\Omega G$, and so these $2^n$ facets partition the vertex set of the hypercube graph associahedron of $\Omega G$.


	\begin{figure}
\centering
\includegraphics[width=.5\textwidth]{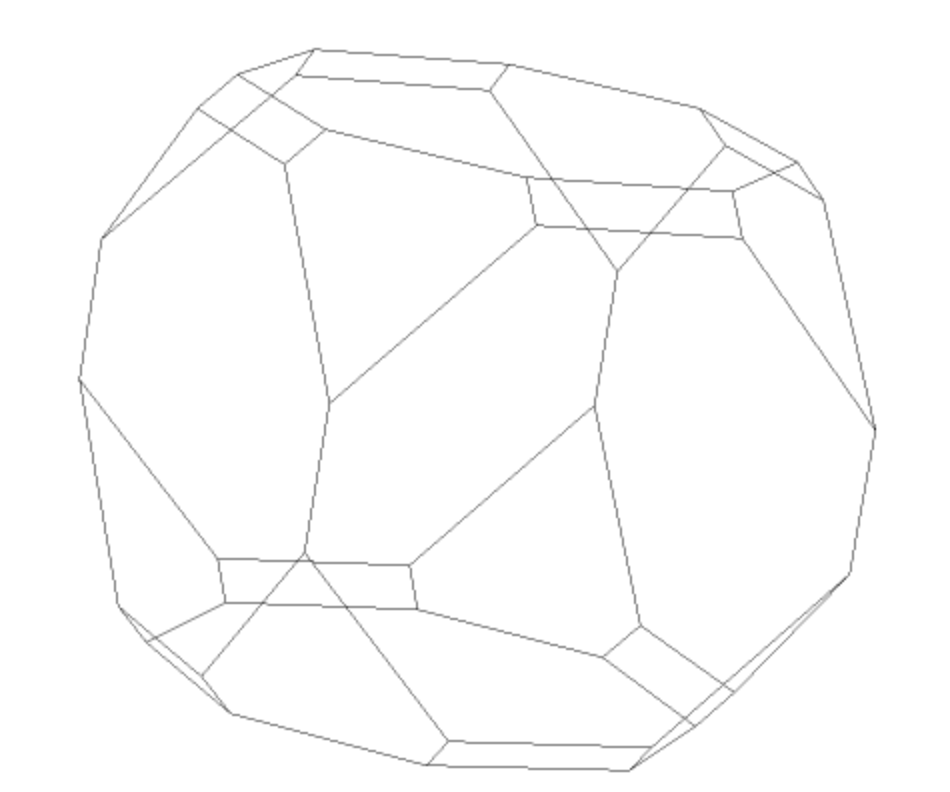}
\caption{Omni-graph associahedron of a path graph.} \label{fig:omni-path}
\end{figure}

	\subsection{Pell Numbers and Companion Pell Numbers} \label{sub:pell-graphs}
	
	Define the graph $G_n$ on $\pm[n]$ containing edges $\{i,-(i+1)\}$ for $1 \le i < n$. Define the graph $H_n$ as the graph $G_n$ with added edge $\{-1,n\}$. Figure \ref{fig:pell-graphs} shows the two graphs. Call $G_n$ the \emph{Pell hypercube graph}, and call $H_n$ the \emph{companion Pell hypercube graph}.

	\begin{figure}[h]
		\centering
		\includegraphics[width=.8\textwidth]{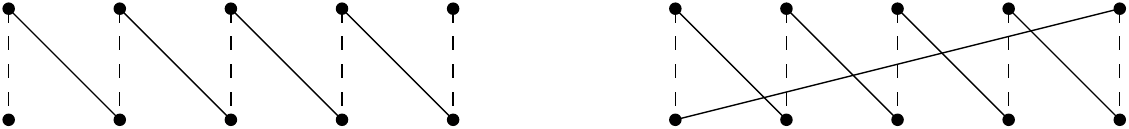}
		\caption{$G_5$ on left and $H_5$ on right.} \label{fig:pell-graphs}
	\end{figure}

	\begin{definition}
		The Pell numbers are a sequence defined by the recurrence $a(n) = 2 a(n-1) + a(n-2)$ with $a(0)=1, a(1)=2$. The companion Pell numbers are given by $b(n)=2(a(n+1)-a(n))$.
	\end{definition}
	
	\begin{theorem}
		The number of maximal tubings of the $G_n$ graph is the $n$th Pell number $a(n)$. The number of maximal tubings of the $H_n$ graph is the $(n-1)$th companion Pell number $b(n-1)$.
	\end{theorem}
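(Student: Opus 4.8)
The plan is to count maximal tubings directly, setting up leaf‑removal recurrences and matching them against the Pell recurrence $a(n)=2a(n-1)+a(n-2)$ with $a(0)=1,\ a(1)=2$. The key preliminary reduction is a \emph{support decomposition} valid for any hypercube graph $G$ on $\pm[n]$: if $T$ is a maximal tubing then $|T|=n$, and by Proposition~\ref{prop:rank-preserving} the preposet $Q(T)_{finite}$ has $n$ equivalence classes partitioning $\bigcup T$; since $\bigcup T$ is a face of the dual complex of the hypercube it has at most $n$ elements and contains no dashed pair $\{i,-i\}$, so $\bigcup T$ contains exactly one of $i$ and $-i$ for every $i$. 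Conversely, by Proposition~\ref{prop:restriction} the maximal tubings with a prescribed support $S$ are exactly the maximal nested sets of the graphic building set restricted to the subset complex of $S$, i.e.\ the maximal tubings of the ordinary induced graph $G|_S$ (the no‑dashed‑edge condition being vacuous inside $S$), and these factor as a product over the connected components of $G|_S$. This reduces everything to counting maximal tubings of classical induced subgraphs, and it licenses splitting the count for $G_n$ (and later $H_n$) according to which representatives of the extreme pairs $\{n,-n\}$ and $\{n-1,-(n-1)\}$ occur in the support.

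\textbf{The $G_n$ recurrence.} Here I would use that $n$ has no solid neighbour, that $-n$'s only solid neighbour is $n-1$, and that $\{n-1,-n\}$ is a solid connected component of $G_n$. Hence: if $n\in\bigcup T$ then $\{n\}\in T$ and $T\setminus\{n\}$ is a maximal tubing of $G_n|_{\pm[n-1]}=G_{n-1}$, giving $g(n-1)$ tubings; if $-n\in\bigcup T$ but $n-1\notin\bigcup T$ then $\{-n\}\in T$ and $T\setminus\{-n\}$ is a maximal tubing of $G_{n-1}$ whose support contains $-(n-1)$, giving $g_-(n-1)$ tubings (writing $g_-(m)$ for the number of maximal $G_m$‑tubings with $-m$ in the support); and if $-n,n-1\in\bigcup T$ then the component $\{n-1,-n\}$ supplies a factor $2$ and the remainder is any maximal tubing of $G_{n-2}$, giving $2g(n-2)$. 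The first case, applied at level $m$, shows $g(m)-g_-(m)=g(m-1)$, whence $g_-(n-1)=g(n-1)-g(n-2)$ and
\[
g(n)=g(n-1)+\bigl(g(n-1)-g(n-2)\bigr)+2g(n-2)=2g(n-1)+g(n-2),
\]
with $g(0)=1,\ g(1)=2$. This is the Pell recurrence, so $g(n)=a(n)$.

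\textbf{The $H_n$ recurrence.} For $H_n$, obtained from $G_n$ by adding the edge $\{-1,n\}$, the same idea applies, and this is where most of the work sits. The new edge makes both $\{-1,n\}$ and $\{n-1,-n\}$ solid connected components of $H_n$. Splitting a maximal tubing $T$ first on the pair $\{n,-n\}$, and then on $\{1,-1\}$ (when $n\in\bigcup T$) or on $\{n-1,-(n-1)\}$ (when $-n\in\bigcup T$), the two ``isolated‑vertex'' subcases each reduce to a maximal tubing of $H_n\setminus\{n,-n\}=G_{n-1}$ with a prescribed element of a boundary pair in its support, while the two ``component'' subcases each split off a factor $2$ and reduce, after relabelling, to $G_{n-2}$. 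Invoking the path‑reversing automorphism $\rho$ of $G_{n-1}$ (which interchanges the vertex $1$ with the vertex $-(n-1)$) identifies the count of $G_{n-1}$‑tubings with $1$ in the support with $g_-(n-1)$, so that $h(n)=2g_-(n-1)+4g(n-2)=2\bigl(g(n-1)-g(n-2)\bigr)+4g(n-2)=2g(n-1)+2g(n-2)$. Since $2a(n-1)+2a(n-2)=2a(n)-2a(n-1)=b(n-1)$ by the Pell recurrence and the definition of the companion Pell numbers, this yields $h(n)=b(n-1)$ for $n\ge 2$ (the case $n=1$ being degenerate, as $\{-1,1\}$ is already a dashed pair).

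\textbf{Main obstacle.} The step I expect to require the most care is the $H_n$ analysis: verifying that each restricted graph that appears — $H_n\setminus\{n,-n\}$, and the portions of $H_n$ left after deleting a solid component together with the two pairs it consumes — is, after relabelling its vertices, literally the claimed copy of $G_{n-1}$ or $G_{n-2}$, and checking that the four subcases are genuinely disjoint and exhaustive for all $n\ge 2$ (including the small cases where the pairs $1$ and $n-1$, or the components $\{-1,n\}$ and $\{n-1,-n\}$, nearly coincide). The half‑open‑polyhedron subtleties of Subsection~\ref{sub:halfopen} hover in the background whenever one restricts to a proper face of a hypercube, but they can be bypassed here since only vertex counts (equivalently, numbers of maximal tubings) are needed, and these behave well under the component‑product and restriction operations above.
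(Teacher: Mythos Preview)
Your argument is correct. For $G_n$ it is essentially the paper's proof: both do a leaf-removal case analysis at one end of the alternating dashed/solid path that $G_n$ forms, arriving at $g(n)=2g(n-1)+g(n-2)$. The paper works at the $\{1,-1\}$ end with the three mutually exclusive sub-tubings $\{\{-1\}\}$, $\{\{1\}\}$, $\{\{1,-2\},\{-2\}\}$ (counts $g(n-1),g(n-1),g(n-2)$); you work at the $\{n,-n\}$ end via the support, getting the slightly finer partition $g(n-1),\,g_-(n-1),\,2g(n-2)$ and then using $g_-(m)=g(m)-g(m-1)$. These are related by the automorphism you later invoke.

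For $H_n$ the routes genuinely diverge. The paper uses the atomic-link-sum overcounting method of Section~\ref{sec:method-one}: since $H_n$ is the alternating dashed/solid $2n$-cycle, every singleton tube has reconnected complement $G_{n-1}$ and every edge tube has reconnected complement $G_{n-2}$, so summing over all $2n$ singletons and $n$ edges gives $n\cdot h(n)=2n\,g(n-1)+2n\,g(n-2)$ in one stroke. Your four-way support split is a direct (no overcounting) argument and needs the path-reversing automorphism of $G_{n-1}$ to match the ``$1$ in support'' count with $g_-(n-1)$. The paper's approach is shorter here precisely because $H_n$ is vertex- and edge-transitive as a hypercube graph; your support framework is more elementary and treats $G_n$ and $H_n$ uniformly, at the cost of a few more cases.
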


	\begin{proof}

		We can count the number of maximal tubings here without going through the entire process of calculating the entire $f$-polynomials of either hypercube graph. We wish to define three tubings $T_1, T_2, T_3$ of $G_n$ such that any maximal tubing of $G_n$ must contain exactly one of them as a subset.

		Define $T_1$ to be the tubing containing only the tube $\{-1\}$ and define $T_2$ to be the tubing containing only the tube $\{1\}$. No tubing can contain both tubes. Now, if $T_1$ and $T_2$ are not in a maximal tubing $T$, then $T$ must contain some tube covering the vertex $1$ or $-1$, and so $T$ must contain $\{1,-2\}$. Now, note that if $T$ is maximal and contains $\{1,-2\}$, then $T$ must contain $\{1\}$ or $\{-2\}$, but because $T_2 \not\subseteq T$, we find $T$ must contain $\{-2\}$. As a result, define $T_3$ to be the tubing containing $\{1,-2\}$ and $\{-2\}$. Figure \ref{fig:pell-graph-tubings} shows these three tubings. The link of each tubing can be done by finding reconnected complements; the reconnected complements of $T_1$ and $T_2$ are isomorphic to the graph $G_{n-1}$, and the reconnected complement of $\{1,-2\}$ is isomorphic to $G_{n-2}$. As a result, if $g(n)$ is the number of maximal tubes of $G_n$, then $g(n)=2g(n-1)+g(n-2)$ for each $n \ge 2$. With $g(0)=1, g(1)=2$, we find $g(n)=a(n)$.

		Now consider the graph $H_n$. We will be using an altered version of the algorithm defined in Section \ref{sec:method-one} to count only maximal tubings. If $\mathcal{N}(H_n,Q_n)$ is the tubing complex of $H_n$, where $Q_n$ is the $n$-dimensional hypercube, then the sum calculated by adding the number of maximal tubings in each atomic link of $\mathcal{N}(H_n,Q_n)$ will be equal to $n$ times the number of maximal tubings of $H_n$. If $h(n)$ is the number of maximal tubes of $H_n$, then we find the atomic link maximal tubing sum is equal to $n h(n)$. We now characterize tubes of $H_n$. Every singleton tube has an atomic link isomorphic to its reconnected complement, which in this case is the $G_{n-1}$ graph, giving $g(n-1)$ maximal tubings. Every edge tube has an atomic link isomorphic to the product of its reconnected complement $G_{n-2}$ and an induced simplex-graph tubing complex of a $1$-simplex, which has $2$ possible maximal tubings. As a result, we find $2g(n-2)$ maximal elements in the atomic link of an edge tube. Because there are $n$ edges and $2n$ vertices, we find that $n h(n) = 2n g(n-1) + 2n g(n-2)$, or $h(n)=2g(n-1)+2g(n-2)$. Considering that $2g(n)-4g(n-1)-2g(n-2)=0$, we can rewrite $h(n)=2(g(n)-g(n-1))=2(a(n)-a(n-1))=b(n-1)$.
	\begin{figure}
\centering
\includegraphics[width=.5\textwidth]{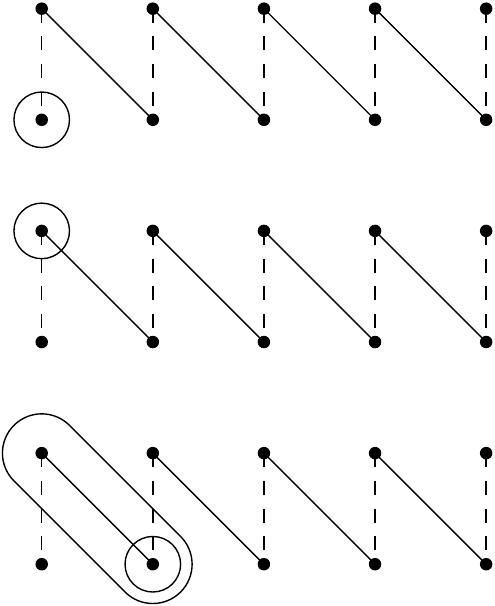}
\caption{Pell graph tubings used in counting.}\label{fig:pell-graph-tubings}
\end{figure}
		\end{proof}
	
	In \cite{sashes}, there is a lattice $\Sigma_n$ of sashes which correspond to the weak order on Pell permutations. We have previously written a program which realizes the Pell hypercube-graph associahedron for dimensions $n \le 7$, and compared the 1-skeleton of the polyhedron to the lattice of sashes, using SageMath. Some code for this appears in Appendix \ref{appendix:code}.

The following conjectures are true for $n\le 7$ dimensions, calculated via electronic computation.
	
	\begin{conjecture}\label{conj:pell-1}
		For every $n \ge 1$, the $1$-skeleton of the graph associahedron for $G_n$ is the undirected Hasse diagram of the lattice of Pell permutations $\Sigma_n$.
	\end{conjecture}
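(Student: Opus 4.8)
The plan is to prove Conjecture \ref{conj:pell-1} by constructing an explicit graph isomorphism between the $1$-skeleton of the $\p$-graph associahedron of $G_n$ and the undirected Hasse diagram of $\Sigma_n$, by induction on $n$, using the same three-way decomposition of maximal tubings of $G_n$ that was used to establish the Pell recurrence $g(n)=2g(n-1)+g(n-2)$. Recall first the combinatorial translation: for a simple $n$-polytope, vertices of the $1$-skeleton are the maximal faces of the dual simplicial complex (here, maximal tubings of $G_n$, counted by $a(n)$ by the preceding theorem), and edges are the size-$(n-1)$ faces, each contained in exactly two maximal faces; so the edges of the $1$-skeleton are precisely the pairs of maximal tubings $T,T'$ with $|T\cap T'|=n-1$, i.e.\ pairs related by a single tube-flip. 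Thus the whole problem is to match the tube-flip graph on maximal tubings of $G_n$ with the cover-relation graph of $\Sigma_n$.

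First I would extract from \cite{sashes} an explicit description of the cover relations of $\Sigma_n$ as local moves on sashes, together with a recursive decomposition of the set of sashes of size $n$ into two copies of the size-$(n-1)$ sashes and one copy of the size-$(n-2)$ sashes that realizes the Pell recursion, recording which boundary covers link the three blocks. On the tubing side, the maximal tubings of $G_n$ split into: those containing the tube $\{-1\}$ (link combinatorially the complex of maximal tubings of $G_{n-1}$, via Proposition \ref{prop:pseudolink-graph} since the reconnected complement of $\{-1\}$ is $G_{n-1}$), those containing $\{1\}$ (again $G_{n-1}$), and those containing both $\{1,-2\}$ and $\{-2\}$ (link $G_{n-2}$). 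Matching the three tubing blocks with the three sash blocks and invoking the inductive hypothesis on each yields a candidate bijection $\Phi_n$ on vertices; the Pell count guarantees there are no collisions once one knows from \cite{sashes} that the three target blocks partition the sashes of size $n$.

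Next I would check that $\Phi_n$ carries edges to edges. The edges fall into two classes. An edge between two maximal tubings lying in the same recursion block is handled by the inductive hypothesis — but this requires first verifying that the identifications of the links with the tubing complexes of $G_{n-1}$ and $G_{n-2}$ are compatible with $\Phi_{n-1}$ and $\Phi_{n-2}$ under the reconnected-complement isomorphism of Proposition \ref{prop:pseudolink-graph}, and likewise that the chosen sash decomposition is compatible with $\Sigma_{n-1}$ and $\Sigma_{n-2}$. The remaining ``boundary'' edges are the flips that create or destroy one of the distinguished tubes $\{-1\}$, $\{1\}$, $\{1,-2\}$, $\{-2\}$; there are only finitely many such local configurations (possibly further split according to which tube of $T$ is the maximal tube incident to the vertex $1$ or $-1$), and each is to be matched by hand against the corresponding boundary cover of $\Sigma_n$.

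The hard part will be exactly this last matching, because the recursion on vertex counts does not by itself force $\Phi_n$ to intertwine the two flip structures: one genuinely needs a dictionary turning a single tube-flip in $G_n$ into a single cover move on a sash, and the cover relations of $\Sigma_n$ are not as transparent as adjacent transpositions since $\Sigma_n$ is a proper sub/quotient structure of the type $B$ weak order. An attractive alternative route — which would also settle the stronger \emph{directed} conjecture mentioned in the introduction — is geometric: show that the normal fan of a standard-cut $G_n$-associahedron, which coarsens the type $B_n$ Coxeter fan, is exactly the fan whose poset of regions (in the sense recalled in the introduction) is $\Sigma_n$. The undirected statement then follows because the $1$-skeleton of a simple polytope is the undirected Hasse diagram of the poset of regions of its normal fan. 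This trades the combinatorial bookkeeping for the task of identifying the walls of $\Sigma_n$ with the hyperplanes normal to the tube rays $v_t=\sum_{i\in t}e_i$ of $G_n$, which is likely the cleaner path if \cite{sashes} supplies a geometric realization of $\Sigma_n$; if not, one is back to the hands-on dictionary, and I would expect the proof to hinge on correctly describing the sash covers that cross block boundaries.
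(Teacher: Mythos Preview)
This statement is a \emph{conjecture} in the paper, not a theorem: the paper does not give a proof at all, only the remark that it has been verified computationally for $n\le 7$ (with code in the appendix). So there is no paper proof to compare your proposal against.

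As for the proposal itself, you are candid that it is a plan rather than a proof, and you correctly identify where the real difficulty lies. The inductive skeleton is sound: the three-block decomposition of maximal tubings of $G_n$ does match the Pell recursion, and Proposition~\ref{prop:pseudolink-graph} does give you the links you claim. But the step you flag as ``the hard part'' is exactly the part that is not done. Matching vertex counts block-by-block and invoking induction inside each block does not by itself produce a graph isomorphism; you need the boundary flips (those creating or destroying $\{-1\}$, $\{1\}$, $\{1,-2\}$, $\{-2\}$) to land precisely on the boundary covers of the corresponding sash decomposition, and you have not exhibited that dictionary. Without it the argument is a plausibility sketch, not a proof.

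Your alternative geometric route --- showing that the normal fan of the standard-cut $G_n$-associahedron is the fan whose poset of regions is $\Sigma_n$ --- is the more promising approach, and is in fact the direction the paper gestures toward in Chapter~\ref{chap:res3} when it lists Conjecture~\ref{conj:pell-1} as future work in the context of posets of regions. But that route also requires input you do not currently have: a geometric realization of $\Sigma_n$ from \cite{sashes} with an explicit description of its walls, which you would then have to match against the tube rays $v_t$. Until that matching is carried out, the conjecture remains open.
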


	\begin{conjecture}\label{conj:pell-2}
		The bivariate $f$-polynomial of the $G_n$ hypercube graph family is given by the generating function

\[
f^G = \frac{1-2st-s^2t-s^2t^2}{(1-2st-s^2t-s^2t^2)-s}
\]
	\end{conjecture}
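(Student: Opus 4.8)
The plan is to apply the atomic link sum polynomial method of Section~\ref{sec:method-one} (Propositions~\ref{prop:diff-eq} and~\ref{prop:p-graph-theorem}), taking advantage of the extremely rigid local structure of the Pell graph. The key observation is that the solid edges $\{i,-(i+1)\}$ of $G_n$ form a \emph{matching}: every vertex lies in at most one solid edge. Hence the only tubes of $G_n$ are the $2n$ singletons and the $n-1$ solid edges, and, since no vertex has two neighbours, the reconnected complement $G_n/t$ (Definition~\ref{def:reconnected-complement-graph}, Proposition~\ref{prop:pseudolink-graph}) never gains a new edge: it is simply $G_n$ with the vertices of $t$ and their dashed partners deleted. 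A short case check then shows that deleting such a vertex pair splits the matching into two Pell graphs. Concretely, the reconnected complement of the singleton $\{\pm i\}$ with $2\le i\le n-1$ is $G_{i-1}\sqcup G_{n-i}$; that of the ``endpoint'' singletons $\{\pm 1\}$ and $\{\pm n\}$ is $G_{n-1}$; and that of the edge $\{i,-(i+1)\}$ is $G_{i-1}\sqcup G_{n-i-1}$. Meanwhile $G_n|_t$ is a point for a singleton tube and a single edge (a segment, whose simplex-graph tubing complex has $f$-polynomial $1+2s$) for an edge tube. By Proposition~\ref{prop:nestohedron-of-product}, the tubing complex of a disjoint union of Pell graphs is the Cartesian product of the pieces' tubing complexes, so its $f$-polynomial factors accordingly.

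I would then assemble the atomic link sum. Write $F := f^{\Delta(G)}(s,t) = \sum_{n\ge 0} f^{\Delta(G_n)}(s)\,t^n$. Summing $R^{\Delta(G_n)}(s) = D_s f^{\Delta(G_n)}(s)$ over the singleton and edge tubes, the endpoint singletons contribute $4 f^{\Delta(G_{n-1})}$, the interior singletons $2\sum_{i=2}^{n-1} f^{\Delta(G_{i-1})} f^{\Delta(G_{n-i})}$, and the edge tubes $(1+2s)\sum_{i=1}^{n-1} f^{\Delta(G_{i-1})} f^{\Delta(G_{n-i-1})}$. Each of the latter sums is a convolution, so passing to generating functions turns it into a product; after collecting the low-dimensional terms ($f^{\Delta(G_0)}=1$, $f^{\Delta(G_1)}=1+2s$) the whole identity collapses to the single functional equation
\[
D_s F - 2t \;=\; 4t(F-1) + 2t(F-1)^2 + (1+2s)\,t^2 F^2 .
\]
This is a Riccati-type equation in $s$ (with $t$ a parameter), of the self-similar sort anticipated in the discussion following Proposition~\ref{prop:p-graph-theorem}; together with the boundary slice $F(0,t) = 1/(1-t)$ coming from $f^{\Delta(G_n)}(0)=1$ it determines $F$ uniquely as a formal power series in $t$.

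The final step is to verify that the conjectured closed form satisfies this equation together with the boundary condition; after clearing denominators this is a finite polynomial identity. In practice it is cleanest to solve the equation directly by a rational ansatz: substituting $F = 1/M$ and $A = 1-M$ reduces the functional equation to $D_s A = 2t + (1+2s)t^2$, which with $A(0,t) = t$ has a unique solution, a specific quadratic in $t$; equivalently, the dimension-indexed $f$-polynomials obey a two-term linear recurrence $f^{\Delta(G_n)} = (1+2s)\,f^{\Delta(G_{n-1})} + s(1+s)\,f^{\Delta(G_{n-2})}$, whose $s$-leading diagonal is the Pell generating function $(1 - 2st - s^2 t^2)^{-1}$, in agreement with the theorem on maximal tubings of $G_n$ established above. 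One then compares the resulting rational generating function with the conjectured expression, and rewrites it in the polytope variables $x,y$ via Proposition~\ref{prop:change-of-basis} if desired. The principal obstacle is the self-referential, quadratic shape of the functional equation: unlike the non-self-similar families, one cannot simply integrate it, so the argument must produce the closed form (or the linear recurrence) and confirm it satisfies the equation. A secondary nuisance is the careful treatment of the degenerate small cases $n = 0, 1, 2$ in the atomic-link bookkeeping, where the ``endpoint'' and ``interior'' vertex ranges overlap or are empty.
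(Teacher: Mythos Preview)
The paper does not prove this statement: it is stated as a conjecture, justified only by a reference to OEIS sequence A209695 and the remark that it ``is correct for all calculated dimensions.'' There is therefore no proof in the paper to compare against; your proposal goes well beyond what the paper does.

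Your strategy is sound and essentially complete. The observation that the solid edges of $G_n$ form a matching is correct, so the tubes are exactly the $2n$ singletons and the $n-1$ edges, and the reconnected complement of any tube is again a disjoint union of Pell graphs (no new edges can appear since each tube has at most one external neighbour). The atomic link sum then gives, after absorbing the ``endpoint'' singletons into the convolution (note that $4g_{n-1}=2g_0g_{n-1}+2g_{n-1}g_0$), the clean equation
\[
D_sF \;=\; \bigl(2t+(1+2s)t^2\bigr)F^2,
\]
which is exactly your Riccati equation after simplifying $4t(F-1)+2t(F-1)^2=2t(F^2-1)$. Your substitution $F=1/(1-A)$ linearises it to $D_sA=2t+(1+2s)t^2$, and with the boundary $F(0,t)=1/(1-t)$ one finds
\[
f^{\Delta(G)}(s,t)\;=\;\frac{1}{\,1-(1+2s)t-s(1+s)t^2\,},
\]
equivalently the two-term recurrence $g_n=(1+2s)g_{n-1}+s(1+s)g_{n-2}$. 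This is a genuine proof of a closed form, not just a verification.

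The one step you leave implicit---``compare the resulting rational generating function with the conjectured expression''---is where care is needed. The conjecture as printed uses the letters $s,t$ but the chapter's convention is that $f^G$ means $f^{P(G)}(x,y)$; moreover the printed rational function does not literally match either $f^{\Delta(G)}(s,t)$ or $f^{P(G)}(x,y)$ under the standard change of variables (for instance, at $s=0$ it gives $1$, whereas $f^{\Delta(G)}(0,t)=1/(1-t)$ and $f^{P(G)}(0,y)$ is the Pell generating function). So when you carry out that final comparison you should expect to either identify a variable-swap/typo in the stated conjecture or conclude that the correct closed form is the one you derived. Either way, your argument settles the question; just do not assume the displayed formula will match on the nose.
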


	This polynomial comes from OEIS \cite[Sequence A209695]{oeis}, and is correct for all calculated dimensions.

%
%
%

	\subsection{Near-Double Path Graph} \label{sub:near double path definition}

	The near double path graph is the hypercube graph on $\pm[n]$ with paths $(1,2,\ldots,n-1,n,-1)$ and path $(-2,-3,\ldots,-n)$. The 4-dimensional case is shown in Figure \ref{fig:neardoublepathgraph}; note that here, we find it useful to arrange the vertices on a cycle, but this is the same underlying hypercube shape.

	\begin{figure}[h]

	\begin{center}
	\includegraphics[width=.3\textwidth]{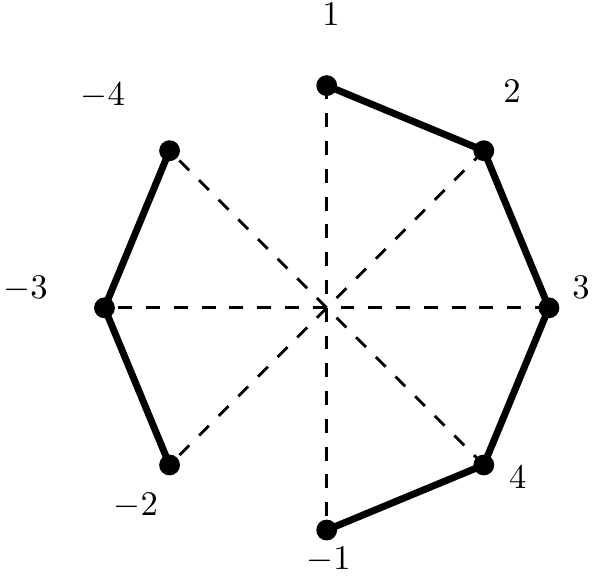}
	\caption{Near-double path graph for $n=4$}\label{fig:neardoublepathgraph}
\end{center}

	\end{figure}

	\begin{proposition}\label{prop:neardoublepath}
		The $n$-dimensional near double path graph hypercube graph associahedron has an $f$-vector equal to the $f$-vector of the $n$-dimensional cyclohedron.
	\end{proposition}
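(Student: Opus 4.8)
The plan is to compute the bivariate $f$-polynomial of the family of near-double path hypercube graph associahedra using the maximal-tube (kingmaker) enumeration method from Section~\ref{sec:method-two}, specifically Proposition~\ref{prop:enumeration-maximal-tube-polyhedra}, and then verify that the resulting generating function agrees with the known bivariate $f$-polynomial of the cyclohedron. Since Proposition~\ref{prop:doublepath} already establishes that the double path hypercube graph associahedron shares its $f$-vector with the cyclohedron, and Subsection~\ref{sub:doublepath} presumably records $f^{C}(x,y)$ (the cyclohedron generating function) explicitly, the goal reduces to showing the near-double path family satisfies the same functional equation or yields the same closed form. I would first set up notation: let $NDP_n$ be the near-double path graph on $\pm[n]$ with the path $(1,2,\ldots,n,-1)$ and the path $(-2,-3,\ldots,-n)$, let $f^{NDP}(x,y)$ be the bivariate $f$-polynomial of its family of (pointed) hypercube graph associahedra, and recall $f^A(x,y)$ for associahedra and $f^C(x,y)$ for cyclohedra.

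The central combinatorial step is to choose a good kingmaker set $X_n$ and classify the tubes meeting it. A natural choice is $X_n = \{-1, n\}$ or perhaps a single vertex like $\{-1\}$; whichever makes $\Delta_n \backslash X_n$ a clean lower-dimensional shape (ideally with $\rho \le 1$, as required by Proposition~\ref{prop:enumeration-maximal-tube-polyhedra}). For each tube $t$ intersecting $X_n$, I would compute (i) the induced simplex graph $G|_t$, which for a path-like hypercube graph is always a path simplex graph, so $f^{\mathcal{N}(G|_t)}$ contributes the associahedron polynomial $f^A$; (ii) the neighborless complement $(G_n/t)\backslash xn(t)$, using the hypercube-graph reconnected complement rule (Proposition in Section~3.2: remove $t$ and $-t$, add edges between former neighbors where no dashed edge exists) followed by deleting the exclusive neighborhood. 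The key claim to nail down is that these neighborless complements are again near-double path graphs, or double path graphs, or paths, of predictable smaller dimension, and that the count $r_{\sigma,\tau}(n,i)$ of tubes of each shape is separable into polynomial/$\delta$-function factors in $i$ and $n-1-\gamma-i$. The ``near'' asymmetry (the extra vertex $-1$ attached at the end of the positive path, plus the shorter negative path) means the tubes near the junction vertices $n$, $-1$, $1$ will split into a handful of distinct cases; I expect roughly three or four shape-pairs $(\sigma,\tau)$, mirroring the structure seen in the twisted path analysis of Subsection~\ref{sub:twistedpath}.

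Having assembled the pieces, I would plug into Proposition~\ref{prop:enumeration-maximal-tube-polyhedra} to obtain an explicit equation for $f^{NDP}(x,y)$ in terms of $f^A$, $f^C$, and possibly $f^{DP}$ (double path), together with the linear operators $O_y^{\cdots}$ — most of which reduce to $\delta$-evaluation at $0$ or to $yD_y$, by the Remark following Proposition~\ref{prop:diff-eq}. Then I would substitute the known closed forms ($f^A$ is the Catalan/associahedron generating function, $f^C$ the cyclohedron one from Subsection~\ref{sub:doublepath}) and simplify, checking that $f^{NDP}(x,y) = f^{C}(x,y)$. As a sanity cross-check I would verify low-dimensional cases ($n=1,2,3$) directly by hand against the cyclohedron $f$-vectors $(1)$, $(1,4,4)$-type data, etc., and note — as the proposition's context already hints — that combinatorial non-isomorphism (visible in dimension $\ge 3$, as with the double path case) does not obstruct $f$-vector equality.

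The main obstacle I anticipate is the bookkeeping around the junction of the two paths: correctly determining the neighborless complements of the small tubes straddling vertices $n, -1, 1$ (and the singleton tubes adjacent to them), making sure the $\gamma_{\sigma,\tau}$ dimension-shift constants are right, and confirming the separability of $r_{\sigma,\tau}(n,i)$. A secondary subtlety is that the shaved simplicial complex $\Delta_n \backslash X_n$ may correspond to an unpointed (half-open) polyhedron, so I must apply the $\rho = 1$ bookkeeping of Subsection~\ref{sub:halfopen}, tracking $\epsilon_L = 1$ and reinterpreting the complement as a $\p$-graph on a lower-dimensional pointed polyhedron (a product of a line with something). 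Once those shapes are pinned down correctly, the remaining work is routine generating-function algebra, and the match with the cyclohedron should drop out — indeed, given Proposition~\ref{prop:doublepath}, I would expect the functional equation for $f^{NDP}$ to coincide formally with the one derived for $f^{DP}$ in Subsection~\ref{sub:doublepath}, which would furnish the cleanest proof.
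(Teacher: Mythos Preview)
Your approach is correct and matches the paper's: maximal-tube (kingmaker) enumeration via Proposition~\ref{prop:enumeration-maximal-tube-polyhedra}, with the aim of recovering the same functional equation as for the double path graph. The paper's execution is cleaner than what you sketch because it chooses $X_n=\{1,-1\}$ rather than $\{-1,n\}$ or $\{-1\}$. With that choice, $NDP_n\backslash X_n$ is exactly $D_{n-1}$ (so $\rho=1$), there is a single tube shape (paths) with $r(n,i)=2$, and every neighborless complement is the missing-vertex double path graph $M_{n-1-i}$; the resulting equation $f^{NDP}=1+xy\,f^{D}+2y\,f^{A}f^{M}$ is literally identical to the equation for $f^{D}$ from Subsection~\ref{sub:doublepath}, so $f^{NDP}=f^{D}=f^{B}$ immediately. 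Your anticipated ``three or four shape-pairs'' and junction bookkeeping are artifacts of a less symmetric kingmaker choice and can be avoided entirely.
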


		This is proven using maximal tube sub-complex enumeration in Subsection \ref{sub:neardoublepath}. As we note there, the enumeration here is the same as the enumeration in the double path graph, giving us the same $f$-vector. 

	We note that for $n=1,2,3$, the $n$-dimensional near double path hypercube graph associahedron is combinatorially isomorphic to the cyclohedron of that dimension, but that pattern ends at $n=4$, when the two are not combinatorially isomorphic.

	\subsection{Is the cyclohedron a hypercube-graph associahedron?}

\begin{figure}
\begin{center}
\includegraphics[width=.75\textwidth]{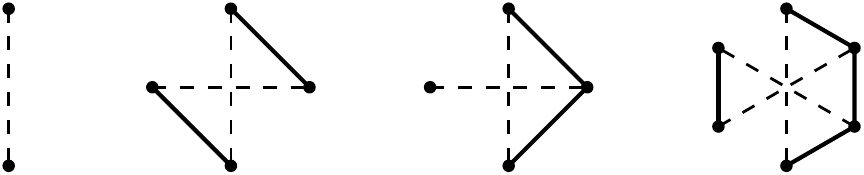}
\end{center}\caption{Known hypercube graphs whose hypercube graph associahedra are cyclohedra.}\label{hypercubecyclohedron}
\end{figure}
	
		Because hypercube graph associahedra are deformations of type $B_n$ permutahedra, it is natural to ask whether the cyclohedron can be expressed as a hypercube graph associahedron. Figure \ref{hypercubecyclohedron} shows the only hypercube graphs of dimension $n \le 3$ whose hypercube graph associahedra are cyclohedra up to isomorphism, proven by machine search; code for the $3$-dimensional case is provided in Appendix \ref{appendix:code}. We have however found several families of hypercube-graph associahedra whose $f$-vectors are equal to those of cyclohedra, despite not being combinatorially isomorphic to them: the double path graph associahedron, and the near-double-path graph associahedron, both of which appear as hypercube graphs in \ref{hypercubecyclohedron} for low dimensions.

\danger{cite appendix?}

		We can think of several features of cyclohedra which suggest they may not be hypercube-graph associahedra. Consider an $n$-dimensional cyclohedron realized as a simplex graph associahedron of a cycle on $n+1$ vertices. For each $0 \le i \le n-1$, there are $(n+1)$ tubes containing $i+1$ vertices. As a result, the cyclohedron has $n+1$ facets isomorphic to $P(A_i) \times P(B_{n-1-i})$, the product of an associahedron times a cyclohedron, and these are all the facets of $P(B_n)$. We know that $P(A_1)$ and $P(B_1)$ are isomorphic to a $1$-simplex.

		If $G$ is a hypercube graph on $\pm[n]$ whose associahedron is isomorphic to a cyclohedron, then we know that each edge induces a graph $A_1$, and each tube with $n-1$ vertices has a reconnected complement isomorphic to a $1$-dimensional hypercube graph as, so we find that the number of edges of $G$, plus the number of tubes of $G$ containing $n-1$ vertices, is equal to $2n+2$. We also know that any connected subgraph which is not an edge or a cycle will induce a simplex-graph associahedron that is not isomorphic to either an associahedron or cyclohedron, and so every connected component of $G$ must be a cycle or a path.

		We have attempted to perform a systematic search for hypercube graph associahedra of dimension $n \ge 4$ isomorphic to cyclohedra, but while this code is efficient enough to run for the case $n=3$, we have not been able to optimize the code to run in time to search all possible graphs.

		\begin{conjecture}\label{conj:no-cyclohedra}
			For all $n \ge 4$, the cyclohedron cannot be expressed as an $n$-dimensional hypercube graph associahedron.
		\end{conjecture}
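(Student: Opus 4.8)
The plan is to reduce the statement to a single finite base case by a dimension-lowering argument on the realizing hypercube graph. Throughout, write $P(B_n)$ for the $n$-dimensional cyclohedron, realized as the classical (simplex-) graph associahedron of the cycle $C_{n+1}$. Applying the reconnected-complement description of facets (Theorem \ref{thm:reconnectednestohedra1} together with Proposition \ref{prop:pseudolink-graph}) to $C_{n+1}$ shows that the facets of $P(B_n)$ are exactly the polytopes $P(A_i)\times P(B_{n-1-i})$ for $0\le i\le n-1$, with $n+1$ facets of each type; in particular $P(B_n)$ has exactly $n+1$ facets combinatorially isomorphic to the cyclohedron $P(B_{n-1})$, and the facet-count bookkeeping recorded before the conjecture (the number of edges of $G$ plus the number of $(n-1)$-vertex tubes equals $2n+2$) holds for any realizing $G$. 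One further input is needed which is not in the excerpt and which I would state and prove (or cite) as a preliminary lemma: the graph associahedron of a connected graph is indecomposable as a Cartesian product of polytopes.

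The core of the argument is the reduction step: \emph{if $P(B_n)$ is realizable as a hypercube-graph associahedron for some $n\ge 3$, then so is $P(B_{n-1})$.} Suppose $G$ is a hypercube graph on $\pm[n]$ with $P(G)\cong P(B_n)$. Each of the $n+1$ cyclohedral facets of $P(G)$ equals some $\Phi_t\cong P(G/t)\times P(G|_t)$; since $P(B_{n-1})$ is indecomposable and both factors are graph associahedra of connected graphs (hence indecomposable), one factor must be a point, and comparing dimensions ($\dim P(G|_t)=|t|-1$, $\dim P(G/t)=n-|t|$) forces $|t|=1$ or $|t|=n$. When $|t|=1$ we get $\Phi_{\{v\}}\cong P(G/\{v\})\cong P(B_{n-1})$, and by the hypercube reconnected-complement proposition $G/\{v\}$ is a genuine hypercube graph on $\pm[n-1]$, so $P(B_{n-1})$ is realizable and we are done. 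When $|t|=n$, the set $t$ is an induced connected subgraph on $n$ of the $2n$ vertices, and I claim at most two cyclohedral facets can be of this type: if three were, then — because any three vertices of $C_{n+1}$ (for $n+1\ge 4$) contain a non-adjacent pair — two of the corresponding facets of $P(B_n)$ meet in a codimension-two face, so two of the size-$n$ tubes $t_i,t_j$ are strongly compatible; being distinct and equinumerous they must be disjoint with no solid edge between them, hence $t_j=\pm[n]\setminus t_i$ and $t_i,t_j$ are connected components of $G$; but then the only induced connected $n$-vertex subgraphs of $G$ are $t_i$ and $t_j$, contradicting that there were three. Thus at least $n+1-2=n-1\ge 1$ cyclohedral facets have $|t|=1$, which yields the realization of $P(B_{n-1})$.

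Granting the reduction, it remains to prove the base case: \emph{$P(B_4)$ is not the hypercube-graph associahedron of any hypercube graph on $\pm[4]$} — for if $P(B_n)$ were realizable for some $n\ge 5$, iterating the reduction would make $P(B_4)$ realizable. I would settle the base case in two stages. First, prune the candidates: every tube of size $\ge 3$ of a realizing $G$ must induce a path or a cycle, since otherwise $P(G|_t)$ is neither an associahedron nor a cyclohedron and $\Phi_t$ cannot be a facet of $P(B_4)$; a careful analysis of how the dashed matching interacts with vertices of solid-degree $\ge 3$ should upgrade this to: every connected component of $G$ is a path or a cycle. Combined with the $f$-vector data of $P(B_4)$ — exactly $70$ vertices, $20$ facets, and the facet-type distribution $5$ each of $P(B_3)$, $P(A_3)$, $P(A_1)\times P(B_2)$, $P(A_2)\times P(B_1)$ — together with the edge/$(n-1)$-tube count $2n+2=10$, this leaves only a very short explicit list of candidate hypercube graphs on $8$ vertices, up to the order-$384$ signed-permutation symmetry of $Q_4$. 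Second, eliminate each surviving candidate by comparing dual simplicial complexes; equivalently, carry out an exhaustive machine search over hypercube graphs on $\pm[4]$ with an $f$-vector pre-filter, which is well within reach once symmetry reduction is used.

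The principal obstacle is precisely this base case. The reduction step is clean and dimension-free, so no genuinely new idea is required above dimension four; but at dimension four the surviving constraints (union of paths and cycles, correct edge count, correct $f$-vector) still permit several combinatorial types, and ruling them all out seems to demand either a delicate hand analysis of the face lattice of $P(B_4)$ or a finite computer verification. A secondary technical point to pin down is the indecomposability lemma for graph associahedra of connected graphs, which the reduction invokes repeatedly; and, if the structural (non-computer) route to the base case is taken, one also needs the companion low-dimensional rigidity statements guaranteeing that the simplex-graph associahedra appearing as facet factors force their graphs to be paths or cycles.
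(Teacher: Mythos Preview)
The statement you are addressing is labeled a \emph{conjecture} in the paper; the paper offers no proof and explicitly states that the author's computer search was feasible only for $n=3$ and could not be extended to $n\ge 4$. So there is no paper proof to compare against.

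Your reduction step is a genuine and essentially sound contribution that the paper does not contain. If $P(G)\cong P(B_n)$ with $n\ge 4$, then at least one of the $n+1$ cyclohedral facets of $P(G)$ must be $\Phi_{\{v\}}$ for a singleton tube, and the hypercube reconnected complement $G/\{v\}$ is again a hypercube graph realizing $P(B_{n-1})$. One simplification: in your bound ``at most two size-$n$ tubes give cyclohedral facets'', the contradiction is immediate once you observe that two distinct size-$n$ tubes $t_i,t_j$ must be disjoint, hence $t_i\cup t_j=\pm[n]$, which is never a face of $\Delta(Q_n)$ --- so $\{t_i,t_j\}$ can never be strongly compatible. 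You do not need the detour through connected components. You also correctly flag the indecomposability of $P(B_{n-1})$ as a lemma to be supplied.

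The genuine gap is the base case. You reduce the entire conjecture to the single assertion that $P(B_4)$ is not a hypercube-graph associahedron, and then propose to settle that by pruning plus exhaustive search. But this is precisely the case ($n=4$) at which the paper's author reports the computation became infeasible. Your pruning heuristics (components must be paths or cycles; the edge/$(n-1)$-tube count must equal $10$; matching the $f$-vector $(70,140,90,20,1)$) are plausible and would certainly shrink the search, and with the signed-permutation symmetry of order $384$ the residual list may indeed be short --- but you have not produced that list, nor eliminated its members, nor proved the structural claim that solid-degree-$\ge 3$ vertices are forbidden. Until one of those routes is actually executed, your proposal is a reduction of the conjecture to its $n=4$ instance, not a proof.
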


	\section{Face enumeration Proofs}\label{sec:proofs}

	See the notes at the beginning of this chapter for notes on the structure of this section. Most relevantly, we recall that the notation $f^G$ refers to the bivariate $f$-polynomial of the family of $\p$-graph associahedra of the family of graphs $G=\{G_0,G_1,\ldots\}$, and that some indices have been removed from functions used in Propositions \ref{prop:p-graph-theorem} and \ref{prop:enumeration-maximal-tube-polyhedra}. In this section we use the methods of atomic link sum enumeration and maximal tube enumeration defined in these propositions.

	\subsection{Established $f$-polynomials}

	We recall that for a family of pointed simple polyhedra $P$ with family of dual simplicial complexes $\Delta(P)$, the change of bases $s=1/x, t=xy$, or $x=1/s, y=st$, allows us to write $f^{\Delta(P)}(s,t)=f^P(x,y)$. For most cases we are calculating the bivariate generating functions for polyhedra and not their dual simplicial complexes.

	From \cite[Sequence A033282]{oeis}, the bivariate generating function of the family of associahedra of type $A_n$ is given as follows:

\[
	f^A(x,y) = -\frac{\sqrt{x^2 y^2-2 (x+2) y+1}+(x+2) y-1}{2 (x+1) y^2}.
\]

	From \cite[Sequence A063007]{oeis}, the bivariate generating function of the family of cyclohedra, or associahedra of type $B_n$, is given as follows:

\[
	f^B(x,y) = \frac{1}{\sqrt{x^2 y^2-2 x y-4 y+1}}
\] 

	Define the \emph{point family} $0$ as a family of polyhedra which only contains one polyhedron, a point. There are two ways to conceptualize a point. The first is to define that a point has no facets, and so its dual simplicial complex is an empty set, and any $\p$-graph of a single point is an empty graph. The second is to note that every $n$-dimensional simplex has $n+1$ facets which together form a forbidden subset, and so the forbidden subset diagram of a $0$-simplex consists of a single empty facet. In either case, we can define that the dual simplicial complex consists of only the set $\emptyset$, and we define $f^0(x,y)=1$. We use this as an abstraction; for instance, when we count the number of tubings of a graph with no edges, we may define $\sigma$ to be the shape of tube including only singleton tubes, and we define $\Ag=0$ and $r(n,i)=\delta(i)$.

	We note that the $f$-polynomial of a ray is $(1+s)$, and the $f$-polynomial of a $1$-simplex is $(2+s)$. The bivariate $f$-polynomial of the family of hypercubes is $1+(2+s)t+(2+s)^2t^2+\cdots = \frac{1}{1-(2+s)t}$.

	\subsection{ Halohedron } \label{sub:halohedron}

	We will use maximal tube enumeration to calculate the bivariate generating function for the family of halohedra, proving Theorem \ref{thm:halohedron-formula}. We define the kingmaker set $X_n=\{1,-1\}$ for all $n \ge 1$. We now apply Proposition \ref{prop:enumeration-maximal-tube-polyhedra} to calculate the bivariate $f$-polynomial $f^H$ for the family of halohedra. We find $intset_{X_n}$ contains three distinct types of tubes: path-shaped tubes containing the vertex $1$, the cycle tube equal to $[n]$, and the negative singleton tube $\{-1\}$.

	We consider the set of negative singleton tubes. We typically count the number of tubes in a shape containing $i+1$ vertices, but here we are restricted to the case where $i=0$ and one tube contains $1$ vertex. The singleton tube has a neighborless complement isomorphic to the $(n-1)$-dimensional single path hypercube graph. The induced simplex-graph is a single vertex when $i=0$, and does not exist otherwise. As a result, we find that for the tube containing $i+1$ vertices of this type, we count $r(n,i)=\delta(i)$ tubes which decompose into graphs of the type $\Ag_i \times \Bg_{n-1-i}$, where $\Ag$ is the 0-dimensional simplex graph induced by a tube containing one vertex, and $\Bg$ is the single path hypercube graph. We find $f^\Ag=1$ and $f^\Bg=f^A$. We find $r(n,i)=r^\Ag(i)r^\Bg(n-1-i-\gamma)=(\delta(i))(1)$. This creates operators $O_y^\Ag$ representing the evaluation operator at $0$, and $O_y^\Bg$ representing the identity operator. As a result, we find this tube contributes $f^A$ to a later sum.

	Consider the set of cyclic tubes in $intset_{X_n}$. We immediately see that there exists only $1$ cyclic tube for each $n$, a cycle containing $n=(n-1)+1$ vertices, and so the cycle tube shape is counted by $r(n,i)=\delta(n-1-i)$. For a tube containing $i+1$ vertices, the tube induces a graph $\Ag_i$ which is a simplex cycle graph. The neighborless complement $\Bg_{n-1-i}$ is the empty graph. We find $r(n,i)=(1)(\delta(n-1-i))$, and so we define operators $O_y^\Ag=1$ and $O_y^\Bg$ to be the evaluation operator at $0$. Since $f^\Ag=f^C, f^\Bg=1$, we find this tube contribues $f^C$ to our sum.

	Now consider a path shaped tube containing $i+1$ vertices. When $i < n-1$, there are $i+1$ such tubes, but there are $0$ such paths when $i+1=n$, so we write $r(n,i) = (i+1)(1 - \delta(n-1-i))$. The induced graph $\Ag_i$ is a path graph, so $f^\Ag=f^A$. The neighborless complement is interesting, and is illustrated in Figure \ref{fig:halohedron-complements}. The neighborless complement of a path on $(n-1)$ vertices is a single vertex. The neighborless complement of a path on $(n-2)$ vertices is a pair of vertices. The neighborless complement of a path on $j < (n-2)$ vertices is a pair of two vertices, and a single-path graph on $(n-2-j)$ vertices. We can define this graph $\Bg$. Label these $\p$-graphs with the family $\Bg$, defining $\Bg_0$ as the empty graph, $\mathbb{B}_1$ as the graph on one vertex, $\mathbb{B}_2$ as the graph on two vertices, and so on. We note that the underlying forbidden subset diagrams of these graphs are polyhedral, defining $P_0$ as a vertex, $P_1$ as a ray, $P_2$ as a product of rays, and $P_n$ for $n \ge 2$ as the product of two rays and an $(n-2)$-dimensional hypercube. Taking this into consideration, we find the $P$-graph associahedron of $\Bg_i$ to be a vertex for $n=0$, a ray if $n=1$, and the product of two rays and an $(n-2)$-dimensional associahedron if $n \ge 2$. We then can compute $f^\Bg = 1 +(1+x)y + (1+x)^2 y^2 f^B$. As a result, we find the path tube shape in $intset_{X_n}$ containing $i+1$ vertices splits the graph into a path graph $\Ag_i$ on $i+1$ vertices and a neighborless complement $\Bg_{n-1-i}$. We split $r(n,i)=(i+1)(1-\delta(n-1-i))$ into separable parts, obtaining $r^\Ag(i)=i+1$ and $r^\Bg=1-\delta(n-1-i)$. We calculate linear operators $O_y^\Ag[f(x,y)]=f(x,y)+yD_y f(x,y)$ and $O_y^\Bg[f(x,y)]=f(x,y)-f(x,0)$, and find $O_y^\Ag[f^\Ag(x,y)]=f^A+y D_y f^A$, while $O_y^\Bg[f^\Bg(x,y)]=f^\Bg(x,y)-f^\Bg(x,0)$. We note that $f^\Bg(x,0)=1$, so $O_y^\Bg[f^\Bg(x,y)]=f^\Bg-1 = (1+x)y+(1+x)^2y^2 f^B$, giving $O_y^\Ag[f^\Ag] O_y^\Bg[f^\Bg] = \left( f^A + yD_y f^A\right) \left(f^\Bg-1\right) = \left( f^A + yD_y f^A\right)\left((1+x)y+(1+x)^2y^2 f^B\right)$.

\begin{figure}
\centering
\includegraphics[width=.5\textwidth]{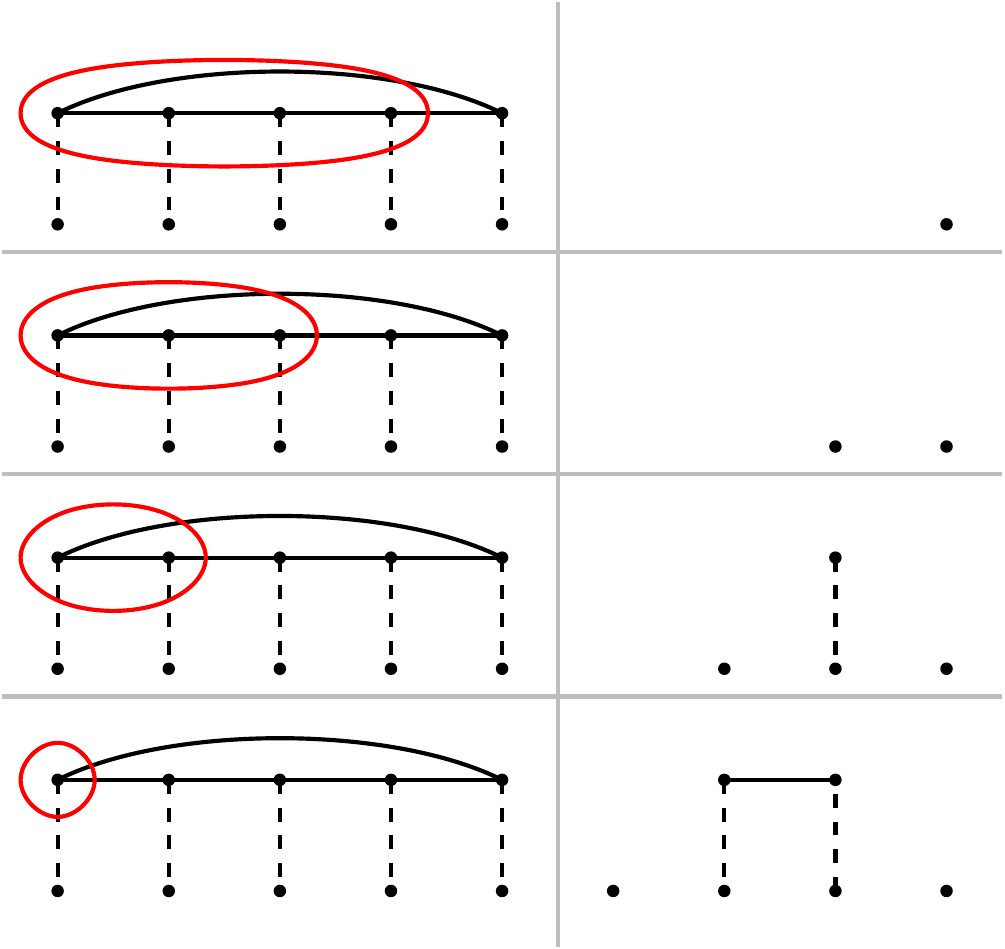}
\caption{The $n=5$ case, and neighborless complements of path tubes of each possible size.} \label{fig:halohedron-complements}
\end{figure}


	Finally, the hypercube graph minus $X_n$ is equal to an $n$-dimensional single path hypercube graph, which has been proven to be isomorphic to an associahedron. As a result, we write $\rho=1$ and get generating function $f^A$. We then plug all this information into Proposition \ref{prop:enumeration-maximal-tube-polyhedra} and obtain:

	\[
		f^H(x,y) = 1+f^A(xy)^1 + y \left[
(f^A) + (f^B) + \left( f^B + y D_y f^B\right) \left( (1+x)y+(1+x)^2y^2 f^B \right)
\right]
	\]


This expression simplifies to the expression in Theorem \ref{thm:halohedron-formula}.

	\subsection{Note on double path graph variants}

	We wish to use the maximal tube enumeration method to calculate the bivariate $f$-polynomial of the double path graph, but in order to do so we must define $\p$-graphs which will correspond to neighborless complements of tubes in the double path graph. Several of these graphs are used in multiple calculations. If $D_n$ is the double path graph on $\pm[n]$, define the \emph{missing vertex double path graph} $M_n$ as the graph obtained by removing the vertex $-1$ from $D_n$. Define the \emph{trans double path graph} as the graph obtained by removing $-1$ and $n$ from $D_n$, and define the \emph{cis double path graph} as the graph obtained by removing $-1$ and $-n$ from $D_n$. Example graphs are shown for the $n=5$ case in Figure \ref{fig:double-path-variants} These graphs are $\p$-graphs, with their respective polyhedra isomorphic to products of hypercubes and rays.


	\begin{figure}[h]
	\centering
	\includegraphics[width=.8\textwidth]{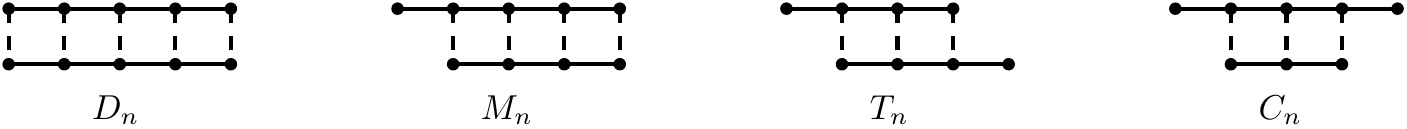}
	\caption{$\p$-graphs for $5$-dimensional double path, missing vertex double path, trans double path, and cis double path graphs.}\label{fig:double-path-variants}
\end{figure}

	\subsection{Missing vertex double path graph}

	We define the \emph{missing vertex double path graph } $M_n$ as the double path hypercube graph $D_n$ on $\pm[n]$, except with the vertex $-1$ removed. This is the graph on vertex set $\{1,2,\ldots, n, -2, \ldots, -n\}$, with paths $(1,2,\ldots,n)$ and $(-2,\ldots,-n)$, and forbidden subsets $\{i,-i\}$ for $2 \le i \le n$. We note that the forbidden subset diagram is dual to the product of a ray and $n-1$ $1$-simplices.

	We calculate the bivariate generating function for the $f$-polynomials of this family of graphs as follows. We define $X_n=\{1\}$ for each $n \ge 1$. We find that the graph $M_n \backslash X_n$ is isomorphic to the hypercube graph $D_{n-1}$.

	Now we consider each tube in $intset_{X_n}$. Consider a tube $t_i \in intset_{X_n}$ containing $i+1$ vertices

 For each $n$ and $0 \le i \le n-1$, there is exactly one path tube in $intset_{X_n}$ containing $i+1$ vertices, which will have graph $G_i$ isomorphic to the path graph $A_i$. We define $r(n,i)=1$, and find the neighborless complement of a path tube containing $i+1$ vertices is a graph on vertices $\{i+3, \ldots, n, -(i+2),\ldots,-(n)\}$. This neighborless complement is isomorphic to the graph $M_{n-1-i}$. We then note that having $r(n,i)=1$ copies of $G_i \times H_{n-1-i}$.

	From Proposition \ref{prop:enumeration-maximal-tube-polyhedra}, we find
	\[
		f^M = 1 +xy f^D + y f^A f^M.
	\]

	We note that this result is used in Subsection \ref{sub:doublepath} to calculate $f^D$, and in the process we calculate
		\[
			f^M = \frac{1}{(1-xy)(1+yf^A)-2yf^A}.
		\]
		We note that the dual function $F^{\Delta(M)}(s,t)$ is equal to the generating function of the triangle listed as Sequence A123160 in OEIS \cite[Sequence A123160]{oeis}.

	\subsection{Double path graph associahedron}\label{sub:doublepath}

	Recall that $D_n$ is the double path hypercube graph on vertices $\pm[n]$. We can apply maximal tube subcomplex enumeration to calculate the bivariate generating function $f^D$. First, we define $X_n = \{1,-1\}$ for $n \ge 1$. We note that $D_n \backslash X_n$ is the graph $D_{n-1}$.

	The only tubes in $intset_{X_n}$ are path shaped, so consider a path-shaped tube $t_i$ containing $i+1$ vertices. There are $r(n,i)=2$ such tubes in $intset_{X_n}$ for each $n$. We find each tube $t_i$ induces a path graph $A_i$. The neighborless complement of $t_i$ is the graph containing vertices $\{i+3, \ldots, n, -(i+2),\ldots,-(n)\}$, and is isomorphic to $M_{n-1-i}$. As a result, the tube $t_i$ splits the graph into $A_i$ and $M_{n-1-i}$. The result of operators $O_y^A[f^A] O_y^M[f^M]$ is $2 f^A f^M$.

	As a result, we apply Proposition \ref{prop:enumeration-maximal-tube-polyhedra}, and find

	\[
		f^D = 1 + xy f^D + y \left(2 f^A f^M\right).
	\]
	We recall that $f^M = 1+xy f^D + y f^A f^M$. We can solve these two algebraic equations to find two new equations:
	\[
	f^M(x,y) = \frac{f^D}{1+y f^A} = \frac{1}{(1-xy)(1+yf^A)-2yf^A}
	\]
	\[
		f^D(x,y) = \frac{1+y f^A}{(1-xy)(1+yf^A)-2yf^A}.
	\]

	Plugging in the known equation $f^A(x,y)$ and simplifying heavily gives us the equation $f^D(x,y)=f^B(x,y)$, proving Proposition \ref{prop:doublepath}.

	\subsection{Near double path}\label{sub:neardoublepath}

	Recall the near double path hypercube graph $NDP_n$ is the hypercube graph on $\pm[n]$ with paths $(1,2,\ldots,n,-1)$ and $(-2,-3,\ldots,-n)$. We will use the maximal tube complex enumeration method to calculate the bivariate generating function of the near double path hypercube graph associahedron.

	Define $X_n=\{-1,1\}$ for all $n \ge 1$. We find that $NDP_n \backslash X_n$ is isomorphic to the double path graph $D_{n-1}$. Consider $intset_{X_n}$. There are $r(n,i)=2$ path tubes in $intset_{X_n}$ containing $i+1$ vertices. For any such tube $t_i$, the tube $t_i$ induces a graph isomorphic to $A_i$, and has a neighborless complement isomorphic to the graph $M_{n-1-i}$.

	From Proposition \ref{prop:enumeration-maximal-tube-polyhedra}, we find

	\[
	f^{NDP}(x,y) = 1 + xy f^D + y \left(2 f^A f^M\right).
	\]

	If we recall the equation $f^D = 1 + xy f^D + 2y f^A f^M$, then we note that $f^{NDP}(x,y)=f^D(x,y)$, which is equal to $f^B(x,y)$, therefore proving Proposition \ref{prop:neardoublepath}.

	\subsection{Cis and trans double path graphs}\label{sub:cistransgraphs}

	Define the \emph{trans double path graph} $T_n$ as the graph obtained by removing $-1$ and $n$ from the double path hypercube graph $D_n$, and define the \emph{cis double path graph} $C_n$ as the graph obtained by removing $-1$ and $-n$ from $D_n$.

	We will perform maximal tube complex enumeration for the cis double path graph $C_n$. First, define $X_n=\{1\}$ for $n \ge 1$. We note $C_n \backslash X_n$ is isomorphic to $M_{n-1}$. The set $intset_{X_n}$ is the set of tubes containing the vertex $1$. There is exactly one tube $t_i$ for each $0 \le i \le n-1$ containing the vertex $1$ and containing $i+1$ vertices, and this is a path tube inducing the graph $A_i$. The neighborless complement of this tubes is a graph containing vertices $\{i+3, \cdots, n, i+2, \ldots, n-1\}$. This graph is isomorphic to $T_{n-1-i}$.

	As a result, from Proposition \ref{prop:enumeration-maximal-tube-polyhedra}, we find

	\[
		f^C = 1 + xy f^M + y \left(f^A f^T\right)
	\]

	Now consider the maximal tube complex enumeration for $T_n$. Define $X_n = \{1\}$. We note that $T_n \backslash X_n$ is isomorphic to $M_{n-1}$.

	Consider the set $intset_{X_n}$. This set contains the tubes $t_i=\{1,\ldots,i+1\}$ for $1 \le i \le n-2$. This means that we have to define $r(n,i) = (1-\delta(n-1-i))$. The tube $t_i$ induces a path graph $A_i$. The neighborless complement is a graph on vertices $\{i+3, \ldots, n-1,i+2,\ldots,n\}$. This is a complex isomorphic to $C_{n-1-i}$. We find that $r(n,i)$ is separable into $r^A(i)=1$ and $r^C(n-1-i)=1-\delta(n-1-i)$. This corresponds to linear operators $O_y^A[f^A]=f^A$ and $O_y^C[f^C] = f^C(x,y)-f^C(x,0)$. The function $f^C(x,0)=1$, giving us the resulting equation:

	\[
		f^T = 1 + xy f^M + y \left(f^A(f^C-1)\right).
	\]

	Solving these two equations together gives the formulas

		\[
	f^T = \frac{1+xy f^M}{1-y f^A} - \frac{y f^A}{1-(yf^A)^2}
\]
\[
	f^C = \frac{1+xy f^M}{1-y f^A} - \frac{(y f^A)^2}{1-(y f^A)^2}.
\]

	\subsection{Twisted Path}\label{sub:twistedpath}

	Recall that the twisted path graph $TP_n$ is the hypercube graph on $\pm[n]$ with path $(1,\ldots,n,-1,\ldots,-n)$. We will perform maximal tube complex enumeration on this hypercube graph.

	We define $X_n=\{n,-1\}$. The graph $TP_n\backslash X_n$ is the trans double path hypercube graph $T_n$, meaning we set $\rho=0$ in our formula.

	The set $intset_{X_n}$ consists of path tubes. We note that for each $0 \le i \le n-1$, there are $r(n,i)=i+2$ path tubes in $intset_{X_n}$ containing $i+1$ vertices. Consider one such tube $t_i$. We find $t_i$ induces a path graph isomorphic to $A_i$. The neighborless complement of $t_i$ is isomorphic to a trans path graph $T_{n-1-i}$. We find that the product $O_y^A[f^A] O_y^T[f^T]$ is equal to $(2f^A+y D_y f^A) (f^T)$. Together, this calculation gives

	\[
		f^{TP} = f^T +y \left(2f^A+2yD_y f^A\right) (f^T)
	\]

	Expanding $f^T$  and then $f^M$ gives the following result:

			\[
			f^{TP}(x,y) =
		\left(	1 + \frac{xy}{(1-xy)(1+yf^A)-2yf^A} + yf^A(f^C-1) \right)
 \left(1+y\left[(2+D_y)f^A\right]\right)
		\]
	which proves Proposition \ref{prop:twistedpath}.

	\subsection{Twisted Cycle}\label{sub:twistedcycle}
	
	Recall that the twisted cycle hypercube graph is the hypercube graph on $\pm[n]$ with cycle $(1,\ldots,n,-1,\ldots,-n)$. Call this graph $TC_n$.

	We use facet sum polynomial enumeration to calculate the bivariate $f$-polynomial for the face counts of twisted cycle hypercube graph associahedra.

	We note that $TC_n$ only has path-shaped tubes. For each $0 \le i \le n-1$, there are $r(n,i)=2n$ path tubes containing $i+1$ vertices. Each such tube $t_i$ induces a path graph isomorphic to $A_i$, and a reconnected complement isomorphic to $TP_{n-1-i}$. We note that $r(n,i)$ can be written as a sum of separable functions for terms $(n-1-i,i)$ if we rewrite it as $r(n,i)=(2)(n-1-i)+(2i+2)(1)$. We now apply Proposition \ref{prop:enumeration-maximal-tube-polyhedra} to define a differential equation
	\[
		(yD_y-xD_x) f^{TC}(x,y) = y \left(2f^A(yD_y f^{TP}) + (2f^A + 2 y D_y f^A)(f^{TP})
				\right).
	\]
	We then consider a hypothesis:

	\[
		f^{TC}(x,y) = \frac{(1-xy)-2y}{(1-xy)^2-4y}.
	\]

	This function satisfies our differential equation. It also satisfies initial conditions; we note that $f^{\Delta(TC)}(s,t)=\frac{(1-t)-2st}{(1-t)^2-4st}$, and so $f^{TC}(0,t)=\frac{1}{1-t}$.

, proving Proposition \ref{prop:twisetdcycle}. This function can be found in an altered form at \cite[Sequence A127674]{oeis}, which is the list of even rows of nonzero coefficients of Chebyshev polynomials, with absolute value taken.

	\subsection{Double Cycle}\label{sub:doublecycle}

	We can use maximal tube complex enumeration on the double cycle graph $DC_n$, defining $X_n=\{-1,1\}$ for this graph. The graph $DC_n \backslash X_n$ is isomorphic to the double path graph $D_{n-1}$, whose generating function is equal to that of $B_{n-1}$.

	There are two tube shapes in $intset_{X_n}$. For the path tube shape, there are $2(i+1)$ tubes in $intset_{X_n}$ containing $i+1$ vertices for $0 \le i \le n-2$. This gives a count of $r_{path}(n,i)=(2i+2)(1-\delta(n-1-i))$ path tubes with $i+1$ vertices. Each such tube induces a simplex-graph isomorphic to a path graph $A_i$, and the neighborless complement of each such tube is a cis double path graph $C_{n-1-i}$. As a result, we find $O_y^A[f^A] O_y^C[f^C]=(2f^A+2yD_y f^A) (f^C-1)$.

	Now there are two cycle tubes in $DC_n$, each containing $n$ vertices, so $r_{cycle}(n,i)=(2)(\delta(n-1-i))$. This induces a cycle graph $B_{n-1-i}$ and an empty neighborless complement $0$. As a result we find $O_y^B[f^B] O_y^0[f^0]=2f^B$.

	We then calculate
	\[
		f^{DC} =1 + xy f^{B} + y \left[(2f^A+2yD_y f^A) (f^C-1)		+		2 f^B
\right]
	\]
	which proves Proposition \ref{prop:doublecycle}.

\chapter{Future Work}
\label{chap:res3}

The work outlined in this thesis presents several possible avenues for exploration in the future. There are several conjectures which we wish to be able to confirm or disprove in the future. Conjecture \ref{conj:no-cyclohedra} suggests that the cyclohedron is not isomorphic to any hypercube graph associahedron for dimensions $n \ge 4$. Conjectures \ref{conj:pell-1} and \ref{conj:pell-2} are contingent upon the existence of a bijection between the poset of maximal tubings of the Pell hypercube-graph and the lattice of sashes. In addition, we have only explored in depth $\p$-nestohedra for hypercubes, but $\p$-nestohedra exist for any simple polyhedron. The following subsections suggest several other directions for further research.

\section{Type $B_n$ signed posets}

The type $A_n$ Coxeter arrangement is defined by hyperplanes of the form $x_i = x_j$ for any $i, j \in [n+1]$. The type $B_n$ Coxeter arrangement is defined by hyperplanes of the form $x_i=x_j, x_i=-x_j$, or $x_i=0$ for any $i, j \in [n+1]$. Section \ref{sec:classical-braid-cones} describes the correlation between braid cones of the type $A_n$ fan, and preposets on the set $[n+1]$. Cones coarsening the type $B_n$ Coxeter fan can be defined by collections of facet-defining inequalities, and each such inequality is in bijection with a root of the type $B_n$ root sytem. A generalization of posets called \emph{signed posets} is defined in \cite{signedposets}, where it is known that these signed posets are in bjiection with full-dimensional type $B_n$ braid cones.

Facial preposets were defined with generality in mind, and not just to work with hypercubes. in the hypercube case no assumptions are made that the vectors $v_i, v_{-i}$ are inverses of each other, let alone parallel. There is no guarantee that the bounding walls of maximal cones align into hyperplanes. However, when using a standard set of vectors, we find that every cone in the normal fan of a standard cut hypercube graph associahedron is a type $B_n$ braid cone. We note that $x_i < x_j$ implies that $-x_j < -x_i$. This symmetry is reflected in signed preposets, where we can say that $i < j$ implies $-j < -i$, but we note that it is absent in the case of facial preposets of hypercubes, where $i \prec j$ does not imply $-j \prec -i$. A treatment of facial preposets of hypercubes as signed posets may be useful for future research.

Polyhedra whose normal fans coarsen a type $W$ Coxeter fan are known as \emph{deformations of Coxeter permutahedra}, and there exists research into these polyhedra \cite{coxeterpermutahedra}. We believe that knowledge of properties of deformations of type $B_n$ Coxeter permutahedra will be useful in understanding hypercube-graph associahedra, and finding applications for hypercube-graph associahedra.

	\section{Posets of Maximal Nested Sets} \label{maximalnestedsets}

	Given a polytope $P \in \R^n$, consider the normal fan of $P$. We can define a poset on the maximal cones of the normal fan, called regions, by defining a poset on the vertices of $P$. These posets of regions can be interesting generalizations of the Coxeter weak order. For example, if $P$ is the permutahedron of a Coxeter group $W$, then the poset of regions of the normal fan is isomorphic to the weak order on $W$.

	\begin{definition}
		The \emph{poset of regions} of a fan $F$ with respect to a vector $\lambda$ is the poset on regions of $F$ generated by the set of relations of the form $R_1 \preceq R_2$ whenever there exists points $v_1 \in R_1, v_2 \in R_2$ such that $v_2= \alpha \lambda+v_1$ for some nonnegative number $\lambda$.
	\end{definition}



	If we have two fans $F_1$, $F_2$ such that $F_2$ coarsens $F_1$, then there exists a map on regions of $F_1$ to regions of $F_2$ defined such that $R_1 \mapsto R_2$ if $R_1 \subseteq R_2$. Now consider that for a classical graph associahedron, every maximal cone of the normal fan is dual to a maximal tubing, and as a result we may consider this poset to be the \emph{poset on maximal tubings.} When the classical graph associahedron is realized as a generalized permutahedron, its normal fan coarsens the type $A_n$ Coxeter fan, and so there is defined a map from the weak order of type $A_n$ to the poset of maximal tubings of a graph. The work in \cite{barnard2018lattices} characterizes every graph whose classical graph associahedron defines a map from the weak order of $A_n$ to the poset of maximal tubings is a lattice congruence.

	We recall that every standard cut hypercube graph associahedron has a fan that coarsens the type $B_n$ Coxeter fan. As a result, every hypercube graph can be associated with a map from the type $B_n$ weak order to the poset of maximal tubings of that hypercube graph. In the future, we wish to study the poset of maximal nested sets. One major unanswered question is as follows: is there a similar condition to that found in \cite{barnard2018lattices} which characterizes the maps on the type $B_n$ weak order induced by hypercube graph associahedra which are lattice congruences?

	We also wish to prove Conjecture \ref{conj:pell-1} in terms of defining the poset on maximal tubings for the Pell graph, and seeing it how it relates to the lattice of sashes.

	\section{Improved enumeration techniques}

	The  methods used in this paper are very effective when the counting function $r(n,i)$ is linear, but work less well whenever a term such as $r(n,i)={n \choose i}$ appears. We anticipate that expanding our enumeration method to count functions with exponential generating functions will allow us to perform more computations.

\subsection{Wand Graphs}

	The wand graph is not a hypercube graph, but is instead a simplex-graph we encountered while searching for families of simplex-graphs which are atomically closed. For any $j, k \ge 0$, define the graph $W_{j,k}$ on $[j+k]$ with a clique on vertices $1,\ldots, j$, and edges $\{i,i+1\}$ for all $j \le i < j+k$. When $j=0$ this is a path graph on $k$ vertices, and when $k=0$ this is a complete graph on $j$ vertices. We note that every reconnected complement of a wand graph is itself a wand graph. As a result, we anticipate that expanding our method of atomic link sum enumeration to allow families of graphs indexed by more than one variable, and compensating for exponential generating functions, may allow us to adapt this method to this case and calculate the trivariate $f$-polynomial for this family of graphs.

	\begin{conjecture}
		If $w_{j,k}$ is the number of maximal tubings of the $W_{j,k}$ graph, the mixed ordinary/exponential generating function is
		\[
			\sum_{j,k \ge 0} w_{j,k} (x^j/j!) y^k =\frac{2}{1+\sqrt{1-4y}-2x}.
		\]
	\end{conjecture}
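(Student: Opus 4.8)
The plan is to prove this via the atomic link sum (facet sum) method of Section~\ref{sec:method-one}, suitably extended to a family indexed by \emph{two} parameters and to a generating function that is exponential in one variable and ordinary in the other. The essential structural input is the closure property the paper already records just above the conjecture: writing $W_{j,k}$ as the complete graph $K_j$ on $\{1,\dots,j\}$ with a tail $j-(j{+}1)-\dots-(j{+}k)$ attached at the clique vertex $j$, one checks that for every tube $t$ of $W_{j,k}$ both the induced simplex-graph $W_{j,k}|_t$ and the reconnected complement $W_{j,k}/t$ (Definition~\ref{def:reconnected-complement-graph}) are again wand graphs, with explicitly computable parameters. For the induced subgraph this is immediate, since a tube is a set of clique vertices together with (possibly) a contiguous initial segment of the tail. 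For the reconnected complement it follows from Proposition~\ref{prop:pseudolink-graph} together with the observation that deleting the tube and re-cliqueing its neighborhood either leaves the clique--tail shape intact, merges a broken tail back into one tail, or promotes a single tail vertex into the clique --- in every case a wand graph, possibly of smaller dimension.

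With this in hand I would run the facet-sum identity in the form used in Proposition~\ref{prop:p-graph-theorem}: if $F$ denotes the relevant bivariate vertex-count generating function, then $(\text{total dimension}-1)\,w_{j,k}=\sum_{t}w\big(W_{j,k}|_t\big)\,w\big(W_{j,k}/t\big)$, the sum over all tubes $t$ of $W_{j,k}$. One sorts the tubes into three families --- tubes lying entirely in the clique (induced $K_i$, contributing factorials $i!=w_{i,0}$), tubes lying entirely in the tail (induced a path, contributing Catalan numbers $C_\ell=w_{0,\ell}$), and tubes meeting both (induced $W_{i,\ell}$) --- records the multiplicities, which in the clique direction are binomial coefficients $\binom{j-1}{i-1}$ and in the tail direction are simple convolutions, and reads off an explicit recursion for $w_{j,k}$ in terms of smaller $w_{j',k'}$, the $i!$, and the $C_\ell$. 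Introducing $F(x,y)=\sum_{j,k\ge 0}w_{j,k}\,\frac{x^j}{j!}\,y^k$ then turns the clique-side binomial convolutions into ordinary products (the point of the exponential variable $x$) and the tail-side convolutions into ordinary products in $y$; the recursion should collapse to a functional equation for $F$ together with the Catalan relation $C(y):=F(0,y)=1+yC(y)^2$. I expect the functional equation to be (equivalent to) $\partial_x F=F^2$ with $F(0,y)=C(y)$, whose unique solution is $F=C(y)/(1-xC(y))$; and since $\tfrac12\big(1+\sqrt{1-4y}\big)=1-yC(y)=1/C(y)$, this is exactly $F(x,y)=2\big/\big(1+\sqrt{1-4y}-2x\big)$. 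The final step is then the verification: check that the claimed closed form satisfies the functional equation and the boundary data $F(x,0)=\tfrac1{1-x}$ (the permutahedra $W_{j,0}=K_j$, with $j!$ vertices) and $F(0,y)=\tfrac{1-\sqrt{1-4y}}{2y}$ (the path-graph associahedra $W_{0,k}$, with $C_k$ vertices).

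The hard part will be exactly the step the paper flags as open in its remark on the stellocubeahedron: making the enumeration machinery of Propositions~\ref{prop:p-graph-theorem} and~\ref{prop:maximal-tube-enumeration-big-result} work for a two-parameter family and for a mixed exponential/ordinary generating function. Concretely, the tube-counting function $\binom{j-1}{i-1}$ is \emph{not} a finite sum of functions separable in the two indices in the ordinary sense --- it becomes separable only after the $1/j!$ normalization --- so the linear operators $O_y^{\,\cdot}$ of that machinery must be reinterpreted on the exponential side, and one must carefully keep track of the lower-order correction terms arising from tubes that coincide with $B_{\max}$, from reconnected complements that drop a dimension, and from small-$j$ degeneracies (e.g.\ $j\le 1$, where the clique is trivial and a prospective tube may be improper or a vertex may fail to be reconnectable). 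Assembling all of these boundary contributions into a clean functional equation --- rather than a nearby but slightly different rational function of $x$ and $\sqrt{1-4y}$ --- is where the bulk of the work lies; this is also the step at which the proposed closed form is either confirmed or, should the conjectured expression need adjustment, replaced by the form the recursion actually forces.
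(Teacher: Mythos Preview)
This statement appears in the paper's Future Work chapter as a \textbf{conjecture}; the paper does not prove it. The text immediately preceding it says only that ``expanding our method of atomic link sum enumeration to allow families of graphs indexed by more than one variable, and compensating for exponential generating functions, may allow us to adapt this method to this case,'' and the sentence after the conjecture merely identifies the coefficients with entries of Catalan's triangle. There is therefore no proof in the paper to compare your proposal against.

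Your proposal is exactly the program the paper itself gestures at. The closure of wand graphs under induced subgraph and reconnected complement is correct and is the structural fact the paper records. Your guessed functional equation $\partial_x F=F^2$ with boundary $F(0,y)=C(y)$ is indeed satisfied by the conjectured closed form (since $\partial_x\,2/(c-2x)=4/(c-2x)^2$), and in coefficient form it reads
\[
w_{j+1,k}\;=\;\sum_{a+c=j}\;\sum_{b+d=k}\binom{j}{a}\,w_{a,b}\,w_{c,d}.
\]
But you have not \emph{derived} this identity from the facet-sum or kingmaker recursions: you say you ``expect'' it to fall out once the binomial multiplicities are absorbed into an exponential variable, and in your final paragraph you correctly flag this very step as the hard part and as not yet carried out. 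What you have submitted is therefore a reasonable plan, consonant with the paper's own stated expectation, rather than a proof; since the paper offers no proof either, the conjecture remains open on both sides.
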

We note that this is equal to $j! T(j,j+k)$, where $T(j,j+k)$ is an entry in Catalan's triangle as defined in \cite[Sequence A009766]{oeis}.



\printbibliography

\restoregeometry
\appendix
%
\chapter{SageMath Code} \label{appendix:code}

A good amount of code was used to directly compute certain hypercube-graph associahedra. The following SageMath code gives commands to define hypercube graphs and create and plot hypercube-graph associahedra and simplex-graph associahedra, as well as plot them. My copy of Jupyter uses JSmol to display graphics. SageMath code will also be available to download from my personal website.

\begin{verbatim}

# Defines a HypercubeGraph object containing graph/edge data
# which can call a command to create a hypercube graph associahedron
# polytope.

class HypercubeGraph(object):
    def __init__(self,en,edgelist):
        self.G = Graph()
        self.n = en
        for i in (1..en):
            self.G.add_vertex(i)
            self.G.add_vertex(-(i))
        self.G.add_edges(edgelist)
    
    # Returns the underlying graph of this hypercube graph
    
    def get_graph(self):
        return self.G
    
    def get_n(self):
        return self.n

    # Used to iterate over all possible connected subgraphs, and check
    # which of them are tubes. Very unoptimized. 
    
    def tube_iterator_1(self):
        subgraphs = self.G.connected_subgraph_iterator(vertices_only=True)
        for graph in subgraphs:
            isTube= True
            for i in range(1,self.n+1):
                if {i,-i}.issubset(graph):
                    isTube=False
            if isTube:
                yield graph

    # Return a hypercube-graph associahedron of G.
    
    def associahedron(self):
        tubes = self.tube_iterator_1()
        P = Polyhedron(ieqs = [self.inequality(tube) for tube in tubes])
        return P

    # Return a hypercube-graph associahedron of G, with custom specified
    # truncation depth function based on tube size/face dimension.
    
    def custom_associahedron(self,distance):
        tubes = self.tube_iterator_1()
        P = Polyhedron(ieqs = [self.custom_inequality(tube,distance)
            for tube in tubes])
        return P
    
    def inequality(self,tube):
        ieq = [(len(tube)*3^(self.n-1) -3^(len(tube)-2))]
        #We swap positive and negative so we get correct orientation
        for i in range(1,self.n+1):
            if i in tube:
                ieq.append(-1)
            elif (-i) in tube:
                ieq.append(1)
            else:
                ieq.append(0)
        return ieq

    def custom_inequality(self,tube,distance):
        ieq = [distance(len(tube))]
        for i in range(1,self.n+1): #swapping positive and negative...
            if i in tube:
                ieq.append(-1)
            elif (-i) in tube:
                ieq.append(1)
            else:
                ieq.append(0)
        return ieq

###############################################################

# These two functions are used to define simplex-graph associahedra.
# simplex_custom_inequality has you input the list of all tubes yourself.
# This is often optimal for well-understood graphs in high dimensions.
#
# simplex_graph_associahedron inputs a graph on [n+1] and iterates
# over all tubes automatically.

###############################################################
def simplex_custom_inequality(tube,n,f):
    ieq = [f(len(tube))]
    for i in range(1,n+1):
        if i in tube:
            ieq.append(1)
        else:
            ieq.append(0)
    return ieq
def simplex_graph_associahedron(graph,f=lambda x:3-3^(-x)):
    n = len(graph.vertices())
    tubes = graph.connected_subgraph_iterator(vertices_only=True)
    ieq_list = [simplex_custom_inequality(tube,n,f) for tube in tubes]
    eqn_list = [[1 for i in range(0,n+1)]]
    P = Polyhedron(ieqs = ieq_list,eqns=eqn_list)
    return P

# Plots polytope with pleasing ratios, then outputs a pretty spinning
# plot in addition to showing the original hypercube graph.

def niceplot(hg,f=lambda x: x-3^(x-3),txt=""): 
    hg.get_graph().show(layout='circular')     
    return text(txt,(0,0)).plot()+hg.custom_associahedron(f).plot(
        frame=false,wireframe='black',polygon=False,point=False,spin=True)

\end{verbatim}

The following commands define hypercube-graphs by providing lists of edges.

\begin{verbatim}
G1 = HypercubeGraph(3,[(1,2),(2,3)])
G2 = HypercubeGraph(3,[(1,2),(2,3),(1,3),(-1,-2),(-2,-3),(-1,-3)])
G3 = HypercubeGraph(3,full_edge_list(3))
\end{verbatim}

Each of these commands individually will display a labeled 3-dimensional hypercube-graph associahedron.

\begin{verbatim}
niceplot(G1,txt="Associahedron")
niceplot(G2,txt="Permutahedron")
niceplot(G3,txt="B3 Permutahedron")
\end{verbatim}

Several commands have been very helpful. Two of these are 
\verb|f_vector()| \linebreak and \verb|is_combinatorially_isomorphic()|. The following code calculates the $f$-vectors of hypercube-graph associahedra of Pell graphs up to $6$ dimensions.

\begin{verbatim}
def partialpellgraph(n):
    edgelist=[]
    for i in range(1,n):
        edgelist.append([i,-i-1])
    return HypercubeGraph(n,edgelist)
for n in range(1,7):
    print partialpellgraph(n).associahedron().f_vector()
\end{verbatim}

The following code iterates over all $3$-dimensional hypercube graphs, and returns the list of hypercube-graphs whose associahedra are combinatorially isomorphic to the $3$-dimensional associahedron.

\begin{verbatim}
A3_associahedron = simplex_graph_associahedron(Graph([(1,2),(2,3),(3,4)]))
count = 0
for edgelist in subsets(full_edge_list(3)):
    P = HypercubeGraph(3,edgelist).associahedron()
    if P.is_combinatorially_isomorphic(A3_associahedron):
        count = count+1
        print edgelist
print count 
\end{verbatim}




\restoregeometry


\backmatter

\end{document}